
\def\version{}

\documentclass[reqno,twoside,11pt]{amsart}
\usepackage{cite}
\usepackage{amsmath}
\usepackage{amsfonts}
\usepackage{amssymb}
\usepackage{epsfig}
\usepackage{verbatim}
\usepackage{color}

\IfFileExists{epsf.def}{\input epsf.def}{\usepackage{epsf}}
\IfFileExists{mathptmx.sty}{\usepackage{mathptmx}}{\usepackage{mathpazo}}
\usepackage{mathrsfs}

\DeclareFontFamily{OT1}{eusb}{} \DeclareFontShape{OT1}{eusb}{m}{n} {<5> <6> <7> <8> <9> <10> <11> <12> <14.4> eusb10}{}
\DeclareMathAlphabet{\eusb}{OT1}{eusb}{m}{n}

\DeclareFontFamily{OT1}{eusm}{} \DeclareFontShape{OT1}{eusm}{m}{n} {<5> <6> <7> <8> <9> <10> <11> <12> <14.4> eusm10}{}
\DeclareMathAlphabet{\eusm}{OT1}{eusm}{m}{n}

\DeclareFontFamily{OT1}{eufm}{} \DeclareFontShape{OT1}{eufm}{m}{n} {<5> <6> <7> <8> <9> <10> <11> <12> <14.4> eufm10}{}
\DeclareMathAlphabet{\mathfrak}{OT1}{eufm}{m}{n}

\DeclareFontFamily{OT1}{fraktura}{}
\DeclareFontShape{OT1}{fraktura}{m}{n} {<5> <6> <7> <8> <9> <10> <11> <12> <13> <14.4> [1.1] eufm10}{}
\DeclareMathAlphabet{\fraktura}{OT1}{fraktura}{m}{n}

\DeclareFontFamily{OT1}{cmfi}{} \DeclareFontShape{OT1}{cmfi}{m}{n} {<5> <6> <7> <8> <9> <10> <11> <12> <13> <14.4> [0.9] cmfi10}{}
\DeclareMathAlphabet{\cmfi}{OT1}{cmfi}{b}{n}

\DeclareFontFamily{OT1}{cmss}{} \DeclareFontShape{OT1}{cmss}{m}{n} {<5> <6> <7> <8> <9> <10> <11> <12> <13> <14.4> cmss10}{}
\DeclareMathAlphabet{\cmss}{OT1}{cmss}{m}{n}

\setlength{\topmargin}{0.0truein}
\setlength{\oddsidemargin}{0.325truein}
\setlength{\evensidemargin}{0.325truein}
\setlength{\textheight}{8.4truein}
\setlength{\textwidth}{5.85truein}

\newtheoremstyle{thm}{1.5ex}{1.5ex}{\itshape\rmfamily}{} {\bfseries\rmfamily}{}{2ex}{}

\newtheoremstyle{def}{1.5ex}{1.5ex}{\rmfamily\sl}{} {\bfseries\rmfamily}{}{2ex}{}

\newtheoremstyle{rem}{1.3ex}{1.3ex}{\rmfamily}{} {\bfseries\rmfamily}{}{2ex}{}

\newtheoremstyle{ass}{1.5ex}{1.5ex}{\rmfamily\sl}{} {\bfseries\rmfamily}{}{2ex}{}

\newenvironment{proofsect}[1] {\vskip0.1cm\noindent{\rmfamily\itshape#1.}}{\qed\vspace{0.15cm}}

\theoremstyle{thm}
\newtheorem{theorem}{Theorem}[section]
\newtheorem{lemma}[theorem]{Lemma}
\newtheorem{proposition}[theorem]{Proposition}

\newtheorem*{Main Theorem}{Main Theorem.}
\newtheorem{corollary}[theorem]{Corollary}
\newtheorem{conjecture}[theorem]{Conjecture}
\newtheorem{question}[theorem]{Question}
\newtheorem{observe}[theorem]{Observation}

\newtheoremstyle{named}{}{}{\itshape}{}{\bfseries}{}{.5em}{\thmnote{#3}}
\theoremstyle{named}

\theoremstyle{def}
\newtheorem{definition}[theorem]{Definition}

\theoremstyle{rem}
\newtheorem{remark}[theorem]{{Remark}}

\numberwithin{equation}{section}
\renewcommand{\theequation}{\arabic{section}.\arabic{equation}}


\renewcommand{\section}{\secdef\sct\sect}
\newcommand{\sct}[2][default]{\refstepcounter{section}
\addcontentsline{toc}{section}
{{\tocsection {}{\thesection}{\!\!\!\!#1\dotfill}}{}}
\vspace{0.7cm}
\centerline{ 
\scshape\arabic{section}.\ #1} \nopagebreak \vspace{0.2cm}}
\newcommand{\sect}[1]{
\vspace{0.4cm} \centerline{\large\scshape\rmfamily #1}
\vspace{0.2cm}}

\renewcommand{\subsection}{\secdef\subsct\sbsect}
\newcommand{\subsct}[2][default]{\stepcounter{subsection}
\addcontentsline{toc}{subsection}
{{\tocsection{\!\!}{\hspace{1.2em}\thesubsection}{\!\!\!\!#1\dotfill}}{}}
\nopagebreak\vspace{0.45\baselineskip} {\flushleft\bf
\thesection.\arabic{subsection}~\bf #1.~}
\\*[3mm]\noindent
\nopagebreak}
\newcommand{\sbsect}[1]{
\vspace{0.1cm}\noindent
\textbf{#1.~}\vspace{0.1cm}}

\renewcommand{\subsubsection}{%
\secdef \subsubsect\sbsbsect}
\newcommand{\subsubsect}[2][default]{%
\refstepcounter{subsubsection} 
\addcontentsline{toc}{subsubsection}{{\tocsection{\!\!}
{\hspace{3.05em}\thesubsubsection}{\!\!\!\!#1\dotfill}}{}}
\nopagebreak
\vspace{0.15\baselineskip} \nopagebreak {\flushleft\rmfamily
\itshape\arabic{section}.\arabic{subsection}.\arabic{subsubsection}
\ \rmfamily #1\/.}\ }
\newcommand{\sbsbsect}[1]{\vspace{0.1cm}\noindent
\rmfamily \itshape
\arabic{section}.\arabic{subsection}.\arabic{subsubsection} \
\sffamily #1\/.\ }

\renewcommand{\caption}[1]{%
\vglue0.5cm
\refstepcounter{figure}
\begin{center}
\begin{minipage}[c]{0.8\textwidth}\small {\sc Fig.~\thefigure:\ }#1\end{minipage}
\end{center}
}


\newcommand{\dist}{\operatorname{dist}}

\newcommand{\textd}{\text{\rm d}\mkern0.5mu}

\newcommand{\texte}{\text{\rm  e}\mkern0.7mu}

\newcommand{\maxi}{{\text{\rm max}}}

\newcommand{\Var}{\text{\rm Var}}
\newcommand{\1}{{1\mkern-4.5mu\textrm{l}}}
\renewcommand{\1}{\text{\sf 1}}

\newcommand{\FF}{\mathcal F}

\newcommand{\NN}{\mathcal N}

\newcommand{\ZZ}{\mathcal Z}

\newcommand{\C}{\mathbb C}

\newcommand{\N}{\mathbb N}

\newcommand{\R}{\mathbb R}

\newcommand{\Z}{\mathbb Z}

\newcommand{\scrF}{\mathscr{F}}

\newcommand{\twoeqref}[2]{(\ref{#1}--\ref{#2})}

\newcommand{\cc}{{\text{\rm c}}}

\newcommand{\frakg}{\fraktura g}
\newcommand{\fraka}{\fraktura a}
\newcommand{\frakm}{\fraktura m}
\newcommand{\frakb}{\fraktura b}
\newcommand{\frakf}{\fraktura f}

\def\myffrac#1#2 in #3{\raise 2.6pt\hbox{$#3 #1$}\mkern-1.5mu\raise 0.8pt\hbox{$#3/$}\mkern-1.1mu\lower 1.5pt\hbox{$#3 #2$}}
\newcommand{\ffrac}[2]{\mathchoice%
	{\myffrac{#1}{#2} in \scriptstyle}
	{\myffrac{#1}{#2} in \scriptstyle}
	{\myffrac{#1}{#2} in \scriptscriptstyle}
	{\myffrac{#1}{#2} in \scriptscriptstyle}
}




\newcommand{\wt}{\widetilde}


\newcommand{\laweq}{\,\overset{\text{\rm law}}=\,}
\newcommand{\argmax}{\operatornamewithlimits{argmax}}

\newcommand{\lf}{\lfloor}
\newcommand{\rf}{\rfloor}

\newcommand{\Lawarrow}{{\,\overset{\text{\rm law}}\longrightarrow\,}}

\newcommand{\DGFF}{{\rm DGFF}}

\newcommand\Xiin{\Xi^{\text{\rm in}}}
\newcommand\Xiout{\Xi^{\text{\rm out}}}
\newcommand{\Cloc}{C^{\text{\rm loc}}_{\text{\rm b}}}
\newcommand{\Cb}{C_{\text{\rm b}}}
\newcommand{\Cc}{C_{\text{\rm c}}}

\begin{document}

\title[Full extremal process of 2D \DGFF{} \hfill \version\hfill]
{\large Full extremal process, cluster law and freezing \\ for the two-dimensional discrete Gaussian Free Field}

\author[\hfill  \version \hfill Biskup and Louidor]
{Marek~Biskup$^{1,2}$ and Oren~Louidor$^3$}
\thanks{\hglue-4.5mm\fontsize{9.6}{9.6}\selectfont\copyright\,\textrm{2018}\ \ \textrm{M.~Biskup, O.~Louidor.
Reproduction, by any means, of the entire
article for non-commercial purposes is permitted without charge.\vspace{2mm}}}
\maketitle

\vspace{-5mm}
\centerline{\textit{$^1$
Department of Mathematics, UCLA, Los Angeles, California, USA}}
\centerline{\textit{$^2$
Center for Theoretical Study, Charles University, Prague, Czech Republic}}
\centerline{\textit{$^3$
Faculty of Industrial Engineering and Management, Technion, Haifa, Israel}}

\vskip0.3cm
\begin{quote}
\footnotesize \textbf{Abstract:}
We study the  local structure of the  extremal process associated with the Discrete Gaussian Free Field (\DGFF{}) in scaled-up (square-)lattice versions of bounded open planar domains subject to mild regularity conditions on the boundary. We prove that, in the scaling limit, this process tends to a Cox process decorated by independent, correlated clusters whose distribution is completely characterized. As an application, we control the scaling limit of the discrete supercritical Liouville measure, extract a Poisson-Dirichlet statistics for the limit of the Gibbs measure associated with the \DGFF{} and establish the ``freezing phenomenon'' conjectured to occur in the ``glassy'' phase. In addition, we prove a local limit theorem for the position and value of the absolute maximum. The proofs are based on a concentric, finite-range decomposition of the \DGFF{} and entropic-repulsion arguments for an associated random walk. Although we naturally build on our earlier work on this problem, the methods developed here are largely independent.
\end{quote}


\tableofcontents

\section{Introduction}
\noindent
Recent years have witnessed remarkable advances in the understanding of extreme values of the two-dimensional Discrete Gaussian Free Field (\DGFF{}). This is a Gaussian process $\{h_x\colon x\in V\}$ in a proper subset~$V$ of the square lattice $\Z^2$ such that 
\begin{equation}
\label{E:DGFF-def}
E(h_x)=0\quad\text{and}\quad E(h_x\,h_y)=G^V(x,y),
\end{equation}
where~$G^V$ denotes the Green function of the simple symmetric random walk in~$V$ killed upon exit from~$V$. (We think of $h_x$ as fixed to zero outside~$V$.) 
Early efforts focused on the absolute maximum in square domains $V_N:=(0,N)^2\cap\Z^2$. Writing~$g:=2/\pi$ for the constant in the asymptotic $G^{V_N}(x,x)=g\log N+O(1)$ whenever~$N$ is large and~$x$ is deep inside~$V_N$, and denoting
\begin{equation}
\label{E:mN}
m_N:=2\sqrt g\,\log N-\frac34\sqrt g\,\log\log N,
\end{equation}
from the works of Bolthausen, Deuschel and Zeitouni~\cite{BDZ}, Bramson and Zei\-touni~\cite{BZ} and, particularly, Bramson, Ding and Zeitouni~\cite{BDingZ} we now know that the law of $\max_{x\in V_N}h_x-m_N$ converges to a non-degenerate limit as $N\to\infty$. 

In~\cite{BL1,BL2} the present authors turned to the extremal process associated with the \DGFF{} in a sequence $\{D_N\}$ of scaled-up versions of a bounded open set~$D\subset\C$ (see \twoeqref{E:1.1}{E:1.1a} for precise definitions).
Writing $\delta_a$ for the Dirac point-mass at~$a$, a standard way to describe extreme order statistics is to encode both the scaled positions and the centered values of the field~$\{h_x\colon x\in D_N\}$ into the random point measure
\begin{equation}
\label{E:1.3b}
\eta^D_N:=\sum_{x\in D_N}\delta_{\,x/N}\otimes\delta_{\,h_x-m_N}
\end{equation}
on $D\times\R$ and study its distributional limits as~$N\to\infty$. However, for conceptual reasons as well as the technical nature of the approach, the analysis~\cite{BL1,BL2} addressed only  the 
\emph{local} maxima (i.e., the tips of large ``peaks'') of the field. Using
\begin{equation}
\label{E:Lambda_r}
\Lambda_r(x):=\{z\in\Z^2\colon |z-x|\le r\},
\end{equation}
 to describe the meaning of the word ``local,'' these were captured by the random point measure~$\wt\eta^D_{N,r}$ on $D\times\R$ defined by
\begin{equation}
\label{E:1.4}
\wt\eta^D_{N,r}:=\sum_{x\in D_N}
\1_{\{h_x=\max_{z\in \Lambda_r(x)}h_z\}}
\delta_{\,x/N}\otimes\delta_{\,h_x-m_N}.
\end{equation}
For any sequence $r_N\to\infty$ with $N/r_N\to\infty$, it was then shown that
\begin{equation}
\label{E:1.5}
\wt \eta^D_{N,r_N}\,\,\underset{N\to\infty}\Lawarrow\,\,\text{\rm PPP}\bigl(Z^D(\textd x)\otimes\texte^{-\alpha h}\textd h\bigr),
\end{equation}
where $\text{\rm PPP}(\lambda)$ denotes the Poisson point process with intensity measure~$\lambda$,
\begin{equation}
\label{E:alpha}
\alpha:=\frac2{\sqrt g}=\sqrt{2\pi}
\end{equation}
and~$Z^D$ is a \emph{random} Borel measure on~$D$ with full support and $0<Z^D(D)<\infty$ a.s. This measure is independent of the  sequences~$r_N$ and $D_N$. 

The laws of the measures~$Z^D$ obey a host of specific properties that characterize them uniquely up to an overall multiplicative constant (see Theorem~2.8 of~\cite{BL2}). Despite its restriction to local maxima, \eqref{E:1.5} yields interesting conclusions for the full process~$\eta^D_N$ as well, e.g., the limit distribution of the scaled position and centered value of the absolute maximum,
\begin{equation}
\label{E:1.8ua}
P\Bigl(N^{-1}\argmax_{D_N}h\in A,\,\max_{x\in D_N}h(x)-m_N\le t\Bigr)
\,\underset{N\to\infty}\longrightarrow\,E\bigl(\,\widehat Z^D(A)\,\texte^{-\alpha^{-1}\texte^{-\alpha t}\,Z^D(D)}\bigr)
\end{equation}
for $A\subset D$ open and any $t\in\R$, as well as joint laws of  the  maxima in any finite number of disjoint  open  subsets of~$D_N$. Unfortunately, the methods of~\cite{BL1,BL2}, being tailored to the global structure of the extremal points, do not generalize to include local information.

The aim of the present article is to complete the description started in \cite{BL1,BL2} and derive the distributional limit of the \emph{full} extremal process \eqref{E:1.3b}. This requires development of techniques that capture the local structure of the extreme points and are, for reasons just mentioned, thus more or less unrelated to those of \cite{BL1,BL2}. As a reward, we are able to establish a number of additional results that have been conjectured in the so called ``glassy'' phase for the Gibbs measure naturally associated with the \DGFF{}. Our approach also yields a \emph{local} limit theorem for the location and the value of the absolute maximum. 

\section{Main results}
\noindent
We proceed to give precise statements of our results. The structure of the proofs, which constitute the remainder of this paper, is outlined along with some heuristics in Section~\ref{sec-2.4}.

\subsection{Full extremal process}
\label{sec-2.1}\noindent
The description of our results naturally starts with a limit theorem for the full extremal process. We will follow the setting of Biskup and Louidor~\cite{BL2} that considers the \DGFF{} over scaled-up versions of rather general domains in the complex plane.

Let~$\mathfrak D$ be the class of all bounded open sets $D\subset\C$ with a finite number of connected components and with boundary~$\partial D$ that has only a finite number of connected components  each of which has 
a positive (Euclidean) diameter. Given~$D\in\mathfrak D$, and letting $\text{dist}_\infty$ denote the $\ell^\infty$-distance on~$\Z^2$, let~$\{D_N\}$ be a sequence such that
\begin{equation}
\label{E:1.1}
	 D_N \subseteq \bigl\{x \in \Z^2 \colon \text{dist}_\infty(x/N,D^\cc)>\ffrac1N\bigr\}
\end{equation}
and, for each $\delta>0$ and all $N$ sufficiently large, also
\begin{equation}
\label{E:1.1a}
D_N\supseteq\bigl\{x \in \Z^2 \colon \text{dist}_\infty(x/N,D^\cc)>\delta\bigr\}.
\end{equation}
Note that $x\in D_N$ implies $x/N\in D$. A key point is that \eqref{E:1.5} holds for every~$D\in\mathfrak D$ (cf Theorem~2.1 of~Biskup and Louidor~\cite{BL2} for a formal statement). 

It is clear that the values of the field at nearby vertices are heavily correlated. Each high value of the field will thus come with a whole cluster of comparable values at basically the same  (scaled)  spatial location. For this reason, instead of \eqref{E:1.3b}, it is more natural to capture the extremal process associated with the \DGFF{} in~$D_N$ by way of \emph{structured} extremal point measures given by
\begin{equation}
\eta^D_{N,r}:=\sum_{x\in D_N}
\1_{\{h_x=\max_{z\in \Lambda_r(x)}h_z\}}
\delta_{\,x/N}\otimes\delta_{\,h_x-m_N}\otimes\delta_{\,\{h_{x}-h_{x+z}\colon z\in\Z^2\}}.
\end{equation}
These are formally Radon measures on~$D\times\R\times\R^{\Z^2}$ (with the product topology on~$\R^{\Z^2}$) that extend the point measures from \eqref{E:1.4} by including control of the ``shape'' of the field ``around'' the local maxima. 

The space of Radon measures on $D\times\R\times\R^{\Z^2}$ is naturally endowed with the topology of vague convergence which, in turn, permits discussion of distributional limits. (A sequence of random Radon measures thus converges in distribution if, and only if, integrals of compactly-supported continuous functions converge in distribution.) Our principal result~is~then:

\begin{theorem}[Full scaling limit]
\label{thm-main}
For each $D\in\mathfrak D$, let~$Z^D$ be the random Borel measure on~$D$ for which \eqref{E:1.5} holds.
There is a probability measure $\nu$ on $[0,\infty)^{\Z^2}$ such that for each $D\in\mathfrak D$ and each~$r_N$ with $r_N\to\infty$ and $r_N/N\to0$, 
\begin{equation}
\label{E:1.9}
\eta^D_{N,r_N}\,\,\,\underset{N\to\infty}\Lawarrow\,\,\,\text{\rm PPP}\bigl(Z^D(\textd x)\otimes\texte^{-\alpha h}\textd h\otimes\nu(\textd\phi)\bigr),
\end{equation}
where $\alpha$ is as in \eqref{E:alpha}. Moreover, $\phi_0=0$ and $\{x\in\Z^2\colon\phi_x\le c\}<\infty$ $\nu$-a.s.\ for each~$c>0$.
\end{theorem}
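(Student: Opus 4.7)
\medskip\noindent
\textbf{Proof plan.}
The strategy is to upgrade the convergence of the local-max process \eqref{E:1.5} from Biskup-Louidor~\cite{BL2} to a joint convergence that also records cluster shapes. Concretely, I would prove \eqref{E:1.9} by showing that the Laplace functional
\[
E\exp\bigl(-\langle\eta^D_{N,r_N},f\rangle\bigr)
\]
converges to the Laplace functional of the target process for every compactly-supported continuous $f\colon D\times\R\times\R^{\Z^2}\to[0,\infty)$ (with the product topology on~$\R^{\Z^2}$ so that $f$ depends on finitely many coordinates of~$\phi$). Since $f$ is bounded and vanishes when the second argument is below a cutoff~$-K$, only local maxima with $h_x-m_N\ge-K$ contribute, and the combinatorial structure of the measure is already captured by~$\widetilde\eta^D_{N,r_N}$.

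\medskip\noindent
\emph{Constructing~$\nu$.} First I would identify the candidate cluster law. Fix a reference point $x^\star$ deep inside a large square $V_N$ and a reference height~$h\in\R$. Condition the DGFF on the event
\[
\bigl\{h_{x^\star}=\max_{z\in\Lambda_{r_N}(x^\star)}h_z\bigr\}\cap\bigl\{h_{x^\star}-m_N\in[h,h+\mathrm d h]\bigr\}.
\]
The law of the shifted shape $\bigl\{h_{x^\star}-h_{x^\star+z}\colon z\in\Z^2\bigr\}$, regarded as an element of $\R^{\Z^2}$, should converge as $N\to\infty$ to a probability measure $\nu$ on $[0,\infty)^{\Z^2}$ that is independent of both $x^\star$ (by asymptotic translation invariance of the DGFF away from the boundary) and~$h$. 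The $h$-invariance is the heart of the matter: after conditioning on the near-maximal value at $x^\star$, the differences $h_{x^\star}-h_{x^\star+z}$ are governed, via the concentric finite-range decomposition mentioned in the abstract, by a random walk that must stay above an essentially linear barrier. An entropic-repulsion / ballot-type analysis forces this walk to escape the barrier at an order-one distance, so the law of the \emph{differences} decouples from the overall offset~$h$ and yields a well-defined limit~$\nu$.

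\medskip\noindent
\emph{Decoupling distinct clusters and assembling the limit.} Since the decorations of~$\eta^D_{N,r_N}$ live on the $r_N$-scale while distinct extremal local maxima are at mutual Euclidean distance of order~$N$, two such clusters are associated with essentially disjoint neighborhoods of the field. A Gibbs-Markov (finite-range) decomposition argument of the DGFF then shows that, on any finite set of well-separated local maxima, the joint conditional law of the associated cluster shapes factorizes asymptotically into independent copies of~$\nu$. Combining this with \eqref{E:1.5} yields that for any compactly-supported test function~$f$,
\[
-\log E\exp\bigl(-\langle\eta^D_{N,r_N},f\rangle\bigr)
\,\,\underset{N\to\infty}\longrightarrow\,\,
E\int_D Z^D(\mathrm d x)\int_\R\alpha\texte^{-\alpha h}\mathrm d h\int\bigl(1-\texte^{-f(x,h,\phi)}\bigr)\nu(\mathrm d\phi),
\]
which is the Laplace functional of the Cox point process in~\eqref{E:1.9}.

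\medskip\noindent
\emph{Properties of~$\nu$ and main obstacle.} The equality $\phi_0=0$ is immediate from the definition ($\phi_z=h_x-h_{x+z}$ at $z=0$) and is stable under the limit. Finiteness of the super-level set $\{z\in\Z^2\colon\phi_z\le c\}$ is equivalent to saying that only finitely many points in a cluster lie within~$c$ of the local maximum; this would follow from a uniform upper bound, for every~$c$, on the expected number of $z\in\Lambda_{r_N}(x^\star)$ with $h_{x^\star}-h_{x^\star+z}\le c$ under the above conditioning, which is a first-moment estimate controllable via the two-point function of the DGFF around a high point. The main obstacle is the first step: proving the existence and $h$-invariance of~$\nu$. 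This is where the technical machinery advertised in the abstract---the concentric, finite-range decomposition combined with entropic repulsion for the associated barrier random walk---will have to do the essential work; the remaining steps are then largely a matter of assembling the Poisson limit from the factorization of the conditional cluster laws and the already-established convergence~\eqref{E:1.5}.
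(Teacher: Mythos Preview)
Your overall plan is sound and matches the paper's approach: reduce to Laplace functionals of test functions depending on finitely many cluster coordinates, construct~$\nu$ as the limiting conditional law of the shape around a single near-maximal local maximum, establish asymptotic independence of the clusters, and then invoke the already-known convergence~\eqref{E:1.5}. The identification of $h$-invariance of the cluster law as the crux, and of entropic repulsion for the associated random walk as the mechanism, is exactly right.

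There is, however, a genuine gap in your decoupling step. The Gibbs--Markov property does \emph{not} by itself give asymptotic independence of the cluster shapes at distinct local maxima $x_1,\dots,x_n$. Conditioning on the field on $\bigcup_i\partial\Lambda_M(x_i)$ makes the fields inside the boxes conditionally independent, but the boundary values are themselves random and correlated across the boxes, and the cluster shape at~$x_i$ depends on them through the harmonic extension into~$\Lambda_M(x_i)$. The paper's device is to pass to an auxiliary process $\widehat\eta^D_{N,M}$ in which the decoration at~$x$ is not $h_x-h_{x+\cdot}$ but $h_x-h_{x+\cdot}+\Phi^{M,x}(x+\cdot)$, where $\Phi^{M,x}$ is the discrete-harmonic extension of the boundary data on~$\partial\Lambda_M(x)$ (with the contribution of the pin at~$x$ removed). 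This modification makes the decoration at~$x$ \emph{exactly} conditionally independent of the field outside~$\Lambda_M(x)$ given only~$h_x$; an inclusion--exclusion then factorizes the Laplace functional exactly, reducing the problem to (i)~showing the single-cluster conditional law~$\nu^{(M,t)}$ converges to~$\nu$ uniformly on compact~$t$-sets, and (ii)~applying~\eqref{E:1.5} to the resulting two-coordinate functional. One must then check that $\widehat\eta^D_{N,M}$ and~$\eta^D_{N,M}$ are close, which amounts to showing that~$\Phi^{M,x}$ is uniformly small near~$x$ and that the two processes pick out the same local maxima; this is not automatic and occupies several lemmas. Your sketch needs either this auxiliary-process trick or an explicit quantitative bound on the effect of the correlated boundary conditions.

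A minor slip: your displayed limit is written as $-\log E\texte^{-\langle\cdot,f\rangle}\to E\!\int\!\cdots$, which is the formula for a Poisson process with deterministic intensity. Since~$Z^D$ is random, the correct limit keeps the expectation outside the exponential,
\[
E\texte^{-\langle\eta^D_{N,r_N},f\rangle}\,\longrightarrow\,E\exp\Bigl\{-\int Z^D(\textd x)\,\texte^{-\alpha h}\textd h\,\nu(\textd\phi)\bigl(1-\texte^{-f(x,h,\phi)}\bigr)\Bigr\},
\]
and there is no extra factor of~$\alpha$ in the intensity.
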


To interpret this result, one can say that although the spatial positions of the local maxima are correlated via the random measure $Z^D$, the configurations around each of the local maxima --- the shapes of the nearly-highest peaks --- are (in the limit) independent samples from~$\nu$. 

A consequence of the above theorem is a representation of the limit law of the ``unstructured'' extremal process $\eta^D_N$ from \eqref{E:1.3b} by means of a \emph{cluster process}:

\begin{corollary}[Cluster process]
\label{cor-cluster-process}
For the setting and notation of Theorem~\ref{thm-main}, let $\{(x_i,h_i)\colon i\in\N\}$ enumerate the points in a sample from $\text{\rm PPP}(Z^D(\textd x)\otimes\texte^{-\alpha h}\textd h)$. Let $\{\phi_z^{(i)}\colon z\in\Z^2\}$, $i\in\N$, be independent samples from the measure $\nu$, independent of $\{(x_i,h_i)\colon i\in\N\}$. Then 
\begin{equation}
\label{E:1.6}
\eta^D_N\,\,\,\underset{N\to\infty}\Lawarrow\,\,\,\sum_{i\in\N}\sum_{z\in\Z^2}\delta_{\,(x_i,\,h_i-\phi_z^{(i)})}.
\end{equation}
The measure on the right is locally finite on $D\times\R$ a.s. 
\end{corollary}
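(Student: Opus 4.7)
The plan is to deduce the corollary from Theorem~\ref{thm-main} via the continuous mapping theorem applied to the \emph{unfolding map}
\[
\Psi\colon\sum_i\delta_{(x_i,h_i,\phi^{(i)})}\;\mapsto\;\sum_i\sum_{z\in\Z^2}\delta_{(x_i,\,h_i-\phi^{(i)}_z)},
\]
which sends Radon point measures on $D\times\R\times\R^{\Z^2}$ to point measures on $D\times\R$. Under~$\Psi$, the structured PPP on the right-hand side of \eqref{E:1.9} is mapped precisely onto the cluster measure in \eqref{E:1.6}, so the task splits into (i) showing that $\Psi(\eta^D_{N,r_N})$ differs from~$\eta^D_N$ by an amount tending to zero in probability against any $f\in C_c(D\times\R)$, and (ii) showing that $\Psi$ is vaguely continuous at almost every realization of the limit PPP.

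For~(i), fix $f\in C_c(D\times\R)$ with support in $K\times[a,\infty)$ for some compact $K\subset D$ and $a\in\R$. Only vertices $y\in D_N$ with $h_y\ge m_N+a$ contribute to $\eta^D_N(f)$, and the clustering picture inherent in \cite{BL1,BL2} (and refined in the present paper) yields, for an appropriate $r_N$, that with probability tending to one every such~$y$ lies within distance~$r_N$ of a \emph{unique} $r_N$-local maximum~$x$, while distinct $r_N$-local maxima are separated on the macroscopic scale~$N$. Setting $z:=y-x$, the contribution of~$y$ to $\eta^D_N(f)$ is $f(y/N,h_y-m_N)$, whereas the corresponding summand of $\Psi(\eta^D_{N,r_N})(f)$ indexed by $(x,z)$ equals $f(x/N,h_y-m_N)$; uniform continuity of~$f$ and $|y-x|/N=O(r_N/N)\to0$ make the discrepancy asymptotically negligible. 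Any summand of $\Psi(\eta^D_{N,r_N})(f)$ outside this matching has $h_{x+z}<m_N+a$ and so is killed by~$f$. Hence $|\eta^D_N(f)-\Psi(\eta^D_{N,r_N})(f)|\to 0$ in probability.

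For~(ii), a configuration $\xi=\sum_i\delta_{(x_i,h_i,\phi^{(i)})}$ is a continuity point of~$\Psi$ at the test function~$f$ provided the $(x_i,h_i)$ are distinct, none lies on $\partial(\supp f)$, and, for each~$i$, the level set $\{z\in\Z^2\colon\phi^{(i)}_z\le h_i-a\}$ is finite. All three conditions hold almost surely under the limit law on the right of~\eqref{E:1.9}: the first two are standard PPP facts and the third is exactly the tail property of~$\nu$ asserted in Theorem~\ref{thm-main}. Convergence in the product topology on $\R^{\Z^2}$ then reduces $\sum_z f(x_i,h_i-\phi^{(i)}_z)$ to a finite sum over a stable index set in a neighborhood of~$\xi$, giving vague continuity of~$\Psi$. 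Applying the continuous mapping theorem to Theorem~\ref{thm-main} yields $\Psi(\eta^D_{N,r_N})\Lawarrow\sum_i\sum_z\delta_{(x_i,h_i-\phi^{(i)}_z)}$, and combining with step~(i) delivers~\eqref{E:1.6}.

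Local finiteness of the limit measure on $D\times\R$ follows from two observations. Since~$\nu$ is supported on $[0,\infty)^{\Z^2}$, we have $h_i-\phi^{(i)}_z\le h_i$ for every~$z$, so only cluster centers with $h_i\ge a$ can deposit atoms in $K\times[a,b]$; those satisfying this constraint form a PPP on $K\times[a,\infty)$ of a.s.\ finite intensity $Z^D(K)\,\alpha^{-1}\texte^{-\alpha a}$. Each such cluster contributes $\#\{z\colon\phi^{(i)}_z\in[h_i-b,h_i-a]\}$ atoms to $K\times[a,b]$, which is finite $\nu$-a.s.\ by the same tail property of~$\nu$. The main technical obstacle is step~(i): establishing \emph{uniqueness} of the parent local maximum for each high point requires a sharp separation estimate for the tips of extremal peaks at scale~$r_N$, which will draw on the concentric finite-range decomposition and entropic-repulsion arguments developed in the body of the paper.
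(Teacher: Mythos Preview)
Your overall strategy is the right idea and matches the paper's, but letting the inner sum in~$\Psi$ run over \emph{all} $z\in\Z^2$ makes both steps fail. In step~(i), $\eta^D_{N,r_N}$ carries an atom at every $r_N$-local maximum of~$h^{D_N}$, regardless of height, and under~$\Psi$ the pair $(x,z)$ contributes $f(x/N,h_{x+z}-m_N)$ whenever $x/N\in K$ and $x+z\in\Gamma_N^D(-a)$. Hence every high point is counted once per $r_N$-local maximum with scaled position in~$K$, and there are of order $(N/r_N)^2\to\infty$ of those; so $\Psi(\eta^D_{N,r_N})(f)$ blows up rather than approximating $\eta^D_N(f)$. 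Your sentence ``any summand outside this matching has $h_{x+z}<m_N+a$'' is precisely what breaks. In step~(ii), vague continuity of~$\Psi$ would need the product topology on~$\R^{\Z^2}$ to control the tail of each~$\phi^{(i)}$, but it controls only finitely many coordinates at a time: a vaguely-close configuration can have $\phi'^{(i)}_z=0$ for arbitrarily many~$z$ outside any prescribed finite set, making $\sum_z f(x_i,h_i-\phi'^{(i)}_z)$ arbitrarily large. So~$\Psi$ is not vaguely continuous.

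The paper repairs this by truncating the inner sum at a \emph{fixed} radius: set $f_r(x,h,\phi):=\sum_{z\in\Lambda_r(0)}f(x,h-\phi_z)$, use Lemma~\ref{lemma-separation} to get $\langle\eta^D_N,f\rangle=\langle\eta^D_{N,r_N},f_r\rangle$ with high probability (the finite inner sum now sees only points within distance~$r$ of the local maximum, eliminating overcounting), apply Theorem~\ref{thm-main} to~$f_r$ (which depends on finitely many coordinates of~$\phi$ and so falls within the scope of the vague convergence), and only then send $r\to\infty$ on the Laplace-functional side via monotone convergence. The separation input you flag as the ``main technical obstacle'' is already available as Lemma~\ref{lemma-separation} from~\cite{DZ,BL2} and needs nothing new here.
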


When we disregard the spatial positions, \eqref{E:1.6} becomes even simpler: 
\begin{equation}
\label{E:1.6a}
\sum_{x\in D_N}\delta_{\,h_x-m_N}
\,\,\,\,\underset{N\to\infty}\Lawarrow\,\,\,
\sum_{i\in\N}\sum_{z\in\Z^2}\,\delta_{\,t_i+\alpha^{-1}\log Z^D(D)-\phi_z^{(i)}}
\end{equation}
where $\{t_i\colon i\in\N\}$ is a sample from Gumbel PPP$(\texte^{-\alpha h}\textd h)$, $\{\phi_z^{(i)}\colon z\in\Z^2\}$ are i.i.d.\ samples from~$\nu$ and~$Z^D(D)$ is the total mass of~$Z^D(\textd x)$, with all three objects independent of one another. The limit process is thus a randomly-shifted Gumbel process decorated by independent and identically distributed clusters. 

\begin{figure}[t]
\centerline{\includegraphics[width=0.7\textwidth]{./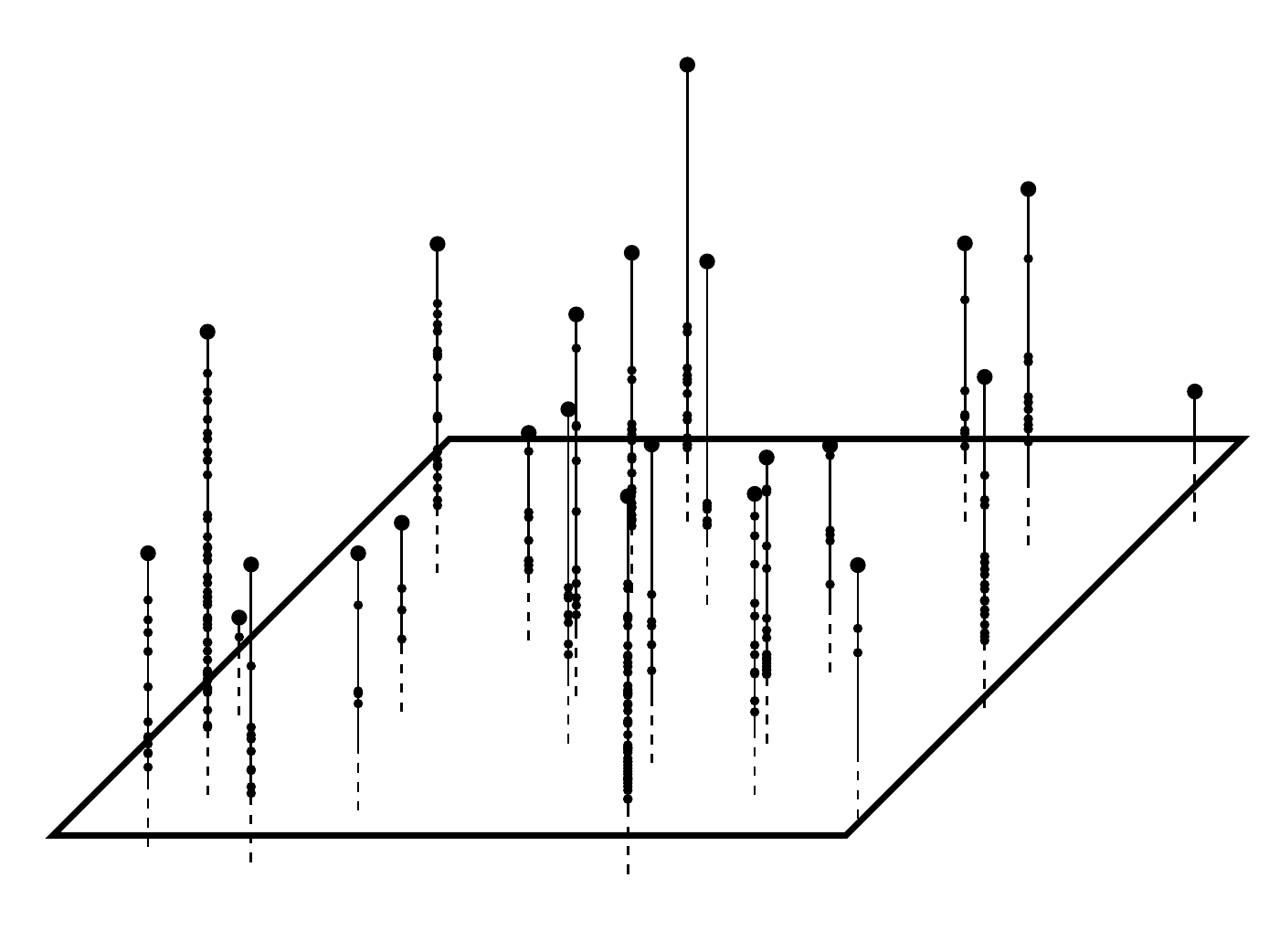}}
\begin{quote}
\small
\vglue-0.2cm
\caption{An illustration of the limit \eqref{E:1.6} of the unstructured point process~$\eta^D_N$ for~$D$ being a unit square. The ``notches'' on the vertical lines mark the locations of sample points of an individual cluster. Only the points above a certain (arbitrary) base value are~included. The top values in the clusters (marked by  larger  bullets) are distributed according to the Cox process in~\eqref{E:1.5}.
\label{fig0}
}
\normalsize
\end{quote}
\end{figure}

A randomly-shifted, i.i.d.-decorated Gumbel process is the limit of the extremal process associated with the Branching Brownian motion; see Arguin, Bovier and Kistler~\cite{ABK1,ABK2,ABK3}, A\"idekon, Berestycki, Brunet and Shi~\cite{ABBS} or Bovier and Hartung~\cite{Bovier-Hartung} who even track additional information analogous to our spatial positions. In these studies the cluster law is defined by taking the whole ensemble of branching Brownian motions conditioned to have an excessively large absolute maximum. (This essentially forces the whole process to be just one cluster.) It turns out that a relatively explicit description of the cluster law is possible in our case as well.

Let $\nu^0$ be the law of the mean-zero \DGFF{} in~$\Z^2$ pinned to zero at~$x=0$ or, equivalently, the \DGFF{} in~$\Z^2\smallsetminus\{0\}$. (Recall that all of our \DGFF{}s have zero boundary conditions.) Explicitly, $\nu^0$ is a Gaussian law on~$\R^{\Z^2}$ with mean zero and covariance
\begin{equation}
\label{E:2.8}
\text{Cov}_{\nu^0}(\phi_x,\phi_y)=\fraka(x)+\fraka(y)-\fraka(x-y),
\end{equation}
where $\fraka\colon\Z^2\to\R$ is the potential kernel of the simple symmetric random walk started from zero with the explicit representation
\begin{equation}
\label{E:2.8u}
\fraka(x):=\int_{[-\pi,\pi]^2}\frac{\textd k}{(2\pi)^2}\,\frac{1-\cos(k\cdot x)}{\sin(k_1/2)^2+\sin(k_2/2)^2}.
\end{equation}
Note that $\phi_0=0$ $\nu^0$-a.s. The following theorem characterizes the cluster law in terms of a limit of conditional laws involving $\nu^0$. Below we write ``$|x| \leq r$'' as a shorthand for ``$\forall x$ such that $|x| \leq r$''. This practice will be used  without further apology  throughout this manuscript.
\begin{theorem}[Cluster law]
\label{thm-lessmain}
The measure $\nu$ in Theorem~\ref{thm-main} is given by the weak limit
\begin{equation}
\label{E:1.14a}
\nu(\cdot)=\lim_{r\to\infty} \,\nu^0\biggl(\,\phi+\frac2{\sqrt g}\,\fraka\in\cdot\,\bigg|\,\phi_x+\frac2{\sqrt g}\,\fraka(x)\ge0\colon\, |x|\le r\biggr).
\end{equation}
\end{theorem}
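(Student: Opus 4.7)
The plan is to identify the cluster law~$\nu$, whose existence is already guaranteed by Theorem~\ref{thm-main}, by a direct computation of its finite-dimensional marginals based on the Gibbs--Markov decomposition of the \DGFF{} at a single point. Fix~$x$ deep inside~$D_N$ and write
\begin{equation*}
h_y \,=\, h_x\,\frac{G^{V_N}(x,y)}{G^{V_N}(x,x)}\,+\,\psi_y,\qquad y\in V_N,
\end{equation*}
where $\psi$ is the \DGFF{} in $V_N\smallsetminus\{x\}$, independent of $h_x$. The cluster around~$x$ then rewrites as
\begin{equation*}
h_x - h_{x+z} \,=\, h_x\,\frac{G^{V_N}(x,x)-G^{V_N}(x,x+z)}{G^{V_N}(x,x)}\,-\,\psi_{x+z},
\end{equation*}
and the local-maximum event $\{h_x=\max_{\Lambda_{r_N}(x)}h\}$ becomes the simultaneous constraint that the right-hand side be nonnegative for all $|z|\le r_N$.

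\textbf{Passage to the limit.} For $x$ deep in~$D_N$, the asymptotics $G^{V_N}(x,x)=g\log N+O(1)$, $m_N/G^{V_N}(x,x)\to 2/\sqrt g$, and $G^{V_N}(x,x)-G^{V_N}(x,x+z)\to\fraka(z)$ give, conditional on $h_x-m_N=t$ for any bounded~$t$,
\begin{equation*}
h_x\,\frac{G^{V_N}(x,x)-G^{V_N}(x,x+z)}{G^{V_N}(x,x)}\,\longrightarrow\,\tfrac{2}{\sqrt g}\,\fraka(z)
\end{equation*}
for each fixed $z$, uniformly in bounded~$t$; in particular the limit is $t$-independent. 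Simultaneously,
\begin{equation*}
G^{V_N\smallsetminus\{x\}}(x+z,x+w)\,=\,G^{V_N}(x+z,x+w)-\frac{G^{V_N}(x,x+z)\,G^{V_N}(x,x+w)}{G^{V_N}(x,x)}\,\longrightarrow\,\fraka(z)+\fraka(w)-\fraka(z-w),
\end{equation*}
which is exactly the covariance in~\eqref{E:2.8}, so $\{\psi_{x+z}\colon z\in\Z^2\}$ converges in finite-dimensional distribution to a sample from~$\nu^0$. Invoking the Gaussian symmetry $\psi\laweq-\psi$ to flip the sign, the conditional law of the cluster given the local-maximum event and $h_x-m_N=t$ converges, as $N\to\infty$, to
\begin{equation*}
\nu^0\Bigl(\phi+\tfrac{2}{\sqrt g}\fraka\in\cdot\,\Big|\,\phi_z+\tfrac{2}{\sqrt g}\fraka(z)\ge0\colon\,|z|\le r_N\Bigr).
\end{equation*}
Letting $r_N\to\infty$ produces the right-hand side of~\eqref{E:1.14a}. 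Since Theorem~\ref{thm-main} asserts that the cluster law is a fixed $\nu$, independent of the spatial position $x/N$ and of the peak height~$t$, and the limit above is manifestly $t$-independent, one concludes that $\nu$ equals this weak limit.

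\textbf{Main obstacle.} The principal difficulty is to rigorously justify the joint passage $N\to\infty$ and $r_N\to\infty$ against the (super-polynomially) small probability of the local-maximum event. This requires (i) establishing that the $r\to\infty$ weak limit on the right-hand side of~\eqref{E:1.14a} actually exists as a probability measure, which amounts to a quantitative entropic-repulsion estimate for the Gaussian field $\phi+(2/\sqrt g)\,\fraka$ under $\nu^0$ conditioned to be nonnegative on $\Lambda_r(0)$, uniformly in~$r$; and (ii) upgrading finite-dimensional convergence of $\psi_{x+z}$ to~$\nu^0$ to a form that is uniform in~$z$ with $|z|\le r_N$, in order to handle the simultaneous constraint on an expanding window. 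I expect (i) to be the hardest step: it is a two-dimensional analogue of the ballot problem for a random walk with logarithmic drift, and its output must match the tail condition on $\nu$ from Theorem~\ref{thm-main} (namely, that $\phi_x\to\infty$ as $|x|\to\infty$). Both ingredients are expected to rest on the concentric finite-range decomposition and the random-walk entropic-repulsion estimates advertised in the abstract.
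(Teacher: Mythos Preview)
Your heuristic matches the paper's own (equations \eqref{E:2.25}--\eqref{E:2.29ua}) almost exactly, and you have correctly identified the two central obstacles. But your overall strategy has a logical circularity: in this paper, Theorem~\ref{thm-main} is \emph{not} proved independently of the characterization of~$\nu$. The paper proceeds in the opposite order. It first constructs~$\nu$ directly as the weak limit in~\eqref{E:1.14a} (Proposition~\ref{cor-4.16}, resting on the sharp asymptotic in Proposition~\ref{prop-4.14}), and only then proves Theorem~\ref{thm-main} by showing (Lemma~\ref{lemma-5.13a}) that the finite-$N$ conditional cluster law $\nu^{(M,t)}$ converges to this already-built~$\nu$. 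The proof of Theorem~\ref{thm-lessmain} is then a one-liner: the measure appearing in Theorem~\ref{thm-main} is by construction the limit~\eqref{E:1.14a}. So you cannot invoke Theorem~\ref{thm-main} as a black box to obtain existence of~$\nu$ and identify it afterward; the identification \emph{is} the construction, and it precedes Theorem~\ref{thm-main}.

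On the technical side, the paper does not attack your obstacle~(ii) by controlling $\psi_{x+\cdot}$ uniformly over the expanding window~$\Lambda_{r_N}$. Instead, the concentric decomposition converts the field-positivity event $\{\phi+\tfrac2{\sqrt g}\fraka\ge0\text{ on }\Delta^r\}$ into the event that an associated random walk $\{S_k\}$ stays above a polylogarithmic curve (Lemma~\ref{lemma-4.6a} and the inclusions \eqref{E:4.22uu}--\eqref{E:4.22uuu}); your obstacle~(i) then becomes a one-dimensional ballot estimate (Lemmas~\ref{lemma-5.3}--\ref{lemma-5.4} and the entropic-repulsion Lemma~\ref{lemma-entropy-RW}). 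The key device is the quantity $\Xiin_\ell(f)$ in~\eqref{E:5.1ua}, which freezes the inner field on a fixed scale~$\Delta^\ell$ and replaces the outer conditioning by the factor~$S_\ell$; its convergence as $\ell\to\infty$ simultaneously gives existence of the limit and tightness on $[0,\infty)^{\Z^2}$. Your observation that the limiting conditional law is $t$-independent is correct and is precisely what makes the factorization $\Xiin_\ell(f)\,\Xiout_{n,\ell}(t)$ in Proposition~\ref{prop-4.13}, and hence Lemma~\ref{lemma-5.13a}, go through.
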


The strong FKG property associated with the \DGFF{} (cf Lemma~\ref{lemma-FKG}) shows that, for increasing events, the probability on the right of \eqref{E:1.14a} is non-decreasing in~$r$. Unfortunately, this is not enough to infer the existence of the limit as stated: As the space of configurations is not compact, work is needed to prevent blow-ups to infinity (i.e., to prove tightness on $[0,\infty)^{\Z^2}$). To see that this is in fact a subtle issue, we note:

\begin{theorem}
\label{thm-2.5}
There is a constant~$\tilde c_\star\in(0,\infty)$ such that
\begin{equation}
\label{E:2.9y}
\nu^0\biggl(\,\phi_x+\frac2{\sqrt g}\,\fraka(x)\ge0\colon\, |x|\le r\biggr)\sim\frac {\tilde c_\star}{(\log r)^{1/2}},\qquad r\to\infty.
\end{equation}
\end{theorem}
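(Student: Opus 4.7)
\textbf{Proof plan for Theorem~\ref{thm-2.5}.} The plan is to reduce the estimate to a ballot-type asymptotic for a one-dimensional random walk obtained from a dyadic-radial decomposition of the shifted field $\psi_x:=\phi_x+\tfrac2{\sqrt g}\fraka(x)$, so that the event in \eqref{E:2.9y} becomes $\{\psi_x\ge0\colon|x|\le r\}$. Since $\fraka(x)=g\log|x|+O(1)$, the field $\psi$ at distance $\rho$ from the origin has Gaussian mean $\sim 2\sqrt g\log\rho$ and variance $\sim 2g\log\rho$, and the question is of ``persistence of a logarithmically correlated 2D Gaussian field on a disc of radius $r$'' type, whose signature scaling is indeed $1/\sqrt{\log r}$.

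Set $r_k:=2^k$ and $K:=\lfloor\log_2 r\rfloor$. I would first use a concentric Gibbs--Markov (or finite-range) decomposition of $\phi$ to extract a radial process $S_k$, e.g.\ the harmonic average of $\psi$ on $\partial\Lambda_{r_k}(0)$. Using the $\log$-growth of $\fraka$ together with the $2$D Green-function formula for harmonic extensions, $S_0,\dots,S_K$ is well approximated by a random walk with i.i.d.\ Gaussian increments of mean $2\sqrt g\log 2$ and variance $2g\log 2$. Conditional on $(S_k)$, the angular remainder $\psi_x-S_{\lfloor\log_2|x|\rfloor}$ is a centred Gaussian field whose shell fluctuations have $O(1)$ amplitude and are approximately independent across scales.

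Next, I would sandwich $\{\psi_x\ge0\colon|x|\le r\}$ between two pure ballot events $\{S_k\ge L_k^\pm\colon 0\le k\le K\}$ with barrier sequences $L_k^\pm=O(1)$. The upper bound follows from restricting $\psi_x\ge0$ to a single point on each shell. The lower bound requires an entropic-repulsion step: once $(S_k)$ is conditioned to lie sufficiently far above zero, the angular Gaussian fluctuations on each shell remain non-negative with probability uniformly bounded below across $k$, so the full pointwise event costs only a bounded multiplicative factor relative to the radial event. Monotonicity of the conditional probabilities via Lemma~\ref{lemma-FKG} makes the sandwich tight up to $O(1)$ shifts in the barrier. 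After subtracting the positive drift $2\sqrt g\log 2$, both bounding events become ballot problems for a centred i.i.d.\ Gaussian random walk $\widetilde S_k$ with bounded barrier sequences $\widetilde L_k^\pm$, and an invocation of a sharp ballot asymptotic (e.g.\ Sparre-Andersen/Caravenna--Chaumont, or the refined random-walk estimates developed earlier in the paper) yields probabilities $\sim c_\pm/\sqrt K$ with $0<c_\pm<\infty$. A pinching argument that uses the robustness of the ballot constant under $O(1)$ barrier perturbations then forces $c_+=c_-=:\tilde c_\star$, and substituting $K=\log_2 r+O(1)$ gives the claimed $\tilde c_\star/\sqrt{\log r}$ (with $\sqrt{\log 2}$ absorbed into the constant).

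\emph{The main obstacle} is the reduction from the pointwise shell constraint to the ballot event on $(S_k)$. The angular fluctuations on the shell of radius $2^k$ themselves have extreme values of order $\log k$, and a naive union bound introduces spurious $\log$-factors that would spoil the precise $1/\sqrt{\log r}$ asymptotic rather than merely the order of magnitude. To keep the error purely multiplicative by a bounded factor, the entropic-repulsion step must be quantitatively strong enough to ensure that, under the conditioning, the radial process is typically an $O(1)$-\emph{large} amount above zero on \emph{every} scale, not just on typical ones. Handling the atypical scales on which $S_k$ flirts with zero --- and showing that the resulting error telescopes to a finite constant rather than diverging in $K$ --- is the subtle part that must be done via coupled, scale-by-scale random-walk estimates combined with the sharp FKG bounds mentioned above.
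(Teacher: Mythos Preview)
Your overall strategy---a concentric decomposition reducing the event to a ballot problem for a one-dimensional radial walk---is the paper's approach as well. But there is a genuine and fatal gap in how you model the shell fluctuations, and this propagates to a wrong ballot problem and a non-working identification of the constant.

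You claim that, conditional on the radial process $(S_k)$, ``the angular remainder $\psi_x - S_{\lfloor\log_2|x|\rfloor}$ is a centred Gaussian field whose shell fluctuations have $O(1)$ amplitude.'' This is false. Under the Gibbs--Markov decomposition, the remainder on the annulus $\Delta^k\smallsetminus\Delta^{k-1}$ after conditioning on boundary values is (essentially) a DGFF on that annulus, whose maximum is of order $m_{2^k}=2\sqrt g\,k\log 2+O(\log k)$---\emph{linear} in~$k$, not $O(1)$ and not $O(\log k)$. In the paper's language these are the fields $h'_k$ (together with the $\chi_k$'s) of Proposition~\ref{prop-concentric}; see Lemmas~\ref{lemma-3.6}--\ref{lemma-3.7}. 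The crucial point is that this linear-in-$k$ annular maximum exactly \emph{cancels} the linear-in-$k$ drift $\tfrac{2}{\sqrt g}\fraka(x)\sim 2\sqrt g\,k\log 2$ on the $k$-th shell; this is the content of Lemma~\ref{lemma-4.6a} and the inclusions \eqref{E:4.22uu}--\eqref{E:4.22uuu}. The correct reduced event is therefore that a \emph{mean-zero} walk $S_k$ stay above a \emph{polylogarithmic} curve $\pm\widetilde R_k(\ell)$. Your scheme instead produces a drifted walk above an $O(1)$ barrier, i.e., after recentering, a mean-zero walk above $-2\sqrt g\,k\log 2+O(1)$; that probability tends to~$1$, not to $c/\sqrt K$. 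The sandwich collapses.

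Even with the correct barriers, your ``pinching argument that uses the robustness of the ballot constant under $O(1)$ barrier perturbations'' would not identify a single $\tilde c_\star$: the leading constant in $P(S_k\ge -c\colon k\le K)\sim C(c)/\sqrt K$ genuinely depends on the barrier near $k=0$, so sandwiching between barriers $L_k^\pm$ only gives $0<c_-\le c_+<\infty$. The paper does not pinch. It proves a \emph{precise} approximation (Proposition~\ref{prop-4.14}): for each fixed $\ell$, $\sqrt r$ times the probability equals $\Xiin_\ell(1)/\sqrt{\log 2}$ plus an error $\epsilon_r(\ell)$ with $\lim_{\ell}\limsup_r|\epsilon_r(\ell)|=0$. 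Since the left side is independent of~$\ell$, a diagonal argument forces both $r\to\infty$ and $\ell\to\infty$ limits to exist and coincide, giving $\tilde c_\star=\lim_{\ell\to\infty}\Xiin_\ell(1)$. The mechanism is to freeze the event on the first~$\ell$ scales (this manufactures the constant $\Xiin_\ell(1)$), condition on $S_\ell$, and apply sharp Brownian ballot asymptotics with polylogarithmic curves (Propositions~\ref{prop-uncond-BM-positive-cor}, \ref{prop-4.9ue} via Lemma~\ref{lemma-A.9}, together with entropic repulsion, Lemma~\ref{lemma-entropy-RW}) on the remaining $r-\ell$ scales; Lemma~\ref{lemma-5.4} is where the factor $S_\ell/\sqrt{r}$ with exact constant $1/\sqrt{\log 2}$ appears. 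Your proposal has no analogue of this inner/outer splitting, and without it the sharp constant cannot be extracted.
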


Thus, staying non-negative in larger and larger volumes is increasingly costly for~$\phi+\frac2{\sqrt g}\fraka$ in spite of the ``logarithmic boost'' received from~$\fraka$ (recall that $\fraka(x)=g\log|x|+O(1)$ as~$|x|\to\infty$). Consequently, one cannot pass the limit $r\to\infty$ inside the conditioning event in \eqref{E:1.14a} and the measures $\nu$ and~$\nu^0$ are supported on disjoint sets. 

We remark that, in the proof of Theorem~\ref{thm-2.5}, the constant~$\tilde c^\star$ is obtained as the $\ell\to\infty$ limit of the quantity~$\Xiin_\ell(f)$ from \eqref{E:5.1ua} for~$f:=1$. However, we do not seem to have a way to express this constant without a limit procedure.

\subsection{Local limit theorem for absolute maximum}
The proofs of the above theorems hinge on control of the \DGFF{} conditioned on the maximum occurring at a given point. This is achieved by way of (what we call) a concentric decomposition of the \DGFF{}; see Section~\ref{sec3}. An augmented version of the same argument then yields also a \emph{local} limit theorem for both the value and the position of the absolute maximum:

\begin{theorem}[Local limit law for absolute maximum]
\label{thm-LLL}
For each $D\in\mathfrak D$ there exists a continuous function $\rho^D\colon D\times\R\to[0,\infty)$ such that for each $\{D_N\}$ that obeys \twoeqref{E:1.1}{E:1.1a}, for each $a<b$ and uniformly in~$x$ over compact subsets of~$D$,
\begin{equation}
\label{E:2.11c}
\lim_{N\to\infty}\, N^2\,\, P\Bigl(\operatornamewithlimits{argmax}_{D_N}h=\lfloor xN\rfloor,\,\max_{D_N}h-m_N\in (a,b)\Bigr)=\int_a^b \rho^D(x,t)\,\textd t.
\end{equation}
Moreover, $x\mapsto \rho^D(x,t)$ is, for each $t\in\R$, the Radon-Nikodym derivative of the measure
\begin{equation}
\label{E:2.12c}
A\mapsto \texte^{-\alpha t}E\bigl(\,Z^D(A)\,\texte^{-\alpha^{-1}\texte^{-\alpha t}\,Z^D(D)}\bigr)
\end{equation}
with respect to the Lebesgue measure on~$\R^2$. Here $Z^D$ is the random measure from \eqref{E:1.5}.
\end{theorem}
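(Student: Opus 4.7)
The plan is to prove~\eqref{E:2.11c} by first isolating the pinning at $x_N := \lfloor xN\rfloor$ via a Gibbs--Markov decomposition, and then analyzing the resulting centered barrier problem with the concentric finite-range decomposition of the \DGFF{} that underlies the proofs of Theorems~\ref{thm-lessmain} and~\ref{thm-2.5}. Writing $h^{D_N}=\varphi+h_{x_N}\,H$, with $H(y):=P^y(\tau_{x_N}<\tau_{D_N^\cc})$ the harmonic hitting kernel of $x_N$ in $D_N$ and $\varphi$ the \DGFF{} in $D_N\setminus\{x_N\}$, independent of $h_{x_N}$, we obtain
\begin{equation*}
P\bigl(\argmax_{D_N} h = x_N,\, \max_{D_N}h-m_N\in(a,b)\bigr) = \int_a^b p_N(x,t)\,q_N(x,t)\,\textd t,
\end{equation*}
where $p_N(x,t)$ is the Gaussian density of $h_{x_N}$ at $m_N+t$ and $q_N(x,t)$ is the probability that $\varphi(y)\le(m_N+t)[1-H(y)]$ for all $y\neq x_N$.

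For the first factor, the asymptotic $G^{D_N}(x_N,x_N)=g\log N+\theta_D(x)+o(1)$ holds uniformly for $x$ in compacts of $D$, with $\theta_D$ continuous (through the Robin constant / conformal radius of $D$). A direct calculation using~\eqref{E:mN} yields
\begin{equation*}
p_N(x,t)=\frac{c_\star(x)\,\log N}{N^2}\,\texte^{-\alpha t}\bigl(1+o(1)\bigr)
\end{equation*}
uniformly in $x$ on compacts and $t$ on bounded intervals, with $c_\star$ continuous and positive; the $\log N$ prefactor originates from the $-\tfrac34\sqrt g\log\log N$ correction in $m_N$.

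For the second factor, the potential-kernel identity gives $1-H(x_N+z)=\fraka(z)/G^{D_N}(x_N,x_N)\,(1+o(1))$ for $z$ in the bulk of the shifted domain, so (using symmetry of $\nu^0$ under $\phi\mapsto-\phi$) the $|z|\le r$ portion of the event reduces, as $N\to\infty$, to exactly the cluster-law event in~\eqref{E:1.14a}. Applying the concentric decomposition of $\varphi$ from Section~\ref{sec3} reduces $q_N(x,t)$ to a barrier-crossing problem for an associated random walk with $\Theta(\log N)$ steps; the entropic-repulsion arguments used for Theorems~\ref{thm-lessmain}--\ref{thm-2.5}, augmented by a local central limit estimate for the walk's endpoint (conditioned on the barrier), give
\begin{equation*}
q_N(x,t)=\frac{\tilde\rho^D(x,t)}{\log N}\bigl(1+o(1)\bigr),
\end{equation*}
with $\tilde\rho^D$ continuous in $(x,t)$ and capturing both the local cluster-law normalization near $x_N$ and the compatibility with the unconditional extremal process in the bulk of $D_N$.

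Combining the two asymptotics produces the density $\rho^D(x,t):=c_\star(x)\,\texte^{-\alpha t}\,\tilde\rho^D(x,t)$, and its identification as the Radon--Nikodym derivative of the measure in~\eqref{E:2.12c} with respect to Lebesgue on $\R^2$ is then forced: integrating~\eqref{E:2.11c} over open $A\subset D$ and $(-\infty,s]$ must reproduce the limit in~\eqref{E:1.8ua}, and a one-variable differentiation in $s$ recovers~\eqref{E:2.12c} verbatim. The main obstacle is the last step of the $q_N$ analysis: extracting a genuine $(\log N)^{-1}$-sharp local asymptotic, as opposed to the polynomial-in-$\log$ upper and lower bounds that suffice for Theorems~\ref{thm-main} and~\ref{thm-lessmain}, requires a local-limit-type sharpening of the entropic-repulsion estimates that is uniform in $x$ over compact subsets of $D$ --- this is where the ``augmentation'' of the concentric-decomposition argument promised in the text is concentrated, and where care must be taken so that the continuous dependence of $\theta_D$, $H$, and the boundary Green function on the base point all propagate through to the density.
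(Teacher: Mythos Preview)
Your overall architecture matches the paper: the Gibbs--Markov factorization at~$x_N$ (Lemma~\ref{lemma-3.1}), the Gaussian density computation for~$p_N$ (cf.~\eqref{E:6.34ua}), and the reduction of~$q_N$ to a barrier problem via the concentric decomposition are exactly the steps taken. The identification of~$\rho^D$ with the Radon--Nikodym derivative by integrating against~\eqref{E:1.8ua} is also how the paper closes.

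The gap is in the sharp asymptotic $q_N(x,t)=\tilde\rho^D(x,t)(\log N)^{-1}(1+o(1))$. Proposition~\ref{prop-4.13} (generalized to arbitrary outer domains in Proposition~\ref{prop-4.13-gen}) does give the factorization $q_N\sim (2/g\log N)\,\Xiin_\infty(1)\,\Xi^D_{N,\ell}(t)$, but the outer factor~$\Xi^D_{N,\ell}(t)$ from~\eqref{E:5.2ua-gen} still carries~$N$: it is the expectation of~$\Psi_{N,\ell}(0)$ times an indicator that the field stays below the barrier on the entire macroscopic region $D_N\smallsetminus S^{-\ell+1}_N$. A ``local central limit estimate for the walk's endpoint'' does not control this --- the indicator depends on the full field in a domain of diameter of order~$N$, not just on finitely many walk steps near the endpoint.

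The paper's resolution (Proposition~\ref{prop-6.2ua}) is to recognize that, after replacing the discrete binding field~$\Psi_{N,\ell}$ by its continuum limit~$\widehat\Psi_\ell$ via Lemma~\ref{lemma-6.3ua}, the indicator product is precisely a void probability for the unstructured extremal process~$\eta^D_N$ restricted to $D\smallsetminus S^{-\ell+1}$, and then to invoke Corollary~\ref{cor-cluster-process} --- already proved --- to pass to the $N\to\infty$ limit. This yields the explicit representation \eqref{E:6.15ua}--\eqref{E:6.16} in terms of~$Z^{D\smallsetminus S^{-\ell}}$, from which continuity in~$t$ and (via the continuum Green function and binding field) in~$x$ follow directly. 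Your phrase ``compatibility with the unconditional extremal process in the bulk'' gestures toward this, but the essential feedback loop --- feeding Theorem~\ref{thm-main} back into the outer-factor limit rather than redoing barrier estimates from scratch --- is the idea you need to make explicit; without it there is no clear route to the existence, let alone continuity, of your~$\tilde\rho^D$.
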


Comparing \twoeqref{E:2.11c}{E:2.12c} with~\eqref{E:1.8ua}, the local limit theorem is consistent with the limit law of the scaled and centered maximum.  The large-$t$ asymptotic of $\rho^D(x,t)$ is thus known; indeed, from Biskup and Louidor~\cite[Theorem~2.6]{BL2} we infer that
\begin{equation}
\label{E:2.13ua}
\rho^D(x,t)\sim t\texte^{-\alpha t}\,\psi^D(x),\qquad t\to\infty,
\end{equation}
where, for~$D$ simply connected,
\begin{equation}
\label{E:2.13}
\psi^D(x)=c_\star\text{rad}_D(x)^2
\end{equation}
 with $c_\star$ a positive constant and $\text{rad}_D(x)$ denoting the conformal radius of~$D$ from~$x$. Our proof gives a formula for~$\rho^D(x,t)$ (see \eqref{E:6.30a}) but this is still quite inexplicit as singular limits remain involved. (Notwithstanding, we do get a somewhat more explicit representation of the constant~$c_\star$ than what has been available so far; see Remark~\ref{remark-6.7}.)
 
\begin{figure}[t]
\vglue0.2cm
\centerline{\includegraphics[width=\textwidth]{./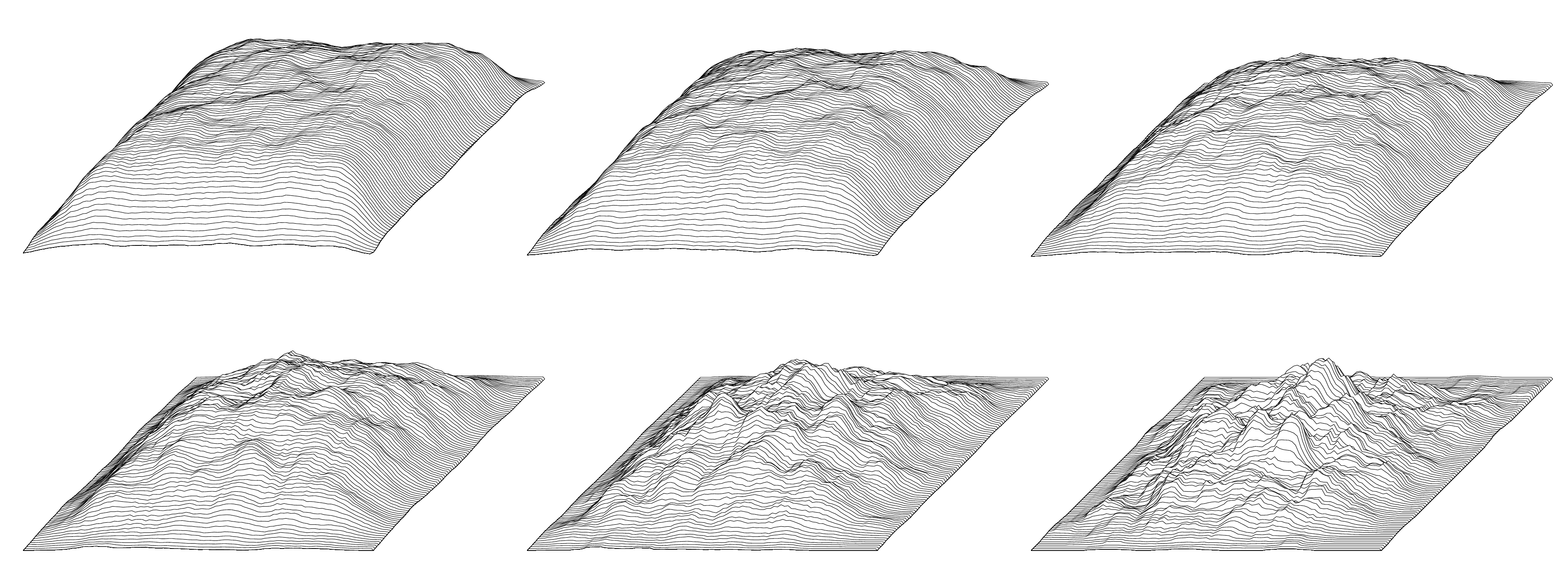}}
\begin{quote}
\small
\caption{Empirical plots of~$x\mapsto\rho^D(x,t)$ obtained from a set of about 100000 samples of the maximum of the DGFF on a 100$\times$100 square. The plots (labeled left to right starting with the top row) correspond to~$t$ increasing by uniform amounts over an interval of length 3 with~$t$ in the fourth figure set to the empirical mean. A certain amount of smoothing has been applied to eliminate discrete effects.
\label{fig6}
}
\normalsize
\end{quote}
\end{figure}

The absence of explicit expressions for the law of the maximum can presumably be blamed on strong correlations between the spatial positions of the large local maxima. To demonstrate the point, consider an analytic bijection $f\colon D\to D'$ and let $\{(x_i,h_i,\phi_i)\colon i\in\N\}$ enumerate the sample points of the (full) limit process~$\eta^D$. Theorem~\ref{thm-main} above and Theorem~2.5 of Biskup and Louidor~\cite{BL2} show that the point process with ``points''
\begin{equation}
\Bigl\{\bigl(f(x_i),h_i+2\sqrt g\log|f'(x_i)|,\phi_i\bigr)\colon i\in\N\Bigr\}
\end{equation}
is equidistributed to~$\eta^{D'}$. However, the shift $2\sqrt g\log|f'(x_i)|$ will generally permute the order of near-maximal points and so an explicit, autonomous expression for the law of the maximum alone is not reasonable to expect.

\subsection{Freezing and Liouville measure in the glassy phase}
\label{sec-2.3}\noindent
The control of the cluster distribution in Theorem~\ref{thm-lessmain} presents us with an opportunity to resolve  a couple of questions that have been debated in the spin-glass literature for some time. Before we formulate these precisely, let us give a bit of necessary motivation.

Given a sample~$h$ of the \DGFF{} on~$D_N$, it is natural to consider a continuous-time (variable speed) random walk on~$D_N$ that makes steps as the ordinary simple symmetric random walk but with exponential holding times whose parameter at vertex~$x$ is~$\texte^{\beta h_x}$. The stationary law of this walk is then given by the Gibbs (probability) measure on~$D_N$ defined by
\begin{equation}
\label{E:2.15tt}
\mu_{\beta,N}^D\bigl(\{x\}\bigr):=\frac1{\ZZ_N(\beta)}\,\texte^{\beta h_x}
\quad\text{where}\quad\ZZ_N(\beta):=\sum_{x\in D_N}\texte^{\beta h_x}.
\end{equation}
Disregarding the conventional minus sign in the exponent, the parameter $\beta\in[0,\infty)$ thus earns the meaning of the inverse temperature.

Obviously, $\mu_{\beta,N}^D$ puts the more weight on a vertex the larger the field is there. However, large field values are increasingly sparse and so a trade-off with entropy occurs. As observed by Carpentier and Le Doussal~\cite{Carpentier-LeDoussal}, this results in a phenomenon akin to that known from the Random Energy Model: The mass of~$\mu_{\beta,N}^D$ asymptotically concentrates on the level set
\begin{equation}
\label{E:level}
\Bigl\{x\in D_N\colon h_x\approx \frac{\beta\wedge\beta_\cc}{\beta_\cc}\, 2\sqrt g\log N\Bigr\},
\end{equation}
 where $\beta_\cc:=\alpha$. (A proof of this can be extracted directly from Daviaud's work~\cite{Daviaud}.) In particular, a phase transition occurs in this model as~$\beta$ varies through~$\beta_\cc$: Indeed, at~$\beta=\beta_\cc$ the support of~$\mu_{\beta,N}^D$ reaches the absolute maximum of $h$, and remains concentrated there for all~$\beta>\beta_\cc$.
 
Our focus here is the detailed structure of the scaled limiting measure in the supercritical ``glassy'' regime; i.e., when~$\beta>\beta_\cc$. Given a (Borel) probability measure~$Q$ on~$\C$ and a parameter~$s>0$, define the point measure $\Sigma_{s,Q}$ by
\begin{equation}
\label{E:2.17}
\Sigma_{s,Q}(\textd x):=\sum_{i\in\N}q_i\,\delta_{\,X_i},
\end{equation}
where $\{q_i\}$ enumerates the sample points of a Poisson process on $[0,\infty)$ with intensity $x^{-1-s}\textd x$ and~$\{X_i\}$ are independent samples from~$Q$, independent of the~$\{q_i\}$. We will in fact need to take~$Q$ random; in this case~$Q$ is sampled first and the construction of $\Sigma_{s,Q}$ is performed conditionally on the sample of~$Q$. Recall the notation 
\begin{equation}
\widehat Z^D(A):=\frac{Z^D(A)}{Z^D(D)}.
\end{equation}
Then we have:

\begin{theorem}[Liouville measure in the glassy phase]
\label{thm-2.7}
Given $D\in\mathfrak D$, let $D_N$ and $m_N$ be as above and let $Z^D$ denote the random measure from Theorem~\ref{thm-main}. For each $\beta>\beta_\cc:=\alpha$ there is a constant $c(\beta)\in(0,\infty)$ such that
\begin{equation}
\label{E:2.23}
\sum_{z\in D_N}\texte^{\beta (h_z-m_N)}\delta_{\,z/N}(\textd x)
\,\,\,\underset{N\to\infty}\Lawarrow\,\,\,
c(\beta) \,Z^D(D)^{\beta/\beta_\cc}\,\,\,\Sigma_{\beta_\cc/\beta,\,\widehat Z^D}(\textd x),
\end{equation}
where, we recall, $Z^D$ is sampled first and~$\Sigma_{\beta_\cc/\beta,\,\widehat Z^D}$ is defined conditionally  on~$Z^D$. Moreover, the constant $c(\beta)$ admits the explicit representation
\begin{equation}
\label{E:2.9b}
c(\beta):=\beta^{-\beta/\beta_\cc}\bigl[E_\nu(Y^\beta(\phi)^{\beta_\cc/\beta})\bigr]^{\beta/\beta_\cc}\quad\text{\rm with}\quad
Y^\beta(\phi):=\sum_{x\in\Z^2}\texte^{-\beta \phi_x}.
\end{equation}
In particular, $E_\nu(Y^\beta(\phi)^{\beta_\cc/\beta})<\infty$ for each~$\beta>\beta_\cc$. 
\end{theorem}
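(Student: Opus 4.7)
The plan is to derive \eqref{E:2.23} by regrouping $S_N(\textd x) := \sum_{z \in D_N} \texte^{\beta(h_z - m_N)}\,\delta_{z/N}$ as a sum over clusters attached to local maxima, then invoking Theorem~\ref{thm-main} and applying a Poisson mapping computation. For a test function $f \in \Cc(D)$ and $\MM_N$ the set of $r_N$-local maxima in $D_N$, the first step is to show
\begin{equation*}
\int f\,\textd S_N \,=\, \sum_{x \in \MM_N} f(x/N)\,\texte^{\beta(h_x - m_N)}\,Y^\beta_{r_N}(\phi^{(x)}) + o(1),
\end{equation*}
where $\phi^{(x)}_z := h_x - h_{x+z}$ and $Y^\beta_R(\phi) := \sum_{|z| \le R} \texte^{-\beta \phi_z}$. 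The prelimit error $o(1)$ absorbs both the discrepancy between $f((x+z)/N)$ and $f(x/N)$, which vanishes since $r_N/N \to 0$ and $f$ is uniformly continuous, and the contribution from vertices far from any local $r_N$-maximum; for $\beta > \beta_\cc$ the latter is controlled via standard upper-tail bounds on the \DGFF{} maximum (of the type underlying \eqref{E:level}).

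Next, I would truncate the cluster sum at a large fixed $R$ and apply Theorem~\ref{thm-main}: the continuous-mapping theorem yields
\begin{equation*}
\sum_{x \in \MM_N} f(x/N)\,\texte^{\beta(h_x - m_N)}\,Y^\beta_R(\phi^{(x)}) \,\,\Lawarrow\,\, \sum_{i \in \N} f(x_i)\,\texte^{\beta h_i}\,Y^\beta_R(\phi^{(i)}),
\end{equation*}
where $\{(x_i, h_i, \phi^{(i)})\}$ enumerates the atoms of the limit Cox process. As $R \to \infty$, monotone convergence sends the right-hand side to $\sum_i f(x_i)\,\texte^{\beta h_i}\,Y^\beta(\phi^{(i)})$, provided the $R$ and $N$ limits can be interchanged on the left. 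Conditional on $Z^D$, the pairs $(x_i, h_i)$ form a PPP on $D \times \R$ with intensity $Z^D(\textd x) \otimes \texte^{-\alpha h}\,\textd h$; the substitution $u = \texte^{\beta h}$ converts this to $\beta^{-1} u^{-1 - \beta_\cc/\beta}\,\textd u \otimes Z^D(\textd x)$. Applying the Poisson mapping theorem to $(x, u, \phi) \mapsto (x, u\,Y^\beta(\phi))$ with the $\phi^{(i)}$ i.i.d.\ $\nu$-distributed marks, the image on $D \times (0, \infty)$ is a PPP with intensity $\beta^{-1} E_\nu(Y^{\beta_\cc/\beta})\,q^{-1 - \beta_\cc/\beta}\,\textd q \otimes Z^D(\textd x)$. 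Writing $Z^D = Z^D(D)\,\widehat Z^D$ and rescaling the $q$-variable identifies $\sum_i q_i\,\delta_{x_i}$ with $c(\beta)\,Z^D(D)^{\beta/\beta_\cc}\,\Sigma_{\beta_\cc/\beta,\,\widehat Z^D}(\textd x)$ for $c(\beta)$ as in \eqref{E:2.9b}.

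The chief technical obstacles are (i)~the uniform-in-$N$ negligibility of the bulk error and of the tail $Y^\beta_{r_N}(\phi^{(x)}) - Y^\beta_R(\phi^{(x)})$ as $R \to \infty$, and (ii)~the moment bound $E_\nu(Y^{\beta_\cc/\beta}) < \infty$, without which the Poisson mapping step produces a vacuous limit. The subadditivity $(\sum_j a_j)^{\beta_\cc/\beta} \le \sum_j a_j^{\beta_\cc/\beta}$, valid since $\beta_\cc/\beta < 1$, reduces (ii) to the summability of $z \mapsto E_\nu(\texte^{-\beta_\cc \phi_z})$, which under $\nu^0$ (i.e.\ without the positivity conditioning) would be borderline divergent; the actual argument must therefore exploit the entropic repulsion generated by the conditioning in \eqref{E:1.14a}, whose magnitude is quantified by Theorem~\ref{thm-2.5}. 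Concretely, I would expect to extract the required tail control of $\phi_z$ under $\nu$ from the concentric decomposition of the \DGFF{} around a local maximum developed in this paper. I expect (ii) to be the single hardest step, and it is what ultimately forces the restriction $\beta > \beta_\cc$; obstacle (i) should then follow from similar estimates combined with the sub-extreme tail of the \DGFF{} max.
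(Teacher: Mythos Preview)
Your high-level structure is essentially the paper's: regroup into clusters around local maxima, apply Theorem~\ref{thm-main}, then a Poisson-mapping/change-of-variables computation. The difference lies in how you allocate the difficulty between your obstacles (i) and (ii), and this matters.

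You regard the moment bound $E_\nu\bigl(Y^\beta(\phi)^{\beta_\cc/\beta}\bigr)<\infty$ as the hardest step and propose to attack it directly via subadditivity, reducing to summability of $z\mapsto E_\nu(\texte^{-\beta_\cc\phi_z})$. This route is shaky: the only a~priori growth information on~$\phi$ under~$\nu$ is of order $(\log k)^2$ on~$\Delta^k\smallsetminus\Delta^{k-1}$ (Proposition~\ref{cor-4.16}), which is far too slow to beat the~$4^k$ sites in that annulus. The paper avoids this entirely by a soft argument: after truncation and passage to the limit process, one arrives at an identity of the form
\[
E\bigl(\texte^{-\langle S_N,f\rangle}\bigr)\longrightarrow E\Bigl(\exp\Bigl\{-\tfrac1\beta\,E_\nu\bigl(Y^\beta_r(\phi)^{\alpha/\beta}\bigr)\int Z^D(\textd x)\!\int_0^\infty t^{-1-\alpha/\beta}(1-\texte^{-tf(x)})\,\textd t\Bigr\}\Bigr)
\]
with~$r$ still finite. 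For $f:=1$ the left side is a positive probability, and since $Z^D(D)>0$ a.s., Monotone Convergence in~$r$ forces $E_\nu\bigl(Y^\beta(\phi)^{\alpha/\beta}\bigr)<\infty$ \emph{a posteriori}. So (ii) comes essentially for free once the rest is in place.

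Conversely, you underestimate (i). The ``bulk error'' (contribution from $h_z-m_N<-t_0$) is not handled by the upper-tail bound on the maximum; it requires control of the \emph{size of the level set}~$|\Gamma_N^D(t)|$ uniformly in~$N$, specifically a bound of the form $P(|\Gamma_N^D(t)|\ge 2\texte^{(\beta_\cc+\epsilon)t})\le c(1+t)^2\texte^{-\epsilon t}$. This is Proposition~\ref{prop-6.1}, and its proof (via Lemma~\ref{lemma-6.5}) is where the concentric-decomposition machinery actually enters the argument for this theorem. Once this is available, the truncation goes through by a geometric-sum estimate, and the remainder of your outline proceeds as you describe.
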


This result settles Conjecture~6.1 of Rhodes and Vargas~\cite{RV-review} for the \DGFF{} on the square lattice. A proof of this conjecture has previously been given in the context of continuum (the so-called star-scale invariant) fields and the associated multiplicative chaos; cf Theorem~2.8 in Madule, Rhodes and Vargas~\cite{MRV}. However (as stated in~\cite{MRV}) the cut-off procedures employed in \cite{MRV} would not permit extensions to the \DGFF{} discussed here. 

\begin{figure}[t]
\vglue0.2cm
\centerline{\includegraphics[width=0.5\textwidth]{./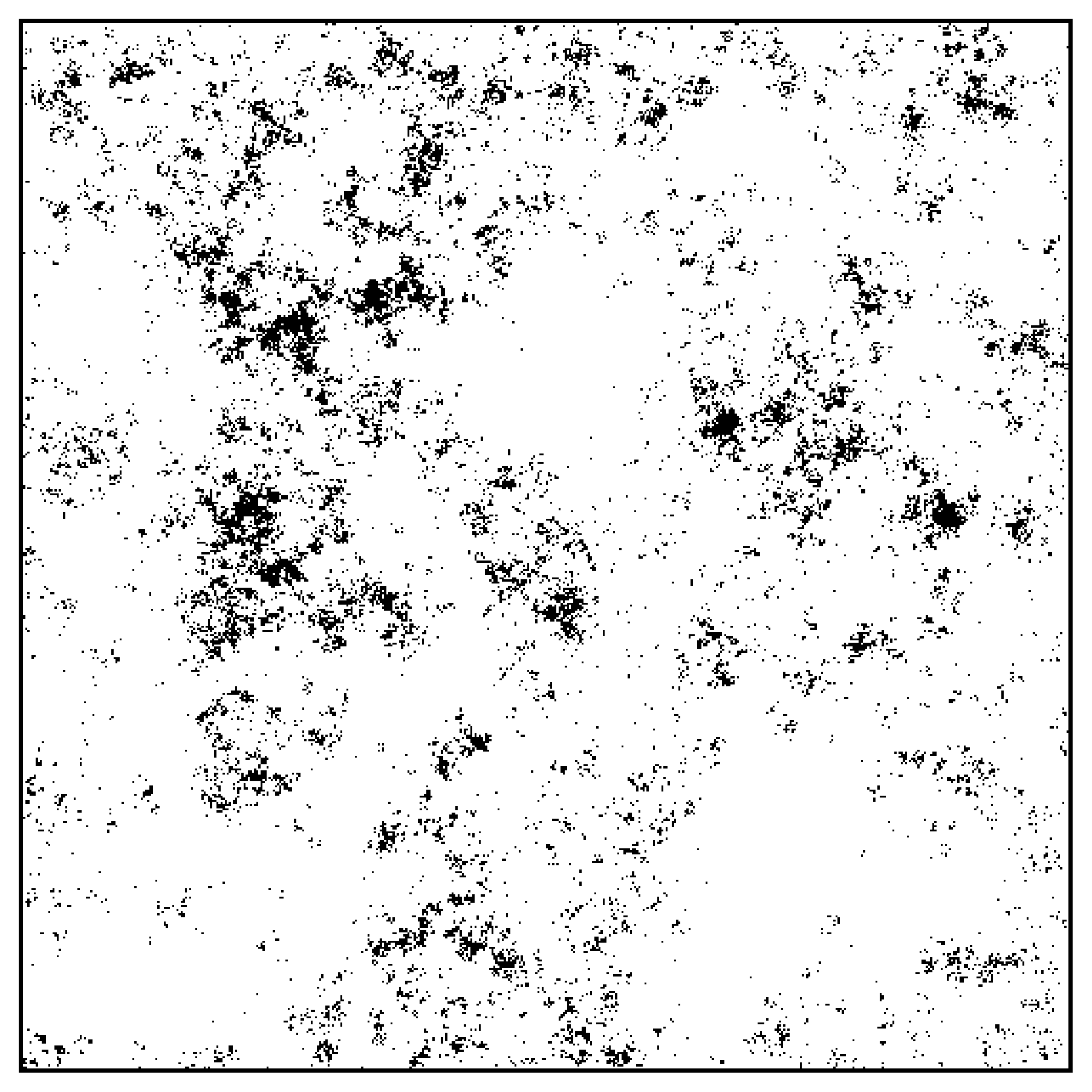}}
\begin{quote}
\small
\caption{A sample of the level set for the DGFF on a $500\times500$ square corresponding to values above 1/3 of the absolute maximum (which occurs at height~$8.17$ in this sample). The fractal, and highly-correlated, nature of this set is quite apparent. Level sets for higher cutoffs become increasingly sparse and thus difficult to visualize.
\label{fig2}
}
\normalsize
\end{quote}
\end{figure}

A direct consequence of Theorem~\ref{thm-2.7} is the following estimate on the size of the level sets for samples from~$\nu$:
\begin{equation}
\limsup_{t\to\infty}\frac1t\log\bigl|\{x\in\Z^2\colon\phi_x\le t\}\bigr| \le \alpha,\qquad\nu\text{\rm-a.s.},
\end{equation}
meaning, in short, that the set where $\phi_x\le t$ has asymptotically at most $\texte^{(\alpha+o(1))t}$ vertices. We in fact know that the limit exists with equality (to~$\alpha$) on the right-hand side, but this needs more than just the argument above. An estimate on the diameter of the level set can be gleaned from Proposition~\ref{cor-4.16}, although we do not believe that estimate to be even close to sharp.

Theorem~\ref{thm-2.7} directly yields a limit law for the normalization constant in \eqref{E:2.15tt},
\begin{equation}
\forall \beta>\beta_c\colon\qquad
\ZZ_N(\beta)\texte^{-\beta m_N}\,\,\underset{N\to\infty}\Lawarrow\,\,Z(D)^{\beta/\beta_\cc}X,
\end{equation}
where $X$ is independent of~$Z^D$ and has the law of a totally skewed (a.s.\ positive) $\beta_\cc/\beta$-stable random variable with an explicit overall normalization. More interestingly, Theorem~\ref{thm-2.7} also gives us the desired characterization of the above Gibbs measure~$\mu_{\beta,N}^D$ for $\beta>\beta_\cc$. Recall that the \emph{Poisson-Dirichlet law} with parameter $s\in(0,1)$, to be denoted PD$(s)$, is a probability measure on non-increasing non-negative normalized sequences,
\begin{equation}
\label{E:nu-level}
\Bigl\{\{p_i\}\colon p_1\ge p_2\ge\dots\ge0,\,\,\sum_{i\in\N}p_i=1\Bigr\}
\end{equation}
obtained by taking the sample points of the Poisson process on $[0,\infty)$ with intensity $x^{-1-s}\textd x$, normalizing them by their (a.s.-finite) sum and ordering the values decreasingly.  Then we have:

\begin{corollary}[Poisson-Dirichlet limit for the Gibbs measure]
\label{cor-2.7}
Let $D\in\mathfrak D$ and let $\mu_{\beta,N}^D$ be the Gibbs measure defined in \eqref{E:2.15tt}. Then for all $\beta>\beta_\cc:=\alpha$,
\begin{equation}
\label{E:2.21qq}
\sum_{z\in D_N}\mu_{\beta,N}^D\bigl(\{z\}\bigr)\delta_{\,z/N}(\textd x)
\,\,\,\underset{N\to\infty}\Lawarrow\,\,\,
\sum_{i\in \N}p_i\delta_{\,X_i},
\end{equation}
where $\{X_i\}$ are (conditionally on~$Z^D$) i.i.d.\ with common law~$\widehat Z^D$, while $\{p_i\} \laweq\text{\rm PD}(\beta_\cc/\beta)$ is independent of $Z^D$ and thus also $\{X_i\}$. 
\end{corollary}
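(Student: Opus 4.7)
The plan is to realize the Gibbs measure as the normalization of the unnormalized point measure from Theorem~\ref{thm-2.7} and then apply a continuous mapping argument. Setting
\begin{equation*}
\nu_N(\textd x) := \sum_{z \in D_N} \texte^{\beta(h_z - m_N)}\, \delta_{\,z/N}(\textd x),
\end{equation*}
the factor $\texte^{-\beta m_N}$ cancels between numerator and denominator in \eqref{E:2.15tt}, and the left-hand side of \eqref{E:2.21qq} equals $\nu_N/\nu_N(D)$. By Theorem~\ref{thm-2.7} and the definition \eqref{E:2.17}, $\nu_N$ converges in law to $c(\beta)\, Z^D(D)^{\beta/\beta_\cc}\sum_i q_i\, \delta_{X_i}$, where $\{q_i\}$ are the atoms of a Poisson process on $[0,\infty)$ of intensity $x^{-1-\beta_\cc/\beta}\textd x$ and $\{X_i\}$ are i.i.d.\ samples from $\widehat Z^D$, independent of $\{q_i\}$.

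Since $\beta_\cc/\beta\in(0,1)$, the sum $\sum_i q_i$ is a.s.\ finite and the $X_i$ all lie in $D$, so the limit measure has finite total mass and puts no mass on $\partial D$. Granted continuity of the map $\mu\mapsto\mu/\mu(D)$ at this limit, continuous mapping then yields
\begin{equation*}
\nu_N/\nu_N(D) \,\,\Lawarrow\,\, \frac{\sum_i q_i\,\delta_{X_i}}{\sum_j q_j} \,=\, \sum_i p_i\, \delta_{X_i},\qquad p_i := q_i\big/\textstyle\sum_j q_j,
\end{equation*}
with the random positive prefactor $c(\beta)\,Z^D(D)^{\beta/\beta_\cc}$ cancelling. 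To identify the law of the limit, I would invoke the classical construction of the Poisson--Dirichlet distribution: the decreasing rearrangement of the normalized atoms of a Poisson process on $(0,\infty)$ of intensity $x^{-1-s}\textd x$ with $s\in(0,1)$ has distribution $\text{\rm PD}(s)$. Independence of $\{q_i\}$ from both $Z^D$ and $\{X_i\}$ implies that the sorted $\{p_i\}$ is independent of $Z^D$ (and of $\{X_i\}$), while exchangeability of the i.i.d.\ family $\{X_i\}$ ensures that the permutation sorting $\{p_i\}$ does not disturb the conditional law $\widehat Z^D$ of the re-indexed $X$'s given $Z^D$. Together these give the statement in \eqref{E:2.21qq}.

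The main technical obstacle is the continuous-mapping step, since vague convergence of Radon measures does not in general preserve total mass. One must rule out escape of $\nu_N$-mass to $\partial D$, i.e., show that $\nu_N$ places negligible weight on shrinking collar neighborhoods of $\partial D$ uniformly in $N$. I expect this to follow from the tightness of $(\nu_N(D))_N$ already packaged into Theorem~\ref{thm-2.7}, combined with an approximation by test functions in $C_\cc(D)$; alternatively, it can be obtained directly from tail bounds on $h_z$ for $z$ close to $\partial D$, which show that the boundary-adjacent vertices contribute negligibly to the partition function on the relevant exponential scale.
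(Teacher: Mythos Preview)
Your approach is essentially the same as the paper's. The paper makes the continuous-mapping step explicit by testing Theorem~\ref{thm-2.7} against functions of the form $\lambda_1+\lambda_2 f(x)$, which immediately gives joint convergence in law of the pair $\bigl(\nu_N(D),\,\langle\nu_N,f\rangle\bigr)$; dividing then yields \eqref{E:2.21qq}, and the identification of the normalized atoms as $\text{PD}(\beta_\cc/\beta)$ is argued exactly as you do. Your worry about mass escaping to $\partial D$ is already handled: the proof of Theorem~\ref{thm-2.7} establishes the convergence \eqref{E:6.9} for every bounded continuous $f\colon D\to[0,\infty)$, in particular for $f\equiv 1$, so the total mass converges jointly with $\langle\nu_N,f\rangle$ and no separate boundary estimate is needed.
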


A version of the Poisson-Dirichlet convergence \eqref{E:2.21qq} for overlap distributions has previously been established by Arguin and Zindy~\cite{Arguin-Zindy}.

Another consequence of Theorem~\ref{thm-2.7} is the proof of the so-called \emph{freezing phenomenon}. This is a term introduced in the context of the Branching Brownian Motion by Derrida and Spohn~\cite{Derrida-Spohn} and further expounded on by Fyodorov and Bouchard~\cite{Fyodorov-Bouchard}. Recently, Subag and Zeitouni~\cite{Subag-Zeitouni} offered a deeper insight into the connection between this concept and the type of cluster process we establish in Theorem~\ref{thm-main}. Our result is as follows:

\begin{corollary}[Freezing]
\label{cor-2.8}
Let $D\in\mathfrak D$ and, in accord with the above references, denote
\begin{equation}
\label{E:2.22ue}
 G_{N,\beta}(t):=E\biggl(\,\exp\Bigl\{-\texte^{-\beta t}\sum_{x\in D_N}\texte^{\beta h_x}\Bigr\}\biggr).
\end{equation}
Let $m_N$ be as above and let $Z^D$ be the measure from Theorem~\ref{thm-main}. Then for each $\beta>\beta_\cc:=\alpha$ there is a constant $\tilde c(\beta)\in\R$ such that
\begin{equation}
\label{E:2.26}
G_{N,\beta}\bigl(\,t+m_N+\tilde c(\beta)\bigr)\,\underset{N\to\infty}\longrightarrow\,E\bigl(\texte^{-Z^D(D)\,\texte^{-\alpha t}}\bigr).
\end{equation}
The constant $\tilde c(\beta)$, given explicitly in \eqref{E:6.54a}, depends only on the law of~$\nu$ and that only via the expectation in \eqref{E:2.9b}. 
\end{corollary}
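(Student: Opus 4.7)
The plan is to rewrite $G_{N,\beta}(t+m_N+\tilde c(\beta))$ as the Laplace transform of the total mass of the random measure whose scaling limit is given by Theorem~\ref{thm-2.7}, pass to the limit, and use the explicit Laplace functional of a stable subordinator to solve for $\tilde c(\beta)$. Setting $W_N:=\sum_{x\in D_N}\texte^{\beta(h_x-m_N)}$, an immediate algebraic rearrangement of \eqref{E:2.22ue} gives
\begin{equation*}
G_{N,\beta}\bigl(t+m_N+\tilde c(\beta)\bigr)=E\Bigl(\,\exp\bigl\{-\texte^{-\beta t-\beta\tilde c(\beta)}\,W_N\bigr\}\Bigr),
\end{equation*}
and $W_N=\mu_N^\beta(D)$ for $\mu_N^\beta(\textd x):=\sum_{z\in D_N}\texte^{\beta(h_z-m_N)}\delta_{z/N}(\textd x)$, the random measure on the left-hand side of \eqref{E:2.23}.

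By Theorem~\ref{thm-2.7}, $\mu_N^\beta\Lawarrow\mu_\infty^\beta:=c(\beta)Z^D(D)^{\beta/\beta_\cc}\Sigma_{\beta_\cc/\beta,\widehat Z^D}$ vaguely on the open set $D$. To upgrade this to convergence in law of the total masses $\mu_N^\beta(D)\Lawarrow\mu_\infty^\beta(D)$, I would pick a sequence of continuous functions $f_k\colon D\to[0,1]$ of compact support with $f_k\uparrow\1_D$ pointwise, apply the continuous mapping theorem to each bounded Laplace functional $\mu\mapsto\exp(-\lambda\langle f_k,\mu\rangle)$, and let $k\to\infty$; monotone convergence on the limit side applies because $\Sigma_{\beta_\cc/\beta,\widehat Z^D}$ is supported in~$D$ and has a.s.-finite total mass (as $\beta_\cc/\beta<1$). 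The main obstacle is matching this with the prelimit side, which requires showing $\limsup_{k\to\infty}\sup_N E[\mu_N^\beta(D\smallsetminus\supp f_k)]=0$. I would handle this via a first-moment estimate using the standard bound $G^{D_N}(z,z)\le g\log\dist_\infty(z,\partial D_N)+O(1)$: since $\beta>\beta_\cc$, the factor $\texte^{-\beta m_N}$ dominates the contribution of $\sum_z E(\texte^{\beta h_z})$ restricted to a thin boundary layer, so the bound vanishes as the layer is sent to zero.

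With $W_N\Lawarrow W_\infty:=c(\beta)Z^D(D)^{\beta/\beta_\cc}S$ in hand, where $S:=\sum_i q_i$ and the $q_i$ are the weights in $\Sigma_{\beta_\cc/\beta,\widehat Z^D}$ --- hence a positive $(\beta_\cc/\beta)$-stable random variable independent of $Z^D$ by the construction in \eqref{E:2.17} --- I would condition on $Z^D$ and apply the textbook identity
\begin{equation*}
E\bigl(\texte^{-\lambda S}\bigr)=\exp\bigl\{-\Gamma(1-s)s^{-1}\lambda^s\bigr\},\qquad s:=\beta_\cc/\beta,
\end{equation*}
which comes from $\int_0^\infty(1-\texte^{-\lambda x})x^{-1-s}\textd x=\Gamma(1-s)s^{-1}\lambda^s$. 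Substituting $\lambda=\texte^{-\beta t-\beta\tilde c(\beta)}$ and using $\beta s=\alpha$ gives
\begin{equation*}
E\bigl(\texte^{-\lambda W_\infty}\bigr)=E\Bigl(\exp\bigl\{-\Gamma(1-s)s^{-1}c(\beta)^s\,\texte^{-\alpha\tilde c(\beta)}\,\texte^{-\alpha t}\,Z^D(D)\bigr\}\Bigr).
\end{equation*}
Matching the prefactor inside the exponential to~$1$ and using the identity $c(\beta)^s=\beta^{-1}E_\nu(Y^\beta(\phi)^{\beta_\cc/\beta})$ extracted from \eqref{E:2.9b} then forces
\begin{equation*}
\tilde c(\beta)=\alpha^{-1}\log\biggl(\frac{\Gamma(1-\beta_\cc/\beta)\,E_\nu(Y^\beta(\phi)^{\beta_\cc/\beta})}{\beta_\cc}\biggr),
\end{equation*}
which depends on~$\nu$ only through the finite expectation appearing in \eqref{E:2.9b}, as claimed.
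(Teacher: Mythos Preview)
Your overall route—take $f\equiv1$ in Theorem~\ref{thm-2.7}, compute the Laplace transform of the resulting stable variable, and solve for $\tilde c(\beta)$—is sound and matches the paper's constant \eqref{E:6.54a} exactly (your $\Gamma(1-s)/\beta_\cc$ is what one gets after evaluating $\int_0^\infty t^{-1-s}(1-\texte^{-t})\,\textd t=\Gamma(1-s)/s$). The paper proceeds slightly differently: it re-runs the argument of Theorem~\ref{thm-2.7} directly with the test function $f(h,\phi)=\texte^{\beta(h-y)}Y^\beta(\phi)$ against the intensity $Z^D(\textd x)\otimes\texte^{-\alpha h}\textd h\otimes\nu(\textd\phi)$, so it never needs to invoke the stable-law identity explicitly.

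There is, however, a genuine gap in your tightness step. The first-moment bound you propose for $\mu_N^\beta(D\smallsetminus\supp f_k)$ cannot work: a direct Gaussian computation gives
\[
E\bigl[\texte^{\beta(h_z-m_N)}\bigr]=\texte^{-\beta m_N+\frac{\beta^2}2 G^{D_N}(z,z)}
\approx N^{\,2(\beta/\beta_\cc-1)^2}
\]
for $z$ at distance of order~$N$ from $\partial D_N$, so $E[W_N]\to\infty$ for every $\beta>\beta_\cc$, and the boundary layer contributes a divergent first moment no matter how thin you make it. The field being supercritical means exactly that the partition function is dominated by rare events, so expectation is the wrong tool.

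The fix is easy: either observe that the proof of Theorem~\ref{thm-2.7} actually establishes \eqref{E:6.9} for every bounded continuous $f\colon D\to[0,\infty)$, so $f\equiv1$ is already covered and $W_N\Lawarrow W_\infty$ comes for free; or, if you want to argue tightness directly, replace the first-moment bound by the \emph{in-probability} control used in that proof, namely Proposition~\ref{prop-6.1} on the size of the level sets $\Gamma_N^D(t)$, which yields \eqref{E:6.12} and hence the required uniform smallness of the contribution from low values of $h^{D_N}-m_N$.
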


A non-degenerate limit of $G_{N,\beta}(t+m_{N,\beta})$ --- with a suitable centering sequence $m_{N,\beta}$ --- exists for all values of $\beta\in[0,\infty)$.  (This follows from the existence of a distributional limit of suitably normalized $\sum_{x\in D_N}\texte^{\beta h_x}$. For $\beta<\beta_\cc$ this is proved in Rhodes and Vargas~\cite[Theorem~5.12 and Appendix~B]{RV-review}; the case $\beta=\beta_\cc$ is addressed, albeit in much less detail, in~\cite[Theorem~5.13]{RV-review}.)  The term  ``freezing'' then refers to the fact that the limit function ceases to depend on~$\beta$ (i.e., ``freezes'') once~$\beta$ passes through~$\beta_\cc$. 

\subsection{Heuristics and outline}
\label{sec-2.4}\noindent
The rest of this article is devoted to the proofs of the above results. In order to give an outline of what is to come, let us begin by an appealing heuristic argument why the above~$\nu$ should appear as the distribution of the clusters. 

A natural way to get to the clusters is by conditioning on the relevant local maxima. Assuming these occur at points $x_1,\dots,x_n$ with the field values at $m_N+t_1,\dots,m_N+t_n$, respectively, if we condition $h$ on just taking these values at these points, the field decomposes into the sum
\begin{equation}
\label{E:2.25}
h^{D_N\smallsetminus\{x_1,\dots,x_n\}}+\frakg_N,
\end{equation}
where $h^{D_N\smallsetminus\{x_1,\dots,x_n\}}$ is the \DGFF{} in~$D_N\smallsetminus\{x_1,\dots,x_n\}$ while $\frakg_N$ is discrete harmonic there, equal to $m_N+t_i$ at each~$x_i$ and vanishing outside~$D_N$. Since $x_1,\dots,x_n$ will be separated by distances of order~$N$ with high probability, as $N\to\infty$, we have $m_N+t_i-\frakg_N(x)\to\frac2{\sqrt g}\fraka(x-x_i)$ whenever $x$ is sufficiently near~$x_i$. Hence,
\begin{equation}
\label{E:2.29ua}
(m_N+t_i)-\Bigl[\,h^{D_N\smallsetminus\{x_1,\dots,x_n\}}(x_i+\cdot)+\frakg_N(x_i+\cdot)\Bigr] \,\,\,\,\underset{N\to\infty}\Lawarrow\,\,\,\,\phi^{(i)}(\cdot)+\frac2{\sqrt g}\fraka(\cdot),
\end{equation}
where $\phi^{(i)}\laweq-\phi^{(i)}$ is distributed according to~$\nu^0$. However, once we impose that each~$x_i$ is also a local maximum, the field in \eqref{E:2.25} must also not exceed the value at~$x_i$ in an $r$-neighborhood of~$x_i$. This forces the conditioning on~$\phi^{(i)}+\frac2{\sqrt g}\fraka\ge0$ in~$\Lambda_r(x_i)$. See Fig.~\ref{fig1} for an illustration.

Our proof of Theorem~\ref{thm-main} proceeds more or less along  these lines, albeit not without a significant amount of technical overhead caused, in hindsight, by the singular nature of the conditioning spelled out in Theorem~\ref{thm-2.5}. Indeed, in order to prove the Poisson law in \eqref{E:1.9}, we have to establish a version of \eqref{E:2.29ua} with $\phi^{(1)},\dots,\phi^{(n)}$ \emph{independent} of each other. This is easy for the unconditioned law but becomes a challenge once we condition on small probability events.

\begin{figure}[b]
\centerline{\includegraphics[width=0.99\textwidth]{./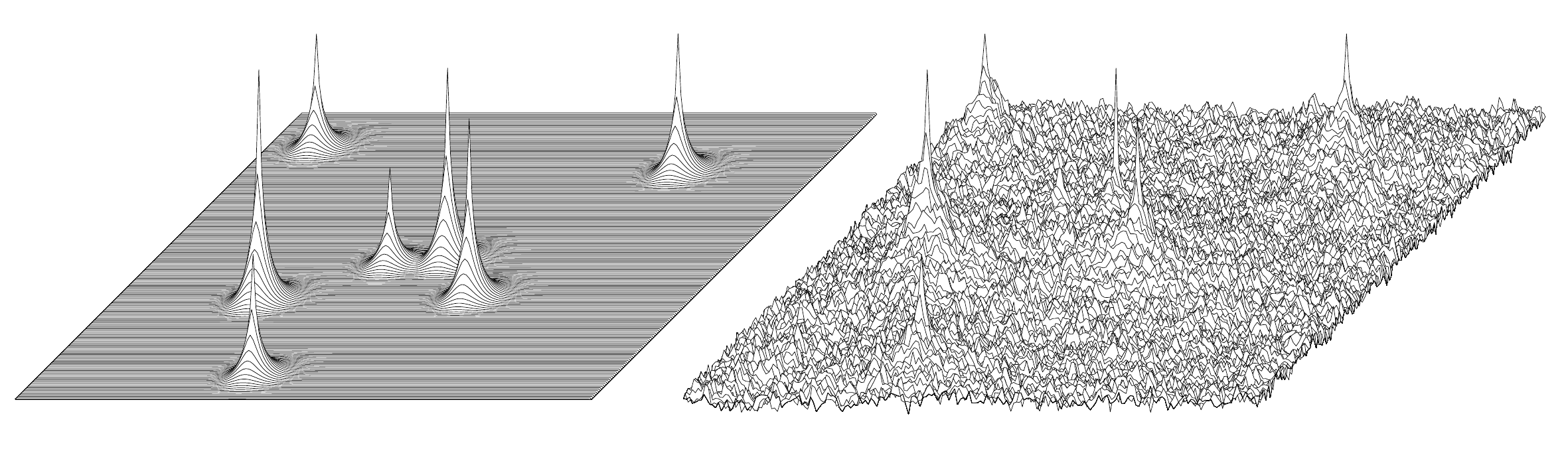}}
\begin{quote}
\small
\caption{Left: An illustration of  function $\mathfrak g_N$ from \eqref{E:2.25} for the underlying domain~$D$ being a unit square,~$N:=200$ and $n:=7$ local maxima being fixed. Right: A sample of the full field $h^{D_N\smallsetminus\{x_1,\dots,x_n\}}+\frakg_N$ conditioned on the values at $x_1,\dots,x_7$ to be local maxima in an $r$-neighborhood thereof for $r:=20$.
\label{fig1}
}
\normalsize
\end{quote}
\end{figure}

The approach we take is that we first focus on a single local maximum and analyze the situation around it in full detail. This is facilitated by a natural \emph{concentric decomposition} of the \DGFF{} pinned to a high value (of the form~$m_N+t$) into a sum of independent and, more or less, localized random fields with good control of the tails. The development and basic properties of the concentric decomposition are the subject of Section~\ref{sec3}. 

An attractive feature of the concentric decomposition is that the overall growth rate of the pinned field can be encoded into a \emph{``backbone'' random walk} (for the  pinned field  in all of~$\Z^2$) or an associated \emph{random-walk bridge} (for the  pinned field  in finite volume). The requirement that the field be maximized at the pinning point then more or less amounts to having this random walk/bridge stay above a polylogarithmic curve for a large interval of times; see Section~\ref{sec-4.1}. 

Estimating the probability that the random walk/bridge stays above such curves is easier to perform first for Brownian motion/bridge; the statements (Propositions~\ref{prop-uncond-BM-positive-cor}--\ref{prop-mine-too-corollary}) are to be found in Section~\ref{sec-4.2} with the proofs relegated to Appendix~A. Straightforward interpolation arguments, simplified considerably by the fact that the random walk/bridge has Gaussian (albeit not identically distributed) steps, then allow us to pull these to the discrete time setting; see Section~\ref{sec-4.3}.

The underlying mechanism that makes all this work is \emph{entropic repulsion}, which causes a random walk conditioned to stay above a slowly-varying negative curve to actually rise, with overwhelming probability, even above a (slowly-varying) positive curve. Explicit statements appear in Propositions~\ref{prop-entropy-BM}--\ref{prop-entropy-BB}. In Section~\ref{sec-4.4} we use this to derive a number of useful estimates for the various attributes of the concentric decomposition; these feed repeatedly into the proofs later. It is Section~\ref{sec4} where our paper makes close contact with the literature on the Branching Brownian Motion; specifically, the pioneering work by Bramson~\cite{Bramson-CPAM}. 

In Section~\ref{sec5a} we return to the problem of the pinned DGFF and start harvesting results. First (in Section~\ref{sec-5.2}) we establish the existence and non-dege\-neracy of the cluster law and thus prove Theorem~\ref{thm-lessmain}. With some additional work (spelled out in Sections~\ref{sec-5.3}--\ref{sec-5.4}), this yields  also the proof of the full scaling limit in Theorem~\ref{thm-main}. The main technical input here is the conditional version of the convergence in \eqref{E:2.29ua}, still for a single point, carried out in Propositions~\ref{prop-4.14}--\ref{prop-4.13}. The contributions of individual local maxima are separated with the help of Proposition~\ref{lemma-4.18}. These propositions are the core technical steps of the proofs of our main results.

Section~\ref{sec6} addresses the proofs of the remaining theorems; first the local limit theorem for the position and value of the maximum (Theorem~\ref{thm-LLL}) and then the proof of Theorem~\ref{thm-2.7} and Corollaries~\ref{cor-2.7}--\ref{cor-2.8} dealing with the Liouville measure, Poisson-Dirichlet statistics and freezing. 
A key point here is the fact that the contribution from the clusters to the measure on the left-hand side of \eqref{E:2.23} can be completely absorbed, via the expectation of the quantity $Y^\beta(\phi)$ in \eqref{E:2.9b}, into the overall normalizing constant~$c(\beta)$. This can (roughly) be attributed to:

\begin{observe}
Let $\{z_i\colon i\in\N\}$ enumerate the sample points from a Gumbel Poisson point process with intensity $\texte^{-\lambda x}\textd x$ for some $\lambda>0$ and let $\{X_i\colon i\in\N\}$ be i.i.d.\ random variables with $\theta:=E(\texte^{\lambda X_1})<\infty$, independent of $\{z_i\colon i\in\N\}$. Then also $\{z_i+X_i\colon i\in\N\}$ is a sample from a Gumbel process but this time with intensity $\theta\texte^{-\lambda x}\textd x$.
\end{observe}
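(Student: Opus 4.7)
The plan is to derive the statement as a routine application of the marking and mapping (displacement) theorems for Poisson point processes. First, I would view the pairs $\{(z_i, X_i) : i \in \N\}$ as a marked Poisson process on $\R \times \R$. Since $\{z_i\}$ is a PPP with intensity $e^{-\lambda x}\,dx$ and the marks $X_i$ are i.i.d.\ with common law $\mu$ and independent of the ground process, the marking theorem identifies the collection of pairs as a Poisson point process on $\R^2$ with intensity $e^{-\lambda x}\,dx \otimes \mu(dy)$.

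Next, I would apply the mapping theorem to the addition map $T\colon (z, x) \mapsto z + x$. Its image is precisely $\{z_i + X_i : i \in \N\}$, and is a Poisson point process on $\R$ whose intensity equals the pushforward $T_*\bigl(e^{-\lambda x}\,dx \otimes \mu\bigr)$, provided the latter is locally finite. A short Fubini computation gives, for Borel $A \subset \R$,
\begin{equation*}
T_*\bigl(e^{-\lambda x}\,dx \otimes \mu\bigr)(A) = \iint \1_A(z+x)\,e^{-\lambda z}\,dz\,\mu(dx) = \Bigl(\int e^{\lambda x}\,\mu(dx)\Bigr)\int_A e^{-\lambda u}\,du = \theta \int_A e^{-\lambda u}\,du,
\end{equation*}
which is exactly the claimed intensity.

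There is essentially no main obstacle here: the statement is a textbook instance of displacement of a PPP. The only point to keep in mind is that the hypothesis $\theta < \infty$ is precisely the condition which makes the image intensity $\theta e^{-\lambda u}\,du$ locally finite, so the mapping theorem applies without degeneracy. As an equivalent route, one could instead compute the Laplace functional of $\sum_i \delta_{z_i+X_i}$ directly: integrating out the marks by independence reduces it to a Laplace functional of the original Gumbel PPP evaluated at a shifted test function, and a change of variables then recognises the result as the Laplace functional of a Gumbel PPP with intensity $\theta e^{-\lambda u}\,du$.
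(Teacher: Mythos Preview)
Your proof is correct and entirely standard: the marking-then-mapping argument (or the equivalent Laplace-functional computation you sketch) is exactly how one verifies this displacement property of the Gumbel PPP, and you correctly identify that the hypothesis $\theta<\infty$ is what guarantees local finiteness of the pushforward intensity.

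The paper does not actually supply a proof of this statement. It is presented as an \emph{Observation} in the heuristics-and-outline section (Section~2.4), offered as a known fact to explain why the cluster contributions can be absorbed into the normalizing constant~$c(\beta)$ in Theorem~2.7. The actual proofs in Section~6 implement this idea not by invoking the observation directly, but by performing the analogous Laplace-transform computation in context: see in particular the change of variables in \eqref{E:6.14}, where the integral over~$h$ against $\texte^{-\alpha h}\textd h$ produces the factor $(Y^\beta_r(\phi))^{\alpha/\beta}$ that, after taking expectation in~$\nu$, becomes the multiplicative constant. So your Laplace-functional alternative is in fact closer in spirit to what the paper does operationally than the marking/mapping route, though both are of course equivalent here.
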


\noindent
In fact, absorbing the contribution of the clusters into a single random variable is the main step of the above proofs; the rest follows fairly directly (by exponentiating) from Theorem~\ref{thm-main}. 

Our proofs naturally use a number of facts about Gaussian processes, and in particular the DGFF, that have been proved earlier. To systematize referencing, we list these results as separate lemmas in Appendix~B and then quote only these lemmas in the proofs.  Many of these facts, as well as other aspects of the extremal values of the DGFF, have  been reviewed in Biskup~\cite{Biskup-PIMS}. 

\subsection{Connections and open questions}
We conclude this section by listing some questions of further interest. Our first question concerns analytic properties of the density~$\rho^D$ from Theorem~\ref{thm-2.7}. The asymptotic expression~\eqref{E:2.13ua} and our simulations in Fig.~\ref{fig6} suggest the following:

\begin{conjecture}
For each~$t\in\R$, the function $x\mapsto\rho^D(t,x)$ is bounded and tends to zero as~$x$ approaches~$\partial D$. In particular, the function admits a continuous extension to all of~$\overline D$.
\end{conjecture}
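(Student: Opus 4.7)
The strategy is to extract the boundary behavior of $\rho^D(\cdot,t)$ from the representation~\eqref{E:2.12c} via a Palm-type decomposition of~$Z^D$. The key analytic input is the elementary bound $y\,\texte^{-cy}\le(\texte c)^{-1}$, valid for all $y,c>0$. Applied with $y:=Z^D(D)$ and $c:=\alpha^{-1}\texte^{-\alpha t}$, and using $Z^D(A)=\widehat Z^D(A)\,Z^D(D)$, it yields
\begin{equation}
\label{E:conj-plan-1}
\int_A\rho^D(x,t)\,\textd x
= E\Bigl[\widehat Z^D(A)\cdot\texte^{-\alpha t}Z^D(D)\,\texte^{-\alpha^{-1}\texte^{-\alpha t}Z^D(D)}\Bigr]
\le(\alpha/\texte)\,E\bigl[\widehat Z^D(A)\bigr]
\end{equation}
uniformly in $t\in\R$. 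Letting $A\downarrow\{x\}$ in the sense of Lebesgue differentiation, and invoking continuity of $\rho^D$ on $D\times\R$ from Theorem~\ref{thm-LLL}, this produces the pointwise bound
\begin{equation}
\label{E:conj-plan-2}
\rho^D(x,t)\le(\alpha/\texte)\,\tilde\psi^D(x),\qquad (x,t)\in D\times\R,
\end{equation}
where $\tilde\psi^D$ is the Lebesgue density of the probability measure $E[\widehat Z^D(\cdot)]$ on $D$. The conjecture thus reduces to showing that $\tilde\psi^D$ is bounded on $D$ and tends to $0$ as $x\to\partial D$.

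To control $\tilde\psi^D$ I would exploit its direct probabilistic meaning: letting $t\uparrow\infty$ in~\eqref{E:1.8ua} one sees that $\tilde\psi^D$ is the Lebesgue density of the weak limit of $N^{-1}\argmax_{D_N}h$; equivalently, $\tilde\psi^D(x)=\int_\R\rho^D(x,t)\,\textd t$. For $D$ simply connected, the conformal covariance of $Z^D$ (Theorem~2.5 of~\cite{BL2}) together with the asymptotics~\eqref{E:2.13ua}--\eqref{E:2.13} forces $\tilde\psi^D(x)\propto\text{rad}_D(x)^2$, which is bounded on $D$ and vanishes at $\partial D$ by Koebe's theorem. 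For general $D\in\mathfrak D$, I would derive the same decay directly from the concentric decomposition of Section~\ref{sec3}: a candidate local maximum at Euclidean distance $d$ from $\partial D$ has access to only $\approx\log_2(dN)$ non-trivial scales in the decomposition, and the random-walk/bridge estimates of Sections~\ref{sec-4.2}--\ref{sec-4.3} together with the entropic-repulsion arguments of Section~\ref{sec-4.4} then convert this shortening into an extra $d^2$ suppression relative to bulk points. Boundedness of $\tilde\psi^D$ on $D$ is automatic once the boundary decay is in place, since $\tilde\psi^D$ is the density of a probability measure.

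Combining \eqref{E:conj-plan-2} with the boundary decay of $\tilde\psi^D$, and using the already-known continuity of $\rho^D$ on $D\times\R$, proves both assertions of the conjecture; the continuous extension to $\overline D$ is then obtained by setting $\rho^D(\cdot,t)\equiv 0$ on $\partial D$.

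The main obstacle is making the boundary estimate on $\tilde\psi^D$ quantitative \emph{uniformly} in~$t$, since $\tilde\psi^D(x)=\int_\R\rho^D(x,t)\,\textd t$ is an integral over all $t\in\R$. The asymptotic~\eqref{E:2.13ua} handles the large-$t$ portion directly, but controlling the moderate-$t$ portion requires rerunning the arguments of Sections~\ref{sec-4.3}--\ref{sec-4.4} with the effective walk length replaced by $\log_2(dN)$ and checking that the resulting one-point density of the argmax inherits the expected $d^2$ factor. For $D$ simply connected this technical route can be circumvented by a Riemann-map pullback argument that reduces the matter to the unit disk, where $\tilde\psi^{\mathbb D}$ is radial and hence controlled once checked at the origin.
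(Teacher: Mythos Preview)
This statement is listed in the paper as a \emph{conjecture}, not a theorem; the authors explicitly write that ``our proofs do not seem to be able to determine the boundary regularity of this function.'' So there is no proof in the paper to compare against, and the question is whether your proposal actually closes the gap.

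Your reduction is clean and correct: the inequality $\rho^D(x,t)\le(\alpha/\texte)\,\tilde\psi^D(x)$ with $\tilde\psi^D$ the Lebesgue density of $E[\widehat Z^D(\cdot)]$ is valid, and the identification $\tilde\psi^D(x)=\int_\R\rho^D(x,t)\,\textd t$ is a nice observation. But the reduction only relocates the difficulty; it does not resolve it.

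The conformal-covariance argument for simply connected~$D$ does not work as stated. Under a conformal map $f\colon D\to D'$, the extremal process transforms via $(x,h)\mapsto(f(x),h+2\sqrt g\log|f'(x)|)$, so the location of the global maximum in the image process is $f(x^\star)$ where $x^\star$ maximizes $h+2\sqrt g\log|f'(\cdot)|$ over the original process, \emph{not} the original argmax. Consequently the law $E[\widehat Z^D(\cdot)]$ of the argmax does not transform by a Jacobian factor, and there is no reason for $\tilde\psi^D$ to be proportional to $\text{rad}_D(\cdot)^2$. The asymptotic \eqref{E:2.13ua}--\eqref{E:2.13} governs only the $t\to\infty$ tail of $\rho^D(x,t)$ and says nothing about the full $t$-integral. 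The same obstruction kills the Riemann-map pullback to the disk: radiality of $\tilde\psi^{\mathbb D}$ is fine, but M\"obius automorphisms of $\mathbb D$ again shift the $h$-coordinate and do not move the argmax deterministically, so boundary behavior cannot be read off from the value at the origin.

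For general~$D$, your plan is to rerun the concentric decomposition with $\approx\log_2(dN)$ scales and extract a $d^2$ factor. This is precisely the program the authors say they cannot carry out: the concentric decomposition of Section~\ref{sec3} and the bounds of Section~\ref{sec-4.4} are built for points deep inside $\Delta^n$ (or a domain squeezed between $\Delta^n$ and $\Delta^{n+q}$ with $q$ \emph{fixed}, cf.\ Lemma~\ref{lemma-gen-domain}), and the control variables, the random-walk comparison, and the curve estimates all degrade when the outermost annuli are truncated by an irregular boundary. You identify this as ``the main obstacle,'' but it is not a technicality---it is the entire content of the conjecture. Your side remark that boundedness of $\tilde\psi^D$ is ``automatic once boundary decay is in place, since $\tilde\psi^D$ is the density of a probability measure'' is also false: integrability plus boundary decay does not preclude interior singularities.
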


Notice that, although the considerations of the \DGFF{} (cf Lemma~\ref{lem:6}) give the absolute continuity of the measure in \eqref{E:2.12c} with respect to the Lebesgue measure, they only imply a certain integrability condition for $x\mapsto\rho^D(t,x)$. Unfortunately, our proofs do not seem to be able to determine the boundary regularity of this function either.
 
As our next question, we wish to point out that the limits in Theorem~\ref{thm-2.7} and Corollary~\ref{cor-2.7} should have versions even for~$\beta=\beta_\cc$:

\begin{conjecture}
There is~$c\in(0,\infty)$ such that for each~$D\in\mathfrak D$, 
\begin{equation}
\label{E:2.23b}
\sqrt{\log N}\sum_{z\in D_N}\texte^{\beta_\cc (h_z-m_N)}\delta_{\,z/N}(\textd x)
\,\,\,\underset{N\to\infty}\Lawarrow\,\,\,
c \,Z^D(\textd x).
\end{equation}
In particular,
\begin{equation}
\label{E:2.21qqb}
\sum_{z\in D_N}\mu_{\beta_\cc,N}^D\bigl(\{z\}\bigr)\delta_{\,z/N}(\textd x)
\,\,\,\underset{N\to\infty}\Lawarrow\,\,\,
\widehat Z^D(\textd x).
\end{equation}
\end{conjecture}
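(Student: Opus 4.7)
The plan is to combine the cluster-decorated scaling limit of Theorem~\ref{thm-main} with a Seneta--Heyde type normalization, in the spirit of the construction of the critical Gaussian multiplicative chaos from its subcritical counterpart. Heuristically, the factor $\sqrt{\log N}$ is the precise counterpart of the $(\log r)^{-1/2}$ decay in Theorem~\ref{thm-2.5}: the cost of forcing a cluster shape to satisfy $\phi+\tfrac{2}{\sqrt g}\fraka\ge 0$ up to scale~$r$ produces this slow logarithmic loss of probability, and when summed over the Cox-decorated Poisson process of local maxima it yields exactly the normalization needed for a non-degenerate limit at~$\beta=\beta_\cc$.

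First I would fix a continuous test function $f$ supported in~$D$, set
\[
W_N(f):=\sqrt{\log N}\sum_{z\in D_N}\texte^{\beta_\cc(h_z-m_N)}\,f(z/N),
\]
and, for a large truncation parameter $T$, split $W_N(f)=W_N^{\rm ext}(f;T)+W_N^{\rm bulk}(f;T)$ according to whether $h_z\ge m_N-T$ or not. Theorem~\ref{thm-main} approximates the extremal part by $\sqrt{\log N}\sum_i f(x_i)\,\texte^{\beta_\cc h_i}\,Y^{\rm trunc}_{r_N,T}(\phi^{(i)})$, where $(x_i,h_i,\phi^{(i)})$ enumerate the sample points of the limit process, $r_N$ is a mesoscopic scale well below the spatial separation of the $x_i$, and $Y^{\rm trunc}_{r,T}$ is a suitable truncation of $\sum_{|z|\le r}\texte^{-\beta_\cc\phi_z}$. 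The key input is a critical cluster-sum asymptotic: the existence of a constant $c_\nu\in(0,\infty)$ with $\sqrt{\log r}\,E_\nu\bigl(Y^{\beta_\cc,r}(\phi)\bigr)\to c_\nu$ as $r\to\infty$, together with matching concentration of the ratio in $\nu$-probability. This replaces, at the critical exponent, the finiteness of $E_\nu(Y^\beta(\phi)^{\beta_\cc/\beta})$ used in Theorem~\ref{thm-2.7}. Feeding the asymptotic into Campbell's formula for the limiting Cox process and sending $T\to\infty$ identifies the constant $c$ as $c_\nu$ times an explicit Gumbel integral.

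For the bulk part $W_N^{\rm bulk}$ one needs a truncated second-moment argument. The Gaussian moment identity $E(\texte^{\beta h_z})=\texte^{\beta^2G_N(z,z)/2}$ shows that the plain first moment of $W_N^{\rm bulk}$ is of order $(\log N)^{3/2}$, which is too large, reflecting the standard failure of the annealed bound at criticality. The remedy is to restrict to a good event saying that $h$ obeys a barrier along the concentric decomposition of Section~\ref{sec3}; on the good event the restricted first moment acquires the right order and the variance is small by decorrelation of distant contributors. The entropic-repulsion estimates of Section~\ref{sec4} would then be used to show that the complementary bad event contributes $o(1/\sqrt{\log N})$, so that its contribution vanishes after the $\sqrt{\log N}$ normalization. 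Taking $N\to\infty$ and then $T\to\infty$ delivers~\eqref{E:2.23b}.

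The hard part is the critical cluster-sum asymptotic above. In the continuum the corresponding statement for GMC is by now classical (Duplantier--Rhodes--Sheffield--Vargas, Junnila--Saksman, Powell), but within the present DGFF framework it requires extending the entropic-repulsion analysis of Section~\ref{sec4} to encode not just the non-negativity of the pinned field but also the sharp second-order fluctuation that drives the $(\log r)^{-1/2}$ rate --- effectively, a quantitative version of Theorem~\ref{thm-2.5} at the level of exponential observables of $\phi$. Once \eqref{E:2.23b} is established, \eqref{E:2.21qqb} follows by dividing through by the total mass: the common $\sqrt{\log N}\,\texte^{-\beta_\cc m_N}$ factors cancel between $\texte^{\beta_\cc h_z}$ and $\ZZ_N(\beta_\cc)$, and the continuous mapping theorem applied jointly to the weighted measure and its total mass yields convergence to~$\widehat Z^D$.
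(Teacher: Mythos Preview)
The statement you are attempting to prove is labeled a \emph{Conjecture} in the paper, and the paper does not supply a proof. In the discussion following the conjecture the authors note that a version of \eqref{E:2.23b} is claimed in Rhodes and Vargas~\cite[Theorem~5.13]{RV-review} in a related setting, but that the details there are scarce and the identification of the limit with~$Z^D$ was (at the time of writing) still open. There is thus no ``paper's own proof'' to compare against.

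As for your proposal itself: what you have written is a plausible outline of a Seneta--Heyde approach, and you correctly identify the main obstruction --- the critical cluster-sum asymptotic $\sqrt{\log r}\,E_\nu(Y^{\beta_\cc,r}(\phi))\to c_\nu$ together with concentration in $\nu$-probability. But this is precisely the step that is not available from the results of the paper, and your sketch does not actually carry it out; you defer it to ``extending the entropic-repulsion analysis of Section~\ref{sec4}'' without indicating how. Likewise, the truncated second-moment argument for the bulk part at criticality is genuinely delicate (the naive first moment diverges, as you note), and the barrier-restricted moment bounds you allude to are not established in the paper for this purpose. So your proposal is a reasonable roadmap, consistent with what is known in the continuum GMC literature, but it is not a proof: the two ingredients you flag as ``hard'' remain conjectural within the framework of this paper, which is exactly why the statement is recorded as a conjecture rather than a theorem.
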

 
\noindent
A version of the convergence in \eqref{E:2.23b} is claimed in Rhodes and Vargas~\cite[Theorem~5.13]{RV-review} in the framework of isoradial graphs, although the details given there (for the relevant critical case) are scarce. A full proof of \eqref{E:2.21qqb} still requires identification of the limit measure there with our~$Z^D$ which has not been accomplished so far.  (\textit{Update in revision}: The identification now appears in the revised version of~\cite{BL2}.)  
We note that, unlike for $\beta>\beta_\cc$ where the limit measures in \eqref{E:2.23} and \eqref{E:2.21qq} are purely atomic, the limit  measures  in \twoeqref{E:2.23b}{E:2.21qqb} have no atoms at all (see Biskup and Louidor~\cite[Theorem~2.1]{BL2}).

A corresponding question arises also for $\beta<\beta_\cc$, although there the limit of the Liouville measure is already reasonably well understood (see Rhodes and Vargas~\cite[Theorem~5.12 and Appendix~B]{RV-review}). However, it is not clear how this translates into the control of the level sets in \eqref{E:level} when ``$\approx$'' is replaced by ``$\ge$'' and $m_N$ is replaced by a function that grows in the leading order as $\lambda (2\sqrt g\log N)$ for some~$\lambda\in(0,1)$. Here is an attempt to formulate this more precisely:

\begin{question}
\label{Q-intermediate}
Let~$\lambda\in(0,1)$ and let $D_N$ arise from a~$D\in\mathfrak D$ as above. Is there~$K_N$ depending only on~$D$ and~$\lambda$ such that
\begin{equation}
\frac1{K_N}\sum_{x\in D_N}\delta_{\,x/N}\otimes \delta_{\,h_x-2\sqrt g\,\lambda\log N} 
\end{equation}
converges in law as~$N\to\infty$ to a non-trivial random measure?
\end{question}

\noindent
The point of this question is to find out if one can capture the scaling limit of the level set of the form \eqref{E:level}, including local information, using a meaningful continuum object.  (\textit{Update in revision:} Question~\ref{Q-intermediate} has now been fully resolved in Biskup and Louidor~\cite{BL3}.)  Continuum counterparts of this question exist; e.g., based on the notion of a \emph{thick point} of the (continuum) Gaussian Free Field analyzed by Hu, Miller and Peres~\cite{HMP10}.

 The next  natural question is whether and how our results extend to the class of logarithmically correlated Gaussian fields in general dimensions $d\ge1$. Here the convergence of the law of the centered maximum has already been proved (Ding, Roy and Zeitouni~\cite{DRZ}) but, for the lack the Gibbs-Markov property in~$d\ne2$, our techniques do not apply.  (An interesting exception is the four-dimensional membrane model, see e.g.~Kurt~\cite{Kurt}, which does have both a Gibbs structure and logarithmic correlations.)  We remark that the scaling limit of the extremal process of the \DGFF{} in $d\ge3$, which is \emph{not} logarithmically correlated, has been shown to coincide, modulo an overall shift, with that of i.i.d.\ Gaussian random variables (Chiarini, Cipriani, Hazra~\cite{CCH1,CCH2,CCH3}).

Another interesting direction concerns the corresponding problem for various non-Gaussian models on~$\Z^2$ with fluctuation structure described, at large scales, by the Gaussian Free Field. This includes the gradient models with uniformly-strictly convex potentials or the local time of the simple random walk run for multiples of the cover time. Unfortunately, here even the tightness of the maximum remains open. Notwithstanding, for the local time associated with the simple random walk on a homogeneous tree, the fluctuations are, at large scales, those of a Gaussian Branching Random Walk. In this case, Abe~\cite{Abe} was able to show that the convergence of the type~\eqref{E:1.5} holds for a suitably defined process of local maxima of the local time.

\section{Field pinned to a high value}
\label{sec3}\noindent
We are ready to begin the exposition of the proofs.  In this paper we will  show that a pinned \DGFF{} naturally decomposes into the sum of independent random fields indexed by a sequence of nested domains and use it to represent the growth rate of the field by way of a random walk.

Throughout this whole section, $D$ (or similar letters) will denote a generic finite set $D\subset\Z^2$ while~$D_N$ will keep denoting the set as in \twoeqref{E:1.1}{E:1.1a} for some (notationally implicit) underlying continuum domain. We will write~$h^D$ (instead of just~$h$) to denote the \DGFF{} in~$D$ and will write~$h^D_x$ or $h^D(x)$ to denote its value at~$x$. The arguments use various standard facts about the \DGFF{} and harmonic analysis on~$\Z^2$; these are for reader's convenience collected in Appendix~\ref{appendB}.

\subsection{Simplifying the conditioning}
\label{sec-3.1}\noindent
We begin by a reduction argument. Recall that our ultimate goal is to control the position and field value at, and the ``shape'' of the configuration around, the nearly-maximal local maxima. Thanks to the Gibbs-Markov property (Lemma~\ref{lemma-GM}) and estimates on separation of near-maximal values (Lemma~\ref{lemma-separation}), it will suffice to do this just for one local maximum. The main task is thus the $N\to \infty$ asymptotic of the probability
\begin{equation}
\label{E:3.1}
P\Bigl(h^{D_N}(0)-h^{D_N}\in A\colon h^{D_N}\le m_N+t+s,\,h^{D_N}(0)\ge m_N+t\Bigr)
\end{equation}
for $t\in\R$, $s\ge0$ and events~$A$ that depend only on a finite number of coordinates near~$0$. (Here and henceforth, $h\le f$ means that $h(x)\le f(x)$ on the natural domain of~$h$.) We will instead study the conditional probability
\begin{equation}
\label{E:3.2}
P\Bigl(h^{D_N}(0)-h^{D_N}\in A,\,h^{D_N}\le m_N+t+s\,\Big|\, h^{D_N}(0)=m_N+t\Bigr).
\end{equation}
This is sufficient for \eqref{E:3.1} because the probability (density) of the conditional event is explicitly available and \eqref{E:3.1} can thus be obtained from \eqref{E:3.2} by integrating over~$t$.

\begin{remark}
The type of ``singular'' conditioning as in \eqref{E:3.2} will be used frequently throughout the rest of this paper. In all such cases it will be clear that the conditional random variable has a well-defined, continuous probability density with respect to the Lebesgue measure and so the conditioning boils down to substituting the conditional value for that variable.
\end{remark}

\smallskip
In order to control \eqref{E:3.2} note that, due to the Gaussian nature of the field, conditioning on a large value at a given point can be reduced to a shift in the mean. More precisely, given a finite~$D\subset\Z^2$ with $0\in D$ let $\frakg^D\colon\Z^2\to[0,1]$ denote the (unique) function such that
\begin{enumerate}
\item[(1)] $\frakg^D$ is discrete harmonic on $D\smallsetminus\{0\}$,
\item[(2)] $\frakg^D(0)=1$ and $\frakg^D(x)=0$ for $x\not\in D$.
\end{enumerate}
By the maximum principle,~$\frakg^D$ takes values in~$[0,1]$. The conditioning then simplifies as:

\begin{lemma}
\label{lemma-3.1}
Let $D\subset\Z^d$ be finite with $0\in D$. Then for all~$t,s\in\R$ and any event~$A$,
\begin{equation}
\label{E:3.3}
P\Bigl(h^D\in A,\,h^D\le s\,\Big|\, h^D(0)=t\Bigr)
=P\Bigl(h^D+t\frakg^D\in A,\,h^D\le s-t\frakg^D\,\Big|\, h^D(0)=0\Bigr).
\end{equation}
\end{lemma}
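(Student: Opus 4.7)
The plan is to derive this lemma as a direct consequence of the Gaussian conditioning formula, with the mean shift identified explicitly via the defining properties of $\frakg^D$.

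First, I would record that $\{h^D_x\colon x\in D\}$ is a centered Gaussian vector with covariance $G^D$. For jointly Gaussian coordinates, the conditional law of $(h^D_x)_{x\in D}$ given $h^D_0=t$ is Gaussian with mean $x\mapsto t\,G^D(x,0)/G^D(0,0)$ and with a residual covariance that does \emph{not} depend on $t$ (it is the Schur complement of the one-dimensional marginal at $0$). Extending by $h^D\equiv 0$ off $D$ as usual, this immediately implies the distributional identity
\begin{equation*}
\text{Law}\bigl(h^D\,\big|\,h^D_0=t\bigr)\,=\,\text{Law}\bigl(h^D+t\,f\,\big|\,h^D_0=0\bigr),\qquad f(x):=G^D(x,0)/G^D(0,0).
\end{equation*}

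Second, I would verify that $f=\frakg^D$. By symmetry of $G^D$ and the fact that $y\mapsto G^D(x,y)$ is discrete harmonic on $D\setminus\{x\}$, the function $x\mapsto G^D(x,0)$ is discrete harmonic on $D\setminus\{0\}$; it vanishes off $D$; and it equals $G^D(0,0)$ at the origin. Hence $f$ satisfies the three properties (1)--(2) from the definition of $\frakg^D$ preceding the lemma. Uniqueness of the solution to the discrete Dirichlet problem on the finite set $D\setminus\{0\}$ (with values $1$ at $0$ and $0$ on $D^{\mathrm c}$) then gives $f\equiv\frakg^D$.

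Third, I would apply the conditional identity of the first step to the bounded measurable functional $F(\varphi):=\1_{\{\varphi\in A\}}\,\1_{\{\varphi\le s\}}$, observing that $h^D+t\frakg^D\le s$ is the same event as $h^D\le s-t\frakg^D$. This yields precisely \eqref{E:3.3}. No tightness, regularity, or limit arguments intervene: once the shift is computed and identified with $t\frakg^D$, the rewrite of the events is automatic.

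There is no real obstacle here; the lemma is a one-step Gaussian computation. The only point requiring any care is the identification in step two, and this is instantaneous from the maximum principle for the discrete Laplacian on $D\setminus\{0\}$. The value of the statement is really its role in what follows: it trades the ``singular'' conditioning $h^D(0)=m_N+t$ in \eqref{E:3.2} for an explicit deterministic mean shift $(m_N+t)\frakg^{D_N}$, which is exactly what makes the subsequent concentric-decomposition analysis of the pinned field tractable.
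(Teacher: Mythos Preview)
Your proof is correct and takes essentially the same approach as the paper's. The paper invokes the Gibbs-Markov property (Lemma~\ref{lemma-GM}) to write $h^D \laweq h^D(0)\,\frakg^D + h^{D\smallsetminus\{0\}}$ with the two summands independent, then identifies $h^{D\smallsetminus\{0\}}$ as $h^D$ conditioned on $h^D(0)=0$; your explicit Gaussian conditioning computation with the identification $G^D(\cdot,0)/G^D(0,0)=\frakg^D$ is the same argument unpacked, and arguably slightly more self-contained since it does not need to cite the Gibbs-Markov lemma.
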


\begin{proofsect}{Proof}
By the Gibbs-Markov property (cf Lemma~\ref{lemma-GM}), we have 
\begin{equation}
\label{E:3.4}
h^D \laweq h^D(0)\,\frakg^D\,+h^{D\smallsetminus\{0\}},
\end{equation}
where the two fields on the right are regarded as independent. This formula shows that $h^{D\smallsetminus\{0\}}$, the \DGFF{} in $D\smallsetminus \{0\}$, is also the \DGFF{} in~$D$ conditioned on $h^D(0)=0$. Plugging these facts into the left-hand side \eqref{E:3.3}, the claim follows.
\end{proofsect}

\subsection{Concentric decomposition}
Having reduced the problem to a field pinned to zero, the next technical step is a representation of this field as a sum of independent random fields. The pinning at a single point naturally leads us to consider a decomposition along a nested sequence of domains. Since we will ultimately work with $\ell^\infty$-balls centered at the origin, we will refer to this as a \emph{concentric decomposition}.

Given a set~$B\subset\Z^2$, let $\partial B$ denote the set of vertices on its external boundary. Consider an increasing sequence of {connected} sets $\Delta^0,\Delta^1\dots, \Delta^n\subset\Z^2$ satisfying
\begin{equation}
\label{E:3.5}
\Delta^0:=\{0\}\quad \text{and}\quad \overline{\Delta^k}:=\Delta^k\cup\partial \Delta^k\subseteq \Delta^{k+1},\quad k=0,\dots,n-1.
\end{equation}
Define
\begin{equation}
\label{E:3.6}
\varphi_k(x):=
\begin{cases}
\begin{aligned}
E\bigl(h^{\Delta^n}(x)\big|\sigma(&h^{\Delta^n}(z)\colon z\in\partial \Delta^{k-1}\cup\partial \Delta^k)\bigr)
\\&-E\bigl(h^{\Delta^n}(x)\big|\sigma(h^{\Delta^n}(z)\colon z\in\partial \Delta^k)\bigr)
\end{aligned}
\,\,,\qquad&\text{}k=1,\dots,n,
\\*[4mm]
h^{\Delta^n}(x)-E\bigl(h^{\Delta^n}(x)\big|\sigma(h^{\Delta^n}(z)\colon z\ne0)\bigr),\qquad&\text{}k=0,
\end{cases}
\end{equation}
and let
\begin{equation}
\label{E:3.7}
\chi_k(x):=\varphi_k(x)-E\bigl(\varphi_k(x)\bigl|\sigma(\varphi_k(0))\bigr),\qquad k=0,\dots,n.
\end{equation}
Then we set
\begin{equation}
\label{E:3.8}
h'(x):=h^{\Delta^n}(x)-\sum_{k=0}^n\varphi_k(x)
\end{equation}
and let
\begin{equation}
\label{E:3.9}
h_k'(x):=h'(x)\1_{\Delta^k\smallsetminus \Delta^{k-1}}(x),\qquad k=0,\dots,n.
\end{equation}
All the fields above are defined on the same probability space as~$h^{\Delta^n}$ and all can be regarded as fields on all of~$\Z^2$. Their distributional properties are summarized in:

\begin{proposition}[Concentric decomposition]
\label{prop-concentric}
Suppose the domains $\{\Delta^k\colon k=0,\dots,n\}$ obey the restrictions in \eqref{E:3.5}. Then the random objects in the union
\begin{equation}
\label{E:3.10a}
\bigl\{\varphi_k(0)\colon k=0,\dots,n\bigr\}\cup\bigl\{\chi_k\colon k=0,\dots,n\bigr\}\cup\bigl\{h_k'\colon k=0,\dots,n\bigr\}
\end{equation}
are all independent of one another. Their individual laws are (multivariate) normal and they are determined by the following properties:
\settowidth{\leftmargini}{(11)}
\begin{enumerate}
\item[(1)] For $k=1,\dots,n$, the field in \eqref{E:3.6} obeys
\begin{equation}
\label{E:3.10}
\varphi_k\laweq E\bigl(h^{\Delta^k}\big|\sigma(h^{\Delta^k}\colon z\in \partial \Delta^{k-1})\bigr),\qquad k=1,\dots,n,
\end{equation}
while, for $k=0$,
\begin{equation}
\label{E:3.11}
\varphi_0=\varphi_0(0)\1_{\{0\}},\quad\text{where}\quad\varphi_0(0)\laweq \NN(0,1).
\end{equation}
A.e.\ sample path of $\varphi_k$ is discrete harmonic on $\Delta^k\smallsetminus\partial \Delta^{k-1}$ and zero on $\Z^2\smallsetminus \Delta^k$. The law of~$\chi_k$ is then determined from \eqref{E:3.7}.
\item[(2)] For $k=1,\dots,n$, for the fields in \eqref{E:3.9} we have
\begin{equation}
\label{E:3.12}
h'_k\laweq h^{\Delta^k\smallsetminus \overline{\Delta^{k-1}}}
\end{equation}
while $h'_0=0$. In particular, $h'$ in \eqref{E:3.8} has the law of the \DGFF{} in $\bigcup_{k=1}^{n} \Delta^k\smallsetminus\overline{\Delta^{k-1}}$ with, per our convention, zero boundary conditions outside of this set.
\end{enumerate}
In addition, we have $\Var(\varphi_k(0))>0$ for all $k=0,\dots,n$ and, letting 
\begin{equation}
\label{E:3.14q}
\frakb_k(x):=\frac1{\Var(\varphi_k(0))}\,E\Bigl(\varphi_k(0)\bigl(\varphi_k(x)-\varphi_k(0)\bigr)\Bigr),
\end{equation}
the following representation holds
\begin{equation}
\label{E:3.14}
h^{\Delta^n}(x)=\sum_{k=0}^n\bigl(1+\frakb_k(x)\bigr)\varphi_k(0)+\sum_{k=0}^n\chi_k(x)+\sum_{k=0}^nh_k'(x).
\end{equation}
\end{proposition}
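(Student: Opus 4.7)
The plan is to exploit the fact that every random object in~\eqref{E:3.10a} is a linear functional of the centered Gaussian field~$h^{\Delta^n}$, so the entire collection is jointly Gaussian; pairwise independence therefore reduces to pairwise orthogonality in~$L^2$. The main technical input is the Gibbs-Markov property (Lemma~\ref{lemma-GM}): for any finite $U\subseteq V\subset\Z^2$, one has $h^V\laweq h^U+\psi^{U,V}$ with the two summands independent, $\psi^{U,V}$ discrete-harmonic on~$U$, and $\psi^{U,V}=h^V$ on~$V\setminus U$.

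First I would iterate Gibbs-Markov along the chain $\Delta^0\subset\Delta^1\subset\cdots\subset\Delta^n$ to prove that the enlarged family $\{\varphi_k\}_{k=0}^n\cup\{h'_k\}_{k=0}^n$ is mutually independent. Applied with $V=\Delta^n$ and $U=\Delta^n\smallsetminus\{0\}$, it extracts an independent Gaussian component of variance $G^{\{0\}}(0,0)=1$ supported at the origin; this component is exactly $\varphi_0$, giving~\eqref{E:3.11}. Applied with $V=\Delta^k$ and $U=\Delta^k\smallsetminus\overline{\Delta^{k-1}}$, it writes $h^{\Delta^k}=h^{\Delta^k\smallsetminus\overline{\Delta^{k-1}}}+\wt\varphi_k$ with the two summands independent, $\wt\varphi_k$ discrete-harmonic on the annulus with inner boundary data $h^{\Delta^k}|_{\partial\Delta^{k-1}}$ and zero on~$\partial\Delta^k$, and extending by $h^{\Delta^k}$ on~$\overline{\Delta^{k-1}}$. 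A direct calculation with conditional expectations---using that $h^{\Delta^k}$ vanishes on $\partial\Delta^k$, so that within the conditioning sigma-algebra of~\eqref{E:3.6} the $\partial\Delta^k$-part carries no information about~$h^{\Delta^k}$, together with the splitting $h^{\Delta^n}=h^{\Delta^k}+\pi_k$ via Gibbs-Markov with $U=\Delta^k$---matches the right-hand side of~\eqref{E:3.6} to $\wt\varphi_k$ and yields~\eqref{E:3.10}; the annular DGFF $h^{\Delta^k\smallsetminus\overline{\Delta^{k-1}}}$ then matches~$h'_k$ via~\eqref{E:3.8}-\eqref{E:3.9}, giving~\eqref{E:3.12}. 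Chaining the independences from the successive Gibbs-Markov applications yields the mutual independence claim together with the telescoping identity $h^{\Delta^n}=\sum_{k=0}^n\varphi_k+\sum_{k=0}^nh'_k$.

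Next I would handle the~$\chi_k$'s. By construction~\eqref{E:3.7}, $\chi_k$ is the $L^2$-residual of~$\varphi_k$ after projecting onto~$\sigma(\varphi_k(0))$, so $\chi_k\perp\varphi_k(0)$; in the Gaussian setting this is independence within the $k$-th layer. Independence of~$\chi_k$ from everything with index $\ell\ne k$ is then immediate because $\chi_k$ is a linear functional of~$\varphi_k$ alone, and $\varphi_k\perp\varphi_\ell,h'_\ell$ has already been established. The representation~\eqref{E:3.14} comes from univariate Gaussian regression: $\varphi_k(x)=c_k(x)\varphi_k(0)+\chi_k(x)$ with $c_k(x):=\Cov(\varphi_k(x),\varphi_k(0))/\Var(\varphi_k(0))$, and a direct comparison with~\eqref{E:3.14q} gives $c_k(x)=1+\frakb_k(x)$; summing over~$k$ and adding $h'=\sum_k h'_k$ produces~\eqref{E:3.14}.

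It remains to verify $\Var(\varphi_k(0))>0$. For $k=0$ this is immediate from~\eqref{E:3.11}. For $k\ge1$, since $0\in\Delta^{k-1}$, one has $\varphi_k(0)=\sum_{y\in\partial\Delta^{k-1}}H^{\Delta^{k-1}}(0,y)\,h^{\Delta^k}(y)$, where $H^{\Delta^{k-1}}$ is the discrete Poisson kernel of~$\Delta^{k-1}$. The coefficients are strictly positive on every $y\in\partial\Delta^{k-1}$ because $\Delta^{k-1}$ is finite and connected with~$0$ inside, and the Gaussian vector $(h^{\Delta^k}(y))_{y\in\partial\Delta^{k-1}}$ has positive-definite covariance (the restriction of~$G^{\Delta^k}$ to the non-empty set $\partial\Delta^{k-1}\subset\Delta^k$ is positive definite), so the linear combination has strictly positive variance. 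The entire argument is essentially a Gaussian computation driven by iterated Gibbs-Markov; the only delicate point is bookkeeping---carefully tracking which layer each object lives in when verifying orthogonality---but no individual step requires a delicate estimate.
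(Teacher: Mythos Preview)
Your overall strategy---iterating the Gibbs--Markov property along the nested domains and then splitting each~$\varphi_k$ into $\varphi_k(0)$ and~$\chi_k$ by Gaussian regression---is sound and close in spirit to the paper's argument. However, there is a concrete bookkeeping error precisely where you warned it might occur, and it makes the identification ``$\wt\varphi_k$ matches~\eqref{E:3.6}'' false as written.

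With $V=\Delta^k$ and $U=\Delta^k\smallsetminus\overline{\Delta^{k-1}}$, the binding field $\wt\varphi_k$ equals $h^{\Delta^k}$ on \emph{all} of~$\overline{\Delta^{k-1}}$, hence is \emph{not} harmonic on~$\Delta^{k-1}$. But the object defined in~\eqref{E:3.6} (and described in the statement) \emph{is} harmonic there: one computes, using $h^{\Delta^n}=h^{\Delta^k}+\pi_k$, that the difference on the right of~\eqref{E:3.6} equals $E\bigl(h^{\Delta^k}\,\big|\,\sigma(h^{\Delta^k}(z)\colon z\in\partial\Delta^{k-1})\bigr)$, which is the harmonic extension from~$\partial\Delta^{k-1}$ into both $\Delta^{k-1}$ and the annulus. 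The two fields agree on $\partial\Delta^{k-1}$ and on the annulus but differ on~$\Delta^{k-1}$ by exactly~$h^{\Delta^{k-1}}$. The fix is simple: take $U=\Delta^k\smallsetminus\partial\Delta^{k-1}$ instead. Then the binding field is precisely~$\varphi_k$ (as in~\eqref{E:3.10}), and $h^{\Delta^k\smallsetminus\partial\Delta^{k-1}}$ splits, by disconnectedness, into the independent pieces $h^{\Delta^{k-1}}$ (which feeds the next iteration) and $h'_k=h^{\Delta^k\smallsetminus\overline{\Delta^{k-1}}}$. Your $\varphi_0$ step has the same issue in reverse: with $U=\Delta^n\smallsetminus\{0\}$ the binding field is $h^{\Delta^n}(0)\frakg^{\Delta^n}$, which is neither supported at the origin nor of variance~$1$; the field you want, $h^{\{0\}}$, emerges only at the \emph{end} of the corrected iteration as the final remainder~$h^{\Delta^0}$.

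For comparison, the paper organizes the same content via a filtration $\FF_k:=\sigma(h^{\Delta^n}(z)\colon z\in\bigcup_{\ell=n-k}^n\partial\Delta^\ell)$ and identifies $\varphi_{n-k}$ as the martingale increment $E(h^{\Delta^n}\mid\FF_{k+1})-E(h^{\Delta^n}\mid\FF_k)$. Independence of the~$\varphi_k$'s is then automatic (orthogonal Gaussian martingale increments), and $h'=h^{\Delta^n}-E(h^{\Delta^n}\mid\FF_{n+1})$ is identified in one stroke as the DGFF on the union of annuli. This packaging sidesteps the layer-by-layer bookkeeping that tripped you up. The paper also derives the positivity of $\Var(\varphi_k(0))$ from the telescoping identity $\Var(\varphi_k(0))=\Var(h^{\Delta^k}(0))-\Var(h^{\Delta^{k-1}}(0))$; your argument via positive-definiteness of $G^{\Delta^k}$ on~$\partial\Delta^{k-1}$ is an equally valid alternative.
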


\begin{figure}
\vglue0.0cm
\centerline{\includegraphics[width=0.8\textwidth]{./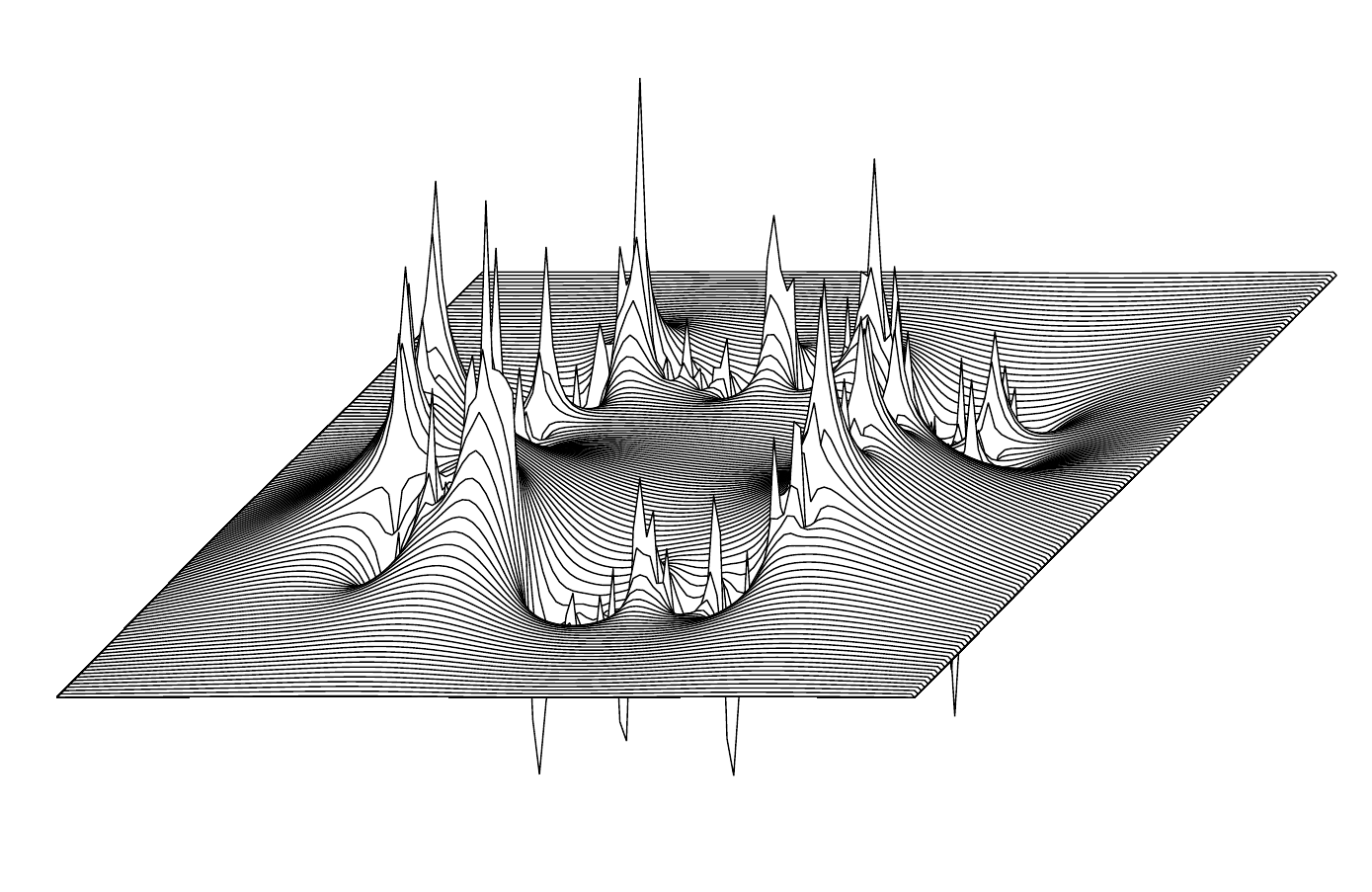}}
\begin{quote}
\small
\vglue-4mm
\caption{The graph of a random sample of~$\chi_k$ from \eqref{E:3.7} for~$\Delta^k$ being a square in~$\Z^2$ of side-length~$128$. The function $\varphi_k$ from which~$\chi_k$ is derived takes values of the DGFF~$h^{\Delta^k}$ on~$\partial\Delta^{k-1}$ and is discrete harmonic elsewhere.
\label{fig3}
}
\normalsize
\end{quote}
\end{figure}

\begin{proofsect}{Proof} Fix~$n\ge0$ and consider the $\sigma$-algebras
\begin{equation}
\FF_k:=\sigma\Bigl(h^{\Delta^n}(z)\colon z\in\bigcup_{\ell={n-k}}^n\partial \Delta^\ell\Bigr),\qquad k=0,\dots,n+1,
\end{equation}
where $\partial \Delta^{-1}:=\Delta^0$.
Obviously, $k\mapsto\FF_k$ is non-decreasing. Since $h^{\Delta^n}$ is fixed to zero on~$\partial \Delta^n$, we have $E(h^{\Delta^n}(x)\big|\FF_0)=E(h^{\Delta^n}(x))=0$ and so
\begin{equation}
\label{E:3.17}
E\bigl(h^{\Delta^n}(x)\big|\FF_{n+1}\bigr)=
\sum_{k=0}^{n}\Bigl(E\bigl(h^{\Delta^n}(x)\big|\FF_{k+1}\bigr)-E\bigl(h^{\Delta^n}(x)\big|\FF_k\bigr)\Bigr).
\end{equation}
We claim that
\begin{equation}
\label{E:3.18ww}
E\bigl(h^{\Delta^n}(x)\big|\FF_{k+1}\bigr)-E\bigl(h^{\Delta^n}(x)\big|\FF_k\bigr)=\varphi_{n-k}(x).
\end{equation}
Indeed, the Gibbs-Markov property (Lemma~\ref{lemma-GM}) tells us that $x\mapsto E(h^{\Delta^n}(x)|\FF_k)$ is the discrete-harmonic extension of the restriction of~$h^{\Delta^n}$ to the set in the definition of~$\FF_k$. The two terms on the left of \eqref{E:3.18ww} have equal extensions outside~$\Delta^{n-k}$ and so their difference vanishes there, while inside~$\Delta^{n-k}$ it is equal to the difference between the harmonic extension of~$h^{\Delta^n}$ restricted to $\partial \Delta^{n-k}\cup\partial \Delta^{n-k-1}$ and the harmonic extension of~$h^{\Delta^n}$ restricted to~$\partial \Delta^{n-k}$. Comparing with \eqref{E:3.6}, this yields \eqref{E:3.18ww}. 

Next we observe that the terms in the sum \eqref{E:3.17} are independent of one another because they are uncorrelated (being martingale increments) Gaussians.  The field $h'$ in \eqref{E:3.8} can in turn be written as $h^{\Delta^n}-E(h^{\Delta^n}(\cdot)\big|\FF_{n+1})$ and so, by the Gibbs-Markov property,  it has the law of the \DGFF{} in $\bigcup_{k=1}^{n} \Delta^k\smallsetminus\overline{\Delta^{k-1}}$. Since the \DGFF{}s in disconnected sets are independent, we get
\begin{equation}
h^{\Delta^n}(x)=\sum_{k=0}^n\varphi_k(x)+\sum_{k=0}^n h'_k
\end{equation}
with all the fields on the right independent of one another. The Gibbs-Markov property then also helps us check \twoeqref{E:3.10}{E:3.11}. 

Now define~$\chi_k$ by \eqref{E:3.7}. Then~$\varphi_k(0)$ and~$\chi_k$ are uncorrelated and thus independent, yielding the claimed independence of the family \eqref{E:3.10a}. Interpreting conditioning as a projection gives
\begin{equation}
E\bigl(\varphi_k(x)\bigl|\sigma(\varphi_k(0))\bigr)=\frakf_k(x)\varphi_k(0)
\end{equation}
for some deterministic function~$\frakf_k$. Writing $\frakf_k$ as $1+\frakb_k$, a covariance calculation shows that $\frakb_k$ must  be given by \eqref{E:3.14q} provided we can verify $\Var(\varphi_k(0))>0$. This follows from 
\begin{equation}
\label{E:3.21}
\Var\bigl(\varphi_k(0)\bigr) = \Var\bigl(h^{\Delta^k}(0)\bigr)-\Var\bigl(h^{\Delta^{k-1}}(0)\bigr)
\end{equation}
and the connectedness of~$\Delta^k$ along with $\Delta^{k-1}\subsetneq \Delta^{k}$, as implied by~\eqref{E:3.5}. (Indeed, by \eqref{E:DGFF-def} the variances are the expected numbers of returns of the simple random walk from 0 to 0 before the walk leaves the given set and, under the stated conditions, there is a finite path of the walk from~0 to~0 that stays in~$\Delta^k$ but leaves~$\Delta^{k-1}$ along the way.) 
\end{proofsect}

The decomposition \eqref{E:3.14} yields:

\begin{corollary}
Assume $\{\varphi_k(0)\colon k\ge0\}$, $\{\chi_k\colon k\ge0\}$ and $\{h_k'\colon k\ge0\}$ are independent objects  with the laws as specified in Proposition~\ref{prop-concentric} and let $\{\frakb_k\colon k\ge0\}$ be as in \eqref{E:3.14q}. Then, for each~$n\ge0$, the sum on the right-hand side of \eqref{E:3.14} has the law of the \DGFF{} in~$\Delta^n$. 
In particular, the whole family $\{h^{\Delta^n}\colon n\ge0\}$ can be constructed on a single probability space by imposing~\eqref{E:3.14} for all~$n\ge0$. Moreover, the object defined in  \eqref{E:3.6} is then given by
\begin{equation}
\label{E:3.22r}
\varphi_k(x)=\bigl(1+\frakb_k(x)\bigr)\varphi_k(0)+\chi_k(x)
\end{equation}
for all $k\ge0$ and all~$x\in\Z^2$.
\end{corollary}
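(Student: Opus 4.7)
The plan is to reduce everything to the uniqueness of centered Gaussian laws together with a locality observation for the marginals. First I would argue the distributional identity: when the three families in \eqref{E:3.10a} are sampled independently with the marginal laws prescribed in Proposition~\ref{prop-concentric}, the right-hand side of \eqref{E:3.14} is a centered Gaussian field on $\Z^2$ supported in $\Delta^n$, and by independence its covariance is the sum of the marginal covariances of its summands. But Proposition~\ref{prop-concentric}, applied to a genuine $h^{\Delta^n}$, produces an almost sure decomposition of the same form with independent components having precisely those marginal laws; hence the sum of those marginal covariances must equal $\Cov(h^{\Delta^n}(\cdot),h^{\Delta^n}(\cdot))$. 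Uniqueness of centered Gaussian laws then forces the independent-sampling version of \eqref{E:3.14} to coincide in law with $h^{\Delta^n}$.

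For the simultaneous construction of the whole family $\{h^{\Delta^n}\colon n\ge 0\}$, the key observation is that the marginal laws of $\varphi_k(0)$, $\chi_k$ and $h'_k$ described in items~(1) and~(2) of Proposition~\ref{prop-concentric} depend only on the pair $(\Delta^{k-1},\Delta^k)$, not on any larger enclosing domain. Indeed, \eqref{E:3.10} recasts $\varphi_k$ as a functional of $h^{\Delta^k}$ alone, \eqref{E:3.7} then defines $\chi_k$ intrinsically from $\varphi_k$, and \eqref{E:3.12} identifies the law of $h'_k$ with that of the DGFF in the annulus $\Delta^k\smallsetminus\overline{\Delta^{k-1}}$. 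Consequently I would sample the three families $\{\varphi_k(0)\}_{k\ge 0}$, $\{\chi_k\}_{k\ge 0}$, $\{h'_k\}_{k\ge 0}$ once and for all, mutually independent, and define $h^{\Delta^n}$ simultaneously for every $n\ge 0$ by \eqref{E:3.14}. The first claim ensures each $h^{\Delta^n}$ has the correct DGFF law, so this gives a single-probability-space coupling.

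The identity \eqref{E:3.22r} is essentially bookkeeping. Interpreting the conditional expectation as an $L^2$-projection onto the one-dimensional Gaussian subspace generated by $\varphi_k(0)$ yields $E\bigl(\varphi_k(x)\mid\sigma(\varphi_k(0))\bigr)=\frakf_k(x)\,\varphi_k(0)$ for a deterministic function $\frakf_k$; matching covariances with $\varphi_k(0)$ (exactly as in the computation inside Proposition~\ref{prop-concentric}) forces $\frakf_k(x)=1+\frakb_k(x)$ with $\frakb_k$ given by \eqref{E:3.14q}. Rearranging the definition \eqref{E:3.7} of $\chi_k$ and solving for $\varphi_k(x)$ then yields \eqref{E:3.22r}.

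I do not expect serious obstacles: the argument is almost entirely a repackaging of what has already been established in Proposition~\ref{prop-concentric}. The one point that requires care is maintaining a clean separation between the \emph{forward} decomposition (starting from a DGFF and extracting the components) used to determine the marginal laws, and the \emph{reverse} synthesis (starting from independent components with those laws and reconstructing the DGFF) that this corollary asserts; the uniqueness-of-Gaussians step in the first paragraph is what bridges the two.
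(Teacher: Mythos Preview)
Your proposal is correct and follows essentially the same approach as the paper's own proof, only with more detail spelled out. The paper's argument is a terse two-sentence version of yours: it notes that the laws of $\varphi_k(0)$, $\chi_k$, $h_k'$ and the function $\frakb_k$ depend only on $\Delta^{k-1}$ and $\Delta^k$ (your ``locality observation''), so \eqref{E:3.14} can be imposed simultaneously for all $n$, and then refers back to the projection computation inside Proposition~\ref{prop-concentric} for \eqref{E:3.22r}---exactly the content of your third paragraph.
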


\begin{proofsect}{Proof}
As is easily checked, the laws of the random variables $\varphi_k(0)$ and random fields~$\chi_k$ and~$h_k'$ as well as the function~$\frakb_k$ depend only on $\Delta^{k-1}$ and~$\Delta^k$ and, in particular, have no explicit dependence on~$n$. Hence, \eqref{E:3.14} can be imposed simultaneously for all~$n\ge0$. The final argument in the proof of Proposition~\ref{prop-concentric} then yields \eqref{E:3.22r}.
\end{proofsect}

We will henceforth assume that all random variables in \eqref{E:3.10a} are realized on the same probability space (as independent with laws as above) and the fields $\{h^{\Delta^n}\colon n\ge0\}$, resp., $\{\varphi_n\colon n\ge0\}$ are derived from these via \eqref{E:3.14}, resp.,~\eqref{E:3.22r}.

The sequence $\{\varphi_k(0)\colon k\ge0\}$ will be central to our arguments and that particularly so via the sequence of its partial sums
\begin{equation}
\label{E:RW}
S_k:=\sum_{\ell=0}^{k-1}\varphi_\ell(0),\qquad k\ge0.
\end{equation} 
Here it is important to note that, in light of
\begin{equation}
\frakb_k(0)=0 \quad\text{as well as}\quad \chi_k(0)=0\quad\text{and}\quad h_k'(0)=0\quad\text{ a.s.},\qquad k\ge0,
\end{equation}
we have~$S_k=h^{\Delta^{k-1}}(0)$ for all $k\ge1$.  
The independence of $\{\varphi_k(0)\colon k\ge0\}$ suggests to regard $\{S_k\colon k\ge1\}$ as a path of a \emph{random walk}, albeit with time-inhomogeneous steps. For the conditional event in \eqref{E:3.3} we then get
\begin{equation}
\label{E:3.16qq}
h^{\Delta^n}(0)=0\quad\Leftrightarrow\quad S_{n+1}=0.
\end{equation} 
The conditioning on $h^{\Delta^n}(0)=0$ thus amounts to the requirement that the random walk be back to its starting point after~$n+1$ steps.

\begin{remark}
The above decomposition of the \DGFF{} can be regarded as a concentric analogue of the finite-range decomposition of the (homogeneous) Gaussian Free Field (discrete or continuum) used in renormalization-based treatments of interacting random fields (Brydges~\cite{Brydges-Park-City}). There is also some vague analogy between~$\{S_n\}$ and the random walk that arises in the spine decomposition of a Branching Random Walk (e.g., A{\"{\i}}d\'ekon~\cite{Aidekon}).  The review~\cite{Biskup-PIMS} contains a thorough discussion of this connenction. 
\end{remark}

\subsection{Useful estimates}
Our next task is to represent the overall growth of $h^{\Delta^n}$ in terms of the random walk \eqref{E:RW}. This will require rather tight control of the various terms on the right-hand side of \eqref{E:3.14} and, for convenience, a specific choice of the domains in \eqref{E:3.5}. Therefore, we will set
\begin{equation}
\label{E:3.25}
\Delta^k:=\bigl\{x\in\Z^2\colon |x|_\infty\le 2^k\bigr\},\qquad k\ge1,
\end{equation}
for the remainder of this paper. (Generalizations when~$\Delta^n$ is replaced by a more general domain of the same  spatial  scale will be discussed in Section~\ref{sec-4.5}.)
Our discussion of the behavior of the objects entering \eqref{E:3.14} starts with the random variables~$\varphi_k(0)$:

\begin{lemma}
\label{lemma-3.4}
We have
\begin{equation}
\inf_{k\ge1}\,\Var\bigl(\varphi_k(0)\bigr)>0.
\end{equation}
Moreover,
\begin{equation}
\label{E:3.20}
\lim_{k\to\infty}\Var\bigl(\varphi_k(0)\bigr)=g\log 2.
\end{equation}
\end{lemma}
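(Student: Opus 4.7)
The plan is to reduce both claims to standard asymptotics of the Green function $G^{\Delta^k}(0,0)$, exploiting the identity \eqref{E:3.21}, which gives
\begin{equation*}
\Var\bigl(\varphi_k(0)\bigr) \,=\, G^{\Delta^k}(0,0) - G^{\Delta^{k-1}}(0,0).
\end{equation*}
The second claim \eqref{E:3.20} is then essentially a statement that the Green function at the centre of a square of side $2^{k+1}$ grows logarithmically in the side with slope $g$. Concretely, I would invoke (or state as a lemma from Appendix~B) the asymptotic
\begin{equation*}
G^{\Delta^k}(0,0) \,=\, g\,\log 2^k + \hat c + o(1), \qquad k\to\infty,
\end{equation*}
valid for some finite shape-dependent constant $\hat c$. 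Subtracting the same asymptotic for $k-1$ immediately yields $\Var(\varphi_k(0))\to g\log 2$.

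For the existence and finiteness of $\hat c$, the cleanest route is to use the representation $G^{\Delta^k}(0,0) = \sum_{y\in\partial\Delta^k}H_{\Delta^k}(0,y)\mathfrak a(y)$, where $H_{\Delta^k}$ is the harmonic measure on $\partial\Delta^k$ from $0$, combined with the asymptotic $\mathfrak a(y) = g\log|y| + c + o(1)$ from \eqref{E:2.8u} (with an $O(|y|^{-2})$ error if the sharper form is cited). Because $\Delta^k$ is just the $2^k$-scaled copy of $\Delta^0 = [-1,1]^2\cap\Z^2$, harmonic-measure estimates together with the logarithm's translation behaviour give a genuine limit for $G^{\Delta^k}(0,0)-g\log 2^k$. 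I would quote such a statement from Appendix~B rather than proving it from scratch.

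For the first claim, non-negativity of each $\Var(\varphi_k(0))$ is automatic, and strict positivity for every fixed $k\ge 1$ was already noted at the very end of the proof of Proposition~\ref{prop-concentric} (a SRW path from $0$ to $0$ that stays in $\Delta^k$ but leaves $\Delta^{k-1}$ exists because $\Delta^{k-1}\subsetneq\Delta^k$ is connected). Since $\Var(\varphi_k(0))\to g\log 2>0$, only finitely many indices $k$ can have $\Var(\varphi_k(0))$ below, say, $\tfrac12 g\log 2$; taking the minimum over this finite list together with $\tfrac12 g\log 2$ yields a strictly positive lower bound.

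The only non-routine step is the existence of a genuine limit for $G^{\Delta^k}(0,0)-g\log 2^k$; the rest is arithmetic. This is a well-known Green-function estimate for SRW in $\Z^2$ restricted to squares, and I would rely on a cited lemma from Appendix~B to dispatch it in one line.
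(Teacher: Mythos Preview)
Your proposal is correct and follows essentially the same approach as the paper: both reduce to \eqref{E:3.21}, invoke the asymptotic $G^{\Delta^k}(0,0)=g\log(2^k)+c_0+o(1)$ via Lemmas~\ref{lemma-G-potential}--\ref{lemma-potential}, and deduce the infimum bound from strict positivity for each~$k$ (Proposition~\ref{prop-concentric}) combined with the positive limit. Your write-up is slightly more explicit about the ``finitely many small terms'' argument for the infimum, but the substance is identical.
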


\begin{proofsect}{Proof}
The strict positivity of $\Var\bigl(\varphi_k(0)\bigr)$ for each~$k\ge0$ was proved in Proposition~\ref{prop-concentric} so it suffices to show \eqref{E:3.20}. This follows from \eqref{E:3.21}, the representation  of  $\Var(h^{\Delta^k}(0))$ as the Green function $G^{\Delta^k}(0,0)$, the definition of~$\Delta^k$ and the asymptotic $G^{\Delta^k}(0,0)=g\log(2^k)+c_0+o(1)$ as $k\to\infty$ for some constant~$c_0$. (This form is derived using Lemmas~\ref{lemma-G-potential}--\ref{lemma-potential}.)
\end{proofsect}

Next we will address the behavior of function $\frakb_k$:

\begin{lemma}
\label{lemma-3.5rt}
Let $k\in\{0,1,\dots\}$. Then $\frakb_k$ is discrete harmonic in~$\Z^2\smallsetminus(\partial \Delta^k\cup\partial \Delta^{k-1})$ and bounded uniformly in~$k$. In addition, we have $\frakb_k(x)\ge-1$ for all~$x\in\Z^2$,
\begin{equation}
\label{E:3.29}
\frakb_k(x)=-1,\qquad x\not\in \Delta^k,
\end{equation}
and, for some~$c\in(0,\infty)$ independent of~$k$ and ``$\dist$'' denoting the $\ell^\infty$-distance on~$\Z^2$,
\begin{equation}
\label{E:3.22q}
\bigl|\frakb_k(x)\bigr|\le c\frac{\dist(0,x)}{\dist(0,\partial \Delta^{k})},\qquad x\in \Delta^k.
\end{equation}
\end{lemma}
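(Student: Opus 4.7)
Let $f_k(x):=1+\frakb_k(x)$. From \eqref{E:3.14q} and $E(\varphi_k(0))=0$, one has $f_k(x)=\Cov(\varphi_k(0),\varphi_k(x))/\Var(\varphi_k(0))$, with denominator positive by Proposition~\ref{prop-concentric} and bounded below uniformly in $k$ by Lemma~\ref{lemma-3.4}. The harmonicity and exterior claims reduce at once to properties of $\varphi_k$: item~(1) of Proposition~\ref{prop-concentric} gives that a.s.\ sample paths of $\varphi_k$ satisfy the discrete Laplace equation at every vertex of $\Delta^k\smallsetminus\partial\Delta^{k-1}$, while $\varphi_k\equiv 0$ on $\Z^2\smallsetminus\Delta^k$. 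Every $x\in\Z^2\smallsetminus(\partial\Delta^k\cup\partial\Delta^{k-1})$ lies in one of these two sets, so $\varphi_k$ is a.s.\ harmonic there; by linearity in $x$ the same holds for $\Cov(\varphi_k(0),\varphi_k(x))$, and hence for $\frakb_k$. Vanishing of $\varphi_k$ outside $\Delta^k$ forces $\frakb_k=-1$ there.

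The key identity for the remaining claims is $\Cov(\varphi_k(0),h^{\Delta^k}(y))=G^{\Delta^k}(0,y)$ for $y\in\partial\Delta^{k-1}$, which follows from the representation $\varphi_k(0)=E(h^{\Delta^k}(0)\mid\sigma(h^{\Delta^k}(z)\colon z\in\partial\Delta^{k-1}))$ and the measurability of $h^{\Delta^k}(y)$ with respect to this $\sigma$-algebra. For any $x\in\Delta^k$, $\varphi_k(x)$ is a non-negative linear combination of $\{h^{\Delta^k}(y)\colon y\in\partial\Delta^{k-1}\}$ --- the discrete harmonic extension inside $\Delta^{k-1}$ and the restricted Poisson kernel on the annulus --- so taking covariance with $\varphi_k(0)$ gives $\Cov(\varphi_k(0),\varphi_k(x))\ge 0$ and hence $\frakb_k(x)\ge -1$.

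For the uniform bound, which I regard as the main obstacle, the same identity at $y\in\partial\Delta^{k-1}$ yields $f_k(y)=G^{\Delta^k}(0,y)/\Var(\varphi_k(0))$. Since such $y$ sits at $\ell^\infty$-distance of order $2^k$ from both $0$ and $\partial\Delta^k$, standard Green function asymptotics (Lemmas~\ref{lemma-G-potential}--\ref{lemma-potential}) give $G^{\Delta^k}(0,y)=O(1)$ uniformly in~$k$, so $f_k=O(1)$ on $\partial\Delta^{k-1}$. The maximum principle, applied to the non-negative harmonic function $f_k$ separately on $\Delta^{k-1}$ and on the annulus $\Delta^k\smallsetminus\overline{\Delta^{k-1}}$ (where $f_k=0$ on $\partial\Delta^k$), then extends the bound $f_k=O(1)$ to all of $\Z^2$.

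Finally, for the Lipschitz-type estimate, when $|x|_\infty\ge 2^{k-1}$ the ratio $\dist(0,x)/\dist(0,\partial\Delta^k)$ is bounded below, so the claim follows at once from the uniform bound on $\frakb_k$. For $|x|_\infty<2^{k-1}$ one applies the standard discrete gradient estimate for harmonic functions: if $u$ is discrete harmonic on the $\ell^\infty$-ball of radius $R$ about $0$, then $|u(x)-u(0)|\le C\|u\|_\infty\,|x|_\infty/R$ for $|x|_\infty\le R/2$. Taking $u=f_k$ and $R=2^{k-1}$ (so that $f_k$ is harmonic on this ball, which is contained in $\Delta^{k-1}$) yields $|\frakb_k(x)|=|f_k(x)-1|=O(|x|_\infty/2^k)$ for $|x|_\infty\le 2^{k-2}$; the intermediate shell $2^{k-2}\le|x|_\infty<2^{k-1}$ is once more absorbed by uniform boundedness.
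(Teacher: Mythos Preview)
Your proof is correct and follows essentially the same approach as the paper. The only cosmetic differences are: for uniform boundedness, the paper invokes the identity $E(\varphi_k(x)\varphi_k(0))=G^{\Delta^k}(x,0)-G^{\Delta^{k-1}}(x,0)$ directly for all~$x$ (via Lemmas~\ref{lemma-G-potential}--\ref{lemma-potential}), whereas you evaluate it only on~$\partial\Delta^{k-1}$ and then extend by the maximum principle; for the Lipschitz bound~\eqref{E:3.22q}, the paper writes out the harmonic-measure representation $\frakb_k(x)=\sum_{y\in\partial\Delta^{k-1}}[H^{\Delta^{k-1}}(x,y)-H^{\Delta^{k-1}}(0,y)]\frakb_k(y)$ and appeals to Lemma~\ref{lemma-HM}, which is precisely the proof of the ``standard gradient estimate'' you cite as a black box.
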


\begin{proofsect}{Proof}
The stated discrete harmonicity of~$\frakb_k$, resp., \eqref{E:3.29} are directly checked from the definition in \eqref{E:3.14q}, resp., \eqref{E:3.10}. To get that $1+\frakb_k\ge0$ it suffices to check that~$\varphi_k$ has positive correlations. This follows either directly from the strong-FKG property (cf Lemma~\ref{lemma-FKG}) or by 
\begin{equation}
\label{E:phi-diff}
E\bigl(\varphi_k(x)\varphi_k(0)\bigr)=E\bigl(h^{\Delta^k}(x)h^{\Delta^k}(0)\bigr)-E\bigl(h^{\Delta^{k-1}}(x)h^{\Delta^{k-1}}(0)\bigr)
\end{equation}
combined with the argument at the end of the proof of Proposition~\ref{prop-concentric}. The uniform boundedness of~$\frakb_k$ follows from \eqref{E:phi-diff} and Lemmas~\ref{lemma-G-potential}--\ref{lemma-potential}.

It remains to prove \eqref{E:3.22q}. Clearly, by our choice of~$\Delta^k$ and the boundedness of~$\frakb_k$, we only need to do this for~$x\in \Delta^{k-2}$. Writing $H^D(x,y)$ --- where $D\subset\Z^2$, $x\in D$ and $y\in\partial D$ --- for the harmonic measure for the simple random walk, the harmonicity of~$\frakb_k$ in~$\Delta^{k-1}$ and $\frakb_k(0)=0$ yield
\begin{equation}
\frakb_k(x)=\sum_{y\in\partial \Delta^{k-1}}\bigl[H^{\Delta^{k-1}}(x,y)-H^{\Delta^{k-1}}(0,y)\bigr]\frakb_k(y).
\end{equation}
By Lemma~\ref{lemma-HM}, we have
\begin{equation}
\label{E:3.32w}
\bigl|H^{\Delta^{k-1}}(x,y)-H^{\Delta^{k-1}}(x',y)\bigr|\le c\frac{\dist(x,x')}{\dist(0,\partial \Delta^{k-1})}\frac1{|\partial \Delta^{k-1}|},\qquad x,x'\in \Delta^{k-2}.
\end{equation}
for some constant $c\in(0,\infty)$.
Using that $\frakb_k$ is bounded on~$\partial \Delta^{k-1}$, and noting that the diameter of~$\Delta^k$ is proportional to that of~$\Delta^{k-2}$, we readily infer \eqref{E:3.22q}.
\end{proofsect}

\begin{figure}[t]
\vglue0.2cm
\centerline{\includegraphics[width=0.8\textwidth]{./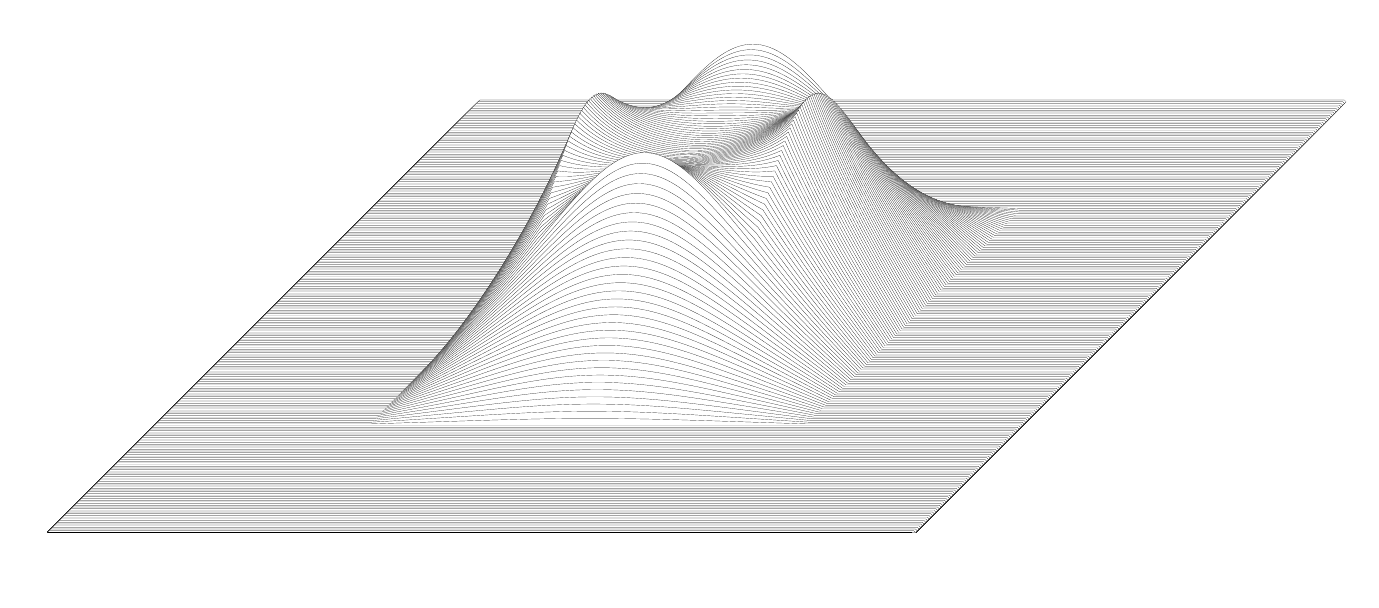}}
\begin{quote}
\small
\vglue0mm
\caption{The graph of function~$\frakb_k$ for~$k:=7$ (and~$\Delta^k$ as in \eqref{E:3.25}). Notice that $\frakb_k$ does attain strictly positive values, but only in the vicinity of~$\partial\Delta^{k-1}$. (The value outside $\Delta^k$ is negative one.)
\label{fig5}
}
\normalsize
\end{quote}
\end{figure}

Our next item of concern is the law of~$\chi_k$. Since~$\chi_k$ is a.s.\ discrete harmonic on~$\Delta^{k-1}$, its control on~$\Delta^\ell$ for $\ell\le k-2$ is fairly straightforward:

\begin{lemma}
\label{lemma-3.6a}
There is a constant~$c\in(0,\infty)$ such that for all~$k\in\N$ and all~$\ell$ with $0\le\ell\le k-2$,
\begin{equation}
\label{E:3.33}
E\Bigl(\,\max_{x\in \Delta^\ell}\chi_k(x)\Bigr)\le c2^{\ell-k}
\end{equation}
and, for some constant $c'\in(0,\infty)$ and all~$\lambda>0$,
\begin{equation}
\label{E:3.34}
P\biggl(\,\Bigl|\,\max_{x\in \Delta^\ell}\chi_k(x)-E\Bigl(\,\max_{x\in \Delta^\ell}\chi_k(x)\Bigr)\Bigr|>\lambda\biggr)
\le 2\texte^{-c'4^{k-\ell}\lambda^2}.
\end{equation}
We have $\chi_k(x)=0$ a.s.\ whenever $x\not\in \Delta^k$.
\end{lemma}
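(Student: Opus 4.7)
The proof naturally splits into three ingredients: the boundary vanishing, a uniform pointwise variance bound, and Gaussian tools to package them into the two displayed inequalities.

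\textbf{Boundary vanishing.} For $x\notin\Delta^k$, Proposition~\ref{prop-concentric}(1) gives $\varphi_k(x)=0$ a.s.\ while Lemma~\ref{lemma-3.5rt} gives $1+\frakb_k(x)=0$. Hence $\chi_k(x)=\varphi_k(x)-(1+\frakb_k(x))\varphi_k(0)=0$ a.s., which is the last assertion of the lemma.

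\textbf{Variance bound.} The analytic heart of the proof is the estimate
\begin{equation*}
\sup_{x\in\Delta^\ell}\Var\bigl(\chi_k(x)\bigr)\le C\,4^{\ell-k},
\end{equation*}
with $C$ independent of $k,\ell$. Since both $\varphi_k$ and $\frakb_k$ are discrete harmonic on $\Delta^{k-1}$, so is $\chi_k$. Combining $\chi_k(0)=0$ with the Poisson representation yields, for $x\in\Delta^\ell$ with $\ell\le k-2$,
\begin{equation*}
\chi_k(x)=\sum_{y\in\partial\Delta^{k-1}}a_{x,y}\,\chi_k(y),\qquad a_{x,y}:=H^{\Delta^{k-1}}(x,y)-H^{\Delta^{k-1}}(0,y),
\end{equation*}
where $\sum_y a_{x,y}=0$ and, by Lemma~\ref{lemma-HM}, $|a_{x,y}|\le c\,2^{\ell-k}/|\partial\Delta^{k-1}|$. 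Writing the resulting quadratic form and using the explicit covariance $\Cov(\chi_k(y),\chi_k(y'))=G^{\Delta^k}(y,y')-G^{\Delta^k}(y,0)G^{\Delta^k}(y',0)/\Var(\varphi_k(0))$, the piece that is constant in $(y,y')$---and which, by the asymptotics of the Green function on $\partial\Delta^{k-1}$ recorded in Lemmas~\ref{lemma-G-potential}--\ref{lemma-potential}, carries the $O(k)$ growth---is annihilated by the relation $\sum_y a_{x,y}=0$. The remainder is controlled by the asymptotic $G^{\Delta^k}(y,y')=-g\log|y-y'|_\infty+O(1)$ plus the pointwise bound on $|a_{x,y}|$, which together give the desired $O(4^{\ell-k})$.

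\textbf{Concentration and expected maximum.} The concentration statement \eqref{E:3.34} is an immediate application of the Borell-TIS inequality to the centered Gaussian field $\{\chi_k(x):x\in\Delta^\ell\}$ with $\sigma^2:=\sup_{x\in\Delta^\ell}\Var(\chi_k(x))\le C\,4^{\ell-k}$, producing the exponent $c'4^{k-\ell}\lambda^2$ with $c':=1/(2C)$. For the bound \eqref{E:3.33} on $E(\max\chi_k)$ we use Dudley's entropy bound for the pseudometric $d(x,y):=\sqrt{\Var(\chi_k(x)-\chi_k(y))}$. Repeating the Poisson-representation argument with coefficients $H^{\Delta^{k-1}}(x,\cdot)-H^{\Delta^{k-1}}(y,\cdot)$, which again sum to zero and are bounded, by Lemma~\ref{lemma-HM}, by $c|x-y|_\infty\,2^{-k}/|\partial\Delta^{k-1}|$, yields $d(x,y)\le C\,|x-y|_\infty\,2^{-k}$. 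The $d$-diameter of $\Delta^\ell$ is therefore $O(2^{\ell-k})$ and, since the $d$-covering number of $\Delta^\ell$ at scale $\varepsilon$ is $O((2^{\ell-k}/\varepsilon)^2)$, the Dudley integral evaluates to $O(2^{\ell-k})$.

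\textbf{Main obstacle.} The delicate step is the elimination of the potential factor of $k$ in the variance bound: a plug-in using $|\Cov(\chi_k(y),\chi_k(y'))|\le Ck$ on $\partial\Delta^{k-1}$ gives only $Ck\,4^{\ell-k}$, which would degrade both \eqref{E:3.33} and \eqref{E:3.34} by $\sqrt{k}$ factors. The gain comes from exploiting $\sum_y a_{x,y}=0$ to cancel the $(k-1)\log 2$ piece of $G^{\Delta^k}(y,y')$, so that only the $-\log|y-y'|$ part, which is effectively of constant order in the quadratic form against~$a_{x,\cdot}$, contributes.
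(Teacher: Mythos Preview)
Your overall architecture is right and matches the paper's: harmonicity of~$\chi_k$ on~$\Delta^{k-1}$, a Poisson representation with differenced harmonic-measure coefficients, Borell--TIS for~\eqref{E:3.34}, and an entropy bound for~\eqref{E:3.33}. The gap is precisely at the step you flag as the ``main obstacle,'' and your proposed fix does not close it.

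You write $\Var(\chi_k(x))=\sum_{y,y'}a_{x,y}a_{x,y'}\Cov(\chi_k(y),\chi_k(y'))$ on $\partial\Delta^{k-1}$, decompose $G^{\Delta^k}(y,y')$ into a constant of order~$k$ plus $-g\log|y-y'|+O(1)$, kill the constant using $\sum_y a_{x,y}=0$, and then claim the remainder is controlled by the \emph{pointwise} bound $|a_{x,y}|\le c\,2^{\ell-k}/|\partial\Delta^{k-1}|$. But the kernel $-g\log|y-y'|$ has average of order~$-gk\log 2$ over $\partial\Delta^{k-1}\times\partial\Delta^{k-1}$ (typical distances are $\sim2^{k-1}$), so $\sum_{y,y'}|a_{x,y}||a_{x,y'}|\,g\log(|y-y'|+1)$ is of order $k\cdot4^{\ell-k}$, not $4^{\ell-k}$. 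Thus the pointwise bound alone leaves exactly the factor of~$k$ you are trying to remove; the cancellation $\sum_y a_{x,y}=0$ has already been spent on the constant and cannot be used again here.

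The paper sidesteps this by a different mechanism. First it bounds $\Var(\chi_k(x)-\chi_k(y))\le\Var(\varphi_k(x)-\varphi_k(y))$, which replaces $\Cov(\chi_k(\cdot),\chi_k(\cdot))$ by the \emph{nonnegative} kernel $G^{\Delta^k}$. Then, instead of splitting off a constant, it uses positivity to bound the quadratic form by $(\max_z|a_z|)^2\sum_{z,z'}G^{\Delta^k}(z,z')$ and invokes the fact that the \emph{average} of $G^{\Delta^k}$ over $\partial\Delta^{k-1}\times\partial\Delta^{k-1}$ is $O(1)$ (this is where the log-singularity is integrated out). This gives $c(|x-y|/2^k)^2$ directly. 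Your own covariance formula actually contains this shortcut: the subtracted rank-one term $G^{\Delta^k}(y,0)G^{\Delta^k}(y',0)/\Var(\varphi_k(0))$ is nonnegative, so dropping it yields $\Var(\chi_k(x))\le\sum_{y,y'}a_{x,y}a_{x,y'}G^{\Delta^k}(y,y')$, after which the paper's positivity-plus-average argument applies verbatim. Alternatively, one can salvage your route by exploiting not just the sum-zero property of~$a_{x,\cdot}$ but its full structure as a difference of harmonic measures: since $\fraka(y-\cdot)$ is discrete harmonic on~$\Delta^{k-1}$ for $y\in\partial\Delta^{k-1}$, one sum collapses to $\fraka(y-x)-\fraka(y)=O(2^{\ell-k})$, and then the second sum gives $O(4^{\ell-k})$. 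Either way, more than the pointwise bound on~$|a_{x,y}|$ is required.
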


\begin{proofsect}{Proof}
We start with \eqref{E:3.33}. The sample paths of~$\chi_k$ are discrete harmonic on~$\Delta^{k-1}$ and obey $\chi_k(0)=0$. Using the notation from the previous proof, for each~$x\in \Delta^{k-2}$,
\begin{equation}
\chi_k(x)=\sum_{z\in\partial \Delta^{k-2}}\bigl[H^{\Delta^{k-2}}(x,z)-H^{\Delta^{k-2}}(0,z)\bigr]\chi_k(z).
\end{equation}
In light of \eqref{E:3.32w}, we get
\begin{equation}
\label{E:3.36}
\max_{x\in \Delta^\ell}\chi_k(x)\le c2^{\ell-k}\max_{x\in \partial \Delta^{k-2}}\chi_k(x),\qquad \ell=0,\dots,k-2.
\end{equation}
It thus suffices to show that the expected maximum on the right is uniformly bounded in~$k$. 

By independence of~$\varphi_k(0)$ and~$\chi_k$ and the stated harmonicity, for all $x,y\in \Delta^{k-1}$,
\begin{equation}
\begin{aligned}
E\bigl(|\chi_k(x)&-\chi_k(y)|^2\bigr)\le E\bigl(|\varphi_k(x)-\varphi_k(y)|^2\bigr)
\\
&=\sum_{z,z'\in\partial \Delta^{k-1}}
\bigl[H^{\Delta^{k-1}}(x,z)-H^{\Delta^{k-1}}(y,z)\bigr]
\bigl[H^{\Delta^{k-1}}(x,z')-H^{\Delta^{k-1}}(y,z')\bigr]
G^{\Delta^k}(z,z').
\end{aligned}
\end{equation}
Invoking the standard asymptotic form of the Green function, the average of $G^{\Delta^k}(z,z')$ over the points $z,z'\in\partial \Delta^{k-1}$ is bounded uniformly in~$k$. From  \eqref{E:3.32w} we thus conclude
\begin{equation}
\label{E:3.38}
E\bigl(|\chi_k(x)-\chi_k(y)|^2\bigr)\le c\Bigl(\frac{|x-y|}{2^k}\Bigr)^2,\qquad x,y\in \overline{\Delta^{k-1}}
\end{equation}
with~$c$ independent of~$k$. Using the Fernique criterion (cf~Lemma~\ref{lemma-Fernique}) with the counting measure on~$\overline{\Delta^{k-2}}$ as the majorization measure, the expected maximum of $\chi_k$ on $\overline{\Delta^{k-2}}$ is found to be bounded uniformly in~$k$. By \eqref{E:3.36}, we get \eqref{E:3.33}.

To get \eqref{E:3.34}, we note that (since $\chi_k(0)=0$) the bound \eqref{E:3.38} shows that the variance of $\chi_k(x)$ is bounded uniformly in~$x\in\overline{\Delta^{k-1}}$. By the argument leading to \eqref{E:3.36}, we thus get
\begin{equation}
\max_{x\in \Delta^\ell}\Var\bigl(\chi_k(x)\bigr)\le c 2^{2(\ell-k)},\qquad \ell=0,\dots,k-2.
\end{equation}
The Borell-Tsirelson inequality (cf~Lemma~\ref{lemma-BT}) then yields \eqref{E:3.34}.
\end{proofsect}

Unfortunately, $\chi_k$ is not discrete harmonic on~$\partial \Delta^{k-1}\cup\partial \Delta^k$, and so its control on $\Delta^k\smallsetminus \Delta^{k-2}$ requires a different argument. We will rely on the fact that, by combining $\chi_k$ with~$h_k'$ we get (more or less) the \DGFF{} in~$\Delta^k\smallsetminus \Delta^{k-2}$. Let us therefore first address the tails of~$h_k'$.

\begin{lemma}
\label{lemma-3.6}
Recall the notation~$m_N$ for the quantity from \eqref{E:mN}. There are constants~$c_1,c_2\in(0,\infty)$ such that for all~$k\ge1$ and all~$\lambda>0$,
\begin{equation}
\label{E:3.40}
P\Bigl(\,\bigl|\,\max_{x\in \Delta^k\smallsetminus{\Delta^{k-1}}}h_k'(x)-m_{2^k}\bigr|>\lambda\Bigr)
\le c_1\texte^{-c_2\lambda}.
\end{equation}
(We have $h'_k(x)=0$ a.s.\ outside the annulus $\Delta^k\smallsetminus{\Delta^{k-1}}$.) As $k\to\infty$ the joint law of
\begin{equation}
\label{E:3.41}
\bigl(2^{-k}\argmax_{\,\Delta^k\smallsetminus{\Delta^{k-1}}} \,h_k', \max_{x\in \Delta^k\smallsetminus{\Delta^{k-1}}}h_k'(x)-m_{2^k}\bigr)
\end{equation}
tends to a non-degenerate distribution on~$(-1,1)^2\times\R$.
\end{lemma}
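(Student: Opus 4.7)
The plan is to identify $h'_k$ as (a copy of) the DGFF on the annulus $A^k := \Delta^k\smallsetminus\overline{\Delta^{k-1}}$ (this is exactly what Proposition~\ref{prop-concentric}(2) gives us) and reduce both claims to known facts about the DGFF on fixed bounded lattice domains. Writing $A := (-1,1)^2\smallsetminus[-1/2,1/2]^2$, the set $A^k$ is, up to boundary technicalities, the scaled-up version $A_{2^k}$ of $A$ in the sense of \twoeqref{E:1.1}{E:1.1a}. Since $A$ has finitely many connected components and a boundary with finitely many components each of positive diameter, $A\in\mathfrak D$; the theory of~\cite{BL2} and the general tightness results for the DGFF maximum on such domains therefore apply directly with $N:=2^k$.

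For the exponential tail bound \eqref{E:3.40}, the centering of $\max h'_k$ at $m_{2^k}+O(1)$ uniformly in $k$ follows from the Ding--Zeitouni/Bramson--Ding--Zeitouni tightness estimates (quoted in Appendix~\ref{appendB}) for DGFF maxima in sequences $A_N$ arising from a fixed $A\in\mathfrak D$. The upper tail $P(\max h'_k - m_{2^k}>\lambda)\le c_1\texte^{-c_2\lambda}$ then drops out of the Borell--TIS inequality (Lemma~\ref{lemma-BT}) applied to the field $h'_k$, whose pointwise variance is uniformly bounded via $G^{A^k}(x,x)\le g\log 2^k+O(1)$, combined with Gaussian concentration around the median. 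The lower tail bound is the more delicate side and is supplied by the exponential lower-tail estimates for the DGFF maximum recorded in Appendix~\ref{appendB} (originating in~\cite{BDingZ}); the key point is that all constants depend only on $A$ and hence are uniform in $k$.

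For the joint convergence of \eqref{E:3.41} to a non-degenerate limit on $(-1,1)^2\times\R$, I apply \eqref{E:1.5} to the domain $A$ with $N:=2^k$ and a choice $r_k\to\infty$ with $r_k/2^k\to 0$: the structured local-max process $\wt\eta^A_{2^k,r_k}$ converges in law to $\text{\rm PPP}(Z^A(\textd x)\otimes\texte^{-\alpha h}\textd h)$. Since $\int_{-\infty}^{\infty}\texte^{-\alpha h}\,\textd h$ diverges at $-\infty$ but is finite at $+\infty$, and $Z^A(A)\in(0,\infty)$ almost surely, the top atom of this Cox process is well defined and its joint law is
\[
E\bigl(\widehat Z^A(B)\,\texte^{-\alpha^{-1}\texte^{-\alpha t}Z^A(A)}\bigr),\qquad B\subseteq A\text{ Borel},\ t\in\R,
\]
exactly as in \eqref{E:1.8ua}. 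The location $2^{-k}\argmax h'_k$ and value $\max h'_k-m_{2^k}$ agree with the location and height of the top atom of $\wt\eta^A_{2^k,r_k}$ except on the event that a non-$r_k$-local vertex is within $\lambda$ of the maximum; using \eqref{E:3.40} together with entropic-repulsion-type exclusion of near-maxima at small lattice distance (as in~\cite{BL1}) this event has probability tending to $0$ uniformly, and the joint convergence to the above non-degenerate limit follows.

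The main technical obstacle is the uniformity in $k$ of the centering and both-sided exponential tail bounds, and the verification that the setup for $A^k$ really matches the framework of \twoeqref{E:1.1}{E:1.1a} (in particular, that removing the closed inner square $\overline{\Delta^{k-1}}$ leaves a domain to which the results of \cite{BL2} apply with the expected scaling). Once these are in place, both parts of the lemma are essentially immediate consequences of the results already available for the DGFF on a fixed bounded planar domain.
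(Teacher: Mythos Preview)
Your argument for the convergence in \eqref{E:3.41} is essentially what the paper does: it cites \cite{BDingZ} for the value of the maximum and \cite{BL1,BL2} (in effect, \eqref{E:1.5} and \eqref{E:1.8ua}) for the joint law with the scaled location, applied to the annulus as a domain in~$\mathfrak D$. No objection there.

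The gap is in your upper-tail argument for \eqref{E:3.40}. Borell--TIS (Lemma~\ref{lemma-BT}) gives
\[
P\Bigl(\max_{x\in A^k} h'_k(x)-E\max_{x\in A^k} h'_k(x)>\lambda\Bigr)\le \texte^{-\lambda^2/(2\sigma_k^2)}
\]
with $\sigma_k^2=\max_{x\in A^k}\Var(h'_k(x))\sim g\,k\log2$. This bound is \emph{not} uniform in~$k$: for $\lambda$ of order $k^{1/2}$ (say), the right-hand side is of constant order, whereas \eqref{E:3.40} demands $c_1\texte^{-c_2\lambda}$ with $c_1,c_2$ independent of~$k$. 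The phrase ``Gaussian concentration around the median'' does not help, since that is precisely what Borell--TIS already expresses, with the same $\sigma_k^2$. The paper avoids this by never invoking Borell--TIS here: it uses the two-sided exponential tail for \emph{square} domains (Lemma~\ref{lemma-max-tail}, due to Ding~\cite{Ding}) and transfers it to the annulus via the stochastic comparison in Lemma~\ref{lemma-stoch-order}, exploiting that $\Delta^k\smallsetminus\Delta^{k-1}$ both contains and is contained in a square of side of order~$2^k$, together with the fact that $N\mapsto m_N$ is slowly varying so that the centering discrepancy is $O(1)$. Your lower-tail reference to Appendix~\ref{appendB} is on the right track but needs this same square-to-annulus transfer, since Lemma~\ref{lemma-max-tail} is stated only for~$V_N$.
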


\begin{proofsect}{Proof}
A bound of the form \eqref{E:3.40} has been proved for square domains in Ding and Zeitouni~\cite{DZ}; cf~Lemma~\ref{lemma-max-tail}. Concerning the maximum in the annuli $\Delta^k\smallsetminus \Delta^{k-1}$ we apply the bounds in Lemma~\ref{lemma-stoch-order} along with the fact that $N\mapsto m_N$ is slowly varying and that $\Delta^k\smallsetminus \Delta^{k-1}$ contains and is contained in a square of side of order~$2^k$.

The convergence in \eqref{E:3.41} has been proved in Bramson, Ding and Zeitouni~\cite{BDingZ}. (Strictly speaking, \cite{BDingZ} proves only the convergence in law for the value of the maximum in square-like domains. The above joint convergence can be gleaned from their proofs and/or can be found in Biskup and Louidor~\cite{BL1,BL2}, with~\cite{BL2} addressing general domains.  Alternative proofs of both tightness and distributional convergence have been given in \cite{Biskup-PIMS}. These rely strongly on the techniques developed in the present paper.) 
\end{proofsect}

We are now ready to deal with $\chi_k$ on $\Delta^k\smallsetminus \Delta^{k-2}$ although, as mentioned before, not without some help from $h'_k$:

\begin{lemma}
\label{lemma-3.7}
There are constants $c_1,c_2\in(0,\infty)$ such that for all $k\ge1$ and all $\lambda>0$,
\begin{equation}
\label{E:3.43}
P\Bigl(\,\bigl|\,\max_{x\in \Delta^{k}\smallsetminus \Delta^{k-1}}\bigl(\chi_k(x)+\chi_{k-1}(x)+h_k'(x)\bigr)-m_{2^k}\bigr|>\lambda\Bigr)
\le c_1\texte^{-c_2\lambda}.
\end{equation}
Similarly, we also have
\begin{equation}
\label{E:3.44}
P\Bigl(\,\bigl|\,\max_{x\in \Delta^{k-1}\smallsetminus \Delta^{k-2}}\bigl(\chi_k(x)+\chi_{k-1}(x)+\chi_{k-2}(x)+h_{k-1}'(x)\bigr)-m_{2^k}\bigr|>\lambda\Bigr)
\le c_1\texte^{-c_2\lambda}.
\end{equation}
\end{lemma}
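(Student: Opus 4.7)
The plan is to reduce both claims to known exponential tail bounds for the \DGFF{} maximum (Lemmas~\ref{lemma-max-tail} and~\ref{lemma-stoch-order}) by expressing the sums in the statement as $h^{\Delta^k}$ restricted to the relevant annulus, corrected by a sub-Gaussian shift coming from the ``radial'' variables $\varphi_\ell(0)$.

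The starting algebraic observation is that, by Proposition~\ref{prop-concentric}, each $\chi_\ell$ is supported on $\Delta^\ell$ (since the same holds for $\varphi_\ell$) and $h_\ell'$ on $\Delta^\ell\setminus\overline{\Delta^{\ell-1}}$, while $1+\frakb_\ell\equiv 0$ on $\Z^2\setminus\Delta^\ell$ by Lemma~\ref{lemma-3.5rt}. For $x\in\Delta^k\setminus\Delta^{k-1}$ every entry of \eqref{E:3.14} with index $\ell\ne k$ therefore vanishes and, in addition, $\chi_{k-1}(x)=0$ holds automatically. Reading off \eqref{E:3.14} gives
\begin{equation*}
\chi_k(x)+\chi_{k-1}(x)+h_k'(x)\;=\;h^{\Delta^k}(x)-\bigl(1+\frakb_k(x)\bigr)\varphi_k(0),\qquad x\in\Delta^k\setminus\Delta^{k-1}.
\end{equation*}
The same bookkeeping on $\Delta^{k-1}\setminus\Delta^{k-2}$---where $\chi_{k-2}\equiv 0$ and only the indices $\ell=k-1,k$ contribute---yields
\begin{equation*}
\chi_k(x)+\chi_{k-1}(x)+\chi_{k-2}(x)+h_{k-1}'(x)\;=\;h^{\Delta^k}(x)-\sum_{\ell=k-1}^{k}\bigl(1+\frakb_\ell(x)\bigr)\varphi_\ell(0).
\end{equation*}

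Lemma~\ref{lemma-3.5rt} bounds $|1+\frakb_\ell|$ uniformly in $\ell$ and $x$, and Lemma~\ref{lemma-3.4} supplies a uniform bound on $\Var(\varphi_\ell(0))$, so in either identity the correction term is at most $C(|\varphi_k(0)|+|\varphi_{k-1}(0)|)$ uniformly in $x$ and therefore has a Gaussian (hence better than exponential) tail in $\lambda$. It thus suffices to show that $\max_A h^{\Delta^k}$, for $A$ either annulus, concentrates near $m_{2^k}$ with an exponential tail (an $O(1)$ shift of the centering is harmlessly absorbed into the prefactor $c_1$). The upper tail is immediate from $A\subset\Delta^k$, Lemma~\ref{lemma-max-tail} applied to $\Delta^k$, and $m_{2^{k+1}}-m_{2^k}=O(1)$. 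For the lower tail on the outer annulus, Gibbs--Markov (as in Proposition~\ref{prop-concentric}) yields $h^{\Delta^k}|_{\Delta^k\setminus\overline{\Delta^{k-1}}}=\varphi_k+h_k'$; evaluating at the argmax of $h_k'$, applying Lemma~\ref{lemma-3.6}, and controlling $|\varphi_k|$ on the thin annulus by its boundary values (maximum principle plus a Borell--TIS bound of the type used in Lemma~\ref{lemma-3.6a}) completes the estimate. For the lower tail on the inner annulus, Lemma~\ref{lemma-stoch-order} compares $h^{\Delta^k}$ from below with the \DGFF{} on a square of side $\sim 2^{k-1}$ inscribed in $\Delta^{k-1}\setminus\Delta^{k-2}$, whose maximum is then controlled by Lemma~\ref{lemma-max-tail}. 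A triangle inequality combining these ingredients with the Gaussian correction yields \eqref{E:3.43} and \eqref{E:3.44}.

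The main subtlety is the lower tail on the inner annulus: because its points lie at distance $\gtrsim 2^{k-1}$ from $\partial\Delta^k$, $\max h^{\Delta^k}$ there actually centers near $m_{2^{k-1}}$ rather than $m_{2^k}$. The two centerings differ by $2\sqrt g\log 2+O(1/k)$, a bounded constant that gets absorbed into $c_1$ without destroying the exponential form of the tail.
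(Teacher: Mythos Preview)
Your approach is essentially the paper's: both express the relevant sums on the annulus as $h^{\Delta^k}$ minus a uniformly sub-Gaussian correction $(1+\frakb_k)\varphi_k(0)$ (plus the $(1+\frakb_{k-1})\varphi_{k-1}(0)$ term on the inner annulus) and then appeal to known tail bounds for the DGFF maximum. Your algebraic identities are correct; the paper in fact writes the outer-annulus identity with an extra $(1+\frakb_{k-1}(x))\varphi_{k-1}(0)$ term, but this vanishes on $\Delta^k\setminus\Delta^{k-1}$ anyway, so the two agree.

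There is, however, a gap in your lower-tail argument for the outer annulus. You propose to control $|\varphi_k|$ uniformly on $\Delta^k\setminus\Delta^{k-1}$ via the maximum principle and a Borell--TIS bound ``of the type used in Lemma~\ref{lemma-3.6a}.'' This does not work: on $\partial\Delta^{k-1}$ the field $\varphi_k$ coincides with $h^{\Delta^k}$, whose pointwise variance there is $G^{\Delta^k}(z,z)\sim g(k-1)\log 2$. So neither the variance entering Borell--TIS nor the expected supremum of $\varphi_k$ over the annulus is bounded uniformly in~$k$. The argument in Lemma~\ref{lemma-3.6a} succeeds for $\chi_k$ precisely because $\chi_k(0)=0$ forces small variance near the origin --- a mechanism that is of no help out on the annulus. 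The paper sidesteps this entirely: it handles both annuli the same way you handle the inner one, namely via Lemma~\ref{lemma-stoch-order} (comparing $\max_A h^{\Delta^k}$ with the maximum of a DGFF on an inscribed square of side order~$2^{k-1}$) together with Lemma~\ref{lemma-max-tail}. Simply use that route for the outer annulus as well and drop the $\varphi_k$ detour.
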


\begin{proofsect}{Proof}
Focusing first on the values in~$\Delta^k\smallsetminus \Delta^{k-1}$, the representation \eqref{E:3.14} shows that the difference between the field in the maximum and $h^{\Delta^k}$ equals
\begin{multline}
\qquad
h^{\Delta^k}(x)-\bigl[\chi_k(x)+\chi_{k-1}(x)+h_k'(x)\bigr]
\\
=\bigl(1+\frakb_k(x)\bigr)\varphi_k(0)+\bigl(1+\frakb_{k-1}(x)\bigr)\varphi_{k-1}(0).
\qquad
\end{multline}
By Lemmas~\ref{lemma-3.4}--\ref{lemma-3.5rt}, the field on the right-hand side has uniform Gaussian tails which converts \eqref{E:3.43} to a bound on the tails of $\max_{x\in \Delta^{k}\smallsetminus \Delta^{k-1}}h^{\Delta^k}(x)-m_{2^k}$. This bound is obtained by another reference to Lemma~\ref{lemma-max-tail}, combined also with Lemma~\ref{lemma-stoch-order} in the bound of the lower tail. The proof of \eqref{E:3.44} is completely analogous.
\end{proofsect}

The last matter to address concerns a rewrite of the event $h^{\Delta^n}\le (m_{2^n}+t)(1-\frakg^{\Delta^n})$ in terms of the random objects $\varphi_k(0)$, $\chi_k$ and~$h_k'$. For this we need to compare the quantity on the right to the natural growth of the maximum of~$h'_k$ which, as stated in Lemma~\ref{lemma-3.6}, is captured by the sequence~$m_{2^k}$. Here we note:

\begin{lemma}
\label{lemma-3.8}
There is a constant~$c\in(0,\infty)$ such that for all $n\ge1$ and all $k=0,\dots,n$,
\begin{equation}
\max_{x\in \Delta^k\smallsetminus \Delta^{k-1}}\Bigl| \,m_{2^n}\bigl(1-\frakg^{\Delta^n}(x)\bigr)-m_{2^k}\Bigr|\le c+\frac34\sqrt g\,\,\log\bigl(1+k\wedge(n-k)\bigr).
\end{equation}
\end{lemma}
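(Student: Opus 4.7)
\medskip\noindent
\textbf{Proof proposal.} The plan is to put $\mathfrak g^{\Delta^n}$ into closed form via the potential kernel~$\mathfrak a$, Taylor-expand using the asymptotics of~$\mathfrak a$ and of the Green function, and then reduce the claim to a short elementary inequality.

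Since $\mathfrak g^{\Delta^n}$ is discrete harmonic on $\Delta^n\smallsetminus\{0\}$, vanishes outside~$\Delta^n$, and equals one at the origin, uniqueness gives $\mathfrak g^{\Delta^n}(x)=G^{\Delta^n}(0,x)/G^{\Delta^n}(0,0)$. By the standard identity (Lemma~\ref{lemma-G-potential}), $G^{\Delta^n}(0,x)=u(x)-\mathfrak a(x)$ for $x\in\Delta^n$, where $u(x):=E^x[\mathfrak a(S_{\tau_{\Delta^n}})]$ is the discrete-harmonic extension of $\mathfrak a|_{\partial\Delta^n}$. Writing $L_n:=G^{\Delta^n}(0,0)=u(0)$, this gives
\begin{equation*}
1-\mathfrak g^{\Delta^n}(x)=\frac{L_n-u(x)+\mathfrak a(x)}{L_n}.
\end{equation*}
The asymptotics of $\mathfrak a$ (Lemma~\ref{lemma-potential}) yield $\mathfrak a(y)=gn\log 2+O(1)$ uniformly on $\partial\Delta^n$ (the Euclidean $|y|$ ranges only over $[2^n,\sqrt 2\cdot 2^n]$), so by the maximum principle $u(x)=gn\log 2+O(1)$ uniformly in~$x\in\Delta^n$, whence $L_n-u(x)=O(1)$ and $L_n=gn\log 2+O(1)$. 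For $x\in\Delta^k\smallsetminus\Delta^{k-1}$ with $k\ge1$ the same asymptotic gives $\mathfrak a(x)=gk\log 2+O(1)$. Combining these estimates, and using $m_{2^n}=O(n)$, one obtains
\begin{equation*}
m_{2^n}\bigl(1-\mathfrak g^{\Delta^n}(x)\bigr)=\frac{k}{n}\,m_{2^n}+O(1).
\end{equation*}

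Inserting the explicit form of $m_{2^n}$ and subtracting~$m_{2^k}$, the leading $2\sqrt g\,k\log 2$ terms cancel and what remains is
\begin{equation*}
\frac{k}{n}m_{2^n}-m_{2^k}=\tfrac34\sqrt g\bigl[\log k-\tfrac{k}{n}\log n\bigr]+O(1),
\end{equation*}
so the task reduces to verifying the elementary inequality $|\log k-(k/n)\log n|\le\log\bigl(1+k\wedge(n-k)\bigr)+O(1)$ for $1\le k\le n$. For $k\le n/2$ both summands on the left are nonnegative, hence $|\log k-(k/n)\log n|\le\max(\log k,(k/n)\log n)$; the bound $\log k\le\log(1+k)$ is trivial, while $(k/n)\log n\le\log(1+k)$ is obtained by observing that $f(k):=(k/n)\log n-\log(1+k)$ vanishes at $k=0$, tends to $-\infty$ as $k\to n/2$, and has a unique interior critical point (a minimum, since $f''<0$ fails only there), so $f\le0$ on $[0,n/2]$. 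The range $n/2<k\le n$ follows via $j:=n-k$: the quantity $\log k-(k/n)\log n$ rewrites as $\log(1-j/n)+(j/n)\log n$, whose first summand is $O(1)$ on $[0,n/2]$ and whose second is bounded by $\log(1+j)$ by the case just treated. Finally, $k=0$ corresponds to the single point $x=0$, where $1-\mathfrak g^{\Delta^n}(0)=0$ and the claim is absorbed into the constant~$c$.

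The main technical input is the uniform estimate $u(x)-L_n=O(1)$ valid on all of~$\Delta^n$, including near $\partial\Delta^n$; but since $\mathfrak a$ has only $O(1)$ oscillation on $\partial\Delta^n$, this is automatic from the maximum principle. The rest of the argument is a calculation with the explicit expression for~$m_N$ together with the elementary inequality above, whose case analysis is the only nontrivial combinatorial ingredient.
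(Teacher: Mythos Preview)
Your approach is exactly that of the paper: express $1-\mathfrak g^{\Delta^n}(x)$ via the Green function and the potential kernel to get $k/n+O(1/n)$ on $\Delta^k\smallsetminus\Delta^{k-1}$, multiply by $m_{2^n}$, and compare with $m_{2^k}$ using the explicit form of~$m_N$. The paper stops at ``invoking the explicit form then yields the result,'' whereas you spell out the resulting elementary inequality $|\log k-(k/n)\log n|\le\log(1+k\wedge(n-k))+O(1)$.

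One slip in that elementary part: for $f(k)=(k/n)\log n-\log(1+k)$ you write that $f$ ``tends to $-\infty$ as $k\to n/2$,'' which is false ($f(n/2)$ is finite), and the remark about $f''$ is garbled. The correct argument is simply that $f''(k)=1/(1+k)^2>0$, so $f$ is convex on $[0,n/2]$ and its maximum lies at an endpoint; since $f(0)=0$ and $f(n/2)=\tfrac12\log n-\log(1+n/2)\le0$ (check directly, or note $\log(1+n/2)\ge\log(n/2)=\log n-\log 2\ge\tfrac12\log n$ for $n\ge4$, with small $n$ trivial), you get $f\le0$ as claimed. With this fix your argument is complete and matches the paper.
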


\begin{proofsect}{Proof}
We have
\begin{equation}
1-\frakg^{\Delta^n}(x)=\frac{G^{\Delta^n}(0,0)-G^{\Delta^n}(0,x)}{G^{\Delta^n}(0,0)}.
\end{equation}
The Green function admits the representation \eqref{E:B.4} in Lemma~\ref{lemma-G-potential} which, using the asymptotic form of the potential from Lemma~\ref{lemma-potential}, some straightforward calculations imply
\begin{equation}
1-\frakg^{\Delta^n}(x)=\frac{k}n+O\Bigl(\frac1n\Bigr),\qquad x\in \Delta^k\smallsetminus \Delta^{k-1},
\end{equation}
with the implicit constant in the error term uniform in~$k$. Multiplying by $m_{2^n}$ and invoking the explicit form \eqref{E:mN} then yields the result.
\end{proofsect}

\subsection{Whole-space pinned field}
The estimates derived above permit us to make a connection to the pinned \DGFF{} on all of~$\Z^2$ and, in fact, realize this field on the same probability space as the objects $\varphi_k$, $\chi_k$ and~$h_k'$:

\begin{proposition}
\label{prop-3.11}
Suppose $\{\varphi_k(0)\colon k\ge0\}$, $\{\chi_k\colon k\ge0\}$ and $\{h_k'\colon k\ge0\}$ are independent objects with the laws as specified in Proposition~\ref{prop-concentric} and let $\{\frakb_k\colon k\ge0\}$ be as in \eqref{E:3.14q}. Then for each~$x\in\Z^2$, the infinite sum
\begin{equation}
\label{E:3.28}
\phi(x):=\sum_{k\ge0}\Bigl(\frakb_k(x)\varphi_k(0)+\chi_k(x)+h_k'(x)\Bigr)
\end{equation}
converges absolutely almost surely. Moreover, $\phi$ has the law~$\nu^0$; i.e., that of the (mean-zero) \DGFF{} in $\Z^2\smallsetminus\{0\}$ or, the \DGFF{} on~$\Z^2$ pinned to zero at~$x=0$.
\end{proposition}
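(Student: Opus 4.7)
My plan has two parts: first establish absolute convergence of the defining series, then identify the resulting centered Gaussian field by computing its covariance. Both rest on the estimates already assembled in Lemmas~\ref{lemma-3.4}--\ref{lemma-3.6a} and on the potential-kernel machinery from the appendix.

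\emph{Step 1 (absolute convergence).} Fix $x\in\Z^2$ and choose $k_0$ so that $x\in\Delta^{k_0-2}$. Only finitely many $k<k_0$ matter, so I focus on $k\ge k_0$. Lemma~\ref{lemma-3.5rt} gives $|\frakb_k(x)|\le c|x|\,2^{-k}$, while Lemma~\ref{lemma-3.4} keeps $\Var(\varphi_k(0))$ bounded, so $\sum_{k\ge k_0}E|\frakb_k(x)\varphi_k(0)|<\infty$. For $\chi_k(x)$, the variance bound \eqref{E:3.38} combined with $\chi_k(0)=0$ yields $E|\chi_k(x)|\le\sqrt{\Var\chi_k(x)}=O(2^{-k})$, again summable. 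Finally $h_k'(x)=0$ unless $x\in\Delta^k\smallsetminus\Delta^{k-1}$, which holds for at most one~$k$. Summing gives $\sum_k E\bigl|\frakb_k(x)\varphi_k(0)+\chi_k(x)+h_k'(x)\bigr|<\infty$, hence absolute convergence a.s.

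\emph{Step 2 (reduction to a centered DGFF limit).} Using $\frakb_k(0)=\chi_k(0)=h_k'(0)=0$ and $h^{\Delta^n}(0)=\sum_{k=0}^n\varphi_k(0)$, the representation \eqref{E:3.14} rearranges to
\[
h^{\Delta^n}(x)-h^{\Delta^n}(0)\;=\;\sum_{k=0}^n\bigl[\frakb_k(x)\varphi_k(0)+\chi_k(x)+h_k'(x)\bigr],
\]
for every $n$ with $x\in\Delta^n$. Thus the partial sums defining $\phi(x)$ coincide exactly with the centered DGFF~$h^{\Delta^n}(x)-h^{\Delta^n}(0)$, and by Step~1 the right-hand side converges a.s.\ to $\phi(x)$.

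\emph{Step 3 (identification of the law as $\nu^0$).} Each partial sum is jointly Gaussian in $x$, so $\phi$ and its finite-dimensional marginals are $L^2$-limits of centered Gaussians and hence centered Gaussian themselves. It thus suffices to identify the covariance, which by Step~2 equals
\[
\Cov(\phi(x),\phi(y))\;=\;\lim_{n\to\infty}\bigl[G^{\Delta^n}(x,y)-G^{\Delta^n}(x,0)-G^{\Delta^n}(0,y)+G^{\Delta^n}(0,0)\bigr].
\]
Inserting the potential-kernel representation of $G^{\Delta^n}$ from Lemma~\ref{lemma-G-potential} and using the asymptotic $\fraka(z)=g\log|z|+O(1)$ from Lemma~\ref{lemma-potential}, each difference of the form $G^{\Delta^n}(0,0)-G^{\Delta^n}(x,0)$ reduces to $\fraka(x)+E_0[\fraka(S_{\tau_{\Delta^n}})-\fraka(S_{\tau_{\Delta^n}}-x)]$, and the expectation vanishes as $n\to\infty$ since $|S_{\tau_{\Delta^n}}|\ge 2^n$ deterministically. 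The limit covariance is $\fraka(x)+\fraka(y)-\fraka(x-y)$, matching \eqref{E:2.8}. Together with $\phi(0)=0$, this identifies the law of $\phi$ as $\nu^0$.

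\emph{Main obstacle.} Neither step is really difficult given the ingredients in hand: the heavy lifting is already packaged into the concentric-decomposition estimates. The one mildly delicate point is the uniform passage to the limit in the Green-function differences, but because the boundary of $\Delta^n$ recedes to infinity deterministically and $\fraka$ has a logarithmic leading term with $O(1)$ correction, the limit is immediate. Joint Gaussianity (for multi-point covariances) is obtained the same way by the analogous polarization identity for each pair $(x,y)$.
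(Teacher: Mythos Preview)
Your proof is correct and takes a genuinely different route from the paper's. You observe directly that the $n$-th partial sum of \eqref{E:3.28} equals $h^{\Delta^n}(x)-h^{\Delta^n}(0)$ as a random variable on the common probability space, and then identify the law by computing the limiting covariance via the potential-kernel representation of~$G^{\Delta^n}$. The paper instead passes through the \emph{conditional} field: it writes $(h^{\Delta^n}\,|\,h^{\Delta^n}(0)=0)$ in terms of the unconditioned building blocks, which forces a correction $\varphi_k(0)\mapsto\varphi_k(0)-c_n(k)\sum_\ell\varphi_\ell(0)$ with $c_n(k)=\Var(\varphi_k(0))/\Var(S_{n+1})$, and then shows the resulting extra term in \eqref{E:3.32} vanishes by a law-of-large-numbers argument.

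Your approach is shorter and more elementary; in fact the authors explicitly note (in the Remark immediately following their proof) that one ``could have perhaps considered checking \eqref{E:3.28} directly by comparing covariances,'' which is precisely what you do. What the paper's detour buys is the explicit conditional representation with the coefficients~$c_n(k)$: this machinery is reused verbatim in the proof of Lemma~\ref{lemma-4.2u} (tail of the control variable under conditioning on $S_{n+1}=0$) and again in the decoupling step of Proposition~\ref{prop-4.13}. So the paper's choice is not about proving Proposition~\ref{prop-3.11} per se but about setting up tools for later.
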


\begin{proofsect}{Proof}
By standard Gaussian bounds and \eqref{E:3.20}, $k\mapsto\varphi_k(0)$ grows at most polylogarithmically while, by Lemma~\ref{lemma-3.6a}, $k\mapsto\chi_k(x)$ decays exponentially. The value $h_k'(x)$ is (for a given~$x$) non-zero only for one~$k$. In light of \eqref{E:3.22q}, the sum in \eqref{E:3.28} thus converges almost surely. 

In order to identify the law of the resulting field we observe (e.g., by comparison of covariances) that the \DGFF{} in~$\Z^2\smallsetminus\{0\}$ is the weak limit of $h^{\Delta^n}-h^{\Delta^n}(0)$ (as $n\to\infty$) which, by \eqref{E:3.4} and the fact that $\frakg^{\Delta^n}\to1$ pointwise, coincides with the weak limit of $h^{\Delta^n}$ conditioned on $h^{\Delta^n}(0)=0$. Now \eqref{E:3.16qq} yields
\begin{equation}
h^{\Delta^n}(x)=\sum_{k=0}^n\Bigl(\frakb_k(x)\varphi_k(0)+\chi_k(x)+h_k'(x)\Bigr)
\qquad\text{on }\bigl\{h^{\Delta^n}(0)=0\bigr\},
\end{equation}
which is close to \eqref{E:3.28} except for one fact: conditioning on $h^{\Delta^n}(0)=0$ changes the law of the variables $\{\varphi_k(0)\colon k=0,\dots,n\}$. 

To account for this change, note that, by \eqref{E:3.16qq} again and the Gaussian nature of all variables, conditional on $h^{\Delta^n}(0)=0$, the law of $\{\varphi_k(0)\colon k=0,\dots,n\}$ is that of
\begin{equation}
\biggl\{\varphi_k(0)-c_n(k)\sum_{\ell=0}^n\varphi_\ell(0)\colon k=0,\dots,n\biggr\}
\end{equation}
under the unconditioned (product) measure, where
\begin{equation}
\label{E:3.33qq}
c_n(k):=\Var\bigl(\varphi_k(0)\bigr)\biggl(\,\sum_{\ell=0}^n\Var\bigl(\varphi_k(0)\bigr)\biggr)^{-1}.
\end{equation}
Neither $\chi_k$ nor~$h_k'$ are affected by the conditioning, being independent of the $\varphi_k(0)$'s, and so\begin{equation}
\label{E:3.31}
\bigl(h^{\Delta^n}\bigl|h^{\Delta^n}(0)=0\bigr) \laweq  \widetilde\phi_n,
\end{equation}
where
\begin{equation}
\label{E:3.32}
\widetilde\phi_n(x):=\sum_{k=0}^n\Bigl(\frakb_k(x)\varphi_k(0)+\chi_k(x)+h_k'(x)\Bigr)\,-\,\biggl(\,\sum_{k=0}^n \frakb_k(x)c_n(k)\biggr)\biggl(\,\sum_{\ell=0}^n\varphi_\ell(0)\biggr)
\end{equation}
with all the variables distributed as under the unconditioned measure.

Taking~$n\to\infty$, the first term on the right of \eqref{E:3.32} tends to the infinite series \eqref{E:3.28} a.s.\ by our observations above.  For the second term, by \eqref{E:3.20} and the absolute summability of the family of numbers $\{\frakb_k(x)\colon k\ge0\}$, the first sum is of order $n^{-1}$. Since~$\{\varphi_k(0)\colon k\ge0\}$ are independent, Gaussian with mean zero and bounded variances, as $n\to\infty$ the whole second term in \eqref{E:3.32} tends to zero a.s.\ by the Law of Large Numbers. 
\end{proofsect}

\begin{remark}
Since the law $\nu^0$ of the pinned DGFF~$\phi$ is explicitly known (see \twoeqref{E:2.8}{E:2.8u}), we could have perhaps considered checking \eqref{E:3.28} directly by comparing covariances. Notwithstanding, we still find the above proof more illuminating; particularly, since we will re-use some of its arguments later.
\end{remark}

\section{Reduction to a random walk}
\label{sec4}\noindent
The goal of this section is to rewrite the event $\{h^{\Delta^n}\le(m_{2^n}+t)(1-\frakg^{\Delta^n})\}$ from \eqref{E:3.3}, as well as its $\Z^2$-counterpart $\{\phi\le\frac2{\sqrt g}\fraka\text{ in }\Delta^n\}$, using the above random walk and well-behaved correction terms. The control of the resulting events for the random walk will require calculations for Brownian motion that are relegated to Appendix~A. Although the random walk estimates are key for most subsequent derivations, as far as the main line of the proof is concerned, the main conclusions  come in Lemmas~\ref{lemma-4.7}--\ref{lemma-4.20}.

\subsection{Control variables}
\label{sec-4.1}\noindent
We begin with the harder of the two events, $\{h^{\Delta^n}\le(m_{2^n}+t)(1-\frakg^{\Delta^n})\}$. The starting point is a definition of a random variable that will help us control the growth of the quantities~$\varphi_k(0)$,~$\chi_k$ and~$h_k'$. To express the requisite errors, for $k,\ell=1,\dots,n$ denote
\begin{equation}
\label{E:4.1}
\Theta_{k}(\ell):=\Bigl[\log\bigl(1+[k\vee(\ell\wedge(n-\ell))]\bigr)\Bigr]^2.
\end{equation}
Note that $k\mapsto\Theta_{k}(\ell)$ is increasing for each~$\ell$. The quantity also depends on~$n$ but we will keep that notationally suppressed. We then pose:

\begin{definition}
\label{def4.1}
Let~$K$ be the minimal~$k\in\{2,\dots,\lfloor \ffrac n2\rfloor\}$ such that the following holds:
\begin{enumerate}
\item[(1)] For each~$\ell=0,\dots,n$,
\begin{equation}
\bigl|\varphi_\ell(0)\bigr|\le\Theta_{k}(\ell),
\end{equation}
\item[(2)] for each $\ell=2,\dots,n$ and each $r=0,\dots,\ell-2$,
\begin{equation}
\max_{x\in \Delta^r}\,\bigl|\chi_\ell(x)\bigr|\le 2^{(r-\ell)/2}\Theta_{k}(\ell),
\end{equation}
\item[(3)] for each $\ell=1,\dots,n$,
\begin{equation}
\label{E:4.4b}
\Bigl|\,\max_{x\in \Delta^\ell\smallsetminus \Delta^{\ell-1}}\bigl(\chi_\ell(x)+\chi_{\ell-1}(x)+h_\ell'(x)\bigr)-m_{2^\ell}\Bigr|\le\Theta_{k}(\ell)
\end{equation}
and
\begin{equation}
\label{E:4.4}
\Bigl|\,\max_{x\in \Delta^\ell\smallsetminus \Delta^{\ell-1}}\bigl(\chi_\ell(x)+\chi_{\ell-1}(x)+\chi_{\ell+1}(x)+h_\ell'(x)\bigr)-m_{2^\ell}\Bigr|\le\Theta_{k}(\ell).
\end{equation}
\end{enumerate}
If no such~$k$ exists, then we set $K:=\lfloor\ffrac n2\rfloor+1$.
\end{definition}

In the default case (i.e., $K:=\lfloor\ffrac n2\rfloor+1$) no explicit bound on the above quantities can be assumed. However, this comes at little loss since we have:

\begin{lemma}
\label{lemma-4.2u}
There are $c>0$ and $k_0\ge2$ such that
\begin{equation}
\label{E:4.5}
P(K=k|S_{n+1}=0)\le\texte^{-c(\log k)^2},\qquad k=k_0,\dots,\lfloor\ffrac n2\rfloor+1.
\end{equation}
In particular, for each $\delta\in(0,1)$ and all~$n$ sufficiently large,
\begin{equation}
\label{E:4.6}
P(K>n^\delta|S_{n+1}=0)\le n^{-2}.
\end{equation}
\end{lemma}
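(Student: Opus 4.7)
\begin{proofsect}{Proof proposal}
The plan is to prove \eqref{E:4.5} by a union bound and then deduce \eqref{E:4.6} as a routine consequence. Since $k\mapsto\Theta_k(\ell)$ is non-decreasing in $k$, the inequalities (1)--(3) in Definition~\ref{def4.1} become weaker as $k$ grows, so the event $\{K\ge k\}$ is contained in the event that at least one of those inequalities fails at some admissible index when the threshold is taken to be $k-1$. The proof then has two phases: bounding the failure probability of each individual inequality in the product (unconditional) measure, and transferring the resulting estimate to the conditional law given $S_{n+1}=0$.

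For the unconditional bounds I would invoke the tail estimates of Section~\ref{sec3}. Condition~(1) failure at index $\ell$ has probability at most $2\exp\{-c\Theta_{k-1}(\ell)^2\}$ by Lemma~\ref{lemma-3.4} combined with a standard Gaussian tail. Condition~(2) failure at $(r,\ell)$ admits a bound of the same form: by Lemma~\ref{lemma-3.6a} the expected maximum of $\chi_\ell$ on $\Delta^r$ is $O(2^{r-\ell})$, which (since $r\le\ell-2$) is dominated by the threshold $2^{(r-\ell)/2}\Theta_{k-1}(\ell)$, so the Borell--Tsirelson exponent $4^{\ell-r}\lambda^2$ reduces to at least $\Theta_{k-1}(\ell)^2$. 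The inequalities \eqref{E:4.4b} and \eqref{E:4.4} give only the weaker exponential tail $c_1\exp\{-c_2\Theta_{k-1}(\ell)\}$ from Lemma~\ref{lemma-3.7}, but this is still adequate because $\Theta_{k-1}(\ell)\ge(\log k)^2$ at every $\ell$. Summing over $\ell$ (and~$r$) after splitting into the ``plateau'' range $\ell\le k$ or $\ell\ge n-k$ (of cardinality $O(k)$, each term $\le e^{-c(\log k)^2}$) and the ``middle'' range $k<\ell<n-k$ (where the summand is $e^{-c(\log\ell)^2}=\ell^{-c\log\ell}$ and hence tail-summable with tail bounded by $e^{-c'(\log k)^2}$) yields the unconditional estimate $P(K\ge k)\le C\exp\{-c(\log k)^2\}$ for some $c>0$ and all $k\ge k_0$.

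The one subtle point is transferring this bound to the conditional law given $S_{n+1}=0$, and here the decomposition of Proposition~\ref{prop-concentric} is what makes everything clean. The events in conditions~(2) and~(3) are measurable with respect to $\sigma(\chi_\ell,h_\ell'\colon \ell\ge 0)$, which is independent of $S_{n+1}=\sum_\ell\varphi_\ell(0)$, so the conditioning does not change their probabilities at all. For condition~(1), the vector $(\varphi_0(0),\dots,\varphi_n(0),S_{n+1})$ is jointly centered Gaussian, and the conditional variance of $\varphi_\ell(0)$ given $S_{n+1}=0$ equals $\sigma_\ell^2(1-\sigma_\ell^2/\Sigma)\le\sigma_\ell^2$ with $\Sigma:=\sum_j\sigma_j^2$; the conditional Gaussian tail is therefore no worse than the unconditional one and the same union bound applies verbatim. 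This establishes \eqref{E:4.5}, and \eqref{E:4.6} then follows by evaluating at $k=\lceil n^\delta\rceil$, since $\exp\{-c(\log k)^2\}=\exp\{-c\delta^2(\log n)^2(1+o(1))\}\le n^{-2}$ for all sufficiently large $n$. The only place I expect real care to be needed is the bookkeeping required to ensure that the polylogarithmic loss in the union bound over $\ell$ (and over $r$ in condition~(2)) can be absorbed into a fraction of the leading exponent $\Theta_{k-1}(\ell)^2$ or $\Theta_{k-1}(\ell)$; this is purely a matter of choosing the constant $c$ in the statement slightly smaller than the constants appearing in Section~\ref{sec3}.
\end{proofsect}
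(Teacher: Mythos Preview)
Your proposal is correct and follows essentially the same approach as the paper: union bound over the failure of the individual conditions in Definition~\ref{def4.1}, with Gaussian tails for conditions~(1) and~(2) and exponential tails for condition~(3), summed over~$\ell$ (and~$r$). The only cosmetic difference is in how you handle the conditioning on $S_{n+1}=0$: the paper writes out the explicit representation $\varphi_\ell(0)-c_n(\ell)S_{n+1}$ of the conditional field and bounds its variance via the uniform boundedness of $nc_n(\ell)$, whereas you invoke directly that the conditional variance of $\varphi_\ell(0)$ given $S_{n+1}=0$ is $\sigma_\ell^2(1-\sigma_\ell^2/\Sigma)\le\sigma_\ell^2$ with conditional mean zero --- a slightly slicker formulation of the same fact.
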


\begin{proofsect}{Proof}
Recall (from the proof of Proposition~\ref{prop-3.11}) that, conditional on~$S_{n+1}=0$, the law of $\{\varphi_k(0)\colon k=0,\dots,n\}$ is that of $\{\varphi_k(0)-c_n(k)S_{n+1}\colon k=0,\dots,n\}$ under the unconditional measure, while $\chi_k$ and~$h_k'$ are not affected by the conditioning. By Lemmas~\ref{lemma-3.4} and~\ref{lemma-3.6a}, the numbers $\{nc_n(k)\colon k=0,\dots,n\}$ are uniformly bounded in both~$k$ and~$n$ and the random variables
\begin{equation}
\bigl\{|\varphi_k(0)-c_n(k)S_{n+1}|\colon k=0,\dots,n\bigr\}\quad\text{and}\quad
\bigl\{2^{\ell-r}\max_{x\in \Delta^r}|\chi_\ell(x)|\colon 0\le r\le\ell-2,\ell\le n\bigr\}
\end{equation}
thus have uniform (in all indices involved) Gaussian tails. Lemma~\ref{lemma-3.6} in turn ensures that the random variables on the left of \eqref{E:4.4} have a uniform exponential tail. 

Using the union bound, the probability that, in the conditional ensemble, condition~(1) of Definition~\ref{def4.1} fails is bounded by $2\sum_{\ell\ge0}\texte^{-c[\log(\ell\vee k)]^4}$, while the probability that~(3) fails is bounded by a similar expression with~$4$ replaced by~$2$ in the exponent of the logarithm. The probability that condition~(2) fails is in turn bounded by
\begin{equation}
\sum_{\ell=2}^\infty\sum_{r=0}^{\ell-2}\texte^{-c2^{\ell-r}[\log(\ell\vee k)]^4}\le \sum_{i\ge2}\sum_{\ell\ge2}\texte^{-c2^i[\log(\ell\vee k)]^4}.
\end{equation}
All of these error bounds combined yield no more that $\texte^{-c(\log k)^2}$ for a suitable~$c>0$ as soon as~$k$ is sufficiently large. This proves \eqref{E:4.5}; the bound \eqref{E:4.6} is then immediate.
\end{proofsect}

When the control variable is not defaulted to the maximal value, the above definitions ensure a rather tight control between the maximum of the field in the annuli $\Delta^k\smallsetminus \Delta^{k-1}$ and a corresponding increment of the above random walk:

\begin{lemma}[Approximation by a random walk]
\label{lemma-4.3}
There is a constant~$C\in(0,\infty)$ such that if $K\le\lfloor\ffrac n2\rfloor$ holds for some~$n\ge1$, then for each $k=0,\dots,n$ we have
\begin{equation}
\label{E:4.5ww}
\Bigl|\,\max_{x\in \Delta^k\smallsetminus \Delta^{k-1}}\bigl[h^{\Delta^n}(x)-m_{2^n}(1-\frakg^{\Delta^n}(x))\bigr]-(S_{n+1}-S_k)\Bigr|
\le R_{K}(k),
\end{equation}
where $R_{k}(\ell):=C[1+\Theta_{k}(\ell)]$.
\end{lemma}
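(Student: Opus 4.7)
The proof begins by substituting the representation \eqref{E:3.14} into the quantity inside the max on the left of \eqref{E:4.5ww} and exploiting three support facts when $x \in \Delta^k \setminus \Delta^{k-1}$ with $1\le k < n$: (i) $h'_\ell(x) = 0$ for $\ell \ne k$ by construction; (ii) Lemma~\ref{lemma-3.5rt} gives $\frakb_\ell(x) = -1$ for $x \notin \Delta^\ell$, so $1+\frakb_\ell(x) = 0$ for $\ell \le k-1$; and (iii) writing $\chi_\ell(x) = \varphi_\ell(x) - (1+\frakb_\ell(x))\varphi_\ell(0)$ and noting both summands vanish off $\Delta^\ell$, we have $\chi_\ell(x) = 0$ for $\ell \le k-1$, in particular $\chi_{k-1}(x) \equiv 0$ on the annulus. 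Pulling out $S_{n+1}-S_k = \sum_{\ell=k}^n \varphi_\ell(0)$ from the backbone sum, the quantity inside the max equals $(S_{n+1}-S_k) + A(x) + B(x) + C(x)$, where
\begin{align*}
A(x) &:= \sum_{\ell=k}^n \frakb_\ell(x)\,\varphi_\ell(0), \qquad B(x) := \sum_{\ell=k+2}^n \chi_\ell(x),\\
C(x) &:= \bigl[\chi_k+\chi_{k-1}+\chi_{k+1}+h'_k\bigr](x) - m_{2^n}\bigl(1-\frakg^{\Delta^n}(x)\bigr).
\end{align*}
The strategy is to bound $A$ and $B$ pointwise using the geometric decay of their coefficients, and to bound $C$ only in its maximum using \eqref{E:4.4} after comparing $m_{2^n}(1-\frakg^{\Delta^n})$ to $m_{2^k}$.

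For $A$, Lemma~\ref{lemma-3.5rt} gives $|\frakb_k(x)| \le C$ and $|\frakb_\ell(x)| \le C\,2^{k-\ell}$ for $\ell > k$ (since $\dist(0,x) \asymp 2^k$ on the annulus), while part (1) of Definition~\ref{def4.1} provides $|\varphi_\ell(0)| \le \Theta_K(\ell)$. A short computation using $\log(1+K\vee(k+j)) \le \log(1+K\vee k) + \log(1+j)$ in the regime $k+j \le n/2$, together with the exponential decay of $2^{k-\ell}$ in the complementary range, yields $\sum_{\ell\ge k+1} 2^{k-\ell}\Theta_K(\ell) \le C(1+\Theta_K(k))$ and hence $\sup_x|A(x)| \le C(1+\Theta_K(k))$. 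For $B$, part (2) of Definition~\ref{def4.1} applies pointwise with $r=k$ (since $\ell \ge k+2$), giving $|\chi_\ell(x)| \le 2^{(k-\ell)/2}\Theta_K(\ell)$, and the same summation argument yields $\sup_x|B(x)| \le C(1+\Theta_K(k))$. For $C$, condition \eqref{E:4.4} at level $\ell = k$ gives $\bigl|\max_x[\chi_k+\chi_{k-1}+\chi_{k+1}+h'_k] - m_{2^k}\bigr| \le \Theta_K(k)$, and Lemma~\ref{lemma-3.8} bounds $|m_{2^n}(1-\frakg^{\Delta^n}(x)) - m_{2^k}| \le c + \tfrac34\sqrt g\,\log(1+k\wedge(n-k)) \le C(1+\Theta_K(k))$ uniformly on the annulus, so combining yields $|\max_x C(x)| \le C(1+\Theta_K(k))$. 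The inequality $\bigl|\max_x(A+B+C)(x) - \max_x C(x)\bigr| \le \sup_x|A(x)|+\sup_x|B(x)|$ then collects the three estimates to produce $|\max_x(A+B+C)(x)| \le C(1+\Theta_K(k)) = R_K(k)$ after enlarging $C$.

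The main subtlety, and the motivation for the specific grouping above, is the term $\chi_{k+1}$: part (2) of Definition~\ref{def4.1} controls $\chi_\ell$ pointwise on $\Delta^r$ only for $\ell \ge r+2$, so $\chi_{k+1}$ admits no direct pointwise bound on $\Delta^k$. This is exactly what the seemingly redundant four-$\chi$ condition \eqref{E:4.4} is designed to remedy, which is why $\chi_{k+1}$ must be absorbed into $C(x)$ together with the unavoidable pair $\chi_k+h'_k$ whose maximum is in any case close to $m_{2^k}$. The boundary cases are handled directly: for $k=0$ the "annulus" is $\{0\}$, $\frakg^{\Delta^n}(0) = 1$, and the identity $h^{\Delta^n}(0) = S_{n+1}$ makes the error vanish; for $k = n$ one uses the three-$\chi$ condition \eqref{E:4.4b} in place of \eqref{E:4.4} (to avoid reference to $\chi_{n+1}$), noting that $k\wedge(n-k) = 0$ makes the Lemma~\ref{lemma-3.8} correction $O(1)$.
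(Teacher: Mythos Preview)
Your proof is correct and follows essentially the same approach as the paper: decompose $h^{\Delta^n}(x)$ via \eqref{E:3.14}, extract $S_{n+1}-S_k$, and bound the three remaining pieces $A$, $B$, $C$ using the conditions in Definition~\ref{def4.1} together with Lemma~\ref{lemma-3.8}. Your treatment of the boundary cases $k=0$ and $k=n$, and your observation that $\chi_{k+1}$ must be grouped into $C$ because part~(2) of Definition~\ref{def4.1} does not cover it pointwise, are exactly what the paper does (the latter point is implicit in the paper's phrase ``$\chi_{k+1}(x)$ is to be dropped when~$k=n$'').
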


\begin{proofsect}{Proof}
 When~$k=0$, the claim follows directly from $h^{\Delta^n}(0)=S_{n+1}$ and $\frakg^{\Delta^n}(0)=1$ so let  $k\in\{1,\dots,n\}$  and pick~$x\in \Delta^k\smallsetminus \Delta^{k-1}$. Then \eqref{E:3.9}, \eqref{E:3.29} and the last clause in Lemma~\ref{lemma-3.6a} imply
\begin{multline}
\qquad
h^{\Delta^n}(x)=S_{n+1}-S_k+\Bigl(\,\sum_{\ell=k}^n\frakb_\ell(x)\varphi_\ell(0)\Bigr)+\Bigl(\,\sum_{\ell=k+2}^n\chi_\ell(x)\Bigr)
\\+\chi_{k-1}(x)+\chi_k(x)+\chi_{k+1}(x)+h_{k}'(x),
\qquad
\end{multline}
where $\chi_{k+1}(x)$ is to be dropped when~$k=n$.
It follows from the definition of~$K$ and some elementary calculations that each of the two sums are bounded (in absolute value, uniformly in above~$x$) by a quantity of the form $C[1+\Theta_K(k)]$. An analogous bound holds also for the maximum of
\begin{equation}
\chi_{k-1}(x)+\chi_k(x)+\chi_{k+1}(x)+h_{k}'(x)-m_{2^{k}},
\end{equation}
again with $\chi_{k+1}$ is dropped when~$k=n$.
The claim follows by replacing $m_{2^n}(1-\frakg^{\Delta^n}(x))$ by~$m_{2^{k}}$, which causes an error of the form $C[1+\Theta_K(k)]$ as shown in Lemma~\ref{lemma-3.8}.
\end{proofsect}

The upshot of Lemma~\ref{lemma-4.3} is that that the event $\{h^{\Delta^n}\le(m_{2^n}+t)(1-\frakg^{\Delta^n})\}$ can be approximated by events defined solely in terms of the random walk $\{S_k\colon k=1,\dots,n+1\}$ and the control variable~$K$. Explicitly, we have:
\begin{multline}
\qquad
\label{E:4.10}
\bigl\{h^{\Delta^n}\le(m_{2^n}+t)(1-\frakg^{\Delta^n})\bigr\}\cap\bigl\{K\le\lfloor\ffrac n2\rfloor\bigr\}
\cap\bigl\{h^{\Delta^n}(0)=0\bigr\}\\
\subseteq\,\,\{S_{n+1}=0\}\cap\bigcap_{k=1}^{n}\bigl\{S_k\ge -R_{K}(k)-|t|\bigr\}
\qquad
\end{multline}
and
\begin{multline}
\qquad
\label{E:4.11}
\bigl\{h^{\Delta^n}\le(m_{2^n}+t)(1-\frakg^{\Delta^n})\bigr\}\cap\bigl\{h^{\Delta^n}(0)=0\bigr\}\\
\supseteq\,\,\bigl\{K\le\lfloor\ffrac n2\rfloor\bigr\}\cap\{S_{n+1}=0\}
\cap\bigcap_{k=1}^{n}\bigl\{S_k\ge R_{K}(k)+|t|\bigr\},
\qquad
\end{multline}
where we also recalled that $0\le1-\frakg^{\Delta^n}\le1$.\

\smallskip
Our control of the event $\{\phi\le\frac2{\sqrt g}\fraka\text{ in }\Delta^n\}$ will be quite similar, and will use only slightly modified versions of the above concepts. For $k,\ell\ge1$, let
\begin{equation}
\widetilde\Theta_k(\ell):=\bigl[\log(1+k\vee\ell)\bigr]^2,
\end{equation}
which we can view as the $n\to\infty$ limit of~$\Theta_{k}(\ell)$ above. For the analogue of the control variable~$K$, we put forward the following definition:

\begin{definition}
\label{def4.2}
Let~$\widetilde K$ be the smallest~$k\ge2$ such that, for all~$n\ge2$, the bounds in Definition~\ref{def4.1} hold with~$\Theta_{k}$ replaced by~$\widetilde\Theta_k$. (If no such~$k$ exists, we set $\widetilde K:=\infty$.)
\end{definition}

The tails of the control variable~$\widetilde K$ are easy to bound:

\begin{lemma}
There is $c>0$ and $k_0\ge2$ such that
\begin{equation}
\label{E:4.16ueu}
P(\widetilde K\ge k)\le\texte^{-c(\log k)^2},\qquad k\ge k_0.
\end{equation}
In particular, $\widetilde K<\infty$ almost surely.
\end{lemma}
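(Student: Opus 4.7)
\begin{proofsect}{Proof plan}
The idea is to repeat the argument used for Lemma~\ref{lemma-4.2u}, but now under the unconditional product measure on $\{\varphi_\ell(0), \chi_\ell, h_\ell'\colon \ell \ge 0\}$ and without the cutoff $\ell \le n$. Note first that, since $\widetilde\Theta_k(\ell)$ does not depend on $n$ and is increasing in $\ell$ for $\ell \ge k$, the defining conditions of Definition~\ref{def4.2} (which impose ``for all $n\ge2$'') are equivalent to requiring (1)--(3) of Definition~\ref{def4.1}, with $\Theta_k$ replaced by $\widetilde\Theta_k$, to hold \emph{for all} $\ell\ge0$ (and, in (2), for all $r=0,\dots,\ell-2$). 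Hence $\{\widetilde K \ge k+1\}$ equals the union, over $\ell$ (and $r$), of the events that one of the conditions fails at that index.

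The plan is to estimate the probability of each such failure using the tail bounds already at our disposal and then apply a union bound. For condition~(1), since $\varphi_\ell(0)$ is Gaussian with variance bounded uniformly in $\ell$ by Lemma~\ref{lemma-3.4}, a standard Gaussian tail gives
\begin{equation}
P\bigl(|\varphi_\ell(0)| > \widetilde\Theta_k(\ell)\bigr) \le 2\exp\bigl(-c[\log(1+k\vee \ell)]^4\bigr).
\end{equation}
For condition~(2), we combine \eqref{E:3.33}--\eqref{E:3.34} of Lemma~\ref{lemma-3.6a} with the fact that $\Var(\chi_\ell(x)) = O(4^{r-\ell})$ on $\Delta^r$, yielding
\begin{equation}
P\Bigl(\max_{x\in\Delta^r}|\chi_\ell(x)| > 2^{(r-\ell)/2}\widetilde\Theta_k(\ell)\Bigr) \le 2\exp\bigl(-c\, 2^{\ell-r}[\log(1+k\vee\ell)]^4\bigr),
\end{equation}
once $k$ is large enough that $2^{(r-\ell)/2}\widetilde\Theta_k(\ell)$ dominates the expected maximum. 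For condition~(3), Lemma~\ref{lemma-3.7} directly supplies
\begin{equation}
P\bigl(\text{(3) fails at }\ell\bigr) \le c_1 \exp\bigl(-c_2 \widetilde\Theta_k(\ell)\bigr) = c_1\exp\bigl(-c_2[\log(1+k\vee\ell)]^2\bigr).
\end{equation}

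To finish, I would sum these bounds. The contributions from (1) and (2) decay like $\exp(-c[\log(k\vee\ell)]^4)$ and are summable in $\ell$ (and, in the case of (2), also in $r$, thanks to the extra $2^{\ell-r}$ factor); their total is easily bounded by $\exp(-c(\log k)^2)$ for $k$ large. The binding term is~(3): for $\ell \le k$ each individual failure has probability $\le c_1\exp(-c_2(\log k)^2)$ and there are at most $k+1$ such indices, giving an aggregate bound of $(k+1)c_1\exp(-c_2(\log k)^2) \le \exp(-c(\log k)^2)$ for some smaller $c>0$ and $k\ge k_0$; for $\ell > k$ the terms $\exp(-c_2(\log\ell)^2)$ are summable with total also $\ll \exp(-c(\log k)^2)$. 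This establishes \eqref{E:4.16ueu}, and Borel--Cantelli (or just monotonicity in $k$) then gives $\widetilde K < \infty$ almost surely.

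The main obstacle, as already in Lemma~\ref{lemma-4.2u}, is the weakest tail, which is the exponential (rather than Gaussian) tail provided by Lemma~\ref{lemma-3.7} for the maxima in the annuli; this is precisely what dictates the $(\log k)^2$ rate in the exponent of \eqref{E:4.16ueu} and prevents a better bound such as $\exp(-c(\log k)^4)$. Everything else is a straightforward union bound over $\ell$ and $r$.
\end{proofsect}
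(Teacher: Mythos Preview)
Your proposal is correct and follows essentially the same approach as the paper: the paper's proof is a one-liner stating that the argument of Lemma~\ref{lemma-4.2u} carries over verbatim and is in fact easier without the conditioning on $S_{n+1}=0$, and you have simply spelled out that very argument (union bound over the failures of (1)--(3), Gaussian tails from Lemmas~\ref{lemma-3.4} and~\ref{lemma-3.6a}, exponential tails from Lemma~\ref{lemma-3.7}, with condition~(3) dictating the $(\log k)^2$ rate). Your added remark that the $n$-independence of~$\widetilde\Theta_k$ reduces the ``for all $n\ge2$'' requirement to a single set of conditions over all~$\ell$ is a useful clarification.
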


\begin{proofsect}{Proof}
The proof is nearly identical to that of Lemma~\ref{lemma-4.2u}; in fact, it is much easier due to the absence of conditioning on~$S_{n+1}=0$. 
\end{proofsect}

 Definition~\ref{def4.2} then readily implies: 

\begin{lemma}
\label{lemma-4.6a}
Let~$\phi$ be as in \eqref{E:3.28}. Then there exists a constant~$C>0$ such that, on $\{\wt K<\infty\}$,
\begin{equation}
\label{E:4.17ueu}
\biggl|\,\min_{x\in \Delta^\ell\smallsetminus \Delta^{\ell-1}}\Bigl(\frac2{\sqrt g}\fraka(x)-\phi(x)\Bigr)-S_{\ell}\,\biggr|\le \widetilde R_{\widetilde K}(\ell),\qquad\ell\ge1,
\end{equation}
where $\widetilde R_k(\ell):=C[1+\widetilde\Theta_k(\ell)]$.
\end{lemma}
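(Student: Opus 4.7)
My plan is to expand $\phi$ via \eqref{E:3.28}, use the support properties of $\frakb_k$, $\chi_k$ and $h_k'$ to isolate the $-S_\ell$ contribution, and bound the remainder via the clauses of Definition~\ref{def4.2} (which is where the hypothesis $\widetilde K < \infty$ enters). Fix $\ell \ge 1$ and $x \in \Delta^\ell \smallsetminus \Delta^{\ell-1}$. Since $x \notin \Delta^k$ for every $k \le \ell-1$, \eqref{E:3.29} forces $\frakb_k(x) = -1$ and the last assertion of Lemma~\ref{lemma-3.6a} gives $\chi_k(x) = 0$ for such $k$. Moreover $h_k'(x)$ is supported on $\Delta^k \smallsetminus \Delta^{k-1}$ and so contributes only for $k = \ell$. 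Substituting into \eqref{E:3.28} and using $S_\ell = \sum_{k=0}^{\ell-1}\varphi_k(0)$ gives the key identity
\begin{equation*}
\phi(x) = -\,S_\ell + E_1(x) + E_2(x),
\end{equation*}
where $E_1(x) := \chi_\ell(x) + \chi_{\ell+1}(x) + h_\ell'(x)$ and $E_2(x) := \sum_{k \ge \ell}\frakb_k(x)\varphi_k(0) + \sum_{k \ge \ell+2}\chi_k(x)$.

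The next step is to bound $E_2$ uniformly and $E_1$ through its maximum. From \eqref{E:3.22q} combined with the uniform boundedness of $\frakb_k$ (Lemma~\ref{lemma-3.5rt}) one has $|\frakb_k(x)| \le C$ for $k = \ell$ and $|\frakb_k(x)| \le C\,2^{\ell-k}$ for $k > \ell$; together with $|\varphi_k(0)| \le \widetilde\Theta_{\widetilde K}(k)$ from Definition~\ref{def4.2}(1), a geometric sum yields $|\sum_{k \ge \ell}\frakb_k(x)\varphi_k(0)| \le C\widetilde\Theta_{\widetilde K}(\ell)$ (the polylogarithmic growth of $\widetilde\Theta_{\widetilde K}(k)$ in $k$ is easily defeated by the $2^{-k}$ decay). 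Similarly, Definition~\ref{def4.2}(2) applied with $r = \ell \le k-2$ gives $|\chi_k(x)| \le 2^{(\ell-k)/2}\widetilde\Theta_{\widetilde K}(k)$ for $k \ge \ell+2$, summing again to $C\widetilde\Theta_{\widetilde K}(\ell)$. Hence $|E_2(x)| \le C\widetilde\Theta_{\widetilde K}(\ell)$ uniformly in $x$ on the annulus. For $E_1$, observe that $\chi_{\ell-1}$ is supported on $\Delta^{\ell-1}$ and hence vanishes on the annulus, so \eqref{E:4.4} (read with $\widetilde K$ in place of $K$) reduces to
\begin{equation*}
\Bigl|\,\max_{x \in \Delta^\ell \smallsetminus \Delta^{\ell-1}} E_1(x) - m_{2^\ell}\Bigr| \le \widetilde\Theta_{\widetilde K}(\ell).
\end{equation*}

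To conclude, I would combine the above with the standard asymptotic $\fraka(x) = g\log|x| + O(1)$ (Lemma~\ref{lemma-potential}); for $|x|_\infty \in (2^{\ell-1}, 2^\ell]$ this gives $|\tfrac{2}{\sqrt g}\fraka(x) - m_{2^\ell}| \le C(1+\log(1+\ell))$ uniformly on the annulus. Writing
\begin{equation*}
\tfrac{2}{\sqrt g}\fraka(x) - \phi(x) = S_\ell + \bigl[\tfrac{2}{\sqrt g}\fraka(x) - m_{2^\ell}\bigr] + \bigl[m_{2^\ell} - E_1(x)\bigr] - E_2(x),
\end{equation*}
the elementary inequality $|\min_x(f+g) - \min_x f| \le \sup_x|g|$ absorbs the first bracket and $E_2$ into a uniform $O(\widetilde\Theta_{\widetilde K}(\ell))$ error, while $\min_x[m_{2^\ell} - E_1(x)] = m_{2^\ell} - \max_x E_1(x)$ is directly bounded by $\widetilde\Theta_{\widetilde K}(\ell)$ via the display above. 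This yields the asserted bound $\widetilde R_{\widetilde K}(\ell) = C[1+\widetilde\Theta_{\widetilde K}(\ell)]$. The only real choice in the argument is the grouping $E_1$ versus $E_2$: clause~(3) of Definition~\ref{def4.2} provides fine $m_{2^\ell}$-level control for exactly the block $\chi_\ell + \chi_{\ell+1} + h_\ell'$, which must therefore be isolated as $E_1$ so that this refined control is applicable; everything else is routine bookkeeping of geometric sums.
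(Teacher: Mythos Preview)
Your proof is correct and follows essentially the same route as the paper's: expand $\phi$ via \eqref{E:3.28}, split off $-S_\ell$ using $\frakb_k(x)=-1$ and $\chi_k(x)=0$ for $k\le\ell-1$, bound the tail sums $\sum_{k\ge\ell}\frakb_k\varphi_k(0)$ and $\sum_{k\ge\ell+2}\chi_k$ by geometric-versus-polylog estimates, and control the remaining block $\chi_\ell+\chi_{\ell+1}+h_\ell'$ through clause~(3) of Definition~\ref{def4.2} together with $\tfrac{2}{\sqrt g}\fraka(x)=m_{2^\ell}+O(\log(1+\ell))$. The only cosmetic difference is that the paper keeps $\chi_{\ell-1}(x)$ in the bracketed block (where it is zero anyway on the annulus) so as to match \eqref{E:4.4} verbatim, whereas you drop it explicitly first.
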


\begin{proofsect}{Proof}
Let $\ell\ge1$ and pick $x\in \Delta^\ell\smallsetminus \Delta^{\ell-1}$. Then \eqref{E:3.29} and the various properties of the random objects $\varphi_j(0)$, $\chi_j$ and $h_j'$ imply
\begin{multline}
\qquad
\phi(x)=-S_{\ell}+\sum_{j\ge\ell}\frakb_j(x)\varphi_j(0)
\\+\sum_{j\ge\ell+2}\chi_j(x)+\bigl[\chi_{\ell-1}(x)+\chi_\ell(x)+\chi_{\ell+1}(x)+h_\ell'(x)\bigr].
\qquad
\end{multline}
As in the proof of Lemma~\ref{lemma-4.3}, the first two sums are bounded (in absolute value) by a quantity of the form $C[1+\Theta_k(\ell)]$ for some constant~$C>0$, uniformly in above~$x$. Similarly, we get
\begin{equation}
\Bigl|\,\min_{x\in \Delta^\ell\smallsetminus \Delta^{\ell-1}}\bigl[\chi_{\ell-1}(x)+\chi_\ell(x)+\chi_{\ell+1}(x)+h_\ell'(x)\bigr]-m_{2^\ell}\Bigr|\le C[1+\Theta_k(\ell)].
\end{equation}
The claim follows from
\begin{equation}
\max_{x\in \Delta^\ell}\,\Bigl|\,\frac2{\sqrt g}\fraka(x)-m_{2^\ell}\Bigr|\le C[1+\Theta_k(\ell)],
\end{equation}
as implied by \eqref{E:mN} and the asymptotic form of~$\fraka$ (see Lemma~\ref{lemma-potential}).
\end{proofsect}

As a consequence of the above observations, we again get a tight control between the event in Theorems~\ref{thm-lessmain} and~\ref{thm-2.5} and the above random walk:
\begin{equation}
\label{E:4.22uu}
\{\wt K<\infty\}\cap\bigcap_{\ell=1}^n\bigl\{S_{\ell}\ge \widetilde R_{\widetilde K}(\ell)\bigr\}
\subseteq\Bigl\{\phi(x)\le\frac2{\sqrt g}\fraka(x)\colon x\in \Delta^n\Bigr\}
\end{equation}
and
\begin{equation}
\label{E:4.22uuu}
\{\wt K<\infty\}\cap\Bigl\{\phi(x)\le\frac2{\sqrt g}\fraka(x)\colon x\in \Delta^n\Bigr\}
\subseteq\{\wt K<\infty\}\cap\bigcap_{\ell=1}^n\bigl\{S_{\ell}\ge -\widetilde R_{\widetilde K}(\ell)\bigr\}
\end{equation}
The upshot of \twoeqref{E:4.10}{E:4.11} and \twoeqref{E:4.22uu}{E:4.22uuu} is that the two events $\{h^{\Delta^n}\le(m_{2^n}+t)(1-\frakg^{\Delta^n})\}$ and $\Z^2$-counterpart $\{\phi\le\frac2{\sqrt g}\fraka\text{ in }\Delta^n\}$ of our prime interest can be represented, via bounds, as events that the random walk $\{S_k\colon k=1,\dots,n+1\}$ stays above a polylogarithmic curve. We thus need to find a way to efficiently control the probability of such random-walk events.

\subsection{Brownian motion above a curve}
\label{sec-4.2}\noindent
It is well-known that the simple random walk bridge (from zero to zero) of time-length~$n$ stays positive with probability that decays proportionally to~$n^{-1}$. An elegant proof exists, based on a symmetry argument, which applies to rather general walks with time-homogeneous steps. Unfortunately, our problem is harder for the following reasons:
\begin{enumerate}
\item[(1)] the steps of our random walk~$\{S_k\colon k=0,\dots,n\}$ are only approximately time-ho\-mo\-ge\-neous (see \eqref{E:3.20} for a precise statement),
\item[(2)] the events in \twoeqref{E:4.10}{E:4.11} compare the random walk to polylogarithmic curves and so an argument based solely on symmetry is not possible,
\item[(3)] the asymptotic probability of the giant intersections in \eqref{E:4.10} and \eqref{E:4.11} will differ by a mutliplicative constant, due to a difference in the restriction near the endpoints.
\end{enumerate}
We thus have to develop tools to address these differences. Based on experience gained in the context of Branching Brownian Motion by Bramson~\cite{Bramson-CPAM}, it is easier to first deal with corresponding claims for Brownian motion and Brownian bridge.

\smallskip
Let $\{B_t\colon t\ge0\}$ be the standard Brownian motion and let~$P^x$ be the law with $P^x(B_0=x)=1$. We will represent the ``curve'' by a function $\zeta\colon[0,\infty)\to[0,\infty)$.  We will separately deal with the cases of curves that are positive (i.e., the case when we require $B\ge\zeta$) or negative (i.e., the case of $B\ge-\zeta$). We begin with (unconditioned) Brownian motion above a positive curve:

\begin{proposition}
\label{prop-uncond-BM-positive-cor}
For~$\zeta\colon[0,\infty)\to[0,\infty)$ non-decreasing, continuous and such that $\zeta(s)=o(s^{1/2})$ as~$s\to\infty$, let
\begin{equation}
\label{E:rho}
\rho(x):=\zeta(x^2)+\frac x2\int_{x^2}^\infty\frac{\zeta(s)}{s^{3/2}}\,\textd s.
\end{equation}
Then for all $t>0$ and all $x>\zeta(0)$,
\begin{equation}
\label{E:4.24wu}
P^x\Bigl(B_s\ge\zeta(s)\colon s\in[0,t]\Bigr)
\ge (1-\delta)\sqrt{\frac2\pi}\frac{x}{\sqrt t}
\end{equation}
holds with
\begin{equation}
\delta:=\frac{x^2}{2t}+4\biggl(\frac{\rho(x)}x\biggr)^{2/3}.
\end{equation}
\end{proposition}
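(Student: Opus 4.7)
The plan is to reduce the curved-barrier problem to the classical no-barrier case and quantify the correction. The starting point is the exact reflection-principle identity
\[
P^x\bigl(B_s \ge 0 : s \in [0,t]\bigr) \;=\; 2\Phi\bigl(x/\sqrt t\bigr) - 1 \;\ge\; \sqrt{\tfrac2\pi}\,\tfrac{x}{\sqrt t}\Bigl(1-\tfrac{x^2}{6t}\Bigr),
\]
obtained by Taylor-expanding the Gaussian error integral. This already handles the $x^2/(2t)$ summand in $\delta$ with room to spare, so the remaining task is to show that raising the barrier from $0$ to $\zeta(s)$ costs only a factor $1-4\bigl(\rho(x)/x\bigr)^{2/3}$.

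For the correction, I would use a first-passage decomposition. Set $\sigma := \inf\{s \in [0,t] : B_s \le \zeta(s)\}$ and write the "bad event" $\mathcal{E} := \{B_s \ge 0\text{ on }[0,t]\}\setminus\{B_s \ge \zeta(s)\text{ on }[0,t]\}$. On $\mathcal{E}$ we have $\sigma \le t$ and, by path-continuity, $B_\sigma = \zeta(\sigma)$. The strong Markov property at $\sigma$ then yields
\[
P^x(\mathcal{E}) \;\le\; \int_0^t \mu(\mathrm{d}s)\, P^{\zeta(s)}\bigl(B_u \ge 0 : u \in [0,t-s]\bigr),
\]
where $\mu(\mathrm{d}s)$ is the joint law of $\sigma$ and $\{B_u \ge 0 : u \le \sigma\}$. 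The right factor is bounded by $\sqrt{2/\pi}\,\zeta(s)/\sqrt{t-s}$ via the same reflection formula, and $\mu(\mathrm{d}s)$ is dominated by the hitting-time density $x/\sqrt{2\pi s^3}\,\mathrm{d}s$ of the level $0$ from $x$ (because hitting $\zeta(s) > 0$ from above requires at least as much "downward work" as hitting $0$ from $x$ when $\zeta$ is small). Splitting the integral at $s=x^2$ and invoking monotonicity of $\zeta$ for $s \le x^2$ (where the hitting density integrates to $O(1)$) produces a bound of the form $C\,\rho(x)/\sqrt t$, with the two pieces of $\rho(x)$ in \eqref{E:rho} arising exactly from the two ranges of the split.

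To sharpen this naive error of order $\rho(x)/x$ to the stated $(\rho(x)/x)^{2/3}$, I would introduce a shift parameter $a > 0$: instead of comparing directly to the flat-barrier event $\{B_s \ge 0\}$, compare to $\{B_s \ge a : s \in [0,t]\}$, i.e.\ start a shifted walk at height $x-a$ above the raised level $a$. The reflection formula now gives a factor $\sqrt{2/\pi}\,(x-a)/\sqrt t$, and the barrier-correction integral improves to roughly $\rho(x)/a$, since a path must cross a level $a$ higher before hitting the curve $\zeta$. Balancing the Taylor-type cost $\sim a/x$ (from $(x-a)/x$) against the improved barrier cost $\sim \rho(x)/a$, subject to the natural constraint that the quadratic Gaussian remainder is small, produces an optimum at $a \sim x\,(\rho(x)/x)^{1/3}$ and a total error of $O\bigl((\rho(x)/x)^{2/3}\bigr)$.

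The main obstacle is executing the $a$-optimization cleanly while keeping the explicit constant $4$ in the statement; a secondary technical nuisance is controlling $\mu(\mathrm{d}s)$ uniformly for a merely continuous non-decreasing $\zeta$ (which is handled by comparing to a step-function majorant that matches $\zeta(x^2)$ on $[0,x^2]$ and $\zeta(s)$ beyond). The condition $\zeta(s) = o(s^{1/2})$ is needed precisely to ensure that the integral $\int_{x^2}^\infty \zeta(s)/s^{3/2}\,\mathrm{d}s$ in $\rho(x)$ converges and that the first-passage approximation (hitting $\zeta(s)$ versus hitting $0$) is accurate up to the asserted multiplicative error.
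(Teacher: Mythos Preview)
Your overall architecture matches the paper's: decompose
\[
P^x\bigl(B_s\ge\zeta(s)\colon s\le t\bigr)=P^0(\tau_x>t)-P^0(\tau_{x-\zeta}\le t<\tau_x),
\]
bound the first term from below via the reflection principle (the paper's Lemma~\ref{lem:2.8}, giving the $x^2/(2t)$ in~$\delta$), and control the ``bad event'' via the strong Markov property at the first hitting time of the curve. Your integration-by-parts and the split at $s=x^2$ are exactly how the paper extracts the quantity~$\rho(x)$ in its Lemma~\ref{lemma-negative}.

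The gap is in your mechanism for upgrading the naive relative error $\rho(x)/x$ to $(\rho(x)/x)^{2/3}$. Your spatial shift by a level~$a$ does not work as described: comparing to $\{B_s\ge a\}$ gives a lower bound on $\{B_s\ge\zeta(s)\}$ only if $a\ge\sup_{s\le t}\zeta(s)$, which fails for large~$t$, and the claimed ``improved barrier cost $\rho(x)/a$'' is not substantiated. Moreover, even accepting your two costs $a/x$ and $\rho(x)/a$ at face value, balancing them gives $a=\sqrt{x\rho(x)}$ and a common error $(\rho(x)/x)^{1/2}$, not the exponent $2/3$ you assert.

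The paper's device is instead a \emph{time-dilation}. One writes
\[
P^0(\tau_{x-\zeta}\le t<\tau_x)\le P^0\bigl(\tau_{x-\zeta}<t,\ \tau_x>(1+\delta)t\bigr)+P^0\bigl(t\le\tau_x<(1+\delta)t\bigr).
\]
In the first term, after the strong Markov step the remaining survival probability is $P^0(\tau_{\zeta(s)}>(1+\delta)t-s)\le\zeta(s)/\sqrt{\delta t}$ \emph{uniformly} in $s\in[0,t]$; this removes the $1/\sqrt{t-s}$ singularity you glossed over and yields the clean bound $\rho(x)/(\sqrt{\delta}\,\sqrt t)$. The second term is at most $x\delta/\sqrt{2\pi t}$ by the explicit hitting-time density. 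Optimizing over~$\delta$ now balances $\rho(x)/\sqrt\delta$ against $x\delta$, so $\delta=(\rho(x)/x)^{2/3}$ and the total error is $2\,\rho(x)^{2/3}x^{1/3}/\sqrt t$, which after division by $\sqrt{2/\pi}\,x/\sqrt t$ produces the factor $4(\rho(x)/x)^{2/3}$ in~$\delta$.
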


Clearly, the bound is not useful unless $x^2\ll t$ and $\rho(x)\ll x$. It turns out that roughly the same conditions guarantee good control for Brownian bridge above a positive curve as well:

\begin{proposition}
\label{prop-mine-corollary}
Let~$\zeta$ and~$\rho$ be as in Proposition~\ref{prop-uncond-BM-positive-cor}.
Then for each $t>0$ and all $x,y>\zeta(0)$,
\begin{equation}
\label{E:4.25wq}
P^x\Bigl(B_s\ge\zeta(s\wedge(t-s))\colon s\in[0,t]\,\Big|\,B_t=y\Bigr)
\ge (1-\delta)\frac{2xy}t
\end{equation}
holds with
\begin{equation}
\delta:=\frac{xy}t+4\biggl(\sqrt{\frac{\rho(x)}x}+\sqrt{\frac{\rho(y)}y}\biggr)\,\texte^{\frac{(x-y)^2}{2t}}.
\end{equation}
\end{proposition}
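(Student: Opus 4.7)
The plan is to reduce the bridge inequality to two copies of a refined version of Proposition~\ref{prop-uncond-BM-positive-cor} (with fixed terminal value) by combining the Markov property at $T:=t/2$ with the time-reversal symmetry of the Brownian bridge. First I would write the left-hand side of \eqref{E:4.25wq} as $P^x(B_s\ge\zeta(s\wedge(t-s))\colon s\in[0,t],\,B_t\in dy)/(p_t(x,y)\,dy)$ and apply the Markov property at $t/2$. Time-reversing the second half converts the decreasing curve $\zeta(t-\cdot)$ on $[t/2,t]$ back into the increasing curve $\zeta(\cdot)$ on $[0,t/2]$ but now starting from $y$. This produces the factorization
\begin{equation*}
P^x\bigl(B_s\ge\zeta(s\wedge(t-s))\colon s\in[0,t],\,B_t\in dy\bigr) \,=\, dy\int_{\zeta(T)}^\infty q(x,z;T)\,q(y,z;T)\,dz,
\end{equation*}
where $q(a,z;T)\,dz := P^a(B_s\ge\zeta(s)\colon s\in[0,T],\,B_T\in dz)$.

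The core analytic input is a pointwise lower bound of the form
\begin{equation*}
q(a,z;T)\,\ge\,\bigl(1-\epsilon(a)\bigr)\bigl(p_T(a,z)-p_T(a,-z)\bigr), \qquad \epsilon(a)=O\bigl(\sqrt{\rho(a)/a}\bigr),
\end{equation*}
valid uniformly in $z>\zeta(T)$ in the bulk of the Gaussian weight. The right-hand side is exactly the reflection-principle density for the $\zeta\equiv 0$ case. To establish it I would mimic the derivation of Proposition~\ref{prop-uncond-BM-positive-cor} but with an added terminal constraint: split $[0,T]$ at time $a^2$; on the near-start window $[0,a^2]$ estimate the probability of staying above the nearly-constant ceiling $\zeta\le\zeta(a^2)$ using that the BM deviates from $a$ by only $O(a)$; on $[a^2,T]$ represent BM conditioned to stay positive and end at $z$ as a Bessel-$3$ bridge and bound the probability that it dips down to $\zeta$ using the weighted integral $\tfrac{a}{2}\int_{a^2}^\infty\zeta(s)s^{-3/2}\,ds$ appearing in $\rho(a)$, which mimics the Bessel-$3$ hitting-probability density.

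Inserting this pointwise bound into the factorization and evaluating the Gaussian convolution
\begin{equation*}
\int_0^\infty \bigl(p_T(x,z)-p_T(x,-z)\bigr)\bigl(p_T(y,z)-p_T(y,-z)\bigr)\,dz \,=\, p_t(x,y)-p_t(x,-y)
\end{equation*}
yields
\begin{equation*}
P^x(\text{event}\mid B_t=y) \,\ge\, \bigl(1-\epsilon(x)\bigr)\bigl(1-\epsilon(y)\bigr)\bigl(1-e^{-2xy/t}\bigr).
\end{equation*}
Expanding $1-e^{-2xy/t}\ge(1-xy/t)\cdot 2xy/t$ and rearranging then recovers the claim. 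The factor $e^{(x-y)^2/(2t)}$ in the stated $\delta$ comes from absorbing the pointwise errors across the full range of $z$ into a single multiplicative correction: the worst-case contribution arises from values of $z$ for which the Gaussian ratio $p_T(x,z)p_T(y,z)/p_t(x,y)$ is maximal, which carries precisely this exponential weight.

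The principal obstacle is the pointwise comparison between $q(a,z;T)$ and the reflection-principle density $p_T(a,z)-p_T(a,-z)$, i.e., an endpoint-sensitive refinement of Proposition~\ref{prop-uncond-BM-positive-cor} that requires Bessel-$3$ bridge computations. The square-root rate $\sqrt{\rho(a)/a}$, as opposed to the linear rate $\rho(a)/a$ of Proposition~\ref{prop-uncond-BM-positive-cor}, emerges from optimally balancing the error from the near-start window $[0,a^2]$ against the Bessel-$3$ bulk error once the terminal value is held fixed.
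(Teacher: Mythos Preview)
Your approach is genuinely different from the paper's, but it has a real gap. The paper does not attempt any pointwise comparison of densities. Instead it writes
\[
P^x\bigl(B_s\ge\zeta(s\wedge(t-s))\colon s\in[0,t]\,\big|\,B_t=y\bigr)
= P^x\bigl(\min_s B_s>0\,\big|\,B_t=y\bigr)
- P^x\bigl(\min_s[B_s-\zeta]<0<\min_s B_s\,\big|\,B_t=y\bigr),
\]
bounds the first term below by the left inequality in \eqref{E:4.17q}, and bounds the subtracted ``bad'' event from above via Proposition~\ref{prop-mine}. That proposition in turn uses a symmetrization step (Cauchy--Schwarz on the midpoint density, Lemma~\ref{lemma-symmetrize}) and a decoupling step (Lemma~\ref{lemma-decouple}) to reduce to two \emph{unconditioned} Brownian motions, for which Lemma~\ref{lemma-negative} applies. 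The factor $\texte^{(x-y)^2/(2t)}$ is exactly the cost of the Cauchy--Schwarz symmetrization, and the rate $\sqrt{\rho(x)/x}$ arises because the symmetrization produces a square root of a probability that is linear in $\rho(x)$---not from any endpoint-sensitive optimization as you suggest.

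The gap in your argument is the pointwise bound $q(a,z;T)\ge(1-\epsilon(a))\bigl(p_T(a,z)-p_T(a,-z)\bigr)$. This cannot hold uniformly for $z>\zeta(T)$: as $z\downarrow\zeta(T)$ the left side vanishes (the path is killed at the curve) while the right side stays bounded away from zero since $\zeta(T)>0$. You hedge with ``in the bulk of the Gaussian weight,'' but then you must control the boundary layer near $z=\zeta(T)$ separately, and that is precisely the difficulty the paper's subtraction avoids. Relatedly, your account of the $\texte^{(x-y)^2/(2t)}$ factor is inconsistent: if the pointwise bound held, the convolution identity you wrote would give $(1-\epsilon(x))(1-\epsilon(y))(1-\texte^{-2xy/t})$ with no such exponential correction. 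The factor only appears once you abandon the pointwise bound and pass to crude upper bounds on the error, at which point you are essentially reinventing the paper's decoupling lemma.
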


The proofs of Propositions~\ref{prop-uncond-BM-positive-cor} and~\ref{prop-mine-corollary} are long and technical and would detract from the main line of presentation. Hence we defer them to Appendix~\ref{appendA}. 

Next we move to the case of negative curves. We again start with the case of unconditioned Brownian motion:

\begin{proposition}
\label{prop-4.9ue}
For~$\zeta\colon[0,\infty)\to[0,\infty)$ non-decreasing and continuous and with $\zeta(s)=o(s^{1/4})$ as~$s\to\infty$, denote
\begin{equation}
\label{E:tilde-rho}
\tilde \rho(x):=\rho(x)+4\,\frac{\zeta(x^2)^2}x+2\int_{x^2}^\infty\frac{\zeta(s)^2}{s^{3/2}}\,\textd s
\end{equation}
where $\rho(x)$ is as in \eqref{E:rho}. Then for all $t>0$ and all $x>0$,
\begin{equation}
\label{E:4.29wu}
P^x\Bigl(B_s\ge-\zeta(s)\colon s\in[0,t]\Bigr)
\le (1+\delta)\sqrt{\frac2\pi}\,\frac{x}{\sqrt t}
\end{equation}
holds true with $\delta:=\kappa_1\bigl(\frac{\tilde\rho(x)}x\bigr)$ for $\kappa_1(u):=4(1+u^{2/3})u^{2/3}$.
\end{proposition}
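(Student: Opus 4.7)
Plan: The target estimate is the upper-bound companion to Proposition~\ref{prop-uncond-BM-positive-cor} but for negative curves. The strategy is to decompose according to the first hitting time of level~$0$, apply the strong Markov property, and then control the post-hitting survival probability by iterating elementary exit estimates; the extra $\zeta^2$ contributions present in $\tilde\rho$ relative to $\rho$ will arise precisely in the second step.

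First, writing $\tau := \inf\{s \geq 0 \colon B_s = 0\}$, I would use the decomposition
\begin{equation*}
P^x\bigl(B_s \geq -\zeta(s)\colon s \in [0,t]\bigr) = P^x(\tau > t) + \int_0^t \frac{x}{\sqrt{2\pi u^3}}\,\texte^{-x^2/(2u)}\, q(u, t-u)\,\textd u,
\end{equation*}
where $q(u, T) := P^0(\widetilde B_v \geq -\zeta(u+v)\colon v \in [0, T])$ and the second term comes from the strong Markov property at~$\tau$ applied to an independent Brownian motion~$\widetilde B$. The first term equals $2\Phi(x/\sqrt t)-1 \leq \sqrt{2/\pi}\,x/\sqrt t$ exactly, supplying the full leading-order right-hand side in~\eqref{E:4.29wu}. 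The task thus reduces to showing that the integral term is bounded by $\delta$ times this leading order.

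For $q(u, T)$, I would perform a dyadic decomposition of the time interval $[0, T]$ into pieces $[2^{j-1}u \wedge T,\,2^j u \wedge T]$; on each such piece, one upper-bounds $\zeta(u+v)$ by its value at the right endpoint (monotonicity of~$\zeta$) and applies, via the Markov property at each dyadic time, the elementary exit estimate $P^y(\min_{[0,s]}B \geq -a) \leq \sqrt{2/\pi}\,(y+a)/\sqrt s$. Using also that the typical position of $\widetilde B$ at time $2^j u$ is of order $\sqrt{2^j u}$, a careful book-keeping yields a bound of the schematic form
\begin{equation*}
q(u, T)\ \leq\ C\,\frac{\zeta(u)}{\sqrt T}\ +\ C \int_u^{u+T}\!\! \frac{\zeta(s)}{s^{3/2}}\,\textd s\ +\ C\,\frac{\zeta(u)^2}{u\sqrt T}\ +\ C \int_u^{u+T} \!\!\frac{\zeta(s)^2}{s^{3/2}\sqrt T}\,\textd s.
\end{equation*}
The quadratic $\zeta^2$ terms capture the contribution of excursions of $\widetilde B$ that, having first risen above the barrier, later dip back near~$-\zeta$; each such second encounter contributes an extra factor of order $\zeta/\sqrt s$, and ignoring them would not suffice to close the bootstrap.

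The remaining step is to substitute this bound into the integral above and interchange the order of integration. Under the change of variables $u = x^2/r$ the $\tau$-density becomes a standard Gaussian measure supported on $r\lesssim 1$, so the integrals $\int\zeta(s)/s^{3/2}$ and $\int\zeta(s)^2/s^{3/2}$ naturally concentrate on $s\gtrsim x^2$. The linear terms then reproduce exactly the $\rho(x)/x$ contribution known from Proposition~\ref{prop-uncond-BM-positive-cor}, while the quadratic terms produce the additional $\zeta(x^2)^2/x$ and $\int_{x^2}^\infty \zeta(s)^2 s^{-3/2}\,\textd s$ pieces defining~$\tilde\rho(x)$. Extracting the common factor $\sqrt{2/\pi}\,x/\sqrt t$ yields \eqref{E:4.29wu} with $\delta=\kappa_1(\tilde\rho(x)/x)$; the specific form $\kappa_1(u)=4(1+u^{2/3})u^{2/3}$ emerges from optimizing the scale at which the dyadic decomposition is truncated. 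The main obstacle is executing the second step sharply: naive iteration of the reflection bound overcounts repeated near-crossings and destroys the constants, so one must retain sharp control of the Brownian endpoint distribution at each dyadic interface. The stricter growth hypothesis $\zeta(s)=o(s^{1/4})$ (compared to $\zeta(s)=o(s^{1/2})$ in Proposition~\ref{prop-uncond-BM-positive-cor}) enters precisely to ensure the finiteness of $\int^\infty \zeta(s)^2 s^{-3/2}\,\textd s$, and hence of $\tilde\rho(x)$.
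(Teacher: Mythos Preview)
Your strong-Markov decomposition at the first hit of level~$0$ is, after a reflection, precisely the decomposition the paper uses (it applies the strong Markov property at~$\tau_x$ under~$P^0$, which has the same law as your~$\tau_0$ under~$P^x$). The bound on the post-hitting survival probability~$q(u,T)$ is also what the paper needs; the paper obtains it not by dyadic iteration but by a short argument based on the joint law of the running maximum~$M_T^\star$ and its time~$T_T^\star$ (Lemma~\ref{lemma-A.9u}), which directly gives $q(u,T)\le T^{-1/2}\bigl(\zeta(u)+2\int_0^T\zeta(u+v)\zeta'(u+v)v^{-1/2}\,\textd v\bigr)$ and, after an integration by parts, the quadratic $\zeta^2$ terms in~$\tilde\rho$.

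There is, however, a genuine gap in your outline: the outer integral $\int_0^t(\text{density of }\tau_0)\,q(u,t-u)\,\textd u$ carries a $1/\sqrt{t-u}$ singularity that you do not address. The paper does \emph{not} integrate this directly. Instead it inserts an artificial time gap: it bounds $P^0(\tau_x<t,\,\tau_{x+\zeta}>(1+\delta)t)$, so that $1/\sqrt{(1+\delta)t-u}\le 1/\sqrt{\delta t}$ uniformly on~$[0,t]$, yielding $\tilde\rho(x)/\sqrt{\delta t}$ (Lemma~\ref{lemma-positive}); it then adds back $P^0(t<\tau_x\le(1+\delta)t)\le x\delta/\sqrt t$ and optimizes over~$\delta$. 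The minimizer is $\delta=(\tilde\rho(x)/x)^{2/3}$, and \emph{this} is where the exponent~$2/3$ in $\kappa_1(u)=4(1+u^{2/3})u^{2/3}$ comes from --- it is a time-scale balancing, not a truncation of a dyadic sum as you suggest. Without this step (or an equivalent device) your integral does not obviously produce the stated form of~$\kappa_1$.
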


For the bound to be useful, we need that $\delta\ll1$ which in turn requires that~$\tilde\rho(x)\ll x$. Not too surprisingly, the same criterion also applies to the case of the Brownian bridge:

\begin{proposition}
\label{prop-mine-too-corollary}
Suppose~$\zeta$ and $\tilde\rho$ are as in Proposition~\ref{prop-uncond-BM-positive-cor}.
Then for all $t>0$ and all $x,y>0$,
\begin{equation}
\label{E:4.25w}
P^x\Bigl(B_s\ge-\zeta(s\wedge(t-s))\colon s\in[0,t]\,\Big|\,B_t=y\Bigr)
\le (1+\delta)\frac{2xy}t
\end{equation}
holds true with $\delta:=\kappa_2\bigl(\frac{\tilde\rho(x)}x,\frac{\tilde\rho(y)}y\bigr)\texte^{\frac{(x-y)^2}{2t}}$ for $\kappa_2(u,v):=48(1+u)(1+v)(\sqrt{u}+\sqrt{v})$.
\end{proposition}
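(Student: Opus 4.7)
The proof will parallel that of Proposition~\ref{prop-mine-corollary} (the lower bound for Brownian bridge above a positive curve) but with inequalities reversed and with Proposition~\ref{prop-4.9ue} playing the role of Proposition~\ref{prop-uncond-BM-positive-cor} as the unconditioned input. The plan is to decompose the bridge interval $[0,t]$ into three pieces via the Markov property, treat the two endpoint pieces using the unconditioned estimate, and bound the middle piece trivially by one.

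Concretely, I would choose cut-off times $T_x, T_y$ of order $x^2$ and $y^2$ respectively --- the natural diffusive scales at each endpoint, matching the scales implicit in $\tilde\rho(x),\tilde\rho(y)$. Applying the Markov property at $T_x$ and at $t - T_y$ together with the time-reversal symmetry of the bridge, the joint density decomposes as
\begin{equation*}
\frac{P^x\bigl(B_s\ge-\zeta(s\wedge(t-s))\colon s\in[0,t],\,B_t\in dy\bigr)}{dy}
=\int\!\!\int Q^x(u)\,p_{t-T_x-T_y}(u,v)\,M(u,v)\,Q^y(v)\,du\,dv,
\end{equation*}
where $Q^a(w)$ is the density of $B_{T_a}$ under Brownian motion started from $a$ and restricted to $\{B_s\ge-\zeta(s)\colon s\in[0,T_a]\}$, while $M(u,v)\le 1$ is the middle-piece confinement probability. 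Since $\zeta$ is non-decreasing, on the middle interval the barrier is at most $-\zeta(T_x\wedge T_y)$, so the crude bound $M\le 1$ loses only a quantity absorbed into $\delta$.

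The technical heart is a density-level upper bound on $Q^a(w)$. Monotonicity of $\zeta$ and reflection across the constant barrier $-\zeta(T_a)$ yield the explicit estimate
\begin{equation*}
Q^a(w)\le p_{T_a}(a,w)\Bigl(1-\exp\bigl(-2(a+\zeta(T_a))(w+\zeta(T_a))/T_a\bigr)\Bigr),
\end{equation*}
valid for $a,w\ge-\zeta(T_a)$. The crude quantity $a+\zeta(T_a)$ is then sharpened to $a$ up to a multiplicative $1+O(\tilde\rho(a)/a)$ correction by a further (dyadic) Markov decomposition of $[0,T_a]$ that iterates the same idea; the accumulated boundary corrections at each dyadic scale are exactly what produces the integral terms in the definition of $\tilde\rho$ at~\eqref{E:tilde-rho}. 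Plugging this back into the decomposition, the Gaussian convolution $p_{T_x}*p_{t-T_x-T_y}*p_{T_y}=p_t$ reproduces the denominator $p_t(x,y)$; the linear-in-$uv$ contributions from the two reflection factors assemble into $2xy/t$; and the mismatch between $p_{T_x}(x,u)\,p_{T_y}(y,v)$ and $p_t(x,y)$ accounts for the factor $e^{(x-y)^2/(2t)}$ appearing in $\delta$.

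The main obstacle is precisely the density-level sharpening of Proposition~\ref{prop-4.9ue} carried out in the previous paragraph: Proposition~\ref{prop-4.9ue} bounds only an integrated probability, whereas the Markov gluing requires a pointwise-in-$w$ bound on $Q^a(w)$ with the same $(1+O(\tilde\rho(a)/a))$ precision. Naive monotonicity in $\zeta$ loses a factor of $\zeta(T_a)/a$ at each endpoint; recovering this loss through the iterated decomposition is what ultimately produces the specific form of $\tilde\rho$ and the explicit constant $48$ entering $\kappa_2$.
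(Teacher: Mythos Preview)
Your plan has a genuine gap: bounding the middle piece by $M(u,v)\le 1$ loses the factor~$1/t$ that carries the whole estimate. To see this, set $\zeta\equiv 0$, so that $\tilde\rho\equiv 0$, $\delta=0$, and~\eqref{E:4.25w} must recover the sharp reflection bound $2xy/t$. Under your scheme with $T_x\sim x^2$, $T_y\sim y^2$ and $M\le 1$, the only surviving constraints are $B_s\ge 0$ on $[0,T_x]\cup[t-T_y,t]$; on $\{B_{T_x}=u\}$ the first constraint has probability $1-\texte^{-2xu/T_x}$, which for typical $u\sim x$ is of order~$1$, not of order $x/\sqrt t$. No amount of ``dyadic sharpening'' at the endpoints can manufacture a $1/t$ out of two $O(1)$ factors. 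Your sentence ``the linear-in-$uv$ contributions from the two reflection factors assemble into $2xy/t$'' is simply false: there is no~$t$ in either reflection factor once $T_x,T_y$ are chosen at the diffusive scales~$x^2,y^2$.

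The paper's argument avoids this by decomposing the \emph{event} rather than the time interval: one writes
\[
P^x\bigl(B\ge -\tilde\zeta\,\big|\,B_t=y\bigr)
= P^x\bigl(B\ge 0\,\big|\,B_t=y\bigr)
+ P^x\bigl(\min B<0<\min(B+\tilde\zeta)\,\big|\,B_t=y\bigr).
\]
The first term is at most $2xy/t$ by the exact reflection identity~\eqref{E:4.17q}; the second (correction) term is controlled by Proposition~\ref{prop-mine-too}, whose proof uses the symmetrization Lemma~\ref{lemma-symmetrize} and the decoupling Lemma~\ref{lemma-decouple} to reduce to the unconditioned estimate in Lemma~\ref{lemma-positive}. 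The decoupling step is what produces the factor $\texte^{(x-y)^2/(2t)}$, and it is applied only to the correction event---where the extra smallness coming from ``crossed~$0$ yet stayed above~$-\zeta$'' already supplies the missing order of magnitude. If you want to salvage a time-splitting approach, you must keep the middle constraint and extract~$2uv/t$ from it; but then the endpoint reflections become redundant and you are essentially reproducing the paper's route.
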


The proofs of Propositions~\ref{prop-4.9ue} and~\ref{prop-mine-too-corollary} are again relegated to Appendix~\ref{appendA}. The reader should note that, despite some similarities between the cases of positive and negative curves, there are also notable differences. Indeed, in order to have $\rho(x)\ll x$ it suffices to have $\zeta(s)=o(s^{1/2})$, while for~$\tilde\rho(x)\ll x$, we seem to need $\zeta(s)=o(s^{1/4})$. While the former criterion is basically best possible, the latter is likely not optimal. However, as the above statements are fully sufficient for our needs, we have not tried to bring them to a necessarily optimal form. 

\begin{remark}
\label{remark-Bramson}
We note that, in his groundbreaking study of the Branching Brownian Motion, Bramson~\cite[Propositions~1, 1', 2 and~2']{Bramson-CPAM} proved bounds of the form \eqref{E:4.24wu}, \eqref{E:4.25wq}, \eqref{E:4.29wu} and~\eqref{E:4.25w} for the specific choice $\zeta(s):=(3/\sqrt8)\log(s\vee1)$ and~$x=y$. Unfortunately, we are not able to use his conclusions for two reasons: First, Bramson's upper and lower bounds differ by an overall multiplicative constant which is something that our applications of these bounds cannot tolerate. Second, we need to control a whole class of~$\zeta$'s uniformly. 
\end{remark}

In order to use the above statements in various situations of interest, it will be convenient to have a quick tool for bounding the quantity~$\tilde\rho(x)$ (which automatically bounds also $\rho(x)$) for a reasonably large class of~$\zeta$. This is the subject of:

\begin{lemma}
\label{lemma-A.9}
Let $\zeta\colon[0,\infty)\to[0,\infty)$ be non-decreasing, continuously differentiable and such that $\zeta(0)>0$ and, for some~$a,\sigma>0$,
\begin{equation}
\label{E:4.23wu}
\zeta'(s)\le a\frac{\log(1+s/\sigma^2)}{1+s/\sigma^2},\qquad s>0.
\end{equation}
For~$u\ge0$ set $\zeta_u(s):=\zeta(u+s)$ and let $\tilde\rho_u(x)$ be the quantity in \eqref{E:tilde-rho} associated with~$\zeta_u$. There is a constant~$c=c(a,\sigma)$ such that for all~$x\ge1$ and all~$u\ge0$,
\begin{equation}
\label{E:4.31uea}
\tilde\rho_u(x)\le 2\zeta(u)+16\frac{\zeta(u)^2}x+c\Bigl(\log\bigl(\texte+\tfrac{x^2}{\sigma^2}\bigr)\Bigr)^4\,
\end{equation}
\end{lemma}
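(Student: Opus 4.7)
The plan is to reduce everything to a uniform-in-$u$ growth bound on $\zeta_u-\zeta(u)$ and then substitute. First I would show that for all $u,s\ge0$,
\[
\zeta(u+s)-\zeta(u)\le C_1+C_2\bigl(\log(1+s/\sigma^2)\bigr)^2,
\]
with constants depending only on $a$ and $\sigma$. To prove this, set $\eta(r):=\log(1+r/\sigma^2)/(1+r/\sigma^2)$; a direct computation of $\eta'$ shows that $\eta$ is non-decreasing on $[0,(e-1)\sigma^2]$, attains its maximum $1/e$ there, and is non-increasing on $[(e-1)\sigma^2,\infty)$. The derivative hypothesis \eqref{E:4.23wu} then gives $\zeta(u+s)-\zeta(u)\le a\int_0^s\eta(u+t)\,\textd t$, and I will split the $t$-integral at $(e-1)\sigma^2$: on $[0,(e-1)\sigma^2]$ the integrand is at most $1/e$ and contributes a constant, while on $[(e-1)\sigma^2,s]$ both $t$ and $u+t$ lie in the region where $\eta$ is non-increasing, so $\eta(u+t)\le\eta(t)$ and the contribution is at most $\int_0^s\eta(t)\,\textd t=\frac{\sigma^2}{2}\log^2(1+s/\sigma^2)$.

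Having established that, I would write $\zeta_u(s)\le\zeta(u)+F(s)$ with $F(s):=C_1+C_2\log^2(1+s/\sigma^2)$ and plug into the four ingredients of $\tilde\rho_u(x)$. The $\zeta(u)$-pieces are elementary: $\zeta(u)$ from $\zeta_u(x^2)$; a further $\zeta(u)$ from $\zeta(u)\cdot\frac{x}{2}\int_{x^2}^\infty s^{-3/2}\,\textd s=\zeta(u)$ inside $\rho_u$; and, after expanding via $(A+B)^2\le 2A^2+2B^2$, a combined $16\zeta(u)^2/x$ from $4\zeta_u(x^2)^2/x$ and $2\int_{x^2}^\infty\zeta_u(s)^2 s^{-3/2}\,\textd s$ (each contributes $8\zeta(u)^2/x$). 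Together these account for the $2\zeta(u)+16\zeta(u)^2/x$ part of the target.

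It then remains to show the $F$-contributions fit under $c(\log(e+x^2/\sigma^2))^4$. The pointwise terms $F(x^2)$ and $F(x^2)^2/x$ are immediately $O(\log^2(1+x^2/\sigma^2))$ and $O(\log^4(1+x^2/\sigma^2))$ respectively, using $x\ge 1$ to tame the $1/x$ factor. For the integrals I will invoke the standard tail asymptotic $\int_T^\infty(\log t)^k\,t^{-3/2}\,\textd t=O((\log T)^k/\sqrt T)$ as $T\to\infty$ (obtained by repeated integration by parts); substituting $t=s/\sigma^2$, $T=x^2/\sigma^2$ and multiplying by the prefactors $x/2$ or $2$ in $\tilde\rho_u$ yields bounds of order $(\log(1+x^2/\sigma^2))^k$ for $k\in\{2,4\}$. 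A separate check that the integrals stay uniformly bounded when $x^2/\sigma^2$ is small, together with $\log(e+z)\ge1$, lets every remaining constant and lower power of $\log$ be swallowed into a single $c(\log(e+x^2/\sigma^2))^4$.

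The main obstacle is the uniform-in-$u$ estimate in the first paragraph. The naive bound $\zeta(u+s)-\zeta(u)\le\frac{a\sigma^2}{2}[\log^2(1+(u+s)/\sigma^2)-\log^2(1+u/\sigma^2)]$ retains a cross-term of size $\log(1+u/\sigma^2)\log(1+s/\sigma^2)$ that cannot be absorbed into $\zeta(u)$, because nothing in the hypotheses forces $\zeta$ to grow with $u$ (it is allowed to be essentially constant). The observation that $\eta$ is eventually non-increasing is precisely what removes this spurious $u$-dependence and makes the stated bound possible.
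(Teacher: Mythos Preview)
Your proposal is correct and follows essentially the same route as the paper. The paper obtains the key uniform-in-$u$ bound via the inequality $\frac{\log(1+s)}{1+s}\le\texte\,\frac{\log(\texte+s)}{\texte+s}$ (noting the right side is decreasing on $[0,\infty)$, hence $\eta(u+t)\le\texte\,\frac{\log(\texte+t/\sigma^2)}{\texte+t/\sigma^2}$), which is a slick packaging of exactly your monotonicity-and-split argument for $\eta$; thereafter both proofs plug $\zeta_u(s)\le\zeta(u)+F(s)$ into $\tilde\rho_u$, use $(a+b)^2\le 2a^2+2b^2$ on the quadratic terms, and finish with integration by parts on the $\int_{x^2}^\infty(\log s)^k s^{-3/2}\,\textd s$ tails.
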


\begin{proofsect}{Proof}
Since $\frac{\log(1+s)}{1+s}\le\texte\frac{\log(\texte+s)}{\texte+s}$, with the expression on the right decreasing on~$\{s\ge0\}$, integrating the inequality \eqref{E:4.23wu} yields
\begin{equation}
\zeta_u(s)\le\zeta(u)+\frac12a\sigma^2\texte\Bigl[1+\log\bigl(\texte+\tfrac{s}{\sigma^2}\bigr)\Bigr]^2.
\end{equation}
To get the result, plug this in the expression for~$\tilde\rho$, use $(a+b)^2\le 2a^2+2b^2$ to deal with the quadratic occurrences of~$\zeta(s)$ and perform a sequence of integrations by parts.
\end{proofsect}

The above calculations permit us to make a statement concerning \emph{entropic repulsion}, which amounts to the fact that, conditioning a Brownian path to stay above a negative curve, it will stay above a positive curve except perhaps near the starting point.

\begin{proposition}
\label{prop-entropy-BM}
Let $\zeta\colon[0,\infty)\to[0,\infty)$  be related to  $a$ and~$\sigma$ as in Lemma~\ref{lemma-A.9}. There are constants $c=c(a,\sigma)>0$ and~$c'=c'(a,\sigma,\zeta(0))>0$ such that for all~$t>2c'$ and all~$u\in[c',t/2]$,
\begin{equation}
P^0\Bigl(\,\min_{0\le s\le t}\bigl[B_s+\zeta(s)\bigr]>0>\min_{u\le s\le t}\bigl[B_s-\zeta(s)\bigr]\Bigr)
\le c\,u^{-\frac1{16}}\frac1{\sqrt t}\,.
\end{equation}
\end{proposition}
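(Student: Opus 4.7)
The plan is to apply the strong Markov property of Brownian motion at time $u$ and analyze the resulting integral in two $B_u$-regimes. Writing $E_u:=\{B_s+\zeta(s)>0,\, s\in[0,u]\}$, and $\tilde E$, $\tilde A_2$ for the analogous events on the post-$u$ walk (with curve $s\mapsto\zeta(u+s)$), Markov gives
$$P^0(A_1\cap A_2) \;=\; \int_0^\infty P^0\bigl(E_u,\,B_u\in dy\bigr)\, P^y\bigl(\tilde E\cap\tilde A_2\bigr)\,dy.$$
The classical reflection principle together with the monotonicity of $\zeta$ bounds the density by $P^0(E_u,B_u\in dy)\le 2\zeta(u)(y+\zeta(u))(u\sqrt{2\pi u})^{-1}\texte^{-y^2/(2u)}\,dy$ for $y\ge 0$, and I would split the $y$-integral at $L:=u^{7/16}$.

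For $y\le L$ the bound $y^2/(2(t-u))\le L^2/(2u)=u^{-1/8}/2$ holds by virtue of $u\le t/2$ forcing $t-u\ge u$, and Lemma~\ref{lemma-A.9} applied to the shifted curve $\zeta_u(s)=\zeta(u+s)$ makes all the remaining correction terms in Propositions~\ref{prop-uncond-BM-positive-cor} and~\ref{prop-4.9ue} also of order $u^{-1/8}$ (as long as $u\ge c'$, where $c'$ is chosen depending on $a,\sigma,\zeta(0)$ so that the $\tilde\rho_u(y)/y$-terms are controlled). Writing $P^y(\tilde E\cap\tilde A_2) = P^y(\tilde E) - P^y(\tilde A_2^c)$ and subtracting the upper bound from Proposition~\ref{prop-4.9ue} from the lower bound in Proposition~\ref{prop-uncond-BM-positive-cor} (both applied to $\zeta_u$) then yields $P^y(\tilde E\cap\tilde A_2)\le C\,y\,u^{-1/8}/\sqrt{t-u}$. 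Integrating against the density produces a contribution of order
$$\frac{\zeta(u)\,u^{-1/8}\,L^3}{u^{3/2}\sqrt{t-u}}\;=\;O\!\Bigl((\log u)^2\,u^{-5/16}\big/\sqrt t\Bigr),$$
which fits into $c\,u^{-1/16}/\sqrt t$ after absorbing the $(\log u)^2$ factor into $u^{1/16}$ for $u\ge c'$ large.

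For $y>L$ I would fall back on the pointwise reflection bound
$$P^y(\tilde E\cap\tilde A_2)\;\le\; 2\bigl[\Phi\bigl((y+\zeta(u))/\sqrt{t-u}\bigr)-\Phi\bigl((y-\zeta(t))/\sqrt{t-u}\bigr)\bigr] \;\le\; \frac{C\zeta(t)}{\sqrt{t-u}}\texte^{-(y-\zeta(t))^2/(2(t-u))},$$
valid for $y>\zeta(t)$ and obtained by upper-bounding $P^y(\tilde E)$ via the constant envelope $-\zeta(u)$, lower-bounding $P^y(\tilde A_2^c)$ via the constant envelope $+\zeta(t)$, and applying the mean value theorem. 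Integration against the Gaussian density of $B_u$, with completion of the square in the combined exponent, produces a contribution of order $\zeta(u)\zeta(t)/\sqrt{ut}$. The main obstacle I anticipate is the $\zeta(t)$ factor here, whose $(\log t)^2$ growth is not naturally dominated by $u^{-1/16}$ for small $u$. This is dealt with by handling the bounded initial range $u\in[c',c'']$ (for $c''$ depending on $a,\sigma$) separately via the trivial estimate $P^0(A_1\cap A_2)\le P^0(A_1)=O(1/\sqrt t)$, which is absorbed into $c\,u^{-1/16}/\sqrt t$ because $u^{-1/16}$ stays bounded below by $(c'')^{-1/16}$ there; for $u\ge c''$ the $\zeta(u)\zeta(t)/\sqrt u$-factor is controlled by $u^{-1/16}$ uniformly in $t\ge 2c'$. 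The exponent $1/16$ is a convenient but non-sharp byproduct of the cutoff $L=u^{7/16}$ and of the $u^{-1/8}$ correction coming from the $x^2/(2t)$ term in Proposition~\ref{prop-uncond-BM-positive-cor}.
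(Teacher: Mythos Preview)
Your overall structure (condition at time~$u$, split according to~$B_u$, compare $P^y(\tilde E)$ with $P^y(\tilde A_2^\cc)$ via Propositions~\ref{prop-uncond-BM-positive-cor} and~\ref{prop-4.9ue}) is the same as the paper's, but there is a genuine gap in your treatment of the regime $y>L$.

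The problem is your constant-envelope bound
\[
P^y(\tilde E\cap\tilde A_2)\le 2\bigl[\Phi\bigl(\tfrac{y+\zeta(t)}{\sqrt{t-u}}\bigr)-\Phi\bigl(\tfrac{y-\zeta(t)}{\sqrt{t-u}}\bigr)\bigr]
\]
(the $\zeta(u)$ in your display should be $\zeta(t)$: to \emph{upper}-bound $P^y(\tilde E)$ you need $\zeta_u(s)\le\zeta(t)$, not $\zeta_u(s)\ge\zeta(u)$). This forces a factor $\zeta(t)$ into the final bound, and your claim that ``for $u\ge c''$ the $\zeta(u)\zeta(t)/\sqrt u$-factor is controlled by $u^{-1/16}$ uniformly in $t\ge 2c'$'' is false: the hypothesis only pins $t\ge 2u$, so for fixed~$u$ one may take $t\to\infty$ and then $\zeta(t)\sim(\log t)^2\to\infty$. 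No choice of~$c''$ rescues this. The paper avoids the $\zeta(t)$ factor entirely by using the curve-specific bounds: subtracting Proposition~\ref{prop-uncond-BM-positive} from Proposition~\ref{prop-uncond-BM-negative} (or, equivalently, using Propositions~\ref{prop-uncond-BM-positive-cor} and~\ref{prop-4.9ue} together with Lemma~\ref{lemma-A.9}) gives
\[
P^y(\tilde E\cap\tilde A_2)=P^0(\tau_{y-\zeta_u}\le t-u<\tau_{y+\zeta_u})\le \frac{C\,\tilde\rho_u(y)^{2/3}y^{1/3}}{\sqrt{t-u}}\bigl(1+(\tilde\rho_u(y)/y)^{2/3}\bigr),
\]
and $\tilde\rho_u(y)\le C[\zeta(u)+(\log y)^4]$ carries no $t$-dependence. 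Plugging this into your density bound and integrating over $y>L$ yields $C(\log u)^{O(1)}u^{-1/3}/\sqrt t$, well within the target.

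A smaller issue: your claim that the correction terms from Propositions~\ref{prop-uncond-BM-positive-cor} and~\ref{prop-4.9ue} are $O(u^{-1/8})$ is not uniform on $[0,L]$ --- for $y\lesssim\zeta(u)$ the ratio $\tilde\rho_u(y)/y$ is of order one or larger, and Proposition~\ref{prop-uncond-BM-positive-cor} does not even apply for $y<\zeta_u(0)=\zeta(u)$. This is easy to fix (the contribution of $y\le(\log u)^{O(1)}$ to your integral is $O((\log u)^{O(1)}u^{-3/2}/\sqrt t)$ by the crude bound $P^y(\tilde E\cap\tilde A_2)\le P^y(\tilde E)$), but it needs to be said. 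The paper sidesteps this by the monotonicity trick in Lemma~\ref{lemma-7.9}: it bounds $P^0(A_u^+|\sigma(B_u))$ on $\{B_u\le x\}$ by its value at the single point $B_u=x$, where $\tilde\rho_u(x)/x=O(1)$ suffices.
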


This statement is far from optimal --- in fact, one expects that the Brownian motion will stay above the curve $s\mapsto s^\delta$ for each $\delta<\ffrac12$ --- but the above is easy to prove given what we already have and is completely sufficient for our needs. The proof is given in Section~\ref{sec-A5}. A similar statement holds for the Brownian bridge as well:

\begin{proposition}
\label{prop-entropy-BB}
Let~$\zeta$ be as in Proposition~\ref{prop-entropy-BM} and abbreviate $\tilde\zeta(s):=\zeta(s\wedge(t-s))$.
Then there are constants~$\tilde c=\tilde c(a,\sigma)>0$ and~$\tilde c'=\tilde c'(a,\sigma,\zeta(0))>0$ such that for all sufficiently large~$t>0$ and all~$u\in[\tilde c',t/4]$,
\begin{equation}
\label{E:4.34r} 
P^0\Bigl(\,\min_{0\le s\le t}\bigl[B_s+\tilde\zeta(s)\bigr]>0>\min_{u\le s\le t}\bigl[B_s-\tilde\zeta(s)\bigr]\,\Big|\, B_t=0\Bigr)
\le \tilde c\,u^{-\frac1{16}}\frac1{ t}\,.
\end{equation}
\end{proposition}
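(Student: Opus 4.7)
The plan is to mimic the proof of Proposition~\ref{prop-entropy-BM}, replacing each invocation of the Brownian-motion bound from Proposition~\ref{prop-4.9ue} by its bridge counterpart from Proposition~\ref{prop-mine-too-corollary}. The extra factor of $t^{-1/2}$ that appears in \eqref{E:4.34r} relative to the unconditioned statement is expected to emerge from the terminal conditioning $B_t=0$, which adds a Gaussian density factor of order $t^{-1/2}$ at each application of the bridge estimate. Before starting, I would exploit the time-reversal symmetry of the bridge together with $\tilde\zeta(s)=\tilde\zeta(t-s)$ so that, up to a factor of~$2$, it suffices to bound the same event intersected with the requirement that the \emph{first} dip at or below~$\zeta$ after time~$u$ occurs in the left half $[u,t/2]$ (the right-half case then being handled by the time-reversed bridge).

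Next, denote by~$T$ the first time $s\ge u$ at which $B_s\le\zeta(s)$; on the event of interest, $T\in[u,t/2]$ and $B_T\in[-\zeta(T),\zeta(T)]$ by the stay-above-$(-\zeta)$ constraint. Conditioning on $(T,B_T)=(\tau,z)$, the strong Markov property splits the bridge into two independent pieces given~$(\tau,z)$: a Brownian bridge from~$0$ to~$z$ on $[0,\tau]$ that must stay above~$-\zeta$ (and, by the minimality of~$T$, stay strictly above~$\zeta(\,\cdot\,)$ up to time~$u$ and then above~$\zeta$ until~$\tau$); and an independent Brownian bridge from~$z$ to~$0$ on $[\tau,t]$ that must stay above~$-\tilde\zeta$. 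To each piece I would apply Proposition~\ref{prop-mine-too-corollary}, whose tail hypothesis on~$\tilde\rho$ is verified by Lemma~\ref{lemma-A.9} (with a time shift $\zeta\rightsquigarrow\zeta_u$ on the pre-dip segment to match the form required by the lemma).

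Third, I would integrate the joint density of~$(T,B_T)$ against the two bounds. The post-dip factor yields a contribution of order $(|z|+\zeta(0))(|z|+\zeta(\tau))/(t-\tau)$, which for $|z|\le\zeta(\tau)$ and $\tau\le t/2$ is at most $C\,\zeta(\tau)^2/t$ up to the usual polylogarithmic correction, and in particular gives the desired $1/t$ scaling (the condition $u\le t/4$ is used precisely here: it guarantees $t-\tau\ge t/2$ so that Proposition~\ref{prop-mine-too-corollary} applies uniformly in~$\tau$ and~$z$). The pre-dip factor is where the entropic-repulsion factor~$u^{-1/16}$ must come out: by a variant of the argument proving Proposition~\ref{prop-entropy-BM}, the density of the first-dip time~$\tau$ on the stay-above-$(-\zeta)$ event decays at least like~$\tau^{-1-\gamma}$ for some~$\gamma>0$, and integrating $\int_u^{t/2}\tau^{-1-\gamma}\,\textd\tau\lesssim u^{-\gamma}$ recovers the exponent~$1/16$ upon careful tracking of constants. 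The polylogarithmic corrections from $\zeta(\tau)^2$ and related terms are absorbed by the slack between~$1/16$ and the larger exponent actually produced by the estimates.

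The main obstacle is this pre-dip step. Unlike in the unconditioned case, the pre-dip segment is itself a Brownian bridge rather than a Brownian motion, and Proposition~\ref{prop-mine-too-corollary} alone delivers only the $1/\tau$ scaling, not the additional polynomial decay~$\tau^{-\gamma}$. To overcome this I see two routes: (a) decompose the pre-dip bridge as a Brownian motion plus the linear drift $(s/\tau)z$, at which point the event is a small perturbation of the Brownian-motion event already controlled by Proposition~\ref{prop-entropy-BM}, with the drift~$|z|\le\zeta(\tau)$ growing only polylogarithmically and hence absorbable into the curve~$\zeta$; or (b) recast the two-stage analysis as a single application of Proposition~\ref{prop-mine-too-corollary} on the full interval $[0,t]$ and then implement the "first dip" constraint by a geometric union bound over dyadic scales of~$\tau$, exactly paralleling the Brownian-motion proof. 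Either way, the endpoint~$z$ has to be tracked carefully through both bridge estimates, which is the main technical subtlety absent from the unconditioned case.
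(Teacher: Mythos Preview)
Your approach is workable in spirit but takes a much harder route than necessary, and the ``main obstacle'' you identify is a real one that you have not actually resolved. The paper's proof bypasses it entirely.

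The paper does not condition on a first-dip time, does not split into pre-dip and post-dip bridges, and does not invoke Proposition~\ref{prop-mine-too-corollary} at all. Instead it uses the Decoupling Lemma (Lemma~\ref{lemma-decouple}) to reduce the bridge probability directly to a product of two \emph{unconditioned} Brownian motion probabilities. Concretely, with
\[
A_1:=\Bigl\{\min_{s\le t/2}[B_s+\zeta(s)]>0>\min_{u\le s\le t/2}[B_s-\zeta(s)]\Bigr\},
\qquad
A_2:=\Bigl\{\min_{s\le t/4}[B_s+\zeta(s)]>0\Bigr\},
\]
the event in \eqref{E:4.34r} is contained in $(A_1\cap A_2')\cup(A_2\cap A_1')$, and Lemma~\ref{lemma-decouple} (with $x=y=0$) gives the bound $2\sqrt8\,P^0(A_1)P^0(A_2)$. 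Now $P^0(A_1)$ is \emph{exactly} the event of Proposition~\ref{prop-entropy-BM} on the time interval $[0,t/2]$, yielding $c\,u^{-1/16}/\sqrt t$, while $P^0(A_2)\le c/\sqrt t$ by Proposition~\ref{prop-4.9ue}. Multiplying gives $\tilde c\,u^{-1/16}/t$.

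The point you missed is that the decoupling lemma---already developed for the proofs of Propositions~\ref{prop-mine-corollary} and~\ref{prop-mine-too-corollary}---converts the bridge into two independent Brownian motions at the cost of a harmless constant. This lets you import the $u^{-1/16}$ factor wholesale from the unconditioned result rather than having to re-derive it inside a bridge, which is precisely where your argument stalls. Your route~(a) would eventually get there (the drift $|z|\le\zeta(\tau)$ is indeed absorbable), but it requires redoing the entropic-repulsion analysis with a perturbed curve; the paper's trick makes this unnecessary.
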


The proof proceeds by a reduction to Proposition~\ref{prop-entropy-BM} and is therefore also deferred to Section~\ref{sec-A5}. As before, we believe that the path gets repelled above a curve $s\mapsto (s\wedge (t-s))^\delta$ for every~$\delta<\ffrac12$, except perhaps near the endpoints.

\subsection{Random walk above a curve}
\label{sec-4.3}\noindent
Our next task is to convert the above statements for Brownian motion to statements about random walks. We will use the convenient fact that our random walks have mean-zero Gaussian steps and so we may as well realize them as values of a standard Brownian motion observed at a deterministic sequence of times. Let $\{\sigma_k^2\colon k\ge0\}$ denote a sequence of numbers obeying
\begin{equation}
\label{E:4.24y}
0<\inf_{k\ge0}\sigma_k^2\le\sup_{k\ge0}\sigma_k^2<\infty.
\end{equation}
Define
\begin{equation}
t_k:=\sum_{\ell=0}^{k-1}\sigma_\ell^2,\qquad k\ge1,
\end{equation}
with $t_0:=0$. Note that, for the specific choice $\sigma_k^2:=\text{Var}(\varphi_k(0))$ --- not necessarily assumed in the discussion later --- we get
\begin{equation}
\label{E:4.35ua}
\{S_k\colon k=1,\dots,n+1\}\laweq\{B_{t_k}\colon k=1,\dots,n+1\}.
\end{equation}
The reduction of the key statements from Brownian motion to the random walk will be considerably simplified using the following claim:

\begin{lemma}
\label{lemma-reduction}
For~$\{\sigma_k^2\colon k\ge0\}$ and $\{t_k\colon k\ge0\}$ as above, let $\sigma_{\min}^2$, resp., $\sigma_{\max}^2$ denote the infimum, resp., the supremum in \eqref{E:4.24y}. Given integers $n\ge1$ and $k\le\lfloor\ffrac n2\rfloor$ and a non-decreasing concave function $\gamma\colon[0,\infty)\to[0,\infty)$, let
\begin{equation}
\label{E:4.25qq}
\zeta(s):=\gamma\Bigl(\frac{t_k+s}{\sigma_{\min}^2}\Bigr).
\end{equation}
Then for all $x,y\in\R$,
\begin{multline}
\label{E:4.28qq}
\qquad
P^0\Bigl(B_{t_\ell}\ge-\gamma\bigl(\ell\wedge(n-\ell)\bigr)\colon \ell=k,\dots,n-k\,\Big|\, B_{t_k}=x, B_{t_{n-k}}=y\Bigr)
\\
\le P^x\Bigl(B_s\ge-2\zeta\bigl(s\wedge(t_{n-k}-t_k-s)\bigr)\colon s\in[0,t_{n-k}-t_k]\,\Big|\, B_{t_{n-k}-t_k}=y\Bigr)
\\
\times\prod_{j=k}^{n-k}\Bigl(1-\texte^{-2\sigma_{\max}^{-2}\gamma(j)^2}\Bigr)^{-2}.
\qquad
\end{multline}
Similarly, if~$\tilde\zeta$ is defined by
\begin{equation}
\label{E:4.28qq1}
\tilde\zeta(s):=\gamma\Bigl(\frac{t_k+s}{\sigma_{\max}^2}\Bigr),
\end{equation}
then for all $x,y\in\R$,
\begin{multline}
\label{E:4.29qq}
\qquad
P^0\Bigl(B_{t_\ell}\ge-\gamma\bigl(\ell\wedge(n-\ell)\bigr)\colon \ell=k,\dots,n-k\,\Big|\, B_{t_k}=x, B_{t_{n-k}}=y\Bigr)
\\
\ge P^x\Bigl(B_s\ge-\tilde\zeta\bigl(s\wedge(t_{n-k}-t_k-s)\bigr)\colon s\in[0,t_{n-k}-t_k]\,\Big|\, B_{t_{n-k}-t_k}=y\Bigr).
\qquad
\end{multline}
\end{lemma}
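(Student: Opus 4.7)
For the \textbf{lower bound} \eqref{E:4.29qq}, the argument is by pointwise comparison at the discrete times. Since $\gamma$ is non-decreasing and $\tilde\zeta(u)=\gamma((t_k+u)/\sigma_{\max}^2)$, we have $\tilde\zeta(a\wedge b)=\tilde\zeta(a)\wedge\tilde\zeta(b)$, so at $s=t_\ell-t_k$,
\[
\tilde\zeta\bigl(s\wedge(t_{n-k}-t_k-s)\bigr)
= \gamma\bigl(t_\ell/\sigma_{\max}^2\bigr)\wedge\gamma\bigl((t_k+t_{n-k}-t_\ell)/\sigma_{\max}^2\bigr).
\]
The trivial estimates $t_\ell\le\ell\,\sigma_{\max}^2$ and $t_k+t_{n-k}-t_\ell\le(n-\ell)\,\sigma_{\max}^2$ (each a sum of variances dominated by $\sigma_{\max}^2$), together with the monotonicity of $\gamma$, bound the right-hand side above by $\gamma(\ell\wedge(n-\ell))$. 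Hence the continuous event on the right-hand side of \eqref{E:4.29qq} is contained in the discrete event on the left after identifying $\{B_{t_k+s}\}_{0\le s\le t_{n-k}-t_k}$ conditioned on its endpoints with the time-shifted Brownian bridge; this yields \eqref{E:4.29qq} by monotonicity of probability.

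For the \textbf{upper bound} \eqref{E:4.28qq} the symmetric estimate $\zeta((t_\ell-t_k)\wedge(t_{n-k}-t_\ell))\ge\gamma(\ell\wedge(n-\ell))$, now using $t_\ell\ge\ell\,\sigma_{\min}^2$ and $t_k+t_{n-k}-t_\ell\ge(n-\ell)\,\sigma_{\min}^2$, shows that the continuous event
\[
F:=\bigl\{B_s\ge-2\zeta\bigl(s\wedge(t_{n-k}-t_k-s)\bigr)\colon s\in[0,t_{n-k}-t_k]\bigr\}
\]
is contained in the discrete event $E:=\{B_{t_\ell-t_k}\ge-\gamma(\ell\wedge(n-\ell))\colon\ell=k,\dots,n-k\}$. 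To relate their probabilities I will invoke the Markov property: conditionally on the $\sigma$-algebra $\mathcal G:=\sigma(B_{t_\ell}\colon\ell=k,\dots,n-k)$, the pieces of $B$ on the sub-intervals $[t_\ell,t_{\ell+1}]$ are independent Brownian bridges of respective lengths $\sigma_\ell^2:=t_{\ell+1}-t_\ell\le\sigma_{\max}^2$. Writing $\chi(s):=\zeta(s\wedge(t_{n-k}-t_k-s))$ and setting $c_\ell:=2\max_{[t_\ell-t_k,\,t_{\ell+1}-t_k]}\chi$, the sufficient condition $\{\min_{[t_\ell,t_{\ell+1}]}B\ge-c_\ell\}$ combined with the classical bridge-minimum formula $P(\min B\ge-c_\ell\mid x,y)=1-\exp(-2(x+c_\ell)(y+c_\ell)/\sigma_\ell^2)$ for $x,y>-c_\ell$ will drive the estimate.

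Abbreviating $\gamma_j:=\gamma(j\wedge(n-j))$ and noting that $\chi\ge\gamma_\ell$ at $s=t_\ell-t_k$ and $\chi\ge\gamma_{\ell+1}$ at $s=t_{\ell+1}-t_k$ (so $c_\ell\ge 2(\gamma_\ell\vee\gamma_{\ell+1})$), the endpoint conditions $x\ge-\gamma_\ell$ and $y\ge-\gamma_{\ell+1}$ valid on $E$ yield $x+c_\ell\ge\gamma_\ell$ and $y+c_\ell\ge\gamma_{\ell+1}$, giving conditional probability at least $1-\exp(-2\gamma_\ell\gamma_{\ell+1}/\sigma_{\max}^2)$ on each sub-interval. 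The key elementary inequality
\[
1-\texte^{-\sqrt{ab}}\ge(1-\texte^{-a})(1-\texte^{-b}),\qquad a,b\ge0,
\]
which follows from $xy\le x\wedge y$ for $x,y\in[0,1]$ combined with $\min\{a,b\}\le\sqrt{ab}$, splits each factor into endpoint contributions. Taking the product over $\ell=k,\dots,n-k-1$ and crudely replacing the single powers at the terminal indices $j=k$ and $j=n-k$ by squares (weaker, since $1-\texte^{-\cdot}\in[0,1]$) gives $P(F\mid\mathcal G)\ge\prod_{j=k}^{n-k}(1-\exp(-2\sigma_{\max}^{-2}\gamma(j)^2))^2$ on $E$. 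Since $F\subseteq E$, the identity $P(F)=E[\mathbf 1_E P(F\mid\mathcal G)]\ge P(E)\cdot\prod_j(\cdots)^2$ rearranges to \eqref{E:4.28qq}. I expect the main technical point to be the choice of $c_\ell$: it must be large enough that $\{\min B\ge-c_\ell\}$ implies the continuous restriction throughout the sub-interval, yet small enough that the endpoint constraints $x+c_\ell\ge\gamma_\ell$ and $y+c_\ell\ge\gamma_{\ell+1}$ survive on $E$. That $c_\ell=2\max\chi$ serves both roles follows directly from $\chi\ge\gamma_\ell$ at the left endpoint and $\chi\ge\gamma_{\ell+1}$ at the right, independently of where the maximum of~$\chi$ on the sub-interval is actually attained.
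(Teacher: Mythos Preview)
Your lower bound \eqref{E:4.29qq} is correct and matches the paper's argument exactly.

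The upper bound argument has a genuine gap. The event $\{\min_{[t_\ell,t_{\ell+1}]}B\ge-c_\ell\}$ with $c_\ell=2\max_{\text{sub}}\chi$ is \emph{not} a sufficient condition for the restriction of~$F$ to the sub-interval: since $\max\chi\ge\chi(s)$, the inequality $B_s\ge-2\max\chi$ is \emph{weaker} than $B_s\ge-2\chi(s)$, not stronger. (You have the direction of role~(1) backwards; sufficiency requires $c_\ell\le 2\min_{\text{sub}}\chi$, which conflicts with role~(2) unless $\min\chi\ge\gamma_\ell\vee\gamma_{\ell+1}$, and that is not guaranteed.) Incidentally, the containment $F\subseteq E$ is also false for the same reason, though as you note this inclusion is not actually needed --- the inequality $P(F)\ge E[\mathbf 1_E\,P(F\mid\mathcal G)]$ holds regardless.

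The paper resolves this by exploiting concavity of $\widehat\zeta(s):=\zeta(s\wedge(t_{n-k}-t_k-s))$, which your constant-threshold approach discards. Rather than bounding $B_s$ from below by a constant on each sub-interval, one writes $B_s=L_\ell(s)-W^{(\ell)}(s)$ with $L_\ell$ the linear interpolant of the endpoint values and $W^{(\ell)}$ the (independent) bridge excursion. On the event $\{B_{t_j-t_k}\ge-\widehat\zeta(t_j-t_k)\colon j\}$, concavity gives $L_\ell(s)\ge-\widehat\zeta(s)$; combining with $\{\max W^{(\ell)}\le\widehat\zeta(t_\ell-t_k)\wedge\widehat\zeta(t_{\ell+1}-t_k)\}$ then yields $B_s\ge-2\widehat\zeta(s)$. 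The Reflection Principle gives each factor as $1-\exp(-2\sigma_{\ell}^{-2}[\widehat\zeta(t_\ell-t_k)\wedge\widehat\zeta(t_{\ell+1}-t_k)]^2)$, and the endpoint comparison $\widehat\zeta(t_j-t_k)\ge\gamma(j\wedge(n-j))$ converts these into the product in \eqref{E:4.28qq}. The linear-interpolation step is what lets the argument track the \emph{shape} of $\chi$ across the sub-interval rather than just its extreme values.
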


Notice that the function $s\mapsto \zeta\bigl(s\wedge(t_{n-k}-t_k-s)\bigr)$ is symmetric about the midpoint of the interval $[0,t_{n-k}-t_k]$. This will be important as soon as we try to apply the earlier conclusions to the probabilities on the right-hand sides of \eqref{E:4.28qq} and \eqref{E:4.29qq}. 

\smallskip
The proof of Lemma~\ref{lemma-reduction} will rely on the following bounds:

\begin{lemma}
\label{lemma-4.7q}
Given sequences $\{\sigma_k^2\colon k\ge1\}$ and $\{t_k\colon k\ge0\}$ as above, let~$\zeta\colon[0,t_n]\to[0,\infty)$ be a concave function. Then for each $x,y\ge0$,
\begin{multline}
\label{E:4.20long}
\qquad
P^x\bigl(B_s\ge-2\zeta(s)\colon 0\le s\le t_n\,|\,B_{t_n}=y\bigr)\prod_{k=1}^{n}\biggl(1-\texte^{-2\sigma_k^{-2}[\zeta(t_{k-1})\wedge\zeta(t_k)]^2}\biggr)^{-1}
\\
\ge
P^x\bigl(B_{t_k}\ge-\zeta(t_k)\colon k=0,\dots,n\,|\,B_{t_n}=y\bigr)
\\
\ge
P^x\bigl(B_s\ge-\zeta(s)\colon 0\le s\le t_n\,|\,B_{t_n}=y\bigr).
\qquad
\end{multline}
\end{lemma}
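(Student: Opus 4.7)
\medskip
\noindent
\textbf{Proof plan.}
The right-hand inequality in \eqref{E:4.20long} is immediate, since the continuous-time event $\{B_s\ge-\zeta(s)\colon 0\le s\le t_n\}$ is contained in the discrete-time event $\{B_{t_k}\ge-\zeta(t_k)\colon k=0,\dots,n\}$ and both probabilities are taken with respect to the same bridge measure.

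For the left-hand inequality, the plan is to condition on $(B_{t_0},\dots,B_{t_n})=(b_0,\dots,b_n)$ (with $b_0=x$ and $b_n=y$) and invoke the Markov property, which makes the restrictions of $B$ to successive intervals $[t_{k-1},t_k]$ into independent Brownian bridges of length $\sigma_k^2$ between the corresponding values. On each such interval I will use concavity of $\zeta$ to replace $\zeta$ by its linear interpolation $L_k$ between $\zeta(t_{k-1})$ and $\zeta(t_k)$; since $\zeta\ge L_k$ on $[t_{k-1},t_k]$, we have $\{B_s\ge-2L_k(s)\}\subseteq\{B_s\ge-2\zeta(s)\}$ throughout that interval. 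Setting $\wt B_s:=B_s+2L_k(s)$ turns this into the event that a Brownian bridge $\wt B$ of length $\sigma_k^2$, from $\wt B_{t_{k-1}}=b_{k-1}+2\zeta(t_{k-1})$ to $\wt B_{t_k}=b_k+2\zeta(t_k)$, stays nonnegative throughout.

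The classical reflection identity for the Brownian bridge (from $a$ to $b$ of length $T$, with $a,b>0$) gives
\begin{equation*}
P\bigl(\min_{[0,T]}\wt B\ge0\,\big|\,\wt B_0=a,\wt B_T=b\bigr)=1-\texte^{-2ab/T}.
\end{equation*}
On the event $\{b_{k-1}\ge-\zeta(t_{k-1}),\,b_k\ge-\zeta(t_k)\}$ we have $\wt B_{t_{k-1}}\ge\zeta(t_{k-1})$ and $\wt B_{t_k}\ge\zeta(t_k)$, whence $\wt B_{t_{k-1}}\wt B_{t_k}\ge[\zeta(t_{k-1})\wedge\zeta(t_k)]^2$, and the probability that the $k$-th segment stays above $-2\zeta(s)$ is bounded below by $1-\exp(-2\sigma_k^{-2}[\zeta(t_{k-1})\wedge\zeta(t_k)]^2)$, uniformly in admissible endpoints.

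It then suffices to multiply these segment-wise lower bounds (the segments being conditionally independent given the $b_k$'s) and integrate against the conditional law of $(B_{t_1},\dots,B_{t_{n-1}})$ given $B_0=x,\,B_{t_n}=y$ restricted to $\{b_k\ge-\zeta(t_k)\}$, which exactly reproduces the middle term of \eqref{E:4.20long} times the displayed product of $1-\exp(\cdot)$ factors. The only subtlety is the concavity-based passage from the discrete to the continuous constraint, which costs only the factor of~$2$ in front of $\zeta$ in the argument of the bridge probability; everything else is a direct application of the Markov property and the bridge reflection formula.
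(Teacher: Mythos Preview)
Your proposal is correct and follows essentially the same route as the paper. The paper organizes the argument by explicitly decomposing $B_s$ on each interval $[t_k,t_{k+1}]$ into the linear interpolation of the endpoint values minus an independent zero-to-zero Brownian bridge $W^{(k)}$, then writes the set-level inclusion
\[
\{B_s\ge-2\zeta(s)\colon s\in[0,t_n]\}\supseteq\{B_{t_k}\ge-\zeta(t_k)\colon k=0,\dots,n\}\cap\bigcap_{k}\Bigl\{\max_{[t_k,t_{k+1}]}W^{(k)}\le\zeta(t_k)\wedge\zeta(t_{k+1})\Bigr\},
\]
after which independence and the reflection formula for $\max W^{(k)}$ finish the job. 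Your version conditions on the skeleton, shifts each segment by the linear interpolant $2L_k$, and applies the reflection formula for a bridge with general positive endpoints; this is just a reparametrization of the same computation (your $\wt B$ staying nonnegative is equivalent to $W^{(k)}$ not exceeding a suitable level), and the concavity of~$\zeta$ enters in the same way in both.
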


\begin{proofsect}{Proof}
The inequality on the right is trivial so let us focus on that on the left. Let
\begin{equation}
W^{(k)}(s):=\frac{t_{k+1}-s}{t_{k+1}-t_k}B_{t_k}+\frac{s-t_k}{t_{k+1}-t_k}B_{t_{k+1}}-B_s,\qquad t_k\le s\le t_{k+1}.
\end{equation}
Under the conditional measure $P^x(-|B_{t_n}=y)$, the processes $\{W^{(k)}\colon k\ge0\}$ have the law of a family of independent Brownian bridges --- from zero to zero, with $W^{(k)}$ indexed by times in the interval~$[t_k,t_{k+1}]$ --- and this family is independent of the values $\{B_{t_k}\colon k=0,\dots,n\}$. Since $\zeta$ is concave on the intervals $[t_k,t_{k+1}]$, we have
\begin{multline}
\label{E:4.42qq}
\bigl\{B_s\ge-2\zeta(s)\colon 0\le s\le t_n\bigr\}
\\\supseteq 
\bigl\{B_{t_k}\ge-\zeta(t_k)\colon k=0,\dots,n\bigr\}\cap\bigcap_{k=0}^{n-1}\Bigl\{\,\max_{t_k\le s\le t_{k+1}} W^{(k)}(s)\le\zeta(t_{k})\wedge\zeta(t_{k+1})\Bigr\}.
\quad
\end{multline}
By the Reflection Principle (cf~\eqref{E:4.20}), the probability of the event in the giant intersection corresponding to index~$k$ is equal to $1-\texte^{-2\sigma_{k+1}^{-2}[\zeta(t_{k})\wedge\zeta(t_{k+1})]^2}$. Using the stated independence, we readily get the left inequality in \eqref{E:4.20long} as well.
\end{proofsect}


\begin{proofsect}{Proof of Lemma~\ref{lemma-reduction}}
Abbreviate $\widehat\zeta(s):=\zeta(s\wedge(t_{n-k}-t_k-s))$. Noting that
\begin{equation}
t_\ell\ge\sigma_{\min}^2\ell\quad\text{and}\quad
t_{n-k}+t_k-t_\ell\ge\sigma_{\min}^2(n-\ell),\qquad\ell=k,\dots,n-k,
\end{equation}
we have
\begin{equation}
\label{E:4.35qq}
\widehat\zeta(t_\ell-t_k)\ge\gamma\bigl(\tfrac1{\sigma_{\min}^2}(t_\ell\wedge(t_{n-k}+t_k-t_\ell))\bigr)
\ge\gamma\bigl(\ell\wedge(n-\ell)\bigr)
\end{equation}
for all $\ell=k,\dots,n-k$. This yields
\begin{multline}
\qquad
P^0\Bigl(B_{t_\ell}\ge-\gamma\bigl(\ell\wedge(n-\ell)\bigr)\colon \ell=k,\dots,n-k\,\Big|\, B_{t_k}=x, B_{t_{n-k}}=y\Bigr)
\\
\le P^x\Bigl(B_{t_\ell-t_k}\ge-\widehat\zeta(t_\ell-t-t_k)\colon \ell=k,\dots,n-k\,\Big|\, B_{t_{n-k}-t_k}=y\Bigr)
\qquad
\end{multline}
Next we use \eqref{E:4.20long} to estimate this by 
\begin{equation}
P^x\Bigl(B_s\ge-2\widehat\zeta(s)\colon s\in[0,t_{n-k}-t_k]\,\Big|\, B_{t_{n-k}-t_k}=y\Bigr)
\prod_{\ell=k+1}^{n-k}\Bigl(1-\texte^{-2\sigma_\ell^{-2}[\widehat\zeta(t_{\ell-1})\wedge \widehat\zeta(t_{\ell})]^2}\Bigr)^{-1}.
\end{equation}
From \eqref{E:4.35qq} it is easy to check that the product is bounded by that in \eqref{E:4.28qq}. This proves the upper bound. For the lower bound \eqref{E:4.29qq} we replace~$\zeta$ by~$\tilde\zeta$ in the definition of~$\widehat\zeta$ and notice that the opposite inequality then holds in~\eqref{E:4.35qq}. Lemma~\ref{lemma-4.7q} then readily yields the claim.
\end{proofsect}

We will also need a similar statement for the unconditioned random walk. (Unfortunately, due to the symmetrization of the function~$\zeta$ in \eqref{E:4.28qq} and \eqref{E:4.29qq}, this does not follow by mere integration over~$y$.) Naturally, the expressions are considerably simpler in this case:

\begin{lemma}
\label{lemma-reduction1}
Let $\gamma$ be a function as in Lemma~\ref{lemma-reduction}. For each~$x\in\R$, each $n\ge k\ge1$ and $\zeta$ defined from~$\gamma$ and~$k$ as in \eqref{E:4.25qq},
\begin{multline}
\label{E:4.28new}
\qquad
P^0\Bigl(B_{t_\ell}\ge-\gamma\bigl(\ell\bigr)\colon \ell=k,\dots,n\,\Big|\, B_{t_k}=x\Bigr)
\\
\le P^x\Bigl(B_s\ge-2\zeta(s)\colon s\in[0,t_{n}-t_k]\Bigr)
\prod_{j=k}^{n}\Bigl(1-\texte^{-2\sigma_{\max}^{-2}\gamma(j)^2}\Bigr)^{-2}.
\qquad
\end{multline}
Similarly, for all~$x\in\R$ and all $n\ge k\ge1$ and~$\tilde\zeta$ as in \eqref{E:4.28qq1},
\begin{equation}
\label{E:4.29new}
P^0\Bigl(B_{t_\ell}\ge-\gamma(\ell)\colon \ell=k,\dots,n\,\Big|\, B_{t_k}=x\Bigr)
\ge P^x\Bigl(B_s\ge-\tilde\zeta(s)\colon s\in[0,t_{n}-t_k]\Bigr).
\end{equation}
\end{lemma}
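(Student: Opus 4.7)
The argument will closely parallel the proof of Lemma~\ref{lemma-reduction} and in fact be simpler, since without any right-endpoint conditioning there is no need to symmetrize the boundary function $\zeta$ about a midpoint. For the lower bound \eqref{E:4.29new}, I will use a pointwise containment of events. After shifting so that the Brownian motion starts at $x$ at time $0$, set $\tau_\ell := t_\ell - t_k$. Since $t_\ell = \sum_{j=0}^{\ell-1}\sigma_j^2 \leq \ell\sigma_{\max}^2$ and $\gamma$ is non-decreasing,
\begin{equation*}
\tilde\zeta(\tau_\ell) \,=\, \gamma\bigl(t_\ell/\sigma_{\max}^2\bigr) \,\leq\, \gamma(\ell),
\end{equation*}
so $\{B_s \geq -\tilde\zeta(s)\colon s \in [0,\tau_n]\}$ is pointwise contained in $\{B_{\tau_\ell}\geq -\gamma(\ell)\colon \ell=k,\dots,n\}$, and \eqref{E:4.29new} follows by monotonicity of $P^x$.

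For the upper bound \eqref{E:4.28new} I will prove an unconditional analogue of Lemma~\ref{lemma-4.7q} by essentially the same argument. Condition on the discrete values $\{B_{\tau_\ell}\colon \ell=k,\dots,n\}$ under $P^x$. On each interval $[\tau_\ell, \tau_{\ell+1}]$ the deviation $W^{(\ell)}(s):=L_\ell(s)-B_s$ of $B$ from the linear interpolation $L_\ell$ is a Brownian bridge from $0$ to $0$ of duration $\sigma_\ell^2$, independent across $\ell$ and of the endpoint values. The function $\zeta$ from \eqref{E:4.25qq} is concave and non-decreasing, with $\zeta(\tau_\ell) = \gamma(t_\ell/\sigma_{\min}^2) \geq \gamma(\ell)$ since $t_\ell \geq \ell\sigma_{\min}^2$. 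By convexity of $-\zeta$ on each $[\tau_\ell,\tau_{\ell+1}]$, the same linear-interpolation argument as in \eqref{E:4.42qq} shows that the discrete event $\{B_{\tau_\ell}\geq -\gamma(\ell)\colon \ell=k,\dots,n\}$ together with $\{\max_{s \in [\tau_\ell,\tau_{\ell+1}]}W^{(\ell)}(s)\leq \gamma(\ell)\}$ for $\ell=k,\dots,n-1$ imply $B_s\geq -2\zeta(s)$ for every $s \in [0,\tau_n]$.

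The Reflection Principle (cf~\eqref{E:4.20}) gives $P(\max_{[\tau_\ell,\tau_{\ell+1}]}W^{(\ell)}\leq c)=1-\texte^{-2c^2/\sigma_\ell^2}\geq 1-\texte^{-2c^2/\sigma_{\max}^2}$, and the bridge events are independent of the endpoint values. Taking $c=\gamma(\ell)$ and multiplying over $\ell=k,\dots,n-1$ yields
\begin{multline*}
P^x\bigl(B_s\geq -2\zeta(s)\colon s\in[0,\tau_n]\bigr) \\
\,\geq\, P^x\bigl(B_{\tau_\ell}\geq -\gamma(\ell)\colon \ell=k,\dots,n\bigr)\prod_{\ell=k}^{n-1}\bigl(1-\texte^{-2\gamma(\ell)^2/\sigma_{\max}^2}\bigr).
\end{multline*}
Rearranging, using $(1-\texte^{-c})^{-1}\leq(1-\texte^{-c})^{-2}$ for $c\geq 0$, and noting that each factor $(1-\texte^{-c})^{-2}\geq 1$ so that appending the term $j=n$ only weakens the upper bound, will yield \eqref{E:4.28new}. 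I anticipate no substantive obstacle; the only novelty relative to the proof of Lemma~\ref{lemma-reduction} is that the bridges run only on $[\tau_k,\tau_n]$ without any right-endpoint constraint, which is why the symmetrized $s\wedge(t_{n-k}-t_k-s)$ of \eqref{E:4.28qq} is replaced here by the plain argument $s$.
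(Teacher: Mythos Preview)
Your proposal is correct and follows essentially the same approach as the paper, which simply remarks that the proof ``follows exactly the same lines as that of Lemma~\ref{lemma-reduction} and is in fact easier due to the absence of symmetrization.'' You have spelled out precisely this: the monotone containment for the lower bound, and the bridge-decomposition argument of Lemma~\ref{lemma-4.7q} (now without the endpoint conditioning) for the upper bound, with the correct observation that the non-decrease of~$\zeta$ makes the symmetrized minimum $\zeta(t_{\ell})\wedge\zeta(t_{\ell+1})$ unnecessary.
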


\begin{proofsect}{Proof}
The proof follows exactly the same lines as that of Lemma~\ref{lemma-reduction} and is in fact easier do to the absence of symmetrization. Hence, it is omitted.
\end{proofsect}

Next we will move to the statement of entropic repulsion for the above random walk. We will henceforth work with
\begin{equation}
\label{E:gamma}
\gamma(s):=a\bigl[1+\log(a+s)\bigr]^2\quad\text{where}\quad a>\frac{1+\sqrt5}2.
\end{equation}
As a calculation shows, the restriction on~$a$ ensures that $\gamma$ is non-decreasing and concave on~$[0,\infty)$.
For a change, we start with the claim for the unconditioned walk:

\begin{lemma}
\label{lemma-entropy-RW}
Let~$\gamma$ be as in \eqref{E:gamma}. Then there is~$c=c(a)\in(0,\infty)$ such that for all sufficiently large~$n\ge1$ and all~$k\ge1$ with $k\le\frac n2$,
\begin{equation}
\label{E:4.47wq}
\frac{\displaystyle P^0\Bigl(\,\min_{0\le \ell\le n}\bigl[B_{t_\ell}+\gamma(\ell)\bigr]>0>\min_{k\le\ell\le n}\bigl[B_{t_\ell}-\gamma(\ell)\bigr]\Bigr)}
{\displaystyle P^0\Bigl(\,\min_{0\le \ell\le n}\bigl[B_{t_\ell}+\gamma(\ell)\bigr]>0\Bigr)}
\le c\,k^{-\frac1{16}}\,.
\end{equation}
\end{lemma}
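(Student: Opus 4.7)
The plan is to realize the random walk in~\eqref{E:4.47wq} as a standard Brownian motion sampled at discrete times, and then sandwich the numerator and denominator between continuous Brownian probabilities handled by Propositions~\ref{prop-entropy-BM} and~\ref{prop-uncond-BM-positive-cor} respectively. By Lemma~\ref{lemma-3.4} the step variances $\sigma_\ell^2:=\Var(\varphi_\ell(0))$ lie in a compact sub-interval $[\sigma_{\min}^2,\sigma_{\max}^2]\subset(0,\infty)$, and the Gaussian nature of the steps lets us write $S_\ell=B_{t_\ell}$ for a standard Brownian motion~$B$ with $t_\ell:=\sum_{j<\ell}\sigma_j^2\in[\sigma_{\min}^2\ell,\sigma_{\max}^2\ell]$. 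Abbreviate $E_1:=\{B_{t_\ell}>-\gamma(\ell)\colon 0\le\ell\le n\}$ and $E_2:=\{B_{t_\ell}<\gamma(\ell)\text{ for some }k\le\ell\le n\}$, so that~\eqref{E:4.47wq} is the ratio $P^0(E_1\cap E_2)/P^0(E_1)$.

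For the numerator I set $\eta(s):=\gamma(s/\sigma_{\min}^2)$, which is non-decreasing, concave, and satisfies $\eta(t_\ell)\ge\gamma(\ell)$. Consequently $E_1\subseteq\{B_{t_\ell}>-\eta(t_\ell)\colon\ell\le n\}$, while the event $E_2$ already gives $B_{t_\ell}<\gamma(\ell)\le\eta(t_\ell)<2\eta(t_\ell)$ at some witnessing $\ell\in[k,n]$, which translates to $B_s<2\eta(s)$ at the corresponding $s=t_\ell\in[t_k,t_n]$. Following the Brownian-bridge argument from the proof of Lemma~\ref{lemma-4.7q}, if on each interval $[t_\ell,t_{\ell+1}]$ the associated bridge $W^{(\ell)}$ stays below $\eta(t_\ell)\wedge\eta(t_{\ell+1})$, then the concavity of~$\eta$ upgrades the discrete control at the~$t_\ell$'s to the continuous bound $B_s>-2\eta(s)$ on all of~$[0,t_n]$. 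Since the bridges are independent of $\{B_{t_\ell}\}_\ell$ and have Reflection-Principle probabilities bounded below by $1-\texte^{-2\sigma_{\max}^{-2}(\eta(t_\ell)\wedge\eta(t_{\ell+1}))^2}$, we obtain
\[
P^0(E_1\cap E_2)\prod_{\ell\ge 0}\Bigl(1-\texte^{-2\sigma_{\max}^{-2}(\eta(t_\ell)\wedge\eta(t_{\ell+1}))^2}\Bigr)\le P^0\Bigl(\,\min_{0\le s\le t_n}[B_s+2\eta(s)]>0>\min_{t_k\le s\le t_n}[B_s-2\eta(s)]\Bigr).
\]
The product converges to a strictly positive constant because $\eta(t_\ell)^2$ grows like~$(\log\ell)^4$; Lemma~\ref{lemma-A.9} verifies the hypotheses of Proposition~\ref{prop-entropy-BM} for~$2\eta$, which then bounds the right-hand side by $c\,t_k^{-1/16}/\sqrt{t_n}\le c'\, k^{-1/16}/\sqrt{n}$.

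For the denominator, put $\tilde\eta(s):=\gamma(s/\sigma_{\max}^2)\ge 0$. Since $t_\ell\le\sigma_{\max}^2\ell$ we have $\tilde\eta(t_\ell)\le\gamma(\ell)$, hence $\{B_s>-\tilde\eta(s)\colon s\in[0,t_n]\}\subseteq E_1$. Moreover, since $\tilde\eta\ge 0$, this continuous event is implied by the intersection $\{\min_{s\in[0,1]}B_s>-\tilde\eta(1)\}\cap\{B_1\in[1,2]\}\cap\{B_s>0\colon s\in[1,t_n]\}$; the first two events occur jointly with a strictly positive probability depending only on $a$ and $\sigma_{\max}$, while Proposition~\ref{prop-uncond-BM-positive-cor} applied with $\zeta\equiv 0$ to the Brownian motion restarted at $B_1\in[1,2]$ yields a lower bound of $c/\sqrt{t_n}$ for the third event (uniformly in $B_1\in[1,2]$). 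Hence $P^0(E_1)\ge c''/\sqrt{n}$, and taking the ratio with the numerator estimate gives~\eqref{E:4.47wq}.

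The only point requiring care is the numerator reduction: the Reflection-Principle argument from Lemma~\ref{lemma-4.7q} must accommodate the additional ``below'' event~$E_2$. This is no obstacle because $E_2$ only becomes weaker once discretization is removed (the witnessing discrete $\ell$ also witnesses the continuous event via $s=t_\ell$), so the bridge decomposition applies unchanged to the joint event $E_1\cap E_2$. All remaining verifications --- concavity and growth of~$\eta$ and~$2\eta$, convergence of the infinite product, the derivative bound needed in Lemma~\ref{lemma-A.9}, and the positivity of the conditional bridge probability used in the denominator --- are direct calculations using the explicit form~\eqref{E:gamma} of~$\gamma$.
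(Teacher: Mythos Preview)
Your argument tracks the paper's proof closely: the numerator reduction via the bridge decomposition of Lemma~\ref{lemma-4.7q} and the subsequent application of Proposition~\ref{prop-entropy-BM} are exactly what the paper does (your observation that the ``below'' event~$E_2$ passes to continuous time for free, being witnessed at a discrete~$t_\ell$, is the right justification). For the denominator you take a slightly more elementary route than the paper --- the paper re-invokes entropic repulsion (Proposition~\ref{prop-entropy-BM}) to push the path above the positive curve and then appeals to Proposition~\ref{prop-uncond-BM-positive-cor}, whereas you condition on~$B_1\in[1,2]$ and use the classical $c/\sqrt{t}$ survival estimate directly. Both give the needed $c/\sqrt n$ lower bound; your version avoids the extra entropic-repulsion step.

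One small slip: the inclusion you claim for the denominator fails as written. Since~$\tilde\eta$ is non-decreasing, for $s\in[0,1]$ one has $-\tilde\eta(1)\le-\tilde\eta(s)$, so $\{\min_{s\in[0,1]}B_s>-\tilde\eta(1)\}$ does \emph{not} imply $B_s>-\tilde\eta(s)$ on~$[0,1]$. The fix is to replace $\tilde\eta(1)$ by~$\tilde\eta(0)$: then $B_s>-\tilde\eta(0)\ge-\tilde\eta(s)$ for $s\in[0,1]$, and since $\tilde\eta(0)=\gamma(0)=a[1+\log a]^2>0$ the event $\{\min_{s\in[0,1]}B_s>-\tilde\eta(0)\}\cap\{B_1\in[1,2]\}$ still has strictly positive probability. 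With this correction the argument is complete.
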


\begin{proofsect}{Proof}
Let $\zeta^{\min}(s):=\gamma(s/\sigma_{\min}^2)$. Then $\zeta^{\min}(t_\ell)\ge\gamma(\ell)$ and \eqref{E:4.42qq} thus implies
\begin{multline}
\qquad
P^0\Bigl(\,\min_{0\le \ell\le n}\bigl[B_{t_\ell}+\gamma(\ell)\bigr]>0>\min_{k\le\ell\le n}\bigl[B_{t_\ell}-\gamma(\ell)\bigr]\Bigr)\prod_{j=1}^{n}\Bigl(1-\texte^{-2\sigma_{\max}^{-2}\gamma(j)^2}\Bigr)
\\\le
P^0\Bigl(\,\min_{0\le s\le t_n}\bigl[B_s+\zeta^{\min}(s)\bigr]>0>\min_{t_k\le s\le t_n}\bigl[B_s-\zeta^{\min}(s)\bigr]\Bigr)
\qquad
\end{multline}
The product is bounded away from zero uniformly in~$n$ and so, Proposition~\ref{prop-entropy-BM} and the fact that $t_k\ge\sigma_{\min}^2k$, the right-hand side is at most $c k^{-\frac1{16}}/\sqrt{t_n}$ as soon as~$k$ is large enough.

For a suitable lower bound on the denominator, here we set $\zeta^{\max}(s):=\gamma(s/\sigma_{\max}^2)$ and note that $\gamma(\ell)\ge \zeta^{\max}(\ell)$. By Proposition~\ref{prop-entropy-BM},
\begin{equation}
\begin{aligned}
\label{E:4.53ueu}
P^0\Bigl(\,\min_{0\le \ell\le n}\bigl[B_{t_\ell}+\gamma(\ell)\bigr]>0\Bigr)
&\ge P^0\Bigl(\,\min_{0\le s\le t_n}\bigl[B_{s}+\zeta^{\max}(s)\bigr]>0\Bigr)
\\
&\ge P^0\Bigl(\,\min_{u\le s\le t_n}\bigl[B_{s}-\zeta^{\max}(s)\bigr]>0\Bigr)
-c u^{-\frac1{16}}\frac1{\sqrt{t_n}},
\end{aligned}
\end{equation}
where $c=c(a,\sigma)$ and $u\ge c'=c'(a,\sigma,\zeta(0))$.
By Lemma~\ref{lemma-A.9} for~$\zeta^{\max}$ in place of~$\zeta$, the quantity $\sup_{x\ge u}\tilde\rho_u(x)/x$ is bounded and tending to zero with~$u\to\infty$. Hence, for~$u$ large enough, 
Proposition~\ref{prop-uncond-BM-positive-cor} bounds the probability on the right of \eqref{E:4.53ueu} by~$\tilde c/\sqrt{t_n}$ with some~$\tilde c$ independent of $(a,\sigma,\zeta(0))$. The claim follows.
\end{proofsect}

\begin{lemma}
\label{lemma-entropy-RWB}
Let $\gamma$ be as in \eqref{E:gamma} and set $\tilde\gamma(k):=\gamma(k\wedge(n-k))$. There is~$c=c(a)\in(0,\infty)$ such that for all sufficiently large~$n\ge1$ and all~$k\ge1$ with $k\le\frac n4$,
\begin{equation}
\frac{\displaystyle P^0\Bigl(\,\min_{0\le \ell\le n}\bigl[B_{t_\ell}+\tilde\gamma(\ell)\bigr]>0>\inf_{k\le\ell\le n-k}\bigl[B_{t_\ell}-\tilde\gamma(\ell)\bigr]\,\Big|\, B_{t_n}=0\Bigr)}
{\displaystyle P^0\Bigl(\,\min_{0\le \ell\le n}\bigl[B_{t_\ell}+\tilde\gamma(\ell)\bigr]>0\,\Big|\, B_{t_n}=0\Bigr)}
\le c\,k^{-\frac1{16}}\,.
\end{equation}
\end{lemma}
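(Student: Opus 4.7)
The argument parallels that of Lemma~\ref{lemma-entropy-RW}, with Proposition~\ref{prop-entropy-BB} replacing Proposition~\ref{prop-entropy-BM} and Proposition~\ref{prop-mine-corollary} replacing Proposition~\ref{prop-uncond-BM-positive-cor}. Set $\zeta_1(s):=\gamma(s/\sigma_{\min}^2)$ and $\zeta_2(s):=\gamma(s/\sigma_{\max}^2)$, and for $i\in\{1,2\}$ put $\tilde\zeta_i(s):=\zeta_i(s\wedge(t_n-s))$. Using $\sigma_{\min}^2\ell\le t_\ell\le\sigma_{\max}^2\ell$ (and the analogous bounds for $t_n-t_\ell$), one checks that
\[
\tilde\zeta_2(t_\ell)\le\tilde\gamma(\ell)\le\tilde\zeta_1(t_\ell),\qquad \ell=0,\dots,n.
\]
Since $\gamma$ is concave and non-decreasing, both $\tilde\zeta_i$ are concave on $[0,t_n]$ (as compositions of $\gamma$ with the concave map $s\mapsto s\wedge(t_n-s)$).

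For the numerator, apply the Brownian-bridge argument of~\eqref{E:4.42qq} on each sub-interval $[t_\ell,t_{\ell+1}]$ with $\zeta:=\tilde\zeta_1$, then combine with $\tilde\gamma(\ell)\le\tilde\zeta_1(t_\ell)$ to deduce
\begin{multline*}
P^0\bigl(\text{numerator event}\,\big|\,B_{t_n}=0\bigr)\,\prod_{j=1}^{n}\bigl(1-\texte^{-2\sigma_{\max}^{-2}\tilde\gamma(j)^2}\bigr)\\
\le P^0\Bigl(\,\min_{0\le s\le t_n}\bigl[B_s+2\tilde\zeta_1(s)\bigr]>0,\,\inf_{t_k\le s\le t_{n-k}}\bigl[B_s-\tilde\zeta_1(s)\bigr]<0\,\Big|\,B_{t_n}=0\Bigr).
\end{multline*}
The product is bounded below uniformly in~$n$ since $\sum_j\texte^{-2\sigma_{\max}^{-2}\tilde\gamma(j)^2}$ is finite (the terms decay super-polynomially away from the endpoints because $\tilde\gamma(j)\ge\gamma(j\wedge(n-j))$ grows polylogarithmically). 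Applying Proposition~\ref{prop-entropy-BB} to $2\zeta_1$ with $u:=\sigma_{\min}^2 k$, so that $[t_k,t_{n-k}]\subseteq[u,t_n-u]$, bounds the right-hand side by $c_1 k^{-1/16}/t_n$.

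For the denominator, the inequality $\tilde\zeta_2(t_\ell)\le\tilde\gamma(\ell)$ gives
\[
P^0\bigl(\text{denominator event}\,\big|\,B_{t_n}=0\bigr)\ge P^0\Bigl(\,\min_{0\le s\le t_n}\bigl[B_s+\tilde\zeta_2(s)\bigr]>0\,\Big|\,B_{t_n}=0\Bigr),
\]
and we lower-bound this by $c_2/t_n$ via a three-piece decomposition. Fix a large, $n$-independent constant $u>0$ and condition on $(B_u,B_{t_n-u})$. Under the bridge measure, and for $u\ll t_n$, this pair is approximately a pair of independent $N(0,u)$ random variables, so the event $\{B_u,\,B_{t_n-u}\in[\sqrt u,\,2\sqrt u]\}$ has probability uniformly positive in large~$n$. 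On this event, Proposition~\ref{prop-mine-corollary} applied to $\eta(r):=\zeta_2(u+r)$ on $[0,t_n-2u]$ gives probability $\ge c_3 u/t_n$ that the middle bridge stays above $+\tilde\zeta_2$, and the reflection principle gives probability $\ge c_4/\sqrt u$ that each of the two outer bridges (from~$0$ to $B_u$, respectively from $B_{t_n-u}$ to~$0$) stays above $-\gamma(0)$. Multiplying these three contributions yields the required lower bound.

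Combining the two estimates proves the claim with $c:=c_1/c_2$. The main technical obstacle is the denominator lower bound: Proposition~\ref{prop-mine-corollary} requires strictly positive starting and ending values and so cannot be applied directly to a bridge beginning and ending at~$0$; one has to pay two reflection-principle factors of order $1/\sqrt u$ in order to reach a configuration $(B_u,B_{t_n-u})$ of order $\sqrt u$, at which point the positive-curve estimate becomes available and supplies the missing factor of order $u/t_n$ from the middle piece.
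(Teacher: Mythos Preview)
Your argument is correct and follows the route the paper indicates: pass from the discrete bridge to the continuous one via the Lemma~\ref{lemma-reduction}/\eqref{E:4.42qq} mechanism, then invoke Proposition~\ref{prop-entropy-BB} for the numerator and Proposition~\ref{prop-mine-corollary} for the denominator, exactly paralleling the proof of Lemma~\ref{lemma-entropy-RW}.

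The only place where you do noticeably more work than the paper is the denominator lower bound. Your three-piece decomposition is a valid and careful way to deal with the fact that the bridge starts and ends at~$0$ (so that Proposition~\ref{prop-mine-corollary} cannot be applied directly), and it yields the required $c_2/t_n$. It is worth noting, however, that a one-line alternative is available: since $\tilde\zeta_2(s)=\gamma\bigl((s\wedge(t_n-s))/\sigma_{\max}^2\bigr)\ge\gamma(0)$ for all $s\in[0,t_n]$, the denominator event contains $\{B_s>-\gamma(0)\colon s\in[0,t_n]\}$, and by~\eqref{E:4.20} this has conditional probability $1-\texte^{-2\gamma(0)^2/t_n}\ge \gamma(0)^2/t_n$ for~$t_n$ large. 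This bypasses Proposition~\ref{prop-mine-corollary} entirely for the lower bound.
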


\begin{proofsect}{Proof}
The proof is completely analogous to that of Lemma~\ref{lemma-entropy-RW};  we just use Lemma~\ref{lemma-reduction} instead of Lemma~\ref{lemma-reduction1} and Proposition~\ref{prop-entropy-BB} instead of Proposition~\ref{prop-entropy-BM}.
\end{proofsect}

\subsection{Growth and gap control}
\label{sec-4.4}\noindent
Returning back to our consideration of the pinned \DGFF{}, we are now in a position to derive quantitative estimates on various undesirable events of interest. We start by bounding the probability that the control variables~$K$, resp.,~$\widetilde K$ introduced earlier take a given value: 

\begin{lemma}
\label{lemma-4.7}
There are constants~$c_1,c_2\in(0,\infty)$ such that for all~$n\ge1$ sufficiently large, all $k\ge1$ with~$k<n/4$ and all $t\ge0$,
\begin{equation}
\label{E:4.19qq}
P\biggl(\,\{K=k\}\cap \bigcap_{\ell=k}^{n-k}\bigl\{S_\ell\ge -R_{k}(\ell)-t\bigr\}\,\bigg|\,S_{n+1}=0\biggr)\le c_1(1+t)^2\frac{\texte^{-c_2(\log k)^2}}{n}.
\end{equation}
Similarly, there are constants~$\tilde c_1,\tilde c_2\in(0,\infty)$ such that for all~$r\ge k\ge1$,
\begin{equation}
\label{E:4.19qq2}
\nu^0\biggl(\,\{\widetilde K=k\}\cap \bigcap_{\ell=k}^{r}\bigl\{S_\ell\ge -\widetilde R_{k}(\ell)\bigr\}\biggr)\le \tilde c_1\frac{\texte^{-\tilde c_2(\log k)^2}}{\sqrt r}.
\end{equation}
\end{lemma}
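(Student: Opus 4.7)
Plan. My plan is to combine two ingredients already in place: (a) tail estimates for the control variables $K$ and $\wt K$ in the spirit of Lemma~\ref{lemma-4.2u}, and (b) the walk/bridge-above-barrier estimates of Section~\ref{sec-4.3}. The central observation that will allow (a) and (b) to be coupled cleanly is that the concentric decomposition yields independence between the random walk $\{S_\ell\}$, built from $\{\varphi_\ell(0)\}$, and the remaining components $\{\chi_\ell\},\{h'_\ell\}$: conditions~(2)--(3) in Definitions~\ref{def4.1}--\ref{def4.2} depend only on the latter, while condition~(1) is a walk event.

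For~\eqref{E:4.19qq}, I would bound $\1_{\{K=k\}}\le\1_{\{K\ge k\}}$ and split $\{K\ge k\}\subseteq E^\varphi_k\cup E^{\chi,h'}_k$, where $E^\varphi_k$ is the walk-measurable event that condition~(1) fails at level $k-1$ and $E^{\chi,h'}_k$ the (walk-independent) event that condition~(2) or~(3) fails at that level. For the piece $E^{\chi,h'}_k\cap B_k$, with $B_k:=\bigcap_{\ell=k}^{n-k}\{S_\ell\ge-R_k(\ell)-t\}$, independence will give
\begin{equation*}
P(E^{\chi,h'}_k\cap B_k\mid S_{n+1}=0)=P(E^{\chi,h'}_k)\,P(B_k\mid S_{n+1}=0),
\end{equation*}
where the first factor is $\le\texte^{-c(\log k)^2}$ by the same union bound used for Lemma~\ref{lemma-4.2u} (invoking Lemmas~\ref{lemma-3.6a} and~\ref{lemma-3.7}). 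For the second factor, I would condition additionally on the pair $(S_k,S_{n-k+1})$ so that the middle of the walk becomes a Gaussian bridge of length of order $n$, apply Lemma~\ref{lemma-4.7q} to dominate by a Brownian bridge, and then invoke Proposition~\ref{prop-mine-too-corollary}. The smallness hypothesis on $\tilde\rho/x$ there is to be verified via Lemma~\ref{lemma-A.9}, since $R_k$ satisfies \eqref{E:4.23wu} with parameters bounded in $k$. On $\{S_k,S_{n-k+1}\ge-R_k(k)-t\}$ this yields
\begin{equation*}
P(B_k\mid S_k,S_{n-k+1},S_{n+1}=0)\le C\,\frac{(S_k+R_k(k)+t+1)(S_{n-k+1}+R_k(k)+t+1)}{n}.
\end{equation*}
Integrating against the joint Gaussian law of $(S_k,S_{n-k+1})$ conditional on $S_{n+1}=0$ --- whose marginal variances are of order $k(n-k)/n\asymp k$ and covariance of order $k^2/n$ --- and using $R_k(k)\le C[1+(\log k)^2]$, this should give a bound of the form $C(1+t)^2[1+(\log k)^4]/n$, whose surplus polylogarithmic factor will be absorbed into $\texte^{-c_2(\log k)^2}$ by shrinking $c_2$ slightly.

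For $E^\varphi_k\cap B_k$ I would union-bound over the index $\ell$ witnessing $|\varphi_\ell(0)|>\Theta_{k-1}(\ell)$ and integrate over $\varphi_\ell(0)=S_{\ell+1}-S_\ell$ against its Gaussian density on this tail, extracting a factor $\texte^{-c\Theta_{k-1}(\ell)^2/2}$ which is summable in $\ell$ and produces the required $\texte^{-c(\log k)^2}$. Conditional on $\varphi_\ell(0)$, the walk decomposes into two independent Gaussian bridges whose joint bridge-above-barrier probability is again estimated as above to produce the $(1+t)^2/n$ factor. Inequality~\eqref{E:4.19qq2} follows by the same strategy but replaces the bridge estimate by the unconditioned-walk bound obtained from Proposition~\ref{prop-4.9ue} via Lemma~\ref{lemma-reduction1}, yielding the decay $1/\sqrt r$ in place of $1/n$, together with the tail bound \eqref{E:4.16ueu} on $\wt K$; for $r\le Ck$ one simply uses the tail bound $P(\wt K=k)\le\texte^{-c(\log k)^2}$ directly, which is already smaller than $\texte^{-c'(\log k)^2}/\sqrt r$ for $k$ large.

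The hard part will be controlling the $\delta$-correction in Proposition~\ref{prop-mine-too-corollary} uniformly in $k$, $t$ and the endpoint values $(S_k,S_{n-k+1})$: since the barrier height grows as $(\log k)^2$ in $k$ and linearly in $t$, Lemma~\ref{lemma-A.9} will be used decisively to translate the explicit form of $R_k$ into the quantitative bound $\tilde\rho(x)/x\to 0$ required to make Proposition~\ref{prop-mine-too-corollary} applicable and to ensure that the resulting prefactor is polynomial, rather than exponential, in $\log k$ and $t$.
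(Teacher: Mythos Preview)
Your split $\{K\ge k\}\subseteq E^\varphi_k\cup E^{\chi,h'}_k$ and the treatment of the walk-independent piece $E^{\chi,h'}_k$ is sound (the missing polynomial-in-$k$ factor from $\Var(S_k)\asymp k$ is harmless, as you note). The real gaps are in the $E^\varphi_k$ piece and in the endpoint control for the bridge bound.

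First, conditioning on $\varphi_{\ell_0}(0)$ and $S_{n+1}=0$ does \emph{not} produce two independent bridges: the remaining increments $\{\varphi_j(0)\}_{j\ne\ell_0}$ are tied by the single linear constraint $\sum_{j\ne\ell_0}\varphi_j(0)=-\varphi_{\ell_0}(0)$, so the two halves of the walk are coupled. You would have to condition additionally on, say, $S_{\ell_0}$ to decouple, and then the barrier event $B_k$ straddles both pieces in a way that does not reduce to a single $xy/n$ bound. When the witness index $\ell_0$ lies in the bulk $[k,n-k]$, the event $\{|\varphi_{\ell_0}(0)|>\Theta_{k-1}(\ell_0)\}$ is a functional of the bridge itself, so the factorization you invoke does not hold.

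Second, Proposition~\ref{prop-mine-too-corollary} does not give a usable bound uniformly in the shifted endpoints: for $x$ near~$0$ one has $\tilde\rho(x)/x\to\infty$ (the term $\zeta(0)^2/x$ in~\eqref{E:tilde-rho} diverges), and since your barrier has $\zeta(0)\asymp R_k(k)\asymp(\log k)^2$, even $x$ of order~$1$ gives $\delta\asymp(\log k)^4$. On your event you only know $S_k+R_k(k)+t\ge0$ with no upper bound on $S_k$, so the monotone shift you would need to land in a range where Lemma~\ref{lemma-A.9} controls $\tilde\rho/x$ is not available.

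The paper sidesteps both issues with one observation you discard by passing from $\{K=k\}$ to $\{K\ge k\}$: since $\Theta_k(\ell)=\Theta_{k-1}(\ell)$ for all middle indices $k\le\ell\wedge(n-\ell)$, the failure of the level-$(k-1)$ conditions on $\{K=k\}$ must occur at a boundary index $\ell<k$ or $\ell>n-k$. Hence $\{K=k\}$ is contained in an event $A_k$ measurable with respect to $\FF_k:=\sigma(\varphi_\ell(0),\chi_\ell,h'_\ell:\ell\le k\text{ or }\ell\ge n-k)$, and on $A_k$ one has the \emph{deterministic} two-sided bound $|S_k|,|S_{n-k}|\le(k+1)[\log(k+1)]^2$. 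Conditioning on $\FF_k$ then leaves a clean Gaussian bridge on $[k,n-k]$ with endpoints in a fixed window $[a_k,3a_k]$ after the monotone shift, so Proposition~\ref{prop-mine-too-corollary} applies with $\delta$ bounded uniformly; the factor $P(A_k\mid S_{n+1}=0)\le\texte^{-c(\log k)^2}$ supplies the rest. Retaining this structure, rather than relaxing to $\{K\ge k\}$, is what makes the argument go through.
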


\begin{proofsect}{Proof}
Given~$k\ge1$, let~$A_k$ denote the event that the conditions in Definition~\ref{def4.1} (with~$k$ as stated) hold for all~$\ell$ satisfying $\ell\le k$ or~$\ell\ge n-k$, but at least one of these conditions fails when~$k$ is replaced by~$k-1$. Note that~$A_k$ belongs to the $\sigma$-algebra
\begin{equation}
\FF_k:=\sigma\Bigl(\varphi_\ell(0),\chi_\ell,h'_\ell\colon \ell=0,\dots,k,n-k,\dots,n\Bigr)
\end{equation}
and that $\{K=k\}\subseteq A_k$. The desired probability is thus bounded by
\begin{equation}
\label{E:4.53ue}
E\biggl(\1_{A_k}\,P\Bigl(\,\,\bigcap_{\ell=k}^{n-k}\bigl\{S_\ell\ge -R_{k}(\ell)-t\bigr\}\,\Big|\,\FF_k\Bigr)\,\bigg|\,S_{n+1}=0\biggr).
\end{equation}
Our strategy is to derive a pointwise estimate on the conditional probability. 

First we note that, setting $\gamma$ to \eqref{E:gamma} with~$a>\max\{C,\frac{1+\sqrt5}2\}$ where~$C$ is the constant in Lemma~\ref{lemma-4.3}, on the event $\{S_k=x,\,S_{n-k}=y\}$ the above conditional probability can be bounded above by
\begin{equation}
P^0\Bigl(B_{t_\ell}\ge-\gamma(\ell\wedge(n-\ell))-t\colon \ell=k,\dots,n-k\,\Big|\,B_{t_k}=x,\,B_{t_{n-k}}=y\Bigr),
\end{equation}
where $B$ is the standard Brownian motion and~$\{t_k\colon k\ge1\}$ --- not to be confused with~$t$ and~$t_0$ in the statement --- are now defined using $\sigma_k^2:=\Var(\varphi_k(0))$. Setting~$\zeta(s):=\gamma(s/\sigma_{\min}^2)$ and comparing this with \eqref{E:4.25qq}, we bound this probability using \eqref{E:4.28qq} with~$\zeta$ replaced by $\zeta_u(s):=\zeta(u+s)$ where $u:=t_k$. Here we observe that, for our choice of~$\gamma$, the product on the right of \eqref{E:4.28qq} is bounded by a constant uniformly in~$k$,~$n$ and $t\ge0$. Using a trivial shift of coordinates, we thus need to derive a uniform bound on
\begin{equation}
\label{E:4.45ww}
P^{x+t}\Bigl(B_s\ge-2\zeta_{t_k}\bigl(s\wedge(t_{n-k}-t_k-s)\bigr)\colon s\in[0,t_{n-k}-t_k]\,\Big|\, B_{t_{n-k}-t_k}=y+t\Bigr)
\end{equation}
for all relevant~$x$ and~$y$.
For this we note that, on~$A_k$, we necessarily have $|\varphi_\ell(0)|\le\Theta_k(k)$ for all $\ell\le k$ and $\ell\ge n-k$ and so, since we condition on~$S_{n+1}=0$, we may assume that
\begin{equation}
-\gamma(k\wedge(n-k))\le S_k,S_{n-k}\le (k+1)\Theta_k(k)=(k+1)[\log(k+1)]^2.
\end{equation}
Since \eqref{E:4.45ww} increases when both~$x$ and~$y$ are increased by the same amount, by adding to~$x$ and~$y$ two times $a_k:=\gamma(k)\vee((k+1)[\log(k+1)]^2)$ it thus suffices to estimate the maximal value the probability in \eqref{E:4.45ww} takes for $a_k\le x,y \le 3a_k$. 

We will use Proposition~\ref{prop-mine-too-corollary} but for that we first need to bound the error term  denoted by~$\delta$.  Since we work with~$\zeta_u$, we first check that \eqref{E:4.23wu} in~Lemma~\ref{lemma-A.9} applies with~$a$ as above and $\sigma^2:=\sigma_{\min}^2$, and so the requisite~$\tilde\rho_u$ for $u:=t_k$ is can be estimated as in \eqref{E:4.31uea}. Hence we get
\begin{equation}
\sup_{t\ge0}\,\,\max_{k=1,\dots,n}\,\,\sup_{a_k\le x \le 3a_k}\frac{\tilde\rho_{t_k}(x+t)}{x+t}<\infty.
\end{equation}
As $t_{n-k}-t_k\ge c'n$ for some~$c'>0$ due to our assumption that $k<n/4$, \eqref{E:4.25w} in conjunction with the previous steps yield
\begin{equation}
P\Bigl(\,\,\bigcap_{\ell=k}^{n-k}\bigl\{S_\ell\ge -R_{k}(\ell)-t\bigr\}\,\Big|\,\FF_k\Bigr)
\le c_1\frac{(3a_k+t)^2}n\qquad\text{on }A_k\cap\{S_{n+1}=0\},
\end{equation}
for some constant~$c_1>0$.
The expectation in \eqref{E:4.53ue} is then at most $c_1\frac{(3a_k+t)^2}n P(A_k\,|\,S_{n+1}=0)$ and the last probability is estimated as in Lemma~\ref{lemma-4.2u} by $\texte^{-c_2(\log k)^2}$. Sacrificing part of the exponent, we can now rewrite this as \eqref{E:4.19qq}.

The proof of the corresponding statement \eqref{E:4.19qq2} follows exactly the same argument as that of \eqref{E:4.19qq}; in fact, the derivation is simpler due to the absence of~$n$ and~$t$ dependence of all terms. We leave the details to the reader.
\end{proofsect}

As a consequence of these bounds, we are able to control the leading order of the probability of the principal events of interest:

\begin{lemma}
\label{lemma-4.9new}
There are a constant~$c_2\in(0,\infty)$ and for each $t_0>0$ also a constant $c_1\in(0,\infty)$ such that for all $n\ge1$ and all $s\in[0,t_0]$ and all $t\le s$, we have
\begin{equation}
\label{E:4.24z}
\frac{c_1}n\le P\Bigl(\,h^{\Delta^n}\le m_{2^n}+s-(m_{2^n}+t)\frakg^{\Delta^n}
\,\Big|\,h^{\Delta^n}(0)=0\Bigr)\le\frac{c_2}n(1+s-t)^2.
\end{equation}
Similarly, there are $c_1',c_2'\in(0,\infty)$ such that for all $r\ge1$,
\begin{equation}
\label{E:4.25z}
\frac{c_1'}{\sqrt r}\le \nu^0\Bigl(\phi(x)\le\frac2{\sqrt g}\fraka(x)\colon x\in \Delta^r\Bigr)\le\frac{c_2'}{\sqrt r}.
\end{equation}
\end{lemma}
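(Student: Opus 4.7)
My strategy is to exploit the random-walk reduction established in Lemmas~\ref{lemma-4.3} and~\ref{lemma-4.6a}, which translate the field-level events in \twoeqref{E:4.24z}{E:4.25z} into statements that the random walk $\{S_\ell\}$ stays above a polylogarithmic curve. The tail bounds on the control variables $K,\widetilde K$ (Lemma~\ref{lemma-4.2u}) together with the random-walk probability bound (Lemma~\ref{lemma-4.7}) will supply the upper bounds, while the lower bounds will come from the Brownian-bridge estimate of Proposition~\ref{prop-mine-corollary} for \eqref{E:4.24z} and the unconditioned Brownian estimate of Proposition~\ref{prop-uncond-BM-positive-cor} for \eqref{E:4.25z}, each applied via the discrete-time reductions in Lemmas~\ref{lemma-reduction} and~\ref{lemma-reduction1}.

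\textbf{Upper bound of \eqref{E:4.24z}.} Identifying $\{h^{\Delta^n}(0)=0\}$ with $\{S_{n+1}=0\}$ and applying Lemma~\ref{lemma-4.3} with the crude pointwise estimate $|s-t\frakg^{\Delta^n}(x)|\le s+|t|\le C(1+s-t)$ (valid for $s\in[0,t_0]$ and $t\le s$, with $C$ depending only on $t_0$), on the event $\{K\le n/4\}$ the event in \eqref{E:4.24z} forces $S_\ell\ge-R_K(\ell)-C(1+s-t)$ for $\ell\in\{K,\dots,n-K\}$. Decomposing according to the value of $K$ and invoking \eqref{E:4.19qq} of Lemma~\ref{lemma-4.7} yields a contribution of $\sum_k c_1(1+s-t)^2 e^{-c_2(\log k)^2}/n\le c(1+s-t)^2/n$. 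The residual contribution from $\{K>n/4\}$ is handled by Lemma~\ref{lemma-4.2u} applied with any $\delta\in(0,1)$ such that $n^\delta<n/4$ (which holds for $n$ large), giving $O(n^{-2})$.

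\textbf{Lower bound of \eqref{E:4.24z}.} A short computation reveals that the threshold $s+m_{2^n}(1-\frakg^{\Delta^n})-t\frakg^{\Delta^n}$ is pointwise minimized, over the range $s\ge0$ and $t\le s$, at $s=t=0$; since the event in question only grows with the threshold, it suffices to prove the lower bound in that case. The inclusion \eqref{E:4.11} with $t=0$ then reduces the task to lower-bounding $P(\{K=k_0\}\cap\bigcap_{\ell=k_0}^{n-k_0}\{S_\ell\ge R_{k_0}(\ell)\}\mid S_{n+1}=0)$ for a sufficiently large fixed $k_0$. I will restrict to an $\FF_{k_0}$-measurable subevent on which the conditions of Definition~\ref{def4.1} hold with $k=k_0$ and additionally $S_{k_0},S_{n-k_0}\in[R_{k_0}(k_0)+1,R_{k_0}(k_0)+2]$; this event has probability bounded from below uniformly in $n$ by independence and standard Gaussian tail considerations. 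Conditional on these endpoint values, the remaining bridge-stays-positive probability is bounded below via the lower bound \eqref{E:4.29qq} of Lemma~\ref{lemma-reduction} applied with the concave envelope $\gamma(\ell):=C[1+\log(1+\ell)]^2$, followed by Proposition~\ref{prop-mine-corollary}; the error term in the latter is controlled uniformly by Lemma~\ref{lemma-A.9}, yielding a lower bound of order $xy/(n-2k_0)\ge c/n$.

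\textbf{The estimate \eqref{E:4.25z}.} The scheme is entirely parallel, with $\widetilde K$ playing the role of $K$. The upper bound follows from \eqref{E:4.22uuu}, decomposition over the values of $\widetilde K$, and the second part \eqref{E:4.19qq2} of Lemma~\ref{lemma-4.7}; the factor $1/\sqrt r$ (rather than $1/n$) reflects that we are bounding a first-passage probability for the unconditioned walk rather than a bridge-positivity probability. The lower bound follows from \eqref{E:4.22uu} after restricting to $\{\widetilde K=k_0\}$ with $S_{k_0}$ in a fixed bounded range, then applying Lemma~\ref{lemma-reduction1} together with Proposition~\ref{prop-uncond-BM-positive-cor} (again with the error controlled by Lemma~\ref{lemma-A.9}) to lower-bound the probability that the walk stays above $\widetilde R_{k_0}$. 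The main obstacle throughout is to keep the constants matching: one must ensure the Brownian(-bridge) estimates are sufficiently tight that the multiplicative error does not swallow the leading $xy/n$ or $x/\sqrt{n}$ behavior. This is precisely the content of Lemmas~\ref{lemma-A.9} and~\ref{lemma-reduction}--\ref{lemma-reduction1}, which were designed for exactly this purpose.
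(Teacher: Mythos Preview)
Your upper-bound arguments are essentially the paper's: reduce to the random walk via Lemma~\ref{lemma-4.3} (resp.\ Lemma~\ref{lemma-4.6a}), partition on the value of the control variable, and sum the bounds from Lemma~\ref{lemma-4.7}. That part is fine.

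The lower bounds, however, have a real gap. You invoke \eqref{E:4.11} with $t=0$ and then claim it suffices to lower-bound $P(\{K=k_0\}\cap\bigcap_{\ell=k_0}^{n-k_0}\{S_\ell\ge R_{k_0}(\ell)\}\mid S_{n+1}=0)$. But \eqref{E:4.11} requires $S_\ell\ge R_K(\ell)$ for \emph{all} $\ell=1,\dots,n$, not just the middle range. For $\ell\le k_0$ this demands, e.g., $S_1=\varphi_0(0)\ge R_{k_0}(1)=C[1+(\log(1+k_0))^2]$, whereas the condition $\{K\le k_0\}$ forces $|\varphi_0(0)|\le\Theta_{k_0}(0)=[\log(1+k_0)]^2$. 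Unless the unspecified constant $C$ in Lemma~\ref{lemma-4.3} happens to be $<1$, these are mutually exclusive, and your ``$\FF_{k_0}$-measurable subevent'' is empty. Moreover, $\{K\le k_0\}$ depends on \emph{all} scales (Definition~\ref{def4.1} imposes conditions for every~$\ell$), so it is not $\FF_{k_0}$-measurable; you cannot factor the probability as (endpoint event)$\times$(bridge probability) in the way you describe. The same issue recurs verbatim in your treatment of \eqref{E:4.25z}.

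The paper sidesteps this obstruction entirely by using the strong-FKG property (Lemma~\ref{lemma-FKG}). Writing $\phi':=\tfrac2{\sqrt g}\fraka-\phi$, positive association gives
\[
\nu^0\bigl(\phi'\ge0\text{ in }\Delta^r\bigr)\ge\nu^0\bigl(\phi'\ge0\text{ in }\Delta^k\bigr)\,\nu^0\bigl(\phi'\ge0\text{ in }\Delta^r\smallsetminus\Delta^k\bigr).
\]
The first factor is a fixed positive constant, independent of~$r$. For the second, Lemma~\ref{lemma-4.6a} shows that on $\{\widetilde K\le k\}$ one needs only $S_\ell\ge\widetilde R_k(\ell)$ for $\ell\ge k$ --- the troublesome small-$\ell$ constraints never enter. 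The residual condition $\{\widetilde K\le k\}$ is then removed by inclusion--exclusion at the cost of the $O(\texte^{-c(\log k)^2}/\sqrt r)$ term from \eqref{E:4.19qq2}, and the entropic-repulsion estimate (Lemma~\ref{lemma-entropy-RW}) converts the event $\{S_\ell\ge\widetilde R_k(\ell),\,\ell\ge k\}$ into the more tractable $\{S_\ell\ge-\widetilde R_k(\ell),\,\ell\ge1\}$, which is bounded below by a simple Reflection-Principle computation. The argument for \eqref{E:4.24z} is analogous. Without the FKG split you have no clean way to decouple the inner region from the walk, and your direct approach via \eqref{E:4.11} does not go through.
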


\begin{proofsect}{Proof}
Let us start with the upper bound in \eqref{E:4.24z}. For the event under consideration we get
\begin{multline}
\quad
\bigl\{h^{\Delta^n}-m_{2^n}(1-\frakg^{\Delta^n})\le (s-t)\frakg^{\Delta^n}\bigr\}
\\
\subseteq\{K=\lfloor\ffrac n2\rfloor+1\}\cup \bigcap_{k=1}^n\Bigl(\bigl\{S_k-S_{n+1}\ge -R_K(k)-(s-t)\bigr\}\cap\bigl\{K\le\lfloor\ffrac n2\rfloor\bigr\}\Bigr).
\end{multline}
Here we bounded $(s-t)\frakg^{\Delta^n}\le(s-t)$ and then invoked Lemma~\ref{lemma-4.3}. In light of \eqref{E:3.16qq}, the upper bound in \eqref{E:4.24z} follows  by summing \eqref{E:4.19qq} over $1\le k\le\ffrac n4$ and applying \eqref{E:4.6} to deal with the complementary values of the control variable. (The contribution of the latter part is then absorbed into that of the former.)

The upper bound in \eqref{E:4.25z} is completely analogous; we invoke Lemma~\ref{lemma-4.6a} to rewrite the event using the random walk and the control variable and then bound the resulting probability by \eqref{E:4.19qq2} for $k\le r$ and \eqref{E:4.16ueu} for $k\ge r$ (including the default value $\widetilde K=\infty$). 

For the lower bounds, we will conveniently use the fact that the law of both $h^{\Delta^n}$ conditioned on~$h^{\Delta^n}(0)=0$ and $\phi$ are positively correlated (see Lemma~\ref{lemma-FKG}). Focussing, for simplicity, on \eqref{E:4.25z} and abbreviating $\phi'(x):=\frac2{\sqrt g}\fraka(x)-\phi(x)$, for each $1\le k\le r$ we thus have
\begin{equation}
\nu^0\bigl(\phi'(x)\ge0\colon x\in \Delta^r\bigr)
\ge \nu^0\bigl(\phi'(x)\ge0\colon x\in \Delta^k\bigr)\nu^0\bigl(\phi'(x)\ge0\colon x\in \Delta^r\smallsetminus \Delta^k\bigr)
\end{equation}
By inserting the event $\{\widetilde K\le k\}$, we then get
\begin{multline}
\label{E:4.65aa}
\qquad
\nu^0\bigl(\phi'(x)\ge0\colon x\in \Delta^r\smallsetminus \Delta^k\bigr)
\\\ge P\Bigl(\{\widetilde K\le k\}\cap\bigcap_{\ell=1}^r\bigl\{S_k\ge -\widetilde R_k(\ell)\bigr\}\cap\bigcap_{\ell=k}^r\bigl\{S_k\ge \widetilde R_k(\ell)\bigr\}\Bigr),
\qquad
\end{multline}
where we also noted that, by \twoeqref{E:4.22uu}{E:4.22uuu}, on $\{\widetilde K\le k\}$ the event on the left is a subset of the first giant intersection on the right. Then we applied \eqref{E:4.17ueu}. 

Now we use \eqref{E:4.19qq2} conclude that the right-hand side of \eqref{E:4.65aa} is at least
\begin{equation}
\label{E:4.66ui}
P\Bigl(\,\bigcap_{\ell=1}^r\bigl\{S_k\ge -\widetilde R_k(\ell)\bigr\}\Bigr)
P\Bigl(\,\bigcap_{\ell=k}^r\bigl\{S_k\ge \widetilde R_k(\ell)\bigr\}\,\Big|\,\bigcap_{\ell=1}^r\bigl\{S_k\ge -\widetilde R_k(\ell)\bigr\}\Bigr)-\tilde c_1\frac{\texte^{-\tilde c_2(\log k)^2}}{\sqrt r}.
\end{equation}
Since $\widetilde R_k(\ell)\ge0$, our embedding of the random walk into Brownian motion as detailed in \eqref{E:4.35ua} shows that the first probability can be bounded as
\begin{equation}
P\Bigl(\,\bigcap_{\ell=1}^r\bigl\{S_k\ge -\widetilde R_k(\ell)\bigr\}\Bigr)
\ge P\bigl(\,B_s\ge 0\colon s\in[1,t_r]\bigr)\,,
\end{equation}
where $t_r:=\Var(S_r)$. By the Reflection Principle, this is at least a constant times $t_r^{-1/2}$, which by \eqref{E:4.24y} is at least a constant times $r^{-1/2}$. The  conditional  probability in \eqref{E:4.66ui} is bounded  from  below using Lemma~\ref{lemma-entropy-RWB} by a quantity of the form $1-ck^{-\frac1{16}}$. The left inequality in \eqref{E:4.25z} follows once~$k$ is taken sufficiently large.

Concerning the lower bound in \eqref{E:4.24z}, here we simply set~$s:=t$ and then proceed by an argument quite analogous to the one above. (The only change is that we need to write $R_k(\ell)+|t|$ instead of~$\widetilde R_k(\ell)$.) We omit further details for brevity.
\end{proofsect}

Our final task in this subsection is to show that configurations conditioned to stay above (or below) a function will leave a uniform gap between the conditional and typical value. This gap will be crucial in various approximation arguments in the next section. Here, for ease of expression, we write ``$f\not\ge g$ in~$\Lambda$'' to designate that there is $x\in\Lambda$ such that $f(x)<g(x)$.

\begin{lemma}
\label{lemma-4.20}
For all $k\ge1$ and~$\epsilon>0$ there is $\delta>0$ such that for all $r\ge k$ and $s\in\{0,\delta\}$,
\begin{equation}
\label{E:4.64}
\nu^0\Bigl(\frac2{\sqrt g}\fraka-\phi\not\ge \delta-s\text{\rm\,\ in } \Delta^k\smallsetminus\{0\}\,\Big|\,\frac2{\sqrt g}\fraka-\phi\ge-s \text{\rm\,\ in } \Delta^r\Bigr)\le\epsilon.
\end{equation}
Similarly, for all $k\ge1$, all $\epsilon>0$ and all~$t_0>0$ there is $\delta>0$ such that for all $t\in\R$ with $|t|<t_0$ and all $n\ge k$,
\begin{equation}
\label{E:4.65a}
P\Bigl(\,h^{\Delta^n}\not\le\frakm_n -\delta\text{\rm\,\ in }\Delta^k\smallsetminus\{0\}
\,\Big|\,h^{\Delta^n}\le \frakm_n,\,h^{\Delta^n}(0)=0\Bigr)\le \epsilon, 
\end{equation}
where $\frakm_n(x):=(m_{2^n}+t)(1-\frakg^{\Delta^n})$.
\end{lemma}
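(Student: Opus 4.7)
Both parts are proved by the same strategy; I will describe Part~1 (\eqref{E:4.64}) in detail, Part~2 being analogous. With $\phi':=\frac2{\sqrt g}\fraka-\phi$ and $B:=\{\phi'\ge-s\text{ in }\Delta^r\}$, the plan is to establish the stronger assertion
$$\nu^0\Bigl(\exists\,x\in\Delta^k\smallsetminus\{0\}\colon\phi'(x)\in[-s,\delta-s),\,B\Bigr)\le C_k\,\delta\cdot\nu^0(B),$$
with $C_k$ depending only on~$k$; the choice $\delta:=\epsilon/C_k$ then yields \eqref{E:4.64}. A union bound over $x\in\Delta^k\smallsetminus\{0\}$ reduces matters to the per-point estimate $\nu^0(\phi'(x)\in[-s,\delta-s),\,B)\le C_x\,\delta\cdot\nu^0(B)$ with $C_x$ bounded uniformly for $x\in\Delta^k\smallsetminus\{0\}$.

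For the per-point estimate I would condition on the value of $\phi'(x)$. Let $p$ denote the marginal Gaussian density of $\phi'(x)$ under $\nu^0$ and set $Q(u):=\nu^0\bigl(\phi'(y)\ge-s,\,y\in\Delta^r\smallsetminus\{x\}\,\big|\,\phi'(x)=u\bigr)$. Then
$$\nu^0\bigl(\phi'(x)\in[-s,\delta-s),\,B\bigr)=\int_{-s}^{\delta-s}p(u)Q(u)\,\dd u,\qquad \nu^0(B)\ge\int_{\delta-s}^{\delta-s+1}p(u)Q(u)\,\dd u.$$
The key observation is that $u\mapsto Q(u)$ is non-decreasing. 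Indeed, by \eqref{E:2.8} the Gaussian field $\phi$ under $\nu^0$ has non-negative covariances (as used in the strong-FKG property, Lemma~\ref{lemma-FKG}), so the conditional law of $(\phi(y))_{y\ne x}$ given $\phi(x)=v$ decreases stochastically in $v$, and via $\phi'(x)=\frac2{\sqrt g}\fraka(x)-\phi(x)$ this translates to monotonicity of $Q$ in $u$. Bounding $Q(u)$ above by $Q(\delta-s)$ in the first integral and below by $Q(\delta-s)$ in the second, these factors cancel and we are left with
$$\frac{\nu^0\bigl(\phi'(x)\in[-s,\delta-s),\,B\bigr)}{\nu^0(B)}\le\frac{P\bigl(\phi'(x)\in[-s,\delta-s)\bigr)}{P\bigl(\phi'(x)\in[\delta-s,\delta-s+1)\bigr)}.$$

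It remains to show the right-hand side is at most $C_x\delta$. Under $\nu^0$, $\phi'(x)$ is Gaussian with mean $\frac2{\sqrt g}\fraka(x)$ and variance $2\fraka(x)$ (by \eqref{E:2.8}), both bounded above and away from zero for $x\in\Delta^k\smallsetminus\{0\}$. Its density is therefore bounded above by some finite $C_x$ and, for $\delta$ in a bounded range and $s\in\{0,\delta\}$, also bounded below on the unit interval $[\delta-s,\delta-s+1)\subseteq[-1,2]$ by a positive $c_x$, so the ratio is at most $(C_x/c_x)\delta$. Summing the resulting bound over the $|\Delta^k|-1$ points of $\Delta^k\smallsetminus\{0\}$ yields Part~1. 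For Part~2, the conditioning on $h^{\Delta^n}(0)=0$ reduces the law of $h^{\Delta^n}$ to the DGFF on $\Delta^n\smallsetminus\{0\}$, which is again a centered Gaussian field with non-negative covariances; the same monotonicity argument applies with $\phi'$ replaced by $\frakm_n-h^{\Delta^n}$. Uniformity in $n$ and $|t|\le t_0$ follows because Lemma~\ref{lemma-3.8} controls $\frakm_n(x)$ uniformly for $x\in\Delta^k\smallsetminus\{0\}$, while the variance $G^{\Delta^n\smallsetminus\{0\}}(x,x)$ lies in a bounded interval $[\tilde c_1,\tilde c_2]$ with $\tilde c_1>0$ uniformly in $n\ge k$, providing the required uniform density bounds. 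The main delicacy throughout is precisely this uniform control over the Gaussian marginal densities; the FKG-monotonicity step is itself a robust property of positively-correlated Gaussian fields.
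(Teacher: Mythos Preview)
Your argument is correct and follows essentially the same strategy as the paper's proof: a union bound over $x\in\Delta^k\smallsetminus\{0\}$ together with the FKG/positive-correlation structure to reduce to single-site Gaussian estimates. The paper's execution is slightly more economical: rather than conditioning on the value $\phi'(x)=u$ and exploiting monotonicity of $Q(u)$, it uses strong FKG directly to bound $P(\chi(x)<f(x)+\delta\mid \chi\ge f)\le P(\chi(x)<f(x)+\delta\mid \chi(x)\ge f(x))$, and then observes that since $\chi(x)$ is centered and $f(x)\le 0$ one has $P(\chi(x)\ge f(x))\ge 1/2$, so the conditional probability is at most $2\delta/(\sqrt{2\pi}\,\sigma)$. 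This avoids your auxiliary interval $[\delta-s,\delta-s+1)$ and the need for a lower bound on the density (hence any control of the mean $\frakm_n(x)$); only a lower bound on the variance is required. Your route works too, and your appeal to Lemma~\ref{lemma-3.8} to bound $\frakm_n(x)$ uniformly for $x\in\Delta^k\smallsetminus\{0\}$ is legitimate, but the paper sidesteps that step entirely.
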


\begin{proofsect}{Proof}
For convenience, both parts will again use the fact that that laws of the random fields~$\phi$ and~$h^{\Delta^n}$ are strong-FKG (see Lemma~\ref{lemma-FKG}). Our argument will be facilitated by the following general observations: If $\chi$ is a field on a set~$\Lambda$ with the strong-FKG property and $f\colon \Lambda\to\R$ is a function, then
\begin{equation}
P\bigl(\chi\not\ge f+\delta\,\big|\,\chi\ge f\bigr)\le\sum_{x\in\Lambda}P\bigl(\chi(x)< f(x)+\delta\,\big|\,\chi(x)\ge f(x)\bigr).
\end{equation}
Moreover, if $\chi(x)$ is normal with mean zero and $\sigma^2:=\Var(\chi(x))>0$, and $f(x)\le0$, then
\begin{equation}
P\bigl(\chi(x)\not\ge f(x)+\delta\,\big|\,\chi(x)\ge f(x)\bigr)\le\frac2{\sqrt{2\pi}}\frac\delta\sigma\le\frac\delta\sigma.
\end{equation}
This follows by a straightforward estimate of the probability density of~$\chi(x)$.

The bound \eqref{E:4.64} follows with the choice $f(x):=-\frac2{\sqrt g}\fraka(x)-s$ and $\epsilon:=2c\delta|\Delta^k|$, where~$c$ is a number such that~$c^{-2}$ is the minimum of $\Var(\phi(x))$ over $x\in \Delta^k\smallsetminus\{0\}$. The bound \eqref{E:4.65a} is obtained analogously; we just need to also observe that, since $(h^{\Delta^n}(x)|h^{\Delta^n}(0)=0)$ converges in law to~$\phi$ as $n\to\infty$, we have $\inf_{n\ge k}\Var(h^{\Delta^n}(x)|h^{\Delta^n}(0)=0)>0$ for each $x\in \Delta^k\smallsetminus\{0\}$.
\end{proofsect}

\subsection{More general outer domains}
\label{sec-4.5}\noindent
In order to simplify exposition, the derivations in the previous sections have been based on the definition of~$\{\Delta^k\colon k\ge1\}$ via \eqref{E:3.25}. However, applications will occasionally require estimates also in the situations when the last domain in the sequence~$\Delta^0,\Delta^1,\dots,\Delta^n$ is replaced by a slightly more general domain, albeit of the same spatial scale;  see Fig.~\ref{fig4}.  Here we observe: 

\begin{lemma}
\label{lemma-gen-domain}
Let~$q\ge1$. Then there are constants $c_1,c_2\in(0,\infty)$ such that if we replace~$\Delta^n$ for $n\ge1$ by a set~$D\subset\Z^2$ satisfying
\begin{equation}
\label{E:D-incl}
\Delta^n\subseteq D\subseteq \Delta^{n+q},
\end{equation}
then
\begin{equation}
c_1\le\Var\bigl(\varphi_n(0)\bigr)\le c_2.
\end{equation}
Similarly, when we replace $\Delta^n$ by any~$D$ obeying \eqref{E:D-incl}, the conclusions of Lemmas~\ref{lemma-3.5rt},~\ref{lemma-3.6a} and \eqref{E:3.40} in Lemma~\ref{lemma-3.6} hold as stated for all $k\le n$ with the constants that may depend on~$q$ but not on~$n$. The same holds for \eqref{E:4.19qq} in Lemma~\ref{lemma-4.7}, \eqref{E:4.24z} in Lemma~\ref{lemma-4.9new} and \eqref{E:4.65a} in Lemma~\ref{lemma-4.20}.
\end{lemma}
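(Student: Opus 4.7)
\begin{proofsect}{Proof sketch (proposal)}
The key structural observation is that replacing $\Delta^n$ by $D$ with $\Delta^n\subseteq D\subseteq \Delta^{n+q}$ only modifies the outermost layer in the concentric decomposition: the inner shells $\Delta^0,\dots,\Delta^{n-1}$ are unchanged, so the laws of $\varphi_k(0)$, $\chi_k$, $h'_k$ and the function $\frakb_k$ for $k\le n-1$ coincide with those in the original setup. Hence we only need to verify that the analogs for $k=n$ (now associated with the annulus $D\smallsetminus\overline{\Delta^{n-1}}$ and the boundary $\partial D$) obey the same quantitative bounds as before, with constants now allowed to depend on~$q$. Once this is established, the proofs of Lemmas~\ref{lemma-4.7}, \ref{lemma-4.9new} and~\ref{lemma-4.20} will go through verbatim, since they only use the concentric decomposition through the uniform (in the shell index) estimates on $\varphi_k(0)$, $\chi_k$ and $h'_k$ together with the random-walk probability estimates of Sections~\ref{sec-4.2}--\ref{sec-4.3}.

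For the variance, the Gibbs--Markov property gives $\Var(\varphi_n(0))=G^D(0,0)-G^{\Delta^{n-1}}(0,0)$ exactly as in \eqref{E:3.21}. The inclusion \eqref{E:D-incl} combined with monotonicity of the Green function in the domain and the asymptotic $G^{\Delta^k}(0,0)=gk\log 2+c_0+o(1)$ (Lemmas~\ref{lemma-G-potential}--\ref{lemma-potential}) immediately sandwiches this between $g\log 2\,(1-o(1))$ and $g(q+1)\log 2+O(1)$, giving $c_1\le \Var(\varphi_n(0))\le c_2$ with $q$-dependent constants.

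For Lemmas~\ref{lemma-3.5rt} and~\ref{lemma-3.6a}, the proofs only used two ingredients about the outer domain: (i) boundedness of $\Var(h^D(0))$, which we have just verified; and (ii) the harmonic-measure gradient estimate of Lemma~\ref{lemma-HM} applied to $D$ (replacing $\Delta^{k-1}$ in \eqref{E:3.32w}). Because $\Delta^n\subseteq D\subseteq \Delta^{n+q}$, the set $D$ has inner radius and diameter both comparable to $2^n$ up to a factor depending only on $q$, so Lemma~\ref{lemma-HM} applies with a constant depending only on $q$. Consequently $|\frakb_n|$ remains bounded and obeys~\eqref{E:3.22q}, and the Fernique bound driving \eqref{E:3.33}--\eqref{E:3.34} carries over with a $q$-dependent constant. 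Estimate~\eqref{E:3.40} in Lemma~\ref{lemma-3.6}, invoked via Lemma~\ref{lemma-max-tail} and Lemma~\ref{lemma-stoch-order}, is insensitive to the precise shape of the outer annulus $D\smallsetminus\Delta^{n-1}$ as long as it is contained in and contains boxes of side comparable to $2^n$; this is guaranteed by \eqref{E:D-incl}, and the shift between $m_{2^n}$ and $m_{2^{n+q}}$ is $O(\log q)=O_q(1)$ and hence absorbable.

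Given the above, the proof of Lemma~\ref{lemma-4.3} goes through unchanged for $k\le n$ (the only subtle point is $k=n$, handled by the two inputs above), and Lemma~\ref{lemma-3.8} is now replaced by the weaker fact that $m_{2^n}(1-\frakg^D)$ differs from $m_{2^k}$ on $\Delta^k\smallsetminus\Delta^{k-1}$ by at most $C_q+\tfrac{3}{4}\sqrt g\log(1+k\wedge(n-k))$; this follows from the same computation as in Lemma~\ref{lemma-3.8} using $G^D(0,0)=gn\log 2+O_q(1)$ and the potential-kernel asymptotics. With these inputs in place the proofs of \eqref{E:4.19qq}, \eqref{E:4.24z} and \eqref{E:4.65a} transcribe directly to the present setting. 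The main obstacle in making all this rigorous is purely bookkeeping: verifying that each constant that appeared as ``universal'' in Sections~\ref{sec3}--\ref{sec4} really arose from one of the two geometric inputs above (boundedness of the Green function and the harmonic-measure gradient bound), and therefore retains its form with an additional $q$-dependence.
\end{proofsect}
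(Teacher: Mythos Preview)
Your argument is correct and matches the paper's approach almost exactly: both proofs observe that only the outermost layer changes, bound $\Var(\varphi_n(0))$ via \eqref{E:3.21} and monotonicity of the Green function, note that Lemmas~\ref{lemma-3.6}--\ref{lemma-3.7} rely only on the coarse inputs from Lemmas~\ref{lemma-stoch-order} and~\ref{lem:6}, and then let Lemmas~\ref{lemma-4.7}, \ref{lemma-4.9new} and~\ref{lemma-4.20} follow automatically.

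One minor point where you do more work than necessary: you claim ingredient~(ii) requires applying the harmonic-measure gradient estimate of Lemma~\ref{lemma-HM} to the general domain~$D$. This is not needed. In the proofs of Lemmas~\ref{lemma-3.5rt} and~\ref{lemma-3.6a}, the estimate \eqref{E:3.32w} is always applied to the \emph{inner} domain $\Delta^{k-1}$ or~$\Delta^{k-2}$ (at most~$\Delta^{n-1}$), never to the outermost domain. Since these inner domains are unchanged, no new harmonic-measure estimate is required. This is cleaner because Lemma~\ref{lemma-HM} as stated is for square boxes only, and extending it to arbitrary~$D$ with $\Delta^n\subseteq D\subseteq\Delta^{n+q}$ would take an extra (though routine) argument. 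The paper's proof simply records that \eqref{E:3.32w} is used only for $k\le n-1$, which sidesteps the issue entirely.
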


\begin{proofsect}{Proof}
The proofs of the statements under consideration depend sensitively on the underlying domains only via \eqref{E:3.32w}, which we use only for $\Delta^k$ with~$k\le n-1$, and bounds on the variance of~$\varphi_k(0)$. The only variance that changes when~$\Delta^n$ is replace by above~$D$ is that of~$\varphi_n(0)$, which is bounded by \eqref{E:3.21} and the fact that $D\mapsto\Var(h^{D}(0))$ is non-decreasing with respect to set inclusion. The bounds in Lemmas~\ref{lemma-3.6} and~\ref{lemma-3.7} use only the various coarse facts about the underlying domain spelled out in Lemmas~\ref{lemma-stoch-order} and \ref{lem:6}. The statements of Lemmas~\ref{lemma-4.7},~\ref{lemma-4.9new} and~\ref{lemma-4.20} then follow as well.
\end{proofsect}

\begin{figure}
\vtop to 0.5\textwidth{
\centerline{\includegraphics[width=0.5\textwidth]{./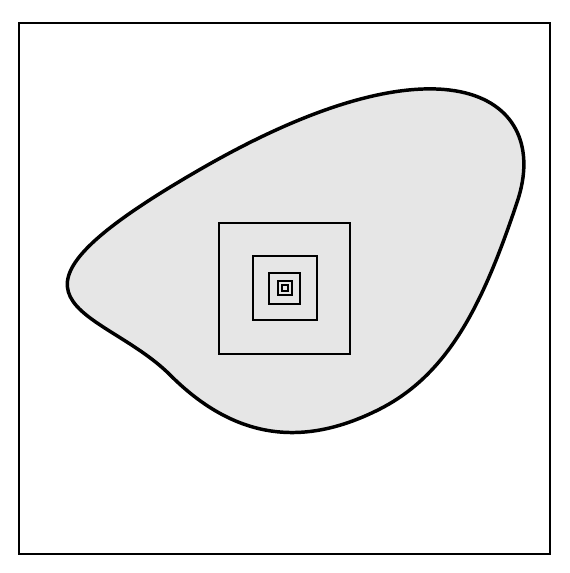}}
}
\begin{quote}
\small
\caption{An illustration of the setting for more general outer domains with parametrization as in~\eqref{E:D-incl}. The set~$D$ is lightly shaded, the parameter~$q$ equals~$2$.
\label{fig4}
}
\normalsize
\end{quote}
\end{figure}

\section{Limit extremal process}
\label{sec5a}\noindent
We are ready to start proving our main results. We begin by addressing the existence of the limiting distribution of the clusters. The key technical input for these will be the asymptotic formulas stated in Propositions~\ref{prop-4.14}--\ref{prop-4.13} below.

\subsection{Key asymptotic formulas}
Abusing the standard notations slightly, let $\Cb(\R^{\Delta^j})$ denote the class of bounded and continuous functions $f\colon\R^{\Z^d}\to\R$ that depend only on the coordinates in~$\Delta^j$. We will also write $\Cloc(\R^{\Z^2}):=\bigcup_{j\ge1}\Cb(\R^{\Delta^j})$.
Given an integer~$\ell\ge1$ and an $f\in\Cloc(\R^{\Z^2})$, define
\begin{equation}
\label{E:5.1ua}
\Xiin_\ell(f):= 
E\biggl( f\bigl(\tfrac2{\sqrt g}\fraka-\phi_\ell\bigr)\,S_\ell\,\1_{\{S_{\ell}\in[\ell^{1/6},\ell^2]\}}\prod_{x\in \Delta^\ell}\1_{\{\phi_\ell(x)\le\frac2{\sqrt g}\fraka(x)\}}\biggr),
\end{equation}
where
\begin{equation}
\phi_\ell(x):=h^{\Delta^{\ell}}(x)-h^{\Delta^{\ell}}(0).
\end{equation}
Given also an integer~$n\ge\ell$ and a number~$t\in\R$, we also set
\begin{equation}
\label{E:5.2ua}
\Xiout_{n,\ell}(t):=
E\biggl(S_{n-\ell}\,\1_{\{S_{n-\ell}\in[\ell^{1/6},\ell^2]\}}\prod_{x\in \Delta^n\smallsetminus \Delta^{n-\ell}}\1_{\{h^{\Delta^n}(x)\le(m_{2^n}+t)(1-\frakg^{\Delta^n}(x))\}}\,\bigg|\, S_{n+1}=0\biggr).
\end{equation}
The key tool for much of our forthcoming derivations are the following two propositions:

\begin{proposition}
\label{prop-4.14}
Let $f\in\Cloc(\R^{\Z^2})$. For each $\epsilon>0$ there is $\ell_0\ge1$ such that for all $\ell\ge \ell_0$ and all $r\ge \ell$ sufficiently large, we have
\begin{equation}
\label{E:4.31z}
\biggl|\,E_{\nu^0}\Bigl(f\bigl(\tfrac2{\sqrt g}\fraka-\phi)\prod_{x\in \Delta^r}\1_{\{\phi(x)\le\frac2{\sqrt g}\fraka(x)\}}\Bigr)-\frac{1}{\sqrt{\log 2}}\,\frac{\Xiin_\ell(f)}{\sqrt r}\biggr|\le\frac{\epsilon}{\sqrt r}.
\end{equation}
\end{proposition}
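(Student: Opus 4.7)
The plan is to split the positivity event $\{\phi\le\frac2{\sqrt g}\fraka$ on $\Delta^r\}$ at scale~$\ell$, evaluate the outer random-walk part via the Brownian estimates of Section~\ref{sec-4.2}, and identify the remaining inner expectation with $\Xiin_\ell(f)$. First, pick $j$ so that $f\in\Cb(\R^{\Delta^j})$. The concentric decomposition~\eqref{E:3.28} gives
\begin{equation*}
\phi(x)-\phi_\ell(x)=\sum_{k>\ell}\bigl(\frakb_k(x)\varphi_k(0)+\chi_k(x)\bigr),\qquad x\in\Delta^j,\ \ell>j,
\end{equation*}
since the $h'_k$ with $k>\ell$ vanish on $\Delta^\ell\supseteq\Delta^j$. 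The bounds \eqref{E:3.22q} on $\frakb_k$ and \eqref{E:3.33}--\eqref{E:3.34} on $\chi_k$ imply this supremum over $x\in\Delta^j$ is at most $2^{-\ell/3}$ off an event of probability $O(2^{-\ell/4})$, so uniform continuity of $f$ lets us replace $f(\frac2{\sqrt g}\fraka-\phi)$ by $f(\frac2{\sqrt g}\fraka-\phi_\ell)$ at cost $o(1/\sqrt r)$ uniformly in $r\ge\ell$.

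Next, Lemma~\ref{lemma-4.7} lets us restrict to $\{\widetilde K\le k_0\}$ for $k_0=k_0(\epsilon)$ at a loss of $\epsilon/(2\sqrt r)$. On this event, Lemma~\ref{lemma-4.6a} sandwiches the outer positivity $\{\phi\le\frac2{\sqrt g}\fraka$ on $\Delta^r\setminus\Delta^\ell\}$ between the random-walk events $\{S_k\ge\pm\widetilde R_{k_0}(k),\ k=\ell,\dots,r\}$. By Proposition~\ref{prop-concentric}, the outer increments $\{S_k-S_\ell:k>\ell\}$ are independent of $(\phi_\ell,S_\ell)$. Embedding the walk into a Brownian motion with $t_k\sim kg\log 2$ (Lemma~\ref{lemma-3.4}), Lemmas~\ref{lemma-reduction1} and~\ref{lemma-4.7q} combined with Propositions~\ref{prop-uncond-BM-positive-cor} and~\ref{prop-4.9ue} applied to the curve~\eqref{E:gamma} yield, uniformly for $x\in[\ell^{1/6},\ell^2]$ as $\ell\to\infty$ and $r/\ell\to\infty$,
\begin{equation*}
P\bigl(S_k\ge\pm\widetilde R_{k_0}(k),\ k=\ell,\dots,r\,\big|\,S_\ell=x\bigr)=\sqrt{\tfrac2\pi}\,\frac{x}{\sqrt{(r-\ell)g\log 2}}\,\bigl(1+o(1)\bigr).
\end{equation*}
Since $g=2/\pi$, the prefactor equals $x\bigl(1+o(1)\bigr)/(\sqrt r\sqrt{\log 2})$.

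To justify the restriction $S_\ell\in[\ell^{1/6},\ell^2]$ implicit in $\Xiin_\ell(f)$: configurations with $S_\ell>\ell^2$ are suppressed by a Gaussian tail for $S_\ell$ (of variance $\sim\ell g\log 2$) times the $O(1/\sqrt r)$ outer positivity probability; configurations with $S_\ell<\ell^{1/6}$ are controlled via the entropic-repulsion estimate of Lemma~\ref{lemma-entropy-RW} together with a Bessel-type argument giving $P(S_\ell<\ell^{1/6},\,\text{positivity on scales }\le\ell)=O(\ell^{1/6-1/2})$. Both contributions sum to $o(1/\sqrt r)$ for $\ell$ large. Assembling everything via the inner/outer independence yields
\begin{equation*}
\text{LHS of \eqref{E:4.31z}}=\frac{\Xiin_\ell(f)}{\sqrt{\log 2}\,\sqrt r}+o(1/\sqrt r),
\end{equation*}
which is the desired bound.

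The main obstacle is upgrading the matching upper and lower Brownian bounds in the second paragraph to a genuine $(1+o(1))$ asymptotic for the outer walk probability, uniformly in $x\in[\ell^{1/6},\ell^2]$ and $r\ge\ell$. The error terms $\delta$ in Propositions~\ref{prop-uncond-BM-positive-cor} and~\ref{prop-4.9ue}, the product correction in Lemma~\ref{lemma-4.7q}, and the discrepancy between the $\pm\widetilde R_{k_0}$ curves must all be shown to vanish as $\ell\to\infty$. This is achieved by invoking Lemma~\ref{lemma-A.9} for the time-shifted curve $\gamma(\cdot/\sigma_{\min}^2)$ reindexed by $u=t_\ell$; the window $x\ge\ell^{1/6}$ is chosen precisely so that $\tilde\rho_{t_\ell}(x)/x=O((\log\ell)^4/\ell^{1/6})\to 0$, making the errors uniformly small over the relevant range.
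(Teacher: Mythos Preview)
Your outline correctly identifies the overall strategy (split at scale~$\ell$, evaluate the outer random-walk probability via the Brownian estimates, recognize the inner expectation as~$\Xiin_\ell(f)$), but there is a genuine gap in how the inner and outer parts are decoupled.

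The inner positivity event in the statement is $\{\phi\le\tfrac2{\sqrt g}\fraka$ on $\Delta^\ell\}$, involving the \emph{full} pinned field~$\phi$. The quantity~$\Xiin_\ell(f)$ involves instead $\{\phi_\ell\le\tfrac2{\sqrt g}\fraka$ on $\Delta^\ell\}$, with the truncated field~$\phi_\ell$. Your approximation $\max_{x\in\Delta^j}|\phi-\phi_\ell|\le 2^{-\ell/3}$ holds only on the \emph{small} box~$\Delta^j$ where~$f$ lives; by \eqref{E:5.7ui} the bound on the annulus $\Delta^m\setminus\Delta^{m-1}$ for $m$ near $\ell$ is only $c(\log\ell)^2\,2^{(m-\ell)/2}$, which is order~$(\log\ell)^2$ and gives no control. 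So you cannot swap~$\phi$ for~$\phi_\ell$ in the positivity constraint on all of~$\Delta^\ell$. Moreover, the event $\{\phi'\ge0$ on $\Delta^\ell\}$ depends on $\varphi_k(0),\chi_k$ for \emph{all}~$k\ge0$, so the ``inner/outer independence'' you invoke in the last step does not hold as stated.

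The paper closes this gap via a three-scale argument $k\ll\ell\ll r$: the gap lemma (Lemma~\ref{lemma-4.20}) produces $\phi'\ge\delta_k$ on $\Delta^k$; entropic repulsion (Lemma~\ref{lemma-entropy-RW}) upgrades $S_j\ge-\widetilde R_k(j)$ to $S_j\ge 2\widetilde R_k(j)$ on scales $k<j\le r$; and then, for $\ell$ large relative to $k$, the error bound \eqref{E:5.7ui} on $\Delta^k$ and Lemma~\ref{lemma-4.6a} on the intermediate annuli $\Delta^j\setminus\Delta^{j-1}$, $k<j\le\ell$, combine to sandwich the true event between events of the form $\{\phi'_\ell\ge0$ on $\Delta^\ell\}\cap\{S_j\ge0,\,j>\ell\}$. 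Only after this replacement does the factorization you want become available. This is the step your proposal is missing; the remaining ingredients (Lemma~\ref{lemma-A.9} for the $\tilde\rho_{t_\ell}(x)/x$ estimate, the handling of $S_\ell\notin[\ell^{1/6},\ell^2]$) are along the right lines.
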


\begin{proposition}
\label{prop-4.13}
Let $f\in\Cloc(\R^{\Z^2})$. For each $\epsilon>0$ and each $t_0>0$ there is $\ell_0\ge1$ such that for all $t\in\R$ with $|t|<t_0$, all $\ell\ge\ell_0$ and all $n$ with $\ell\le n^{1/8}$,
\begin{equation}
\label{E:4.30z}
\biggl|\,E\Bigl(\,f\bigl(\frakm_n-h^{\Delta^n}\bigr)\,\1_{\{h^{\Delta^n}\le\frakm_n\}}\,\Big|\,h^{\Delta^n}(0)=0\Bigr)
-\frac 1n\,\frac{2}{g\log2}\Xiin_\ell(f)\,\Xiout_{n,\ell}(t)\biggr|\le \frac \epsilon n,
\end{equation}
where, as before, $\frakm_n(x):=(m_{2^n}+t)(1-\frakg^{\Delta^n}(x))$.
\end{proposition}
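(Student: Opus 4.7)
The proof extends the scheme of Proposition~\ref{prop-4.14} to the finite-volume bridge, where via \eqref{E:3.16qq} the conditioning $h^{\Delta^n}(0)=0$ reads as $S_{n+1}=0$. Fix $j$ so that $f\in\Cb(\R^{\Delta^j})$ and take $\ell\ge j$. I split the time indices $\{0,\ldots,n+1\}$ into three segments: inner $[0,\ell]$, middle $[\ell,n-\ell]$ and outer $[n-\ell,n+1]$. The overall decomposition as
\[
\textrm{inner contribution}\times\textrm{middle bridge probability}\times\textrm{outer contribution}
\]
is what ultimately yields the product form $\Xiin_\ell(f)\,\Xiout_{n,\ell}(t)$ in \eqref{E:4.30z}, with the middle contributing the overall prefactor $\frac{2}{n g\log 2}$.

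\textbf{Reduction to the walk.} Lemma~\ref{lemma-4.3} brackets the event $\{h^{\Delta^n}\le\frakm_n\}$ between events of the form $\{S_k\ge\pm R_K(k)\mp|t|\}$. On $\{K\le k_0\}$ --- a good event by Lemma~\ref{lemma-4.7} and \eqref{E:4.6}, the complement contributing $o(1/n)$ --- the bounds $R_K(k)$ are just a fixed polylog curve. From \eqref{E:3.14} and the vanishing $h'_k(x)=0$ on $\Delta^\ell$ for $k>\ell$, one has on $\{S_{n+1}=0\}$
\[
h^{\Delta^n}(x)=\phi_\ell(x)+\sum_{k>\ell}\bigl[\frakb_k(x)\varphi_k(0)+\chi_k(x)\bigr],\qquad x\in\Delta^\ell,
\]
with the tail sum uniformly small on $\Delta^j$ by Lemmas~\ref{lemma-3.5rt}--\ref{lemma-3.6a}. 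Meanwhile, Lemma~\ref{lemma-3.8} together with $\fraka(x)=g\log|x|+O(1)$ yield $\frakm_n(x)\to\tfrac{2}{\sqrt g}\fraka(x)$ pointwise. The continuity and boundedness of $f$ then permit me to replace $f(\frakm_n-h^{\Delta^n})$ by $f(\tfrac{2}{\sqrt g}\fraka-\phi_\ell)$ at cost $o(1/n)$.

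\textbf{Middle bridge and entropic repulsion.} Proposition~\ref{prop-concentric} makes the three time segments independent conditional on their endpoints. Conditional on $S_\ell=x$, $S_{n-\ell}=y$ and $S_{n+1}=0$, the middle walk is a random-walk bridge from $x$ to $y$; by Lemma~\ref{lemma-reduction} combined with Propositions~\ref{prop-mine-corollary} and~\ref{prop-mine-too-corollary}, together with $\Var(\varphi_k(0))\to g\log 2$ (Lemma~\ref{lemma-3.4}) and the hypothesis $\ell\le n^{1/8}$, the probability that this bridge stays above the polylog curve equals $(1+o(1))\frac{2xy}{n g\log 2}$, uniformly for $x,y\in[\ell^{1/6},\ell^2]$. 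Entropic repulsion (Lemmas~\ref{lemma-entropy-RW}--\ref{lemma-entropy-RWB}) forces $(S_\ell,S_{n-\ell})$ into this window with conditional probability $1-O(\ell^{-1/16})$. Collecting everything, the conditional expectation in \eqref{E:4.30z} splits as
\[
\frac{2}{n g\log 2}\,E\bigl(f(\tfrac{2}{\sqrt g}\fraka-\phi_\ell)\,S_\ell\,\1_{\{S_\ell\in[\ell^{1/6},\ell^2]\}}\1_{\{\phi_\ell\le\tfrac{2}{\sqrt g}\fraka\text{ on }\Delta^\ell\}}\bigr)\cdot\Xiout_{n,\ell}(t)+o(1/n),
\]
matching $\tfrac{2}{n g\log 2}\,\Xiin_\ell(f)\,\Xiout_{n,\ell}(t)$.

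\textbf{Main obstacle.} The chief difficulty is making the ``approximate independence'' of the three time segments quantitative: the variables $\varphi_k(0)$ for middle $k$ still couple to field values inside $\Delta^\ell$ and inside $\Delta^n\smallsetminus\Delta^{n-\ell}$ through $\frakb_k$ and $\chi_k$, and the bridge conditioning $S_{n+1}=0$ alters the marginals of every $\varphi_k(0)$ (cf.~\eqref{E:3.33qq}). Controlling these couplings requires careful use of the quantitative tail bounds of Lemmas~\ref{lemma-3.5rt}--\ref{lemma-3.6a} and~\ref{lemma-4.7}, as well as uniformity in the Brownian-to-random-walk comparison (Lemma~\ref{lemma-reduction} together with the $\tilde\rho$ estimate of Lemma~\ref{lemma-A.9}) over $|t|\le t_0$ and endpoint values in $[\ell^{1/6},\ell^2]$.
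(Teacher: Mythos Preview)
Your strategy matches the paper's: same reduction via Lemma~\ref{lemma-4.3}, same replacement of $f(\frakm_n-h^{\Delta^n})$ by $f(\tfrac2{\sqrt g}\fraka-\phi_\ell)$, same bridge asymptotic for the middle block (the paper packages this as Lemma~\ref{lemma-5.6}), and the same restriction of $(S_\ell,S_{n-\ell})$ to $[\ell^{1/6},\ell^2]$ (packaged as Lemma~\ref{lemma-5.5}).

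The one step you flag as the ``main obstacle'' but do not resolve is genuinely the crux, and your sketch does not yet close it. After inserting the bridge asymptotic $\sim\tfrac{2S_\ell S_{n-\ell}}{n\,g\log 2}$, you must show that under $P(\,\cdot\mid S_{n+1}=0)$ the inner factor (a function of $\phi_\ell,S_1,\dots,S_\ell$) and the outer factor decouple, \emph{and} that the inner expectation becomes the \emph{unconditioned} quantity~$\Xiin_\ell(f)$. Conditional independence given the endpoints $(S_\ell,S_{n-\ell})$ is not enough for this, because under the bridge law $S_\ell$ and $S_{n-\ell}$ are correlated and the marginal of the inner block is not the unconditioned one. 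The paper's concrete device is to pass to the auxiliary variables
\[
\widetilde S^{(n-\ell)}_j:=S_j-\Bigl(\sum_{i<j}c_{n-\ell-1}(i)\Bigr)S_{n-\ell}
\quad\text{and the analogously shifted }\widetilde\phi_{n-\ell},
\]
which under $P(\,\cdot\mid S_{n+1}=0)$ are \emph{exactly} independent of $\sigma(S_{n-\ell},\dots,S_{n+1})$ (this is where \eqref{E:3.33qq} enters) and which, on $\{S_{n-\ell}\in[\ell^{1/6},\ell^2]\}$, differ from $S_j,\phi_\ell$ by $O(\ell^3/n)$. Swapping to the tilded variables, factoring, and swapping back is what actually yields the product $\Xiin_\ell(f)\,\Xiout_{n,\ell}(t)$; this is the mechanism your proposal is missing.
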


We will proceed by giving the proofs of these propositions, starting with Proposition~\ref{prop-4.14} first. We need two lemmas:

\begin{lemma}
\label{lemma-5.3}
We have:
\begin{equation}
\lim_{\ell\to\infty}\,\limsup_{r\to\infty}\sqrt r\,\,E_{\nu^0}\Bigl(\1_{\{S_\ell\le\ell^{1/6}\}}\prod_{x\in \Delta^r}\1_{\{\phi(x)\le\frac2{\sqrt g}\fraka(x)\}}\Bigr)
=0.
\end{equation}
\end{lemma}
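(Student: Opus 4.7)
\textit{Proof plan.}  Denote the expectation on the left-hand side by $E$.  The approach is to convert the event $\{\phi\le\frac{2}{\sqrt g}\fraka\text{ on }\Delta^r\}$ into an event for the random walk $\{S_j\}$ via \eqref{E:4.22uuu}, decompose according to the value of the control variable $\widetilde K$, and combine Lemma~\ref{lemma-4.7} with a ballot-type estimate for the walk.

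Since $\widetilde K<\infty$ almost surely, \eqref{E:4.22uuu} yields
\begin{equation*}
E \;\le\; \sum_{k\ge 2} P\Bigl(\{\widetilde K=k\}\cap\{S_\ell\le \ell^{1/6}\}\cap \bigcap_{j=1}^{r}\{S_j\ge -\widetilde R_k(j)\}\Bigr).
\end{equation*}
Fix a large integer $M$.  For $k>M$ I would drop the $\{S_\ell\le\ell^{1/6}\}$ constraint and apply \eqref{E:4.19qq2}; the tail sum is then bounded by $\epsilon(M)/\sqrt r$, where $\epsilon(M):=\sum_{k>M}\tilde c_1\exp(-\tilde c_2(\log k)^2)\to 0$ as $M\to\infty$.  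For $k\le M$ the disjointness of $\{\widetilde K=k\}$ and the monotonicity $\widetilde R_k\le \widetilde R_M$ reduce the contribution to the single walk probability
\begin{equation*}
Q_{\ell,r,M} \;:=\; P\Bigl(\{S_\ell\le \ell^{1/6}\}\cap \bigcap_{j=1}^{r}\{S_j\ge -\widetilde R_M(j)\}\Bigr).
\end{equation*}

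The main step is to show $Q_{\ell,r,M}=o_\ell(1/\sqrt r)$, uniformly in $r\ge 2\ell$.  The plan is to apply the Markov property at time $\ell$ and integrate over $x:=S_\ell\in[-\widetilde R_M(\ell),\ell^{1/6}]$, obtaining two independent factors.  The first factor, the joint density $P(S_\ell\in dx,\,S_j\ge-\widetilde R_M(j):j\le\ell)$, I would bound by the Gaussian density $dx/\sqrt{2\pi t_\ell}$.  For the second factor, the probability that the walk starting at $x$ stays above $-\widetilde R_M(\ell+\cdot)$ for $r-\ell$ steps, I would use Lemma~\ref{lemma-reduction1} to pass to Brownian motion — the correction product $\prod_j(1-\exp(-c\,\widetilde R_M(\ell+j)^2))^{-2}$ is absolutely bounded since the exponent grows like $(\log j)^4$ — and then invoke Proposition~\ref{prop-4.9ue} applied to the shifted curve $\eta(s):=\widetilde R_M(\ell+s)-\widetilde R_M(\ell)\ge 0$ (which vanishes at $s=0$, is non-decreasing and polylogarithmic) and shifted starting point $y:=x+\widetilde R_M(\ell)\ge 0$.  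This produces a bound of size $C_M y/\sqrt{r-\ell}$ modulo the error factor $1+\delta(y)$ with $\delta(y)=\kappa_1(\tilde\rho_\eta(y)/y)$; integration over $y\in[0,\ell^{1/6}+\widetilde R_M(\ell)]$ then yields $Q_{\ell,r,M}=O_M(\ell^{-1/6+o(1)}/\sqrt r)$, where polylog-in-$\ell$ factors are absorbed into the constant and $\ell^{1/6}\gg\widetilde R_M(\ell)=O((\log\ell)^2)$ has been used.

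Combining the two steps, $\sqrt r\cdot E \le O_M(\ell^{-1/6+o(1)})+\epsilon(M)$.  Taking $\limsup_{r\to\infty}$, then $\lim_{\ell\to\infty}$, and finally $M\to\infty$ (in this order) gives the result.  The principal difficulty lies in the ballot bound when the shifted starting point $y$ is near zero: Proposition~\ref{prop-4.9ue}'s error $\delta(y)$ diverges like a polylog-in-$\ell$ constant times $y^{-4/3}$ as $y\to 0^+$, but the combined factor $(1+\delta(y))y$ is dominated by $y+\text{polylog}(\ell)\cdot y^{-1/3}$, which is Lebesgue-integrable on $(0,1)$, so the small-$y$ contribution does not spoil the bound after multiplication by $\sqrt r$ and passage to $\ell\to\infty$.
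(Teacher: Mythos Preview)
Your proof is correct and follows the same strategy as the paper: use \eqref{E:4.22uuu} and the control variable to reduce to a random-walk event, then control that event by a ballot estimate for the embedded Brownian motion. Two simplifications from the paper are worth noting. First, the auxiliary parameter~$M$ is unnecessary: the paper simply splits at $k=\ell$, so the tail contribution is $c_1\texte^{-c_2(\log\ell)^2}/\sqrt r$ and vanishes directly under $\ell\to\infty$, while for $k\le\ell$ one works with~$\widetilde R_\ell$ (once $\ell>M$ your curve $\widetilde R_M$ agrees with~$\widetilde R_\ell$ anyway, which is why your $O_M$ constant does not blow up). Second, rather than integrating over the starting point~$y=S_\ell$ and worrying about the $y\downarrow0$ singularity, the paper passes to Brownian motion on the \emph{whole} interval $[0,t_r]$ (via the argument of Lemma~\ref{lemma-4.7q}) and then invokes Lemma~\ref{lemma-7.9} with $u:=t_\ell$, $x:=\ell^{1/6}$, $t:=t_r$; that lemma already packages the Markov step and uses monotonicity in the starting point to evaluate Proposition~\ref{prop-4.9ue} only at the single value $x_1=x=\ell^{1/6}$, giving $c\,\ell^{-1/6}/\sqrt r$ directly without any integration.
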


\begin{proofsect}{Proof}
Thanks to \twoeqref{E:4.22uu}{E:4.22uuu} and Lemma~\ref{lemma-4.7}, the expectation is at most
\begin{equation}
\frac{c_1\texte^{-c_2(\log\ell)^2}}{\sqrt r}
+P\Bigl(\bigl\{S_\ell\le\ell^{1/6}\bigr\}\cap\bigcap_{j=1}^r\bigl\{S_j\ge-\widetilde R_\ell(j)\bigr\}\Bigr).
\end{equation}
Invoking the argument from the proof of Lemma~\ref{lemma-4.7q}, the second term is bounded by 
\begin{equation}
\bigl(1-\texte^{-c(\log\ell)^2}\bigr)^{-1}P^0\Bigl(\bigl\{B_{t_\ell}\le\ell^{1/6}\bigr\}\cap\bigl\{B_s\ge-2\zeta(s)\colon s\in[0,t_r]\bigr\}\Bigr),
\end{equation}
where $c\in(0,\infty)$ and $\zeta(s):=C[1+\log(\ell\vee (s/\sigma_{\min}^2))]$.
Lemma~\ref{lemma-7.9} (with $x:=\ell^{1/6}$, $u:=t_\ell$ and $t:=t_r$) dominates the probability by $c\ell^{-1/6}/\sqrt r$.
\end{proofsect}

\begin{lemma}
\label{lemma-5.4}
There is a constant~$c\in(0,\infty)$ such that for $\ell\ge1$ sufficiently large and $r\ge\ell$ sufficiently large, 
\begin{equation}
\biggl|\,P\Bigl(\,\bigcap_{j=\ell+1}^r\{S_j\ge 0\}\,\Big|\,\sigma(S_\ell)\Bigr)-\frac1{\sqrt{\log2}}\,\,\frac{S_\ell}{\sqrt r}\biggr|\le c\,\frac{\ell^4}r\,\frac{S_\ell}{\sqrt r}
\end{equation}
holds on the event $\{S_\ell\in[\ell^{1/6},\ell^2]\}$.
\end{lemma}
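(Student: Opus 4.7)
The plan is to realize the walk as a time-changed Brownian motion: setting $\sigma_j^2 := \Var(\varphi_j(0))$ and $t_k := \sum_{j=0}^{k-1}\sigma_j^2$, there is a standard Brownian motion~$B$ with $\{S_k\}\laweq\{B_{t_k}\}$. Lemma~\ref{lemma-3.4}, combined with the asymptotic expansions of the Green function and the potential kernel (Lemmas~\ref{lemma-G-potential}--\ref{lemma-potential}), makes the differences $\sigma_j^2-g\log 2$ summable in~$j$; hence $T:=t_r-t_\ell=(r-\ell)\,g\log 2+O(1)$ uniformly in $\ell\le r$. Using $g=2/\pi$, the algebraic identity $\sqrt{2/\pi}/\sqrt{g\log 2}=1/\sqrt{\log 2}$ turns the claimed target $(1/\sqrt{\log 2})(x/\sqrt r)$ into $\sqrt{2/\pi}\,x/\sqrt T$ up to a relative error of order $1/r$ which is absorbed into the permitted $O(\ell^4/r)$ budget. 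On $\{S_\ell=x\}$ with $x\in[\ell^{1/6},\ell^2]$ the conditional probability in question equals $\Pi(x):=P^x(B_{t_j-t_\ell}\ge 0,\ j=\ell+1,\dots,r)$.

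For the lower bound, sampling weakens the positivity requirement, and the reflection principle together with a Taylor expansion of the Gaussian CDF gives
\[
\Pi(x)\ge P^x\bigl(\min_{s\in[0,T]}B_s\ge 0\bigr)=2\Phi(x/\sqrt T)-1=\sqrt{\tfrac{2}{\pi}}\,\frac{x}{\sqrt T}\Bigl(1-\tfrac{x^2}{6T}+O\bigl((x/\sqrt T)^4\bigr)\Bigr).
\]
Since $x\le\ell^2$ yields $x^2/T\le c\,\ell^4/r$, the lower bound matches the target within the permitted error.

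For the upper bound the plan is to decompose the discretization excess
\[
\Pi(x)-P^x\bigl(\min_{[0,T]}B\ge 0\bigr)=P^x\bigl(B_{t_j-t_\ell}\ge 0\ \forall j,\ \min_{[0,T]}B<0\bigr)
\]
over the first interval $[t_k-t_\ell,\,t_{k+1}-t_\ell]$ on which the continuous path visits the negative half-line. The explicit Brownian-bridge crossing formula---conditional on positive endpoints $a,b$, the bridge dips below~$0$ with probability $e^{-2ab/\sigma_{k+1}^2}$---combined with the strong Markov property applied at the first visit to~$0$ reduces the estimate to an integral of the first-passage density $\frac{x}{\sqrt{2\pi s^3}}e^{-x^2/(2s)}$ against the probability that the sampled Brownian walk from~$0$ stays non-negative at the remaining sample times.

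The main obstacle is extracting the sharp $O(\ell^4/r)$ rate for this excess: a naive estimate gives only $O(1/\sqrt T)$, which is too weak once $r\gg\ell^{4+1/6}$. One has to exploit both the lower cutoff $x\ge\ell^{1/6}$ (which makes the first-passage density super-exponentially small on the early intervals, where the Gaussian endpoints are concentrated near~$x$) and the constraint that \emph{all} remaining sample points be non-negative (not just the two endpoints of the bad interval) in order to match the Taylor error from the lower bound. Once this Gaussian bookkeeping is completed, combining the matching upper and lower bounds yields the claim.
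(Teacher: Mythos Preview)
Your lower bound matches the paper's: embed via $S_k=B_{t_k}$, bound $\Pi(x)\ge P^x(\min_{[0,T]}B\ge 0)$ from below using Lemma~\ref{lem:2.8}, and note that the Taylor correction $x^2/T$ is $O(\ell^4/r)$ since $x\le\ell^2$.

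The gap is in the upper bound. Your plan is to control the discretization excess $\Pi(x)-P^x(\min_{[0,T]}B\ge 0)$ directly via a first-passage decomposition, and you correctly flag that the naive outcome is $O(1/\sqrt T)$. Carrying your sketch through---strong Markov at $\tau_0$, then the bound $P^0(B_{t_j-t_\ell-s}\ge 0\ \forall j\ge k)\le C/\sqrt{r-k}$ for the post-$\tau_0$ samples---indeed yields an excess of order $C/\sqrt r$, hence a relative error of order $C/x\le C\ell^{-1/6}$. That error is $r$-independent, so no refinement of the bridge-crossing bookkeeping can push it down to $O(\ell^4/r)$; it encodes the genuine ladder-height shift $V(x)\sim\sqrt{2/\pi}\,(x+c_V)$ of the sampled walk. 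The ``Gaussian bookkeeping'' you invoke is not spelled out and, for this reason, cannot close the gap as stated.

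The paper takes a different route for the upper bound that sidesteps the discretization issue. It first \emph{relaxes} the constraint via $\{S_j\ge 0\}\subseteq\{S_j\ge -(\log j)^2\}$ and then invokes Lemma~\ref{lemma-4.7q} with $\zeta(s)=[\log(s/\sigma_{\max}^2)]^2$ to pass from the discrete event to $\{B_s\ge -2\zeta(s)\colon s\in[t_\ell,t_r]\}$, at the cost of the factor $\prod_{j\ge\ell}\bigl(1-e^{-c(\log j)^4}\bigr)^{-1}$, which is $1+o(1)$ as $\ell\to\infty$. The resulting continuous-time probability is handled by Proposition~\ref{prop-4.9ue} (with $\tilde\rho(x)/x$ controlled via Lemma~\ref{lemma-A.9} on $x\ge\ell^{1/6}$), giving $(1+\delta_\ell)\sqrt{2/\pi}\,x/\sqrt{T}$ with $\delta_\ell\to 0$. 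The negative curve is precisely what makes the discrete-to-continuous comparison essentially lossless; that trick is what your proposal is missing. (Note that the paper's $\delta_\ell$, like the $C/x$ your approach would produce if completed, vanishes with $\ell$ but not with $r$; this already suffices for the application in Proposition~\ref{prop-4.14} via the Bounded Convergence Theorem.)
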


\begin{proofsect}{Proof}
 We will argue separately for the lower and upper bound on the conditional probability in the statement. First,   
the conditional probability is bounded from below by the probability that the standard Brownian motion started from~$S_\ell$ stays positive for time $t_r-t_\ell$. Since $S_\ell\le\ell^2$ and $t_r$ is at least a constant times~$r$, Lemma~\ref{lem:2.8} shows that this probability is at least
\begin{equation}
\sqrt{\frac 2\pi}\, \frac{S_\ell}{\sqrt{t_r}}\,(1-c\ell^4/r)
\end{equation}
for some constant~$c\in(0,\infty)$. The lower bound follows from the fact that, thanks to Lemma~\ref{lemma-3.4}, for each~$\epsilon>0$ we have $t_r\le(1+\epsilon)(g\log 2)\,r$ as soon as~$r$ is sufficiently large.

For the upper bound we use $\{S_j\ge0\}\subseteq\{S_j\ge-(\log j)^2\}$ for $j=\ell,\dots,r$ and then bound the resulting probability by
\begin{equation}
P\bigl(B_{t_j}\ge-\zeta(t_j)\colon j=\ell,\dots,r\,\big|\, B_{t_\ell}=x\bigr)\quad\text{evaluated at\ \ } x:=S_\ell,
\end{equation}
where $\zeta(s):=[\log(s/\sigma_{\max}^2)]^2$ with~$\sigma_{\max}^2$ denoting the supremum in \eqref{E:4.24y}.
Using the left-hand side of the bound in Lemma~\ref{lemma-4.7q} (integrated with respect to the probability density of the end point over~$y$) this is now  at most 
\begin{equation}
\prod_{j=\ell}^r\biggl(1-\texte^{-2\sigma_{\maxi}^{-2}\,[\log(cj)]^2}\biggr)^{-1}\,
P\Bigl(B_s\ge-2\zeta(s)\colon s\in[t_\ell,t_r]\Big|\, B_{t_\ell}=x\Bigr)\Bigl|_{x:=S_\ell},
\end{equation}
where~$c:=\sigma_{\min}^2/\sigma_{\max}^2$ with $\sigma_{\min}^2$ denoting the infimum in \eqref{E:4.24y}. Invoking Proposition~\ref{prop-4.9ue} with the help of Lemma~\ref{lemma-A.9}, the probability on the right is bounded by 
\begin{equation}
(1+\delta_\ell)\sqrt{\frac2\pi}\,\frac{S_\ell}{\sqrt{t_r-t_\ell}}, 
\end{equation}
uniformly on $\{S_\ell\in[\ell^{1/6},\ell^2]\}$, where~$\delta_\ell$ is a number such that $\delta_\ell\to0$ as~$\ell\to\infty$. The claim again follows by $t_t\ge(1-\epsilon)(g\log 2) r$.
\end{proofsect}

\begin{proofsect}{Proof of Proposition~\ref{prop-4.14} }
Let~$\epsilon>0$ and pick a function $f\in\Cloc(\R^{\Z^2})$. Assume without loss of generality that~$0\le f\le1$ and let $k_0$ be such that~$f$ only depends on the coordinates in~$\Delta^{k_0}$ and let $\ell\ge k_0$. Our task to to replace~$\phi$ by $\phi_\ell$ in the argument of~$f$ as well as in the part of the product corresponding to~$x\in \Delta^\ell$. For this we first note that
\begin{equation}
\phi_\ell(x)=\sum_{j=0}^\ell\Bigl(\frakb_j(x)\varphi_j(0)+\chi_j(x)+h_j'(x)\Bigr).
\end{equation}
For each~$j\in\{1,\dots,\ell\}$, we thus get
\begin{equation}
\label{E:5.7ui}
\max_{x\in \Delta^j}\bigl|\phi(x)-\phi_\ell(x)\bigr|\le c_1 (\log\ell)^2 2^{(j-\ell)/2}\qquad\text{on }\{\widetilde K\le \ell\}.
\end{equation}
Moreover, on $\{\widetilde K\le k\}$, both~$\phi$ and~$\phi_\ell$ are bounded on~$\Delta^{k_0}$ by a $k$-dependent quantity only. Thanks to uniform continuity of~$f$ on compact sets, we thus get
\begin{equation}
\lim_{\ell\to\infty}\limsup_{r\to\infty}\,\sqrt r\,\,E_{\nu^0}\biggl(\,\Bigl|f\bigl(\tfrac2{\sqrt g}\fraka-\phi)-f\bigl(\tfrac2{\sqrt g}\fraka-\phi_\ell)\Bigr|\prod_{x\in \Delta^r}\1_{\{\phi(x)\le\frac2{\sqrt g}\fraka(x)\}}\biggr)=0
\end{equation}
by inserting the indicator of $\{\widetilde K\le k\}$ for $k\ge k_0$, applying \eqref{E:5.7ui} for $\ell\ge k$, invoking \twoeqref{E:4.22uu}{E:4.22uuu} in conjunction with \eqref{E:4.19qq2} and taking the stated limits followed by~$k\to\infty$.

Having brought the argument of~$f$ to what it is supposed to be, for each~$k\ge1$ let $\delta_k>0$ be such that \eqref{E:4.64} holds with $\delta:=\delta_k$ and the above~$\epsilon$. Abbreviate $\phi'(x):=\frac2{\sqrt g}\fraka(x)-\phi(x)$ and define
\begin{equation}
A_{r,\ell,k}:=\{\widetilde K\le k\}\cap\bigl\{\phi'\ge\delta_k\text{ on }\Delta^k\bigr\}\cap\bigl\{S_\ell\in[\ell^{1/6},\ell^2]\bigr\}\cap\bigcap_{j=k+1}^r\bigl\{S_j\ge2\widetilde R_k(j)\bigr\}.
\end{equation}
Lemmas~\ref{lemma-4.7},~\ref{lemma-4.20},~\ref{lemma-entropy-RW} and~\ref{lemma-5.3} show
\begin{equation}
\label{E:5.18ww}
\limsup_{r\to\infty}\sqrt r\,\,E_{\nu^0}\Bigl(\1_{A_{r,\ell,k}^\cc}\prod_{x\in \Delta^r}\1_{\{\phi'(x)\ge0\}}\Bigr)
\le\epsilon+ck^{-\frac1{16}}<2\epsilon
\end{equation}
once $k$ and $\ell\ge k$ are sufficiently large. We fix such a~$k$ for the rest of the argument.

Using the shorthand $\phi_\ell'(x):=\frac2{\sqrt g}\fraka(x)-\phi_\ell(x)$, the bound in \eqref{E:5.18ww} yields
\begin{multline}
\label{E:5.11ui}
\qquad
E_{\nu^0}\Bigl(f(\phi_\ell')\prod_{x\in \Delta^r}\1_{\{\phi'(x)\ge0\}}\Bigr)
\\\le\frac{2\epsilon}{\sqrt r}+E_{\nu^0}\Bigl(f(\phi_\ell')\1_{\{\widetilde K\le k\}}\1_{\{S_\ell\in[\ell^{1/6},\ell^2]\}}\prod_{x\in \Delta^k}\1_{\{\phi'(x)\ge\delta_k\}}\prod_{j=k+1}^r\1_{\{S_j\ge2\widetilde R_k(j)\}}\Bigr).
\quad
\end{multline}
Now pick $\ell\ge k$ so large that the right-hand side of \eqref{E:5.7ui} is less than~$\delta_k$ for $j=1,\dots,k$ and less than~$\widetilde R_k(j)$ for $j=k+1,\dots,\ell$. (This assumes that the constant~$C$ defining~$\widetilde R_k(j)$ was taken large enough.) Then 
\begin{equation}
\bigl\{\phi'\ge\delta_k\text{ on }\Delta^k\bigr\}\subseteq\bigl\{\phi_\ell'\ge0\text{ on }\Delta^k\bigr\}
\end{equation}
while, for $j=k+1,\dots,\ell$,
\begin{equation}
\{\widetilde K\le k\}\cap\bigl\{S_j\ge2\widetilde R_k(j)\bigr\}\subseteq\bigl\{\phi'\ge\widetilde R_k(j)\text{ on }\Delta^j\smallsetminus \Delta^{j-1}\bigr\}\subseteq\{\phi'_\ell\ge0\text{ on }\Delta^j\smallsetminus \Delta^{j-1}\bigr\}.
\end{equation}
It follows that the expectation on the right of \eqref{E:5.11ui} is bounded by
\begin{equation}
\label{E:5.12ui}
E_{\nu^0}\Bigl(f(\phi_\ell')\1_{\{S_\ell\in[\ell^{1/6},\ell^2]\}}\prod_{x\in \Delta^\ell}\1_{\{\phi_\ell'(x)\ge0\}}\prod_{j=\ell+1}^r\1_{\{S_j\ge0\}}\Bigr).
\end{equation}
Conditional on~$S_\ell$, the field $\phi'_\ell$ is independent of $\sigma(S_{\ell+1},\dots,S_r)$. 
Lemma~\ref{lemma-5.4} and the Bounded Convergence Theorem then
yield
\begin{equation}
\label{E:5.15ui}
\lim_{\ell\to\infty}\limsup_{r\to\infty}\,\Bigl|\,\sqrt r\,\,E_{\nu^0}\Bigl(f(\phi_\ell')\1_{\{S_\ell\in[\ell^{1/6},\ell^2]\}}\prod_{x\in \Delta^\ell}\1_{\{\phi_\ell'(x)\ge0\}}\prod_{j=\ell+1}^r\1_{\{S_j\ge0\}}\Bigr) - \Xiin_\ell(f)\Bigr|=0.
\end{equation}
In conjunction with \eqref{E:5.11ui} and the derivations above, this proves one ``half'' of \eqref{E:4.31z}.

To get the other ``half'' of \eqref{E:4.31z}, we in turn define
\begin{equation}
\widetilde A_{r,\ell,k}:=\{\widetilde K\le k\}\cap\bigl\{\phi'\ge0\text{ on }\Delta^r\bigr\}\cap\bigl\{S_\ell\in[\ell^{1/6},\ell^2]\bigr\}
\end{equation}
and use Lemmas~\ref{lemma-4.7},~\ref{lemma-4.20},~\ref{lemma-entropy-RW} and~\ref{lemma-5.3} to get, for $k\le\ell$ large enough, 
\begin{equation}
\label{E:5.17ui}
\limsup_{r\to\infty}\sqrt r\,\,E\Bigl(\1_{\widetilde A_{r,\ell,k}^\cc}\prod_{x\in \Delta^k}\1_{\{\phi'(x)\ge-\delta_k\}}\prod_{j=k+1}^r\1_{\{S_j\ge-2\widetilde R_k(j)\}}\Bigr)<2\epsilon.
\end{equation}
Now, clearly,
\begin{equation}
\begin{aligned}
E_{\nu^0}\Bigl(f(\phi_\ell')\prod_{x\in \Delta^r}\1_{\{\phi'(x)\ge0\}}\Bigr)
&\ge E_{\nu^0}\bigl(f(\phi_\ell')\1_{\widetilde A_{r,\ell,k}}\bigr)
\\
&\ge E_{\nu^0}\Bigl(f(\phi_\ell')\1_{\widetilde A_{r,\ell,k}}\prod_{x\in \Delta^k}\1_{\{\phi'(x)\ge-\delta_k\}}\prod_{j=k+1}^r\1_{\{S_j\ge-2\widetilde R_k(j)\}}\Bigr).
\end{aligned}
\end{equation}
Assuming~$\ell\gg k$, on $\{\widetilde K\le k\}$, which is a subset of $\widetilde A_{r,\ell,k}$, we have $\{\phi'\ge-\delta_k\text{ on }\Delta^k\}\supseteq\{\phi_\ell'\ge0\text{ on }\Delta^k\}$ and $\{S_j\ge-2\widetilde R_k(j)\}\supseteq\{\phi_\ell'\ge0\text{ on }\Delta^j\smallsetminus \Delta^{j-1}\}$ for $j=k+1,\dots,\ell$. By \eqref{E:5.17ui},
\begin{multline}
\qquad
E_{\nu^0}\Bigl(f(\phi_\ell')\prod_{x\in \Delta^r}\1_{\{\phi'(x)\ge0\}}\Bigr)
\ge E_{\nu^0}\Bigl(f(\phi_\ell')\1_{\widetilde A_{r,\ell}}\prod_{x\in \Delta^\ell}\1_{\{\phi_\ell'(x)\ge0\}}\prod_{j=\ell+1}^r\1_{\{S_j\ge0\}}\Bigr)
\\
\ge -\frac{2\epsilon}{\sqrt r}
+E_{\nu^0}\Bigl(f(\phi_\ell')\1_{\{S_\ell\in[\ell^{1/6},\ell^2]\}}\prod_{x\in \Delta^\ell}\1_{\{\phi_\ell'(x)\ge0\}}\prod_{j=\ell+1}^r\1_{\{S_j\ge0\}}\Bigr).
\qquad
\end{multline}
The proof of \eqref{E:4.31z} is finished using \eqref{E:5.15ui}.
\end{proofsect}

In order to prove Proposition~\ref{prop-4.13}, we need substitutes for Lemmas~\ref{lemma-5.3}--\ref{lemma-5.4}:
 
\begin{lemma}
\label{lemma-5.5}
Recall the shorthand $\frakm_n(x):=(m_{2^n}+t)(1-\frakg^{\Delta^n}(x))$. Then
\begin{equation}
\lim_{\ell\to\infty}\,\limsup_{n\to\infty} \,\,n\,E\Bigl(\,\1_{\{S_\ell\wedge S_{n-\ell}\le\ell^{1/6}\}}\prod_{x\in \Delta^n}\1_{\{h^{\Delta^n}(x)\le\frakm_n(x)\}}\,\Big|\,h^{\Delta^n}(0)=0\Bigr)=0.
\end{equation}
\end{lemma}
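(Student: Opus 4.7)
The argument follows the template of Lemma~\ref{lemma-5.3}, adapted to the bridge setting. By the inclusion \eqref{E:4.10}, on $\{h^{\Delta^n}(0)=0\}\cap\{h^{\Delta^n}\le\frakm_n\}$ either $K>\lfloor n/2\rfloor$ (whose conditional contribution is $O(n^{-2})$ by Lemma~\ref{lemma-4.2u}) or $S_j\ge-R_K(j)-|t|$ for all $1\le j\le n$. For $\ell\le K\le\lfloor n/4\rfloor$ one may discard the truncation $S_\ell\wedge S_{n-\ell}\le\ell^{1/6}$ and sum the bound of Lemma~\ref{lemma-4.7}, producing a contribution of order $(1+|t|)^{2}\texte^{-c(\log\ell)^{2}}/n$, which is $o(1/n)$ as $\ell\to\infty$. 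It thus suffices to show that
$$
\Pi_{n,\ell}:=P\Bigl(\{K\le\ell,\,S_\ell\wedge S_{n-\ell}\le\ell^{1/6}\}\cap\bigcap_{j=1}^{n}\{S_j\ge-R_K(j)-|t|\}\,\Big|\,S_{n+1}=0\Bigr)
$$
satisfies $\limsup_{n\to\infty}n\,\Pi_{n,\ell}\to 0$ as $\ell\to\infty$.

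By the near index-reversal symmetry of the bridge law (all variances $\sigma_j^{2}$ lie in a bounded interval tending to $g\log 2$ by Lemma~\ref{lemma-3.4}), it suffices to bound the $\{S_\ell\le\ell^{1/6}\}$ contribution. Embedding $\{S_j\}$ into Brownian motion via \eqref{E:4.35ua} and conditioning on $(S_\ell,S_{n-\ell})=(x,y)$, the Markov property decomposes the event into three independent pieces on $[0,\ell]$, $[\ell,n-\ell]$, $[n-\ell,n+1]$. Crucially, on $\{K\le\ell\}$ the relevant curve is dominated by $R_\ell(j)+|t|$, which by \eqref{E:4.1} has the \emph{symmetric} V-shape required by Proposition~\ref{prop-mine-too-corollary}: expressed in within-piece time, the middle-piece curve takes the form $\zeta(s\wedge(T-s))$ with $\zeta$ non-decreasing, while on the outer pieces it stays uniformly below $c(\log\ell)^{2}$. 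Setting $a_\ell:=c(\log\ell)^{2}+|t|$ and applying Proposition~\ref{prop-mine-too-corollary} (with the error term controlled via Lemma~\ref{lemma-A.9}), the middle piece contributes $\le C(x+a_\ell)(y+a_\ell)/n$ and the two outer pieces contribute $\le C\,a_\ell(x+a_\ell)/\ell$ and $\le C\,a_\ell(y+a_\ell)/\ell$, respectively.

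Integrating these three factors against the Gaussian bridge joint density of $(S_\ell,S_{n-\ell})$, which is of order $\ell^{-1}\texte^{-c(x^{2}+y^{2})/\ell}$ for $x,y\ll\sqrt{n}$, yields
$$
n\,\Pi_{n,\ell}\le C\,\frac{a_\ell^{2}}{\ell^{3}}\int_0^{\ell^{1/6}}(x+a_\ell)^{2}\,dx\int_0^{\infty}\texte^{-cy^{2}/\ell}(y+a_\ell)^{2}\,dy\le C\,\frac{(\log\ell)^{4}}{\ell},
$$
which vanishes as $\ell\to\infty$. The main obstacle lies in ensuring that the shift can be kept of order $(\log\ell)^{2}$ rather than $(\log n)^{2}$; this is achieved by leveraging the symmetric V-shape of $R_\ell(j)$, which permits Proposition~\ref{prop-mine-too-corollary} to be applied to the middle piece with $\delta$ merely bounded (via Lemma~\ref{lemma-A.9}), rather than forcing the negative curve to be pushed below zero by a uniform constant shift. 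Without this observation the bound would carry spurious $(\log n)^{O(1)}$ factors that would defeat the power-law gain $\ell^{1/6-1/2}=\ell^{-1/3}$ supplied by the truncation $S_\ell\le\ell^{1/6}$.
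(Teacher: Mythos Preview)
Your argument is correct. The initial reduction---handling $K>\lfloor n/2\rfloor$ via \eqref{E:4.6} and $\ell\le K\le\lfloor n/4\rfloor$ by summing \eqref{E:4.19qq}---matches the paper exactly (you should also note that $n/4<K\le n/2$ is absorbed by \eqref{E:4.5}, which you omitted but is harmless). The divergence comes in the main estimate on $\{K\le\ell\}$.

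The paper does not carry out your three-piece decomposition. Instead it points out that the proof of Lemma~\ref{lemma-5.3} transfers directly: one applies Lemma~\ref{lemma-7.9} (which bounds $P^0(\min_{s\le t}[B_s+\zeta(s)]>0,\,B_u\le x)\le cx^2/(\sqrt u\sqrt t)$) to the conditioned Brownian motion, using the Decoupling Lemma (Lemma~\ref{lemma-decouple}) to split the bridge into two unconditioned halves. This yields $n\,\Pi_{n,\ell}\le c\,\ell^{-1/6}$ in one line, with no integration needed.

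Your route---condition on $(S_\ell,S_{n-\ell})$, bound each of three bridge segments separately, then integrate against the joint Gaussian density---is more hands-on and in fact gives a sharper rate, $n\,\Pi_{n,\ell}\le C(\log\ell)^4/\ell$. The price is that you must verify the error term $\delta$ in Proposition~\ref{prop-mine-too-corollary} stays bounded on the integration domain; this is fine once you shift by $a_\ell+O(1)$ rather than exactly $a_\ell$ (so that the shifted endpoints are bounded away from zero and $\zeta(0)>0$ in Lemma~\ref{lemma-A.9}), and once you check that the $\texte^{(x-y)^2/(2T)}$ factor in $\delta$ is dominated by the $\texte^{-cy^2/\ell}$ Gaussian weight since $T\sim n\gg\ell$. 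You allude to both points but do not spell them out. With those caveats, your approach is valid and self-contained, whereas the paper's is shorter but relies on the prepackaged Lemma~\ref{lemma-7.9}.
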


\begin{lemma}
\label{lemma-5.6}
There is a constant~$c\in(0,\infty)$ such that
\begin{equation}
\biggl|\,P\Bigl(\,\bigcap_{j=\ell+1}^{n-\ell}\{S_j\ge0\}\,\Big|\,\sigma(S_\ell,S_{n-\ell})\Bigr)-\frac2{g{\log2}}\,\,\frac{S_\ell S_{n-\ell}}n\biggr|\le c\frac{\ell^4}n\,\frac{S_\ell S_{n-\ell}}n
\end{equation}
holds on $\{S_\ell,S_{n-\ell}\in[\ell^{1/6},\ell^2]\}$. 
\end{lemma}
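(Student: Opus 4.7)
\noindent
\emph{Proof plan.} I would parallel the strategy of Lemma~5.4, replacing its Brownian-motion estimates by the corresponding Brownian-bridge ones. Embed the walk into a standard Brownian motion via $S_k \laweq B_{t_k}$ with $t_k := \sum_{j=0}^{k-1}\sigma_j^2$ and $\sigma_j^2 := \Var(\varphi_j(0))$. By Lemma~\ref{lemma-3.4}, $\sigma_j^2 \to g\log 2$ while remaining bounded uniformly away from zero, whence $T := t_{n-\ell} - t_\ell = (g\log 2)\,n + O(\ell)$, and hence
\[
\frac{2xy}{T} = \frac{2xy}{(g\log 2)\,n}\bigl(1+O(\ell/n)\bigr),\qquad x,y \in [\ell^{1/6},\ell^2].
\]

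For the lower bound, the inclusion $\bigcap_{j=\ell+1}^{n-\ell}\{S_j\ge 0\} \supseteq \{B_s \ge 0\colon s\in[t_\ell,t_{n-\ell}]\}$ combined with the classical Reflection Principle for the Brownian bridge from $(t_\ell,x)$ to $(t_{n-\ell},y)$ yields, for $x,y>0$,
\[
P\Bigl(\,\bigcap_{j=\ell+1}^{n-\ell}\{S_j\ge 0\}\,\Big|\, S_\ell=x,\, S_{n-\ell}=y\Bigr)\,\ge\,1-\exp(-2xy/T).
\]
Since $xy/T = O(\ell^4/n)$ on our event, a first-order Taylor expansion gives $1-e^{-2xy/T} = \tfrac{2xy}{T}(1+O(\ell^4/n))$, and substituting the asymptotic for $T$ produces the claimed lower bound.

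For the upper bound, I would use the monotone inclusion $\{S_j\ge 0\}\subseteq\{S_j\ge -\gamma(j\wedge(n-j))\}$ with $\gamma(k):=a[1+\log(1+k)]^2$ (choosing $a$ as in \eqref{E:gamma} so $\gamma$ is concave), and invoke Lemma~\ref{lemma-reduction} to dominate the discrete probability by
\[
P^x\Bigl(B_s \ge -2\tilde\zeta\bigl(s\wedge(T-s)\bigr)\colon s\in[0,T]\,\Big|\,B_T = y\Bigr)\prod_{j=\ell}^{n-\ell}\bigl(1-\exp(-2\sigma_{\max}^{-2}\gamma(j)^2)\bigr)^{-2},
\]
where $\tilde\zeta(s):=\gamma\bigl((t_\ell+s)/\sigma_{\min}^2\bigr)$. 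Since $\gamma(j)^2\ge(\log j)^4$, the product converges and is $1+o(1)$ as $\ell\to\infty$. Proposition~\ref{prop-mine-too-corollary}, together with the estimate on $\tilde\rho$ furnished by Lemma~\ref{lemma-A.9}, then bounds the Brownian-bridge probability by $(1+\delta)\tfrac{2xy}{T}$ with $\delta = O\bigl((\log\ell)^2/\ell^{1/12}\bigr) + O(\ell^4/n)$ uniformly on our event.

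The main obstacle is packaging the various error contributions --- from the correction product in Lemma~\ref{lemma-reduction}, from the $\tilde\rho$-term in Proposition~\ref{prop-mine-too-corollary}, and from the replacement of $T$ by $(g\log 2)\,n$ --- into a single bound of relative order $O(\ell^4/n)$. The $O((\log\ell)^2/\ell^{1/12})$ piece arising from Proposition~\ref{prop-mine-too-corollary} is a slowly-decaying error in $\ell$ that does not literally fit the stated form, but, exactly as in Lemma~5.4, the bound is applied subsequently (via a Bounded-Convergence argument in the proof of Proposition~\ref{prop-4.13}) with $\ell$ taken large first and then $n\to\infty$, so this loose error is harmless.
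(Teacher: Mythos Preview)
Your proposal is correct and follows essentially the same route as the paper, which explicitly says to parallel the argument of Lemma~\ref{lemma-5.4} with the Brownian-bridge estimates in place of the Brownian-motion ones (the prefactor $2/(g\log 2)$ arising from \eqref{E:4.17q} rather than \eqref{E:4.16q}). Your closing remark that the $\delta_\ell$-type error from Proposition~\ref{prop-mine-too-corollary} does not literally fit the stated $c\,\ell^4/n$ form is accurate and applies verbatim to the paper's own proof of Lemma~\ref{lemma-5.4}; as you note, this is harmless because the lemma is only ever invoked inside the double limit $n\to\infty$ then $\ell\to\infty$.
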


\begin{proofsect}{Proofs of Lemmas~\ref{lemma-5.5}--\ref{lemma-5.6}}
These statements are proved by arguments nearly identical to those of Lemmas~\ref{lemma-5.3}--\ref{lemma-5.4}. The following two points are worthy of a note: First, a bound in Lemma~\ref{lemma-7.9} can be used for Brownian motion conditioned on $B_t=0$ (and a slightly worse numerical constant). This follows from the decoupling argument in Lemma~\ref{lemma-decouple}. Second, the constant multiplying~$S_\ell S_{n-\ell}$ is different from that multiplying~$S_\ell$ in Lemma~\ref{lemma-5.3}. This stems from the difference of the prefactors in \eqref{E:4.17q} and \eqref{E:4.16q}. Further details are left to the reader.
\end{proofsect}

\begin{proofsect}{Proof of Proposition~\ref{prop-4.13}}
Pick $f\in\Cloc(\R^{\Z^2})$ and assume without loss of generality that $1\le f\le2$. By routine approximation arguments we can suppose that~$f$ is in fact Lipschitz in the variables that it depends on. Similarly as in the proof above, on $\{K\le k\}\cap\{h^{\Delta^n}(0)=0\}$ we have
\begin{equation}
\max_{x\in \Delta^j}\bigl|h^{\Delta^n}(x)-\phi_\ell(x)\bigr|\le c_1(\log\ell)^2\,2^{-(\ell-k)/2},\qquad j=k,\dots,\ell
\end{equation}
for each $k\le\ell\le n/2$. This and the fact that $\frakm_n(x)\to\frac2{\sqrt g}\fraka(x)$ as $n\to\infty$ (uniformly on compact intervals of~$t$) imply that it suffices to prove \eqref{E:4.30z} for $f(\frakm_n-h^{\Delta^n})$ replaced by $f(\phi_\ell')$, where (as before)
\begin{equation}
\phi_\ell'(x):=\frac2{\sqrt g}\fraka(x)-\phi_\ell(x),
\end{equation}
for any~$\ell$ sufficiently large. Here we again used the inclusions in \twoeqref{E:4.22uu}{E:4.22uuu} in conjunction with \eqref{E:4.19qq2} to bound the contribution of the event when the control variable~$K$ is large.

Let $P^0$, resp., $E^0$ abbreviate the probability, resp., expectation conditional on $\{h^{\Delta^n}(0)=0\}$. Given $\epsilon>0$ and~$k\ge1$, let $\delta_k$ be such that \eqref{E:4.65a} holds with~$\delta:=2\delta_k$ for this~$\epsilon$. For $\ell$ with $k\le\ell\le n/2$, define
\begin{multline}
\qquad
A_{n,\ell,k}:=\{K\le k\}\cap\bigl\{ \frakm_n- h^{\Delta^n}\ge2\delta_k\text{ on }\Delta^k\bigr\}
\\
\cap\bigl\{S_\ell,S_{n-\ell}\in[2\ell^{1/6},\tfrac12\ell^2]\bigr\}\cap\bigcap_{j=k+1}^{n-\ell}\bigl\{S_j\ge 2R_k(j)\bigr\}.
\qquad
\end{multline}
By Lemmas~\ref{lemma-4.7} and~\ref{lemma-4.20}, Lemma~\ref{lemma-entropy-RWB} and Lemma~\ref{lemma-5.5} we have
\begin{equation}
\limsup_{n\to\infty} \,\,n\,E^0\Bigl(\,\1_{A_{n,\ell,k}^\cc}\prod_{x\in \Delta^n}\1_{\{h^{\Delta^n}(x)\le\frakm_n(x)\}}\Bigr)<2\epsilon
\end{equation}
once~$k$ is large enough. We again fix this~$k$ throughout the rest of the argument.

For~$n$ sufficiently large, we get
\begin{multline}
\label{E:5.24}
\quad
E^0\Bigl(\,f(\phi_\ell')\prod_{x\in \Delta^n}\1_{\{h^{\Delta^n}(x)\le \frakm_n(x)\}}\Bigr)
\\
\le\frac{2\epsilon}n+E^0\biggl(\,f(\phi_\ell)\1_{\{S_\ell,S_{n-\ell}\in[2\ell^{1/6},\frac12\ell^2]\}}
\prod_{x\in \Delta^k}\1_{\{\phi_\ell'\ge2\delta_k\}}
\\
\times\prod_{j=k+1}^{n-\ell}\1_{\{S_j\ge2R_k(j)\}}\prod_{x\in \Delta^n\smallsetminus \Delta^{n-\ell}}\1_{\{h^{\Delta^n}(x)\le\frakm_n(x)\}}\biggr).
\quad
\end{multline}
The next important observation is that, conditional on
\begin{equation}
\FF_\ell:=\sigma\bigl(S_1,\dots,S_\ell,S_{n-\ell},\dots,S_{n+1}\bigr)
\end{equation}
the field $\phi_\ell$, the random variables $\{S_j\colon j=\ell,\dots,n-\ell\}$ and the field $\{h^{\Delta^n}(x)\colon x\in \Delta^n\smallsetminus \Delta^{n-\ell}\}$ are independent under~$P^0$. Using Lemma~\ref{lemma-5.6},
the expectation on the right of \eqref{E:5.24} is bounded by
\begin{multline}
\label{E:5.27}
\qquad
\frac1n\Bigl(\frac2{g\log 2}+c\frac{\ell^4}n\Bigr)E^0\biggl(\,f(\phi_\ell')\1_{\{S_\ell,S_{n-\ell}\in[2\ell^{1/6},\frac12\ell^2]\}}S_\ell S_{n-\ell}
\prod_{x\in \Delta^k}\1_{\{\phi_\ell'(x)\ge2\delta_k\}}
\\\times\prod_{j=k+1}^{\ell}\1_{\{S_j\ge2R_k(j)\}}\prod_{x\in \Delta^n\smallsetminus \Delta^{n-\ell}}\1_{\{h^{\Delta^n}(x)\le \frakm_n(x)\}}\biggr).
\quad
\end{multline}
Our task is to dominate the expectation by $(1+o(1))\Xiin_\ell(f)\Xiout_{n,\ell}(t)$. The main issue is to decouple the scales $j\le \ell$ from the scales $j\ge n-\ell$.

First we recall that, by \twoeqref{E:3.31}{E:3.32}, for each $r=0,\dots,n$ the random variables
\begin{equation}
\widetilde S^{(r)}_j:=S_j-\Bigl(\,\sum_{i=0}^{j-1} c_{r-1}(i)\Bigr)S_r,\qquad j=1,\dots,r,
\end{equation}
where $c_n(k)$ are as in \eqref{E:3.33qq}, as well as the field
\begin{equation}
\widetilde\phi_r(x):=\phi_r(x)-\Bigl(\,\sum_{j=0}^{r-1}\frakb_j(x)c_{r-1}(j)\Bigr)S_r,
\end{equation}
are independent of $\sigma(S_r,S_{r+1},\dots,S_{n+1})$ under~$P^0$. Lemma~\ref{lemma-3.4} shows that $0\le c_n(k)\le \tilde c/n$ for some $\tilde c\in(0,\infty)$ and Lemma~\ref{lemma-3.5rt} then ensures that, for some $c\in(0,\infty)$ and all $k=0,\dots,n$,
\begin{equation}
\Bigl|\,\sum_{j=0}^n\frakb_j(x)c_n(j)\Bigr|\le c\frac rn,\qquad x\in \Delta^r.
\end{equation}
It follows that, on $\{S_{n-\ell}\in[\ell^{1/6},\ell^2]\}$ and for $\ell<n^{1/4}$,
\begin{equation}
\max_{x\in \Delta^\ell}\bigl|\phi_\ell'(x)-\widetilde\phi_{n-\ell}'(x)\bigr|\le c{\ell^3}(n-\ell)^{-1}\le 2c \ell^3 n^{-1}
\end{equation}
and
\begin{equation}
0\le S_j-\widetilde S^{(n-\ell)}_j\le \tilde c\ell^{3}n^{-1},\qquad j=0,\dots,\ell.
\end{equation}
Thanks to the Lipschitz property of~$f$, this permits us to bound the expectation in \eqref{E:5.27} by a quantity of the form  $(1+O(n^{-1/4}))$ --- where the $\ell^4$ term comes from bounding $S_\ell S_{n-\ell}$ by their worst case value --- times
\begin{multline}
\label{E:5.33}
\quad 
E^0\biggl(\,f(\widetilde\phi_{n-\ell}')\1_{\{\widetilde S^{(n-\ell)}_\ell\in[\ell^{1/6},\frac12\ell^2]\}}\widetilde S^{(n-\ell)}_\ell \prod_{x\in \Delta^k}\1_{\{\widetilde\phi_{n-\ell}'\ge\delta_k\}}\prod_{j=k+1}^{\ell}\1_{\{\widetilde S^{(n-\ell)}_j\ge R_k(j)\}}
\\
\times
\1_{\{S_{n-\ell}\in[\ell^{1/6},\ell^2]\}} \,S_{n-\ell}\prod_{x\in \Delta^n\smallsetminus \Delta^{n-\ell}}\1_{\{h^{\Delta^n}(x) \le\frakm_n(x)\}}\biggr).
\quad
\end{multline}
Conditional on~$S_{n-\ell}$, the ``small scale'' part of the function under expectation is now independent of the rest and so this \emph{equals} $\Xiout_{n,\ell}(t)$ times
\begin{equation}
E^0\biggl(\,f(\widetilde\phi_{n-\ell}')\1_{\{\widetilde S^{(n-\ell)}_\ell\in[\ell^{1/6},\frac12\ell^2]\}}\widetilde S^{(n-\ell)}_\ell \prod_{x\in \Delta^k}\1_{\{\widetilde\phi_{n-\ell}'\ge\delta_k\}}\prod_{j=k+1}^{\ell}\1_{\{\widetilde S^{(n-\ell)}_j\ge R_k(j)\}}
\1_{\{S_{n-\ell}\in[\ell^{1/6},\ell^2]\}}
\biggr).
\end{equation}
The last indicator permits us to return all $\widetilde\phi_{n-\ell}'$ back to $\phi_\ell'$ and all~$\widetilde S^{(n-\ell)}_j$ back to $S_j$ at the cost of another multiplicative factor $(1+O(\ell^5 n^{-1}))$. Dropping the indicator and applying the argument \twoeqref{E:5.11ui}{E:5.12ui} then yields the expectation in the definition of $\Xiin_\ell(f)$.

The complementary (lower) bound is proved by modifications similar to those used in the proof of Proposition~\ref{prop-4.14}. We omit the details.
\end{proofsect}

\subsection{Extracting the cluster law}
\label{sec-5.2}\noindent
We are now ready to harvest the first fruits of our hard work in the previous sections. In particular, we will establish the existence of the cluster law and prove the asymptotic in Theorem~\ref{thm-2.5}. We begin by noting that the estimates in Lemma~\ref{lemma-4.9new} imply uniform bounds on the quantities $\Xiin_\ell(1)$ and $\Xiout_{n,\ell}(t)$ from \twoeqref{E:5.1ua}{E:5.2ua}.

\begin{lemma}
There are $c_1,c_2\in(0,\infty)$ such that for all sufficiently large $\ell$,
\begin{equation}
\label{E:4.32z}
c_1<\Xiin_\ell(1)<c_2.
\end{equation}
Moreover, for each~$t_0>0$ there are $c_1',c_2'\in(0,\infty)$ such that for all sufficiently large $\ell$, all $n\ge\ell^8$ and all $t\in\R$ with~$|t|<t_0$, we also have
\begin{equation}
\label{E:4.33z}
c_1'<\Xiout_{n,\ell}(t)<c_2'.
\end{equation}
\end{lemma}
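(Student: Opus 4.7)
The plan is to derive both bounds as immediate consequences of Propositions~\ref{prop-4.14} and \ref{prop-4.13} applied with $f\equiv 1$, combined with the leading-order probability estimates already established in Lemma~\ref{lemma-4.9new}. The key observation is that $\Xiin_\ell(1)$ and $\Xiout_{n,\ell}(t)$ appear as the (essentially unique) nontrivial constants in the asymptotics of two quantities for which matching upper and lower bounds of the correct order are already known.

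For \eqref{E:4.32z}, I would fix an $\epsilon>0$ and apply Proposition~\ref{prop-4.14} with $f\equiv 1$: this yields, for all $\ell$ sufficiently large and all $r\ge\ell$ sufficiently large,
\begin{equation*}
\Bigl|\,\nu^0\bigl(\phi\le\tfrac2{\sqrt g}\fraka\text{ in }\Delta^r\bigr)-\frac1{\sqrt{\log 2}}\,\frac{\Xiin_\ell(1)}{\sqrt r}\Bigr|\le\frac{\epsilon}{\sqrt r}.
\end{equation*}
Multiplying by $\sqrt r$ and sending $r\to\infty$ (with $\ell$ fixed), the two-sided estimate \eqref{E:4.25z} in Lemma~\ref{lemma-4.9new} gives
\begin{equation*}
c_1'-\epsilon\le\frac1{\sqrt{\log 2}}\,\Xiin_\ell(1)\le c_2'+\epsilon,
\end{equation*}
which is \eqref{E:4.32z} after absorbing $\epsilon$ and $\sqrt{\log 2}$ into the constants.

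For \eqref{E:4.33z}, I would proceed analogously. Fix $t_0>0$, $\epsilon>0$, and apply Proposition~\ref{prop-4.13} with $f\equiv 1$: for $\ell$ large enough, all $|t|<t_0$ and all $n\ge\ell^8$,
\begin{equation*}
\Bigl|\,P\bigl(h^{\Delta^n}\le\frakm_n\,\big|\,h^{\Delta^n}(0)=0\bigr)-\frac1n\,\frac{2}{g\log 2}\,\Xiin_\ell(1)\,\Xiout_{n,\ell}(t)\Bigr|\le\frac\epsilon n.
\end{equation*}
Multiplying by $n$ and inserting the bounds \eqref{E:4.24z} of Lemma~\ref{lemma-4.9new} (applied with $s=0$) yields
\begin{equation*}
c_1-\epsilon\le\frac2{g\log 2}\,\Xiin_\ell(1)\,\Xiout_{n,\ell}(t)\le c_2(1+t_0)^2+\epsilon,
\end{equation*}
uniformly in $|t|<t_0$ and $n\ge\ell^8$. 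Combining with the already-established two-sided bound \eqref{E:4.32z} on $\Xiin_\ell(1)$, we divide through to obtain the desired strictly positive and finite lower and upper bounds on $\Xiout_{n,\ell}(t)$.

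No significant obstacle is anticipated: the entire argument is a two-line combination of Propositions~\ref{prop-4.14}--\ref{prop-4.13} with the corresponding coarse probability estimates from Lemma~\ref{lemma-4.9new}. The only point worth a remark is that Proposition~\ref{prop-4.13} is stated for $\ell\le n^{1/8}$, hence the constraint $n\ge\ell^8$ above; this is compatible with taking $\ell$ fixed and $n$ arbitrarily large when deriving the uniform bound on $\Xiout_{n,\ell}(t)$.
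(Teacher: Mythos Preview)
Your proposal is correct and follows exactly the paper's approach: the paper's proof simply states that combining \eqref{E:4.31z} for $f:=1$ with \eqref{E:4.25z} yields \eqref{E:4.32z}, and that \eqref{E:4.30z} together with \eqref{E:4.24z} implies \eqref{E:4.33z}.

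One small slip worth fixing: in the second part you invoke \eqref{E:4.24z} ``with $s=0$'', but the event $\{h^{\Delta^n}\le\frakm_n\}$ in Proposition~\ref{prop-4.13} corresponds to $s=t$ in the notation of Lemma~\ref{lemma-4.9new}, since $\frakm_n=(m_{2^n}+t)(1-\frakg^{\Delta^n})=m_{2^n}+t-(m_{2^n}+t)\frakg^{\Delta^n}$. With $s=t$ the upper bound from \eqref{E:4.24z} is just $c_2/n$ (no $(1+t_0)^2$ factor), and the hypothesis $s\in[0,t_0]$, $t\le s$ is satisfied for $t\in[0,t_0]$; negative $t$ is handled by monotonicity of the event in $t$. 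This does not affect the argument.
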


\begin{proofsect}{Proof}
Combining \eqref{E:4.31z} for $f:=1$ with \eqref{E:4.25z} directly yields \eqref{E:4.32z}. A similar reasoning then shows that \eqref{E:4.30z} and \eqref{E:4.24z} imply \eqref{E:4.33z}.
\end{proofsect}

With these bounds in hand, Proposition~\ref{prop-4.14} readily yields the asymptotic of the probability of the conditional event that leads to the definition of the cluster law:

\begin{proofsect}{Proof of Theorem~\ref{thm-2.5}}
 Using $\phi\leftrightarrow-\phi$ symmetry of~$\nu^0$, the  bound in \eqref{E:4.31z} reads
\begin{equation}
\sqrt r\,\nu^0\biggl(\,\phi_x+\frac2{\sqrt g}\,\fraka(x)\ge0\colon\, |x|\le 2^r\biggr)= 
\frac{\Xiin_\ell(1)}{\sqrt{\log 2}}+\epsilon_r(\ell),
\end{equation}
where $\lim_{\ell\to\infty}\limsup_{r\to\infty}|\epsilon_r(\ell)|=0$. Since the left-hand side is independent of~$\ell$, taking~$r\to\infty$ followed by~$\ell\to\infty$ along suitable subsequences shows that the \emph{limes superior} of the left-hand side is less than the \emph{limes inferior} of the right-hand side and, similarly, the \emph{limes inferior} of the left-hand side is at least the \emph{limes superior} of the right-hand side. It follows that both limits in
\begin{equation}
\lim_{r\to\infty}\sqrt r\,\nu^0\biggl(\,\phi_x+\frac2{\sqrt g}\,\fraka(x)\ge0\colon\, |x|\le 2^r\biggr)=\lim_{\ell\to\infty}\frac{\Xiin_\ell(1)}{\sqrt{\log 2}}
\end{equation}
exist and are related as stated. By \eqref{E:4.32z} the right-hand side is also positive and finite. The claim then follows with $\tilde c^\star:=\lim_{\ell\to\infty}\Xiin_\ell(1)$ (which we denote by $\Xiin_\infty(1)$ later) by noting that, thanks to monotonicity in~$r$ of the probability in \eqref{E:2.9y} and the slowly-varying nature of $r\mapsto\log r$, it suffices to prove the desired asymptotic only for radii varying along powers of~$2$.
\end{proofsect}

Proposition~\ref{prop-4.14} permits us to work with a large class of test functions~$f$. This in particular permits us to prove the existence of the limit in \eqref{E:1.14a}. (We state this as a separate proposition because we will only identify the limit measure with that in Theorem~\ref{thm-main} later.)

\begin{proposition}[Existence of cluster law]
\label{cor-4.16}
For every $f\in\Cloc(\R^{\Z^2})$, the limit
\begin{equation}
\label{E:5.40a}
\Xiin_\infty(f):=\lim_{\ell\to\infty}\Xiin_\ell(f)
\end{equation}
exists and is finite. Moreover, also the following limit exists and obeys
\begin{equation}
\label{E:5.40}
\lim_{r\to\infty}\frac{E_{\nu^0}(f(\phi+\frac2{\sqrt g}\fraka)\,\prod_{x\colon|x|\le r}\,\1_{\{\phi(x)+\frac2{\sqrt g}\fraka(x)\ge0\}})}{\nu^0(\phi(x)+\frac2{\sqrt g}\fraka(x)\ge0\colon |x|\le r)}
=\frac{\Xiin_\infty(f)}{\Xiin_\infty(1)}.
\end{equation}
In addition, there is a probability measure~$\nu$ on $[0,\infty)^{\Z^d}$ such that the limit equals~$E_\nu(f(\phi))$. This measure has finite level sets almost surely; in fact, for any $c\in(0,\infty)$,
\begin{equation}
\label{E:5.42}
\nu\Bigl(\bigl\{\phi\not\ge c(\log k)^2\text{\rm\ on }\Delta^k\smallsetminus \Delta^{k-1}\bigr\}\text{\rm\ i.o.}\Bigr)=0.
\end{equation}
\end{proposition}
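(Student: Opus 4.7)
The single engine behind all four claims is Proposition~\ref{prop-4.14}, which says that the quantity
\[
P_r(f) := E_{\nu^0}\Bigl(f\bigl(\tfrac2{\sqrt g}\fraka-\phi\bigr)\prod_{x\in \Delta^r}\1_{\{\phi\le\frac2{\sqrt g}\fraka\}}\Bigr)
\]
agrees with $\frac{1}{\sqrt{\log 2}}\Xiin_\ell(f)/\sqrt r$ up to an error $o(1/\sqrt r)$ which, crucially, can be made smaller than any given $\epsilon/\sqrt r$ by choosing $\ell$ large and then $r\ge r_0(\ell)$. Since $P_r(f)$ does not depend on $\ell$, applying the proposition once with some $\ell$ and once with some $\ell'\ge\ell$, multiplying by $\sqrt r$ and sending $r\to\infty$ yields $|\Xiin_\ell(f)-\Xiin_{\ell'}(f)|\le 2\epsilon\sqrt{\log 2}$, which shows $\{\Xiin_\ell(f)\}$ is Cauchy in $\ell$; hence the limit in \eqref{E:5.40a} exists and is finite, and \eqref{E:4.32z} (with $f\equiv 1$) forces $\Xiin_\infty(1)\in(0,\infty)$. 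The asymptotic \eqref{E:5.40} is then immediate by dividing the Prop.~\ref{prop-4.14} asymptotic for $f$ by the same statement with $f\equiv 1$ and invoking $\phi\leftrightarrow-\phi$ symmetry of $\nu^0$ to align sign conventions; for general radii $r$ (not just powers of~$2$) the FKG property (Lemma~\ref{lemma-FKG}) gives monotonicity that sandwiches arbitrary~$r$ between adjacent powers of~$2$.

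\textbf{Constructing $\nu$.} Define $\nu_\ell(f):=\Xiin_\ell(f)/\Xiin_\ell(1)$ for $f\in\Cloc(\R^{\Z^2})$. From the defining formula \eqref{E:5.1ua}, the indicator $\prod\1_{\{\phi_\ell\le\frac2{\sqrt g}\fraka\}}$ forces the argument $\frac2{\sqrt g}\fraka-\phi_\ell$ of $f$ to lie in $[0,\infty)^{\Delta^\ell}$, and the weight $S_\ell\1_{\{S_\ell\in[\ell^{1/6},\ell^2]\}}$ is a \emph{nonnegative} (random) measure on $\R^{\Z^2}$; dividing by $\Xiin_\ell(1)$ produces a probability measure supported on $[0,\infty)^{\Z^2}$ (coordinates outside $\Delta^\ell$ are set to the corresponding value of $\frac2{\sqrt g}\fraka$, which is harmless for cylindrical testing). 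The proved convergence $\nu_\ell(f)\to \Xiin_\infty(f)/\Xiin_\infty(1)$ for every $f\in\Cb(\R^{\Delta^j})$ and every $j$ identifies a consistent family of finite-dimensional distributions, which by Kolmogorov extension defines a probability measure $\wt\nu$ on $[0,\infty]^{\Z^2}$ (compactness is automatic in the product topology on $[0,\infty]$). To confirm $\wt\nu$ places no mass at coordinates equal to $+\infty$ we apply the estimate \eqref{E:5.42} proved below: it shows $\nu_\ell$-a.s.\ (uniformly in large $\ell$) that $\phi(x)\le c(\log|x|)^2$ fails only for finitely many shells, and in particular $\phi(x)<\infty$ at every $x$; the limit $\nu:=\wt\nu$ then lives on $[0,\infty)^{\Z^2}$. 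This gives the representation $E_\nu(f(\phi))=\Xiin_\infty(f)/\Xiin_\infty(1)$.

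\textbf{Level-set bound \eqref{E:5.42} and the main obstacle.} For fixed $k\ge 1$ and $c>0$, the event $A_k:=\{\phi\not\ge c(\log k)^2\text{ on }\Delta^k\smallsetminus\Delta^{k-1}\}$ can be recast using \eqref{E:5.40} and a standard increasing-event approximation (via FKG) as the $r\to\infty$ limit of $\nu^0$ conditioned on $\phi+\frac2{\sqrt g}\fraka\ge 0$ on $\Delta^r$. Using Lemma~\ref{lemma-4.6a} (which approximates $\min_{\Delta^k\smallsetminus\Delta^{k-1}}(\frac2{\sqrt g}\fraka-\phi)$ by $S_k$ with error $\wt R_{\wt K}(k)\le C(\log(\wt K\vee k))^2$) and splitting on $\{\wt K\le k\}$ versus $\{\wt K>k\}$, the numerator $\nu^0(A_k\cap\{\phi+\frac2{\sqrt g}\fraka\ge0\text{ on }\Delta^r\})$ is bounded by the sum of (i) $\sum_{j>k}\tilde c_1 e^{-\tilde c_2(\log j)^2}/\sqrt r$, obtained from Lemma~\ref{lemma-4.7}, and (ii) the probability that the random walk $\{S_j\}$ stays above $-\wt R_k(j)$ up to $j=r$ while $S_k\le 2c(\log k)^2$. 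Term (ii) is then estimated by the Brownian-motion entropic-repulsion bound (Lemma~\ref{lemma-entropy-RW}, or directly from Propositions~\ref{prop-uncond-BM-positive-cor} and~\ref{prop-4.9ue} after embedding the walk via \eqref{E:4.35ua}): the probability, \emph{relative to} $\sqrt r\cdot\nu^0(\text{staying above zero on }\Delta^r)\sim\tilde c^\star/\sqrt{\log 2}$, tends to zero at rate $k^{-1/16}$ uniformly in $r$. Dividing by the Theorem~\ref{thm-2.5} denominator therefore yields $\nu(A_k)\le C k^{-1/16}$. The event in \eqref{E:5.42} is $\bigcap_{k_0}\bigcup_{k\ge k_0}A_k$, which has $\nu$-probability $\le\lim_{k_0\to\infty}\sum_{k\ge k_0}C k^{-1/16}$; this sum is not summable, so \emph{the main technical obstacle} is sharpening the entropic-repulsion bound from $k^{-1/16}$ to $k^{-(1+\delta)}$. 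I would handle this by passing from Propositions~\ref{prop-uncond-BM-positive-cor} and~\ref{prop-4.9ue} to a version for the Brownian bridge/walk conditioned above a polylogarithmic curve, using that once the walk is forced to leave the boundary layer near zero it ballistically rises to scale $\sqrt k$, giving a Gaussian-type bound on $\{S_k\le 2c(\log k)^2\}$ that is summable with room to spare; equivalently, one can use a direct ballot-style estimate $P(S_k\le 2c(\log k)^2,\,S_j\ge-\wt R_k(j)\text{ for }j\le r)/P(\text{stay above zero on }[1,r])\le C(\log k)^4/k$, which is summable. Once summability is achieved, Borel--Cantelli under $\nu$ immediately yields \eqref{E:5.42}.
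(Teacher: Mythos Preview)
Your treatment of \eqref{E:5.40a} and \eqref{E:5.40} is essentially the same as the paper's: both use that the left side of the Proposition~\ref{prop-4.14} estimate is $\ell$-free to force $\{\Xiin_\ell(f)\}$ to be Cauchy, and then pass to general radii by monotonicity. Your construction of~$\nu$ via compactifying to $[0,\infty]^{\Z^2}$ and applying Kolmogorov is a legitimate alternative to the paper's Riesz--Markov route, though note the paper proves tightness (no mass at $+\infty$) directly from the tail of the control variable~$\wt K$ (Lemmas~\ref{lemma-4.7} and~\ref{lemma-entropy-RW}), avoiding any circularity with~\eqref{E:5.42}.

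The genuine gap is in your handling of~\eqref{E:5.42}. You bound each individual event $A_k$ by $Ck^{-1/16}$ and then try Borel--Cantelli, which fails because the series diverges; you then propose to sharpen the single-$k$ rate to something summable. This is unnecessary and misses the structure of the entropic-repulsion input. Lemma~\ref{lemma-entropy-RW} does \emph{not} merely bound $P(S_k\le\gamma(k))$ for one fixed~$k$; the event in its numerator is $\{\min_{k\le\ell\le n}[B_{t_\ell}-\gamma(\ell)]<0\}$, i.e.\ the walk drops below the positive curve at \emph{some} time in $[k,n]$. Hence one obtains, for any $k\le k'\le r$,
\[
\nu^0\Bigl(\,\bigcup_{j=k}^{k'} A_j\,\Big|\,\phi+\tfrac2{\sqrt g}\fraka\ge0\text{ on }\Delta^r\Bigr)\le c'k^{-1/16}
\]
directly, uniformly in~$k'$ and~$r$. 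Passing to the limit $r\to\infty$ (via the already-established \eqref{E:5.40}) and then $k'\to\infty$ gives $\nu\bigl(\bigcup_{j\ge k}A_j\bigr)\le c'k^{-1/16}$, and letting $k\to\infty$ yields~\eqref{E:5.42} without any summation. Your proposed ballot-type estimate $P(S_k\le 2c(\log k)^2\mid\text{stay positive})\lesssim(\log k)^4/k$ is correct in spirit and would also close the argument, but it is strictly more work than what the paper actually needs.
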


\begin{proofsect}{Proof}
Abbreviate  (with a slight abuse of our earlier notation)  $\phi'(x):=\phi(x)+\frac2{\sqrt g}\fraka(x)$. By Proposition~\ref{prop-4.14} we have
\begin{equation}
\frac{E_{\nu^0}(f(\phi')\prod_{x\in \Delta^r}\1_{\{\phi'(x)\ge0\}})}{\nu^0(\phi'(x)\ge0\colon x\in \Delta^r)}
=\frac{\Xiin_\ell(f)+\tilde\epsilon_r(\ell)}{\Xiin_\ell(1)+\epsilon_r(\ell)}\,,
\end{equation}
where $\lim_{\ell\to\infty}\limsup_{r\to\infty}|\tilde\epsilon_r(\ell)|=0$ and similarly for~$\epsilon_r(\ell)$.  Since the left hand side does not depend on~$\ell$ while the only dependence on~$r$ of the right hand side comes through~$\tilde\epsilon_r(\ell)$ and $\epsilon_r(\ell)$ which vanish as~$r\to\infty$, the argument from the previous proof shows that both sides converge. 
That argument  also gave the existence of the positive limit $\lim_{\ell\to\infty}\Xiin_\ell(1)$ and so also the limit in~\eqref{E:5.40a} exists for all $f\in\Cloc(\R^{\Z^2})$. This proves \eqref{E:5.40} for~$r$ running along powers of~$2$; for the general~$r\to\infty$ we then assume~$f\ge0$ and argue by monotonicity and existence of the limit in~\eqref{E:2.9y} (already proved above).

The linear functional $f\mapsto \xi(f):=\Xiin_\infty(f)/\Xiin_\infty(1)$ is positive and so,  when restricted to the subspace $\Cc(\R^{\Delta^j})$ of compactly supported functions in~$\Cb(\R^{\Delta^j})$,  continuous and bounded in the supremum norm. By the Riesz-Markov-Kakutani representation theorem, there is a regular Borel measure~$\nu_j$ on~$\R^{\Delta^j}$ such that $\xi(f)=\int \nu_j(\textd\phi)f(\phi)$ for each~$f\in \Cc(\R^{\Delta^j})$. In order to show that~$\nu_j$ is a probability measure, we have to prove tightness  (note that $f:=1$ is not in~$\Cc(\R^{\Delta^j})$).  Here we note that \eqref{E:4.17ueu} and the definition of $R_{\widetilde K}(j)$ imply
\begin{equation}
\bigl\{\phi'\not\le 2k^2\text{\rm\ on }\Delta^j\bigr\}\subseteq\{\widetilde K>k\},\qquad 1\le j\le k,
\end{equation}
once~$k$ is sufficiently large.
Lemmas~\ref{lemma-4.7} and~\ref{lemma-entropy-RW} then show
\begin{equation}
\nu^0\bigl(\phi'\not\le 2k^2\text{\rm\ on }\Delta^j\,\big|\,\phi'(x)\ge0\colon x\in \Delta^r\bigr)\le c\texte^{-c'(\log k)^2}.
\end{equation}
Standard approximation arguments extend this bound to the limit measure; taking $k\to\infty$ then shows $\nu_j(\R^{\Delta^j})=1$. The measures $\{\nu_j\colon j\ge1\}$ are clearly consistent; the Kolmogorov Extension Theorem then ensures that they are restrictions of a unique probability measure $\nu$ on~$\R^{\Z^2}$.  The above tightness argument then also ensures that this  measure obeys $\xi(f)=E_\nu(f(\phi))$ for all~$f\in\Cloc(\R^{\Z^2})$ as desired.

It remains to prove \eqref{E:5.42}. For this we note
\begin{equation}
\bigl\{\phi'\not\ge c(\log k)^2\text{\rm\ on }\Delta^k\smallsetminus \Delta^{k-1}\bigr\}\cap\{K\le k\}
\subseteq \bigl\{S_k\le \tilde c[1+(\log k)^2]\bigr\}.
\end{equation}
Lemmas~\ref{lemma-4.7} and~\ref{lemma-entropy-RW} then show that, for any $k\le k'\le r$ with~$k$ sufficiently large,
\begin{equation}
\nu^0\Bigl(\,\bigcup_{j=k}^{k'}\bigl\{\phi'\not\ge c(\log k)^2\text{\rm\ on }\Delta^k\smallsetminus \Delta^{k-1}\bigr\}\,\Big|\,\phi'(x)\ge0\colon x\in \Delta^r
\Bigr)\le c'k^{-\frac1{16}}
\end{equation}
once~$r$ is large. Taking $r\to\infty$ followed by~$k\to\infty$ then yield the claim. 
\end{proofsect}

\subsection{Limit of full extreme process}
\label{sec-5.3}\noindent
We will now move to the proof of the distributional convergence in Theorem~\ref{thm-main} including the characterization of the cluster law in Theorem~\ref{thm-lessmain}. This will be done modulo the proof of a technical Proposition~\ref{lemma-4.18} which is deferred to the next subsection.

Let~$D\in\mathfrak D$. Given a Radon measure $\eta$ on $ \overline D\times\R\times\R^{\Z^d}$ and a measurable function $f\colon \overline D\times\R\times\R^{\Z^d}\to[0,\infty)$, we will abbreviate
\begin{equation}
\langle \eta,f\rangle:=\int\eta(\textd x\,\textd h\,\textd\phi)f(x,h,\phi).
\end{equation}
Let~$D_N$ be related to~$D$ as in \twoeqref{E:1.1}{E:1.1a} and let~$h^{D_N}$ be the DGFF in~$D_N$. Recall the notation
\begin{equation}
\label{E:5.47}
\Gamma_N^D(t):=\bigl\{x\in D_N\colon h^{D_N}(x)\ge m_N-t\bigr\}
\end{equation}
and, using the notation $\Lambda_r(x)$ from \eqref{E:Lambda_r}, write
\begin{equation}
\Theta^D_{N,r}:=\bigl\{x\in D_N\colon h^{D_N}(x)=\max_{z\in\Lambda_r(x)}h^{D_N}(z)\bigr\}
\end{equation}
for the set of points in $D_N$ that are $r$-local extrema.
Our starting point is the following lemma:

\begin{lemma}
\label{lemma-4.17}
For any~$r_N\to\infty$ with $N/r_N\to\infty$ and any continuous $f\colon \overline D\times\R\times\R^{\Z^d}\to[0,\infty)$ with compact support,
\begin{equation}
\lim_{r\to\infty}\,\,\limsup_{N\to\infty}\,\,\max_{M\colon r\le M\le N/r}\,\Bigl|\, E\bigl(\texte^{-\langle\eta_{N,r_N}^D,f\rangle}\bigr)
-E\bigl(\texte^{-\langle\eta_{N,M}^D,f\rangle}\bigr)\Bigr|=0.
\end{equation}
\end{lemma}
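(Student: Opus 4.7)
\begin{proofsect}{Proof plan}
The plan is to reduce the statement to a probability estimate and then invoke the separation of near-maxima (Lemma~\ref{lemma-separation}). Since $f$ is continuous with compact support, there exist $a<b$ and a compact $K\subset\R^{\Z^2}$ (in the product topology) such that $f(x,h,\phi)=0$ unless $h\in[a,b]$ and $\phi\in K$; moreover $\|f\|_\infty<\infty$. Because $u\mapsto e^{-u}$ is $1$-Lipschitz on $[0,\infty)$ and bounded by $1$, we have
\begin{equation}
\bigl|E(e^{-\langle\eta_{N,r_N}^D,f\rangle})-E(e^{-\langle\eta_{N,M}^D,f\rangle})\bigr|
\le 2\,P\bigl(\langle\eta_{N,r_N}^D,f\rangle\ne\langle\eta_{N,M}^D,f\rangle\bigr),
\end{equation}
so it suffices to bound the right-hand side uniformly in $M\in[r,N/r]$. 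Since $r_N\to\infty$ and $N/r_N\to\infty$, we have $r_N\in[r,N/r]$ for all sufficiently large $N$, so we may replace $r_N$ by a generic $M'\in[r,N/r]$ and show that the two sums agree with high probability for any $M,M'\in[r,N/r]$.

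\smallskip\noindent
The next step exploits the monotonicity $\Theta^D_{N,M'}\subseteq\Theta^D_{N,M}$ whenever $M\le M'$. Taking WLOG $M\le M'$, the two sums $\langle\eta_{N,M}^D,f\rangle$ and $\langle\eta_{N,M'}^D,f\rangle$ differ only by contributions from points $x\in\Theta^D_{N,M}\smallsetminus\Theta^D_{N,M'}$, and for such a point to actually contribute to either sum we need $h^{D_N}(x)-m_N\in[a,b]$. Any such $x$ must admit some $y$ with $M<|y-x|\le M'$ and $h^{D_N}(y)>h^{D_N}(x)\ge m_N+a$. Consequently,
\begin{equation}
\bigl\{\langle\eta_{N,M}^D,f\rangle\ne\langle\eta_{N,M'}^D,f\rangle\bigr\}
\subseteq\bigl\{\exists x,y\in\Gamma^D_N(|a|)\colon M<|y-x|\le M'\bigr\},
\end{equation}
where $\Gamma^D_N(\cdot)$ is the level set defined in~\eqref{E:5.47}.

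\smallskip\noindent
The final ingredient is the separation-of-near-maxima estimate of Lemma~\ref{lemma-separation}, which (in the form we need) asserts that for every $K>0$ and $\epsilon>0$ there exist $r_0\ge1$ and $\delta>0$ such that
\begin{equation}
\limsup_{N\to\infty}P\Bigl(\exists\,x,y\in\Gamma^D_N(K)\colon r_0\le|x-y|\le\delta N\Bigr)\le\epsilon.
\end{equation}
Applying this with $K:=|a|$ and the given $\epsilon$, we fix the resulting $r_0,\delta$. For $r\ge\max\{r_0,1/\delta\}$ and any $M,M'\in[r,N/r]$, the distance window $(M,M']$ lies in $[r_0,\delta N]$, so the above inclusion combined with the separation bound yields
\begin{equation}
\limsup_{N\to\infty}\,\sup_{M,M'\in[r,N/r]}P\bigl(\langle\eta_{N,M}^D,f\rangle\ne\langle\eta_{N,M'}^D,f\rangle\bigr)\le\epsilon.
\end{equation}
Letting $r\to\infty$ forces $\epsilon$ to be arbitrary, and the claimed double limit is $0$.

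\smallskip\noindent
The only nontrivial input is the separation lemma; everything else is a bookkeeping reduction using monotonicity of $\Theta^D_{N,\cdot}$ and compactness of the support of $f$. No delicate control of the cluster coordinate is needed: since $\eta_{N,M}^D$ and $\eta_{N,M'}^D$ carry the \emph{same} cluster-coordinate data $\{h^{D_N}(x)-h^{D_N}(x+z)\}_{z\in\Z^2}$ at any common local maximum $x$, agreement of the index sets of contributing points immediately gives agreement of the integrals.
\end{proofsect}
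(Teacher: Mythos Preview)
Your proof is correct and follows essentially the same approach as the paper: reduce the difference of Laplace functionals to the probability that the two integrals disagree, observe that any point in $\Theta^D_{N,M}\triangle\Theta^D_{N,M'}$ contributing to $f$ forces two points of $\Gamma^D_N(|a|)$ at distance in $(M\wedge M',\,M\vee M']\subset[r,N/r]$, and invoke Lemma~\ref{lemma-separation}. The paper's proof is more terse but the logic is identical; your explicit use of the monotonicity $\Theta^D_{N,M'}\subseteq\Theta^D_{N,M}$ and the remark that the cluster coordinate is the same at common local maxima just make the bookkeeping more transparent.
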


\begin{proofsect}{Proof}
By assumption~$f(x,h,\phi)=0$ is zero unless $|h|\le\lambda$, for some~$\lambda>0$.  The inequality  $\langle\eta_{N,r_N}^D,f\rangle\ne\langle\eta_{N,M}^D,f\rangle$ for some $M$ between~$r$ and~$N/r$ implies $(\Theta^D_{N,r_N}\triangle\Theta^D_{N,M})\cap\Gamma_N(\lambda)\ne\emptyset$. But this in turn forces the existence of two local maxima in~$\Gamma_N(\lambda)$ that are farther than $M\wedge r_N$ and yet closer than $M\vee r_N$. Hence,
for $N$ so large that $r_N\ge r$ and $r_N\le N/r$, 
\begin{multline}
\label{E:3.44qu}
\qquad
\max_{M\colon r\le M\le N/r}\,\Bigl|E\bigl(\texte^{-\langle\eta^D_{N,r_N},f\rangle}\bigr)-E\bigl(\texte^{-\langle\eta^D_{N,M},f\rangle}\bigr)\Bigr|
\\
\le P\Bigl(\,\exists x,y\in\Gamma_N^D(\lambda)\colon r\le|x-y|\le N/r\Bigr).
\qquad
\end{multline}
The right-hand side tends to zero in the stated limits by Lemma~\ref{lemma-separation}.
\end{proofsect}

Thanks to  Lemma~\ref{lemma-4.17}, we may as well pick any~$M\in\{r,\dots,\lfloor N/r\rfloor\}$ and work with $\eta_{N,M}^D$ in place of~$\eta^D_{N,r_N}$. We will choose
\begin{equation}
\label{e:5.61}
M=M(N,r):=\max\{2^n\colon 2^n\le N/r\}.
\end{equation}
Our next step is to introduce an auxiliary process 
\begin{equation}
\widehat\eta^D_{N,M}:=\sum_{x\in D_N}
\1_{\{h_x^{D_N}=\max_{z\in \Lambda_M(x)}(h^{D_N}_x-\Phi^{M,x}_z)\}}
\delta_{\,x/N}\otimes\delta_{\,h^{D_N}_x-m_N}\otimes\delta_{\,\{h^{D_N}_x-h^{D_N}_{x+z}+\Phi^{M,x}_{x+z}\colon z\in\Z^2\}},
\end{equation}
where, using the notation $H^D(x,y)$ for the harmonic measure from~$x$ to~$y$ in~$D$, 
\begin{equation}
\label{E:4.49q}
\Phi^{M,x}(z):=\sum_{y\in D_N\cap\partial\Lambda_M(x)}H^{(D_N\cap\Lambda_M(x))\smallsetminus\{x\}}(z,y) \,h^{D_N}(y).
\end{equation}
Note that $\Phi^{M,x}=h^{D_N}(x)$ outside~$\Lambda_M(x)$ and also $\Phi^{M,x}(x)=0$ because $y:=x$ is not included in the sum. As it turns out, the laws of the processes~$\eta^D_{N,M}$ and $\widehat\eta^D_{N,M}$ are very close:

\begin{proposition}
\label{lemma-4.18}
For any $f\colon \overline D\times\R\times\R^{\Z^d}\to[0,\infty)$ that is continuous with compact support and depends only on a finite number of coordinates of~$\phi$,
\begin{equation}
\lim_{r\to\infty}\,\limsup_{N\to\infty}\,\Bigl|\, E\bigl(\texte^{-\langle\eta_{N,M(N,r)}^D,f\rangle}\bigr)
-E\bigl(\texte^{-\langle\widehat\eta_{N,M(N,r)}^D,f\rangle}\bigr)\Bigr|=0.
\end{equation}
\end{proposition}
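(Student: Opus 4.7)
The natural starting point is the elementary bound $|E e^{-\langle\eta,f\rangle} - E e^{-\langle\widehat\eta,f\rangle}| \le E|\langle\eta,f\rangle - \langle\widehat\eta,f\rangle|$, after which we decompose the integrand pointwise over $x \in D_N$. The key structural observation is that $\widehat\eta^D_{N,M}$ is exactly the $\eta$-process built from the modified field $\tilde h(z) := h^{D_N}(z) - \Phi^{M,x}(z)$ inside $\Lambda_M(x)$ (and $\tilde h = h^{D_N}$ elsewhere; in particular $\tilde h(x) = h^{D_N}(x)$ since $\Phi^{M,x}(x)=0$). The Gibbs--Markov property on $\Lambda_M(x) \cap D_N$ with separator $\partial\Lambda_M(x) \cup \{x\}$ gives the independent decomposition
\begin{equation*}
h^{D_N}(z) = h^{\text{inner}}(z) + \Phi^{M,x}(z) + h^{D_N}(x)\,\mathfrak g^{\Lambda_M(x)\cap D_N}(z-x),\qquad z\in\Lambda_M(x),
\end{equation*}
which isolates $\Phi^{M,x}$ as the ``outer harmonic boundary'' contribution; passing from $\eta$ to $\widehat\eta$ simply removes this piece.

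Since $f$ is supported in a compact interval $[-\lambda,\lambda]$ of the height variable and depends only on finitely many coordinates $z \in \Lambda_j$, we may restrict to $x$'s with $h^{D_N}(x) - m_N \in [-\lambda,\lambda]$, a set of tight total cardinality by Lemma~\ref{lemma-separation}. Split $\langle\eta,f\rangle - \langle\widehat\eta,f\rangle = A + B$, where $A$ gathers the ``cluster-shift'' contribution from $x$'s satisfying both $E_1(x) := \{h_x = \max_{\Lambda_M(x)} h^{D_N}\}$ and $E_2(x) := \{\Phi^{M,x} \ge 0 \text{ on } \Lambda_M(x)\}$, and $B$ collects the ``symmetric-difference'' contribution from $x$'s where exactly one of $E_1,E_2$ holds. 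For $A$, the two cluster fields differ precisely by $\Phi^{M,x}(x+z)$ at $z \in \Lambda_j$. Since $\Phi^{M,x}$ is discrete-harmonic in $(\Lambda_M(x)\cap D_N)\setminus\{x\}$, vanishes at $x$, and has boundary values of typical magnitude $O(\sqrt{\log M})$ on $\partial\Lambda_M(x)$, the harmonic-measure bound of Lemma~\ref{lemma-HM} (applied as in the proof of Lemma~\ref{lemma-3.5rt}) yields $|\Phi^{M,x}(x+z)| = O(\log(|z|+1)/\sqrt{\log M})$ with overwhelming probability, uniformly in $z \in \Lambda_j$. Combined with the uniform continuity of $f$ on compact sets, this drives $E|A| \to 0$ as $N \to \infty$.

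The main obstacle is bounding $E|B|$, i.e., showing that the expected count of $x \in D_N \cap \Gamma_N^D(\lambda)$ with $\mathbf 1_{E_1(x)} \ne \mathbf 1_{E_2(x)}$ is $o(1)$. Using Lemma~\ref{lemma-3.1} to trade the conditioning on $h^{D_N}(x) = m_N + s$ for a deterministic mean shift, both events translate via the concentric decomposition of Section~\ref{sec3} around $x$ into positivity requirements on the backbone random walk $\{S_k\}$ plus controlled corrections (Lemma~\ref{lemma-4.3}): $E_1$ requires this positivity on all scales $1 \le k \le \log_2 N$, while $E_2$ constrains only the pieces of the decomposition whose superposition equals $\Phi^{M,x}$ (the contributions at scales exceeding $\log_2 M$ together with the pinning harmonic). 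Applying the gap estimate (Lemma~\ref{lemma-4.20}) together with the entropic-repulsion bound (Lemma~\ref{lemma-entropy-RWB}), conditional on either event the walk is strictly repelled into the interior of its admissible region on every intermediate scale between $1$ and $\log_2 M$, so that the other event then holds with conditional probability $1 - o(1)$ in the iterated $N \to \infty$, $r \to \infty$ limit. Combining with the pointwise probability $O(1/N^2)$ supplied by Lemma~\ref{lemma-4.9new} and integrating over $s \in [-\lambda,\lambda]$ yields $E|B| \to 0$. The principal technical difficulty in this last step is the matching argument itself: $E_1$ constrains the inner DGFF and pinning pieces of the decomposition while $E_2$ constrains only the outer harmonic piece, and simultaneously forcing both to be positive requires a careful tracking of margins through all intermediate scales.
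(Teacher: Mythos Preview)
Your overall $A/B$ split --- cluster-shift where both indicator events fire, symmetric-difference where exactly one does --- is precisely the paper's structure (implemented there via the intermediate process~$\widetilde\eta^D_{N,M}$), and your treatment of the $A$ piece via smallness of $\Phi^{M,x}$ on $\Lambda_j(x)$ plus uniform continuity of~$f$ matches Lemma~\ref{lemma-max-auxfield}. However, your $E_2(x)=\{\Phi^{M,x}\ge0\text{ on }\Lambda_M(x)\}$ is \emph{not} the indicator in~$\widehat\eta^D_{N,M}$: as you yourself write, $\widehat\eta$ is the $\eta$-process for $\tilde h=h^{D_N}-\Phi^{M,x}$, so the correct event is $\{h^{D_N}(z)-\Phi^{M,x}(z)\le h^{D_N}(x)\colon z\in\Lambda_M(x)\}$. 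With your $E_2$ the split is not clean (on $E_1\cap E_2^{\text c}$ the $\widehat\eta$-indicator may still fire), and your later description of $E_2$ as ``constraining only the outer harmonic piece'' confirms you are working with the wrong event.

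More seriously, the $B$ argument has a real gap in the direction $E_1\smallsetminus\widehat\Theta^D_{N,M}$. A violation $h^{D_N}(z)-\Phi^{M,x}(z)>h^{D_N}(x)$ can occur at \emph{any} $z\in\Lambda_M(x)$, including the outer annulus $\Lambda_M(x)\smallsetminus\Lambda_{M/2}(x)$ where $\Phi^{M,x}$ is \emph{not} small and where entropic repulsion of the inner walk on scales $1,\dots,\log_2 M$ says nothing. The paper (Lemma~\ref{lemma-5.14}) therefore splits by the location of the violating~$z$: for $z$ in the outer annulus it uses the sharp upper tail of $\max h^{\Delta^n\smallsetminus\{0\}}$ (Lemma~\ref{lem:6}) to gain a factor $r^{-4}\log r$; for $z\in\Lambda_{M/2}(x)$ the violation forces $\Phi^{M,x}<0$ somewhere, and the paper controls this by first applying FKG to enlarge the conditioning to $\{h^{D_N}\le m_N+t\text{ on all of }D_N\}$, then writing $\Phi^{M,0}=\sum_{k>n}[\frakb_k\varphi_k(0)+\chi_k]$ explicitly and bounding it through the control variable~$K$. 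Your ``careful tracking of margins through intermediate scales'' would need to be replaced by one of these concrete mechanisms; the gap and entropic-repulsion lemmas you cite do not by themselves control the sign of~$\Phi^{M,x}$ deep inside~$\Lambda_M(x)$ nor the size of the inner field near~$\partial\Lambda_M(x)$.
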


We defer the proof of this proposition to the next subsection and  instead proceed with the proof of the point process convergence. There are several reasons why $h^{D_N}-\Phi^{M,\cdot}$ is more convenient to work with than~$h^{D_N}$. All of them can be deduced from the following observation:

\begin{lemma}
\label{lemma-5.10q}
Suppose~$x\in D_N$ is such that $\Lambda_M(x)\subset D_N$ and let $n\in\N$ be such that $M=2^n$. Consider the $\sigma$-algebra
\begin{equation}
\FF_{M,\,x}:=\sigma\bigl(h^{D_N}(z)\colon z\in \{x\}\cup\Lambda_{M}(x)^\cc\bigr).
\end{equation}
Then for Lebesgue almost every~$t\in\R$,
\begin{multline}
\quad
P\bigl(h^{D_N}(x+\cdot)-\Phi^{M,x}(x+\cdot)\in \cdot\big|\FF_{M,x}\bigr)
\\
=P\bigl(h^{\Delta^n\smallsetminus\{0\}}+t\frakg^{\Delta^n}\in\cdot\bigr)
=P\bigl(h^{\Delta^n}\in\cdot\,\big|\,h^{\Delta^n}(0)=t\bigr)\quad\text{\rm on }\{h^{D_N}(x)=t\}.
\quad
\end{multline}
In particular, conditional on $\sigma(h^{D_N}(x))$, the conditional probability on the left-hand side is independent of $\sigma(h^{D_N}(z)\colon z\not\in\Lambda_M(x))$.
\end{lemma}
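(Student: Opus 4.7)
The proof plan is to unwind the definitions and identify the claim as a direct consequence of the Gibbs-Markov property (Lemma~\ref{lemma-GM}) applied to the set $U := \Lambda_M(x)\smallsetminus\{x\}$, combined with the representation from Lemma~\ref{lemma-3.1}. Since $\Lambda_M(x)\subseteq D_N$ by assumption, the exterior boundary of $U$ inside $D_N$ consists of $\{x\}\cup(\partial\Lambda_M(x))$, and the $\sigma$-algebra $\FF_{M,x}$ is precisely generated by the values of $h^{D_N}$ on $U^\cc$ (the values outside $D_N$ being identically zero).

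First I would invoke Gibbs-Markov on $U$ to write
\begin{equation*}
h^{D_N}(z) = h^U(z) + \psi(z), \qquad z\in U,
\end{equation*}
where $h^U\laweq h^{\Lambda_M(x)\smallsetminus\{x\}}$ is independent of $\FF_{M,x}$ and $\psi$ is the discrete-harmonic extension in $U$ of the boundary data $h^{D_N}|_{\partial U}$. Splitting the harmonic extension over the two components of~$\partial U$ visible from~$U$ gives
\begin{equation*}
\psi(z) = H^U(z,x)\,h^{D_N}(x) + \sum_{y\in\partial\Lambda_M(x)}H^U(z,y)\,h^{D_N}(y) = H^U(z,x)\,h^{D_N}(x) + \Phi^{M,x}(z),
\end{equation*}
where the second equality uses the very definition \eqref{E:4.49q} (and, again, $\Lambda_M(x)\subseteq D_N$). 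Subtracting $\Phi^{M,x}$ from both sides yields
\begin{equation*}
h^{D_N}(z)-\Phi^{M,x}(z) = h^U(z) + h^{D_N}(x)\,H^U(z,x).
\end{equation*}

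Next I would identify the two pieces on the right after translation by~$-x$. Because $\Lambda_M(x)-x = \Delta^n$ when $M=2^n$ (with $\Lambda_r$ interpreted in the $\ell^\infty$-sense consistently with \eqref{E:3.25}), translation invariance of the simple random walk gives $H^U(x+\cdot,x) = \frakg^{\Delta^n}(\cdot)$ by the characterization of $\frakg$ in Section~\ref{sec-3.1}, and $h^U(x+\cdot)\laweq h^{\Delta^n\smallsetminus\{0\}}$. On the event $\{h^{D_N}(x)=t\}$ we therefore obtain
\begin{equation*}
h^{D_N}(x+\cdot)-\Phi^{M,x}(x+\cdot) \,\overset{\text{law}}{=}\, h^{\Delta^n\smallsetminus\{0\}}+t\frakg^{\Delta^n}\quad\text{conditional on }\FF_{M,x},
\end{equation*}
because the only $\FF_{M,x}$-measurable object entering the right-hand side, namely $h^{D_N}(x)$, has been fixed. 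The equality of this law with $P(h^{\Delta^n}\in\cdot\mid h^{\Delta^n}(0)=t)$ for a.e.~$t$ is exactly the content of the identity \eqref{E:3.4} underlying Lemma~\ref{lemma-3.1}. The final independence statement is automatic from the previous display: the conditional law given $\FF_{M,x}$ depends on $\FF_{M,x}$ only through $h^{D_N}(x)$.

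There is no real obstacle here beyond careful bookkeeping; the only point that requires slight caution is making sure that $\partial U$ decomposes cleanly into $\{x\}\sqcup \partial\Lambda_M(x)$ with no stray boundary contribution from $\partial D_N$, which is why the hypothesis $\Lambda_M(x)\subseteq D_N$ is imposed. The ``Lebesgue almost every~$t$'' qualifier is the standard one arising because we are conditioning on the event $\{h^{D_N}(x)=t\}$ of zero probability; the conditional law is nonetheless well defined as a regular conditional distribution and agrees with the claimed law for a.e.~$t$ by the Gaussian structure.
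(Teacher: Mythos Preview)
Your proof is correct and follows essentially the same route as the paper: apply the Gibbs-Markov decomposition on $U=\Lambda_M(x)\smallsetminus\{x\}$, split the harmonic extension into the $\Phi^{M,x}$ piece and the $\frakg^{\Delta^n}(\cdot)h^{D_N}(x)$ piece, and then invoke the identity underlying Lemma~\ref{lemma-3.1}. The only cosmetic difference is that the paper first writes the conditional expectation $\Phi^{M,x}(x+z)+\frakg^{\Delta^n}(z)h^{D_N}(x)=E(h^{D_N}(x+z)\mid\FF_{M,x})$ and then identifies the fluctuation, whereas you write the Gibbs-Markov decomposition $h^{D_N}=h^U+\psi$ first and then split~$\psi$; the content is identical.
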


\begin{proofsect}{Proof}
Our choice of~$M$ ensures~$\Lambda_M(0)=\Delta^n$. By the Gibbs-Markov property (Lemma~\ref{lemma-GM}) and the fact that $H^{(D_N\cap\Lambda_M(x))\smallsetminus\{x\}}(x+z,x)=\frakg^{\Delta^n}(z)$ we have
\begin{equation}
\Phi^{M,x}(x+z)+\frakg^{\Delta^n}(z)h^{D_N}(x)=E\bigl(h^{D_N}(x+z)\,\big|\,\sigma(h^{D_N}(y)\colon y\in\{x\}\cup\Lambda_M(x)^\cc)\bigr).
\end{equation}
Under the assumption that $\Lambda_M(x)\subset D_N$ we thus get
\begin{equation}
h^{D_N}(x+\cdot)-\Phi^{M,x}(x+\cdot)-\frakg^{\Delta^n}(\cdot)h^{D_N}(x)\,\laweq\,h^{\Delta^n\smallsetminus\{0\}}(\cdot).
\end{equation}
Conditional on~$\scrF_{M,x}$ the last two terms on the left-hand side are effectively constant, with the very last one 
equal to $\frakg^{\Delta^n}(\cdot)t$ on $\{h^{D_N}(0)=t\}$. The claim follows by noting, as observed in the proof of Lemma~\ref{lemma-3.1}, $h^{\Delta^n\smallsetminus\{0\}}+t\frakg^{\Delta^n}$ has the law of $h^{\Delta^n}$ conditioned on $h^{\Delta^n}(0)=t$.
\end{proofsect}

The role of the auxiliary process is to arrange that, after conditioning on the position and value at each relevant local maximum, the ``clusters'' associated with distinct local maxima are independent. As foreseen in \twoeqref{E:3.1}{E:3.2}, the following law  naturally arises,
\begin{equation}
\nu^{(M,t)}(\cdot) := P\Bigl(h^{\Delta^n}(0)-h^{\Delta^n}\in\cdot\,\Big|\,h^{\Delta^n}(0)=m_N+t,\, h^{\Delta^n}\le h^{\Delta^n}(0)\Bigr),
\end{equation}
where $n\in\N$ is such that $M=2^n$.
Indeed, we have:

\begin{lemma}
\label{lemma-5.12}
Let~$f=f(x,h,\phi)\colon D\times\R\times\R^{\Z^2}\to[0,\infty)$ be continuous with compact support and depending only on~$\{\phi(x)\colon x\in\Lambda_M(x)\}$. Define $\tilde f_{N,r}\colon D\times\R\to[0,\infty)$ by
\begin{equation}
\label{e:5.70}
\texte^{-\tilde f_{N,r}(x,h)}:=E_{\nu^{(M,h)}}\bigl(\,\texte^{-f(x,h,\phi)}\bigr),
\end{equation}
where $M=M(N,r)$ is as above. Then there is $r_0=r_0(f)$ such that for all $r\ge r_0$ and all~$N$ sufficiently large,
\begin{equation}
E\bigl(\texte^{-\langle\widehat\eta^D_{N,M},f\rangle}\bigr) = E\bigl(\texte^{-\langle\widehat\eta^D_{N,M},\tilde f_{N,r}\rangle}\bigr).
\end{equation}
\end{lemma}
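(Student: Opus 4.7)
The plan is to derive the identity via a conditional argument that exploits the Gibbs-Markov structure already built into the definition of $\widehat\eta^D_{N,M}$. The single key input is Lemma~\ref{lemma-5.10q}: conditional on $\FF_x:=\sigma(h^{D_N}(y)\colon y\in\{x\}\cup\Lambda_M(x)^\cc)$ and on $\{h^{D_N}(x)=t\}$, the field $h^{D_N}(x+\cdot)-\Phi^{M,x}(x+\cdot)$ on $\Lambda_M(x)$ is distributed as $h^{\Delta^n}$ conditioned on $h^{\Delta^n}(0)=t$, with $n$ such that $2^n=M$. Under this identification the cluster $\phi^x$ appearing in $\widehat\eta^D_{N,M}$ becomes $t-h^{\Delta^n}(\cdot)$ and the hat-local-max indicator becomes $\1_{\{h^{\Delta^n}\le h^{\Delta^n}(0)\}}$; by the definitions of $\nu^{(M,t-m_N)}$ and of $\tilde f_{N,r}$ in \eqref{e:5.70}, this yields the \emph{pointwise replacement identity}
\begin{equation*}
E\bigl[\1_{\{x\in\widehat\Theta\}}\,\texte^{-f(x/N,\,h^{D_N}(x)-m_N,\,\phi^x)}\,\big|\,\FF_x\bigr]=\texte^{-\tilde f_{N,r}(x/N,\,h^{D_N}(x)-m_N)}\,P\bigl(x\in\widehat\Theta\,\big|\,\FF_x\bigr),
\end{equation*}
valid for every $x\in D_N$ with $\Lambda_M(x)\subset D_N$, where $\widehat\Theta$ denotes the set of hat-local-maxes. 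Since $\tilde f_{N,r}(x/N,h^{D_N}(x)-m_N)$ is $\FF_x$-measurable, the right-hand side coincides with $E[\1_{\{x\in\widehat\Theta\}}\texte^{-\tilde f_{N,r}(x/N,h^{D_N}(x)-m_N)}\,|\,\FF_x]$, so the replacement $f\leftrightarrow\tilde f_{N,r}$ is exact at a single point.

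To globalize, I would first reduce to a finite collection of contributing points. Pick $\lambda>0$ and $k_0\ge1$ such that $f(x,h,\phi)=0$ for $|h|>\lambda$ and $f$ depends on $\phi$ only through coordinates in $\Delta^{k_0}$, and then choose $r_0=r_0(f)$ so that for $r\ge r_0$ and $N$ large: $M=M(N,r)>2^{k_0+1}$; every $x$ with $x/N$ in the $D$-projection of $\supp(f)$ satisfies $\Lambda_M(x)\subset D_N$ (via \eqref{E:1.1a}); and, by Lemma~\ref{lemma-separation}, any two distinct points of $\Gamma_N^D(\lambda)$ are separated by more than $3M$ on an event $\Xi$ of probability $1-o(1)$. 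Only points in $\widehat\Theta\cap\Gamma_N^D(\lambda)$ contribute to either Laplace functional, so on $\Xi$ the contributing points $x_1,\ldots,x_K$ have pairwise disjoint $\Lambda_M$-balls, all inside $D_N$. The Gibbs-Markov property (Lemma~\ref{lemma-GM}) applied to $D_N\setminus\bigcup_i(\Lambda_M(x_i)\setminus\{x_i\})$ renders the cluster fields $\{h^{D_N}(x_i+\cdot)-\Phi^{M,x_i}(x_i+\cdot)\}_{i=1}^K$ conditionally independent given the corresponding outer $\sigma$-algebra. Applying the pointwise identity factor-by-factor yields the simultaneous replacement, and taking expectations gives the equality on $\Xi$.

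The main obstacle is upgrading this to the \emph{exact} equality stated in the lemma, rather than to one that holds modulo the probability of the overlap event $\Xi^\cc$. I would expect this to be settled by taking $r_0$ sufficiently large that $\Xi^\cc$ either carries no configurations with contributing hat-local-maxes or its contribution can be absorbed through a careful iterative application of the single-point identity in an order compatible with the nested $\FF_{x}$-conditionings; a quantitative form of Lemma~\ref{lemma-separation}, combined with tightness of the centered maximum, should supply the necessary control. The secondary, routine point is the exclusion of $x$'s with $\Lambda_M(x)\not\subset D_N$, which is resolved by the compact support of $f$ in~$x$.
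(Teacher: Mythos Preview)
Your single-point replacement identity is correct and matches what the paper draws from Lemma~\ref{lemma-5.10q}. The gap is in the globalization. Restricting to a high-probability separation event~$\Xi$ and then ``conditioning on the outer $\sigma$-algebra'' determined by the contributing points $x_1,\dots,x_K$ is circular: that set is itself a function of the field inside the very boxes you wish to integrate out, so there is no deterministic conditioning $\sigma$-algebra available, and the ``factor-by-factor'' replacement cannot be carried out as written. Even granting the on-$\Xi$ step, you would only obtain $\bigl|E(\texte^{-\langle\widehat\eta,f\rangle})-E(\texte^{-\langle\widehat\eta,\tilde f\rangle})\bigr|\le 2P(\Xi^\cc)$, and Lemma~\ref{lemma-separation} never drives this to zero for fixed~$N$, so the \emph{exact} equality in the statement is out of reach. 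Your suggested fixes do not help: no choice of~$r_0$ makes $\Xi^\cc$ empty, and iterating the single-point identity through nested $\FF_{x}$'s breaks down precisely when boxes overlap, since the factor at~$x'$ is then not $\FF_{M,x}$-measurable.

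The paper's device is inclusion-exclusion: expand
\[
\texte^{-\langle\widehat\eta^D_{N,M},f\rangle}=\prod_{x\in D_N}\Bigl[1+\bigl(\texte^{-f(x/N,\,h^{D_N}_x-m_N,\,\phi^x)}-1\bigr)\1_{\{x\in\widehat\Theta\}}\Bigr]
\]
as a sum over \emph{deterministic} subsets $A\subset D_N$. For each fixed~$A$ that contributes (the paper argues its boxes are a.s.\ disjoint), the $\sigma$-algebra $\sigma(\FF_{M,x}\colon x\in A)$ is now well-defined, and the conditional-independence structure of Lemma~\ref{lemma-5.10q} gives the replacement $f\to\tilde f_{N,r}$ term-by-term inside the product over~$A$. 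Re-summing the inclusion-exclusion yields the identity exactly, with no error term; the compact $x$-support of~$f$ ensures $\Lambda_M(x)\subset D_N$ for every contributing~$x$ once $r\ge r_0(f)$, which is the only role of~$r_0$. The key conceptual point you are missing is that passing to deterministic index sets~$A$ is what de-randomizes the conditioning and makes the factorization legitimate.
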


\begin{proofsect}{Proof}
Recall our notation $\Theta_{N,r}^D$ for the set of sites in~$D_N$ where~$h^{D_N}$ has an $r$-local maximum and let~$\widehat\Theta_{N,M}^D$ analogously denote the set
\begin{equation}
\widehat\Theta_{N,M}^D:=\bigl\{x\in D_N\colon \max_{z\in\Lambda_M(x)}h^{D_N}(z)-\Phi^{M,x}(z)=h^{D_N}(x)\bigr\}.
\end{equation}
Using inclusion-exclusion, we then get
\begin{multline}
\label{E:5.73}
\quad
E\bigl(\texte^{-\langle\widehat\eta^D_{N,M},f\rangle}\bigr)=1
\\+\sum_{n=1}^{|D_N|}\sum_{\begin{subarray}{c}
A\subset D_N\\|A|=n
\end{subarray}}
E\biggl(\,\prod_{x\in A}\Bigl(\bigl(\texte^{-f(x/N,\,h^{D_N}(x)-m_N,\,h^{D_N}(x)-h^{D_N}(x+\cdot)+\Phi^{M,x}(x+\cdot))}-1\bigr)\1_{\{x\in\widehat\Theta_{N,M}^D\}}\Bigr)\biggr),
\end{multline}
where the sum actually terminates at the maximal number of distinct translates of~$\Lambda_{M}(0)$ one can center at points of~$D_N$ so that each center point belongs to only one of these sets. 

Since $h^{D_N}$ is continuously distributed, the collection of sets $\{\Lambda_M(x)\colon x\in A\}$ is a.s.\ disjoint for any~$A$ contributing (non-trivially) to the above sum. Thanks to our assumptions on~$f$, as soon as~$r$ is large enough (fixed) and~$N$ is larger than a constant times~$r$, we may assume that $\Lambda_M(x)\subset D_N$ for each~$x\in A$. Under such conditions Lemma~\ref{lemma-5.10q} tells us that, a.s.,
\begin{multline}
\qquad
E\Bigl(\texte^{-f(x/N,\,h^{D_N}(x)-m_N,\,h^{D_N}(x)-h^{D_N}(x+\cdot)+\Phi^{M,x}(x+\cdot))}
\1_{\{x\in\widehat\Theta_{N,M}^D\}}\,\Big|\,\FF_{M,x}\Bigr)
\\
=E_{\nu^{(M,t)}}\bigl(\texte^{-f(x/N,\,t,\phi)}\bigr)\Big|_{t := h^{D_N}(x)-m_N}
E\bigl(\1_{\{x\in\widehat\Theta_{N,M}^D\}}\,\big|\FF_{M,x}\bigr)
\qquad
\end{multline}
and by conditioning on~$\sigma(\FF_{M,x}\colon x\in A)$ we thus get
\begin{multline}
\qquad
E\biggl(\,\,\prod_{x\in A}\Bigl[\bigl(\texte^{-f(x/N,\,h^{D_N}(x)-m_N,\,h^{D_N}(x)-h^{D_N}(x+\cdot)+\Phi^{M,x}(x+\cdot))}-1\bigr)\1_{\{x\in\widehat\Theta_{N,M}^D\}}\Bigr]\biggr)
\\
=E\biggl(\,\prod_{x\in A}\Bigl[\bigl(\texte^{-\tilde f_{N,r}(x/N,\,h^{D_N}(x)-m_N)}-1\bigr)\1_{\{x\in\widehat\Theta_{N,M}^D\}}\Bigr]\biggr).
\qquad
\end{multline}
Plugging this back into the inclusion-exclusion formula \eqref{E:5.73}, the claim follows.
\end{proofsect}

Another reason why the auxiliary process is useful to work with is seen from:

\begin{lemma}
\label{lemma-5.13a}
Fix any $r\ge1$, any $j\ge1$ and any $c_1\in(0,\infty)$. For~$M=M(N,r)$ as above, uniformly in $f\in \Cb(\R^{\Delta^j})$ with $\Vert f\Vert_\infty\le c_1$ and uniformly in~$t$ on compact sets in~$\R$,
\begin{equation}
E_{\nu^{(M,t)}}(f)\,\underset{N\to\infty}\longrightarrow\, E_\nu(f),
\end{equation}
where~$\nu$ is the measure from Proposition~\ref{cor-4.16}.
\end{lemma}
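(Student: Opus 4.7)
The plan is to recognize this as a direct corollary of the asymptotic formulas in Proposition~\ref{prop-4.13} and Proposition~\ref{cor-4.16}. First, I apply Lemma~\ref{lemma-3.1} with pin value $p := m_N+t$ and upper bound $s := p$ to rewrite
\begin{equation*}
E_{\nu^{(M,t)}}(f) = \frac{E\bigl(f(\frakm_n^*-h^{\Delta^n})\,\1_{\{h^{\Delta^n}\le\frakm_n^*\}}\,\big|\,h^{\Delta^n}(0)=0\bigr)}{P\bigl(h^{\Delta^n}\le\frakm_n^*\,\big|\,h^{\Delta^n}(0)=0\bigr)},
\end{equation*}
where $\frakm_n^*(x):=(m_N+t)(1-\frakg^{\Delta^n}(x))$. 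Setting $t'_N:=t+(m_N-m_{2^n})$, we have $m_N+t=m_{2^n}+t'_N$; since $2^n\in[N/(2r),N/r]$ with $r$ fixed, the quantity $m_N-m_{2^n}$ is bounded in $N$ (the log--log correction vanishes in the limit), so $\{t'_N\colon N\ge 1\}$ remains in a compact subset of~$\R$ uniformly in $t\in[-t_0,t_0]$. Thus $\frakm_n^*$ matches the quantity $\frakm_n$ in Proposition~\ref{prop-4.13} with parameter $t'_N$.

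Next, I apply Proposition~\ref{prop-4.13} to the numerator (using our~$f$, valid once $j\le\ell$) and to the denominator (using the constant function~$1$): for any $\epsilon>0$ there is $\ell_0$ so that for $\ell\ge\ell_0$ and $n\ge\ell^8$,
\begin{equation*}
E_{\nu^{(M,t)}}(f)=\frac{\tfrac{2}{gn\log2}\,\Xiin_\ell(f)\,\Xiout_{n,\ell}(t'_N)+O(\epsilon/n)}{\tfrac{2}{gn\log2}\,\Xiin_\ell(1)\,\Xiout_{n,\ell}(t'_N)+O(\epsilon/n)}.
\end{equation*}
The bounds \eqref{E:4.32z}--\eqref{E:4.33z} ensure that $\Xiin_\ell(1)$ and $\Xiout_{n,\ell}(t'_N)$ are bounded from above and away from zero, uniformly in $\ell$ large, $n\ge\ell^8$, and $t'_N$ in the above compact range. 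Hence the common prefactor $\tfrac{2}{gn\log2}\Xiout_{n,\ell}(t'_N)$ cancels, leaving
\begin{equation*}
E_{\nu^{(M,t)}}(f)=\frac{\Xiin_\ell(f)}{\Xiin_\ell(1)}+o_N(1),\qquad N\to\infty.
\end{equation*}
By Proposition~\ref{cor-4.16}, $\Xiin_\ell(f)/\Xiin_\ell(1)\to\Xiin_\infty(f)/\Xiin_\infty(1)=E_\nu(f)$ as $\ell\to\infty$. Taking $N\to\infty$ first and then $\ell\to\infty$ yields the stated convergence.

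The main obstacle is to establish the claimed \emph{uniformity} in both $t$ and~$f$. Uniformity in $t$ on compacts is transparent, as all applications of Proposition~\ref{prop-4.13} above use $t'_N$ confined to a compact set depending only on~$t_0$ and~$r$, and the error bounds in that proposition are uniform on $\{|t|<t_0\}$. Uniformity in $f\in\Cb(\R^{\Delta^j})$ with $\|f\|_\infty\le c_1$ requires a brief inspection of the proof of Proposition~\ref{prop-4.13}: the three sources of $f$-dependence there are (i) the replacement of $h^{\Delta^n}$ by $\phi_\ell$ in the argument of~$f$, which uses uniform continuity of~$f$ on the compact sets where the decomposition is controlled; (ii) the $\|f\|_\infty$-bounded error terms from ignored small-probability events; and (iii) the Riesz--Markov identification in Proposition~\ref{cor-4.16}. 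Since~$j$ and~$c_1$ are fixed, (ii) is controlled by $c_1$, while for (i) one may first reduce, by a mollification argument with a kernel independent of~$f$, to functions with a uniform modulus of continuity on the ball of radius $C\log\ell$ in~$\R^{\Delta^j}$ (this ball captures the bulk of the conditional law by the tail estimates in Proposition~\ref{cor-4.16}); the resulting error is uniform in~$f$ satisfying our constraint. The convergence $\Xiin_\ell(f)/\Xiin_\ell(1)\to E_\nu(f)$ from Proposition~\ref{cor-4.16} is then uniform over the same class, completing the argument.
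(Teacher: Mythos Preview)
Your argument is essentially the paper's own proof: rewrite $E_{\nu^{(M,t)}}(f)$ as the ratio of the conditional expectations, apply Proposition~\ref{prop-4.13} to numerator and denominator, cancel the common factor $\Xiout_{n,\ell}$ using the two-sided bounds \eqref{E:4.32z}--\eqref{E:4.33z}, and invoke Proposition~\ref{cor-4.16}. Your explicit bookkeeping of the shift $t'_N=t+(m_N-m_{2^n})$ is a nice clarification that the paper leaves implicit.

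One caveat on the uniformity in~$f$: the mollification argument you sketch is not quite complete. Replacing $f$ by $f*\rho_\delta$ controls the modulus of continuity of the approximant, but the error $|E_\mu(f)-E_\mu(f*\rho_\delta)|$ is \emph{not} small uniformly over $\{f:\|f\|_\infty\le c_1\}$ from tightness of~$\mu$ alone; you would need some regularity of the measures (e.g., absolute continuity with locally bounded density on~$\R^{\Delta^j\smallsetminus\{0\}}$, which does hold here since both $\nu^{(M,t)}$ and the finite-$r$ conditionals defining~$\nu$ are Gaussians conditioned on positive-probability events). The paper's proof simply asserts that the errors $\epsilon_n(\ell),\tilde\epsilon_n(\ell)$ vanish ``uniformly \dots\ for~$f$ as above'' without spelling this out either. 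A cleaner route is to note that the errors in the proof of Proposition~\ref{prop-4.13} split into (a) $\|f\|_\infty$ times probabilities of small events, and (b) terms controlled by the oscillation of~$f$ over shrinking neighborhoods on a fixed compact set; the latter can be made uniform by using tightness of the family $\{\nu^{(M,t)}\}$ together with the absolute continuity just mentioned, or---more economically for the application in the proof of Theorem~\ref{thm-main}---by restricting to the equicontinuous family $\{e^{-f(x,h,\cdot)}\}_{(x,h)}$ that is actually used there.
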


\begin{proofsect}{Proof}
Let~$n$ be such that $M=2^n$ and let, as before, $\frakm_n(x)=(m_N+t)(1-\frakg^{\Delta^n}(x))$. From Proposition~\ref{prop-4.13} and $h^{\Delta^n}\leftrightarrow -h^{\Delta^n}$ symmetry we get
\begin{equation}
E_{\nu^{(M,t)}}(f)
=\frac{E(f(\frakm_n-h^{\Delta^n})\1_{\{h^{\Delta^n}\le\frakm_n\}}\,|\,h^{\Delta^n}(0)=0)}
{E(\1_{\{h^{\Delta^n}\le\frakm_n\}}\,|\,h^{\Delta^n}(0)=0)}
=\frac{\Xiin_\ell(f)\Xiout_{n,\ell}(t)+\tilde\epsilon_n(\ell)}{\Xiin_\ell(1)\Xiout_{n,\ell}(t)+\epsilon_n(\ell)},
\end{equation}
where $\epsilon_n(\ell)$ and~$\tilde\epsilon_n(\ell)$ tend to zero as~$n\to\infty$ followed by~$\ell\to\infty$, uniformly on compact sets of~$t\in\R$ and for~$f$ as above. In light of the bounds in \eqref{E:4.33z}, the right-hand side tends to that of \eqref{E:5.40} which by Proposition~\ref{cor-4.16} equals $E_\nu(f)$.
\end{proofsect}

We are finally ready to give the proof of the first main result of this paper:

\begin{proofsect}{Proof of Theorem~\ref{thm-main}}
Let~$f\colon D\times\R\times\R^{\Z^2}\to[0,\infty)$ be a function $f=f(x,h,\phi)$ that is continuous with compact support and dependent only on a finite number of coordinates of~$\phi$. Consider the expectation $E(\texte^{-\langle\widehat\eta^D_{N,M(N,r)},f\rangle})$ which, thanks to Lemma~\ref{lemma-5.12}, we can replace by $E(\texte^{-\langle\widehat\eta^D_{N,M(N,r)},\tilde f_{N,r}\rangle})$ once~$N\gg r\gg1$ with $\tilde f_{N,r}$ as in~\eqref{e:5.70}. 
Our aim is to derive a limit for this expectation using the results of Biskup and Louidor~\cite{BL2}, but for that we will need to replace $\tilde f_{N,r}$ by a continuous, compactly-supported function that does not depend on~$N$ and~$r$.
	  
For concreteness suppose that $f(x,h,\phi)$ is zero unless $|h|\le\lambda$ and that~$f$ depends only on $\{\phi(x)\colon x\in\Lambda_{r_0}(0)\}$ for some $r_0\ge1$. Recall also the definition of $M$ and $n$ from~\eqref{e:5.61}. By our assumptions on~$f$, the functions $\{\tilde f_{N,r}\colon  N,r\ge1\}$ are all supported in the same compact set in~$D\times\R$. Lemma~\ref{lemma-5.13a} then gives that, for each~$r\ge1$,
\begin{equation}
\tilde f_{N,r}(x,h)\,\underset{N\to\infty}\longrightarrow\, \tilde f(x,h),\qquad\text{uniformly in~$x$ and~$h$},
\end{equation}
where $\tilde f$ is defined by
\begin{equation}
\texte^{-\tilde f(x,h)}=E_\nu\bigl(\texte^{-f(x,h,\phi)}\bigr).
\end{equation}
Both~$\tilde f_{N,r}$ and~$\tilde f$ vanish unless~$|h|\le\lambda$ and so
\begin{equation}
\label{E:5.79}
\Bigl|\bigl\langle\widehat\eta^D_{N,M(N,r)},\,(\tilde f-\tilde f_{N,r})\bigr\rangle\Bigr|\le 2\Vert \tilde f-\tilde f_{N,r}\Vert_\infty
|\Gamma_N^D(\lambda)|\,,
\end{equation}
where the random variables~$\{|\Gamma_N^D(\lambda)|\colon N\ge1\}$ are known to be tight thanks to Lemma~\ref{lemma-level-set}.
Using this in conjunction with Lemma~\ref{lemma-4.17}, Proposition~\ref{lemma-4.18} and Lemma~\ref{lemma-5.12} yields
\begin{equation}
\begin{aligned}
\label{E:4.52}
E\bigl(\texte^{-\langle\eta^D_{N,r_N},f\rangle}\bigr)+o(1)
&=E\bigl(\texte^{-\langle\widehat\eta^D_{N,M(N,r)}, f\rangle}\bigr)
\\
&=E\bigl(\texte^{-\langle\widehat\eta^D_{N,M(N,r)}, \tilde f_{N,r}\rangle}\bigr)
\\
&=E\bigl(\texte^{-\langle\widehat\eta^D_{N,M(N,r)}, \tilde f\rangle}\bigr)+o(1)
\\
&=E\bigl(\texte^{-\langle\eta^D_{N,r_N},\tilde f\rangle}\bigr)+o(1),
\end{aligned}
\end{equation}
where the~$o(1)$ terms tend to zero in the limits $N\to\infty$ followed by~$r\to\infty$.

Since~$\tilde f\colon D\times\R\to[0,\infty)$ is continuous with compact support, Theorem~2.1 of Biskup and Louidor~\cite{BL2} shows
\begin{equation}
\label{E:4.53}
E\bigl(\texte^{-\langle\eta^D_{N,r_N},\tilde f\rangle}\bigr)
\,\underset{N\to\infty}\longrightarrow\,
E\biggl(\exp\Bigl\{-\int Z^D(\textd x)\otimes\texte^{-\alpha h}\textd h\,\,(1-\texte^{-\tilde f(x,h)})\Bigr\}\biggr).
\end{equation}
Invoking the definition of~$\tilde f$, the integral in \eqref{E:4.53} can be recast as
\begin{equation}
\int Z^D(\textd x)\otimes\texte^{-\alpha h}\textd h\otimes\nu(\textd\phi)\,\,(1-\texte^{-f(x,h,\phi)}).
\end{equation}
This yields the desired expression
\begin{equation}
E\bigl(\texte^{-\langle\eta^D_{N,r_N},f\rangle}\bigr)
\,\underset{N\to\infty}\longrightarrow\,
E\biggl(\exp\Bigl\{-\int Z^D(\textd x)\otimes\texte^{-\alpha h}\textd h\otimes\nu(\textd\phi)\,\,(1-\texte^{-f(x,h,\phi)})\Bigr\}\biggr)
\end{equation}
whenever~$f$ is as assumed above. But the class of such~$f$ is dense in the class of all continuous compactly-supported functions $f\colon D\times\R\times\R^{\Z^2}\to[0,\infty)$ with respect to the supremum norm, and so the claim follows by invoking a version of the estimate \eqref{E:5.79} (on the left-hand side) and the fact that~$Z^D(D)<\infty$ a.s.\ (on the right-hand side).
\end{proofsect}

\begin{proofsect}{Proof of Theorem~\ref{thm-lessmain}}
Let~$\nu$ be the measure constructed by the limit in Proposition~\ref{cor-4.16}. The proof of Theorem~\ref{thm-main} (specifically, Lemma~\ref{lemma-5.13a}) then shows that~$\nu$ is indeed the cluster law.
\end{proofsect}

\begin{proofsect}{Proof of Corollary~\ref{cor-cluster-process}}
Let~$f\colon\overline D\times\R\to[0,\infty)$ be a continuous function with compact support. For~$r\ge1$ define $f_r\colon\overline D\times\R\times\R^{\Z^2}\to[0,\infty)$ by
\begin{equation}
f_r(x,h,\phi):=\sum_{z\in\Lambda_r(0)}f(x,h-\phi_z).
\end{equation}
Let~$\lambda>0$ be such that $f(x,h)=0$ unless~$|h|\le\lambda$. We then observe that, if~$r_N>2r$, on the event when~$x,y\in\Gamma_N^D(\lambda)$ imply either $|x-y|<r$ or~$|x-y|>r_N$ (and assuming that no two values of~$h^{D_N}$ are the same) each point contributing to $\langle\eta_N^D,f\rangle$ lies within $r$-neighborhood of a unique $r$-local maximum of~$h^{D_N}$. Under such circumstances we have 
\begin{equation}
\langle \eta_N^D,f\rangle = \sum_{x\in D_N}\sum_{z\in\Lambda_r(x)}\1_{\{h^{D_N}_x=\max_{z\in\Lambda_r(x)}h^{D_N}_z\}} f\bigl(x/N,h^{D_N}_x-(h^{D_N}_x-h^{D_N}_z)\bigr)
\end{equation}
But on the same event we can freely replace $\Lambda_r(x)$ in the indicator on the right-hand side by~$\Lambda_{r_N}(x)$ thus leading to $\langle \eta_N^D,f\rangle=\langle\eta_{N,r_N}^D, f_r\rangle$. Using Lemma~\ref{lemma-separation}, it then follows that
\begin{equation}
\lim_{r\to\infty}\limsup_{N\to\infty}P\bigl(\langle \eta_N^D,f\rangle\ne\langle\eta_{N,r_N}^D, f_r\rangle\bigr)=0.
\end{equation}
As the maximum of $h^{D_N}-m_N$ is tight, Theorem~\ref{thm-main} can be applied for the test function~$f_r$ despite the fact that it does not have compact support. Invoking also the Monotone Convergence Theorem to deal with the limit~$r\to\infty$, we get
\begin{equation}
\label{E:5.88a}
E\bigl(\texte^{-\langle \eta_N^D,f\rangle}\bigr)\,\underset{N\to\infty}\longrightarrow\,
E\Bigl(\exp\Bigl\{-\int Z^D(\textd x)\otimes\texte^{-\alpha h}\textd h\otimes\nu(\textd\phi)\bigl(1-\texte^{- f_\infty(x,h,\phi)}\bigr)\Bigr\}\Bigr),
\end{equation}
where
\begin{equation}
f_\infty(x,h,\phi):=\lim_{r\to\infty} f_r(x,h,\phi)=\sum_{z\in\Z^2}f(x,h-\phi_z).
\end{equation}
The tightness of~$\eta_N^D$-processes --- or the growth estimate on samples from~$\nu$ in~\eqref{E:5.42} --- ensure that $f_\infty$ is finite almost everywhere under the intensity measure. Since,
\begin{equation}
\int\nu(\textd\phi)\bigl(1-\texte^{- f_\infty(x,h,\phi)}\bigr) = 1-E_\nu\exp\Bigl\{-\sum_{z\in\Z^2}f(x,h-\phi_z)\Bigr\}
\end{equation}
we easily check that the right-hand side of \eqref{E:5.88a} is the Laplace transform of the cluster process in the statement and that this process is locally finite as claimed.
\end{proofsect}

\subsection{Control of auxiliary process}
\label{sec-5.4}\noindent
In order to prove Proposition~\ref{lemma-4.18}, we need additional lemmas. The first one shows that the field~$\Phi^{M,x}$ is small uniformly in~$x$ and its argument.

\begin{lemma}
\label{lemma-max-auxfield}
For $M=M(N,r)$ as above and any $\delta>0$,
\begin{equation}
\label{E:5.69a}
\lim_{r\to\infty}\limsup_{N\to\infty}\,
P\biggl(\,\,\max_{x\in D_N^\delta}\,\max_{z\in\Lambda_r(x)}|\Phi^{M,x}(z)|>\frac{(\log r)^2}{\sqrt{\log N}}\biggr)=0
\end{equation}
\end{lemma}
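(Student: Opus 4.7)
My plan is to exploit a clean harmonic decomposition of $\Phi^{M,x}$ that separates the "large-scale average" of $h^{D_N}$ around $x$ from the pinning to zero at $x$. Specifically, assuming $\Lambda_M(x)\subset D_N$ (which holds for $x\in D_N^\delta$ once $r$ is large enough that $M\le\delta N$), I would define the auxiliary harmonic average
\begin{equation*}
\psi(z;x):=\sum_{y\in\partial\Lambda_M(x)}H^{\Lambda_M(x)}(z,y)\,h^{D_N}(y),
\end{equation*}
and then verify via the identity $H^{\Lambda_M(x)}(z,y)=H^{\Lambda_M(x)\setminus\{x\}}(z,y)+\frakg^{\Lambda_M(x)}(z-x)H^{\Lambda_M(x)}(x,y)$ that
\begin{equation*}
\Phi^{M,x}(z)=\bigl[\psi(z;x)-\psi(x;x)\bigr]+\psi(x;x)\,\bigl[1-\frakg^{\Lambda_M(x)}(z-x)\bigr].
\end{equation*}
The problem then reduces to bounding each term on the right uniformly in $x\in D_N^\delta$ and $z\in\Lambda_r(x)$.

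For the first bracket, I would invoke the harmonic-measure gradient estimate (Lemma~\ref{lemma-HM}) to get $|\psi(z;x)-\psi(x;x)|\le c\frac{r}{M}\max_{y\in\partial\Lambda_M(x)}|h^{D_N}(y)|$. Combined with the standard upper-tail bound on $\max_{D_N}|h^{D_N}|\le C\log N$ (Lemma~\ref{lemma-max-tail}) and the estimate $r/M\lesssim r^2/N$, this contribution is $O(r^2\log N/N)$, which is $o\bigl((\log r)^2/\sqrt{\log N}\bigr)$ with high probability for fixed $r$ as $N\to\infty$. For the second bracket, I would use the potential-kernel asymptotic (Lemma~\ref{lemma-G-potential}--\ref{lemma-potential}) to derive $1-\frakg^{\Lambda_M(x)}(z-x)=O(\log r/\log M)=O(\log r/\log N)$ uniformly for $z\in\Lambda_r(x)$.

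The only remaining task is a uniform-in-$x$ bound on $|\psi(x;x)|$. Here I would observe that, by the Gibbs-Markov property and Green-function asymptotics,
\begin{equation*}
\Var\bigl(\psi(x;x)\bigr)=G^{D_N}(x,x)-G^{\Lambda_M(x)}(x,x)=g\log(N/M)+O(1)\le C\log r,
\end{equation*}
so a Gaussian tail bound gives $P(|\psi(x;x)|>C'\sqrt{\log r\cdot\log N})\le 2N^{-C'^2/(2C)}$, and a union bound over the $O(N^2)$ points of $D_N^\delta$ yields $\max_{x}|\psi(x;x)|\le C'\sqrt{\log r\cdot\log N}$ with probability tending to $1$ as $N\to\infty$, once $C'$ is chosen large enough. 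Plugging this into the second bracket produces $O\bigl((\log r)^{3/2}/\sqrt{\log N}\bigr)$, which for $r$ sufficiently large is dominated by $(\log r)^2/\sqrt{\log N}$.

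I do not anticipate serious obstacles: the decomposition reduces everything to standard harmonic-analysis estimates plus a crude union bound. The most delicate point is the quantitative control of the ratio $1-\frakg^{\Lambda_M(x)}(z-x)$, since this factor is what forces the final bound to scale like a power of $\log r$ over $\sqrt{\log N}$ rather than being $O(1)$. The only mild technicality is ensuring the Gibbs-Markov setup applies, i.e.\ $\Lambda_M(x)\subset D_N$, which is automatic for $r$ large depending on $\delta$, and is harmless because the result takes $N\to\infty$ before $r\to\infty$.
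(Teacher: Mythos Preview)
Your proposal is correct and takes a genuinely different route from the paper's proof. The paper bounds $\Var\bigl(\Phi^{M,x}(x+z)\bigr)$ directly by writing it as a double sum of $H^{\Lambda_M(0)\smallsetminus\{0\}}(z,y)H^{\Lambda_M(0)\smallsetminus\{0\}}(z,\tilde y)G^{D_N}(x+y,x+\tilde y)$ and invoking the punctured-box harmonic-measure estimate $H^{\Lambda_M(0)\smallsetminus\{0\}}(z,y)\le c\,\frac{\log r}{M\log M}$ for $z\in\Lambda_r(0)$; this yields $\Var\le c(\log r/\log M)^2\log(N/M)$, after which a Gaussian tail plus a union bound over all $(x,z)$ finishes the job in one stroke. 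Your decomposition $\Phi^{M,x}(z)=[\psi(z;x)-\psi(x;x)]+\psi(x;x)[1-\frakg^{\Lambda_M(x)}(z-x)]$ instead isolates the same two effects explicitly: the factor $1-\frakg\asymp\log r/\log M$ is precisely the escape probability hidden in the paper's punctured harmonic-measure bound, and your $\Var(\psi(x;x))\le C\log r$ plays the role of the paper's $\log(N/M)$. What you gain is that you only ever use the \emph{unpunctured} harmonic-measure gradient (Lemma~\ref{lemma-HM}) and Green-function asymptotics, sidestepping the estimate $H^{\Lambda_M(0)\smallsetminus\{0\}}(z,y)\le c\,\frac{\log r}{M\log M}$ that the paper quotes as ``standard'' but does not prove; the price is an extra piece (the oscillation term) that you dispatch with the crude bound $\max_{D_N}|h^{D_N}|\le C\log N$. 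Both arguments give the same final scale $(\log r)^{3/2}/\sqrt{\log N}$ for the dominant contribution.
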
 

\begin{proofsect}{Proof}
Fix~$\delta>0$ and assume that~$N$ and~$r$ are so large that $\Lambda_M(x)\subset D_N^{\delta/2}$ for each~$x\in D_N^\delta$. Let $H(x,y)$ abbreviate the harmonic measure $H^{\Lambda_M(0)\smallsetminus\{0\}}(x,y)$. Then
\begin{equation}
\Var\bigl(\Phi^{M,x}(x+z)\bigr) = \sum_{y,\tilde y\in\partial\Lambda_M(0)}H(z,y)H(z,\tilde y)G^{D_N}(x+y,x+\tilde y).
\end{equation}
Plugging the asymptotic $G^{D_N}(x+y,x+\tilde y)\le c\log(N/(1+|y-\tilde y|))$ (cf Lemmas~\ref{lemma-G-potential}--\ref{lemma-potential}) implied by the containment $\Lambda_M(x)\subset D_N^\delta$ and using the standard bound on the harmonic measure (cf Lemma~\ref{lemma-HM})
\begin{equation}
\label{E:H-bound}
H(z,y)\le c_1\frac{\log r}{M\log M},\qquad z\in\Lambda_r(0),\,y\in\partial \Lambda_M(0),
\end{equation}
where we assume, e.g., $r\le M/2$,
we thus get that, for some constant $c_2\in(0,\infty)$,
\begin{equation}
\label{E:5.88}
\max_{x\in D_N^\delta}\,\max_{z\in\Lambda_r(x)}|\Var\bigl(\Phi(x+z)\bigr)\le c_2\Bigl(\frac{\log r}{\log M}\Bigr)^2\log(N/M).
\end{equation}
Since~$N/M$ is of order~$r$, the union bound combined with exponential Chebyshev inequality readily yield the claim.
\end{proofsect}

The next lemma uses the above control to show that the nearly-maximal $M$-local maxima of~$h^{D_N}$ exhaust, with high probability, those of~$h^{D_N}-\Phi^{M,x}$.

\begin{lemma}
\label{lemma-5.13}
For any $\lambda>0$ and $M=M(N,r)$ as above,
\begin{equation}
\lim_{r\to\infty}\,\limsup_{N\to\infty}\, 
P\left(\exists x\in \Gamma_N^D(\lambda)\colon\begin{aligned}
&h^{D_N}-\Phi^{M,x}\le h^{D_N}(x)\text{\rm\ in }\Lambda_M(x)
\\
&h^{D_N}\not\le h^{D_N}(x)\text{\rm\ in }\Lambda_M(x)
\end{aligned}\right)=0.
\end{equation}
\end{lemma}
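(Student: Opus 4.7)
The plan is to show that, up to events of vanishing probability, the event in the lemma forces the existence of two distinct near-maximal points whose values coincide to within $\epsilon_{N,r} := (\log r)^2/\sqrt{\log N}$, and then to bound the latter by a first-moment estimate combining the uniform bound on the number of $r$-local maxima above a threshold with a conditional Gaussian density bound.

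First, I would observe that if $x\in\Gamma_N^D(\lambda)$ triggers the event, then there is some $z\in\Lambda_M(x)\setminus\{x\}$ with $h^{D_N}(z)>h^{D_N}(x)$ and $h^{D_N}(z)-h^{D_N}(x)\le\Phi^{M,x}(z)$; in particular $z\in\Gamma_N^D(\lambda)$. Next, restrict to a high-probability good event in the order of limits $N\to\infty$ followed by $r\to\infty$, which is the intersection of: (a) every point of $\Gamma_N^D(\lambda)$ lying at $\ell^\infty$-distance $>\delta N$ from $\partial D_N$ for some small $\delta>0$ (standard tightness of the max-location, combined with Lemma~\ref{lemma-level-set}); (b) the separation dichotomy of Lemma~\ref{lemma-separation}, forcing any two distinct points of $\Gamma_N^D(\lambda)$ to be at distance $\le r$ or $>N/r$; and (c) $\max_{x\in D_N^\delta}\max_{z\in\Lambda_r(x)}|\Phi^{M,x}(z)|\le\epsilon_{N,r}$ from Lemma~\ref{lemma-max-auxfield}. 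Since $M(N,r)\le N/r$, clause~(b) forces $|x-z|\le r$, and (c) then gives $h^{D_N}(z)-h^{D_N}(x)\le\epsilon_{N,r}$. Letting $y$ be any $\ell^\infty$-argmax of $h^{D_N}$ over $\Lambda_M(x)$, separation yields $|x-y|\le r$, whence $\Lambda_r(y)\subset\Lambda_{2r}(x)\subset\Lambda_M(x)$ so that $y$ is in fact an $r$-local maximum of $h^{D_N}$; moreover $0\le h^{D_N}(y)-h^{D_N}(x)\le\Phi^{M,x}(y)\le\epsilon_{N,r}$.

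Up to a set of vanishing probability, the event is thus contained in
\begin{equation*}
\mathcal{B}_{N,r}:=\bigl\{\exists\, y,x\colon y\ne x,\ y\in\Theta^D_{N,r}\cap\Gamma_N^D(\lambda),\ x\in\Lambda_r(y),\ 0\le h^{D_N}(y)-h^{D_N}(x)\le\epsilon_{N,r}\bigr\}.
\end{equation*}
A first-moment bound gives
\begin{equation*}
P(\mathcal{B}_{N,r})\le(2r+1)^2\,\sum_{y\in D_N}\sup_{x\in\Lambda_r(y)\smallsetminus\{y\}}P\bigl(y\in\Theta^D_{N,r}\cap\Gamma_N^D(\lambda),\ |h^{D_N}(y)-h^{D_N}(x)|\le\epsilon_{N,r}\bigr).
\end{equation*}
The one-point intensity $\sum_y P(y\in\Theta^D_{N,r}\cap\Gamma_N^D(\lambda))=E|\Theta^D_{N,r}\cap\Gamma_N^D(\lambda)|$ is bounded uniformly in $N$ by the convergence of $\tilde\eta^D_N$ to the Cox process (Biskup--Louidor \cite{BL2}) --- equivalently, via Gibbs-Markov, by the first-moment estimates that follow from Lemma~\ref{lemma-4.9new} applied centered at each $y$ deep in $D_N$. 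For each fixed pair $(y,x)$, the conditional law of $h^{D_N}(y)-h^{D_N}(x)$ given $h^{D_N}(y)=a$ and the values of $h^{D_N}$ on $\partial\Lambda_M(y)\cup\{y\}$ is --- by the Gibbs-Markov representation of Lemma~\ref{lemma-5.10q} --- a non-degenerate Gaussian whose variance is bounded below by a constant $c(|x-y|)>0$ independent of $N$. The additional constraint that $y$ be an $r$-local maximum only restricts to the half-space $\{h^{D_N}(x)<a\}$, and by an FKG argument (Lemma~\ref{lemma-FKG}) in the spirit of Lemma~\ref{lemma-4.20} the density at~$0$ of $h^{D_N}(y)-h^{D_N}(x)$ remains bounded. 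Integrating over an $\epsilon_{N,r}$-window then yields $P(\cdot\,|\,y\in\Theta^D_{N,r}\cap\Gamma_N^D(\lambda))\le C\epsilon_{N,r}$, so that $P(\mathcal{B}_{N,r})=O(r^2\epsilon_{N,r})\to0$ as $N\to\infty$ for each fixed $r$, and the lemma follows by taking $r\to\infty$ at the end.

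The main obstacle is the uniform-in-$N$ conditional density bound in the presence of the $r$-local maximum constraint at $y$. This is handled by combining the pinned-DGFF Gibbs-Markov decomposition of Lemma~\ref{lemma-5.10q} (which removes the $N$-dependence on the $r$-neighborhood of $y$ up to a bounded discrete-harmonic shift) with FKG positive association to absorb the max-constraint, exactly as in Lemma~\ref{lemma-4.20}. All other inputs --- separation, tightness of the max-location, the $\Phi^{M,x}$-bound, and the first-moment intensity --- have already been established in the paper.
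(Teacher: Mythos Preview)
Your overall plan---reduce to a near-tie between the putative maximum and a neighbor, then bound by a first-moment argument combined with a Gaussian density estimate---matches the spirit of the paper's proof. However, there is a genuine gap in the first-moment step.

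You claim that $E|\Theta^D_{N,r}\cap\Gamma_N^D(\lambda)|$ is bounded uniformly in~$N$ for \emph{fixed}~$r$. This is false: a direct Gaussian tail computation gives $P(h^{D_N}(y)\ge m_N-\lambda)\asymp(\log N)/N^2$ for~$y$ deep in~$D_N$, and conditioning further on~$y$ being an $r$-local maximum only multiplies by a positive constant depending on~$r$ (not by anything decaying in~$N$), so the sum over~$y$ is of order~$\log N$. Convergence of $\tilde\eta^D_{N,r_N}$ to a Cox process in~\cite{BL2} is proved only for $r_N\to\infty$, and in any case weak convergence alone does not give bounded expectation. Lemma~\ref{lemma-4.9new} likewise does not help here, since it concerns the event that the field is dominated throughout a box of scale~$\Delta^n$, not merely in a fixed-size $r$-ball. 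With $E|\Theta^D_{N,r}\cap\Gamma_N^D(\lambda)|\asymp\log N$, your bound becomes $r^2\,\epsilon_{N,r}\cdot\log N\asymp r^2(\log r)^2\sqrt{\log N}\to\infty$.

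The missing idea is precisely the $1/\log N$ factor that comes from the $M$-local-maximum constraint. The paper keeps, in its union bound over~$x\in D_N^\delta$, the hypothesis $h^{D_N}-\Phi^{M,x}\le h^{D_N}(x)$ on all of~$\Lambda_M(x)$; by Lemma~\ref{lemma-5.10q} this becomes the pinned-DGFF event in~$\Delta^n$, whose probability is $O(1/n)$ by Lemma~\ref{lemma-4.9new}. Combined with the gap bound from Lemma~\ref{lemma-4.20} (which is linear in the gap size), the per-site probability is $c_N/N^2$ with $c_N\to0$, so the sum over $O(N^2)$ sites vanishes. You could repair your argument along similar lines by noting that your~$y$, being the argmax of~$h^{D_N}$ over $\Lambda_M(x)$ with $|x-y|\le r$, is actually an $(M-r)$-local maximum, and carrying that stronger constraint into~$\mathcal B_{N,r}$; then the first-moment is $E|\Theta^D_{N,M-r}\cap\Gamma_N^D(\lambda)|$, which \emph{is} $O(1)$ via Lemma~\ref{lemma-4.9new}.
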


\begin{proofsect}{Proof}
Note that, on the event whose probability we are to estimate, there are points $x\in D_N$ and $y\in\Lambda_M(x)$ such that $h^{D_N}(x)\ge m_N-\lambda$ and $h^{D_N}(y)>h^{D_N}(x)\ge h^{D_N}(y)-\Phi^{M,x}(y)$. Unless these points obey $r<|x-y|\le M$, this~$y$ must in fact lie in~$\Lambda_r(x)$. Since also $h^{D_N}-\Phi^{M,x}\le h^{D_N}(x)$ is assumed in~$\Lambda_M(x)$, the field $h^{D_N}-\Phi^{M,x}$ then has an $r$-local maximum at~$x$ but with a gap to the next value in~$\Lambda_r(x)$ less than $\max_{z\in\Lambda_r(x)}|\Phi^{M,x}(z)|$. Utilizing our uniform bound on this maximum from Lemma~\ref{lemma-max-auxfield}, we will show this to be unlikely to happen anywhere in~$D_N$.

Let $\lambda>0$ be fixed, pick~$\delta>0$ small and assume that $N$ is so large that, for a given $r\ge1$, we have $\delta N\gg r$. Abbreviate $a_N:=\log\log N/\log N$. Using the above observations, we bound the probability in the statement by
\begin{equation}
\label{E:5.82}
\begin{aligned}
\quad
P\bigl(\Gamma_N^D(\lambda)&\smallsetminus D_N^\delta\ne\emptyset\bigr)+P\bigl(\,\max_{x\in D_N}h^{D_N}(x)>m_N+\lambda'\bigr)
\\
&+P\bigl(\exists x,y\in\Gamma_N(\lambda)\colon r<|x-y|\le M\bigr)
\\
&\qquad\quad+P\Bigl(\,\max_{x\in D_N^\delta}\max_{y\in\Lambda_r(x)}|\Phi^{M,x}(y)|>a_N\Bigr)
\\
&\qquad\qquad\qquad+\sum_{x\in D_N^\delta}P\left(\begin{aligned}
&-\lambda\le h^{D_N}(x)-m_N\le\lambda'\\
&\,\,h^{D_N}-\Phi^{M,x}\le h^{D_N}(x)\text{\rm\ in }\Lambda_M(x)
\\
&\max_{\begin{subarray}{c}
y\in\Lambda_r(x)\\ y\ne x
\end{subarray}}
\bigl[h^{D_N}(y)-\Phi^{M,x}(y)\bigr]>h^{D_N}(x)-a_N
\end{aligned}\right)\,,
\end{aligned}
\end{equation}
where $\lambda'>0$ and where we used the union bound in the last step. Invoking Lemmas~\ref{lem:6}, \ref{lemma-max-tail}, \ref{lemma-separation} and~\ref{lemma-max-auxfield}, the first four probabilities tend to zero as~$N\to\infty$,~$r\to\infty$ and $\lambda'\to\infty$. Thanks to Lemma~\ref{lemma-5.10q}, the last probability (without the sum) is equal to
\begin{equation}
\int_{m_N-\lambda}^{m_N+\lambda'} P\bigl(h^{D_N}(x)\in\textd t\bigr) P\Bigl(h^{\Delta^n}\le t\text{ in }\Delta^n,\, h^{\Delta^n}\not\le t-a_N \text{ in }\Delta^k\smallsetminus\{0\}\,\Big|\,h^{\Delta^n}(0)=t\Bigr),
\end{equation}
where $n\in\N$ is such that $M=2^n$ and~$k\in\N$ is such that $2^k\ge r>2^{k-1}$. Noting that $m_N-m_{2^n}$ remains bounded as~$N\to\infty$ and using Lemma~\ref{lemma-3.1} to convert the conditioning to $h^{\Delta^n}(0)=0$, the second part of Lemma~\ref{lemma-4.20} shows that the integral is bounded by
\begin{equation}
\label{E:5.84}
\frac{c_N}n P\bigl(h^{D_N}(x)\ge m_N-\lambda\bigr),
\end{equation}
where $c_N\to0$ as~$N\to\infty$. For $x\in D_N^\delta$ we have $\Var(h^{D_N}(x))\ge g\log N-c$ for some constant $c\in(0,\infty)$. Plugging this to the standard Gaussian asymptotic and using some straightforward manipulations (cf, e.g., \eqref{E:Gauss-calc}), the expression \eqref{E:5.84} is bounded by a constant times $c_N/N^2$ once~$N$ is sufficiently large. As $|D_N^\delta|$ is at most a constant times~$N^2$, the sum in \eqref{E:5.82} tends to zero as $N\to\infty$, thus proving the claim.
\end{proofsect}

The next lemma complements this by showing that the correspondence between the local maxima of $h^{D_N}$ and those of $h^{D_N}-\Phi^{M,x}$ is, in fact, one-to-one  with high probability.

\begin{lemma}
\label{lemma-5.14}
For any $\lambda>0$ and $M=M(N,r)$ as above,
\begin{equation}
\lim_{r\to\infty}\,\limsup_{N\to\infty}\, 
P\left(\exists x\in \Gamma_N^D(\lambda)\colon\begin{aligned}&h^{D_N}\le h^{D_N}(x)\text{\rm\ in }\Lambda_M(x)\\
&h^{D_N}-\Phi^{M,x}\not\le h^{D_N}(x)\text{\rm\ in }\Lambda_M(x)\end{aligned}\right)=0.
\end{equation}
\end{lemma}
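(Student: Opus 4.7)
The plan is to mirror the proof of Lemma~\ref{lemma-5.13}. First I would carry out the same preliminary reductions: restrict to $x\in D_N^\delta$ via Lemma~\ref{lem:6}, cap $\max h^{D_N}\le m_N+\lambda'$ via Lemma~\ref{lemma-max-tail}, and exclude pairs of $\lambda$-near-maxima at intermediate distances via Lemma~\ref{lemma-separation}. Each reduction costs $o(1)$ in the limits $N\to\infty$ and then $r\to\infty$. On the remaining event, for each candidate $x$ with $h^{D_N}(x)=t\in[m_N-\lambda,m_N+\lambda']$, the lemma forces an existence of $y\in\Lambda_M(x)$ with $h^{D_N}(y)\le t$ and $h^{D_N}(y)-\Phi^{M,x}(y)>t$; in particular $\Phi^{M,x}(y)<0$ and $|\Phi^{M,x}(y)|>t-h^{D_N}(y)\ge 0$. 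I would then split into Case~1 where $y\in\Lambda_r(x)$ and Case~2 where $r<|y-x|\le M$.

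Case~1 would run essentially verbatim as in Lemma~\ref{lemma-5.13}: Lemma~\ref{lemma-max-auxfield} gives $\max_{y\in\Lambda_r(x)}|\Phi^{M,x}(y)|\le a_N:=(\log r)^2/\sqrt{\log N}$ with high probability, forcing $0\le t-h^{D_N}(y)\le a_N$ at some $y\in\Lambda_r(x)\setminus\{x\}$. Conditioning on $h^{D_N}(x)=t$, applying Lemma~\ref{lemma-5.10q}, and reducing to $h^{\Delta^n}(0)=0$ via Lemma~\ref{lemma-3.1}, this becomes a ``gap of size $a_N$'' event for the pinned field on $\Delta^n$. The gap bound \eqref{E:4.65a} in Lemma~\ref{lemma-4.20}, applied with $\delta=a_N\to 0$, shows the conditional probability is $c_N/n$ with $c_N\to 0$; summing over $x$ and integrating against the density of $h^{D_N}(x)$ exactly as in the derivation of \eqref{E:5.84} gives $o(1)$.

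For Case~2 I would again condition on $\mathscr{F}_{M,x}$ and $h^{D_N}(x)=t$. Writing $z:=y-x$ and applying Lemma~\ref{lemma-5.10q}, the condition $h^{D_N}(y)-\Phi^{M,x}(y)>t$ translates into $h^{\Delta^n\smallsetminus\{0\}}(z)>t(1-\frakg^{\Delta^n}(z))$ at some $z$ with $r<|z|\le 2^n$, while $h^{D_N}\le t$ in $\Lambda_M(x)$ becomes $h^{\Delta^n\smallsetminus\{0\}}+\Phi^{M,x}(x+\cdot)\le t(1-\frakg^{\Delta^n})$ on $\Delta^n$. Translating to the random-walk representation of Lemma~\ref{lemma-4.3}, this is a ``near-hit'' event for the bridge $\{S_k\}_{k=0}^{n+1}$ with $S_{n+1}=t$: on the event $\{K\le k_0\}$ of Lemma~\ref{lemma-4.7}, the walk must stay below the profile $\tau(k/n)+R_K(k)$ for $\tau=t-m_{2^n}=O(\log r)$ and additional deterministic shifts inherited from $\Phi^{M,x}$, and yet come within $O(\widetilde R_K(k))$ of this profile at some scale $k>\log_2 r$. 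The desired bound $c_r/n$ with $c_r\to 0$ as $r\to\infty$ then follows from a ``thin-strip'' variant of the Brownian-bridge estimate of Proposition~\ref{prop-mine-too-corollary} combined with the entropic-repulsion decay of Lemma~\ref{lemma-entropy-RWB}: once $K\le k_0$, the walk is repelled well above the profile at scale $k>\log_2 r$ except on an event of probability $\le c\,r^{-1/16}$, uniformly in the conditioning and in $N$.

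The hard part is Case~2, specifically extracting the decaying factor $c_r$. Merely bounding by condition~(A) and using an extension of Lemma~\ref{lemma-4.9new} (with offset $\tau=O(\log r)$, via the $(1+s-t)^2$ factor in Lemma~\ref{lemma-4.7}) yields only $C(\log r)^2/n$; when summed against $\sum_{x\in D_N^\delta}P(h^{D_N}(x)\approx m_N)\asymp \log N$ this leaves a residue of order $(\log r)^2$, which does not vanish as $N\to\infty$. The extra $c_r\to 0$ factor must be obtained by coupling the exceedance condition at scale $|z|>r$ with the entropic-repulsion pushdown implicit in~(A): the latter forces the bridge $S_k$ to rise well above the tilted curve at scales $k\ge\log_2 r$, so a near-touch at any such scale is rare, quantified by the $k^{-1/16}$ decay in Lemma~\ref{lemma-entropy-RWB} uniformly over the deterministic shifts produced by $\Phi^{M,x}$.
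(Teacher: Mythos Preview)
Your approach diverges from the paper's, and your Case~2 carries a genuine gap.

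The paper splits at $M/2$, not at $r$. In its Case~1 (violation somewhere in $\Lambda_{M/2}(x)$) the key observation is that $h^{D_N}\le t$ together with $h^{D_N}-\Phi^{M,x}\not\le t$ forces $\Phi^{M,x}<0$ somewhere in $\Lambda_{M/2}(x)$; by harmonicity this amounts to $\Phi^{M,x}+t\,\frakg^{D_N}<0$ on $\partial\Delta^{n-1}$, where $t\,\frakg^{D_N}\approx 2\sqrt g\,\log r$. The paper then uses FKG to enlarge the conditioning from $\Lambda_M(x)$ to all of $D_N$ and invokes the explicit representation $\Phi^{M,0}(z)=\sum_{k=n+1}^{n+q}(\frakb_k(z)\varphi_k(0)+\chi_k(z))$ with $q\sim\log_2 r$; the uniform Gaussian tails of these \emph{outer} concentric variables (Lemmas~\ref{lemma-3.4}, \ref{lemma-3.6a}) give conditional probability $\le c_1\texte^{-c_2(\log r)^2}$, which beats the factor~$r^2$ from summing over local maxima. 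In its Case~2 (violation in the annulus $A_M(x)=\Lambda_M(x)\smallsetminus\Lambda_{M/2}(x)$), Lemma~\ref{lemma-5.10q} reduces $\{h^{D_N}-\Phi^{M,x}>t\text{ in }A_M(x)\}$ to $\{\max_{\Delta^n\smallsetminus\Delta^{n-1}}h^{\Delta^n\smallsetminus\{0\}}>m_M+2\sqrt g\log r-c\}$, and Lemma~\ref{lem:6} bounds this by $Cr^{-4}\log r$; FKG separates it from the decreasing event $\{h^{D_N}\le t\text{ in }\Lambda_M(x)\}$, and summing over~$x$ gives $O(r^{-2}\log r)$.

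Your Case~2 is where the difficulty lies, as you acknowledge. The proposed entropic-repulsion fix requires control of the ``deterministic shifts produced by $\Phi^{M,x}$'' throughout $\Lambda_M(x)\smallsetminus\Lambda_r(x)$, but Lemma~\ref{lemma-max-auxfield} bounds $\Phi^{M,x}$ only on $\Lambda_r(x)$; away from there, $\Phi^{M,x}$ is essentially the harmonic extension of $h^{D_N}|_{\partial\Lambda_M(x)}$ and has fluctuations of order $\sqrt{\log N}$, so the condition $h^{\Delta^n\smallsetminus\{0\}}+\Phi^{M,x}(x+\cdot)\le t(1-\frakg^{\Delta^n})$ does not reduce to a random-walk event with uniformly bounded perturbation. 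Moreover, at scales $k$ near $n$ the bridge is at its endpoint and Lemma~\ref{lemma-entropy-RWB} gives nothing; there one genuinely needs the upper-tail bound for the maximum, which is exactly the paper's Case~2 device. The paper's insight --- that the violation forces negativity of $\Phi^{M,x}$, and that $\Phi^{M,x}$ is built from the few \emph{outer} layers of the concentric decomposition --- is what circumvents the entire entropic-repulsion computation you are attempting.
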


\begin{proofsect}{Proof}
Given any~$\delta>0$ and~$\lambda'>0$, by Lemma~\ref{lem:6}, we may restrict the event to $x\in D_N^\delta$ and $h^{D_N}(x)\le m_N+\lambda'$ as soon as~$N$ is sufficiently large. We will deal separately with  two cases depending on whether  $h^{D_N}-\Phi^{M,x}\not\le h^{D_N}(x)$ occurs in~$\Lambda_{M/2}(x)$  or  in~$\Lambda_M(x)\smallsetminus\Lambda_{M/2}(x)$.

\smallskip\noindent
\textsl{CASE 1:} Suppose that $h^{D_N}-\Phi^{M,x}\not\le h^{D_N}(x)$ occurs in~$\Lambda_{M/2}(x)$ for some~$x\in D_N^\delta$ and note that, since $h^{D_N}\le h^{D_N}(x)$, this in particular forces $\Phi^{M,x}\not\ge0$ in~$\Lambda_{M/2}(x)$. Applying the union bound, the relevant probability is at most
\begin{equation}
\label{E:5.103}
\sum_{x\in D_N^\delta}P\Bigl(
h^{D_N}(x)-m_N\in[-\lambda,\lambda'],\, 
h^{D_N}\le h^{D_N}(x) \text{ in }\Lambda_M(x),\,\Phi^{M,x}\not\ge 0\text{ in }\Lambda_{M/2}(x)
\Bigr).
\end{equation}
We will bound the probability under the sum by conditioning on $h^{D_N}(x)=m_N + t$ (with~$t \in[-\lambda,\lambda'])$ and on $h^{D_N}\le m_N + t$ in~$\Lambda_M(x)$. 

Focusing on the conditional probability only, the strong-FKG property (Lemma~\ref{lemma-FKG}) and the fact that~$\Phi^{M,x}$ is an increasing function of $h^{D_N}$ then imply
\begin{multline}
\qquad
P\Bigl(\Phi^{M,x}\not\ge 0\text{ in }\Lambda_{M/2}(x)\,\Big|\, h^{D_N}\le m_N + t
\text{ in }\Lambda_M(x),\,h^{D_N}(x)=m_N + t\Bigr)
\\
\le
P\Bigl(\Phi^{M,x}\not\ge0\text{ in }\Lambda_{M/2}(x)\,\Big|\, h^{D_N}\le m_N + t
\text{ in }D_N,\,h^{D_N}(x)=m_N + t\Bigr).
\qquad
\end{multline}
We now assume for simplicity (and without loss of generality) that~$x=0$ and, abusing our earlier notation, interpret $D_N$ as a domain such that
\begin{equation}
\Lambda_M(0)=\Delta^n\subseteq D_N=: \Delta^{n+q},
\end{equation}
where $n\in\N$ is such that~$M=2^n$ and where $q=q(r)=r\log 2+O(1)$. In this notation, Lemma~\ref{lemma-3.1} shows that the conditional probability becomes
\begin{equation}
\label{E:5.107}
P\Bigl(\Phi^{M,0}+(m_N+t)\frakg^{D_N}\not\ge0\text{ in }\Delta^{n-1}\,\Big|\, h^{D_N}\le (m_N+t)(1-\frakg^{D_N}),\,\,h^{D_N}(x)=0\Bigr)
\end{equation}
where $\Phi^{M,0}$ admits the representation
\begin{equation}
\label{E:5.108}
\Phi^{M,0}(z) = \sum_{k=n+1}^{n+q}\bigl(\frakb_k(z)\varphi_k(0)+\chi_k(z)\bigr),\qquad z\in\Delta^n(0).
\end{equation}
 This  is checked by noting that the field on the right agrees with $h^{D_N}$ on~$\partial\Delta^n$, is harmonic in $\Delta^n\smallsetminus\{0\}$ and equal to zero at~$z=0$. In light of the harmonicity of $z\mapsto\Phi^{M,0}(z)+(m_N+t)\frakg^{D_N}(z)$ on $D_N\smallsetminus\{0\}$ and the fact that this function vanishes at $z:=0$, by the maximum principle it suffices to estimate the probability that this function takes a negative value on $\partial\Delta^{n-1}$. For that we note
\begin{equation}
m_N\frakg^{D_N}(z)=2\sqrt g\log r+O(1),\qquad z\in\partial\Delta^{n-1},
\end{equation}
and so we can bound the probability in \eqref{E:5.107} by
\begin{equation}
\label{E:5.99}
P\Bigl(\Phi^{M,0}\not\ge -2\sqrt g\log r-c\text{ on }\partial\Delta^{n-1}\,\Big|\, h^{D_N}\le (m_N+t)(1-\frakg^{D_N}),\,\,h^{D_N}(x)=0\Bigr)
\end{equation}
for some~$c=c(\lambda)\in(0,\infty)$. 

We are now ready to derive the desired bound. Comparing the event in \eqref{E:5.99} with \eqref{E:5.108} and assuming~$r$ to be large, on the event in question we have
\begin{equation}
\max\Bigl\{\max_{n\le k\le n+q}|\varphi_k(0)|,\,\max_{n\le k\le n+q}\max_{z\in\Delta^n}|\chi_k(z)|\Bigr\} > c'\log r
\end{equation}
for some~$c'\in(0,\infty)$ depending only on the constant in Lemma~\ref{lemma-3.6a}. As was used in the proof of Lemma~\ref{lemma-4.2u}, the conditioning on~$h^{D_N}(0)=0$ amounts to changing $\varphi_k(0)$ to~$\varphi_k(0)-c_n(k)S_{n+1}$ which have a uniform Gaussian tail under the conditioning. By Lemma~\ref{lemma-3.6a} a similar statement holds for the~$\chi_k$'s. Following the proof of Lemmas~\ref{lemma-4.7}, and invoking Lemma~\ref{lemma-4.9new} for the lower bound on the conditional event, the probability in \eqref{E:5.99} is thus at most $c_1\texte^{-c_2(\log r)^2}$ uniformly in~$t\in[-\lambda,\lambda']$ (and uniformly in all shifts of~$D_N$ for which~$0$ lies in~$D_N^\delta$). 

The quantity in \eqref{E:5.103} is therefore bounded~by
\begin{equation}
\label{E:5.101}
c_1\texte^{-c_2(\log r)^2}\sum_{x\in D_N^\delta}P\Bigl(x\in\Gamma_N^D(\lambda),\, h^{D_N}\le h^{D_N}(x)\text{ in }\Lambda_M(x)\Bigr).
\end{equation}
Writing the sum as an expectation of the sum of indicators, the fact that the field values are continuously distributed and~$M\ge N/(2r)$ shows that the sum (of indicators) is bounded pointwise (a.s.) by a constant times~$r^2$, uniformly in~$N$ (sufficiently large). The expression in \eqref{E:5.101} thus tends to zero as~$r\to\infty$, thus taking care of the case when $h^{D_N}-\Phi^{M,x}\not\le h^{D_N}(x)$ occurs in~$\Lambda_{M/2}(x)$.

\smallskip\noindent
\textsl{CASE 2:} Next let us assume that $h^{D_N}-\Phi^{M,x}\not\le h^{D_N}(x)$ occurs in~$A_M(x):=\Lambda_{M}(x)\smallsetminus\Lambda_{M/2}(x)$. By way of a union bound, we can fix that~$x$ and estimate the probability for that~$x$ only. Recall the notation~$\FF_{M,x}$ for the $\sigma$-algebra from Lemma~\ref{lemma-5.10q}. Noting that $\Phi^{M,x}$ and $h^{D_N}(x)$ are measurable with respect to~$\FF_{M,x}$, by conditional independence (cf Lemma~\ref{lemma-5.10q})
\begin{multline}
\label{E:5.102}
\qquad
P\Bigl(h^{D_N}\le h^{D_N}(x)\text{\rm\ in }\Lambda_M(x),\,h^{D_N}-\Phi^{M,x}\not\le h^{D_N}(x)\text{\rm\ in }A_M(x)\,\Big|\,\FF_{M,x}\Bigr)
\\
= P\Bigl(h^{D_N}\le h^{D_N}(x)\text{\rm\ in }\Lambda_M(x)\,\Big|\,\FF_{M,x}\Bigr)
\\
\times 
P\Bigl(h^{D_N}-\Phi^{M,x}\not\le h^{D_N}(x)\text{\rm\ in }A_M(x)\,\Big|\,\FF_{M,x}\Bigr).
\qquad
\end{multline}
Letting $n\in\N$ be such that $M=2^n$, Lemma~\ref{lemma-5.10q} along with the fact that $(m_N-\lambda)(1-\frakg^{\Delta^n}) - m_M \ge 2\sqrt g\log r-c$ on~$\Delta^n\smallsetminus\Delta^{n-1}$ then shows that on $\{h^{D_N}(x)\ge m_N-\lambda\}$ we have the pointwise (a.s.) inequality
\begin{multline}
\qquad
P\Bigl(h^{D_N}-\Phi^{M,x}\not\le h^{D_N}(x)\text{\rm\ in }A_M(x)\,\Big|\,\FF_{M,x}\Bigr)
\\
= P\Bigl(h^{\Delta^n\smallsetminus\{0\}}-(m_N+t)(1-\frakg^{\Delta^n})\not\le0\text{ in }\Delta^n\smallsetminus\Delta^{n-1}\Bigr)\Bigr|_{t:=h^{D_N}(x)}
\\
\le P\Bigl(\,\,\max_{z\in\Delta^n\smallsetminus\Delta^{n-1}} h^{\Delta^n\smallsetminus\{0\}}(z) > m_M + 2\sqrt g\log r-c\Bigr).
\qquad
\end{multline}
Using Lemma~\ref{lemma-stoch-order} to replace $h^{\Delta^n\smallsetminus\{0\}}$ by $h^{\Delta^n}$, the sharp upper tail of the maximum in Lemma~\ref{lem:6} bounds this probability by a constant times~$r^{-4}\log r$. Using this in \eqref{E:5.102}, the probability in the case under consideration is then bounded by a constant times
\begin{equation}
\frac{\log r}{r^4}\sum_{x\in D_N^\delta}P\bigl(h^{D_N}\le h^{D_N}(x)\text{\rm\ in }\Lambda_M(x)\bigr).
\end{equation}
As argued before, the sum is at most a constant times~$r^2$ uniformly in~$N$ and so the claim follows by taking~$r\to\infty$.
\end{proofsect}

With the above lemmas in hand, the proof of the last missing step in the proof of our main theorems is quite immediate:

\begin{proofsect}{Proof of Proposition~\ref{lemma-4.18}}
We will write $M$ in place of $M(N,r)$. Suppose that~$f(x,h,\phi)$ is zero unless~$|h|\le\lambda$ and assume~$f$ only depends on~$\phi(z)$ for~$z\in\Lambda_{r_0}(0)$ for some $r_0\ge1$. Introduce an intermediate (auxiliary) process
\begin{equation}
\widetilde\eta^D_{N,M}:=\sum_{x\in D_N}
\1_{\{h_x^{D_N}=\max_{z\in \Lambda_M(x)}h^{D_N}_x\}}
\delta_{\,x/N}\otimes\delta_{\,h^{D_N}_x-m_N}\otimes\delta_{\,\{h^{D_N}_x-h^{D_N}_{x+z}+\Phi^{M,x}_{x+z}\colon z\in\Z^2\}}.
\end{equation}
Lemmas~\ref{lemma-5.13}--\ref{lemma-5.14} show that the $M$-local extrema of $h^{D_N}$ and $h^{D_N}-\Phi^{M,\cdot}$ are in one-to-one correspondence with probability tending to one in the limit as $N\to\infty$ and~$r\to\infty$. It follows that, for any~$f$ as above,
\begin{equation}
\lim_{r\to\infty}\lim_{N\to\infty}P\bigl(\langle\widetilde\eta^D_{N,M},f\,\rangle\ne\langle\widehat\eta^D_{N,M},f\,\rangle\bigr)=0.
\end{equation}
But the assumptions on continuity and support of~$f$ imply that for each~$\epsilon>0$ there is~$\delta>0$ such that for all~$x$ and~$h$,
\begin{equation}
\max_{x\in\Lambda_r(0)}\bigl|\phi(x)-\phi'(x)\bigr|<\delta\quad\Rightarrow\quad \bigl|f(x,h,\phi)-f(x,h,\phi')\bigr|<\epsilon.
\end{equation}
In light of Lemma~\ref{lemma-max-auxfield}, for each~$\epsilon>0$ we thus have
\begin{equation}
\lim_{N\to\infty}P\Bigl(\bigl|\langle\widetilde\eta^D_{N,M},f\,\rangle-\langle\eta^D_{N,M},f\,\rangle\bigr|>\epsilon\Bigr)=0
\end{equation}
as soon as $r>r_0$. The claim follows.
\end{proofsect}

\section{Local limit theorem and freezing}
\label{sec6}\noindent
In this section we complete the proofs of the remaining results: the local limit theorem for the absolute maximum (Theorem~\ref{thm-LLL}) and then the results on the Liouville measure in glassy phase and freezing (Theorem~\ref{thm-2.7} and Corollaries~\ref{cor-2.7}--\ref{cor-2.8}).

\subsection{Local limit theorem for the absolute maximum}
We begin with the local limit theorem for both position and value of the global maximum. The starting point is a reformulation of Proposition~\ref{prop-4.13} for general outer domains. For any integer $q\in\Z$ define the square domain
\begin{equation}
S^q:=(-2^{q},2^q)\times (-2^{q},2^q).
\end{equation}
Thanks to translation invariance of the DGFF and scaling, we can and will restrict attention to continuum domains in the class
\begin{equation} 
\mathfrak D_q:=\bigl\{D\in\mathfrak D\colon \overline{S^{1}}\subseteq D\subseteq S^q\bigr\}
\end{equation}
where~$q>1$ is an arbitrary (but fixed) integer.
The definition ensures the following property: If~$\{D_N\}$ is a sequence of approximating domains satisfying~\twoeqref{E:1.1}{E:1.1a} and
\begin{equation}
\label{E:6.3q}
n:=\min\{m\in\N\colon 2^m\ge N\},
\end{equation}
then for all $N$ sufficiently large,
\begin{equation}
\label{E:6.4q}
\Delta^{n-\ell-1}\subseteq
S_N^{-\ell}\subseteq\Delta^{n-\ell}\subseteq\Delta^n\subseteq D_N\subseteq\Delta^{n+q},
\end{equation}
where we set $S^{-\ell}_N:=\{x\in\Z^2\colon x/N\in S^{-\ell}\}$. The reason for inserting $S^{-\ell}_N$ in-between $\Delta^{n-\ell-1}$ and~$\Delta^{n-\ell}$ is that, unlike these two domains (assuming~$n$ is tied to~$N$ as in \eqref{E:6.3q}), $S^{-\ell}_N$ has a well defined scaling limit as~$N\to\infty$. 

Pick $\ell\ge1$ and given~$D\in\mathfrak D_q$, consider a sequence $\{D_N\}$ of approximating domains satisfying \twoeqref{E:1.1}{E:1.1a} and \eqref{E:6.4q}. For $h^{D_N}$ the DGFF in~$D_N$, set
\begin{equation}
\Psi_{N,\ell}(x):=E\bigl(h^{D_N}(x)\,\big|\,\sigma(h^{D_N}(z)\colon z\in \partial S^{-\ell}_N)\bigr).
\end{equation}
For each $t\in\R$ and for~$m_N$ as in \eqref{E:mN}, we then define an analogue of $\Xiout_{n,\ell}(t)$ from \eqref{E:5.2ua} as
\begin{equation}
\label{E:5.2ua-gen}
\Xi^{D}_{N,\ell}(t):=
E\biggl(\Psi_{N,\ell}(0)\,\1_{\{\Psi_{N,\ell}(0)\in[\ell^{1/6},\ell^2]\}}\prod_{x\in D_N\smallsetminus S^{-\ell+1}_N}\1_{\{h^{D_N}(x)\le\frakm_N(x,t)\}}\,\bigg|\,h^{D_N}(0)=0\biggr),
\end{equation}
where, abusing our earlier notation slightly,
\begin{equation}
\frakm_N(x,t):= (m_N+t)(1-\frakg^{D_N}(x))
\end{equation}
with $\frakg^D$ as defined in Section~\ref{sec-3.1}. Recall that $\Xiin_\ell(1)$ denotes the quantity from \eqref{E:5.1ua} for~$f:=1$ and that $\Xiin_\infty(1)=\lim_{\ell\to\infty}\Xiin_\ell(1)$ exists. Our rewrite of Proposition~\ref{prop-4.13} is as follows:

\begin{proposition}
\label{prop-4.13-gen}
For each~$q>1$, each $\epsilon>0$ and each $t_0>0$ there is $\ell_0\ge1$ such that for all $t\in\R$ with $|t|<t_0$, all $\ell\ge\ell_0$ and all $D\in\mathfrak D_q$,
\begin{equation}
\label{E:4.30z-gen}
\biggl|\,P\Bigl(h^{D_N}\le m_N+t\,\Big|\,h^{D_N}(0)=m_N+t\Bigr)
-\frac 2{g\log N}\,\Xiin_\infty(1)\,\Xi^D_{N,\ell}(t)\biggr|\le \frac \epsilon {\log N}
\end{equation}
holds true for any sequence $\{D_N\}$ corresponding to~$D$ via \twoeqref{E:1.1}{E:1.1a} as soon as $N$ is so large that $\ell\le (\log N)^{1/8}$ and \eqref{E:6.4q} apply.
\end{proposition}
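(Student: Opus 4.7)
The proof will closely parallel that of Proposition~\ref{prop-4.13}, but with the outer domain $\Delta^n$ replaced by $D_N$, the test function specialized to $f\equiv1$, and with one final passage of~$\ell$ to infinity in the inner factor. Set $\frakm_N(x,t):=(m_N+t)(1-\frakg^{D_N}(x))$ and invoke Lemma~\ref{lemma-3.1} to rewrite the probability in~\eqref{E:4.30z-gen} as $P(h^{D_N}\le\frakm_N(\cdot,t)\,|\,h^{D_N}(0)=0)$. We then apply the concentric decomposition of Section~\ref{sec3} along the nested sequence $\{0\}=\Delta^0\subset\Delta^1\subset\cdots\subset\Delta^{n-1}\subset D_N$, where $n$ is as in~\eqref{E:6.3q}. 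The containment $\Delta^{n-1}\subset D_N\subseteq\Delta^{n+q}$ ensured by~$D\in\mathfrak D_q$ via \eqref{E:6.4q}, together with Lemma~\ref{lemma-gen-domain}, guarantees that all the key estimates on the concentric decomposition---Lemmas~\ref{lemma-3.6a}, \ref{lemma-4.7}, \ref{lemma-4.9new}, \ref{lemma-4.20}---continue to hold with constants depending only on~$q$. In particular, the increments $\varphi_0(0),\dots,\varphi_{n-1}(0)$ and associated random walk $S_1,\dots,S_n$ retain verbatim their distributions from the square case; only the last variance, $\Var(\varphi_n(0))$, is affected by the replacement of $\Delta^n$ by $D_N$, and it remains bounded above and away from zero uniformly in~$D\in\mathfrak D_q$.

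With this machinery in place, we follow the argument leading to~\eqref{E:5.24}--\eqref{E:5.33} in the proof of Proposition~\ref{prop-4.13}. The event $\{h^{D_N}\le\frakm_N\}$ is split into three pieces: an inner part supported on $\Delta^k\subset S^{-\ell}_N$ (for some fixed small $k$, introduced to deploy Lemma~\ref{lemma-4.20}), a middle part requiring $S_j\ge -R_K(j)$ for $\ell\le j\le n-\ell$, and an outer part supported on $D_N\smallsetminus S^{-\ell+1}_N$. We condition on the $\sigma$-algebra $\scrF_\ell:=\sigma(S_1,\dots,S_\ell;\,h^{D_N}(z)\colon z\not\in S^{-\ell}_N)$ and exploit the fact that, on $\{h^{D_N}(0)=0\}$, the conditional expectation $\Psi_{N,\ell}(0)$ plays precisely the role of $S_{n-\ell}$ in the square case---both being the value at~$0$ of the harmonic extension of the boundary data one scale inside. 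The middle bridge factor then yields, via an analog of Lemma~\ref{lemma-5.6} built from the reduction in Lemma~\ref{lemma-reduction} and the Brownian-bridge Proposition~\ref{prop-mine-corollary}, the asymptotic
\[
P\Bigl(\bigcap_{j=\ell}^{n-\ell}\{S_j\ge 0\}\,\Big|\,\scrF_\ell\Bigr) = \frac{2}{g\log N}\,S_\ell\,\Psi_{N,\ell}(0)\bigl(1+O(\ell^4/\log N)\bigr)
\]
on the event $\{S_\ell,\Psi_{N,\ell}(0)\in[\ell^{1/6},\ell^2]\}$, where the prefactor uses $t_{n+1}-t_\ell=g\log N+O(1)$ from Lemma~\ref{lemma-3.4} combined with Lemma~\ref{lemma-gen-domain} and $n=\log_2 N+O(1)$.

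Separating the resulting product by conditional independence, the inner expectation (involving $S_\ell$ and the small-scale indicators) converges to $\Xiin_\ell(1)$ along exactly the lines of the proof of Proposition~\ref{prop-4.14}, while the outer expectation is by construction precisely $\Xi^D_{N,\ell}(t)$. This yields
\[
\biggl|\,P\bigl(h^{D_N}\le\frakm_N(\cdot,t)\,\big|\,h^{D_N}(0)=0\bigr)-\frac{2}{g\log N}\,\Xiin_\ell(1)\,\Xi^D_{N,\ell}(t)\biggr|\le \frac{\epsilon_\ell}{\log N},
\]
with $\epsilon_\ell\to 0$ as~$\ell\to\infty$, uniformly in~$D\in\mathfrak D_q$. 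To upgrade~$\Xiin_\ell(1)$ to $\Xiin_\infty(1)$ at the cost of an error absorbable into~$\epsilon/\log N$, we invoke Proposition~\ref{cor-4.16} (which delivers $\Xiin_\ell(1)\to\Xiin_\infty(1)$) together with a uniform upper bound on $\Xi^D_{N,\ell}(t)$ in~$D\in\mathfrak D_q$,~$N$,~$\ell$, and~$t$ with $|t|\le t_0$. This uniform bound is obtained exactly as in~\eqref{E:4.33z}, combining the estimate~\eqref{E:4.24z} (in its generalized form for outer domain $D_N$ via Lemma~\ref{lemma-gen-domain}) with the already-established lower bound $\Xiin_\ell(1)\ge c_1>0$.

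The main obstacle is ensuring that all asymptotic estimates---in particular the random-walk bridge probability and the uniform boundedness of $\Xi^D_{N,\ell}(t)$---hold \emph{uniformly} across the class $\mathfrak D_q$. This uniformity reduces via Lemma~\ref{lemma-gen-domain} to the observation that only $\Var(\varphi_n(0))$ depends on the choice of outer domain, with bounds depending only on~$q$. The Brownian-motion comparison estimates in Propositions~\ref{prop-uncond-BM-positive-cor}--\ref{prop-mine-too-corollary} and the reduction in Lemma~\ref{lemma-reduction} depend on the walk only through such variance bounds and the asymptotic total variance~$t_{n+1}$, which, by Lemma~\ref{lemma-3.4}, satisfies $t_{n+1}=g\log N+O(1)$ uniformly in~$D\in\mathfrak D_q$. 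This delivers all the required uniformity.
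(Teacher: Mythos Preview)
Your overall strategy---run the proof of Proposition~\ref{prop-4.13} with outer domain~$D_N$ via Lemma~\ref{lemma-gen-domain}, then trade $\Xiin_\ell(1)$ for $\Xiin_\infty(1)$ using the uniform bound on $\Xi^D_{N,\ell}(t)$---is exactly the paper's. However, one step is glossed over and, as written, does not quite hold.

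The concentric decomposition is built along $\Delta^0\subset\cdots\subset\Delta^{n-1}\subset D_N$, so the random walk $\{S_j\}$ and the bridge estimate of Lemma~\ref{lemma-5.6} are tied to the boundaries $\partial\Delta^{j}$. In particular, $S_{n-\ell}$ is measurable with respect to $\sigma(h^{D_N}(z)\colon z\in\partial\Delta^{n-\ell-1})$, whereas $\Psi_{N,\ell}(0)$ is the conditional expectation of $h^{D_N}(0)$ given the field on~$\partial S^{-\ell}_N$. Since $\Delta^{n-\ell-1}\subseteq S^{-\ell}_N\subseteq\Delta^{n-\ell}$, these are \emph{different} random variables; $\partial\Delta^{n-\ell-1}$ lies inside $S^{-\ell}_N$, so $S_{n-\ell}$ is not $\scrF_\ell$-measurable for your $\scrF_\ell$, and you cannot simultaneously condition on $\{h^{D_N}(z)\colon z\notin S^{-\ell}_N\}$ and invoke Lemma~\ref{lemma-5.6} with endpoint $S_{n-\ell}$. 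Your displayed bridge asymptotic with $\Psi_{N,\ell}(0)$ in place of $S_{n-\ell}$ therefore has no direct justification. There is also a domain mismatch in the outer indicator: the walk naturally controls $D_N\smallsetminus\Delta^{n-\ell}$, whereas $\Xi^D_{N,\ell}(t)$ involves $D_N\smallsetminus S^{-\ell+1}_N$.

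The paper closes this gap by introducing an intermediate quantity $\widetilde\Xi^D_{N,\ell}(t,a)$ that uses $S_{n-\ell}$ and $D_N\smallsetminus\Delta^{n-\ell}$, so that Proposition~\ref{prop-4.13} applies verbatim, and then transfers to $\Xi^D_{N,\ell}(t)$ in two moves: (i) the inclusions $\Delta^{n-\ell-1}\subseteq S^{-\ell}_N\subseteq\Delta^{n-\ell}$ and the Gibbs--Markov property give that $S_{n-\ell}-\Psi_{N,\ell}(0)$ is centered Gaussian with variance bounded uniformly in $N,\ell,D$, so the event $\{|S_{n-\ell}-\Psi_{N,\ell}(0)|>\tfrac12\ell^{1/6}\}$ contributes at most~$\epsilon$; (ii) sandwiching between $\widetilde\Xi$ at levels $\ell$ and $\ell+1$ (with slack parameters $a=\tfrac12,2$) absorbs the domain mismatch. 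You should insert this intermediate object and the variance-bound argument; the rest of your write-up then goes through.
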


\begin{proofsect}{Proof}
Applying the observations from Lemma~\ref{lemma-gen-domain}, the proof of Proposition~\ref{prop-4.13} carries over to this case as soon as~$N$ and~$\ell$ are such that $\ell\le (\log N)^{1/8}$ and \eqref{E:6.4q} hold. Recalling~\eqref{E:6.3q} and denoting, for $a>0$,
\begin{equation}
\widetilde \Xi^{D}_{N,\ell}(t,a) := E\biggl(S_{n-\ell}\,\1_{\{S_{n-\ell}\in[a^{-1}\ell^{1/6},\,a\ell^2]\}}\prod_{x\in D_N\smallsetminus \Delta^{n-\ell}}\1_{\{h^{D_N}(x)\le\frakm_N(x,t)\}}\,\bigg|\,S_{n+1}=0\biggr)
\end{equation}
where the random walk $\{S_k\}$ is related to $h^{D_N}$ via $S_{n+1}=h^{D_N}(0)$ and
\begin{equation}
S_{k}=E\bigl(h^{D_N}(0)\,\big|\,\sigma(h^{D_N}(z)\colon z\in \partial \Delta^{k-1})\bigr),
\qquad k=1,\dots,n,
\end{equation}
we thus get
\begin{multline}
\qquad
\frac2{g\log 2}\,\frac1n\,\Xiin_{\ell}(1)\widetilde \Xi^{D}_{N,\ell}(t,\ffrac12)-\frac\epsilon n
\\
\le
P\Bigl(h^{D_N}\le m_N+t\,\Big|\,h^{D_N}(0)=m_N+t\Bigr)
\\
\le \frac2{g\log 2}\,\frac1n\,\Xiin_{\ell+1}(1)\widetilde \Xi^{D}_{N,\ell+1}(t,2)+\frac\epsilon n.
\qquad
\end{multline}
Now the inclusions $\Delta^{n-\ell-1}\subseteq S^{-\ell}_N\subseteq\Delta^{n-\ell}$ from \eqref{E:6.4q} and the Gibbs-Markov property (Lemma~\ref{lemma-GM}) ensure that $S_k-\Psi_{N,\ell}(0)$ is a centered Gaussian with a uniformly bounded variance. A straightforward estimate (of the kind done in Lemma~\ref{lemma-4.2u}) then shows that the contribution to $\widetilde \Xi^{D}_{N,\ell}(t,a)$ of the event when this random variable is larger than $\frac12\ell^{1/6}$ is at most $\epsilon$ as soon as~$\ell$ is large enough (we use that $S_{n-\ell}$ is bounded by $2\ell^2$ when $a\le 2$). This yields
\begin{equation}
\widetilde \Xi^{D}_{N,\ell+1}(t,2)-\epsilon
\le\Xi^{D}_{N,\ell}(t)
\le \widetilde \Xi^{D}_{N,\ell}(t,\ffrac12)+\epsilon.
\end{equation}
Invoking $n\log2 = (1+o(1))\log N$, applying the corresponding analogue of \twoeqref{E:4.32z}{E:4.33z} and using that~$\Xiin_\ell(1)$ can be made as close to~$\Xiin_\infty(1)$ by taking~$\ell$ large enough, the claim follows (relabeling~$\epsilon$ to a constant times~$\epsilon$ to absorb numerical prefactors).
\end{proofsect}

We record one consequence of the proof:

\begin{corollary}
For each $q>1$ and each~$t_0>0$ there are~$c_1,c_2\in(0,\infty)$ and $\ell_0\ge1$ such that for all $\ell\ge\ell_0$, all $D\in\mathfrak D_q$ and any sequence $\{D_N\}$ of lattice domains such that \twoeqref{E:1.1}{E:1.1a} apply, 
\begin{equation}
c_1<\Xi^D_{N,\ell}(t)<c_2
\end{equation}
holds true uniformly in~$t\in[-t_0,t_0]$ as soon as~$N$ is sufficiently large. 
\end{corollary}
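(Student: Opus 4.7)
The plan is to read off the bounds directly from the proof of Proposition~\ref{prop-4.13-gen}, using the fact that $\Xi^D_{N,\ell}(t)$ is sandwiched between two quantities differing from the conditional probability $P(h^{D_N}\le m_N+t\,|\,h^{D_N}(0)=m_N+t)$ only by a multiplicative constant and an error~$O(\epsilon/\log N)$.

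First, I would convert the ``pinning at $m_N+t$'' to ``pinning at $0$'' via Lemma~\ref{lemma-3.1}: taking $D:=D_N$, the conditional value $m_N+t$, and the cutoff $s:=m_N+t$ yields the identity
\begin{equation}
P\bigl(h^{D_N}\le m_N+t\,\big|\,h^{D_N}(0)=m_N+t\bigr) = P\bigl(h^{D_N}\le\frakm_N(\cdot,t)\,\big|\,h^{D_N}(0)=0\bigr).
\end{equation}
The containments \eqref{E:6.4q} put every $D_N$ arising from a $D\in\mathfrak D_q$ into the framework of Lemma~\ref{lemma-gen-domain}, which explicitly extends the two-sided bound \eqref{E:4.24z} of Lemma~\ref{lemma-4.9new} to any outer domain sandwiched between $\Delta^n$ and $\Delta^{n+q}$. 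Taking $s:=t$ in that lemma (so that the quadratic error $(1+s-t)^2$ reduces to a $q,t_0$-dependent constant) produces $c_1',c_2'\in(0,\infty)$ such that
\begin{equation}
\frac{c_1'}{\log N}\,\le\, P\bigl(h^{D_N}\le m_N+t\,\big|\,h^{D_N}(0)=m_N+t\bigr)\,\le\,\frac{c_2'}{\log N}
\end{equation}
uniformly in $D\in\mathfrak D_q$ and $|t|\le t_0$, for $N$ large.

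Next I invoke Proposition~\ref{prop-4.13-gen} with $\epsilon:=c_1'/2$ (say), which supplies $\ell_0$ such that for all $\ell\ge\ell_0$, all $D\in\mathfrak D_q$, all $|t|\le t_0$ and all $N$ large enough,
\begin{equation}
\Bigl|\tfrac{2\,\Xiin_\infty(1)}{g\log N}\,\Xi^D_{N,\ell}(t)-P\bigl(h^{D_N}\le m_N+t\,\big|\,h^{D_N}(0)=m_N+t\bigr)\Bigr|\le\frac{\epsilon}{\log N}.
\end{equation}
Combining this with the previous display and rearranging gives
\begin{equation}
\frac{g(c_1'-\epsilon)}{2\,\Xiin_\infty(1)}\,\le\,\Xi^D_{N,\ell}(t)\,\le\,\frac{g(c_2'+\epsilon)}{2\,\Xiin_\infty(1)}.
\end{equation}
Since the proof of Theorem~\ref{thm-2.5} established that $\Xiin_\infty(1)=\tilde c^\star\in(0,\infty)$, the outer bounds are the desired $c_1$ and $c_2$.

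No substantive obstacle arises: the content of the corollary is essentially bookkeeping, combining the sandwich estimate built into Proposition~\ref{prop-4.13-gen} with the uniform-in-outer-domain extension of Lemma~\ref{lemma-4.9new} already codified in Lemma~\ref{lemma-gen-domain}. The only point that deserves a moment of care is checking that the constants arising from Lemma~\ref{lemma-gen-domain} depend only on $q$ (and $t_0$, for the upper bound), not on $D$ or $N$ — which is precisely what that lemma asserts.
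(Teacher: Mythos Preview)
Your proposal is correct and follows essentially the same route as the paper's proof, which is a one-line appeal to Proposition~\ref{prop-4.13-gen}, the fact that $\Xiin_\infty(1)\in(0,\infty)$, and the bounds in Lemma~\ref{lemma-4.9new} adapted (via Lemma~\ref{lemma-gen-domain}) to the general outer domain. You have simply spelled out the details of this combination explicitly.
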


\begin{proofsect}{Proof}
This follows from from Proposition~\ref{prop-4.13-gen}, the fact that $\Xiin_\infty(1)\in(0,\infty)$ and the bounds in Lemma~\ref{lemma-4.9new} adapted to the present situation.
\end{proofsect}

The behavior of $N\mapsto \Xi^D_{N,\ell}(t)$ as $N\to\infty$ was not important for the construction of the cluster law as this quantity factors out from all relevant formulas. This is different for the local limit theorem for the maximum, where we will need the limit of $\Xi^D_{N,\ell}(t)$ as $N\to\infty$ to exist and even have some regularity properties. To describe the limit object, let~$\widehat\Psi_\ell$ denote the mean-zero Gaussian process on $D\smallsetminus \partial S^{-\ell}$ with covariance
\begin{equation}
C(x,y):=G^D(x,y)-G^{S^{-\ell}}(x,y),
\end{equation}
where $G^D$ is the continuum Green function in~$D$ with Dirichlet boundary conditions on~$\partial D$; see Section~\ref{sec-GF}. As it turns out, the field $\widehat\Psi_\ell$ is the scaling limit of $\Psi_{N,\ell}$ defined above: 

\begin{lemma}
\label{lemma-6.3ua}
Fix~$\ell\ge1$, $q>1$ and let $D\in\mathfrak D_q$. The field $\widehat\Psi_\ell$ has continuous sample paths on $D\smallsetminus\partial S^{-\ell}$ a.s. Moreover, for each $N\ge1$ there is a coupling of $\Psi_{N,\ell}$ and~$\widehat\Psi_\ell$ such that, for each~$\delta>0$ sufficiently small,
\begin{equation}
\label{E:6.17a}
\max_{\begin{subarray}{c}
x\in D_N\\\dist(x,\partial D_N\cup \partial S^{-\ell}_N)>\delta N
\end{subarray}}
\bigl|\,\Psi_{N,\ell}(x)-\widehat\Psi_\ell(x/N)\bigr|\,\underset{N\to\infty}\longrightarrow\,0,\qquad\text{\rm in probability}.
\end{equation}
\end{lemma}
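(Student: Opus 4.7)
The plan is to read off both claims from the covariance kernel of $\widehat\Psi_\ell$ together with the Gibbs--Markov decomposition.

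For the continuity assertion, I would first observe that, on each of the two connected components of $D\smallsetminus\partial S^{-\ell}$ (namely $S^{-\ell}$ and $D\smallsetminus\overline{S^{-\ell}}$), the kernel $C(x,y)$ coincides with the difference of two continuum Green functions sharing the same logarithmic singularity on the diagonal, so the singularities cancel and $C$ extends to a jointly smooth (indeed harmonic in each variable) function on the product of each component with itself. This gives
\begin{equation*}
E\bigl(|\widehat\Psi_\ell(x)-\widehat\Psi_\ell(y)|^2\bigr)=C(x,x)+C(y,y)-2C(x,y)\le C_U\,|x-y|^2
\end{equation*}
uniformly in $x,y$ over compact subsets~$U$ of either component. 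Kolmogorov's continuity criterion, applied separately on each component, then yields a continuous version.

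For the coupling, I would invoke the Gibbs--Markov property (Lemma~\ref{lemma-GM}) to decompose $h^{D_N}=\Psi_{N,\ell}+\hat h^{(N)}$ with $\hat h^{(N)}$ having the law of the DGFF in $D_N\smallsetminus \partial S^{-\ell}_N$ and independent of $\Psi_{N,\ell}$. This identifies $\Psi_{N,\ell}$ as a centered Gaussian field whose covariance kernel at $(x,y)$ equals $G^{D_N}(x,y)-G^{D_N\smallsetminus\partial S^{-\ell}_N}(x,y)$. Using the standard convergence of the lattice Green function to its continuum counterpart on a domain with boundary as regular as the ones in~$\mathfrak D_q$ (built from Lemmas~\ref{lemma-G-potential}--\ref{lemma-potential}), the covariance, its first differences, and its second differences converge uniformly, i.e.
\begin{equation*}
\sup\,\Bigl|\mathrm{Cov}\bigl(\Psi_{N,\ell}(x),\Psi_{N,\ell}(y)\bigr)-C(x/N,y/N)\Bigr|\underset{N\to\infty}\longrightarrow 0,
\end{equation*}
with the supremum taken over $x,y\in D_N$ at $\ell^\infty$-distance at least $\delta N$ from $\partial D_N\cup\partial S^{-\ell}_N$.

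From this uniform covariance convergence the coupling is standard. I would express $\widehat\Psi_\ell$ via its Karhunen--Lo\`eve expansion and represent $\Psi_{N,\ell}$ in the corresponding basis (using the reproducing-kernel isomorphism between their Cameron--Martin spaces), so that the difference is again centered Gaussian with a covariance tending to zero uniformly in the buffered region. Combined with a Fernique/Borell--Tsirelson concentration bound (Lemma~\ref{lemma-BT}) on the supremum, this produces a coupling under which the maximum in~\eqref{E:6.17a} tends to zero in $L^2$, hence in probability. The delicate step I expect is the uniform convergence of $C_N(x,y):=\mathrm{Cov}(\Psi_{N,\ell}(x),\Psi_{N,\ell}(y))$ up to second differences on compact sets simultaneously kept away from both the macroscopic boundary~$\partial D_N$ (where irregularities of~$\partial D$ intrude) and the interior surface~$\partial S^{-\ell}_N$ (where the kernel becomes singular). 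The $\delta N$-buffer in the statement is precisely what makes this uniform control possible, and the smoothness of $\partial S^{-\ell}$ together with the mild regularity in the definition of~$\mathfrak D$ suffices.
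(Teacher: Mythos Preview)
Your proposal is correct in its ingredients and overall strategy: identify $\Psi_{N,\ell}$ and $\widehat\Psi_\ell$ as the discrete and continuum ``binding'' fields via Gibbs--Markov, show covariance convergence from the Green function asymptotics, and then use Gaussian concentration to upgrade this to a coupling. This is exactly what the paper does, stated as a one-line reduction to Lemma~\ref{lemma-BF-converge}.

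The only substantive difference is in the coupling construction. The paper (in the proof of Lemma~\ref{lemma-BF-converge}) uses a more elementary route than your Karhunen--Lo\`eve/RKHS argument: take an $r$-net $x_1,\dots,x_k$ in the $\delta$-buffered region, couple the two fields on these finitely many points (finite-dimensional convergence in law can always be realized as convergence in probability), and then show via Fernique and Borell--Tsirelson that both fields have oscillations of order $o(1)$ over balls of radius~$r$, uniformly in~$N$. Your approach is workable, but the sentence ``represent $\Psi_{N,\ell}$ in the corresponding basis \dots\ so that the difference is again centered Gaussian with a covariance tending to zero uniformly'' hides a nontrivial step: you must specify the \emph{joint} law (i.e., the cross-covariance), and pointwise convergence $C_N\to C$ does not by itself hand you a coupling with uniformly small variance of the difference --- one needs something closer to operator-norm convergence of the covariance operators. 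The net-plus-equicontinuity argument avoids this entirely and only requires the pointwise (locally uniform) kernel convergence you already established.
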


\begin{proofsect}{Proof (sketch)}
As $\Psi_{N,\ell}$, resp., $\widehat\Psi_\ell$ is the discrete and continuum ``binding'' field relating the GFF in~$D$ to that in~$D\smallsetminus\partial S^{-\ell}$, this reduces to Lemma~\ref{lemma-BF-converge}.\end{proofsect}

The claim about $\Xi^D_{N,\ell}(t)$ we will need is then as follows:

\begin{proposition}
\label{prop-6.2ua}
Let $q>1$ be an integer. For each $\ell\ge1$, each $t\in\R$, each $D\in\mathfrak D_q$ and each sequence $\{D_N\}$ of domains related to~$D$ as in \twoeqref{E:1.1}{E:1.1a}, the limit
\begin{equation}
\Xi^D_{\infty,\ell}(t):=\lim_{N\to\infty}\Xi^D_{N,\ell}(t)
\end{equation}
exists and is finite, strictly positive, non-decreasing and continuous in~$t$. Moreover, the limit is independent of the sequence $\{D_N\}$ and, in fact, admits the explicit representation
\begin{equation}
\label{E:6.15ua}
\Xi^D_{\infty,\ell}(t)=E\Bigl(\widehat\Psi_{\ell}(0)\,Q^D_{\ell,t}(\widehat\Psi_\ell)\1_{\{\widehat\Psi_{\ell}(0)\in[\ell^{1/6},\ell^2]\}}
\Bigr),
\end{equation}
where $Q^D_{\ell,t}(\varphi)$ is, for each Borel measurable $\varphi\colon\overline D\smallsetminus S^{-\ell+1}\to\R$, given by
\begin{equation}
\label{E:6.16}
Q^D_{\ell,t}(\varphi) :=E\biggl(\exp\Bigl\{-\alpha^{-1}\texte^{-\alpha t}
\int_{D\smallsetminus S^{-\ell+1}}\, Z^{D\smallsetminus S^{-\ell}}(\textd x)\texte^{\alpha\varphi(x)+\alpha^2 G^D(0,x)}\Bigr\}\biggr)
\end{equation}
Here $Z^D$ is the measure from Theorem~\ref{thm-main}.
\end{proposition}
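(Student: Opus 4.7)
The plan is to apply Gibbs--Markov to separate $h^{D_N}$ into a binding field and an independent DGFF with a ``hole,'' then to recognize the product in the definition of $\Xi^D_{N,\ell}(t)$ as a void event for the extremal process of a DGFF in the annular domain $D_N\smallsetminus\overline{S^{-\ell}_N}$. The limit will then be extracted from the Cox-process limit~\eqref{E:1.5} of~\cite{BL2} via its Laplace functional, coupled with the convergence of the binding field from Lemma~\ref{lemma-6.3ua}.

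\textbf{Gibbs--Markov decomposition.} By Lemma~\ref{lemma-GM}, $h^{D_N}=\Psi_{N,\ell}+h'$, where $h'$ is a DGFF in $D_N\smallsetminus\partial S^{-\ell}_N$, independent of $\Psi_{N,\ell}$. The underlying domain disconnects into an inner piece $S^{-\ell}_N$ and an outer piece $D_N\smallsetminus\overline{S^{-\ell}_N}$, on which $h'$ restricts to independent DGFFs. Because $0\in S^{-\ell}_N$, the conditioning $h^{D_N}(0)=0$ affects only $\Psi_{N,\ell}(0)$ and the inner piece of $h'$; the outer restriction, which I will denote $\tilde h$, remains a DGFF in $D_N\smallsetminus\overline{S^{-\ell}_N}$ and is independent of $\Psi_{N,\ell}$. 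Since the product in the definition of $\Xi^D_{N,\ell}(t)$ ranges only over $x\in D_N\smallsetminus S^{-\ell+1}_N\subset D_N\smallsetminus\overline{S^{-\ell}_N}$, conditioning on $\Psi_{N,\ell}$ gives
\begin{equation*}
\Xi^D_{N,\ell}(t)=E\!\Big[\Psi_{N,\ell}(0)\,\1_{\{\Psi_{N,\ell}(0)\in[\ell^{1/6},\ell^2]\}}\,\mathcal P_{N,\ell}(\Psi_{N,\ell},t)\Big],
\end{equation*}
where $\mathcal P_{N,\ell}(\psi,t):=P\bigl(\tilde h(x)\le\frakm_N(x,t)-\psi(x)\colon x\in D_N\smallsetminus S^{-\ell+1}_N\bigr)$.

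\textbf{Passage to the limit via the Cox process.} The annular domain $D\smallsetminus\overline{S^{-\ell}}$ lies in $\mathfrak D$, so the main theorem of~\cite{BL2} applies to $\tilde h$: its extremes, centered by $m_N$, converge in distribution to $\text{\rm PPP}\bigl(Z^{D\smallsetminus S^{-\ell}}(\textd x)\otimes\texte^{-\alpha h}\textd h\bigr)$. The event in $\mathcal P_{N,\ell}$ is a void event for the full extremal point process at a level function that, using $m_N/G^{D_N}(0,0)\to\alpha$ and the bulk asymptotics for $G^{D_N}$ (Lemmas~\ref{lemma-G-potential}--\ref{lemma-potential}), converges on compacta of $D\smallsetminus\overline{S^{-\ell+1}}$ to $a(x)=t-\alpha G^D(0,x)-\widehat\Psi_\ell(x)$. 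Coupling $\Psi_{N,\ell}$ to $\widehat\Psi_\ell$ via Lemma~\ref{lemma-6.3ua} and evaluating the Laplace functional of the Cox process on the void event $\{h>a(x)\}$ --- which computes as
\begin{equation*}
E\!\Big[\exp\bigl\{-\alpha^{-1}\!\!\int Z^{D\smallsetminus S^{-\ell}}(\textd x)\,\texte^{-\alpha a(x)}\bigr\}\Big]=Q^D_{\ell,t}(\widehat\Psi_\ell)
\end{equation*}
after substitution --- yields $\mathcal P_{N,\ell}(\Psi_{N,\ell},t)\to Q^D_{\ell,t}(\widehat\Psi_\ell)$. A dominated-convergence argument, with integrability supplied by~\eqref{E:4.33z} adapted through Lemma~\ref{lemma-gen-domain}, then delivers~\eqref{E:6.15ua}. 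Since the right-hand side depends only on $D$, independence from the choice of $\{D_N\}$ is automatic.

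\textbf{Regularity and main obstacle.} Finiteness and positivity of $\Xi^D_{\infty,\ell}(t)$ pass from~\eqref{E:4.33z} to~\eqref{E:6.15ua} by Fatou. Monotonicity in $t$ is immediate from $0<\frakg^{D_N}(x)<1$ for $x\ne 0$, which in the limit corresponds to monotonicity of $t\mapsto\texte^{-\alpha t}$ in~\eqref{E:6.16}. Continuity in $t$ follows by dominated convergence in~\eqref{E:6.16}, once one knows the a.s.\ finiteness of $\int Z^{D\smallsetminus S^{-\ell}}(\textd x)\,\texte^{\alpha\widehat\Psi_\ell(x)+\alpha^2 G^D(0,x)}$ (a consequence of moment estimates on $Z$ from~\cite{BL2} combined with Gaussian integrability of $\widehat\Psi_\ell$, which is bounded on compacta of $D\smallsetminus\overline{S^{-\ell+1}}$). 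The main obstacle is to upgrade the~\cite{BL2} convergence --- naturally phrased for \emph{fixed} continuous level functions --- to the \emph{functional} convergence required here, where the level depends on the random field $\Psi_{N,\ell}$ and the weight $\Psi_{N,\ell}(0)$ is correlated with it. This will require a Skorokhod-type realization of the joint convergence $\Psi_{N,\ell}\to\widehat\Psi_\ell$, equicontinuity of $\psi\mapsto\mathcal P_{N,\ell}(\psi,t)$ (obtainable via the separation-of-maxima estimate of Lemma~\ref{lemma-separation}), and standard Gaussian tail bounds to truncate $\Psi_{N,\ell}$ to a compact set on which uniform convergence holds.
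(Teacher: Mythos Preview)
Your approach is essentially the paper's: Gibbs--Markov split into binding field plus independent annular DGFF, couple the binding field to its continuum limit via Lemma~\ref{lemma-6.3ua}, and read off the void-event probability from the Cox-process limit. The paper carries this out through two intermediate quantities $\widetilde\Xi^D_{N,\ell}(t)$ (conditioning removed, $\frakm_N$ replaced by $m_N+t-\alpha G^D(0,\cdot/N)$) and $\widehat\Xi^D_{N,\ell}(t)$ (discrete $\Psi_{N,\ell}$ replaced by continuum $\widehat\Psi_\ell$), with explicit sandwich bounds \eqref{E:6.19}--\eqref{E:6.20} in each step.

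One point you skate over deserves a flag. After conditioning on $h^{D_N}(0)=0$, the law of the \emph{entire} field $\Psi_{N,\ell}$ changes, not just its value at~$0$: the conditional covariance picks up a correction $-G^{D_N}(\cdot,0)G^{D_N}(\cdot,0)/G^{D_N}(0,0)$ of order $1/\log N$ on the outer region. Lemma~\ref{lemma-6.3ua} couples the \emph{unconditional} $\Psi_{N,\ell}$ to $\widehat\Psi_\ell$, so invoking it directly for your conditional expectation leaves a gap. The paper closes this by first removing the conditioning via $(h^{D_N}\,|\,h^{D_N}(0)=0)\laweq h^{D_N}-\frakg^{D_N}h^{D_N}(0)$ together with $\max_{x\in D_N\smallsetminus S^{-\ell}_N}\frakg^{D_N}(x)\le c\,\ell/\log N$, and only then applying the coupling. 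Your route is salvageable --- one can argue the conditional law of $\Psi_{N,\ell}$ on the outer region also converges to $\widehat\Psi_\ell$ --- but this has to be said, and the sandwich-in-$t$ trick (shifting by $\pm\epsilon_N$ as in \eqref{E:6.19}--\eqref{E:6.20}) is what actually makes the monotonicity usable to absorb all the $o(1)$ discrepancies at once.
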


\begin{proofsect}{Proof}
Fix~$\ell\ge1$ and define, for~$\alpha:=2/\sqrt g$ and $G^D$ the continuum Green function in~$D$,
\begin{equation}
\label{E:6.18}
\widehat \Xi^D_{N,\ell}(t):=E\biggl(\widehat\Psi_{\ell}(0)\,\1_{\{\widehat\Psi_{\ell}(0)\in[\ell^{1/6},\ell^2]\}}\prod_{x\in D_N\smallsetminus S^{-\ell+1}_N}\1_{\{h^{D_N\smallsetminus S^{-\ell}_N}(x)+\widehat\Psi_\ell(x/N)\le m_N+t-\alpha G^D(0,x/N)\}}\biggr)\,,
\end{equation}
where we regard $h^{D_N\smallsetminus S^{-\ell}_N}$ and~$\widehat\Psi_\ell$ as independent. Abusing our earlier notation, let $\widetilde \Xi^D_{N,\ell}(t)$ be the same quantity but with all occurrences of~$\widehat\Psi_\ell(\cdot/N)$ replaced by $\Psi_{N,\ell}(\cdot)$. Thanks to the Gibbs-Markov property (Lemma~\ref{lemma-GM}), $\widetilde \Xi^D_{N,\ell}(t)$ is the quantity defined as in \eqref{E:5.2ua-gen} but without the conditioning on $h^{D_N}(0)=0$ and with $\frakm_N(x,t)$ replaced by $m_N+t-\alpha G^D(0,x/N)$. 

We claim that the three objects $\Xi^D_{N,\ell}(t)$, $\widetilde \Xi^D_{N,\ell}(t)$ and $\widehat \Xi^D_{N,\ell}(t)$ are equal in the limit as $N\to\infty$. Indeed, to see the closeness of the former two note that
\begin{equation}
\label{E:6.17b}
\lim_{N\to\infty}\,\,
\max_{x\in D_N\smallsetminus S^{-\ell}_N}\,\Bigl|\,\frakm_N(x,t)-\bigl(m_N+t-\alpha G^D(0,x/N)\bigr)\Bigr|=0
\end{equation}
uniformly on compact sets of~$t$. In addition, observe that
\begin{equation}
\bigl(h^{D_N}(\cdot)\big|h^{D_N}(0)=0\bigr)\laweq h^{D_N}(\cdot)-\frakg^{D_N}(\cdot)h^{D_N}(0)
\end{equation}
and
\begin{equation}
\max_{x\in D_N\smallsetminus S^{-\ell}_N}\frakg^{D_N}(x)\le c\frac{\ell}{\log N}
\end{equation}
for some~$c\in(0,\infty)$. Let~$\epsilon_N$ be a sequence with $\epsilon_N\downarrow0$ such that $\epsilon_N$ is larger than both $c\frac{\ell}{\log N}$ and the maximum in \eqref{E:6.17b} and such that the probability that $h^{D_N}(0)\in[-(\log N)^{1/3},(\log N)^{1/3}]$ or that $\Psi_{N,\ell}(0)$ is within~$\epsilon_N$ of $\ell^{1/3}$ or~$\ell^2$ is at most~$1/N$. Then
\begin{equation}
\label{E:6.19}
(1-\epsilon_N)
\widetilde \Xi^D_{N,\ell}\bigl(t-2\epsilon_N\bigr)-2\ell^2/N
\le
\Xi^D_{N,\ell}(t)
\le
(1+\epsilon_N)\widetilde \Xi^D_{N,\ell}\bigl(t+2\epsilon_N\bigr)+2\ell^2/N.
\end{equation}
and so $\Xi^D_{N,\ell}(t)-\widetilde \Xi^D_{N,\ell}(t)$ indeed converges to zero as~$N\to\infty$.

Moving to the corresponding relation with $\widehat\Xi^D_{N,\ell}(t)$, consider the coupling of $\Psi_{N,\ell}$ and~$\widehat\Psi_\ell$ guaranteed by Lemma~\ref{lemma-6.3ua} for~$\delta$ such that $\dist(0,\partial S_N^{-\ell})>\delta N$ as well as $\dist(\partial S_N^{-\ell+1},S_N^{-\ell})>\delta N$. (This ensures that \eqref{E:6.17a} applies to all occurrences of~$\Psi_{N,\ell}$ in \eqref{E:6.18}.) Let~$\tilde\epsilon_N$ be a sequence $\tilde\epsilon_N\downarrow0$ such that with probability at least $1-1/N$, the maximum in \eqref{E:6.17a} is at most~$\tilde\epsilon_N$ and $\widehat\Psi_\ell(0)$ lies further than $\tilde\epsilon_N$ of the endpoints of the interval $[\ell^{1/6},\ell^2]$. Then
\begin{equation}
\label{E:6.20}
(1-\tilde\epsilon_N)\widehat \Xi^D_{N,\ell}\bigl(t-2\tilde\epsilon_N\bigr)-2\ell^2/N
\le
\widetilde\Xi^D_{N,\ell}(t)
\le
(1+\tilde\epsilon_N)\widehat \Xi^D_{N,\ell}\bigl(t+2\tilde\epsilon_N\bigr)+2\ell^2/N
\end{equation}
and so $\widehat\Xi^D_{N,\ell}(t)-\widetilde \Xi^D_{N,\ell}(t)$ also converges to zero as~$N\to\infty$
Combining \twoeqref{E:6.19}{E:6.20}, it suffices to prove the claim for $\widehat\Xi^D_{N,\ell}$ instead of~$\Xi^D_{N,\ell}$.

First we note that the product in \eqref{E:6.18} is the $a\to\infty$ limit of $\texte^{-a\langle\eta^D_N,f_{\widehat\Psi}\rangle}$ where
\begin{equation}
f_{\widehat\Psi}(x,h):=\1_{[\,t-\widehat\Psi_\ell(x)-\alpha G^D(0,x),\,\infty)}(h)\1_{D\smallsetminus S^{-\ell+1}}(x).
\end{equation}
Given a sample path of~$\widehat\Psi_\ell(x)$, this function can in turn be approximated by continuous functions with compact support in $\overline D\times \R$. The full (unstructured) process convergence in Corollary~\ref{cor-cluster-process} and some routine use of the Monotone Convergence Theorem then show that
\begin{equation}
\label{E:6.25}
\lim_{N\to\infty}\widehat \Xi^D_{N,\ell}(t) = E\Bigl(\widehat\Psi_{\ell}(0)\,Q^D_{\ell, t}(\widehat\Psi_\ell)\,\1_{\{\widehat\Psi_{\ell}(0)\in[\ell^{1/6},\ell^2]\}}
\Bigr),
\end{equation}
holds for all~$t$, where $\eta^D$ is the limit point process on the right-hand side of \eqref{E:1.6a} and $Q^D_{\ell,t}(\varphi)$ is the probability
\begin{equation}
Q_{\ell, t}^D(\varphi) := P\biggl(\eta^D\Bigl(\bigl\{(x,h)\in(\overline D\smallsetminus S^{-\ell+1})\times\R\colon h+\varphi(x)+\alpha G^D(0,x)> t\bigr\}\Bigr)=0\biggr) \,,
\end{equation}
and the right-hand side is continuous in $t$.

Thanks to Theorem~\ref{thm-main}, this probability admits the explicit representation~\eqref{E:6.16}. The function~$t\mapsto Q_{\ell,t}^D(\varphi)$, being essentially a Laplace transform of a non-negative and finite random variable, is automatically continuous, non-vanishing and finite for all~$t\in\R$ and all~$\varphi$ as above. In particular, \eqref{E:6.25} applies for all~$t$ and, by monotonicity, the convergence is locally uniform.
\end{proofsect}

Using a similar argument as in Proposition~\ref{cor-4.16}, we get:

\begin{corollary}
\label{cor-6.5}
For all~$t\in\R$, all integers~$q>1$ and all $D\in\mathfrak D_q$, the limit
\begin{equation}
\label{E:6.27ua}
\Xi^D_{\infty,\infty}(t):=\lim_{\ell\to\infty}\Xi^D_{\infty,\ell}(t)
\end{equation}
exists and is positive, finite, non-decreasing and continuous in~$t$. Moreover, for any sequence $\{D_N\}$ of domains related to~$D$ via \twoeqref{E:1.1}{E:1.1a}, 
\begin{equation}
\label{E:lim-S}
\lim_{N\to\infty}
(\log N)\,\,P\Bigl(h^{D_N}\le m_N+t\,\Big|\,h^{D_N}(0)=m_N+t\Bigr)
=(2/g)\,\Xiin_\infty(1)\,\Xi^D_{\infty,\infty}(t).
\end{equation}
The sequence on the left is bounded uniformly in~$N$.
\end{corollary}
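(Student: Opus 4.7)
The plan is to extract all three conclusions --- existence of the limit, the identity \eqref{E:lim-S}, and the regularity properties --- from a single Cauchy-in-supremum-norm argument combining Propositions~\ref{prop-4.13-gen} and~\ref{prop-6.2ua}. First I would multiply the bound in Proposition~\ref{prop-4.13-gen} through by $\log N$ to get that, for all $\ell\ge\ell_0(\epsilon,t_0,q)$ and all $N$ large enough,
\begin{equation*}
\bigl|(\log N)\,P(h^{D_N}\le m_N+t \mid h^{D_N}(0)=m_N+t) - (2/g)\Xiin_\infty(1)\,\Xi^D_{N,\ell}(t)\bigr|\le \epsilon
\end{equation*}
holds uniformly in $|t|<t_0$. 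Comparing this for two indices $\ell,\ell'\ge\ell_0$ at a common~$N$ and then sending $N\to\infty$ using the local uniformity (in $t$) of the convergence $\Xi^D_{N,\ell}(t)\to\Xi^D_{\infty,\ell}(t)$ supplied by Proposition~\ref{prop-6.2ua} yields
\begin{equation*}
\bigl|\Xi^D_{\infty,\ell}(t) - \Xi^D_{\infty,\ell'}(t)\bigr|\le \frac{g\,\epsilon}{\Xiin_\infty(1)}
\end{equation*}
uniformly in $|t|<t_0$. Thus $\{\Xi^D_{\infty,\ell}(\cdot)\}_{\ell\ge1}$ is Cauchy in $L^\infty([-t_0,t_0])$ for every $t_0>0$. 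Since each $\Xi^D_{\infty,\ell}$ is continuous in~$t$ (by dominated convergence applied to \eqref{E:6.15ua}, using that $t\mapsto Q^D_{\ell,t}(\varphi)$ is manifestly continuous from \eqref{E:6.16} and bounded by one), the limit $\Xi^D_{\infty,\infty}(t)$ in \eqref{E:6.27ua} exists, is continuous in~$t$, and is attained locally uniformly.

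For the identity \eqref{E:lim-S}, I would return to the displayed bound above and take $N\to\infty$ at fixed $\ell\ge\ell_0$ using Proposition~\ref{prop-6.2ua}: this forces both the $\liminf$ and $\limsup$ of the left-hand side to lie within $\epsilon$ of $(2/g)\Xiin_\infty(1)\,\Xi^D_{\infty,\ell}(t)$. Sending $\ell\to\infty$ using the uniform Cauchy property and then $\epsilon\to0$ shows that the $\lim_N$ exists and equals $(2/g)\Xiin_\infty(1)\,\Xi^D_{\infty,\infty}(t)$; independence of the approximating sequence $\{D_N\}$ is inherited from Proposition~\ref{prop-6.2ua}. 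Positivity and finiteness of $\Xi^D_{\infty,\infty}(t)$ follow from the uniform two-sided bounds $c_1<\Xi^D_{N,\ell}(t)<c_2$ recorded in the corollary between Propositions~\ref{prop-4.13-gen} and~\ref{prop-6.2ua}, which survive both limits. Monotonicity descends from the pointwise monotonicity of $t\mapsto Q^D_{\ell,t}(\varphi)$ visible in \eqref{E:6.16}, preserved under the expectation in \eqref{E:6.15ua} and under the pointwise limit $\ell\to\infty$. The uniform boundedness of the sequence on the left of \eqref{E:lim-S} is then immediate from~\eqref{E:4.30z-gen} combined with the upper bound $\Xi^D_{N,\ell}(t)<c_2$.

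The delicate point, and the reason a simple pointwise argument is insufficient, is the continuity of $\Xi^D_{\infty,\infty}(t)$: pointwise limits of monotone functions are generally not continuous. What unlocks the argument is the observation that Proposition~\ref{prop-4.13-gen}'s threshold $\ell_0$ depends only on $\epsilon$, $t_0$ and~$q$ and not on the specific~$t$, so the bound is automatically uniform in $t$ on compacts. This is precisely what converts the pointwise Cauchy condition into a uniform Cauchy condition in $L^\infty_{\mathrm{loc}}(\R)$, and hence delivers continuity of the limit for free alongside its existence.
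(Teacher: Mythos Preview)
Your proof is correct and follows essentially the same strategy as the paper's: both combine Proposition~\ref{prop-4.13-gen} with Proposition~\ref{prop-6.2ua} to trap $(\log N)\,P(h^{D_N}\le m_N+t\mid h^{D_N}(0)=m_N+t)$ within~$\epsilon$ of $(2/g)\Xiin_\infty(1)\,\Xi^D_{\infty,\ell}(t)$ uniformly in~$t$ on compacts for~$N\gg\ell\gg1$, and then exploit that the left side is independent of~$\ell$ to force convergence. You spell out the Cauchy-in-$L^\infty_{\text{loc}}$ argument explicitly, whereas the paper compresses this into a single sentence; your care in emphasizing that uniformity in~$t$ is what upgrades pointwise convergence to continuity of the limit is well placed, since the paper leaves this implicit.
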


\begin{proofsect}{Proof}
By Propositions~\ref{prop-4.13-gen} and~\ref{prop-6.2ua}, for each~$\epsilon>0$ and each~$t_0>0$, the sequence under the limit on the left of \eqref{E:lim-S} is within~$\epsilon$ of $(2/g)\Xiin_\infty(1)\,\Xi^D_{\infty,\ell}(t)$ uniformly in $t\in[-t_0,t_0]$ as soon as~$N\gg\ell\gg1$. As the former of these two sequences does not depend on~$\ell$, the limit in \eqref{E:6.27ua} exists and \eqref{E:lim-S} holds. Since $t\mapsto \Xi^D_{\infty,\ell}(t)$ is continuous, $t\mapsto\Xi^D_{\infty,\infty}(t)$ is continuous as well. 
\end{proofsect}

In light of the scaling relations for the $Z^D$-measure (cf Corollary~2.2 of~\cite{BL2}), the limit \eqref{E:6.27ua} exists and defines $\Xi^{D}_{\infty,\infty}(t)$ for any~$D\in\mathfrak D$ with~$0\in D$. For arbitrary~$D\in\mathfrak D$ we then set
\begin{equation}
\Xi^{D}_{\infty,\infty}(t,x):=\Xi^{-x+D}_{\infty,\infty}(t),\qquad x\in D.
\end{equation}
Our last item of concern is the regularity of~$x\mapsto\Xi^D_{\infty,\infty}(t,x)$:

\begin{lemma}
Let~$D\in\mathfrak D$ with~$0\in D$. Then $x\mapsto\Xi^D_{\infty,\infty}(t,x)$ is continuous on~$D$ for each~$t\in\R$. 
\end{lemma}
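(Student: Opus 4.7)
\begin{proofsect}{Proof sketch}
The plan is to exhibit $x\mapsto\Xi^D_{\infty,\infty}(t,x)$ as a locally uniform limit of continuous functions on $D$. Fix $t\in\R$ and a compact $K\subset D$; shrinking $K$ if needed, choose an integer $q>1$ such that the shifted domains $D_x:=-x+D$ lie in the class $\mathfrak D_q$ for every $x\in K$. Since $\Xi^D_{\infty,\infty}(t,x)=\Xi^{D_x}_{\infty,\infty}(t)$ by definition, it suffices to show that $x\mapsto\Xi^{D_x}_{\infty,\infty}(t)$ is continuous on $K$.

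The first step is to reduce to the $\ell$-finite approximations by verifying
\[
\lim_{\ell\to\infty}\,\sup_{x\in K}\,\bigl|\Xi^{D_x}_{\infty,\infty}(t)-\Xi^{D_x}_{\infty,\ell}(t)\bigr|=0.
\]
Uniformity in $x\in K$ is inherited from the uniformity in $D\in\mathfrak D_q$ of the error bound in Proposition~\ref{prop-4.13-gen}: combining it with Corollary~\ref{cor-6.5} applied to each shifted domain sandwiches $\Xi^{D_x}_{\infty,\infty}(t)-\Xi^{D_x}_{\infty,\ell}(t)$ within an arbitrarily small, $x$-independent tolerance once $\ell$ is large.

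It then remains to show that, for each fixed $\ell$, the map $x\mapsto\Xi^{D_x}_{\infty,\ell}(t)$ is continuous on $K$. Here I would use the representation \eqref{E:6.15ua}--\eqref{E:6.16}, in which three objects depend on $x$: the centered Gaussian field $\widehat\Psi^{D_x}_\ell$ with covariance $G^{D_x}-G^{S^{-\ell}}$; the random measure $Z^{D_x\smallsetminus S^{-\ell}}$; and the deterministic function $y\mapsto G^{D_x}(0,y)$. The plan is to couple all three jointly in $x$ so that each is continuous in a useful mode, and then invoke bounded convergence. For $\widehat\Psi^{D_x}_\ell$, the identity $G^{D_x}(y,z)=G^D(y+x,z+x)$ together with continuity of $G^D$ off the diagonal furnishes a continuous family of covariance kernels, which I realize jointly via a white-noise representation. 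For $Z^{D_x\smallsetminus S^{-\ell}}$, I first invoke the translation covariance of $Z$ (Corollary~2.2 of~\cite{BL2}) to identify it in law with the translate of $Z^{D\smallsetminus(x+S^{-\ell})}$, and then construct the latter family jointly in $x$ from a single realization of $Z^D$ via the Gibbs--Markov decomposition for~$Z$. Continuity of $y\mapsto G^{D_x}(0,y)=G^D(x,x+y)$ in $x$ is routine off $\{y=0\}\cup\partial D_x$.

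The main obstacle is the second coupling: translation covariance of $Z$ is only an equality in law, so building a canonical version that is continuous in $x$ requires threading the Gibbs--Markov formula carefully. Once it is in place, the integrand in \eqref{E:6.15ua} is dominated by $\ell^2$ (thanks to the truncation $\widehat\Psi^{D_x}_\ell(0)\in[\ell^{1/6},\ell^2]$ and the fact that $Q^{D_x}_{\ell,t}\in[0,1]$) and converges in probability as $y\to x$, so dominated convergence yields continuity of $x\mapsto\Xi^{D_x}_{\infty,\ell}(t)$. Combined with the uniform $\ell\to\infty$ convergence from the reduction step, $x\mapsto\Xi^D_{\infty,\infty}(t,x)$ is a uniform limit on $K$ of continuous functions, hence continuous.
\end{proofsect}
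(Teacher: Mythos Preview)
Your overall strategy matches the paper's: first use the uniformity in $D\in\mathfrak D_q$ of Proposition~\ref{prop-4.13-gen} (as in Corollary~\ref{cor-6.5}) to reduce to continuity of $x\mapsto\Xi^{D_x}_{\infty,\ell}(t)$ for fixed~$\ell$, and then argue via the representation \eqref{E:6.15ua}--\eqref{E:6.16}.

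Where you diverge is in the second step. You propose to build explicit joint couplings of $\widehat\Psi_\ell^{D_x}$, $Z^{D_x\smallsetminus S^{-\ell}}$, and $G^{D_x}(0,\cdot)$ in~$x$, via white-noise representations and the Gibbs--Markov structure of~$Z$, and then apply dominated convergence pathwise. The paper instead simply observes that $x\mapsto G^{-x+D}(0,\cdot)$ is continuous on $D\smallsetminus S^{-\ell+1}$ and that $x\mapsto\widehat\Psi_\ell^{-x+D}$ and $x\mapsto Z^{-x+D\smallsetminus S^{-\ell}}$ are continuous \emph{in law}; since $\Xi^{D_x}_{\infty,\ell}(t)$ is an expectation of a bounded functional of these objects, continuity of the expectation follows directly from \eqref{E:6.15ua}--\eqref{E:6.16} without any coupling. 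Your approach is correct but does more work than necessary: the ``main obstacle'' you identify (coupling the $Z$-measures canonically in~$x$) simply does not arise in the paper's argument, because distributional continuity suffices for an expectation.
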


\begin{proofsect}{Proof}
The argument from the proof of Corollary~\ref{cor-6.5} shows that $\Xi^D_{\infty,\ell}(t)$ approximates~$\Xi^D_{\infty,\infty}(t)$ uniformly in~$D\in\mathfrak D_q$. It thus suffices to show that $x\mapsto\Xi^{-x+D}_{\infty,\ell}(t)$ is continuous at~$x:=0$ for each~$\ell$ and each~$D\in\mathfrak D_q$. For this observe that $x\mapsto G^{-x+D}(0,\cdot)$ varies continuously on $D\smallsetminus S^{-\ell+1}$ while both $x\mapsto Z^{-x+D}$ and $x\mapsto\widehat\Psi_\ell^{-x+D}$, where $\widehat\Psi_\ell^D$ marks the explicit dependence of the above field~$\widehat\Psi_\ell$ on the underlying domain, are continuous in law for~$x$ small. The continuity of $x\mapsto\Xi^{-x+D}_{\infty,\ell}(t)$ is then checked from \twoeqref{E:6.15ua}{E:6.16}. 
\end{proofsect}

\begin{proofsect}{Proof of Theorem~\ref{thm-LLL}}
Let~$D\in\mathfrak D$ and suppose without loss of generality that~$0\in D$. We will prove the claim with~$\rho^D(t,x)$ given by
\begin{equation}
\label{E:6.30a}
\rho^D(x,t):=c\texte^{-\alpha t}\exp\Bigl\{2\int_{\partial D}\Pi^D(x,\textd z)\log|x-z|\Bigr\}
\Xiin_\infty(1)\,\Xi^{D}_{\infty,\infty}(t,x),
\end{equation}
where~$c$ is a constant to be determined, $\Pi^D(x,\cdot)$ is the harmonic measure on~$\partial D$ for the Brownian motion started from~$x$.

Let~$\{D_N\}$ be a sequence of domains related to~$D$ via \twoeqref{E:1.1}{E:1.1a} and let~$x_N:=\lfloor xN\rfloor$. The probability density of~$h^{D_N}(x_N)$ evaluated at~$m_N+t$ is then
\begin{equation}
f_N(x,t):= \frac1{\sqrt{2\pi\Var(h^{D_N}(x_N))}}\texte^{-\frac12\frac{(m_N+t)^2}{\Var(h^{D_N}(x_N))}}.
\end{equation}
The standard representation of the discrete Green function using the potential yields
\begin{equation}
\Var(h^{D_N}(x_N)) = g\log N + g\int_{\partial D}\Pi^D(x,\textd z)\log|x-z|+ c_0 +O(N^{-2}).
\end{equation}
After some straightforward manipulations, this shows
\begin{multline}
\label{E:Gauss-calc}
\qquad
\frac12\frac{(m_N+t)^2}{\Var(h^{D_N}(x_N))}
=2\log N-\frac32\log\log N
\\+\alpha t
-2\int_{\partial D}\Pi^D(x,\textd z)\log|x-z| - 2c_0/g +o(1),
\qquad
\end{multline}
where $o(1)\to0$ uniformly on compact sets of~$t$ and on compact sets of~$x\in D$. Hence we get
\begin{equation}
\label{E:6.34ua}
f_N(x,t) = \frac{\texte^{2c_0/g+o(1)}}{2}\texte^{-\alpha t}\exp\Bigl\{2\int_{\partial D}\Pi^D(x,\textd z)\log|x-z|\Bigr\}\,\,\frac{\log N}{N^2}
\end{equation}
and, using also \eqref{E:lim-S} and the translation invariance of the DGFF,
\begin{equation}
\label{E:6.35}
\lim_{N\to\infty}
\,N^2\,P\Bigl(h^{D_N}\le m_N+t\,\Big|\,h^{D_N}(x_N)=m_N+t\Bigr)
f_N(x,t)
=\rho^D(x,t)
\end{equation}
provided we set~$c:=\texte^{2c_0/g}/g$ in \eqref{E:6.30a}. Since
\begin{multline}
\label{E:6.37}
\qquad
P\bigl(h^{D_N}\le h^{D_N}(z),\,h^{D_N}(z)-m_N\in(a,b)\bigr)
\\
=\int_a^b
P\Bigl(h^{D_N}\le m_N+t\,\Big|\,h^{D_N}(z)=m_N+t\Bigr)f_N(z/N,t)\,\textd t
\qquad
\end{multline}
and since the integrands on the right are bounded uniformly on compact sets of~$t$, the Bounded Convergence Theorem then proves \eqref{E:2.11c}. 

It remains to connect~$\rho^D(x,t)$ to the measure \eqref{E:2.12c}. Pick~$A\subset D$ open with $\overline A\subset D$ and recall that by Corollary~1.2 of Biskup and Louidor~\cite{BL1} generalized, with the help of Theorem~\ref{thm-main}, to arbitrary domains in~$\mathfrak D$,
\begin{multline}
\label{E:6.41}
\qquad\quad
P\Bigl(N^{-1}\argmax_{D_N}h^{D_N}\in A,\,\max_{z\in D_N}h^{D_N}(z)-m_N\in(a,b)\Bigr)
\\
\,\underset{N\to\infty}\longrightarrow\, 
\int_a^b\,E\bigl(\widehat Z(A)\texte^{-\alpha^{-1}Z^D(D)\texte^{-\alpha t}}\bigr)\textd t.
\quad\qquad
\end{multline}
Writing the left-hand side as
\begin{equation}
\int_{A\times(a,b)} N^2
P\Bigl(h^{D_N}\le m_N+t\,\Big|\,h^{D_N}(x_N)=m_N+t\Bigr)\,f_N(x,t)\,\textd x\,\textd t
\end{equation}
and recalling that, by Corollary~\ref{cor-6.5} and \eqref{E:6.34ua}, the integrand is bounded uniformly in~$N$, \eqref{E:6.35}, the Bounded Convergence Theorem, the fact that $\rho^D$ is measurable and \eqref{E:6.41} then show
\begin{equation}
\int_{A\times(a,b)}\rho^D(x,t)\textd x\textd t
=\int_a^b\,E\bigl(\widehat Z(A)\texte^{-\alpha^{-1}Z^D(D)\texte^{-\alpha t}}\bigr)\textd t.
\end{equation}
As this holds for a generating class of sets~$A$, the continuity of~$\rho^D$ yields the desired claim.
\end{proofsect}

\begin{remark}
\label{remark-6.7}
Recall that, for each~$D\in\mathfrak D$, the function $\psi^D$ takes the form
\begin{equation}
\psi^D(x)=c_\star \exp\Bigl\{2\int_{\partial D}\Pi^D(x,\textd z)\log|x-z|\Bigr\}
\end{equation}
for some (exis\-ten\-tial) constant~$c_\star\in(0,\infty)$ (same as that in \eqref{E:2.13}); see Biskup and Louidor~\cite{BL2}. Comparing \eqref{E:6.30a} with \eqref{E:2.13ua} and recalling the notation $c:=\texte^{2c_0/g}/g$ from the above proof, it thus follows that
\begin{equation}
c_\star = c \,\Xiin_\infty(1)\,\lim_{t\to\infty}\frac{\Xi^{D}_{\infty,\infty}(t,x)}t,
\end{equation}
where, in particular, the limit exists and is independent of~$x$. The limit depends only on the global characteristics of the extreme value statistics (as expressed by the~$Z^D$ measure); all local properties are encoded into~$\Xiin_\infty(1)$ and, to some extent, also the constant~$c$ (which depends on the potential~$\fraka$ via the constant~$c_0$). \textit{Update in revision}: In the proof of the identification of the~$Z^D$ measure with the LQG, the above limit is shown to be one; see~\cite{BL2}. 
\end{remark}

\subsection{Liouville measure, PD statistics and freezing}
\noindent
The last statements to be still proved are those dealing with the limit of the Liouville measure, Poisson Dirichlet statistics of the corresponding atomic law and the freezing phenomenon. All of these pertain to $\beta>\beta_\cc$ where, we recall,~$\beta_\cc=\alpha:=2/\sqrt g$. Throughout this section we suppose that~$D\in\mathfrak D$ and a sequence~$\{D_N\}$ satisfying \twoeqref{E:1.1}{E:1.1a} are given and fixed.

Fix~$\beta>\beta_\cc$ and recall the definition of $\Sigma_{s,Q}$ from before Theorem~\ref{thm-2.7}. Our principal goal is to prove that, for every bounded and continuous function $f\colon D\to[0,\infty)$,
\begin{equation}
\label{E:6.9}
\sum_{z\in D_N}\texte^{\beta (h^{D_N}(z)-m_N)}f(x/N)
\,\,\,\underset{N\to\infty}\Lawarrow\,\,\,
c(\beta) Z^D(D)^{\beta/\beta_\cc}\,\,\,\int_D\Sigma_{\beta_\cc/\beta,\,\widehat Z^D}(\textd x)f(x).
\end{equation}
A natural approach is to write the left-hand side as $\langle\eta^D_N,\tilde f\rangle$, where $\tilde f(x,h):=\texte^{\beta h}f(x)$, and apply Corollary~\ref{cor-cluster-process}. It does not bother us much that $\tilde f$ is unbounded as we can always truncate the maximum from above with high probability. However, the fact that the support of~$\tilde f$ extends all the way to $-\infty$ in the~$h$ variable is much more serious as this could lead to potential blow-ups, which need to be ruled out before the limit $N\to\infty$ is taken.

 Recall the definition of~$\Gamma_N^D(t)$ from \eqref{E:5.47} and for~$\delta>0$ small, set
\begin{equation}
D_N^\delta:=\{x\in D_N\colon \dist(x,D_N^\cc)>\delta N\}\quad\text{and}\quad \Gamma_{N,\delta}^D(t):=\Gamma_N^D(t)\cap D_N^\delta.
\end{equation}
The said blow-ups will be controlled with the help of the following claim:

\begin{proposition}
\label{prop-6.1}
For $D\in\mathfrak D$ there is a constant~$c\in(0,\infty)$ such that for each~$\epsilon>0$ small enough, all $N\ge1$ large and all $t\ge0$,
\begin{equation}
P\bigl(|\Gamma_{N}^D(t)|\ge2\texte^{(\beta_\cc+\epsilon)t}\bigr)\le c(1+t)^2\texte^{-\epsilon t}.
\end{equation}
\end{proposition}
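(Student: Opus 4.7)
The strategy is to establish the $N$-uniform first-moment bound
\begin{equation}
\label{E:firstmoment-plan}
E\bigl|\Gamma_N^D(t)\bigr|\le c(1+t)^2\,e^{\alpha t}
\end{equation}
and then apply Markov's inequality, which gives directly
\[
P\bigl(|\Gamma_N^D(t)|\ge 2 e^{(\alpha+\epsilon)t}\bigr)\le\frac{E|\Gamma_N^D(t)|}{2e^{(\alpha+\epsilon)t}}\le\tfrac{c}{2}(1+t)^2 e^{-\epsilon t}.
\]

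The naive evaluation $E|\Gamma_N^D(t)|=\sum_{x\in D_N}P(h^{D_N}(x)\ge m_N-t)$ via Gaussian tails and $\operatorname{Var}(h^{D_N}(x))=g\log N+O(1)$ for bulk $x$ yields only the weaker bound $c(\log N)e^{\alpha t}$, because the per-site probability is of order $\frac{\log N}{N^2}e^{\alpha t}$ while $|D_N|\asymp N^2$. To cancel the spurious $\log N$ I would first pass to the structured process: any $x\in\Gamma_N^D(t)$ reaches, by steepest ascent inside $\Lambda_r(x)$ (for $r=r_N$ growing with $N$), an $r$-local maximum $y=y(x)\in\Gamma_N^D(t)$, hence
\[
|\Gamma_N^D(t)|\le\sum_{y\in\tilde\Theta_{N,r}^D\cap\Gamma_N^D(t)}\bigl|\Lambda_r(y)\cap\Gamma_N^D(t)\bigr|.
\]

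Taking expectations, I would condition on $h^{D_N}(y)=m_N-s$: by Lemma~\ref{lemma-5.10q} the conditional law of $h^{D_N}(y)-h^{D_N}(y+\cdot)$ on $\Lambda_r(y)$ is that of a pinned DGFF on $\Delta^n$ with $2^n\asymp r$, and the event $\{y\in\tilde\Theta_{N,r}^D\}$ becomes precisely the event that this pinned field is non-negative. Lemma~\ref{lemma-4.9new} then bounds the conditional probability of this latter event by $c\,n^{-1}(1+t-s)^2$, supplying both the $(1+t)^2$ factor appearing in the proposition and the reciprocal-$\log N$ factor that will cancel the Gaussian density inflation. Indeed, the density of $h^{D_N}(y)$ at $m_N-s$ contributes for bulk $y$ a factor of order $\frac{\log N}{N^2}e^{\alpha s}$, so that summing over $y$ (a further factor $\asymp N^2$), multiplying by the conditional expected cluster size, and integrating over $s\le t$ yields \eqref{E:firstmoment-plan}: the $\log N$ from the density exactly cancels the $n^{-1}\asymp(\log N)^{-1}$ from Lemma~\ref{lemma-4.9new}, and the $(1+t-s)^2 e^{\alpha s}$ integrand integrates to $(1+t)^2 e^{\alpha t}$ up to constants.

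The main obstacle is obtaining a matching upper bound on the conditional expected cluster size
\[
E\bigl(|\Lambda_r(y)\cap\Gamma_N^D(t)|\,\bigm|\,h^{D_N}(y)=m_N-s,\,y\in\tilde\Theta_{N,r}^D\bigr)\le C e^{\alpha(t-s)},
\]
uniformly in $N$. This is morally a quantitative version of the exponential growth rate of level sets under the cluster law $\nu$, and cannot be imported from Theorem~\ref{thm-2.7} since the present proposition is used as an input to its proof. I would derive it instead directly from the concentric decomposition of Section~\ref{sec3}, by partitioning $\Lambda_r(y)$ into annuli $\Delta^k\setminus\Delta^{k-1}$ and controlling, annulus by annulus via the random-walk representation $S_k$ and the entropic-repulsion bounds of Section~\ref{sec-4.4}, the number of sites whose field value lies above $m_N-t$; summing the resulting geometric-type bound over $k$ then produces the claimed exponential.
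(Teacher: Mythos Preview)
Your plan has a genuine gap at exactly the point you flag as the ``main obstacle'': the uniform conditional cluster-size bound $E(|\Lambda_r(y)\cap\Gamma_N^D(t)|\mid\ldots)\le C e^{\alpha(t-s)}$ is not available in the paper and would require substantial independent work. Your sketch---partition into annuli and control level-set sizes via the random-walk representation---is plausible in spirit, but carrying it out means bounding, conditionally on the pinned field staying nonnegative, the expected number of annulus points below a given level; this is essentially a quantitative level-set estimate for the cluster law, and the paper only obtains such estimates \emph{after} Proposition~\ref{prop-6.1} (cf.\ the remark following Theorem~\ref{thm-2.7}). So as written, the argument is incomplete at its crux.

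The paper's route is much shorter and bypasses cluster sizes entirely. Rather than aiming for the unconditional first-moment bound~\eqref{E:firstmoment-plan}, it proves the \emph{restricted} bound
\[
E\bigl(|\Gamma_{N,\delta}^{\widetilde D}(t)|\,\1_{\{\max h^{\widetilde D_N}\le m_N+s\}}\bigr)\le c(1+s+t)^2 e^{\alpha t},
\]
which follows pointwise from Lemma~\ref{lemma-6.5}: for each $x$, the joint event $\{h(x)\ge m_N-t,\ h\le m_N+s\}$ already carries a factor $(1+s+t)^2/n$ from Lemma~\ref{lemma-4.9new} (applied at $x$ itself, with the global upper bound $m_N+s$ playing the role your local-max constraint would have). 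Then one splits $P(|\Gamma_N^D(t)|\ge 2e^{(\alpha+\epsilon)t})$ according to whether $\max h\le m_N+s$ or not, uses Markov on the first piece, the max-tail bound (Lemma~\ref{lemma-max-tail}) on the second, and sets $s:=t$. The passage to a slightly larger domain $\widetilde D$ with $D_N\subset\widetilde D_N^\delta$ (via Lemma~\ref{lemma-Gamma-monotonicity}) handles boundary points. The insight you are missing is that conditioning on a bounded global maximum is just as effective as conditioning on a local maximum for extracting the $1/\log N$, and costs only an exponentially small tail to remove.
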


We begin with a lemma:

\begin{lemma}
\label{lemma-6.5}
For each~$D\in\mathfrak D$ and each $\delta>0$ there is a constant $c\in(0,\infty)$ depending only on~$\delta$ and the diameter of~$D$ such that for all $t,s\ge0$,
\begin{equation}
\max_{x\in D_N^\delta}\,P\bigl(h^{D_N}(x)\ge m_N-t,\,h^{D_N}\le m_N+s\bigr)\le c(1+s+t)^2\texte^{\alpha t}\,\frac1{N^2}\,.
\end{equation}
\end{lemma}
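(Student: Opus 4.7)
The plan is to disintegrate the probability over the value $u$ of $h^{D_N}(x)$ and then combine a Gaussian density estimate with the conditional estimate supplied by Lemma~\ref{lemma-4.9new}. Write
\begin{equation}
P\bigl(h^{D_N}(x)\ge m_N-t,\,h^{D_N}\le m_N+s\bigr)
=\int_{-t}^{s} f_N(x,m_N+t')\,p_N(x,t',s)\,\textd t',
\end{equation}
where $f_N(x,\cdot)$ is the (Gaussian, mean-zero, variance $G^{D_N}(x,x)$) density of $h^{D_N}(x)$ and $p_N(x,t',s):=P(h^{D_N}\le m_N+s\,|\,h^{D_N}(x)=m_N+t')$. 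The upper endpoint is $s$ rather than $+\infty$ because the event forces $h^{D_N}(x)\le m_N+s$.

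For $x\in D_N^\delta$, standard facts about the Green function (Lemmas~\ref{lemma-G-potential}--\ref{lemma-potential}) give $G^{D_N}(x,x)=g\log N+O(1)$ uniformly in $x\in D_N^\delta$ and uniformly over the class of $D\in\mathfrak D$ of bounded diameter. A direct Gaussian calculation, parallel to \eqref{E:Gauss-calc}, then yields
\begin{equation}
f_N(x,m_N+t')\le c\,\frac{\log N}{N^2}\,\texte^{-\alpha t'}\,\texte^{-t'^2/(2g\log N)+O(|t'|/\log N)}\,,
\end{equation}
which gives the workable bound $f_N(x,m_N+t')\le c'\frac{\log N}{N^2}\texte^{-\alpha t'}$ as long as $|t'|\le c''\log N$ for a suitable $c''$. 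For $s+t\ge c''\log N$ the advertised bound $c(1+s+t)^2\texte^{\alpha t}/N^2$ exceeds one and is trivial; so we may henceforth assume $s+t\le c''\log N$.

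Next, Lemma~\ref{lemma-3.1} recasts $p_N(x,t',s)$ as the probability that $h^{D_N\smallsetminus\{x\}}\le m_N+s-(m_N+t')\frakg^{D_N}$, with $\frakg^{D_N}$ the harmonic function from Section~\ref{sec-3.1}. Translating so that $x$ becomes the origin, $D_N-x$ obeys $\Delta^{n_0}\subseteq D_N-x\subseteq\Delta^{n_0+q}$ for some integers $n_0=\log_2 N+O(1)$ and $q=q(\delta,\diam(D))$. Hence the upper bound in Lemma~\ref{lemma-4.9new}, generalized to such outer domains via Lemma~\ref{lemma-gen-domain} in Section~\ref{sec-4.5}, yields
\begin{equation}
p_N(x,t',s)\le\frac{c(1+s-t')^2}{\log N}
\end{equation}
uniformly in $t'\le s$ and in $x\in D_N^\delta$.

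Substituting the two displays into the disintegration gives
\begin{equation}
P\bigl(h^{D_N}(x)\ge m_N-t,\,h^{D_N}\le m_N+s\bigr)\le\frac{c}{N^2}\int_{-t}^{s}\texte^{-\alpha t'}(1+s-t')^2\,\textd t'.
\end{equation}
The change of variables $u=-t'$ turns this integral into $\int_{-s}^{t}\texte^{\alpha u}(1+s+u)^2\,\textd u$, which splits as the $u\in[-s,0]$ piece, bounded crudely by $(1+s)^3$, and the $u\in[0,t]$ piece, which is monotone and therefore bounded by $\alpha^{-1}\texte^{\alpha t}(1+s+t)^2$. Both are at most a constant multiple of $(1+s+t)^2\texte^{\alpha t}$, giving the claim. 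The main obstacle is the extension of the upper bound in Lemma~\ref{lemma-4.9new} from cubes to the domain $D_N$; this is exactly what Lemma~\ref{lemma-gen-domain} is designed for, and the constants produced there depend only on the parameter $q$, hence only on $\delta$ and $\diam(D)$, as the statement requires.
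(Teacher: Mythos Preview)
Your approach is essentially the same as the paper's: disintegrate over the value of~$h^{D_N}(x)$, combine the Gaussian density asymptotic with the conditional bound from Lemma~\ref{lemma-4.9new} (extended to~$D_N$ via Lemma~\ref{lemma-gen-domain}), and integrate. Two estimates in the execution are off, though.

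First, your ``trivial case'' is not actually trivial. If $t=0$ and $s=c''\log N$, the target bound reads $c(1+c''\log N)^2/N^2$, which is far below~$1$. Fortunately the case split is unnecessary: in your own density estimate the quadratic term $-t'^2/(2g\log N)$ dominates the linear error (which should really be $O(|t'|\log\log N/\log N)$, coming from the $\log\log N$ in~$m_N$), and their sum is bounded above by an absolute constant for \emph{all} $t'\in\R$. This is exactly the computation the paper does, yielding $f_N(x,m_N+t')\le c'\frac{\log N}{N^2}\texte^{-\alpha t'}$ with no restriction on~$t'$.

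Second, the final integral estimate does not close. Your crude bound $(1+s)^3$ for the piece $\int_{-s}^0 \texte^{\alpha u}(1+s+u)^2\,\textd u$ is \emph{not} at most $c(1+s+t)^2\texte^{\alpha t}$ uniformly in $s,t\ge0$; take $t=0$ and $s$ large. The clean fix, which is what the paper does, is to bound $(1+s+u)^2\le(1+s+t)^2$ over the whole range $u\in[-s,t]$ \emph{before} integrating, and then use $\int_{-s}^t\texte^{\alpha u}\,\textd u\le\alpha^{-1}\texte^{\alpha t}$. Alternatively, a sharper estimate of the $[-s,0]$ piece (exploiting the exponential decay of $\texte^{\alpha u}$) gives $O((1+s)^2)$ rather than $(1+s)^3$, which then does fit under the target.
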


\begin{proofsect}{Proof}
For notational convenience suppose that $0\in D_N^\delta$ and instead of the maximum over~$x\in D_N$ in the statement, let us set $x=0$ and take maximum over all shifts of~$D_N$ such that~$0\in D_N^\delta$. The fact that~$\delta>0$ implies that there~$n,q\ge1$, with $n-\log_2N$ and~$q$ bounded by $\delta$-dependent constants uniformly in~$N\ge1$ and all shifts of~$D_N$ for which $0\in D_N^\delta$, such that
\begin{equation}
\Delta^n\subseteq D_N\subseteq \Delta^{n+q}.
\end{equation}
Thanks to Lemma~\ref{lemma-gen-domain}, we can use Lemma~\ref{lemma-4.9new} with~$D_N$ instead of~$\Delta^n$ and from \eqref{E:4.24z} thus get
\begin{equation}
P\Bigl(h^{D_N}\le m_N+s-(m_N+t)\frakg^{D_N}\,\Big|\,h^{D_N}(0)=0\Bigr)\le \frac{c_1}n(1+s+t)^2
\end{equation}
for some $c\in(0,\infty)$.
Lemma~\ref{lemma-3.1} tells us that the probability on the left equals
\begin{equation}
P\bigl(h^{D_N}\le m_N+s\,\big|\,h^{D_N}(0)=m_N-t\bigr)
\end{equation}
and the probability in the statement is thus bounded by
\begin{equation}
\label{E:6.38}
\frac {c_1}n\int_0^{s+t}\textd x\,\,\frac{(1+s+t-x)^2}{\sqrt{2\pi\Var(h^{D_N}(0))}}\,\,\texte^{-\frac12\frac{(m_N+x-t)^2}{\Var(h^{D_N}(0))}}.
\end{equation}
It remains to carefully estimate the integral on the right-hand side. 

Since $\Var(h^{D_N}(0))-g\log N$ is bounded by a constant uniformly in~$N$ and uniformly in the position of~$D_N$  subject to $0\in D_N^\delta$, we have
\begin{equation}
\frac1{\sqrt{\Var(h^{D_N}(0))}}\,\texte^{-\frac12\,m_N^2/\Var(h^{D_N}(0))}\le c_2\,\frac{\log N}{N^2}
\end{equation}
for some $ c_2\in(0,\infty)$. Expanding the square in the exponent and using that $m_N/\Var(h^{D_N}(0))\ge\alpha-c_3(\log\log N)/\log N$ for some $c_3\in(0,\infty)$ we bound
\begin{equation}
\frac{m_N}{\Var(h^{D_N}(0))}(x-t)+\frac12\frac{(x-t)^2}{\Var(h^{D_N}(0))}
\ge\alpha(x-t)-c_3(x-t)\frac{\log\log N}{\log N}+c_4\frac{(x-t)^2}{\log N}
\end{equation}
for some~$c_4\in(0,\infty)$. The last two terms are minimized by $x-t$ of order~$\log\log N$ and so the right-hand side is at least $\alpha(x-t)-c_5$ for some $c_5\in(0,\infty)$. Since $n$ is order~$\log N$, \eqref{E:6.38} is bounded by
\begin{equation}
\frac{c_6}{N^2}(1+s+t)^2\int_0^{s+t}\texte^{-\alpha(x-t)}\,\,\textd x
\end{equation}
for some~$c_6\in(0,\infty)$. The claim follows by simple integration.
\end{proofsect}

\begin{proofsect}{Proof of Proposition~\ref{prop-6.1}}
Let $\delta>0$ be fixed small and let $\widetilde D\supset D$ be such that $\widetilde D^\delta_N\supset D_N$ holds for all~$N\ge1$. From Lemma~\ref{lemma-6.5} we get
\begin{equation}
E\bigl(|\Gamma_{N,\delta}^{\widetilde D}(t)|\1_{\{h^{D_N}\le m_N+s\}}\bigr)\le c_1 (1+s+t)^2\texte^{\alpha t}.
\end{equation}
Lemma~\ref{lemma-Gamma-monotonicity} then allows us to estimate
\begin{equation}
\begin{aligned}
P\bigl(|\Gamma_{N}^D&(t)|\ge2\texte^{(\alpha+\epsilon)t}\bigr)
\le 2P\bigl(|\Gamma_{N,\delta}^{\widetilde D}(t)|\ge\texte^{(\alpha+\epsilon)t}\bigr)
\\
&\le 2P\bigl(\max h^{\widetilde D_N}\le m_N+s,\,|\Gamma_{N,\delta}^{\widetilde D}(t)|\ge\texte^{(\alpha+\epsilon)t}\bigr)+P(\max h^{\widetilde D_N}> m_N+s)
\\
&\le 2c_1(1+s+t)^2\texte^{-\epsilon t}+2c_2 (1+s)\texte^{-\alpha s},
\end{aligned}
\end{equation}
where we used the Markov inequality in the first term and Lemma~\ref{lemma-max-tail} in the second.
Setting $s:=t$ then yields the desired claim. 
\end{proofsect}

This will permit us to complete:

\begin{proofsect}{Proof of Theorem~\ref{thm-2.7}}
Let~$\beta>\beta_\cc$ and let~$\epsilon\in(0,\beta-\beta_\cc)$. In light of the argument at the beginning of this section, the key point is to reduce the sum on the left of \eqref{E:6.9} to those~$z$ where $|h^{D_N}_z-m_N|$ is bounded uniformly in~$N$. This is done as follows: By Proposition~\ref{prop-6.1} and a union bound, there are constants $c,\tilde c\in(0,\infty)$ such that
\begin{equation}
\label{E:6.10}
|\Gamma_{N}^D(t)|\le c\texte^{(\beta_\cc+\epsilon)t},\qquad t\ge t_0,
\end{equation}
occurs with probability at least $1-\tilde c\texte^{-\epsilon t_0/2}$ uniformly in~$N\ge1$. When \eqref{E:6.10} is in force, we have
\begin{multline}
\qquad
\sum_{z\in D_N}\texte^{\beta (h^{D_N}(z)-m_N)}\1_{\{h^{D_N}(z)\le m_N-t_0\}}
\\
\le\sum_{n\ge0}\texte^{-\beta(t_0+n)}\bigl|\Gamma_{N,\delta}^D(t_0+n+1)\bigr|
\le c'\texte^{(\beta_\cc+\epsilon-\beta)t_0}
\qquad
\end{multline}
which is small for~$t_0$ large by our choice of~$\epsilon$. On the other hand, by Lemma~\ref{lemma-max-tail}, the probability that there is any~$z\in D_N$ where $h^{D_N}(z)-m_N\ge t$ is small in~$t$, uniformly in~$N$. It follows that
\begin{equation}
\label{E:6.12}
\lim_{t_0\to\infty}\limsup_{N\to\infty}\,P\biggl(\,\,\sum_{z\in D_N}\texte^{\beta (h^{D_N}(z)-m_N)}f(x/N)\1_{\{h^{D_N}(z)-m_N\not\in[-t_0,t_0]\}}>\delta'\biggr)=0
\end{equation}
holds for each~$\delta'>0$.

We will now invoke the full process convergence to control the contribution of the points where $|h^{D_N}(x)-m_N|\le t_0$. Instead of Corollary~\ref{cor-cluster-process}, it will be more convenient to aim directly at Theorem~\ref{thm-main}. First we invoke Lemma~\ref{lemma-separation} which says that the probability that $|h^{D_N}(z)-m_N|\le t_0$ at $z=x,y$ with $r|x-y|\le N/r$ tends to zero as~$N\to\infty$ and~$r\to\infty$. Thanks to uniform continuity of~$f$, defining $Y^\beta_r(\phi):=\sum_{z\in\Lambda_r(0)}\texte^{-\beta\phi(x)}$ this implies
\begin{multline}
\label{E:6.48}
\quad\lim_{t_0\to\infty}\,\limsup_{r\to\infty}\,\limsup_{N\to\infty}\,
P\biggl(\,\Bigl|\,
\sum_{z\in D_N}\texte^{\beta (h^{D_N}(z)-m_N)}f(x/N)
\\
-\sum_{x\in\Theta_{N,r}}\texte^{\beta (h^{D_N}(x)-m_N)}Y_r(h^{D_N}_x-h^{D_N}_{x+\cdot})f(x/N)\1_{\{h^{D_N}(x)-m_N\in[-t_0,t_0]\}}\Bigr|>\delta\biggr)=0,
\quad
\end{multline}
where we also invoked \eqref{E:6.12} in order to be able to write only an indicator involving~$h_x^{D_N}$ in the second sum. This sum can be written as $\langle\eta^D_{N,r},\,F_{r,\,t_0}\rangle$, where
\begin{equation}
F_{r,\,t_0}(x,h,\phi):=f(x)\,\texte^{\beta h}\1_{\{|h|\le t_0\}}\,Y_r^\beta(\phi).
\end{equation}
Thanks to \eqref{E:6.48}, the limit proved in Theorem~\ref{thm-main} shows
\begin{multline}
\label{E:6.13}
\quad
E\biggl(\exp\Bigl\{-\sum_{z\in D_N}\texte^{\beta (h^{D_N}(z)-m_N)}f(x/N)\Bigr\}\biggr)
\\
\underset{N\to\infty}\longrightarrow\,\,\,\lim_{r\to\infty}
E\biggl(\exp\Bigl\{-\int Z^D(\textd x)\otimes \texte^{-\alpha h}\textd h\otimes\nu(\textd\phi)(1-\texte^{-F_r(x,h,\phi)})\Bigr\}\biggr),
\quad
\end{multline}
where, with the help of the Monotone Convergence Theorem, we already took the limit $t_0\to\infty$ inside the expectation and replaced $F_{r,\,t_0}$ by
\begin{equation}
F_r(x,h,\phi):=f(x)\,\texte^{\beta h}\,Y^\beta_r(\phi).
\end{equation}
Note that the integral on the right of \eqref{E:6.13} is finite a.s.\ because $Z^D(D)<\infty$ a.s.\ and $1-\texte^{-F_r(x,h,\phi)}$ decays proportionally to $\texte^{\beta h}$ as~$h\to-\infty$ and~$\beta>\alpha$.

Next we will address the limit $r\to\infty$ in \eqref{E:6.13}. Focusing first on the integral on the right-hand side, the change of variables $t:=\texte^{\beta h}$ gives us
\begin{equation}
\label{E:6.14}
\begin{aligned}
\int\texte^{-\alpha h}\textd h(1-\texte^{-F_r(x,h,\phi)})
&=\int_0^\infty\frac{\textd t}{\beta t}\, t^{-\alpha/\beta}\bigl(1-\texte^{-f(x)Y^\beta_r(\phi)t}\bigr)
\\
&=\frac1\beta \bigl(Y^\beta_r(\phi)\bigr)^{\alpha/\beta}\int_0^\infty\textd t\,t^{-1-\alpha/\beta}(1-\texte^{-tf(x)}).
\end{aligned}
\end{equation}
By our arguments above, the left-hand side of \eqref{E:6.13} is positive for $f(x):=1$, and since $Z^D(D)>0$ a.s., the right-hand side of \eqref{E:6.14} must remain bounded as~$r\to\infty$ even after taking expectation with respect to~$\nu$. The Monotone Convergence Theorem then gives $E_\nu(Y^\beta(\phi)^{\alpha/\beta})<\infty$ and, in particular, $Y(\phi)<\infty$ $\nu$-a.s. The $r\to\infty$ limit can be taken inside \eqref{E:6.13} thus replacing $F_r$ by
\begin{equation}
F(x,h,\phi)=f(x)\,\texte^{\beta h}\,Y^\beta(\phi).
\end{equation}
It remains to identify the resulting expression on the right-hand side of \eqref{E:6.13} with the Laplace transform of the right-hand side of \eqref{E:6.9}.

The definition of $\Sigma_{s,Q}$ reads
\begin{equation}
\label{E:6.15}
E\biggl(\exp\Bigl\{-\lambda\int\Sigma_{s,Q}(\textd x)f(x)\Bigr\}\biggr)
=\exp\Bigl\{-\lambda^s\int Q(\textd x)\otimes\textd t\,\,t^{-1-s}(1-\texte^{-tf(x)})\Bigr\}
\end{equation}
Setting
\begin{equation}
s:=\alpha/\beta,\quad\lambda^s:=\beta^{-1}E_\nu\bigl(Y^\beta(\phi)^{\alpha/\beta}\bigr)\,Z^D(D)\quad\text{and}\quad Q:=\widehat Z^D
\end{equation}
 \eqref{E:6.14} identifies the exponential on the right-hand side of \eqref{E:6.13} (with $F_r$ replaced by~$F$) with that on the right of \eqref{E:6.15}. Taking expectation with respect to~$Z^D$ yields~\eqref{E:6.9} with $c(\beta)$ as given in~\eqref{E:2.9b}.
\end{proofsect}

\begin{proofsect}{Proof of Corollary~\ref{cor-2.7}}
By using test functions of the form $\lambda_1+\lambda_2 f(x)$ in the convergence in Theorem~\ref{thm-2.7} we find out that
\begin{multline}
\quad
\biggl(\,\,\sum_{z\in D_N}\texte^{\beta (h^{D_N}_z-m_N)},\,\sum_{z\in D_N}\texte^{\beta (h^{D_N}_z-m_N)}f(x/N)\biggr)
\\
\underset{N\to\infty}\Lawarrow\,\,\,
c(\beta) Z^D(D)^{\beta/\beta_\cc}\,\,\,\biggl(\,\Sigma_{\beta_\cc/\beta,\,\,\widehat Z^D}(D),\,\,\int_D\Sigma_{\beta_\cc/\beta,\,\widehat Z^D}(\textd x)f(x)\biggr).
\quad
\end{multline}
The claim follows by dividing these two terms and noting that if $\{q_i\}$ are the points of the Poisson process defining $\Sigma_{s,Q}$ as in \eqref{E:2.17}, then $\Sigma_{s,Q}(D)=\sum_i q_i$ and $\{q_i/\sum_jq_j\colon i\ge1\}$, reordered according to size, constitute a sample from~PD($s$).
\end{proofsect}

\begin{proofsect}{Proof of Corollary~\ref{cor-2.8}}
Recall the function $G_{N,\beta}(t)$ from \eqref{E:2.22ue}. Using the same arguments as in the proof of Theorem~\ref{thm-2.7}, setting $f(h,\phi):=\texte^{\beta (h-y)}Y^\beta(\phi)$ yields
\begin{equation}
G_{N,\beta}(y+m_N)\,\underset{N\to\infty}\longrightarrow\,
E\biggl(\exp\Bigl\{-Z^D(D)\int\texte^{-\alpha h}\textd h\otimes\nu(\textd\phi)(1-\texte^{-f(h,\phi)})\Bigr\}\biggr).
\end{equation}
By change of variables again,
\begin{equation}
\begin{aligned}
\int\texte^{-\alpha h}\textd h(1-\texte^{-f(h,\phi)})
&=\int_0^\infty\frac{\textd t}{\beta t}\, t^{-\alpha/\beta}\bigl(1-\texte^{-\texte^{-\beta y}Y^\beta(\phi)t}\bigr)
\\
&=\frac1\beta \bigl(\texte^{-\beta y}Y^\beta(\phi)\bigr)^{\alpha/\beta}\int_0^\infty\textd t\,t^{-1-\alpha/\beta}(1-\texte^{-t}).
\end{aligned}
\end{equation}
Defining
\begin{equation}
\label{E:6.54a}
\tilde c(\beta):=\frac1\alpha\log\biggl(\frac1\beta E_\nu\bigl(Y(\theta)^{\alpha/\beta}\bigr)\int_0^\infty\textd t\,t^{-1-\alpha/\beta}(1-\texte^{-t})\biggr),
\end{equation}
we then get
\begin{equation}
G_{N,\beta}(y+m_N+\tilde c(\beta))\,\underset{N\to\infty}\longrightarrow\,E\bigl(\texte^{-Z^D(D)\,\texte^{-\alpha y}}\bigr)
\end{equation}
exactly as desired.
\end{proofsect}

\newcounter{appendA}
\renewcommand{\theappendA}{A}
\section*{Appendix A: Brownian paths above a curve}
\addcontentsline{toc}{section}
{{\tocsection {}{7}{\!\!\!\!Appendix A: Brownian paths above a curve\dotfill}}{}}
\refstepcounter{appendA}
\label{appendA}
\renewcommand{\thesection}{A}
\setcounter{subsection}{0}
\setcounter{theorem}{0}
\setcounter{equation}{0}
\renewcommand{\thesubsection}{A.\arabic{subsection}}
\renewcommand{\theequation}{\thesection.\arabic{equation}}
\noindent
The goal of this section is prove Propositions~\ref{prop-uncond-BM-positive-cor}, \ref{prop-mine-corollary},~\ref{prop-4.9ue}, \ref{prop-mine-too-corollary}, \ref{prop-entropy-BM} and~\ref{prop-entropy-BB} dealing with probabilities that Brownian motion and Brownian bridge avoid hitting a given curve. Many of the calculations presented here appear in some form in other places including the literature on the \DGFF{} and the Branching Brownian Motion. However, as discussed in Remark~\ref{remark-Bramson}, our applications require a level of precision and generality that forces us to furnish 
independent proofs.

\subsection{Consequences of the Reflection Principle}
\label{sec-A1}\noindent
Let $\{B_t\colon t\ge0\}$ be a standard Brownian motion. For a continuous function $g\colon[0,\infty)\to[0,\infty)$, define
\begin{equation}
\tau_g:=\inf\bigl\{s\ge0\colon B_s-g(s)=0\bigr\}.
\end{equation}
(In particular, for $g(s):=x$ we will write $\tau_x$ and for $g(s):=x+\zeta(s)$ we will write $\tau_{x+\zeta}$, etc.) We begin by dealing with Brownian motion and Brownian bridge above a \emph{constant} curve which boils down to standard applications of the Reflection Principle. The following specific facts will be quite handy in various calculations below:

\begin{lemma}
\label{lem:2.8}
For all~$x>0$ and all~$t>0$, 
\begin{equation}
\label{E:4.16b}
P^0\bigl(\tau_x\in\textd t\bigr)\le \frac1{\sqrt{2\pi}}\,\frac x{t^{3/2}}\,\textd t
\end{equation}
and 
\begin{equation}
\label{E:4.16q}
\sqrt{\frac2\pi}\,\frac x{\sqrt t}\Bigl(1-\frac{x^2}{2t}\Bigr)\le P^0\bigl(\tau_x> t\bigr)\le\sqrt{\frac2\pi}\,\frac x{\sqrt t}\,.
\end{equation}
Similarly, for each $t>0$ and each $x,y>0$,
\begin{equation}
\label{E:4.17q}
\Bigl(1-\frac{xy}t\Bigr)\frac{2xy}t\le
P^x \bigl( \tau_0>t\,\big|\, B_t = y \bigr)\le\frac{2xy}t.
\end{equation}
Finally, let $M_t^\star:=\max_{s\le t}B_s$ and $T_t^\star:=\sup\{s\le t\colon B_s=M_t^\star\}$. Then $B_{T_t^\star}=M_t^\star$ and
\begin{equation}
\label{E:max-BM}
P^0\bigl(T_t^\star\in \textd s,\,M^\star_t\in \textd z\bigr)=\frac{z\,\texte^{-\frac{z^2}{2s}}}{\pi\,s^{3/2}\sqrt{t-s}}\,\1_{\{0\le s\le t\}}\1_{\{z\ge0\}}\,\textd s\,\textd z.
\end{equation}
 (Here and henceforth, $P(X\in\textd x)=f(x)\textd x$ is a shorthand for $P(X\in A)=\int_A f(x)\textd x$.)
\end{lemma}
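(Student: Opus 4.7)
The four statements are all classical consequences of the Reflection Principle; the point of stating them is to pin down numerical constants and error terms precisely, so the proof should simply make each constant explicit.

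For \eqref{E:4.16b}, I would just invoke the explicit formula $P^0(\tau_x\in\textd t)=\frac{x}{\sqrt{2\pi t^3}}\,\texte^{-x^2/(2t)}\textd t$, derived in the standard way by differentiating the Reflection-Principle expression $P^0(\tau_x\le t)=2P^0(B_t\ge x)$ in $t$. The stated bound then follows by dropping the exponential factor. For \eqref{E:4.16q}, the same Reflection Principle gives
\[
P^0(\tau_x>t)=1-2P^0(B_t\ge x)=\sqrt{\tfrac{2}{\pi t}}\int_0^x\texte^{-y^2/(2t)}\,\textd y.
\]
Bounding $\texte^{-y^2/(2t)}\le 1$ yields the upper bound; for the lower bound, use $\texte^{-u}\ge 1-u$ to get $\sqrt{\tfrac2{\pi t}}(x-x^3/(6t))$, which exceeds $\sqrt{\tfrac2\pi}(x/\sqrt t)(1-x^2/(2t))$ since $1/6<1/2$.

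For the Brownian-bridge estimate \eqref{E:4.17q}, I would apply the bridge version of the Reflection Principle: for $x,y>0$,
\[
P^x(\tau_0>t,\,B_t\in\textd y)=\frac{1}{\sqrt{2\pi t}}\bigl(\texte^{-(y-x)^2/(2t)}-\texte^{-(y+x)^2/(2t)}\bigr)\,\textd y,
\]
which on dividing by the Gaussian density $p_t(x,y)$ gives the exact identity $P^x(\tau_0>t\,|\,B_t=y)=1-\texte^{-2xy/t}$. The upper bound follows from $1-\texte^{-u}\le u$; the lower bound from $1-\texte^{-u}\ge u-u^2/2=u(1-u/2)$, the latter inequality verified by checking that $f(u):=1-\texte^{-u}-u+u^2/2$ satisfies $f(0)=f'(0)=0$ and $f''\ge 0$ on $[0,\infty)$.

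The joint density \eqref{E:max-BM} of $(T_t^\star,M_t^\star)$ is the one place where a decomposition argument is needed rather than a direct reflection computation. My plan is to use the classical fact that $T_t^\star$ has the arcsine density $\frac{1}{\pi\sqrt{s(t-s)}}$ on $(0,t)$ (a standard consequence of time reversal plus the Reflection Principle for the last zero of Brownian motion), together with the conditional law $M_t^\star\mid T_t^\star=s\laweq$ Rayleigh with parameter $\sqrt s$, i.e.\ density $(z/s)\texte^{-z^2/(2s)}$ on $(0,\infty)$. This second fact can be obtained either from Williams' path decomposition at the maximum --- after $T_t^\star$ the process is (by time reversal) a Brownian meander of length $t-s$ started at height $M_t^\star$, while before $T_t^\star$ it is a Bessel(3) bridge from $0$ to $M_t^\star$ of length $s$ --- or more elementarily, by conditioning at time $s$: given $B_s=z$, the probabilities that (i) $\max_{[0,s]}B\le z$ and (ii) $\max_{[s,t]}B<z$ are computed by the Reflection Principle as $2xy/t$-type formulas and multiplied. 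Multiplying the two densities gives $\frac{z\,\texte^{-z^2/(2s)}}{\pi s^{3/2}\sqrt{t-s}}$, matching \eqref{E:max-BM}. The only mildly subtle step is the extraction of the Rayleigh conditional; everything else is bookkeeping.
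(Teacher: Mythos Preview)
Your treatment of \eqref{E:4.16b}, \eqref{E:4.16q} and \eqref{E:4.17q} matches the paper's: both use the Reflection Principle to get $P^0(\tau_x>t)=P^0(|B_1|\le x/\sqrt t)$ and $P^x(\tau_0>t\,|\,B_t=y)=1-\texte^{-2xy/t}$, then apply the same elementary inequalities for the density and for $1-\texte^{-u}$.

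For \eqref{E:max-BM} the approaches differ. The paper does not quote the arcsine law or any path-decomposition result; it argues self-containedly by discretizing the \emph{level} of the maximum. It writes
\[
P^0\bigl(T_t^\star>s,\,M_t^\star\ge z\bigr)
=\lim_{\epsilon\downarrow0}\sum_{k\ge0}\int_{(s,t]}P^0(\tau_{z+k\epsilon}\in\textd u)\,P^0(\tau_\epsilon>t-u),
\]
where the $k$-th term is the probability that level $z+k\epsilon$ is first hit after time~$s$ and level $z+(k+1)\epsilon$ is never hit; as $\epsilon\downarrow0$ this pins $M_t^\star$ down and sends the first hitting time to the (a.s.\ unique) argmax. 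Since \eqref{E:4.16q} is sharp for small~$\epsilon$, the sum becomes a $\textd z$-integral and the joint density drops out by differentiation. Your route---factor the joint density as (arcsine for $T_t^\star$)$\times$(Rayleigh for $M_t^\star\mid T_t^\star=s$)---is correct and perhaps more recognizable, but it imports those two facts from outside, whereas the paper's derivation needs only the already-proved \eqref{E:4.16b}--\eqref{E:4.16q} and the Strong Markov Property.

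One caution on your ``more elementary'' alternative for the Rayleigh conditional: conditioning on $B_s=z$ and multiplying the two ``$2xy/t$-type'' probabilities does not work as stated. Given $B_s=z$, the event $\{\max_{[0,s]}B\le z\}$ has probability zero (a Brownian bridge from $0$ to $z$ a.s.\ overshoots~$z$), and likewise $\{\max_{[s,t]}B\le z\}$ has probability zero since the post-$s$ path a.s.\ immediately exceeds its starting value. Extracting a nonzero density from this boundary case requires exactly the kind of limiting argument the paper supplies. If you keep your factorization proof, quote the arcsine law and the Rayleigh conditional as known results (or derive them via Brownian meander), and drop the conditioning-at-$s$ sketch.
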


\begin{proof}
These claims are standard and can be found (albeit perhaps not in such  a  compact form) in various textbooks. We  provide proofs for completeness of exposition. 

The Reflection Principle shows
\begin{equation}
\label{E:7.6u}
P^0\bigl(\tau_x>t\bigr)
\\
=P^0\bigl(|B_1|\le\tfrac x{\sqrt t}\bigr).
\end{equation}
Then \twoeqref{E:4.16b}{E:4.16q} follow by bounding the probability density of~$B_1$ on~$[-\frac x{\sqrt t},\frac x{\sqrt t }]$ by~$(2\pi)^{-1/2}$ from above and by $(2\pi)^{-1/2}(1-\frac{x^2}{2t})$ from below.
Another application of the Reflection Principle yields 
\begin{equation}
\label{E:4.20}
	P^x \bigl( \tau_0>t\,\big|\, B_t = y \big)
	= 1 - 
		\exp \bigl\{ -\tfrac{2yx}{t} \big\}.
\end{equation}
The bounds \eqref{E:4.17q} again follow from $a-\frac{a^2}2\le 1-\texte^{-a}\le a$ valid for all~$a\ge0$. For \eqref{E:max-BM} we note that, by the Strong Markov Property and path-continuity of the Brownian motion,
\begin{equation}
P^0\bigl(T_t^\star>s,\,M^\star_t\ge z\bigr)=\lim_{\epsilon\downarrow0}\,
\sum_{k\ge0}\int_{(s,t]}P^0\bigl(\tau_{z+k\epsilon}\in\textd u\bigr)P^0(\tau_\epsilon>t-u).
\end{equation}
Using that \eqref{E:4.16q} is sharp in the limit $x\downarrow0$, this shows
\begin{equation}
P^0\bigl(T_t^\star>s,\,M^\star_t\ge z\bigr)=\sqrt{\frac2\pi}\int_{[z,\infty)}\textd\tilde z\int_{(s,t]}P^0(\tau_{\tilde z}\in\textd u)\frac1{\sqrt{t-u}}.
\end{equation}
The result then follows by \eqref{E:7.6u} and differentiation. That $B_{T_t^\star}=M_t^\star$ holds is a consequence of path continuity of the Brownian motion.
\end{proof}

\subsection{Positive curves: Brownian motion}
\label{sec-A2a}\noindent
Our next task is the control of Brownian motion above a positive curve. The following is key in the proof of Proposition~\ref{prop-uncond-BM-positive-cor}:

\begin{proposition}
\label{prop-uncond-BM-positive}
Let~$\zeta\colon[0,\infty)\to[0,\infty)$ be non-decreasing, continuous and obeys $\zeta(s)=o(s^{1/2})$ as~$s\to\infty$. Then for all~$x>\zeta(0)$ and all $t>0$,
\begin{equation}
\label{E:A.11ab}
P^0\bigl(\tau_{x-\zeta}\le t<\tau_x\bigr)
\le2\,\frac{\rho(x)^{2/3}\,x^{1/3}}{\sqrt t},
\end{equation}
where $\rho(x)$ be the quantity from \eqref{E:rho}.
\end{proposition}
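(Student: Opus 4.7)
The plan is to apply the strong Markov property at the first curve-hitting time $\tau := \tau_{x-\zeta}$ and then to interpolate between two complementary estimates. Since $\zeta \ge 0$ implies $x-\zeta(\cdot) \le x$, we have $\tau \le \tau_x$, so on the event $\{\tau \le t < \tau_x\}$ the BM at time $\tau$ stands at height $B_\tau = x-\zeta(\tau)$ and the post-$\tau$ BM must avoid $x$ throughout the remaining interval of length $t-\tau$. By the strong Markov property and the upper bound in~\eqref{E:4.16q} of Lemma~\ref{lem:2.8} (applied to a BM hitting level $\zeta(\tau)$),
\[
P^0\bigl(\tau_{x-\zeta}\le t<\tau_x\bigr)\le \sqrt{\tfrac2\pi}\,E\Bigl[\zeta(\tau)\,(t-\tau)^{-1/2}\,;\,\tau\le t\Bigr].
\]

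The key step is to bound the right-hand side by $C\rho(x)/\sqrt t$ for an absolute constant $C$ (working in the regime $t\ge 2x^2$; the claimed bound is trivial otherwise, since its right-hand side exceeds $1$). I would split the expectation at $\tau = x^2$. On $\{\tau \le x^2\}$, monotonicity of $\zeta$ gives $\zeta(\tau) \le \zeta(x^2)$ and $(t-\tau)^{-1/2} \le \sqrt{2/t}$, directly producing the first summand $\zeta(x^2)$ of $\rho(x)$. On $\{x^2 < \tau \le t\}$, I would use the monotonicity-driven inclusion $\{\tau_{x-\zeta}\le s\}\subseteq\{\tau_{x-\zeta(s)}\le s\}$ (valid because the curve $x-\zeta(\cdot)$ lies above the constant $x-\zeta(s)$ on $[0,s]$) together with the explicit hitting-time density~\eqref{E:4.16b} to deduce a bound on the Stieltjes measure $P(\tau \in \mathrm ds)$ of order $x s^{-3/2}\,\mathrm ds$. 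Integrating this against $\zeta(s)(t-s)^{-1/2}$ on $(x^2,t]$---bounding $(t-s)^{-1/2}$ by $\sqrt{2/t}$ on the bulk region $s \le t/2$ and treating the tail $s \in (t/2,t]$ via the cruder fallback with $P^{x-\zeta(s)}(\tilde\tau_x>t-s)\le 1$ combined with the distribution-function bound $P(\tau \le t) \le P(\tau_{x-\zeta(t)} \le t)$---produces the integral summand $\tfrac{x}{2}\int_{x^2}^\infty \zeta(s)s^{-3/2}\,\mathrm ds$ of $\rho(x)$.

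Finally, the $\rho$-bound is combined with the trivial estimate $P^0(\tau_{x-\zeta}\le t<\tau_x)\le P^0(\tau_x>t)\le\sqrt{2/\pi}\,x/\sqrt t$, again supplied by \eqref{E:4.16q}, via the elementary inequality $\min(A,B)\le A^{2/3}B^{1/3}$. With $A = C\rho(x)/\sqrt t$ and $B = \sqrt{2/\pi}\,x/\sqrt t$, this yields a bound of the form $C'\rho(x)^{2/3}x^{1/3}/\sqrt t$; the exponents $2/3$ and $1/3$ are exactly those needed to preserve the $1/\sqrt t$ factor and recover the target product $\rho^{2/3}x^{1/3}$, and the numerical constants are absorbed into the stated $2$ by standard bookkeeping. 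The main obstacle I anticipate is the density-control substep above, since no closed form for $P(\tau_{x-\zeta}\in\mathrm ds)$ is available for general $\zeta$: extracting it from the distribution-function bound $F(s) \le 2\bar\Phi((x-\zeta(s))/\sqrt s)$ with an \emph{increasing} integrand $\zeta(s)(t-s)^{-1/2}$ must proceed either via integration by parts using the antiderivative $-2\sqrt{t-s}$ of $(t-s)^{-1/2}$ (which converts the Stieltjes integral into one involving $F(s)$ and $\zeta'(s)$) or via a dyadic decomposition at scales $2^k x^2$ followed by summation.
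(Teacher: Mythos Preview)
Your key step is where the argument breaks. You claim that the Strong Markov representation
\[
P^0(\tau_{x-\zeta}\le t<\tau_x)=\int_0^t P^0(\tau_{x-\zeta}\in\textd s)\,P^0(\tau_{\zeta(s)}>t-s)
\]
can be bounded by $C\rho(x)/\sqrt t$, with the tail region $s\in(t/2,t]$ handled via the ``cruder fallback $P^{x-\zeta(s)}(\tilde\tau_x>t-s)\le1$.'' But that fallback reduces the tail to $P^0(\tau_{x-\zeta}\in(t/2,t])$, and this quantity is of order $x/\sqrt t$, \emph{not} $\rho(x)/\sqrt t$. Indeed, whether you use your proposed density heuristic $\sim x\,s^{-3/2}$ and integrate over $(t/2,t]$, or bound via $F(t)-F(t/2)\le 1-P^0(\tau_x>t/2)-\bigl(1-\sqrt{2/\pi}\,x/\sqrt{t/2}\bigr)$, you get a term $\asymp x/\sqrt t$ in which the factor~$\zeta$ has disappeared entirely. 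This does \emph{not} ``produce the integral summand $\tfrac{x}{2}\int_{x^2}^\infty\zeta(s)s^{-3/2}\,\textd s$'' of~$\rho(x)$ as you write. With the $A$-bound contaminated by an $x/\sqrt t$ term, $A$ becomes comparable to $B=\sqrt{2/\pi}\,x/\sqrt t$ and the interpolation $\min(A,B)\le A^{2/3}B^{1/3}$ yields nothing.

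The paper's route avoids the singularity at $\tau=t$ by a buffer trick rather than a tail split. It first proves (Lemma~\ref{lemma-negative}) that for every $\delta>0$,
\[
P^0\bigl(\tau_{x-\zeta}<t,\ \tau_x>(1+\delta)t\bigr)\le\frac{1}{\sqrt\delta}\,\frac{\rho(x)}{\sqrt t},
\]
because on that event $(1+\delta)t-\tau\ge\delta t$ uniformly, so the post-$\tau$ survival probability is at most $\zeta(\tau)/\sqrt{\delta t}$ with no blow-up; the remaining integral $\int_0^t\zeta(s)\,P^0(\tau_{x-\zeta}\in\textd s)$ is then controlled by integration by parts against the survival function and domination by $P^0(\tau_x>s)$, giving exactly~$\rho(x)$. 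The gap event $\{t\le\tau_x<(1+\delta)t\}$ costs at most $x\delta/\sqrt{2\pi t}$ by the explicit density~\eqref{E:4.16b}. Optimizing $\delta=(\rho(x)/x)^{2/3}$ balances $\rho/\sqrt\delta$ against $x\delta$ and delivers the bound $2\rho(x)^{2/3}x^{1/3}/\sqrt t$. So the balancing of a $\rho$-term against an $x$-term is achieved through a free parameter in the decomposition, not through a $\min$ of two separate bounds; the buffer is precisely what lets the $\rho$-side be established cleanly.
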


This is indeed the case, as we see from:

\begin{proofsect}{Proof of Proposition~\ref{prop-uncond-BM-positive-cor}}
 From~$\zeta\ge0$ we  have
\begin{equation}
P^0(\tau_x>t)=P^0(\tau_{x-\zeta}>t)+P^0\bigl(\tau_{x-\zeta}\le t<\tau_x\bigr).
\end{equation}
To get the claim we just subtract \eqref{E:A.11ab} from the left-hand side of \eqref{E:4.16q}.
\end{proofsect}

Also the proof of Proposition~\ref{prop-uncond-BM-positive} begins by a slightly stricter estimate:

\begin{lemma}
\label{lemma-negative}
Let~$\zeta$ and~$\rho$ be as above. Then for all $x>\zeta(0)$ and all $\delta>0$,
\begin{equation}
\label{E:A.11}
P^0\Bigl(\tau_{x-\zeta}<t,\,\tau_x>(1+\delta) t\Bigr)
\le\frac1{\sqrt{\delta}}\,\frac{\rho(x)}{\sqrt t}.
\end{equation}
\end{lemma}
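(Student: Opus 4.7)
\medskip\noindent
\textit{Proof plan.} The approach is to condition on the first hitting time of the curve and reduce everything to standard estimates for hitting times of constant levels.

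The first step is to use the strong Markov property at the stopping time $\tau_{x-\zeta}$. On the event $\{\tau_{x-\zeta}<t\}$, the Brownian motion is at position $x-\zeta(\tau_{x-\zeta})$, and staying below $x$ on $[\tau_{x-\zeta},(1+\delta)t]$ is the same as a fresh BM from~$0$ staying below $\zeta(\tau_{x-\zeta})$ for time $(1+\delta)t-\tau_{x-\zeta}$. By \eqref{E:4.16q} in Lemma~\ref{lem:2.8} (i.e.\ the Reflection Principle bound $P^0(\tau_c>u)\le\sqrt{2/\pi}\,c/\sqrt u$), and using $(1+\delta)t-s\ge\delta t$ for $s\le t$,
\[
P^0\bigl(\tau_{x-\zeta}<t,\,\tau_x>(1+\delta)t\bigr)\le\sqrt{\tfrac2\pi}\,\frac{1}{\sqrt{\delta t}}\,\int_0^t\zeta(s)\,dF(s),
\]
where $F(s):=P^0(\tau_{x-\zeta}\le s)$. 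Thus the whole task reduces to showing
\[
\int_0^t\zeta(s)\,dF(s)\le\sqrt{\tfrac\pi 2}\,\rho(x).
\]

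The second step is to estimate this Stieltjes integral by splitting at $s=x^2$. On $[0,x^2]$ the bound is trivial by monotonicity of~$\zeta$: the contribution is at most $\zeta(x^2)F(x^2)\le\zeta(x^2)$. On $[x^2,t]$ I would write $\zeta(s)=\zeta(x^2)+\int_{x^2}^s d\zeta(u)$ and apply Fubini to get
\[
\int_{x^2}^t\zeta(s)\,dF(s)=\zeta(x^2)\bigl[F(t)-F(x^2)\bigr]+\int_{x^2}^t\bigl[F(t)-F(u)\bigr]\,d\zeta(u).
\]
The key observation is now that, since $\zeta\ge0$, we have $\tau_{x-\zeta}\le\tau_x$ almost surely (by continuity of paths), so $\{\tau_{x-\zeta}>u\}\subseteq\{\tau_x>u\}$. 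Hence $F(t)-F(u)\le P^0(\tau_{x-\zeta}>u)\le P^0(\tau_x>u)\le\sqrt{2/\pi}\,x/\sqrt u$ by \eqref{E:4.16q} again. This gives
\[
\int_{x^2}^t\zeta(s)\,dF(s)\le\sqrt{\tfrac2\pi}\,\zeta(x^2)+\sqrt{\tfrac2\pi}\,x\int_{x^2}^t\frac{d\zeta(u)}{\sqrt u}.
\]

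The final step is a routine integration by parts of the last integral: since $\zeta(s)=o(\sqrt s)$,
\[
\int_{x^2}^t u^{-1/2}\,d\zeta(u)=\frac{\zeta(t)}{\sqrt t}-\frac{\zeta(x^2)}{x}+\frac12\int_{x^2}^t\zeta(u)u^{-3/2}\,du\le\frac12\int_{x^2}^\infty\zeta(u)u^{-3/2}\,du-\frac{\zeta(x^2)}{x},
\]
where we used monotonicity of~$\zeta$ to bound $\zeta(t)/\sqrt t\le\tfrac12\int_t^\infty\zeta(u)u^{-3/2}\,du$. Assembling the two pieces, the $\zeta(x^2)$ terms cancel and one obtains
\[
\int_{x^2}^t\zeta(s)\,dF(s)\le\sqrt{\tfrac2\pi}\,\frac x2\int_{x^2}^\infty\zeta(u)u^{-3/2}\,du=\sqrt{\tfrac2\pi}\bigl(\rho(x)-\zeta(x^2)\bigr).
\]
Adding the $[0,x^2]$-contribution $\zeta(x^2)$ and using $\sqrt{2/\pi}<1$, the total is at most $\rho(x)$, and plugging this back in yields the stated bound with the prefactor $\sqrt{2/\pi}<1$, which absorbs into the claimed constant~$1/\sqrt{\delta}$.

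No single step here is a serious obstacle; the only point requiring any care is the book-keeping of constants in the final assembly and the justification of Fubini/IBP, which is routine because $\zeta$ is continuous and non-decreasing (so $d\zeta$ is a Stieltjes measure) and the growth hypothesis $\zeta(s)=o(s^{1/2})$ makes all boundary contributions vanish or be absorbed.
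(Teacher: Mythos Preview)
Your proof is correct and follows essentially the same route as the paper: strong Markov at $\tau_{x-\zeta}$, the reflection-principle bound $P^0(\tau_c>u)\le\sqrt{2/\pi}\,c/\sqrt u$ to reduce to $\int_0^t\zeta(s)\,dF(s)$, the key domination $P^0(\tau_{x-\zeta}>u)\le P^0(\tau_x>u)$, and a split at $s=x^2$. The only organizational difference is that the paper integrates by parts first (writing the integral as $\zeta(0)+\int_0^t\zeta'(s)P^0(\tau_{x-\zeta}>s)\,ds$, then bounding and integrating by parts back to $\int_0^\infty P^0(\tau_x\in ds)\,\zeta(s)$ before splitting), whereas you split first and then apply Fubini; the ingredients and final estimate are the same.
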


\begin{proofsect}{Proof}
A routine approximation argument (based on path continuity of the Brownian motion) permits us to assume that~$\zeta$ is continuously differentiable.
Using the Strong Markov Property  for  the stopping time~$\tau_{x-\zeta}$ yields
\begin{equation}
P^0\Bigl(\tau_{x-\zeta}<t,\,\tau_x>(1+\delta)  t\Bigr)=\int_0^t P^0\bigl(\tau_{x-\zeta}\in\textd s\bigr)P^0\bigl(\tau_{\zeta(s)}>(1+\delta) t-s\bigr).
\end{equation}
By \eqref{E:4.16q}, the second probability is at most $\zeta(s)/\sqrt{\delta t}$ for all $s\in[0,t]$. Integrating by parts and using  the  positivity of~$\zeta(t)$  along with $2/\pi\le1$,  the integral is then at most
\begin{equation}
\frac1{\sqrt t}\,\frac1{\sqrt\delta}
\Bigl[\zeta(0)+\int_0^t\zeta'(s)P^0\bigl(\tau_{x-\zeta}>s\bigr)\textd s\Bigr].
\end{equation}
Our goal is to show that the term in the brackets is bounded by~$\rho(x)$. For this we first take~$t\to\infty$ and then bound $P^0(\tau_{x-\zeta}>s)$ by $P^0(\tau_x>s)$. Then we integrate by parts and use that~$\zeta(s)=o(s^{1/2})$ (because otherwise~$\rho(x)=\infty$) to find that the  square  bracket is bounded by the left-hand side of
\begin{equation}
\label{E:A.14}
\int_0^\infty P^0(\tau_x\in\textd s)\zeta(s)\le\zeta(x^2)+\frac x2\int_{x^2}^\infty\frac{\zeta(s)}{s^{3/2}}\,\textd s.
\end{equation}
Here, to get the bound on the right, we split the integral at~$s:=x^2$, invoked the monotonicity of~$\zeta$ in the first one while applied \eqref{E:4.16q} and also used $\sqrt{2\pi}\ge2$ in the second one.
\end{proofsect}

\begin{proofsect}{Proof of Proposition~\ref{prop-uncond-BM-positive}}
We have
\begin{equation}
P^0\bigl(\tau_{x-\zeta}<t<\tau_x\bigr)\le
P^0\Bigl(\tau_{x-\zeta}<t,\,\tau_x>(1+\delta) t\Bigr)
+P^0\Bigl(t\le\tau_{x}<(1+\delta) t\Bigr).
\end{equation}
Using \eqref{E:4.16b} and \eqref{E:A.11}, we can bound the right-hand side by
\begin{equation}
\frac1{\sqrt\delta}\,\frac{\rho(x)}{\sqrt t}+\frac1{\sqrt{2\pi}}\frac{x\delta}{\sqrt t}.
\end{equation}
 Invoking  $\sqrt{2\pi}\ge2$, the resulting expression is, as a function of~$\delta$, minimized at $\delta:=(\rho(x)/x)^{2/3}$. Plugging this back in, we get the claim.
\end{proofsect}

\subsection{Positive curves: Brownian bridge}
\label{sec-A2}\noindent
Next we will move to the case of the Brownian bridge above a positive curve. The goal is to prove Proposition~\ref{prop-mine-corollary} whose key part is the following claim:

\begin{proposition}
\label{prop-mine}
Let~$\zeta\colon[0,\infty)\to[0,\infty)$ be non-decreasing, continuous  with  $\zeta(s)=o(s^{1/2})$ as~$s\to\infty$. Then for all~$x,y>\zeta(0)$ and all $t>0$,
\begin{multline}
\label{E:4.41}
\qquad
P^x\Bigl( \,\min_{0\le s\le t}\bigl[B_s-\zeta(s\wedge(t-s))\bigr]<0<\min_{0\le s\le t} B_s\,\Big|\,B_t=y\Bigr)
\\
\le  8\biggl(\sqrt{\frac{\rho(x)}x}+\sqrt{\frac{\rho(y)}y}\biggr)\,\frac{xy}t\,\,\texte^{\frac{(x-y)^2}{2t}},
\qquad
\end{multline}
where $\rho(x)$ is the quantity from \eqref{E:rho}.
\end{proposition}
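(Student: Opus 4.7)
My plan is to adapt the approach of Proposition~\ref{prop-uncond-BM-positive} to the bridge setting, exploiting the symmetry of the curve $\tilde\zeta(s):=\zeta(s\wedge(t-s))$ about $s=t/2$. First, I would dispose of the case $\rho(x)>x$ or $\rho(y)>y$ as trivial: in that case $\sqrt{\rho(x)/x}+\sqrt{\rho(y)/y}\ge 1$, and the LHS is dominated by $P^x(\min_{[0,t]}B>0\,|\,B_t=y)\le 2xy/t$ by~\eqref{E:4.17q}. So I may assume $\rho(x)\le x$ and $\rho(y)\le y$.

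By path-continuity the event $\{\min(B-\tilde\zeta)<0\}$ equals $\{\tau_{x-\tilde\zeta}\le t\}$, which I split via a union bound according to whether the path meets $\tilde\zeta$ somewhere in $[0,t/2]$ or somewhere in $(t/2,t]$. Since $\tilde\zeta$ is symmetric about $t/2$ and a Brownian bridge from $x$ to $y$ reversed in time is a bridge from $y$ to $x$, the second contribution is obtained from the first by interchanging $x$ and $y$. On the first sub-event $\tilde\zeta=\zeta$ and $\tau:=\tau_{x-\tilde\zeta}=\tau_{x-\zeta}$, so it remains to bound $P^x(\tau\le t/2,\min_{[0,t]}B>0\,|\,B_t=y)$.

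For this I would apply the Strong Markov Property at $\tau$. The key observation is that on $[0,\tau)$ one has $B>\zeta\ge 0$, so the positivity condition is automatic there. Conditional on $\tau=s\in[0,t/2]$, the continuation must travel from $\zeta(s)$ to $y$ in time $t-s$ while remaining positive, and by the Reflection Principle (cf.~\eqref{E:4.20}) its joint density is at most $(2\zeta(s)y/(t-s))\,p_{t-s}(\zeta(s),y)$. Dividing by $p_t(x,y)$, using $\sqrt{t/(t-s)}\le\sqrt 2$ and $1/(t-s)\le 2/t$ on $[0,t/2]$, and dropping the non-positive term $-(\zeta(s)-y)^2/(2(t-s))$ from the exponent to get $p_{t-s}(\zeta(s),y)/p_t(x,y)\le\sqrt 2\,e^{(x-y)^2/(2t)}$, one arrives at
\begin{equation*}
P^x\bigl(\tau\le t/2,\,\min B>0\,\big|\,B_t=y\bigr)\le\frac{4\sqrt 2\,y}{t}\,e^{(x-y)^2/(2t)}\int_0^{t/2}\zeta(s)\,P^x(\tau\in ds).
\end{equation*}

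The integral is handled exactly as in Lemma~\ref{lemma-negative}: the pointwise domination $P^x(\tau_{x-\zeta}>s)\le P^x(\tau_x>s)$ (valid since $\zeta\ge 0$) together with integration by parts bounds it by $\int_0^\infty\zeta(s)\,P^x(\tau_x\in ds)\le\rho(x)$ via~\eqref{E:A.14}. In the regime $\rho(x)\le x$, the resulting contribution $4\sqrt 2\,(y\rho(x)/t)\,e^{(x-y)^2/(2t)}=4\sqrt 2\,(\rho(x)/x)(xy/t)\,e^{(x-y)^2/(2t)}$ is at most $4\sqrt 2\,\sqrt{\rho(x)/x}\,(xy/t)\,e^{(x-y)^2/(2t)}$. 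Adding the symmetric second-half contribution and noting $4\sqrt 2<8$ yields the claimed bound. The only nontrivial ingredients are the strong-Markov step and the time-reversal symmetry of the bridge; the rest is bookkeeping essentially identical to Lemma~\ref{lemma-negative}, and no further optimization over a parameter $\delta$ is needed (unlike in the Brownian-motion case) because the conditioning on $B_t=y$ already fixes the late-time behaviour.
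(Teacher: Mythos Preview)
Your proof is correct and is genuinely more direct than the paper's. The paper does not apply the Strong Markov Property at the first hitting time of~$\zeta$; instead it introduces two auxiliary tools --- a \emph{Symmetrization Lemma} (Cauchy--Schwarz on the law of the bridge midpoint) and a \emph{Decoupling Lemma} (bounding the bridge probability of $A_1\cap A_2'$ by a product of free-BM probabilities via a crude density bound) --- and then invokes Lemma~\ref{lemma-negative} on the resulting free Brownian motion pieces. Your route bypasses both devices: you compute the joint density $P^x(\tau\in\textd s,\,\min_{[\tau,t]}B>0,\,B_t\in\textd y)$ directly under the free measure via the Strong Markov Property, divide by $p_t(x,y)$, and bound the continuation by the Reflection Principle; the resulting integral $\int_0^{t/2}\zeta(s)\,P^0(\tau_{x-\zeta}\in\textd s)$ is then controlled by~$\rho(x)$ exactly as in Lemma~\ref{lemma-negative}. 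The time-reversal step for the second half is identical in spirit to the paper's use of $A_1'$.

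What your approach buys is simplicity and a slightly better constant ($4\sqrt2$ before the case split, versus the paper's route through $\sqrt8\cdot 2\cdot\sqrt{12}$-type products). What the paper's approach buys is modularity: the Decoupling and Symmetrization Lemmas are reused verbatim in the negative-curve analog (Proposition~\ref{prop-mine-too}) and in the entropic-repulsion estimates (Propositions~\ref{prop-entropy-BM}--\ref{prop-entropy-BB}), where the continuation after the stopping time must stay above a \emph{negative} curve and a simple reflection bound is no longer available. Your argument would need a separate mechanism there. One minor notational slip: when you write ``$P^x(\tau_{x-\zeta}>s)\le P^x(\tau_x>s)$'' you mean the $P^0$-law (or equivalently, under $P^x$, the first hitting time of~$\zeta$ versus that of~$0$); the inequality and its use are correct.
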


We first  check that this indeed implies the desired bound:

\begin{proofsect}{Proof of Proposition~\ref{prop-mine-corollary}}
We just combine \eqref{E:4.41} with the bound on the left of~\eqref{E:4.17q}.
\end{proofsect}

The proof of Proposition~\ref{prop-mine} will be based on Lemma~\ref{lemma-negative} and some interesting technical ingredients. The first one is inspired by arguments from~Bramson~\cite[proof of Proposition~1']{Bramson-CPAM}:

\begin{lemma}[Decoupling lemma]
\label{lemma-decouple}
Pick~$x,y\in\R$, let $t,t_1,t_2>0$ be such that $t_1+t_2<t$ and let $A_1$ and~$A_2$ be events such that $A_i\in\sigma(B_s\colon 0\le s\le t_i)$, $i=1,2$.
Consider also the event
\begin{equation}
\label{E:4.29}
A_2':=\bigl\{\,\text{\rm path }\{B_{t-s}\colon 0\le s\le t_2\}\text{\rm \ lies in }A_2\bigr\}.
\end{equation}
 Then
\begin{equation}
P^x\bigl(A_1\cap A_2'\big|B_t=y\bigr)\le\sqrt{\frac t{t-t_1-t_2}}\,\,\texte^{\frac{(x-y)^2}{2t}}\,\,
P^x(A_1)P^y(A_2).
\end{equation}
\end{lemma}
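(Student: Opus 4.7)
\medskip\noindent
\textit{Proof proposal.} The plan is to condition on the bridge values at the two ``separating'' times $t_1$ and $t-t_2$, exploit independence of the three resulting Brownian-bridge segments, and then compare the conditional joint density of $(B_{t_1},B_{t-t_2})$ under $P^x(\cdot\,|\,B_t=y)$ with the product of the marginal densities of $B_{t_1}$ under $P^x$ and $B_{t_2}$ under $P^y$. The only nontrivial ingredient will be a pointwise Gaussian-density estimate, which will directly produce the stated constant.

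More precisely, I would proceed as follows. Under the conditional law $P^x(\cdot\,|\,B_t=y)$, the process is a Brownian bridge from $x$ to $y$ of length $t$; conditioning further on $B_{t_1}=u$ and $B_{t-t_2}=v$ decomposes the path into three independent Brownian bridges on $[0,t_1]$, $[t_1,t-t_2]$, $[t-t_2,t]$, joining $x\rightsquigarrow u$, $u\rightsquigarrow v$, $v\rightsquigarrow y$, respectively. Since $A_1\in\sigma(B_s\colon 0\le s\le t_1)$, we have $P^x(A_1\,|\,B_{t_1}=u,B_t=y)=P^x(A_1\,|\,B_{t_1}=u)$; and the reversal $s\mapsto t-s$ maps the Brownian bridge from $v$ to $y$ on $[t-t_2,t]$ to a Brownian bridge from $y$ to $v$ on $[0,t_2]$, so by time-reversal symmetry
\begin{equation}
P^x\bigl(A_2'\,\big|\,B_{t-t_2}=v,\,B_t=y\bigr)=P^y\bigl(A_2\,\big|\,B_{t_2}=v\bigr).
\end{equation}
Writing $\varphi_s(z):=(2\pi s)^{-1/2}\texte^{-z^2/(2s)}$, integrating over $(u,v)$ then yields
\begin{equation}
P^x(A_1\cap A_2'\,|\,B_t=y)=\int f_1(u)f_2(v)\,\frac{\varphi_{t_1}(u-x)\varphi_{t-t_1-t_2}(v-u)\varphi_{t_2}(y-v)}{\varphi_t(y-x)}\,\textd u\,\textd v,
\end{equation}
where $f_1(u):=P^x(A_1\,|\,B_{t_1}=u)$ and $f_2(v):=P^y(A_2\,|\,B_{t_2}=v)$, while
\begin{equation}
P^x(A_1)\,P^y(A_2)=\int f_1(u)f_2(v)\,\varphi_{t_1}(u-x)\varphi_{t_2}(y-v)\,\textd u\,\textd v.
\end{equation}

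The proof will then reduce to the pointwise comparison of the two kernels, i.e.\ showing
\begin{equation}
\frac{\varphi_{t-t_1-t_2}(v-u)}{\varphi_t(y-x)}\le\sqrt{\frac{t}{t-t_1-t_2}}\,\texte^{(x-y)^2/(2t)},\qquad\forall u,v\in\R,
\end{equation}
which is immediate from the trivial bound $\varphi_{t-t_1-t_2}(v-u)\le(2\pi(t-t_1-t_2))^{-1/2}$ together with $\varphi_t(y-x)^{-1}=\sqrt{2\pi t}\,\texte^{(y-x)^2/(2t)}$. Plugging this back into the integral and using $f_1,f_2\ge0$ gives the claim. There is no serious obstacle: the chain of reductions is standard (Markov property, time reversal of Brownian bridge, Gaussian density ratios), and the constant of the lemma arises sharply from the density comparison at $v=u$.
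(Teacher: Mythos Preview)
Your proposal is correct and follows essentially the same route as the paper: condition on $(B_{t_1},B_{t-t_2})$, use time-reversal to introduce $f_1(u)=P^x(A_1\mid B_{t_1}=u)$ and $f_2(v)=P^y(A_2\mid B_{t_2}=v)$, write out the joint Gaussian density, and bound the middle factor $\varphi_{t-t_1-t_2}(v-u)$ by its maximum while expanding $\varphi_t(y-x)^{-1}$ explicitly. The only cosmetic difference is notation.
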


\begin{proofsect}{Proof}
Define the functions
\begin{equation}
f_1(z):=P^x(A_1|B_{t_1}=z)\quad\text{and}\quad f_2(z):=P^y(A_2|B_{t_2}=z).
\end{equation}
Conditioning on~$B_{t_1}$ and $B_{t-t_2}$ and invoking the reversibility of the Brownian bridge then yields
\begin{equation}
\label{E:4.33}
P^x\bigl(A_1\cap A_2'\big|B_t=y\bigr) = E^x\bigl( f_1(B_{t_1}) f_2(B_{t-t_2})\,\big|\, B_t=y\bigr).
\end{equation}
 Next  let $p^{x,y}_{t_1,\,t-t_2}(x_1,x_2)$ denote the joint probability density of $(B_{t_1},B_{t-t_2})$ in measure $P^x(-|B_t=y)$ and let $g_t(x)$ denote the probability density of a normal random variable with mean zero and variance~$t$. Then
\begin{equation}
p^{x,y}_{t_1,\,t-t_2}(x_1,x_2)=\frac{g_{t_1}(x_1-x)\,g_{t-t_1-t_2}(x_2-x_1)\,g_{t_2}(y-x_2)}{g_t(y-x)}.
\end{equation}
Using the explicit form of~$g_t$ in the denominator and the bound $g_t(x)\le\frac1{\sqrt{2\pi t}}$ for the middle term in the numerator gives
\begin{equation}
p^{x,y}_{t_1,\,t-t_2}(x_1,x_2)
\le \sqrt{\frac t{t-t_1-t_2}}\,\,\texte^{\frac{(x-y)^2}{2t}}\,\, g_{t_1}(x_1-x)\,g_{t_2}(y-x_2).
\end{equation}
Since both~$f_1$ and~$f_2$ are positive, plugging this in \eqref{E:4.33} then yields the claim.
\end{proofsect}

Lemma~\ref{lemma-decouple} permits us to effectively ``tear'' a Brownian bridge apart into two independent Brownian paths. The next lemma will in turn let us symmetrize the events about the midpoint of the interval, and thus assume that the starting and ending points of the Brownian bridge are the~same. 

\begin{lemma}[Symmetrization lemma]
\label{lemma-symmetrize}
Let $t>0$ be given and let $A_1,A_2\in\sigma(B_s\colon 0\le s\le t/2)$. Let $A'_1$, resp.,~$A_2'$ be related to $A_1$, resp.,~$A_2$ as in \eqref{E:4.29}. Then for all $x,y\in\R$,
\begin{equation}
P^x(A_1\cap A_2'|B_t=y)
\le P^x(A_1\cap A_1'|B_t=x)^{\ffrac12}\,P^y(A_2\cap A_2'|B_t=y)^{\ffrac12}\,\texte^{\frac{(x-y)^2}{2t}}.
\end{equation}
\end{lemma}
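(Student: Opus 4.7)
\smallskip\noindent
\textbf{Proof proposal.} The plan is to condition on the midpoint $B_{t/2}$, which under $P^x(\cdot\,|\,B_t=y)$ decouples the path into two independent Brownian bridges (a bridge from $x$ to $B_{t/2}$ on $[0,t/2]$ and, in reversed time, a bridge from $y$ to $B_{t/2}$ on $[0,t/2]$), and then to apply Cauchy--Schwarz to the resulting integral. Because $A_1$ depends only on the first half and $A_2'$ only on the (time-reversed) second half, this reduces everything to a one-dimensional integral against the Gaussian marginal of $B_{t/2}$, where the inequality we want is essentially the statement that a ``cross'' correlation is controlled by the two ``diagonal'' ones.

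\smallskip\noindent
Concretely, writing $g_s(u):=(2\pi s)^{-1/2}\texte^{-u^2/(2s)}$ for the Gaussian density, I would first define
\begin{equation*}
\phi_1(z):=P^x\bigl(A_1\,\big|\,B_{t/2}=z\bigr),\qquad\phi_2(z):=P^y\bigl(A_2\,\big|\,B_{t/2}=z\bigr),
\end{equation*}
and observe (using the reversibility of the Brownian bridge on $[t/2,t]$) that
\begin{equation*}
P^x\bigl(A_1\cap A_2'\,\big|\,B_t=y\bigr)
=\int\phi_1(z)\,\phi_2(z)\,\frac{g_{t/2}(z-x)\,g_{t/2}(y-z)}{g_t(y-x)}\,\textd z.
\end{equation*}
Cauchy--Schwarz applied with the splitting $g_{t/2}(z-x)\,g_{t/2}(y-z)=[g_{t/2}(z-x)^2]^{1/2}[g_{t/2}(y-z)^2]^{1/2}$ then yields
\begin{equation*}
P^x\bigl(A_1\cap A_2'\,\big|\,B_t=y\bigr)
\le\frac1{g_t(y-x)}\Bigl(\int\phi_1(z)^2 g_{t/2}(z-x)^2\textd z\Bigr)^{1/2}\Bigl(\int\phi_2(z)^2 g_{t/2}(y-z)^2\textd z\Bigr)^{1/2}.
\end{equation*}

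\smallskip\noindent
Next, the same midpoint decomposition applied with $y$ replaced by $x$ gives
\begin{equation*}
P^x\bigl(A_1\cap A_1'\,\big|\,B_t=x\bigr)=\int\phi_1(z)^2\,\frac{g_{t/2}(z-x)^2}{g_t(0)}\,\textd z,
\end{equation*}
and similarly for the second factor; so the two integrals above equal $g_t(0)\,P^x(A_1\cap A_1'\,|\,B_t=x)$ and $g_t(0)\,P^y(A_2\cap A_2'\,|\,B_t=y)$, respectively. Since $g_t(0)/g_t(y-x)=\texte^{(x-y)^2/(2t)}$, this closes the proof. The only real ``obstacle'' is bookkeeping of Gaussian prefactors, which turns out to collapse exactly into the stated exponential correction; there is no additional analytic input required beyond the elementary fact that a Brownian bridge on $[0,t]$ decomposes into two independent Brownian bridges on $[0,t/2]$ and $[t/2,t]$ given its midpoint.
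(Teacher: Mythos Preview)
Your proposal is correct and takes essentially the same approach as the paper: condition on the midpoint $B_{t/2}$, use that the two half-bridges are conditionally independent (with the second identified via time reversal), and then apply Cauchy--Schwarz to the resulting one-dimensional integral. The paper packages the midpoint density as $f_{x,y}(z)$ and uses the identity $f_{x,y}=f_{x,x}^{1/2}f_{y,y}^{1/2}\texte^{(x-y)^2/(2t)}$ before Cauchy--Schwarz, whereas you keep the ratio $g_{t/2}(z-x)g_{t/2}(y-z)/g_t(y-x)$ and recombine only after Cauchy--Schwarz; these are the same computation in slightly different order.
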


\begin{proofsect}{Proof}
Denote
\begin{equation}
f_{x,\,y}(z):=\sqrt{\frac2{\pi t}}\,\texte^{-\frac2t(z-\frac{x+y}2)^2}.
\end{equation}
As is easy to check, $f_{x,y}$ is the (probability) density of $P^x(B_{t/2}\in \cdot|B_t=y)$ with respect to the Lebesgue measure. Therefore,
\begin{equation}
\label{E:4.37}
P^x(A_1\cap A_2'|B_t=y)=\int_{\R}\textd z\,f_{x,\,y}(z)\,P^x(A_1|B_{t/2}=z)\,P^y(A_2|B_{t/2}=z).
\end{equation}
A calculation shows
\begin{equation}
f_{x,\,y}(z)=f_{x,\,x}(z)^{1/2}\,f_{y,\,y}(z)^{1/2}\,\texte^{\frac{(x-y)^2}{2t}}.
\end{equation}
The claim follows by plugging this in the above integral, invoking the Cauchy-Schwarz inequality and wrapping the result together using again \eqref{E:4.37}.
\end{proofsect}

\begin{proofsect}{Proof of Proposition~\ref{prop-mine}}
Consider the events
\begin{equation}
A_1:=\bigl\{\min_{s\le t/2}(B_s-\zeta(s))<0<\min_{s\le t/2}B_s\bigr\}
\quad\text{and}\quad A_2:=\bigl\{\min_{s\le t/2}B_s>0\bigr\}\,.
\end{equation}
Using $A'_i$ to denote the path reversal of event~$A_i$ as in Lemma~\ref{lemma-decouple}, the event in the statement is contained in $(A_1\cap A_2')\cup(A_2\cap A_1')$. By Lemma~\ref{lemma-symmetrize}, the desired probability is thus at most
\begin{multline}
\label{E:4.41e}
\qquad
P^x(A_1\cap A_1'|B_t=x)^{1/2}P^y(A_2\cap A_2'|B_t=y)^{1/2}
\\+P^x(A_2\cap A_2'|B_t=x)^{1/2}P^y(A_1\cap A_1'|B_t=y)^{1/2}
\qquad
\end{multline}
times $\,\texte^{\frac{(x-y)^2}{2t}}$. By symmetry, it suffices to bound the first term in \eqref{E:4.41e} by $8\sqrt{\frac{\rho(x)}x}\frac{xy}t$.

We introduce two additional events
\begin{equation}
A_3:=\bigl\{\min_{s\le t/2}(B_s-\zeta(s))<0<\min_{s\le \frac34t}B_s\bigr\}
\quad\text{and}\quad A_4:=\bigl\{\min_{s\le \frac18 t}B_s>0\bigr\}.
\end{equation}
Then $A_1\cap A_1'\subseteq A_3\cap A_4'$ and so, by Lemma~\ref{lemma-decouple} with $t_1:=\frac34t$ and $t_2:=\frac18t$,
\begin{equation}
P^x(A_1\cap A_1'|B_t=x)
\le P^x(A_3\cap A_4'|B_t=x)\le\sqrt8\,P^x(A_3)P^x(A_4).
\end{equation}
We now note that, by Lemma~\ref{lemma-negative},
\begin{equation}
P^x(A_3)=P^0\bigl(\tau_{x-\zeta}<\tfrac12t,\,\tau_x>\tfrac34t\bigr)
\le 2\,\frac{\rho(x)}{\sqrt t},
\end{equation}
while Lemma~\ref{lem:2.8} gives
\begin{equation}
P^x(A_4)=P^0\bigl(\tau_x>\tfrac18t\bigr)\le\sqrt8\,\frac x{\sqrt t}
\end{equation}
and
\begin{equation}
P^y(A_2\cap A_2'|B_t=y)=P^y(\tau_0>t|B_t=y)\le\frac{2y^2}t.
\end{equation}
Combining these facts, we get the desired statement.
\end{proofsect}

\subsection{Negative curves}
\label{sec-A3}\noindent
The control of Brownian motion and Brownian bridge above negative curves involves considerably heavier calculations but we will also be able to enjoy the benefits of our earlier work. The following is the main ingredient for the proof of Proposition~\ref{prop-4.9ue}:

\begin{proposition}
\label{prop-uncond-BM-negative}
Let~$\zeta\colon[0,\infty)\to[0,\infty)$ be non-decreasing, continuous  with  $\zeta(s)=o(s^{1/4})$ as~$s\to\infty$. Then for all~$x>0$ and all $t>0$,
\begin{equation}
\label{E:A.11a}
P^0\bigl(\tau_x<t<\tau_{x+\zeta}\bigr)
\le2\,\biggl(1+\Bigl(\frac{\tilde\rho(x)}x\Bigr)^{2/3}\biggr)\frac{\tilde\rho(x)^{2/3}\,x^{1/3}}{\sqrt t},
\end{equation}
where $\tilde\rho(x)$ be the quantity from \eqref{E:tilde-rho}.
\end{proposition}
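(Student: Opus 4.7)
The plan is to mirror the proof of Proposition~\ref{prop-uncond-BM-positive} for the positive-curve case. First, introduce a parameter $\delta\in(0,1/2)$ and decompose
$$
P^0(\tau_x<t<\tau_{x+\zeta})\le P^0\bigl(\tau_x<(1-\delta)t,\,\tau_{x+\zeta}>t\bigr)+P^0\bigl((1-\delta)t\le\tau_x\le t\bigr).
$$
The second probability is controlled using the explicit density \eqref{E:4.16b} of $\tau_x$, yielding a bound of order $x\delta/\sqrt t$ for $\delta$ bounded away from $1$, exactly as in the analogous step of the proof of Proposition~\ref{prop-uncond-BM-positive}.

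For the first probability---the analog of Lemma~\ref{lemma-negative} for negative curves---I would use the joint law \eqref{E:max-BM} of the running maximum $M^*_t$ and its argmax $T^*_t$. Observing that $\{\tau_x\le t<\tau_{x+\zeta}\}\subseteq\{M^*_t\in[x,x+\zeta(T^*_t))\}$, the formula \eqref{E:max-BM} yields
$$
P^0(\tau_x<t<\tau_{x+\zeta})\le\int_0^t\int_x^{x+\zeta(s)}\frac{z\,e^{-z^2/(2s)}}{\pi s^{3/2}\sqrt{t-s}}\,\textd z\,\textd s.
$$
Evaluating the inner integral exactly, using $1-e^{-a}\le a$ and recognizing $\frac{xe^{-x^2/(2s)}}{\sqrt{2\pi}\,s^{3/2}}$ as the density of $\tau_x$, I would rewrite this bound as a weighted integral of $(x\zeta(s)+\zeta(s)^2/2)/(x\sqrt{t-s})$ against $P^0(\tau_x\in\textd s)$. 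Splitting this integral at $s=(1-\delta)t$, the bulk region $s\le(1-\delta)t$ is controlled by the moment estimates
$$
\int_0^\infty\zeta(s)\,P^0(\tau_x\in\textd s)\le\rho(x),\qquad\int_0^\infty\zeta(s)^2\,P^0(\tau_x\in\textd s)\le\tfrac{x\,\tilde\rho(x)}{4},
$$
each proved by splitting at $s=x^2$ and applying \eqref{E:4.16b} in exactly the manner of the closing step of the proof of Lemma~\ref{lemma-negative}; together with $\sqrt{t-s}\ge\sqrt{\delta t}$ this yields a contribution of order $\tilde\rho(x)/\sqrt{\delta t}$. The endpoint region $s\in[(1-\delta)t,t]$ is bounded using $\zeta(s)\le\zeta(t)$ together with the uniform upper bound on the density of $\tau_x$, producing a remainder of order $(x\zeta(t)+\zeta(t)^2)/t$. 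Combining all three pieces and optimizing $\delta=(\tilde\rho(x)/x)^{2/3}$ then yields the stated bound, with the factor $1+(\tilde\rho/x)^{2/3}$ in the statement providing slack to absorb numerical constants and the regime where $\tilde\rho/x$ is not small.

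The main obstacle will be the endpoint contribution, where $\zeta(t)$ appears in place of an integrated functional of $\zeta$ over $[0,t]$. To tie $\zeta(t)$ back to $\tilde\rho(x)$, I would exploit the presence of $\int_{x^2}^\infty\zeta(s)^2 s^{-3/2}\,\textd s$ in $\tilde\rho$: by monotonicity of $\zeta$ this integral dominates $\zeta(t)^2/\sqrt t$ up to a universal constant whenever $t\gtrsim x^2$, so the endpoint error is absorbed into $C\tilde\rho(x)/\sqrt t$; in the complementary regime $t\lesssim x^2$ the target bound $\tilde\rho^{2/3}x^{1/3}/\sqrt t$ is of order $\tilde\rho^{2/3}/x^{1/6}$ which is essentially of order one, making the claim trivial from $P\le 1$ up to absorption into the prefactor $1+(\tilde\rho/x)^{2/3}$.
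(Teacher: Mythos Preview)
Your route differs from the paper's in how the ``first probability'' is bounded. The paper's Lemma~\ref{lemma-positive} applies the Strong Markov property at~$\tau_x$ and then invokes Lemma~\ref{lemma-A.9u} to control $P^0(\tau_{\zeta(s+\cdot)}>(1+\delta)t-s)$; since $(1+\delta)t-s\ge\delta t$ for all $s\in[0,t]$, no endpoint singularity arises and the bound $\tilde\rho(x)/\sqrt{\delta t}$ falls out directly. The final step relabels $(1+\delta)t\mapsto t$, which is precisely what generates the prefactor $1+(\tilde\rho/x)^{2/3}$ in the statement. Your direct use of the $(M^\star_t,T^\star_t)$ density instead carries a $1/\sqrt{t-s}$ weight and forces the endpoint to be handled separately. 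Note also that your inclusion bounds only the \emph{full} event---since $T^\star_t$ need not coincide with~$\tau_x$, the restriction $\tau_x<(1-\delta)t$ does not translate into a restriction on~$T^\star_t$---so your initial decomposition is not actually interacting with the integral split; the term $x\delta/\sqrt t$ is redundant rather than the piece you balance against.

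The real gap is your endpoint treatment for $t\lesssim x^2$. At $t=x^2$ the target bound equals $2\bigl(1+(\tilde\rho/x)^{2/3}\bigr)(\tilde\rho/x)^{2/3}$, which can be arbitrarily small when $\tilde\rho\ll x$; the claim that it is ``essentially of order one'' rests on the arithmetic slip $x^{1/3}/\sqrt t\sim x^{-1/6}$ (it is in fact $x^{-2/3}$ at $t=x^2$), and the trivial bound $P\le 1$ does not absorb. The endpoint \emph{can} be controlled in this regime---e.g.\ via $\zeta(t)\le\zeta(x^2)\le\rho(x)$ and $\zeta(x^2)^2\le x\tilde\rho(x)/4$, both read off from the definition of~$\tilde\rho$---but this needs a genuine argument with tracked constants, which your sketch does not supply. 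The paper's Strong-Markov route simply avoids the issue.
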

  
\begin{proofsect}{Proof of Proposition~\ref{prop-4.9ue}}
Just add \eqref{E:A.11a} to the right-hand side of \eqref{E:4.16q}.
\end{proofsect}

The proof of Proposition~\ref{prop-uncond-BM-negative} will follow the same strategy as for the case of positive curves. We start with a slightly stricter version of the desired estimate:

\begin{lemma}
\label{lemma-positive}
For~$\zeta$ and $\tilde\rho$ as above, any $x>0$, any $t>0$ and any $\delta>0$,
\begin{equation}
\label{E:4.23u}
P^0\Bigl(\tau_x<t,\,\tau_{x+\zeta}>(1+\delta)t\Bigr)\le \frac1{\sqrt\delta}\,\frac{\tilde\rho(x)}{\sqrt t}.
\end{equation}
\end{lemma}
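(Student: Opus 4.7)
I would mirror the proof of Lemma~\ref{lemma-negative}, applying the Strong Markov property at~$\tau_x$ to write
\[
P^0\bigl(\tau_x<t,\,\tau_{x+\zeta}>(1+\delta)t\bigr)
=\int_0^t P^0(\tau_x\in \dd s)\,Q(s,T_s),
\]
with $T_s:=(1+\delta)t-s\ge\delta t$ and $Q(s,T):=P^0\bigl(\widehat B_u<\zeta(s+u)\text{ for all }u\in[0,T]\bigr)$ for an independent Brownian motion~$\widehat B$ starting at~$0$. The crux is to bound $Q(s,T)$ sharply enough that the subsequent integration against $P^0(\tau_x\in\dd s)$ produces $\tilde\rho(x)/\sqrt{\delta t}$, not a cruder bound involving $\zeta((1+\delta)t)$ which is what a naive maximum inequality of the form $Q(s,T)\le\sqrt{2/\pi}\,\zeta(s+T)/\sqrt T$ would yield.

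\textbf{Main step: bounding $Q(s,T)$.} I would use the joint density of the maximum $M^{**}$ and its time $T^{**}$ for $\widehat B$ on $[0,T]$ from Lemma~\ref{lem:2.8}. Since $\{\widehat B_u<\zeta(s+u)\ \forall u\le T\}\subseteq\{M^{**}<\zeta(s+T^{**})\}$ by path continuity, integrating the inner Gaussian in~$a$ over $[0,\zeta(s+u)]$ gives
\[
Q(s,T)\le \frac{1}{\pi}\int_0^T \frac{1-\texte^{-\zeta(s+u)^2/(2u)}}{\sqrt{u(T-u)}}\,\dd u.
\]
Applying the elementary bound $1-\texte^{-z}\le\min(1,z)$ and splitting the $u$-integral at the threshold $u_\star\approx\zeta(s)^2$ where $\zeta(s+u)^2/(2u)$ crosses unity, the ``small-$u$'' piece contributes a term of order $\zeta(s)/\sqrt T$, and the ``large-$u$'' piece a term of order $\int_{u_\star}^T\zeta(s+u)^2/\bigl(u^{3/2}\sqrt{T-u}\bigr)\,\dd u$. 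Restricting first to $u\le T/2$ (so that $1/\sqrt{T-u}\le \sqrt{2/T}$) and handling the tail $u\in[T/2,T]$ separately via the crude bound $u\ge T/2$ absorbs the endpoint singularity and reproduces the same structural form, with $1/\sqrt{T_s}\le 1/\sqrt{\delta t}$ supplying the $\delta$-factor.

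\textbf{Finishing.} Integrating against $P^0(\tau_x\in\dd s)$ and applying Fubini, the linear contribution yields $\rho(x)/\sqrt{\delta t}$ via the identity $\int P^0(\tau_x\in\dd s)\,\zeta(s)\le\rho(x)$ recalled in \eqref{E:A.14}. The quadratic contribution produces integrals of the form $\int\zeta(s+u)^2\,\bigl(x/s^{3/2}\bigr)\texte^{-x^2/(2s)}\,\dd s\,\dd u$ which, after changing variables to $v=s+u$ and splitting the resulting $v$-integral at $v=x^2$ (using the monotonicity of~$\zeta$ to bound $\zeta(v)\le\zeta(x^2)$ on $[0,x^2]$ and the crude bound $\texte^{-x^2/(2s)}\le 1$ on $[x^2,\infty)$), reproduce exactly the extra terms $\zeta(x^2)^2/x$ and $\int_{x^2}^\infty\zeta(s)^2/s^{3/2}\,\dd s$ that distinguish $\tilde\rho(x)$ from~$\rho(x)$. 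The main obstacle I expect is the careful book-keeping of the interplay between the scales $s\sim x^2$, $u\sim\zeta(s)^2$, and $T\sim\delta t$, together with keeping the absolute constants within those implicit in $\tilde\rho(x)$; the hypothesis $\zeta(s)=o(s^{1/4})$ is exactly what makes the quadratic tail integral $\int^\infty\zeta(s)^2/s^{3/2}\,\dd s$ converge and hence the bound non-trivial.
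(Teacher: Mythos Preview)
Your opening is identical to the paper's: apply the Strong Markov property at~$\tau_x$ and bound the post-$\tau_x$ survival probability $Q(s,T)$ via the joint law of the running maximum and its time from Lemma~\ref{lem:2.8}. The paper packages this bound separately as Lemma~\ref{lemma-A.9u}, but in the form
\[
Q(s,T)\le\frac1{\sqrt T}\Bigl(\zeta(s)+2\int_0^\infty\frac{\zeta(s+u)\,\zeta'(s+u)}{\sqrt u}\,\textd u\Bigr),
\]
i.e., with integrand $\zeta\zeta'/\sqrt u$ rather than your $\zeta^2/u^{3/2}$ (coming from $1-\texte^{-z}\le z$). This seemingly cosmetic difference is decisive. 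With the $\zeta\zeta'$ form the subsequent double integral against the $\tau_x$-density is handled by the substitution $w:=s+u$, $v:=u/s$: since $\zeta(s+u)\zeta'(s+u)$ depends only on~$w$, the $v$-integral factors out and evaluates exactly to $\sqrt{2\pi w}/x$, collapsing everything to a single $w$-integral that, after one integration by parts, splits at $w=x^2$ into precisely the quadratic terms defining~$\tilde\rho(x)$.

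Your $\zeta^2/u^{3/2}$ integrand blocks this factorisation: under the same substitution the $v$-integral diverges at $v=0$ (this is the non-integrability at $u=0$ that forced your cutoff $u_\star\approx\zeta(s)^2$), and with the $s$-dependent cutoff kept, your one-variable substitution $v=s+u$ leaves both the lower limit and the kernel $(v-s)^{-3/2}$ coupling $s$ and~$v$. So the assertion that the change of variables ``reproduces exactly'' the extra terms in~$\tilde\rho(x)$ is not justified as written; the tail $u\in[T/2,T]$ also yields a contribution $\sim\zeta((1+\delta)t)^2/(\delta t)$ whose absorption into the main integral needs a separate argument. The missing device is the $\zeta\zeta'$ form (equivalently, Lemma~\ref{lemma-A.9u}) combined with the two-variable substitution $(s,u)\mapsto(w,v)$; once you use that, the rest goes through cleanly and with the stated constants.
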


The proof is an augmented version of the argument from the proof of Lemma~\ref{lemma-negative}. However, we will need to replace the bounds in Lemma~\ref{lem:2.8} by the following estimate:

\begin{lemma}
\label{lemma-A.9u}
For~$\zeta\colon[0,\infty)\to[0,\infty)$ non-decreasing and continuously differentiable and $t>0$,
\begin{equation}
P^0(\tau_\zeta>t)\le\frac 1{\sqrt t}\biggl(\zeta(0)+2\int_0^t\textd u\,\frac{\zeta(u)\zeta'(u)}{\sqrt u}\biggr).
\end{equation}
\end{lemma}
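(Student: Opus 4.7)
The plan is to reduce the estimate to the joint density of the Brownian maximum and its location (the last part of Lemma~\ref{lem:2.8}), then bound the resulting integral by a suitable splitting.

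First, I would use the monotonicity of $\zeta$ and path continuity of Brownian motion to establish the inclusion $\{\tau_\zeta > t\} \subseteq \{M_t^* < \zeta(T_t^*)\}$. Indeed, on $\{\tau_\zeta > t\}$ we have $B_s < \zeta(s)$ for all $s \le t$, so evaluating at $s = T_t^*$ gives $M_t^* = B_{T_t^*} < \zeta(T_t^*)$. Plugging the joint density from \eqref{E:max-BM} and integrating out $z \in [0,\zeta(s)]$ using $\int_0^a z\,e^{-z^2/(2s)}\,\textd z = s(1-e^{-a^2/(2s)})$ yields
\begin{equation*}
P^0(\tau_\zeta > t) \le \frac{1}{\pi}\int_0^t \frac{1 - e^{-\zeta(s)^2/(2s)}}{\sqrt{s(t-s)}}\,\textd s.
\end{equation*}

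Next, to produce the desired additive structure, I would write $\zeta(s)^2/(2s) = \zeta(0)^2/(2s) + (\zeta(s)^2-\zeta(0)^2)/(2s)$ and apply the subadditive inequality $1-e^{-(a+b)} \le (1-e^{-a})+(1-e^{-b})$ (valid for $a,b\ge 0$, since $e^{-a}e^{-b}\ge e^{-a}+e^{-b}-1$). The first piece integrates to $P^0(M_t^* < \zeta(0))$ for the \emph{constant} barrier $\zeta(0)$, which by \eqref{E:4.16q} is at most $\sqrt{2/\pi}\,\zeta(0)/\sqrt t \le \zeta(0)/\sqrt t$. For the second piece, I would use the elementary bound $1-e^{-x}\le x$, then write $\zeta(s)^2-\zeta(0)^2 = 2\int_0^s \zeta(u)\zeta'(u)\,\textd u$ and apply Fubini to get
\begin{equation*}
\int_0^t \frac{\zeta(s)^2-\zeta(0)^2}{2\pi\, s^{3/2}\sqrt{t-s}}\,\textd s = \frac{1}{\pi}\int_0^t \zeta(u)\zeta'(u)\int_u^t \frac{\textd s}{s^{3/2}\sqrt{t-s}}\,\textd u.
\end{equation*}
The inner integral is evaluated via $s=t\sin^2\theta$ to equal $\frac{2}{t}\sqrt{(t-u)/u}$, which I would bound by $\frac{2}{\sqrt{tu}}$. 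This gives the second piece at most $\frac{2}{\pi\sqrt t}\int_0^t \zeta(u)\zeta'(u)/\sqrt u\,\textd u$, and summing the two contributions produces the claim since $2/\pi < 2$.

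The only genuine subtlety is the splitting in the final step: applying $1-e^{-x}\le x$ directly to the full integrand produces a $\zeta(0)^2/s^{3/2}$ divergence at $s=0$, so the subadditivity step is essential to peel off the $\zeta(0)$ contribution and handle it by the sharp constant-barrier estimate, leaving a regular integral for the growth term.
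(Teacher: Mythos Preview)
Your proof is correct and takes a genuinely different route from the paper's in the main computation. Both start from the same inclusion $\{\tau_\zeta>t\}\subseteq\{M_t^\star<\zeta(T_t^\star)\}$ and the joint density \eqref{E:max-BM}. The paper then splits off $\{M_t^\star<\zeta(0)\}$ directly, and for the remaining piece $\int_0^t\textd s\int_{\zeta(0)}^{\zeta(s)}\textd z\,(\cdots)$ it assumes~$\zeta$ invertible, swaps the order of integration, splits the inner $s$-integral around $(t/2)\wedge\zeta^{-1}(z)$ (with some suppressed calculations), arrives at $\frac{4\sqrt2}{\pi\sqrt t}\int_{\zeta(0)}^{\zeta(t)}z/\sqrt{\zeta^{-1}(z)}\,\textd z$, and substitutes $z=\zeta(s)$ back. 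You instead integrate out~$z$ first to get a single $s$-integral involving $1-\texte^{-\zeta(s)^2/(2s)}$, peel off the $\zeta(0)$ contribution by the subadditivity of $x\mapsto1-\texte^{-x}$, and on the growth term use $1-\texte^{-x}\le x$ followed by Fubini and the closed-form evaluation $\int_u^t s^{-3/2}(t-s)^{-1/2}\,\textd s=\tfrac2t\sqrt{(t-u)/u}$. Your route is arguably cleaner: it avoids the invertibility approximation and the hidden splitting estimate, and even yields the better prefactor $2/\pi$ (versus $4\sqrt2/\pi$) on the growth term before you discard it for~$2$.
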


\begin{proofsect}{Proof}
The proof is inspired by an argument from the proof of Bramson~\cite[Proposition~1]{Bramson-CPAM}.
Recall our earlier definitions $M_t^\star:=\max_{s\le t}B_s$ and $T_t^\star:=\sup\{s\le t\colon B_s=M_t^\star\}$. By \eqref{E:max-BM} and \eqref{E:4.16q}, we have
\begin{equation}
\begin{aligned}
P^0(\tau_\zeta>t)
&\le P^0(\tau_{\zeta(0)}>t)+P^0\bigl(\zeta(T^\star_t)>M_t^\star\ge\zeta(0)\bigr)
\\
&\le\frac{\zeta(0)}{\sqrt t}+\int_0^t\textd s\int_{\zeta(0)}^{\zeta(s)}\textd z\,
\frac{z\,\texte^{-\frac{z^2}{2s}}}{\pi\,s^{3/2}\sqrt{t-s}}\,.
\end{aligned}
\end{equation}
We need to estimate the integral on the right. By way of a routine approximation argument we may assume that $\zeta$ is invertible and $\zeta^{-1}$ thus exists on $[\zeta(0),\zeta(t)]$. This permits us to write
\begin{equation}
\label{E:4.55}
\int_0^t\textd s\int_{\zeta(0)}^{\zeta(s)}\textd z\,
\frac{z\,\texte^{-\frac{z^2}{2s}}}{\pi\,s^{3/2}\sqrt{t-s}}
=\int_{\zeta(0)}^{\zeta(t)}\textd z\int_{\zeta^{-1}(z)}^t\textd s\,\frac{z\,\texte^{-\frac{z^2}{2s}}}{\pi\,s^{3/2}\sqrt{t-s}}.
\end{equation}
We estimate the inner integral by splitting the domain around the point $(t/2)\wedge\zeta^{-1}(z)$. After some straightforward calculations,
the double integral in \eqref{E:4.55} is bounded by
\begin{equation}
\frac{4\sqrt2}{\pi\sqrt t}\int_{\zeta(0)}^{\zeta(t)}\textd z\,\frac{z}{\sqrt{\zeta^{-1}(z)}}.
\end{equation}
Substituting $z:=\zeta(s)$ and using that $4\sqrt2\le2\pi$ we then readily get the claim.
\end{proofsect}

\begin{proofsect}{Proof of Lemma~\ref{lemma-positive}}
As before, approximation arguments permit us to assume that~$\zeta$ is continuously differentiable.
As in the proof of Lemma~\ref{lemma-negative}, we write
\begin{equation}
P^0\Bigl(\tau_x<t,\,\tau_{x+\zeta}>(1+\delta)t\Bigr)
=\int_{[0,t]}P^0\bigl(\tau_{x}\in\textd s\bigr)P^0\bigl(\tau_{\zeta(s+\cdot)}>(1+\delta)t-s\bigr).
\end{equation}
Using that $(1+\delta)t-s\ge \delta t$ throughout the domain of integration, we then use Lemma~\ref{lemma-A.9u} to bound the (last) probability on the right by $f(s)/\sqrt{\delta t}$, where
\begin{equation}
f(s):=\zeta(s)+2\int_0^\infty\textd u\,\frac{\zeta(s+u)\zeta'(s+u)}{\sqrt u}.
\end{equation}
The contribution of the first term on the right-hand side is handled by the argument in the proof of Lemma~\ref{lemma-negative} (specifically, \eqref{E:A.14}). Using also \eqref{E:4.16b}, we thus get
\begin{multline}
\label{E:4.61}
\qquad
P^0\Bigl(\tau_x<t,\,\tau_{x+\zeta}>(1+\delta)t\Bigr)\le
\frac{\rho(x)}{\sqrt{\delta t}}
\\+\frac{2}{\sqrt{2\pi \delta t\,}}\int_0^\infty\textd s\int_0^\infty\textd u\,\frac{x}{s^{3/2}}\texte^{-\frac{x^2}{2s}}\frac{\zeta(s+u)\zeta'(s+u)}{\sqrt u}.
\qquad
\end{multline}
Next we invoke the substitution $w:=s+u$ and $v:= u/s$. The Jacobian of the transformation equals $w/(1+v)^2$. After some cancelations, the double integral in \eqref{E:4.61} thus becomes
\begin{equation}
\int_0^\infty\textd w\,\,\texte^{-\frac{x^2}{2w}}\,\frac{x\zeta(w)\zeta'(w)}{w}\int_0^\infty\textd v\,\frac1{\sqrt v}\,\texte^{-\frac{x^2}{2w}v}.
\end{equation}
The inner integral evaluates to $\sqrt{2\pi w}/x$ and so the last term on the right-hand side of \eqref{E:4.61} is bounded by $1/\sqrt{\delta t}$ times
\begin{equation}
\label{E:A.46a}
2\int_0^\infty\textd s\,\,\texte^{-\frac{x^2}{2s}}\frac{\zeta(s)\zeta'(s)}{\sqrt s}
=\frac12\int_0^\infty\textd s\,\zeta(s)^2\texte^{-\frac{x^2}{2s}}\Bigl(\frac{x^2}{s^{5/2}}+s^{-3/2}\Bigr),
\end{equation}
where we integrated by parts and used that~$\zeta(s)=o(s^{1/4})$ as~$s\to\infty$. We thus have to show that \eqref{E:A.46a} is bounded by the sum of the last two terms in the definition of~$\tilde\rho$ in~\eqref{E:tilde-rho}.

Consider the integral on the right of \eqref{E:A.46a}. We first absorb $x^2/(2s)$ from the first term in the parenthesis at the cost of changing ``2'' to~``4'' in the denominator of the exponent of the exponential. The right-hand side of \eqref{E:A.46a} is then bounded by
\begin{equation}
\label{E:A.46}
\frac12(4\texte^{-1}+1)\int_0^\infty\textd s\,\frac{\zeta(s)^2}{s^{3/2}}\texte^{-\frac{x^2}{4s}}.
\end{equation}
We split the integral at~$s:=x^2$ and, in the part corresponding to $s\in[0,x^2]$, use $s^{-1}\texte^{-\frac{x^2}{4s}}\le 4\texte^{-1}x^{-2}$ to bound the expression in \eqref{E:A.46} by
\begin{equation}
2\texte^{-1}(4\texte^{-1}+1)\frac1{x^2}\int_0^{x^2}\textd s\,\frac{\zeta(s)^2}{s^{1/2}}+\frac12(4\texte^{-1}+1)
\int_{x^2}^\infty\textd s\,\frac{\zeta(s)^2}{s^{3/2}}.
\end{equation}
Using $\zeta(s)\le\zeta(x^2)$ inside the first integral, the result follows by elementary calculations.
\end{proofsect}

\begin{proofsect}{Proof of Proposition~\ref{prop-uncond-BM-negative}}
Fix $\delta:=(\tilde\rho(x)/x)^{2/3}$ and note that 
\begin{equation}
P^0\Bigl(\tau_x<(1+\delta)t<\tau_{x+\zeta}\Bigr)
\le P^0\Bigl(\tau_x<t,\,\tau_{x+\zeta}>(1+\delta)t\Bigr)+P^0\Bigl(t<\tau_x\le(1+\delta)t\Bigr).
\end{equation}
As in the proof of Proposition~\ref{prop-uncond-BM-positive}, the right-hand side is bounded using \eqref{E:4.16b} and \eqref{E:4.23u} by an expression that evaluates to $2\tilde\rho(x)^{2/3} x^{1/3}/\sqrt t$. The claim follows by relabeling $(1+\delta)t$ for~$t$.
\end{proofsect}

Moving over to the case of Brownian bridge, the key estimate in the proof of Proposition~\ref{prop-mine-too-corollary} is as follows:

\begin{proposition}
\label{prop-mine-too}
Let~$\zeta\colon[0,\infty)\to[0,\infty)$ be non-decreasing, continuous and obeys $\zeta(s)=o(s^{1/4})$ as~$s\to\infty$. Then for all~$x,y>0$ and all $t>0$,
\begin{multline}
\quad
\label{E:4.41b}
P^x\Bigl( \,\min_{0\le s\le t} B_s< 0< \min_{0\le s\le t}\bigl[B_s+\zeta(s\wedge(t-s))\bigr]\,\Big|\,B_t=y\Bigr)
\\
\le 96\biggl(1+\frac{\tilde\rho(x)}x\biggr)\biggl(1+\frac{\tilde\rho(y)}y\biggr)
\biggl(\sqrt{\frac{\tilde\rho(x)}{x+\tilde\rho(x)}}+\sqrt{\frac{\tilde\rho(y)}{y+\tilde\rho(y)}}\biggr)\,\frac{xy}t\,\texte^{-\frac{(x-y)^2}{2t}}\,,
\end{multline}
where $\tilde\rho$ is the quantity from \eqref{E:tilde-rho}.
\end{proposition}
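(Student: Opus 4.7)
The plan is to closely mirror the proof of Proposition~\ref{prop-mine}, with Lemma~\ref{lemma-negative} replaced by its negative-curve analog Lemma~\ref{lemma-positive} throughout. Introduce the events $A_1 := \{\min_{s \le t/2} B_s < 0 < \min_{s \le t/2}(B_s + \zeta(s))\}$ and $A_2 := \{\min_{s \le t/2}(B_s + \zeta(s)) > 0\}$, along with their path-reversed counterparts $A_1'$ and $A_2'$. Since the event in the statement is contained in $(A_1 \cap A_2') \cup (A_2 \cap A_1')$, Lemma~\ref{lemma-symmetrize} reduces the task to bounding $[P^x(A_1 \cap A_1' | B_t = x) P^y(A_2 \cap A_2' | B_t = y)]^{1/2}$ (and its $x \leftrightarrow y$ swap), each times the factor $\texte^{(x-y)^2/(2t)}$.

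For the first factor, introduce the auxiliary events $A_3 := \{\min_{s \le t/2} B_s < 0,\, \min_{s \le 3t/4}(B_s + \zeta(s)) > 0\}$ and $A_4 := \{\min_{s \le t/8}(B_s + \zeta(s)) > 0\}$, verify the inclusion $A_1 \cap A_1' \subseteq A_3 \cap A_4'$ (using that $\zeta(s \wedge (t-s)) \le \zeta(s)$ for $s \le 3t/4$), and apply Lemma~\ref{lemma-decouple} with $t_1 = 3t/4$, $t_2 = t/8$ to obtain $P^x(A_1 \cap A_1' | B_t = x) \le \sqrt 8\, P^x(A_3) P^x(A_4)$. Apply Lemma~\ref{lemma-positive} with $\delta = 1/2$ to bound $P^x(A_3) = P^0(\tau_x < t/2,\, \tau_{x+\zeta} > 3t/4) \le 2\tilde\rho(x)/\sqrt t$. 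For $P^x(A_4) = P^0(\tau_{x+\zeta} > t/8)$, decompose further as $P^0(\tau_x \ge t/16) + P^0(\tau_x < t/16,\, \tau_{x+\zeta} > t/8)$ and apply Lemma~\ref{lem:2.8} to the first term and Lemma~\ref{lemma-positive} (with $t \to t/16$ and $\delta = 1$) to the second, yielding $P^x(A_4) \le C(x + \tilde\rho(x))/\sqrt t$. The correction factor $(1 + \tilde\rho(x)/x)$ in the final bound comes precisely from the extra $\tilde\rho(x)$ that appears here but not in the positive-curve case.

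For the second factor $P^y(A_2 \cap A_2'|B_t=y)$, bound the bridge-from-$y$-to-$y$ stays above $-\zeta(s \wedge (t-s))$ event by replacing the variable floor by the constant $-\zeta(t/2)$; the Reflection Principle for bridges then gives $\le 1 - \texte^{-2(y+\zeta(t/2))^2/t} \le 2(y + \zeta(t/2))^2/t$. To absorb $\zeta(t/2)$ into the form $(1 + \tilde\rho(y)/y)$ demanded by the statement, split into the regimes $t/2 \le y^2$, where $\zeta(t/2) \le \zeta(y^2) \le \tilde\rho(y)$ directly from the definition of $\tilde\rho$, and $t/2 > y^2$, where one relates $\zeta(t/2)^2$ to the tail integral $\int_{y^2}^\infty \zeta(s)^2/s^{3/2}\,\textd s$ appearing in the definition of $\tilde\rho(y)$. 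Combining with the bound from the previous paragraph via Cauchy-Schwarz and the elementary inequality $\sqrt{u(1+u)} \le (1+u)\sqrt{u/(1+u)}$ (with $u = \tilde\rho(x)/x$) produces a quantity of the form $\sqrt{\tilde\rho(x)/(x+\tilde\rho(x))}\,(1+\tilde\rho(x)/x)\,(1+\tilde\rho(y)/y)\,xy/t$, from which the stated bound follows (with the symmetric term supplying the $\sqrt{\tilde\rho(y)/(y+\tilde\rho(y))}$).

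The main obstacle will be this last absorption step: quantitatively trading $\zeta(t/2)$ for $\tilde\rho(y)$ in the regime where $t/2 \gg y^2$, since a naive pointwise comparison fails ($\tilde\rho(y)$ is fixed while $\zeta(t/2)$ can grow with $t$). The trick is to avoid computing $\zeta(t/2)$ directly and instead estimate $P^y(A_2 \cap A_2'|B_t = y)$ through a decoupling analogous to the one used for $A_1 \cap A_1'$—applying Lemma~\ref{lemma-decouple} to the bridge to reduce to two short-time Brownian-motion factors, each controlled by the same $P^0(\tau_{y+\zeta} > \cdot)$ argument that yielded the $(y + \tilde\rho(y))/\sqrt t$ bound on $P^x(A_4)$. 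This parallel treatment of the two factors is what synchronizes the constants and produces the precise form of $\kappa_2$ claimed in the corollary.
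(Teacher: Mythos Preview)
Your plan is correct and, once you follow through on your own final paragraph, it is exactly the paper's proof. The paper defines the same $A_1,A_2,A_3,A_4$, uses the inclusion $A_2\cap A_2'\subseteq A_4\cap A_4'$, and applies Lemma~\ref{lemma-decouple} (with $t_1=t_2=t/8$) to get $P^y(A_2\cap A_2'\mid B_t=y)\le 2\,P^y(A_4)^2$; it never passes through the constant-floor Reflection-Principle bound you try in your third paragraph. You are right that that detour cannot be closed: from $\zeta(t/2)^2\le c\,\tilde\rho(y)\sqrt t$ one only gets $(y+\zeta(t/2))^2/t\lesssim y^2/t+\tilde\rho(y)/\sqrt t$, and the second term is not dominated by $(y+\tilde\rho(y))^2/t$ for large~$t$. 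So drop that paragraph and go straight to the decoupling you describe at the end; with $P^y(A_4)\le c(1+\tilde\rho(y)/y)\,y/\sqrt t$ already in hand, the bound \eqref{E:4.41b} falls out immediately.
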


\begin{proofsect}{Proof}
The proof follows closely that of Proposition~\ref{prop-mine}. The events $A_1$ and~$A_2$ are now given by\begin{equation}
A_1:=\Bigl\{\,\min_{s\le t/2}B_s<0< \min_{s\le t/2}\bigl[B_s+\zeta(s)\bigr]\Bigr\}
\quad\text{and}\quad A_2:=\Bigl\{\,\min_{s\le t/2}\bigl[B_s+\zeta(s)\bigr]>0\Bigr\}
\end{equation}
and the symmetrization argument from Lemma~\ref{lemma-symmetrize} again reduces the problem to bounding the probabilities of $A_1\cap A_1'$ and $A_2\cap A_2'$.
Denoting
\begin{equation}
A_3:=\Bigl\{\,\min_{s\le t/2}B_s<0< \min_{s\le \frac34t}\bigl[B_s+\zeta(s)\bigr]\Bigr\}
\quad\text{and}\quad A_4:=\Bigl\{\,\min_{s\le \frac18t}\bigl[B_s+\zeta(s)\bigr]>0\Bigr\}
\end{equation}
from the monotonicity of~$\zeta$ we have $A_1\cap A_1'\subseteq A_3\cap A_4'$ while $A_2\cap A_2'\subseteq A_4\cap A_4'$. The decoupling argument in Lemma~\ref{lemma-decouple} then yields
\begin{equation}
\label{E:4.65}
P^x(A_1\cap A_1'|B_t=x)\le \sqrt8\,P^x(A_3)P^x(A_4)\quad\text{and}\quad P^y(A_2\cap A_2'|B_t=y)\le 2\, P^y(A_4)^2.
\end{equation}
Lemma~\ref{lemma-positive} shows
\begin{equation}
P^x(A_3)=P^0\bigl(\tau_x<t/2,\,\tau_{x+\zeta}>\tfrac34t\bigr)\le 2\frac{\tilde\rho(x)}{\sqrt t}
\end{equation}
while for~$P^y(A_4)$ we get
\begin{equation}
\begin{aligned}
P^y(A_4)&=P^0\bigl(\tau_{y+\zeta}>\tfrac18t\bigr)
=P^0\bigl(\tau_y>\tfrac1{12}t\bigr)+P^0\bigl(\tau_y<\tfrac1{12}t,\,\tau_{y+\zeta}>\tfrac18t\bigr)
\\&\le\sqrt{12}\,\frac{y}{\sqrt t}+\sqrt2\,\sqrt{12}\,\frac{\tilde\rho(y)}{\sqrt t}
\le12\Bigl(1+\frac{\tilde\rho(y)}{y}\Bigr)\frac{y}{\sqrt t}.
\end{aligned}
\end{equation}
To get the desired conclusion, just plug these in \eqref{E:4.65} and use \eqref{E:4.41e} along with some straightforward algebraic manipulations.
\end{proofsect}

Finally, we use the above to dismiss our earlier claim:

\begin{proofsect}{Proof of Proposition~\ref{prop-mine-too-corollary}}
Just combine the right-hand sides of the bounds \eqref{E:4.17q} and~\eqref{E:4.41b}.
\end{proofsect}

\subsection{Entropic repulsion}
\label{sec-A5}\noindent
The above results permit us to give the proofs of Propositions~\ref{prop-entropy-BM} and~\ref{prop-entropy-BB} dealing with the phenomenon of entropic repulsion. We begin by proving the statement in Propositions~\ref{prop-entropy-BM} for unconditioned Brownian motion. First we show that, on the said event, the Brownian path is already quite high at time~$u$:

\begin{lemma}
\label{lemma-7.9}
For~$\zeta$ as in Proposition~\ref{prop-entropy-BM} there is a constant~$c_1=c_1(a,\sigma)>0$ such that for all sufficiently large~$t>0$, all $u\in[0,t/4]$ and all $x\ge \zeta(u)\vee1$,
\begin{equation}
P^0\Bigl(\,\min_{0\le s\le t}\bigl[B_s+\zeta(s)\bigr]>0,\, B_u\le x\Bigr)
\le c_1\,\frac {x^2}{\sqrt u}\,\frac1{\sqrt t}.
\end{equation}
\end{lemma}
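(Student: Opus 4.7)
The plan is to condition on the value $B_u = y$ and invoke the Markov property at time $u$, splitting the event into independent contributions from $[0,u]$ (where the path stays above $-\zeta(\cdot)$ and terminates at $y$) and $[u,t]$ (where an independent Brownian motion started at $y$ stays above $-\zeta(u+\cdot)$). Since the event forces $B_u > -\zeta(u)$ and we have restricted to $B_u \le x$, the $y$-integration runs over $(-\zeta(u),x]$. Writing the post-$u$ survival probability as $f(y) := P^y(B_s + \zeta(u+s) > 0,\, s \in [0,t-u])$, the task reduces to bounding the product of $f(y)$ against the density of $B_u$ on the event $\{\tau_\zeta > u\}$ and integrating.

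For the post-$u$ factor I would shift coordinates by setting $W_s := B_s + \zeta(u)$ and $\tilde\zeta(s) := \zeta(u+s) - \zeta(u)$, so that $W$ is a standard Brownian motion started from $\xi := y + \zeta(u) > 0$ and $\tilde\zeta$ is non-decreasing, non-negative with $\tilde\zeta(0)=0$. Proposition~\ref{prop-4.9ue} applied to $W$ with curve $\tilde\zeta$, combined with Lemma~\ref{lemma-A.9} (applied to $\zeta_u(s)=\zeta(u+s)$ to dominate $\tilde\rho_{\tilde\zeta}$ by $\tilde\rho_{\zeta_u}$), then yields $f(y) \le C(y+\zeta(u))/\sqrt{t-u}$ uniformly in $u$ as long as $\xi$ is bounded below by a threshold $\xi_0 = \xi_0(a,\sigma)$ at which $\tilde\rho_u(\xi)/\xi$ is controlled. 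Since $u \le t/4$, one may replace $\sqrt{t-u}$ by a constant multiple of $\sqrt t$ throughout.

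For the pre-$u$ factor I would simply drop the constraint $\{\tau_\zeta>u\}$ and use the Gaussian bound $P^0(B_u\in dy) \le dy/\sqrt{2\pi u}$. Integrating the product over the main region $\xi \in [\xi_0, x+\zeta(u)]$ yields roughly $\frac{C}{\sqrt{ut}}\int_{-\zeta(u)}^{x}(y+\zeta(u))\,dy \le \frac{C(x+\zeta(u))^2}{2\sqrt{ut}} \le \frac{2C x^2}{\sqrt{ut}}$, using $\zeta(u) \le x$ from the hypothesis. This is exactly the target bound.

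The main obstacle is the narrow sub-region $\xi \in (0,\xi_0)$ where the $\delta$-factor in Proposition~\ref{prop-4.9ue} is not controlled and the Prop.~\ref{prop-4.9ue}-type bound on $f$ degenerates. Here I would swap which bound is tightened: replace the loose pre-$u$ density by the sharper Reflection-Principle estimate $P^0(\tau_{-\zeta(u)}>u,\,B_u\in dy) \le \frac{2\zeta(u)(y+\zeta(u))}{u^{3/2}\sqrt{2\pi}}e^{-y^2/(2u)}\,dy$, and bound $f(y)$ trivially by $1$. The extra factor $(y+\zeta(u))$ in the density integrates over a strip of width at most $\xi_0 \vee (\zeta(u)\vee 1)$ to give a contribution controlled by $\zeta(u)^2/u^{3/2}\le x^2/u^{3/2}$, which, together with the bound $x\ge \zeta(u)\vee 1$ and $u\le t/4$, can be absorbed into $c_1x^2/\sqrt{ut}$ upon choosing $\xi_0$ appropriately. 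Verifying that the threshold can be chosen to make both sub-regions yield the same scaling is the one place where some care with the explicit form of $\tilde\rho_u$ from Lemma~\ref{lemma-A.9} is needed.
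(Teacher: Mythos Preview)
Your overall setup — conditioning on $B_u$ and using the Markov property to split into a pre-$u$ density times a post-$u$ survival probability $f(y)$ — is exactly what the paper does, and your main-region computation is fine. The gap is in the sub-region argument.

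On the strip $\xi = y+\zeta(u)\in(0,\xi_0)$ you bound $f(y)\le 1$ and use the reflection estimate for the pre-$u$ density. This yields a term of order $\zeta(u)\,\xi_0^2/u^{3/2}$, and you claim this can be absorbed into $c_1x^2/\sqrt{ut}$. It cannot: take $u$ of order~$1$ and let $t\to\infty$. The target $x^2/\sqrt{ut}$ decays like $t^{-1/2}$, whereas your sub-region bound is $O(1)$ (since $\zeta(0)>0$). The trivial bound $f(y)\le 1$ throws away the entire $t^{-1/2}$ decay coming from the requirement that the path survive on $[u,t]$, and no choice of~$\xi_0$ can recover it. Moreover, since Lemma~\ref{lemma-A.9} shows $\tilde\rho_u(\xi)/\xi$ contains a $\zeta(u)/\xi$ term, the threshold~$\xi_0$ itself must be at least of order~$\zeta(u)$, so the strip is not even narrow in general.

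The paper avoids the sub-region entirely by a single observation you are missing: the post-$u$ survival probability $f(y)=P^y(\min_{0\le s\le t-u}[B_s+\zeta_u(s)]>0)$ is \emph{monotone increasing} in~$y$, so on $\{B_u\le x\}$ one has $f(B_u)\le f(x)$ pointwise. Since the hypothesis $x\ge\zeta(u)\vee1$ is tailored to ensure $\tilde\rho_u(x)/x$ is bounded (via Lemma~\ref{lemma-A.9}), Proposition~\ref{prop-4.9ue} gives $f(x)\le c'x/\sqrt t$ uniformly. Then
\[
P^0(A_0\cap\{B_u\le x\})\le f(x)\,P^0\bigl(-\zeta(u)<B_u\le x\bigr)\le \frac{c'x}{\sqrt t}\cdot\frac{2x}{\sqrt{2\pi u}},
\]
and you are done in one line. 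No $y$-dependent bound on~$f$, no integration, no sub-region.
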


\begin{proofsect}{Proof}
For any~$u\in[0,t]$, abbreviate
\begin{equation}
\label{E:7.49}
A_u:=\Bigl\{\,\min_{u\le s\le t}\bigl[B_s+\zeta(s)\bigr]>0\Bigr\}.
\end{equation}
The event in the statement can then be written as $A_0\cap\{B_u\le x\}$. 
From $A_0\subseteq A_u$ we have 
\begin{equation}
\label{E:A.58}
P^0\bigl(A_0\cap\{B_u\le x\}\bigr)
\le E\Bigl(\1_{\{-\zeta(u)< B_u\le x\}}\,P^0\bigl(A_u\,\big|\,\sigma(B_u)\bigr)\Bigr).
\end{equation}
Introducing $\zeta_u(s):=\zeta(u+s)$, on the event $\{B_u=x_1\}$ we then have
\begin{equation}
P^0\bigl(\,A_u\,\big|\,\sigma(B_u)\bigr)
=P^{\,x_1}\Bigl(\,\min_{0\le s\le t-u}\bigl[B_u+\zeta_u(s)\bigr]>0\Bigr).
\end{equation}
A straightforward monotonicity argument then shows that, for~$x_1\in(-\zeta(u),x]$, this is maximized at~$x_1=x$. As~$x\ge\zeta(u)$, Lemma~\ref{lemma-A.9} ensures that $\tilde\rho(x)\le c x$ for some constant~$c=c(a,\sigma)$ independent of~$\zeta(0)$. Proposition~\ref{prop-uncond-BM-positive-cor} and the fact that $t-u\ge t/2$ then show
\begin{equation}
\label{E:A.60}
P^0\bigl(\,A_u\,\big|\,\sigma(B_u)\bigr)
\le c'\frac{x}{\sqrt t},\qquad\text{on }\{B_u\le x\}
\end{equation}
for some constant $c'=c'(a,\sigma)$. Plugging this in the expectation above, the claim follows by noting that $P^0(-\zeta(0)< B_u\le x)\le 2x/\sqrt u$.
\end{proofsect}

With Lemma~\ref{lemma-7.9} in hand, we are ready to tackle to proof of the main claim:

\begin{proofsect}{Proof of Proposition~\ref{prop-entropy-BM}}
Let us augment our earlier notation by writing
\begin{equation}
\label{E:7.49a}
A_u^\pm:=\Bigl\{\,\min_{u\le s\le t}\bigl[B_s\pm\zeta(s)\bigr]>0\Bigr\}.
\end{equation}
Our goal is to bound the probability $P^0(A_0^+\smallsetminus A_u^-)$.
First we note that, by \eqref{E:4.16q}, 
\begin{equation}
\label{E:A.63}
P^0(A_0^+)\ge \frac13\frac{\zeta(0)}{\sqrt t}\quad\text{whenever}\quad t>\zeta(0)^2.
\end{equation}
The condition $t>\zeta(0)^2$ will be ensured by assuming $c'>\frac12\zeta(0)^2$. Lemma~\ref{lemma-7.9} then shows
\begin{equation}
\label{E:A.64}
\frac{P^0\bigl(A_0^+\cap\{B_u\le x\}\bigr)}{P^0(A_0^+)}
\le 3c_1\frac1{\zeta(0)}\,\frac{x^2}{\sqrt u}
\end{equation}
whenever~$u\in[c',t/2]$ and $x\ge\zeta(u)\vee1$.
To get the claim, it thus suffices to derive a good lower bound on the conditional probability $P(A_u^-\cap\{B_u\ge x\}\,|\,A_0^+)$.

For simplicity of certain bounds later, we may and will assume that~$x>\zeta(u)\vee\texte$. For any $0\le u\le t$ denote $A_{0,u}^+:=\{B_s>-\zeta(s)\colon 0\le s\le u\}$. Then $A_0^+\cap A_u^-=A_{0,u}^+\cap A_u^-$ and so setting $\FF_u:=\sigma(B_s\colon 0\le s\le u)$ and noting that $A_{0,u}^+\in\FF_u$,
\begin{equation}
P^0\bigl(A_0^+\cap A_u^-\cap\{B_u\ge x\}\bigr)
=E^0\Bigl(\1_{A_{0,u}^+\cap\{B_u\ge x\}}\,P^0(A_u^-\,|\,\FF_u)\Bigr).
\end{equation}
We will now derive a uniform estimate on the conditional probability on the right. First we note that, on $\{B_u=x_1\}$ we have
\begin{equation}
P^0( A_u^-\,|\,\FF_u) 
= P^{\,x_1}\bigl(B_s\ge\zeta_u(s)\colon s\in[0,t-u]\bigr)
\end{equation}
where, as before, $\zeta_u(s):=\zeta(u+s)$. Thanks to Proposition~\ref{prop-uncond-BM-positive-cor} and Lemma~\ref{lemma-A.9} we then get, for some constant $c_2=c_2(a,\sigma)\in(0,\infty)$,
\begin{equation}
P^0( A_u^-\,|\,\FF_u)\ge\sqrt{\frac2\pi}\frac{B_u}{\sqrt t}\,\biggl(1-\frac{x^2}t-c_2\Bigl(\frac{\zeta(u)+\log x}x\Bigr)^{2/3}\,\biggr)\qquad\text{on }\{B_u\ge x\}.
\end{equation}
Similarly, Proposition~\ref{prop-4.9ue} and Lemma~\ref{lemma-A.9} yield
\begin{equation}
P^0( A_u^+\,|\,\FF_u)\le\sqrt{\frac2\pi}\frac{B_u}{\sqrt t}\,\biggl(1+c_3\Bigl(\frac{\zeta(u)+\log x}x\Bigr)^{2/3}\,\biggr)\qquad\text{on }\{B_u\ge x\},
\end{equation}
for some constant $c_3=c_3(a,\sigma)\in(0,\infty)$. It then follows that
\begin{equation}
P^0\bigl(\,A_0^+\cap A_u^-\cap\{B_u\ge x\}\bigr)
\ge\frac{1-\frac{x^2}t-c_2\bigl(\frac{\zeta(u)+\log x}x\bigr)^{2/3}}{1+c_3\bigl(\frac{\zeta(u)+\log x}x\bigr)^{2/3}}\,
P^0\bigl(\,A_0^+\cap\{B_u\ge x\}\bigr).
\end{equation}
In combination with \eqref{E:A.64}, we then get
\begin{equation}
\label{E:A.70}
\frac{P^0(A_0^+\smallsetminus A_u^-)}{P^0(A_0^+)}
\le 3c_1\,\frac1{\zeta(0)}\frac {x^2}{ \sqrt u}+\frac{x^2}t+(c_2+c_3)\Bigl(\frac{\zeta(u)+\log x}x\Bigr)^{2/3}.
\end{equation}
subject to~$u\in[c',t/2]$ and $x\ge\zeta(u)\vee1$.

Proposition~\ref{prop-4.9ue} gives $P^0(A_0^+)\le c_4/\sqrt t$ for some $c_4=c_4(a,\sigma,\zeta(0))$ and so $P^0(A_0^+\smallsetminus A_u^-)$ is bounded by $c_4/\sqrt t$ times the expression on the right of \eqref{E:A.70}. We then choose $x:=u^{\frac7{32}}$ and note that the first term then dominates the other two as soon as~$u$ is sufficiently large. The claim then follows by noting that $x^2/\sqrt u=u^{-\frac1{16}}$ for our choice of~$x$.
\end{proofsect}

Having dealt with entropic repulsion of unconditioned paths, the claim for the Brownian bridge follows readily as well:

\begin{proofsect}{Proof of Proposition~\ref{prop-entropy-BB}}
Consider the events
\begin{equation}
A_1:=\Bigl\{\,\min_{s\le t/2}\bigl[B_s+\zeta(s)\bigr]>0>\min_{u\le s\le t/2}\bigl[B_s-\zeta(s)\bigr]\Bigr\}
\end{equation}
and
\begin{equation}
A_2:=\Bigl\{\,\min_{s\le \frac14t}\bigl[B_s+\zeta(s)\bigr]>0\Bigr\}.
\end{equation}
The event in \eqref{E:4.34r} is than contained in $(A_1\cap A_2')\cup(A_2\cap A_1')$ and so, by the union bound and the decoupling trick in Lemma~\ref{lemma-decouple}, its probability is bounded by $2\sqrt8\,P^0(A_1)P^0(A_2)$. Then $P^0(A_1)$ is bounded using Proposition~\ref{prop-entropy-BM} while $P^0(A_2)$ using Proposition~\ref{prop-4.9ue}.
\end{proofsect}

\newcounter{appendB}
\renewcommand{\theappendB}{B}
\section*{Appendix B: Useful properties and bounds}
\addcontentsline{toc}{section}
{{\tocsection {}{8}{\!\!\!\!Appendix B: Useful properties and bounds\dotfill}}{}}
\refstepcounter{appendB}
\label{appendB}
\renewcommand{\thesection}{B}
\setcounter{subsection}{0}
\setcounter{theorem}{0}
\setcounter{equation}{0}
\renewcommand{\thesubsection}{B.\arabic{subsection}}
\renewcommand{\theequation}{\thesection.\arabic{equation}}
\noindent
In this short section we collect various useful facts relevant for the study of the \DGFF{}. We also restate the results about the behavior of its extreme values that are used in this work. Having these explicated here will ease referencing throughout the rest of the article.  Detailed proofs of many of these facts can be found in Biskup~\cite{Biskup-PIMS}. 

\subsection{Gaussian processes}
We begin with two standard results concerning boundedness and continuity of rather general Gaussian processes. A good reference for this material are the books of Adler~\cite{Adler} and the introductory part of Adler and Taylor~\cite{Adler-Taylor}.

\begin{lemma}[Fernique majorization]
\label{lemma-Fernique}
There is~$K\in(0,\infty)$ such that the following holds: Let $X=\{X_t\colon t\in\mathfrak X\}$ be a separable centered Gaussian field indexed by points in a totally-bounded (pseudo)metric space $(\mathfrak X,\rho)$, where $\rho(t,s):=[E((X_t-X_{s})^2)]^{1/2}$. Then for any Borel probability measure~$\mu$ on~$\mathfrak X$ 
\begin{equation}
\label{E:B.1}
E\bigl(\,\sup_{t\in\mathfrak X}\,X_t\bigr)\le K\,\sup_{t\in\mathfrak X}\,\int_0^\infty \textd r\,\sqrt{\log\frac1{\mu(B_\rho(t,r))}},
\end{equation}
where $B_\rho(t,r):=\{s\in\mathfrak X\colon \rho(t,s)\le r\}$. In addition, we also get
\begin{equation}
\label{E:B.2}
E\Bigl(\,\,\,\sup_{\begin{subarray}{c}
s,t\in\mathfrak X\\ \rho(s,t)\le \epsilon
\end{subarray}}
\,|X_t-X_s|\Bigr)\le K\,\sup_{t\in\mathfrak X}\,\int_0^\epsilon \textd r\,\sqrt{\log\frac1{\mu(B_\rho(t,r))}}.
\end{equation}
\end{lemma}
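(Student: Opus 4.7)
My plan is to prove \eqref{E:B.1} by a classical chaining argument and then deduce \eqref{E:B.2} as a ``restricted chaining'' corollary. By separability and monotone convergence, I will first reduce to the case where $\mathfrak X$ is finite and of $\rho$-diameter $D<\infty$. Fix a reference point $t_0\in\mathfrak X$. The main idea is to write $X_t=X_{t_0}+\sum_{n\ge 0}(X_{\pi_{n+1}(t)}-X_{\pi_n(t)})$, where $\pi_n(t)$ is a ``coarse approximation'' of $t$ at scale $r_n:=D\,2^{-n}$, chosen so that $\rho(\pi_n(t),t)\le r_n$ and the range of $\pi_n$ is small. Concretely, for each $t$ I set $\pi_n(t)$ to be (a measurable selection of) some element in a maximal $r_n$-separated net $T_n$ lying within distance $r_n$ of $t$, with $\pi_n(t_0)=t_0$ for all $n$ and $\pi_n(t)\to t$ as $n\to\infty$. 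Since $\rho(\pi_{n+1}(t),\pi_n(t))\le 3r_n$, the Gaussian maximal inequality $E\max_{i\le N}|Z_i|\le C\sqrt{\log(N+1)}\max_i \sqrt{E Z_i^2}$ yields
\begin{equation}
\label{E:proof-bound}
E\sup_{t}\bigl|X_{\pi_{n+1}(t)}-X_{\pi_n(t)}\bigr|\,\le\, C\, r_n\,\sqrt{\log(|T_{n+1}|^2+1)}.
\end{equation}

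The role of $\mu$ is to bound $|T_n|$. Since the balls $B_\rho(s,r_n/2)$, $s\in T_n$, are pairwise disjoint, we have $\sum_{s\in T_n}\mu(B_\rho(s,r_n/2))\le 1$, and consequently the number of $s\in T_n$ for which $\mu(B_\rho(s,r_n/2))\ge 2^{-k}$ is at most $2^k$. The idea then is to use a \emph{weighted} chaining in which a point $s\in T_n$ contributes with weight proportional to $\sqrt{\log(1/\mu(B_\rho(s,r_n/2)))}$ rather than uniformly; equivalently, one estimates \eqref{E:proof-bound} using the inequality $\sqrt{\log|T_{n+1}|}\lesssim \sup_{t\in\mathfrak X}\sqrt{\log(1/\mu(B_\rho(t,r_{n+1}/2)))}$. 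Summing over $n$ and recognizing
\begin{equation}
\sum_{n\ge 0} r_n\,\sqrt{\log\frac1{\mu(B_\rho(t,r_{n+1}/2))}}\,\le\, c\int_0^D\sqrt{\log\frac1{\mu(B_\rho(t,r))}}\,\textd r
\end{equation}
(via monotonicity of $r\mapsto\mu(B_\rho(t,r))$ and the dyadic structure of the $r_n$) produces the right-hand side of \eqref{E:B.1}.

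To obtain \eqref{E:B.2}, I will apply the same chaining but truncate from above. Let $n_0$ be the first index with $r_{n_0}\le\epsilon$. For any pair $s,t$ with $\rho(s,t)\le\epsilon$ one has $\pi_{n_0-1}(s)=\pi_{n_0-1}(t)$ (after choosing the selectors consistently across the common coarse scales), so the telescoping representation yields
\begin{equation}
|X_t-X_s|\le\sum_{n\ge n_0-1}\bigl(|X_{\pi_{n+1}(t)}-X_{\pi_n(t)}|+|X_{\pi_{n+1}(s)}-X_{\pi_n(s)}|\bigr).
\end{equation}
The preceding estimate applied from scale $r_{n_0}\le\epsilon$ downward then bounds the expectation of the supremum by $K\sup_t\int_0^\epsilon\sqrt{\log(1/\mu(B_\rho(t,r)))}\,\textd r$.

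The main obstacle will be executing the weighted-chaining step cleanly, i.e.\ showing that the measure-based bound on $|T_n|$ translates, via the Gaussian maximal inequality, into the desired pointwise bound by $\sup_{t\in\mathfrak X}\sqrt{\log(1/\mu(B_\rho(t,r_n/2)))}$ rather than merely its value at one particular $t$. This is handled by a stratification trick: split $T_n$ into dyadic layers according to the $\mu$-mass of $B_\rho(s,r_n/2)$, apply the Gaussian inequality on each layer where the logarithm of the cardinality is controlled by the corresponding dyadic exponent, and then sum over layers. Once this step is in place, the remaining computations are routine.
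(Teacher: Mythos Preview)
The paper does not actually prove this lemma; it merely cites Adler~\cite[Theorems~4.1 and~4.5]{Adler}. So there is no ``paper's proof'' to compare against, and your sketch must be judged on its own.

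Your chaining scheme, as written, delivers Dudley's entropy bound rather than Fernique's majorizing-measure bound. Using a \emph{fixed} maximal $r_n$-net $T_n$ together with the Gaussian maximal inequality gives
\[
E\sup_t X_t \;\lesssim\; \sum_{n\ge 0} r_n\,\sqrt{\log|T_{n+1}|}\;\le\;\sum_{n\ge 0} r_n\,\sup_{t\in\mathfrak X}\sqrt{\log\frac1{\mu(B_\rho(t,r_{n+1}/2))}}.
\]
The inequality $\log|T_{n+1}|\le \sup_t\log(1/\mu(B_\rho(t,r_{n+1}/2)))$ is correct, but the right-hand side above has the supremum \emph{inside} the sum. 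Since $\sum_n\sup_t(\cdots)\ge\sup_t\sum_n(\cdots)$ with no reverse inequality in general, this does not yield \eqref{E:B.1}. Your stratification of $T_n$ into layers $T_n^{(k)}$ by the $\mu$-mass of $B_\rho(s,r_n/2)$ controls $|T_n^{(k)}|\le 2^k$, but applying the Gaussian maximal inequality layer-by-layer still produces a bound that is uniform over $t$ at each scale separately; it does not produce the $t$-dependent weight $\sqrt{\log(1/\mu(B_\rho(t,r_n)))}$ needed to pull the supremum outside. Concretely, for $t$ with $\pi_{n+1}(t)\in T_{n+1}^{(k)}$ you get $k\gtrsim\log(1/\mu(B_\rho(\pi_{n+1}(t),r_{n+1}/2)))$, but $B_\rho(\pi_{n+1}(t),r_{n+1}/2)\subset B_\rho(t,r_n)$ gives the inequality in the wrong direction. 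The genuine Fernique argument avoids fixed nets: one runs a $t$-adaptive chain and performs the union bound over links by \emph{summing $\mu$-masses} (which total $\le 1$) rather than counting cardinalities. That is the missing idea.

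A secondary issue: in your derivation of \eqref{E:B.2}, the assertion that $\pi_{n_0-1}(s)=\pi_{n_0-1}(t)$ whenever $\rho(s,t)\le\epsilon$ is not automatic for a net-based selector; nearby points can fall into different cells. One usually handles this by adding a single ``bridging'' link $X_{\pi_{n_0}(t)}-X_{\pi_{n_0}(s)}$ of length $\lesssim r_{n_0}\le\epsilon$, which is harmless, but it should be said explicitly. For the record, in the paper's sole application of this lemma the index set is a lattice box with essentially uniform $\mu$, a homogeneous situation in which Dudley and Fernique coincide, so your argument would suffice there; but it does not prove the lemma as stated.
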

 
\begin{proofsect}{Proof}
For \eqref{E:B.1} see, e.g., Adler~\cite[Theorem~4.1]{Adler}. For \eqref{E:B.2} see  the calculation in the proof of Adler~\cite[Theorem~4.5]{Adler}.
\end{proofsect}
 
\begin{lemma}[Borell-Tsirelson inequality]
\label{lemma-BT}
Let~$\mathfrak X$ be a metric space and suppose $\{X_t\colon t\in\mathfrak X\}$ is a separable centered Gaussian process with $\sup_{t\in X}X_t<\infty$ a.s. Then
\begin{equation}
P\Bigl(\,\sup_{t\in\mathfrak X}\,X_t-E\bigl(\,\sup_{t\in\mathfrak X}\,X_t\bigr)>\lambda\Bigr)
\le \texte^{-\frac{\lambda^2}{2\sigma^2}},\qquad \lambda>0,
\end{equation}
where $\sigma^2:=\sup_{t\in\mathfrak X}E(X_t^2)$.
\end{lemma}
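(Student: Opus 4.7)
The plan is to reduce the infinite-dimensional statement to a finite-dimensional Gaussian concentration inequality via separability, and then invoke the classical concentration of Lipschitz functions of standard Gaussians.

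First, I would use the separability assumption to fix a countable dense subset $\mathfrak{X}_0=\{t_1,t_2,\dots\}\subseteq\mathfrak{X}$ with the property that $\sup_{t\in\mathfrak{X}}X_t=\sup_{t\in\mathfrak{X}_0}X_t$ almost surely. Setting $M_n:=\max_{i\le n}X_{t_i}$, we have $M_n\uparrow M_\infty:=\sup_{t\in\mathfrak{X}}X_t$ a.s. The hypothesis $M_\infty<\infty$ a.s.\ plus the monotone convergence theorem give $E(M_n)\uparrow E(M_\infty)$, provided we first argue $E(M_\infty^+)<\infty$; this, however, follows a posteriori from the concentration estimate we are about to establish for finite $n$ together with a uniform integrability argument, so it is enough to prove the inequality for $M_n$ with a constant independent of $n$ and then pass to the limit using Fatou's lemma on $\{M_n-E(M_n)>\lambda\}$.

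For each fixed $n$, I would realize the Gaussian vector $(X_{t_1},\dots,X_{t_n})$ as $X_{t_i}=\sum_{j=1}^m a_{ij}Z_j$, where $Z=(Z_1,\dots,Z_m)$ is standard Gaussian in $\R^m$ and $A=(a_{ij})$ satisfies $AA^{\sf T}=\mathrm{Cov}(X_{t_1},\dots,X_{t_n})$. Then $M_n=F(Z)$ for $F\colon\R^m\to\R$ defined by $F(z):=\max_{i\le n}\sum_j a_{ij}z_j$. A direct Cauchy--Schwarz argument shows that $F$ is Lipschitz with constant
\begin{equation}
L:=\max_{i\le n}\Bigl(\,\sum_{j=1}^m a_{ij}^2\Bigr)^{1/2}=\max_{i\le n}\sqrt{E(X_{t_i}^2)}\,\le\,\sigma.
\end{equation}
Applying the Gaussian concentration inequality for Lipschitz functions of a standard Gaussian vector (which one may derive from the Gaussian isoperimetric inequality of Borell--Sudakov--Tsirelson, or more economically from the Gaussian logarithmic Sobolev inequality via the Herbst argument), one obtains
\begin{equation}
P\bigl(F(Z)-E(F(Z))>\lambda\bigr)\le\exp\Bigl(-\frac{\lambda^2}{2L^2}\Bigr)\le\exp\Bigl(-\frac{\lambda^2}{2\sigma^2}\Bigr),
\end{equation}
i.e., the desired bound for $M_n$ in place of $\sup_t X_t$.

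Finally, I would pass to the limit. Using $E(M_n)\to E(M_\infty)$ (finite by the uniform tail bound just established) and the almost sure convergence $M_n\to M_\infty$, we have $\{M_n-E(M_n)>\lambda\}\to\{M_\infty-E(M_\infty)>\lambda\}$ in the sense that the indicator of the liminf equals the indicator of the limit event almost surely (up to the boundary $\{M_\infty-E(M_\infty)=\lambda\}$, which contributes zero for all but countably many $\lambda$, and the stated inequality is preserved under $\lambda\downarrow$ limits anyway). Fatou's lemma then yields the claim for $M_\infty$, and in fact for every $\lambda>0$ by monotonicity of both sides. The main obstacle here is the finite-dimensional Gaussian concentration estimate itself: this is a nontrivial input, classically proved either via Gaussian isoperimetry or via a log-Sobolev/Herbst argument, both of which can be quoted as standard. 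Once that is in hand, the rest is routine approximation.
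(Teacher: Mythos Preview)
Your proof sketch is correct and follows the standard route to the Borell--TIS inequality: reduce to finite index sets via separability, realize the finite Gaussian vector as a linear image of a standard Gaussian, observe that the maximum is Lipschitz with constant at most $\sigma$, apply Gaussian concentration for Lipschitz functions, and pass to the limit. The limit step and the finiteness of $E(M_\infty)$ are handled in the usual way you indicate.

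However, there is nothing to compare against: the paper does not prove this lemma at all. Its entire proof reads ``See, e.g., Adler~[Theorem~2.1].'' The Borell--TIS inequality is treated here as a standard black-box input collected in Appendix~B, alongside Fernique majorization and basic harmonic-analysis facts, for convenient referencing elsewhere in the paper. So your write-up is not an alternative to the paper's argument but rather a (correct) expansion of a cited result.
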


\begin{proofsect}{Proof}
See, e.g., Adler~\cite[Theorem~2.1]{Adler}.
\end{proofsect}

\subsection{Harmonic analysis}
\label{sec-GF}\noindent
An attractive feature of the \DGFF{} as defined above is its connection with discrete harmonic analysis on~$\Z^2$. This is a subject that has been heavily studied in the past; see, e.g., the books by Lawler~\cite{Lawler} and Lawler and Limi\'c~\cite{Lawler-Limic}.
We need three  objects:
\begin{enumerate}
\item[(1)] the (discrete) Green function $G^D(x,y)$ , defined for each~$D\subsetneq\Z^2$, as the expected number of visits to~$y$ of the simple random walk started from~$x$ and killed upon exit from~$D$,
\item[(2)] the harmonic measure $H^D(x,y)$, defined as the probability that the random walk started from~$x$ first hits~$\Z^2\smallsetminus D$ at vertex~$y$, and
\item[(3)] the potential  kernel  $\fraka\colon\Z^2\to[0,\infty)$ defined, e.g., by \eqref{E:2.8u}.
\end{enumerate}
The simple random walk on the square lattice is recurrent and so $H^D(x,\cdot)$ is a probability measure on~$\partial D$ for each~$D\subsetneq \Z^2$. Alternative definitions of the potential  kernel  exist, e.g., using the Green function
\begin{equation}
\fraka(x) = \lim_{N\to\infty}\bigl[G^{\wt V_N}(0,x)-G^{\wt V_N}(0,0)\bigr],
\end{equation}
where $\wt V_N:=(-N,N)^2\cap\Z^2$. The connection works the other way round as well:

\begin{lemma}
\label{lemma-G-potential}
For each $D\subset\Z^2$ finite, the Green function $G^D$ in~$D$ obeys
\begin{equation}
\label{E:B.4}
G^D(x,y)=-\fraka(x-y)+\sum_{y\in\partial D}H^D(x,z)\fraka(y-z).
\end{equation}
\end{lemma}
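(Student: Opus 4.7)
The plan is to prove \eqref{E:B.4} via a martingale / optional stopping argument applied to the potential kernel along the trajectory of the simple random walk. First I would fix $y\in D$ and let $\{X_n\}_{n\ge 0}$ denote the simple random walk on $\Z^2$ started from $x\in D$, with $\tau:=\inf\{n\ge 0\colon X_n\notin D\}$ the first exit time from~$D$. Since $D$ is finite, $\tau$ has geometric tails and in particular $E^x(\tau)<\infty$, while $X_\tau$ lives almost surely in the (finite) external boundary $\partial D$.

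The key analytic input is the identity $(P-I)\fraka=\delta_0$, where $P$ is the simple random walk transition operator; this can be read directly off the Fourier representation \eqref{E:2.8u}, where the denominator is, up to a constant, the Fourier symbol of~$I-P$ and cancels against the numerator to produce $\delta_0$. Given this, the process
\begin{equation*}
M_n:=\fraka(X_n-y)-\sum_{k=0}^{n-1}\1_{\{X_k=y\}}
\end{equation*}
is a $P^x$-martingale with $M_0=\fraka(x-y)$.

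Next I would apply the optional stopping theorem at~$\tau$. Integrability is easy to justify: $X_\tau$ ranges over the finite set $\partial D$, so $\fraka(X_\tau-y)$ is bounded; meanwhile $\sum_{k=0}^{\tau-1}\1_{\{X_k=y\}}$ is monotone in~$n$ and its expectation under $P^x$ is exactly $G^D(x,y)$, which is finite. Passing to the limit by monotone/dominated convergence yields
\begin{equation*}
\sum_{z\in\partial D}H^D(x,z)\,\fraka(z-y)\;-\;G^D(x,y)\;=\;\fraka(x-y).
\end{equation*}
Using the evenness of~$\fraka$ (immediate from \eqref{E:2.8u}) to rewrite $\fraka(z-y)=\fraka(y-z)$, rearrangement produces exactly \eqref{E:B.4}.

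The only non-trivial step is the verification of $(P-I)\fraka=\delta_0$ with the right sign; once this is in place the rest is a routine application of optional stopping in a setting where every integrability hypothesis is trivially met because $D$ is finite and $\partial D$ is bounded. As a sanity check, one can also prove \eqref{E:B.4} purely analytically by observing that the right-hand side of \eqref{E:B.4}, viewed as a function of $x$, vanishes on $\partial D$ (since $H^D(x,\cdot)=\delta_x$ there and $\fraka$ is even) and has discrete Laplacian $-\delta_y(x)$ on~$D$ (the harmonic-measure sum being discrete-harmonic in~$x$ and $(P-I)\fraka(x-y)=\delta_y(x)$), so that it coincides with $G^D(\cdot,y)$ by uniqueness of the solution to the discrete Dirichlet problem on the finite set~$D$.
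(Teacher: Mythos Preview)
Your proof is correct. Your primary argument via the martingale $M_n=\fraka(X_n-y)-\sum_{k<n}\1_{\{X_k=y\}}$ and optional stopping is a genuinely different route from the paper's: the paper simply notes that $y\mapsto G^D(x,y)+\fraka(x-y)$ is discrete harmonic on~$D$ (both $G^D(x,\cdot)$ and $-\fraka(x-\cdot)$ have Laplacian $-\delta_x$, so these cancel) and then invokes the Poisson representation of the solution to the Dirichlet problem. Both approaches rest on the same analytic fact, namely $(P-I)\fraka=\delta_0$; your martingale argument packages it probabilistically and handles the boundary values automatically through $X_\tau\in\partial D$, while the paper's version is a one-line appeal to uniqueness for the discrete Dirichlet problem. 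Your ``sanity check'' paragraph is in fact exactly the paper's argument, carried out in the $x$-variable rather than the $y$-variable (which is equivalent by symmetry of $G^D$ and evenness of~$\fraka$).
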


\begin{proofsect}{Proof (sketch)}
The key is to check that $y\mapsto G^D(x,y)+\fraka(x-y)$ is discrete harmonic on~$D$. The stated identity then follows from  the well-known representation of  the solution to a discrete Dirichlet problem.
\end{proofsect}

The Green function $G^{\Z^2\smallsetminus\{0\}}$ is given by \eqref{E:2.8} which appears to have an additional term compared to \eqref{E:B.4}. This term arises from the limit argument that is needed to make the Dirichlet problem uniquely solvable. 
The bound \eqref{E:B.4} is useful in estimates, particularly, in light of:

\begin{lemma}
\label{lemma-potential}
The potential  kernel  $\fraka$ admits the following asymptotic expression
\begin{equation}
\label{E:B.5}
\fraka(x)=g\log|x|+c_0+O(|x|^{-2}),\qquad|x|\to\infty,
\end{equation}
where  $c_0$ is a constant, $g=2/\pi$ and~$|\cdot|$ is the Euclidean norm on~$\R^2$. 
\end{lemma}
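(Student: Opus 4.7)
My plan is to work directly from the Fourier representation \eqref{E:2.8u}. Denote $\phi(k) := \sin(k_1/2)^2 + \sin(k_2/2)^2$, so Taylor expansion gives $\phi(k) = \tfrac14|k|^2 + \psi(k)$ with $\psi$ smooth on $[-\pi,\pi]^2$ and $\psi(k) = O(|k|^4)$ as $k \to 0$. The first step is the algebraic split
$$\frac{1}{\phi(k)} = \frac{4}{|k|^2} - \frac{4\psi(k)}{|k|^2\phi(k)}\,,$$
which decomposes $\fraka(x) = A(x) - B(x)$. The main term $A(x)$ will carry the $g\log|x|$ behavior; the remainder $B(x)$ will contribute a constant plus the $O(|x|^{-2})$ error.

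For $A(x)$, I would introduce a smooth radial cutoff $\chi$ equal to $1$ on $\{|k|\le1\}$ and vanishing on $\{|k|\ge 2\}$, and write
$$A(x) = \int_{\R^2} \chi(k)\,\frac{4(1-\cos(k\cdot x))}{|k|^2}\,\frac{dk}{(2\pi)^2} + \int_{[-\pi,\pi]^2}\bigl(1-\chi(k)\bigr)\,\frac{4(1-\cos(k\cdot x))}{|k|^2}\,\frac{dk}{(2\pi)^2}.$$
The second piece is an integral of a smooth, compactly-supported function against $\cos(k\cdot x)$, so repeated integration by parts yields $O(|x|^{-N})$ for any $N$, while the constant contribution (the $(1-\cos)$ replaced by $1$) is absorbed into $c_0$. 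In the first piece I substitute $k = u/|x|$, split the $u$-region at $|u|\sim 1$, use the standard computation that $\int_{\R^2}[\cos(u\cdot e) - \chi_0(|u|)]\,|u|^{-2}\,du/(2\pi)^2$ is a finite explicit constant (for any radial $\chi_0$ with $\chi_0(0)=1$ of compact support), and unwind the rescaling to extract precisely $g\log|x| = \tfrac{2}{\pi}\log|x|$, plus a constant and an error $O(|x|^{-2})$ from the tail.

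For $B(x)$, the integrand is bounded on $[-\pi,\pi]^2$ since $\psi(k)/[|k|^2\phi(k)] = O(1)$ near the origin. Splitting $1 - \cos(k\cdot x) = 1 - \cos(k\cdot x)$ and taking limits, $B(x) = B(\infty) - \widetilde B(x)$ with $B(\infty)$ explicit (adding to $c_0$) and
$$\widetilde B(x) = \int_{[-\pi,\pi]^2} \cos(k\cdot x)\,F(k)\,\frac{dk}{(2\pi)^2}, \qquad F(k) := \frac{4\psi(k)}{|k|^2\phi(k)}.$$
The function $F$ is smooth on $[-\pi,\pi]^2$ (with a removable singularity at $0$, since $\psi = O(|k|^4)$ and $\phi \sim |k|^2/4$), and is $2\pi$-periodic in each coordinate so that all boundary terms from integration by parts on $\partial[-\pi,\pi]^2$ cancel. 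Integrating by parts twice against $\cos(k\cdot x)$ then gains the factor $|x|^{-2}$; however, derivatives of $F$ grow as $|k|^{-1}$ and $|k|^{-2}$ near the origin, so one must localize via a dyadic decomposition at scales $|k|\sim 2^{-j}$ cut off at $2^{-j}\sim 1/|x|$, using only one integration by parts on the smallest dyadic shell and two on larger ones. The dyadic contributions sum geometrically to the claimed $O(|x|^{-2})$ rate.

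The main obstacle I expect is this sharp $O(|x|^{-2})$ remainder, as opposed to a softer $o(1)$ bound which is essentially automatic. The difficulty is entirely in the interplay between the oscillatory factor $\cos(k\cdot x)$, the boundary of $[-\pi,\pi]^2$ (handled by periodicity), and the mild singularity of the symbol at $k=0$ (handled by dyadic localization). An equivalent route, which may actually be cleaner, is to use the series representation $\fraka(x) = \sum_{n\ge 0}[p_n(0,0) - p_n(0,x)]$ and apply the local central limit theorem for the simple random walk with its first subleading correction, which is precisely of order $|x|^{-2}$ after summation; the Fourier computation above is essentially an inverse-Laplace-transform version of this heat-kernel approach.
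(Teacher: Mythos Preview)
The paper does not actually prove this lemma; it simply cites St\"ohr, Fukai--Uchiyama, and Kozma--Schreiber. Your Fourier-analytic sketch is in the spirit of those proofs, and the overall strategy---split off $4/|k|^2$ to extract $\tfrac2\pi\log|x|$, then control the remainder by oscillatory-integral methods---is correct. However, two technical claims are wrong as stated.

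First, the function $(1-\chi(k))\cdot 4/|k|^2$ restricted to $[-\pi,\pi]^2$ is smooth in the interior but is \emph{not} compactly supported in $\R^2$, nor is it $2\pi$-periodic; the second piece of your $A(x)$ therefore does not give $O(|x|^{-N})$ by plain integration by parts, because boundary terms on $\partial[-\pi,\pi]^2$ do not vanish. The same objection applies to your $\widetilde B(x)$: the function $F(k)=4\psi(k)/(|k|^2\phi(k))$ is not $2\pi$-periodic either, since neither $|k|^2$ nor $\psi(k)$ is. The fix is to combine these two non-periodic pieces: the function $G(k):=1/\phi(k)-4\chi(k)/|k|^2$ agrees with $1/\phi(k)$ near $\partial[-\pi,\pi]^2$ (where $\chi=0$) and is therefore genuinely periodic, so integration by parts on the torus produces no boundary terms.

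Second, $F$ does \emph{not} have a removable singularity at $0$. From $\psi(k)=-\tfrac1{48}(k_1^4+k_2^4)+O(|k|^6)$ and $|k|^2\phi(k)=\tfrac14|k|^4+O(|k|^6)$ one gets $F(k)\to -\tfrac13\,(k_1^4+k_2^4)/|k|^4$ along rays, and this limit depends on the direction (it is $-\tfrac13$ along axes and $-\tfrac16$ along diagonals). So $F$---and hence $G$---is bounded but not even continuous at $0$; your later remark that $\partial F\sim |k|^{-1}$ and $\partial^2 F\sim |k|^{-2}$ is the correct picture and already contradicts the ``removable singularity'' claim. The dyadic-shell argument you propose is therefore not an optional refinement but essential, and with the periodicity repaired as above it does deliver $O(|x|^{-2})$. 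Your alternative LCLT/heat-kernel route is also valid and is closer to what Fukai--Uchiyama actually do.
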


\begin{proofsect}{Proof}
This was apparently first proved by St\"ohr~\cite{Stoehr} using Fourier analysis. See also Fukai and Uchiyama~\cite{Fukai-Uchiyama} and Kozma and Schreiber~\cite{Kozma-Schreiber} for a more general approach to this.
\end{proofsect}

Concerning the harmonic measure $H(x,\cdot)$, we need to control regularity in~$x$ uniformly in the second argument. Fortunately, it suffices to do this for square-like domains: 

\begin{lemma}
\label{lemma-HM}
Recall that $V_N:=(0,N)^2\cap\Z^2$ and let $\epsilon\in(0,1/2)$. There is a constant $c=c(\epsilon)$ such that for any~$x,y\in V_N$ satisfying $\dist(x,V_N^\cc)\ge \epsilon N$ and $\dist(y,V_N^\cc)\ge \epsilon N$,
\begin{equation}
\max_{z\in\partial V_N}\,H^{V_N}(x,z)\le \frac{c}N
\end{equation}
and
\begin{equation}
\max_{z\in\partial V_N}\,\bigl|\,H^{V_N}(x,z)-H^{V_N}(y,z)\bigr|\le c\frac{|x-y|}{N^2}.
\end{equation}
\end{lemma}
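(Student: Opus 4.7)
The strategy is to deduce the two displayed bounds sequentially, with the pointwise estimate $H^{V_N}(x,z)\le c/N$ serving as the main quantitative input and the Lipschitz-type bound following from a standard gradient inequality for discrete harmonic functions.

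For the pointwise bound, I would start from the last-exit decomposition
\[
H^{V_N}(x,z) = \frac{1}{4}\sum_{y\in V_N,\,y\sim z} G^{V_N}(x,y),
\]
reducing the problem to showing $G^{V_N}(x,y)\le c/N$ for $y$ at distance one from $\partial V_N$ and $x$ deep in $V_N$. Writing $G^{V_N}(x,y) = G^{V_N}(y,y)\,P_x(\tau_y<\tau_{\partial V_N})$ via the strong Markov property at the hitting time $\tau_y$, I first bound $G^{V_N}(y,y)=O(1)$ using Lemma~\ref{lemma-G-potential} and Lemma~\ref{lemma-potential}: the escape measure $H^{V_N}(y,\cdot)$ concentrates on the portion of $\partial V_N$ closest to $y$ with a Poisson-kernel-like tail, so $\sum_w H^{V_N}(y,w)\fraka(y-w)$ is summable uniformly in $N$. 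The remaining hitting probability $P_x(\tau_y<\tau_{\partial V_N})$ is of order $1/N$, which can be argued by projecting onto the coordinate normal to the nearest boundary (yielding a lazy 1D random walk for which the estimate is gambler's ruin) or, more robustly, by invoking the invariance principle to compare with Brownian motion on $(0,1)^2$.

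For the Lipschitz-type bound, the key observation is that $u := H^{V_N}(\cdot, z)$ is a non-negative discrete harmonic function on $V_N$, since $z\in\partial V_N$ lies outside the domain of harmonicity. The standard discrete gradient estimate for such functions asserts that whenever $B_R(x_0)\subseteq V_N$ and $w_1,w_2\in B_{R/2}(x_0)$,
\[
|u(w_1)-u(w_2)| \le C\,\frac{|w_1-w_2|}{R}\,\sup_{B_R(x_0)} u.
\]
Given $x,y$ with $\dist(x,\partial V_N),\dist(y,\partial V_N)\ge\epsilon N$, in the regime $|x-y|\ge\epsilon N/4$ the claimed bound follows trivially from the pointwise estimate alone; in the complementary regime, I take $x_0$ a lattice point near the midpoint of $x$ and $y$ and $R=\epsilon N/2$. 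Then $B_R(x_0)\subseteq\{w\in V_N\colon\dist(w,\partial V_N)\ge\epsilon N/2\}$, so the pointwise bound gives $\sup_{B_R(x_0)} u\le c'/N$, and the gradient inequality yields $|u(x)-u(y)|\le c''|x-y|/N^2$ as desired.

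The principal obstacle is the pointwise bound. While the statement is classical and sharp (matching the continuum Poisson kernel of the unit square away from its corners), a fully rigorous derivation requires either careful analysis of the 2D simple random walk near the boundary — with additional attention to the fact that $V_N$ has corners — or a controlled comparison with planar Brownian motion via the invariance principle. The gradient-type bound, by contrast, reduces to a well-known property of non-negative discrete harmonic functions provable by iterating the discrete Harnack inequality or through direct analysis of the discrete Poisson kernel on balls in $\Z^2$.
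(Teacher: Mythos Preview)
Your approach differs from the paper's, which does not give a self-contained argument at all: it simply cites the convergence of discrete to continuum harmonic measure (Lawler--Limic~\cite[Proposition~8.1.4]{Lawler-Limic}) and reads both bounds off the explicit Poisson kernel of the unit square. Your route is more hands-on and purely discrete, and your treatment of the second inequality via the gradient estimate for nonnegative discrete-harmonic functions is correct and arguably cleaner than inferring Lipschitz regularity from the continuum limit, since it reduces the second bound entirely to the first.

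There is, however, a gap in your sketch of the pointwise bound. The gambler's-ruin step does not work as stated: projecting onto the coordinate normal to the nearest edge only controls the probability that the walk reaches the \emph{level} of~$y$, which is of order one, not the probability of hitting the specific vertex~$y$. A clean discrete repair, staying within your framework, is to dominate $G^{V_N}(x,y)$ by the half-plane Green function $G^H(x,y)=\fraka(x-\bar y)-\fraka(x-y)$, where~$\bar y$ is the reflection of~$y$ across the nearest edge of~$V_N$; since $|x-y|$ and $|x-\bar y|$ are both of order~$N$ and differ by~$O(1)$, the asymptotic in Lemma~\ref{lemma-potential} gives $G^H(x,y)=O(1/N)$ directly, with no need for the factorization through $G^{V_N}(y,y)\,P_x(\tau_y<\tau_{\partial V_N})$. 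Your stated fallback to the invariance principle is, of course, essentially the paper's route.
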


\begin{proofsect}{Proof (idea)}
This can be proved, e.g., by invoking the continuum approximation of the harmonic measure (cf Lawler and Limi\'c~\cite[Proposition 8.1.4]{Lawler-Limic}) and the corresponding (standard) estimate for the continuum Poisson kernel.
\end{proofsect}

To keep out notations light, we will abuse it by occasionally writing~$G^D$ to denote also the continuum Green function. This object may as well be defined by an analogue of \eqref{E:B.4},
\begin{equation}
\label{E:B.7}
G^D(x,y)=-g\log|x-y|+g\int_{\partial D}\Pi^D(x,\textd z)\log|y-z|,
\end{equation}
where~$\Pi^D(x,\cdot)$ is the harmonic measure (a.k.a.\ Poisson kernel) associated with the standard Brownian motion killed upon exit from~$D$. In light of  \eqref{E:B.5},  given a sequence $\{D_N\}$ of scaled-up lattice domains approximating~$D\in\mathfrak D$ via \twoeqref{E:1.1}{E:1.1a}, \eqref{E:B.4} converges (pointwise and locally uniformly) to \eqref{E:B.7} away from the diagonal in $D\times D$.

\subsection{Discrete Gaussian Free Field}
Next we will move to the properties of the \DGFF{}. Recall that~$h^D$ denotes the \DGFF{} in~$D\subsetneq\Z^2$ where we regard $h^D$ as zero outside~$D$. A very useful property is the behavior of~$h^D$ under conditioning on values in a subset of~$D$.  The following is sometimes called the \emph{domain Markov property} in the literature: 

\begin{lemma}[Gibbs-Markov property]
\label{lemma-GM}
Let $\wt D\subsetneq D\subsetneq\Z^2$ and denote
\begin{equation}
\varphi^{D,\wt D}(x):=E\bigl(h^D(x)\,\big|\,\sigma(h^D(z)\colon z\in D\smallsetminus \wt D)\bigr).
\end{equation}
Then we have:
\begin{enumerate}
\item[(1)] A.e.\ sample of $x\mapsto\varphi^{D,\wt D}$ is discrete harmonic on~$\wt D$ with ``boundary values'' determined by $\varphi^{D,\wt D}(x)=h^D(x)$ for each $x\in D\smallsetminus \wt D$.
\item[(2)] The field $h^D-\varphi^{D,\wt D}$ is independent of~$\varphi^{D,\wt D}$ and, in fact, $h^D-\varphi^{D,\wt D} \laweq h^{\wt D}$.
\end{enumerate}
\end{lemma}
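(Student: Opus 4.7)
The plan is to exploit joint Gaussianity: conditional expectation equals orthogonal $L^2$-projection, so $\varphi^{D,\wt D}$ is automatically a deterministic linear functional of $\{h^D(z)\colon z\in D\smallsetminus\wt D\}$, and its residual $h^D-\varphi^{D,\wt D}$ is a centered Gaussian field orthogonal to (hence, by Gaussianity, independent of) the $\sigma$-algebra it is being projected onto, and thus independent of $\varphi^{D,\wt D}$. The work splits into (a) identifying $\varphi^{D,\wt D}$ as the discrete harmonic extension, and (b) matching the covariance of the residual to that of $h^{\wt D}$.

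For part (1), I would proceed pointwise using the explicit Gaussian density on $\R^{D}$, proportional to $\exp(-\tfrac14\sum_{x\sim y}(h_x-h_y)^2)$ with $h\equiv0$ off $D$. A short one-site computation shows $E(h^D(x)\,|\,\sigma(h^D(z)\colon z\ne x))=\tfrac14\sum_{y\sim x}h^D(y)$ for $x\in D$. Projecting further onto the smaller $\sigma$-algebra generated by $\{h^D(z)\colon z\in D\smallsetminus\wt D\}$ via the tower property preserves this averaging identity at each $x\in \wt D$, while for $x\in D\smallsetminus\wt D$ the conditional expectation is trivially $h^D(x)$. Thus $\varphi^{D,\wt D}$ is discrete harmonic in~$\wt D$ with boundary values $h^D|_{D\smallsetminus\wt D}$, giving the explicit representation
$$\varphi^{D,\wt D}(x)=\sum_{z\in D\smallsetminus\wt D}H^{\wt D}(x,z)\,h^D(z),\qquad x\in\wt D.$$

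For part (2), independence of $h^D-\varphi^{D,\wt D}$ and $\varphi^{D,\wt D}$ is immediate from the Gaussian remark above. The residual vanishes on $(\wt D)^\cc$ (by construction on $D\smallsetminus\wt D$ and by convention outside~$D$), so it remains to verify that for $x,y\in\wt D$ one has $\text{Cov}(h^D(x)-\varphi^{D,\wt D}(x),\,h^D(y)-\varphi^{D,\wt D}(y))=G^{\wt D}(x,y)$. Orthogonality collapses this to
$$G^D(x,y)-\sum_{z,z'\in D\smallsetminus\wt D}H^{\wt D}(x,z)H^{\wt D}(y,z')\,G^D(z,z').$$
Applying the strong Markov property of the simple random walk at its first exit from $\wt D$ gives, for $x,y\in\wt D$,
$$G^D(x,y)=G^{\wt D}(x,y)+\sum_{z\in D\smallsetminus\wt D}H^{\wt D}(x,z)\,G^D(z,y),$$
and, using $G^{\wt D}(y,z)=0$ for $z\notin\wt D$ together with symmetry of $G^D$, one similarly obtains $G^D(z,y)=\sum_{z'}H^{\wt D}(y,z')\,G^D(z,z')$. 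Substituting this into the previous display reduces the covariance to $G^{\wt D}(x,y)$, completing the identification.

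The main step requiring care, rather than being a deep obstacle, is the propagation of the harmonic averaging identity from one-point conditioning to the full $\sigma$-algebra generated by $D\smallsetminus\wt D$; this is handled by the tower property together with the uniqueness of the discrete Dirichlet extension. Everything else is Gaussian $L^2$-theory and the random-walk representation of the Green function.
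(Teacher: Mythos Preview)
Your proof is correct. The paper itself provides only a one-line ``proof (idea)'' --- that the result follows from the explicit representation of the law of~$h^D$ as a Gibbs measure with nearest-neighbor interactions --- so you have in fact supplied considerably more detail than the paper does.

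There is a mild difference in emphasis worth noting. The paper's hint points toward a direct factorization of the density $\exp\bigl(-\tfrac14\sum_{x\sim y}(h_x-h_y)^2\bigr)$: splitting the Dirichlet energy across edges inside~$\wt D$, edges crossing~$\partial\wt D$, and edges outside~$\wt D$ shows immediately that the conditional law of~$h^D|_{\wt D}$ given~$h^D|_{D\smallsetminus\wt D}$ is that of a DGFF in~$\wt D$ with boundary data~$h^D|_{\partial\wt D}$, which gives both~(1) and~(2) at once without any covariance computation. Your route instead leverages Gaussianity (conditional expectation as orthogonal projection, orthogonality implying independence) together with the random-walk representation of the Green function to match covariances. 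Both arguments are standard and equally valid; the density-factorization route is shorter and works for any Gibbs measure with nearest-neighbor interactions (not just Gaussian), while your Gaussian/covariance approach makes the harmonic-extension formula and the Green-function identity more explicit, which is useful elsewhere in the paper.
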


\begin{proofsect}{Proof (idea)}
This is a consequence of an explicit representation of the probability law of~$h^D$ as a Gibbs measure with nearest-neighbor interactions only.
\end{proofsect}

As a simple consequence of the Gibbs-Markov property we get:

\begin{lemma}
\label{lemma-stoch-order}
If $A\subseteq\wt D\subseteq D\subsetneq\Z^2$ then
\begin{equation}
P\bigl(\max_{x\in A} h^{\wt D}(x)\ge\lambda\bigr)\le 
2P\bigl(\max_{x\in A} h^{D}(x)\ge\lambda\bigr)
\end{equation}
holds for each $\lambda\ge0$.
\end{lemma}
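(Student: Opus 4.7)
\begin{proofsect}{Proof plan}
The plan is to exploit the Gibbs-Markov decomposition of Lemma~\ref{lemma-GM}, which realizes $h^D$ on a common probability space as
\begin{equation}
h^D \,\laweq\, h^{\wt D} + \varphi^{D,\wt D},
\end{equation}
where $h^{\wt D}$ and $\varphi^{D,\wt D}$ are independent and $\varphi^{D,\wt D}$ is a centered Gaussian field. The key observation is that, at any deterministic location, $\varphi^{D,\wt D}$ is mean-zero Gaussian and so, by symmetry, non-negative with probability at least $1/2$. This remains true at a location chosen measurably from $h^{\wt D}$, thanks to the independence.

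Concretely, fix a deterministic total order on $\Z^2$ and let $X^\star$ denote the (a.s.~unique) $h^{\wt D}$-argmax of $h^{\wt D}$ on $A$, taking the order-minimal site in case of ties. Then $X^\star$ is $\sigma(h^{\wt D})$-measurable and
\begin{equation}
\bigl\{\max_{x\in A}h^{\wt D}(x)\ge\lambda\bigr\}\cap\bigl\{\varphi^{D,\wt D}(X^\star)\ge 0\bigr\}
\,\subseteq\, \bigl\{h^D(X^\star)\ge\lambda\bigr\}\,\subseteq\,\bigl\{\max_{x\in A}h^D(x)\ge\lambda\bigr\}.
\end{equation}
Conditional on $h^{\wt D}$, the random variable $\varphi^{D,\wt D}(X^\star)$ is a centered Gaussian (possibly degenerate at $0$), because $X^\star$ is $\sigma(h^{\wt D})$-measurable and $\varphi^{D,\wt D}$ is independent of $h^{\wt D}$. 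By symmetry,
\begin{equation}
P\bigl(\varphi^{D,\wt D}(X^\star)\ge 0\,\big|\,h^{\wt D}\bigr)\ge\frac12\quad\text{a.s.}
\end{equation}
Taking expectation of the indicator of the first event conditioned on $h^{\wt D}$ then yields
\begin{equation}
P\bigl(\max_{x\in A}h^D(x)\ge\lambda\bigr)\ge\frac12\,P\bigl(\max_{x\in A}h^{\wt D}(x)\ge\lambda\bigr),
\end{equation}
which is the desired inequality.

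There is no substantive obstacle; the only minor care needed is to ensure $X^\star$ is measurable (handled by the deterministic tie-breaking rule above) and to allow for the degenerate case $\Var(\varphi^{D,\wt D}(x))=0$, in which case $\varphi^{D,\wt D}(x)=0$ a.s.~and the event $\{\varphi^{D,\wt D}(X^\star)\ge0\}$ still holds with full probability.
\end{proofsect}
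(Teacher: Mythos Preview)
Your proof is correct and follows exactly the approach the paper sketches: decompose $h^D=h^{\wt D}+\varphi^{D,\wt D}$ via the Gibbs-Markov property and use that the independent centered Gaussian~$\varphi^{D,\wt D}$ is non-negative with probability at least~$\tfrac12$ at the $h^{\wt D}$-maximizer over~$A$. Your version simply fills in the measurability and degenerate-variance details that the paper leaves implicit.
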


\begin{proofsect}{Proof}
Just write $h^D=h^{\wt D}+\varphi^{D,\wt D}$ and impose $\varphi^{D,\wt D}\ge0$ at the maximizer of~$h^{\wt D}$ on~$A$.
\end{proofsect}

Another useful feature of the \DGFF{} are positive correlations. Recall that a probability measure~$\mu$ on the product space $\R^{\Z^2}$ is \emph{strong FKG} if for any finite $\Lambda\subset\Z^2$ and any increasing events~$A$ and~$B$ --- with ``increasing'' defined with respect to the usual partial order on $\R^{\Z^2}$ --- we have
\begin{equation}
\mu\bigl(A\cap B\,|\,\FF_\Lambda\bigr)
\ge \mu\bigl(A\,|\,\FF_\Lambda\bigr) \mu\bigl(B\,|\,\FF_\Lambda\bigr)
\end{equation}
where $\FF_\Lambda$ is the $\sigma$-algebra generated by the values of the field in~$\Lambda$. We have:

\begin{lemma}[Positive correlations]
\label{lemma-FKG}
For any $D\subsetneq\Z^2$, the law of $h^D$ is strong FKG.
\end{lemma}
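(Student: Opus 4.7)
\begin{proofsect}{Proof plan}
The plan is to reduce the strong FKG property to the classical fact, due to Pitt, that a centered Gaussian vector is positively associated if and only if all its covariances are non-negative, and to handle the conditioning via the Gibbs--Markov property (Lemma~\ref{lemma-GM}).

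First I would observe that the Green function is non-negative on $D\times D$ for every $D\subsetneq\Z^2$, since $G^D(x,y)$ is the expected number of visits to~$y$ by the simple random walk started from~$x$ and killed upon exit from~$D$. Hence the covariance matrix of $\{h^D(x)\colon x\in D\}$ has non-negative entries, and Pitt's theorem (i.e.\ the Gaussian FKG inequality) yields
\begin{equation*}
E\bigl(F(h^D)G(h^D)\bigr)\ge E\bigl(F(h^D)\bigr)E\bigl(G(h^D)\bigr)
\end{equation*}
for any pair of bounded, measurable, coordinate-wise non-decreasing functions $F,G\colon\R^{\Z^2}\to\R$ (where we recall that $h^D$ is set to zero off~$D$). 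In particular, $\mu(A\cap B)\ge\mu(A)\mu(B)$ for any increasing events~$A,B$.

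To upgrade this to the strong FKG property, I would fix a finite set $\Lambda\subset\Z^2$ and condition on~$\FF_\Lambda$. Without loss of generality $\Lambda\subset D$ (otherwise intersect with~$D$, since $h^D\equiv 0$ outside~$D$). Set $\wt D:=D\smallsetminus\Lambda$. By the Gibbs--Markov property,
\begin{equation*}
h^D=\varphi^{D,\wt D}+\bigl(h^D-\varphi^{D,\wt D}\bigr),
\end{equation*}
where $\varphi^{D,\wt D}$ is $\FF_\Lambda$-measurable and $h^D-\varphi^{D,\wt D}\laweq h^{\wt D}$ is independent of~$\FF_\Lambda$. Consequently, conditional on~$\FF_\Lambda$, the field $h^D$ equals a deterministic shift plus an independent copy of the DGFF~$h^{\wt D}$ in~$\wt D$.

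For an increasing event~$A\in\sigma(h^D(x)\colon x\in\Z^2)$, the set $\{h^{\wt D}\in A-\varphi^{D,\wt D}\}$ is again an increasing event in~$h^{\wt D}$ (the shift is coordinate-wise and deterministic once we condition). Applying the unconditional FKG inequality established in the first step to the Gaussian field $h^{\wt D}$ --- whose covariance $G^{\wt D}$ is non-negative by the same probabilistic interpretation --- yields
\begin{equation*}
\mu\bigl(A\cap B\,\big|\,\FF_\Lambda\bigr)\ge\mu\bigl(A\,\big|\,\FF_\Lambda\bigr)\mu\bigl(B\,\big|\,\FF_\Lambda\bigr)
\end{equation*}
for any increasing~$A,B$. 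This is the strong FKG property.

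The only point that requires any external input is Pitt's theorem on positive association of Gaussian vectors with non-negative covariance matrix; there is no real obstacle here, since the rest of the argument is a direct combination of non-negativity of the random-walk Green function and the Gibbs--Markov decomposition already recorded in Lemma~\ref{lemma-GM}.
\end{proofsect}
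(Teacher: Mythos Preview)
Your argument is correct, but it takes a different route from the paper. The paper simply notes that for finite~$D$ the density of~$h^D$ with respect to Lebesgue measure, being proportional to $\exp\{-\tfrac18\sum_{x\sim y}(h_x-h_y)^2\}$, satisfies the FKG lattice condition (the interaction is attractive), which yields the strong FKG property directly; infinite~$D$ is then handled by a limit.

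Your approach instead combines Pitt's theorem on positive association of Gaussians with non-negative covariance (applied to~$h^{\wt D}$, using $G^{\wt D}\ge0$) together with the Gibbs--Markov decomposition (Lemma~\ref{lemma-GM}) to reduce the conditional statement to the unconditional one on the smaller domain. This is a perfectly valid and arguably more structural proof: it makes transparent \emph{why} conditioning on~$\FF_\Lambda$ preserves positive correlations, namely because the conditional law is a deterministic shift of another DGFF. The lattice-condition route, on the other hand, is the standard spin-system argument and generalizes beyond the Gaussian setting, giving strong FKG in a single stroke without invoking the Markov property. Both handle infinite~$D$ by the same kind of approximation (reduce to cylinder events depending on finitely many coordinates).
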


\begin{proofsect}{Proof (idea)}
For~$D$ finite, the probability density of $h^D$ with respect to the product Lebesgue measure satisfies the so called FKG lattice condition which is sufficient to imply the strong FKG property. The case of infinite~$D$ is obtained by suitable limits.
\end{proofsect}

\subsection{Extreme values}
Our final set of results to be reviewed here concern the extreme values of the \DGFF{}. Recall the notation $\mathfrak D$ from Section~\ref{sec-2.1} for the class of admissible continuum domains and~$\Gamma_N^D(t)$ from \eqref{E:5.47} for the set of values where the DGFF in~$D_N$ is above~$m_N-t$. We then have:

\begin{lemma}
\label{lemma-level-set}
For all $D\in\mathfrak D$ and all $t \in \R$, 
\begin{equation}
	\lim_{a \to \infty}\, \limsup_{N \to \infty}\,
		P \bigl( |\Gamma_N^D(t)|\ge a\bigr) = 0 \,.
\end{equation}
\end{lemma}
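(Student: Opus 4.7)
\emph{Proof plan.}
The plan is to combine tightness of the centered maximum, concentration of near-extremal values in the bulk, and a conditional first-moment bound. The resulting three-step reduction closely resembles the opening steps in the proof of Proposition~\ref{prop-6.1}.

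Fix $\epsilon>0$. Lemma~\ref{lemma-max-tail} (combined with Lemma~\ref{lemma-stoch-order} to embed $D_N$ in a square) supplies $s>0$ such that $P(\max_{D_N}h^{D_N}>m_N+s)<\epsilon$ uniformly in~$N$, while Lemma~\ref{lem:6} supplies $\delta>0$ with $P(\Gamma_N^D(t)\not\subset D_N^\delta)<\epsilon$ for all $N$ sufficiently large. Via Markov's inequality, the proof then reduces to bounding, uniformly in $N$, the quantity
$$E\bigl(\,|\Gamma_N^D(t)\cap D_N^\delta|\,\1_{\{\max h^{D_N}\le m_N+s\}}\bigr)=\sum_{x\in D_N^\delta}P\bigl(h^{D_N}(x)\ge m_N-t,\,h^{D_N}\le m_N+s\bigr)$$
by a constant depending only on~$\delta$,~$s$ and~$t$.

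To control a single summand I would condition on $h^{D_N}(x)=u$ for $u\in[m_N-t,m_N+s]$ and use Lemma~\ref{lemma-3.1} to recast the conditional probability as the probability that a zero-pinned field stays below a shifted harmonic profile; Lemma~\ref{lemma-4.9new}, extended to general outer domains as in Lemma~\ref{lemma-gen-domain}, then bounds it by $c(1+s+t-(u-m_N))^2/\log N$. The density of $h^{D_N}(x)$ at~$u$ is Gaussian with variance $g\log N+O(1)$ for $x\in D_N^\delta$, so the elementary calculation from \eqref{E:Gauss-calc} gives the per-site bound $C(1+s+t)^2\texte^{\alpha t}N^{-2}$. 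Summing over $|D_N^\delta|\le c\,N^2$ yields the required uniform first-moment estimate; choosing $a$ large then makes the Markov bound smaller than $\epsilon$. Adding the three error contributions and sending $N\to\infty$ and then $a\to\infty$ completes the proof.

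The main obstacle is the per-site conditional first-moment bound, which hinges on the random-walk and entropic-repulsion estimates of Section~\ref{sec4} (via Lemma~\ref{lemma-4.9new}); everything else is a routine union and Markov argument.
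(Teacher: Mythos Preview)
Your argument is correct, but the paper proves this lemma very differently: it simply cites Ding and Zeitouni~\cite[Theorem~1.2]{DZ} for square domains and then invokes the monotonicity in Lemma~\ref{lemma-Gamma-monotonicity} to pass to general $D\in\mathfrak D$. No use is made of the concentric decomposition or the random-walk estimates.

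Your route is essentially the computation the paper carries out later, in Lemma~6.5 and Proposition~\ref{prop-6.1}, where the quantitative version of this tightness is actually needed. So what you have written is not a new proof so much as an early instance of that later argument. This buys self-containment --- you avoid the external Ding--Zeitouni input --- at the cost of importing the heavier machinery of Sections~\ref{sec3}--\ref{sec4} (via Lemma~\ref{lemma-4.9new} and Lemma~\ref{lemma-gen-domain}). There is no circularity, since none of those lemmas use Lemma~\ref{lemma-level-set}, but it does invert the intended logical order: Appendix~B is meant to record prior facts on which the body of the paper builds, not the other way around. The paper's two-line proof keeps that separation clean and then re-runs your first-moment computation in Section~6 when the sharper exponential bound on $|\Gamma_N^D(t)|$ is required.
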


\begin{proofsect}{Proof}
For square-like domains, this follows from Ding and Zeitouni~\cite[Theorem~1.2]{DZ}. The extension to more general domains can be deduced from Lemma~\ref{lemma-Gamma-monotonicity} below.
\end{proofsect}

Some applications require knowledge of the size of the intersection of~$\Gamma^D_N(t)$ with an underlying set. The following comparison lemma is then quite useful:

\begin{lemma}
\label{lemma-Gamma-monotonicity}
For each $U\subset V\subset W$, each $a\in\R$ and each integer~$b>0$,
\begin{equation}
P\Bigl(\bigl|\{x\in U\colon h^V(x)\ge a\}\bigr|\ge2b\Bigr)
\le 2P\Bigl(\bigl|\{x\in U\colon h^W(x)\ge a\}\bigr|\ge b\Bigr).
\end{equation}
\end{lemma}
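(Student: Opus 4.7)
The plan is to use the Gibbs--Markov decomposition from Lemma~\ref{lemma-GM} to reduce the inequality to a symmetry property of a centered Gaussian binding field. Write
\begin{equation*}
h^W \,\laweq\, h^V + \varphi, \qquad \varphi := \varphi^{W,V},
\end{equation*}
with $\varphi$ a centered Gaussian field independent of $h^V$. Since $U \subseteq V$, the random set
\begin{equation*}
S := \{x\in U \colon h^V(x) \ge a\}
\end{equation*}
is $\sigma(h^V)$-measurable, and on $S$ we have $h^W(x) \ge a$ as soon as $\varphi(x) \ge 0$. Hence, writing $M_+(s) := |\{x\in s \colon \varphi(x)\ge 0\}|$ for any $s\subseteq U$,
\begin{equation*}
\bigl|\{x\in U\colon h^W(x)\ge a\}\bigr| \,\ge\, M_+(S).
\end{equation*}

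The key step is the following single-point lemma: for any \emph{deterministic} $s\subseteq U$ with $|s|\ge 2b$ one has $P(M_+(s)\ge b) \ge \tfrac12$. Indeed, setting $M_-(s) := |\{x\in s\colon \varphi(x)\le 0\}|$ and noting that every $x\in s$ is counted in at least one of the two sets, $M_+(s)+M_-(s)\ge|s|\ge 2b$ almost surely, so $\{M_+(s)\ge b\}\cup\{M_-(s)\ge b\}$ has probability one. By the symmetry $\varphi\laweq -\varphi$ (which is a centered Gaussian), $M_+(s)\laweq M_-(s)$, so each of the two probabilities is at least $\tfrac12$.

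Finally, because $S$ is $\sigma(h^V)$-measurable and $\varphi$ is independent of $h^V$, applying this lemma to the conditional law of $\varphi$ given $h^V$ yields
\begin{equation*}
P\bigl(M_+(S)\ge b\,\big|\,h^V\bigr) \,\ge\, \tfrac12\,\1_{\{|S|\ge 2b\}}\qquad\text{a.s.}
\end{equation*}
Taking expectations and using the inclusion above gives
\begin{equation*}
P\bigl(|\{x\in U\colon h^W(x)\ge a\}|\ge b\bigr) \,\ge\, \tfrac12\,P(|S|\ge 2b),
\end{equation*}
which is exactly the claim after rearrangement. I do not anticipate any real obstacle here; the only subtle point is that the symmetry argument must be applied \emph{conditionally} on $h^V$ (so that $S$ is frozen and genuinely deterministic from $\varphi$'s point of view), but this is granted for free by the independence statement in Lemma~\ref{lemma-GM}.
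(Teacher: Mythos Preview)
Your argument is correct and is exactly the standard proof of this inequality: Gibbs--Markov decomposition $h^W\laweq h^V+\varphi^{W,V}$ combined with the symmetry $\varphi^{W,V}\laweq-\varphi^{W,V}$, applied conditionally on~$h^V$. The paper does not actually prove this lemma but merely cites Lemma~3.4 in~\cite{BL2}; your write-up is precisely the argument behind that citation (and is the natural counting analogue of the one-point version in Lemma~\ref{lemma-stoch-order}).
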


\begin{proofsect}{Proof}
This is a restatement of the last part of Lemma~3.4 in Biskup and Louidor~\cite{BL2}.
\end{proofsect}

\begin{lemma}
\label{lemma-separation}
For all $D\in\mathfrak D$ and all $t \in \R$, 
\begin{equation}
	\lim_{r \to \infty}\, \limsup_{N \to \infty}\,
		P \bigl( \exists x,y \in \Gamma_N^D( t)\colon r<|x-y| < N/r\bigr) = 0 \,.
\end{equation}
\end{lemma}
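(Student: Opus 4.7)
The plan is a first-moment argument on pairs of points in a ``modified'' near-max set, whose first moment is bounded uniformly in $N$. By Lemma~\ref{lem:6} and Lemma~\ref{lemma-max-tail}, I may restrict, with vanishing error in the limits $N\to\infty$, $M\to\infty$, $\delta\to 0$, to the event where $\Gamma_N^D(t) \subset D_N^\delta$ and $\max_{D_N} h^{D_N} \le m_N + M$. On top of this I further impose an ``entropic-barrier'' constraint in the spirit of Section~4.4: for each candidate point $x$, the backbone random walk $S_k^x$ of the concentric decomposition of $h^{D_N}$ centered at $x$ must satisfy $S_k^x \ge -\Theta_\kappa(k)$ for all $k = 1, \dots, n$, where $n := \lfloor \log_2 N \rfloor$. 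By Lemma~\ref{lemma-4.7} this constraint holds with conditional probability $1-O(\texte^{-c(\log\kappa)^2})$ given $h^{D_N}(x) = m_N - t_1$ (for $|t_1| \le t$) and $\max h^{D_N} \le m_N + M$. Hence the modified level set $\Gamma_N^{D,*}(t,\kappa)$, consisting of points in $\Gamma_N^D(t)$ additionally satisfying the barrier, equals $\Gamma_N^D(t)$ with probability tending to one as $\kappa \to \infty$, uniformly in $N$.

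For $x, y \in D_N^\delta$ with $s := |x-y| \in (r, N/r)$, I apply the Gibbs-Markov property (Lemma~\ref{lemma-GM}) with respect to $\widetilde D := (x + \Delta^{k_0}) \cup (y + \Delta^{k_0})$, where $k_0 := \lfloor \log_2(s/8) \rfloor$. Since the two subboxes are disjoint, $h^{D_N}$ restricted to $\widetilde D$ decomposes as an orthogonal sum of two independent DGFFs, one on each subbox, plus the harmonic binding field $\varphi$ determined by the values of $h^{D_N}$ outside $\widetilde D$. The pair $(\varphi(x), \varphi(y))$ is jointly Gaussian with each variance $\asymp g\log(N/s)$ and correlation $1-o(1)$. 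The event $\{x, y \in \Gamma_N^{D,*}(t,\kappa)\}$ thus splits into three essentially independent random-walk events: the outer walk associated to $\varphi$ must reach values close to $m_N - m_s$ at both $x$ and $y$ while its own backbone (of length $\asymp \log(N/s)$) stays above the barrier; and each of the two inner DGFFs must reach $m_s + O(1)$ at its center while the corresponding inner backbone (of length $\asymp \log s$) stays above the barrier. Applying the random-walk estimate from the proof of Lemma~\ref{lemma-4.9new} (using Lemma~\ref{lemma-gen-domain} to handle the non-canonical outer subdomain) separately to each of the three pieces yields
\begin{equation*}
P\bigl(x, y \in \Gamma_N^{D,*}(t,\kappa)\bigr) \le \frac{C(t, M, \kappa, \delta)}{N^2 \cdot s^2 \cdot \log(N/s) \cdot (\log s)^2}.
\end{equation*}

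Summing over $y$ in shells of width one at distance $s$ from $x$ (contributing $\asymp s$ per shell), integrating $s$ over $(r, N/r)$, and summing over $x \in D_N^\delta$ (contributing $\asymp N^2$), I obtain
\begin{equation*}
E\bigl|\{(x,y) \in (\Gamma_N^{D,*}(t,\kappa))^2 : r < |x-y| < N/r\}\bigr| \le C \int_r^{N/r} \frac{ds}{s \log(N/s) (\log s)^2},
\end{equation*}
which, after splitting the integral at $s = \sqrt N$, is at most $C' (\log N \cdot \log r)^{-1} + C'' (\log\log N)/(\log N)^2$; both terms vanish as $N \to \infty$ for any fixed $r$. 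Markov's inequality then completes the proof, and the reduction from arbitrary $D \in \mathfrak D$ to square-like outer domains is handled by Lemma~\ref{lemma-Gamma-monotonicity}. The main obstacle is the factorized two-point estimate: it requires carefully identifying the three independent random-walk contributions to the joint near-max event via the Gibbs-Markov decomposition and the concentric decompositions centered at $x$, $y$, and at the pair $(x,y)$, and then applying the sharp one-point entropic estimate of Lemma~\ref{lemma-4.9new} to each piece. The outer (binding-field) walk in particular requires a bivariate analogue with endpoint constraints at both $x$ and at $y$, which can be derived by a straightforward adaptation of the Brownian-above-a-curve estimates of Section~4.2 to a two-endpoint setting.
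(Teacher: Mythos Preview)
The paper does not prove this lemma; it simply cites Theorem~1.1 of Ding--Zeitouni~\cite{DZ} for square domains and Proposition~3.1 of~\cite{BL2} for the extension to general~$D\in\mathfrak D$. Your approach --- a first-moment bound on pairs in a barrier-truncated level set --- is in fact the strategy of~\cite{DZ}, so you are essentially re-deriving that result using the concentric decomposition of Sections~\ref{sec3}--\ref{sec4}. There is no circularity: Lemmas~\ref{lemma-4.7} and~\ref{lemma-4.9new} rest on the tightness of the maximum (Lemma~\ref{lemma-max-tail}), not on the separation lemma itself.

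That said, the displayed two-point bound is too strong as written. Your estimate
\[
P\bigl(x, y \in \Gamma_N^{D,*}(t,\kappa)\bigr) \le \frac{C}{N^2 \, s^2 \, \log(N/s) \, (\log s)^2}
\]
leads, after your summation, to an expected pair count that vanishes as $N\to\infty$ for \emph{every} fixed~$r$; by your own reduction this would force $\limsup_N P(\exists x,y\in\Gamma_N^D(t)\colon r<|x-y|<N/r)=0$ already for $r=2$, say. But this is false: whenever the maximum exceeds $m_N$, the pinned field near the maximizer has bounded variance, so with uniformly positive probability a second point at distance~$3$ also lies in~$\Gamma_N^D(t)$ for~$t$ not too small. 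The error is in the Gaussian part of the factorization. The joint density of $\bigl(h^{D_N}(x),h^{D_N}(y)\bigr)$ at $(m_N,m_N)$ is \emph{not} of order $(N^2s^2)^{-1}$; the saddle point in the binding-field variable sits at $z^*=\tfrac{2m_N\log(N/s)}{2\log N-\log s}$, not at $m_N-m_s$, and the mismatch $m_N-m_s\ne m_{N/s}$ injects a factor $(\log N)^{3/2}(\log s)^{-3/2}$-type correction that you have dropped. With the correct polylogarithmic factors in the numerator the expected pair count becomes $O\bigl((\log r)^{-1/2}\bigr)$ \emph{uniformly} in~$N$, which is exactly what is needed for $\lim_{r\to\infty}\limsup_{N\to\infty}=0$ and no more. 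The ``three independent random-walk contributions'' picture is also only approximate: once the annuli around~$x$ at scales~$k\gtrsim\log_2 s$ engulf~$y+\Delta^{k_0}$, the outer walk for~$x$ picks up contributions from~$h_y^{\text{in}}$, so the asserted independence needs a quantitative error term. These are precisely the technical points that make the argument in~\cite{DZ} nontrivial.
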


\begin{proofsect}{Proof}
For square domains, this follows from Theorem~1.1 of~Ding and Zeitouni~\cite{DZ} (where one allows even for $t$ to increase as a constant times $\log\log r$). The (simple) extension to general domains $D\in\mathfrak D$ is provided in Proposition~3.1 of Biskup and Louidor~\cite{BL2}. 
\end{proofsect}

\begin{lemma}
\label{lem:6}
Recall that $\alpha:=2/\sqrt g$ and $V_N:=(0,N)^2\cap\Z^2$. There is a constant~$c\in(0,\infty)$ such that for all $s\ge0$, all $t\ge1$, all $N \geq 1$ and all sets $A \subseteq D\subseteq V_N$, 
\begin{equation}
		P \Bigl(\,\, \max_{x\in A}h^D(x) \geq m_N + t-s \Bigr)
			\leq c\,\Bigl(\frac{|A|}{N^2}\Bigr)^{1/2}\, 
			t\,\texte^{-\alpha (t-s)} \,.
\end{equation}
\end{lemma}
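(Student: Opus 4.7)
The plan is to reduce, via stochastic monotonicity in the outer domain, to the case $D=V_N$ and then to extract the $(|A|/N^2)^{1/2}$ factor from a Gibbs-Markov decomposition matched to $|A|$.

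First, Lemma~\ref{lemma-stoch-order} applied with $\widetilde D:=D$ and ambient $V_N$ gives $P(\max_A h^D\ge\lambda)\le 2\,P(\max_A h^{V_N}\ge\lambda)$ for every $\lambda$, so it suffices to prove the bound with $D:=V_N$. The case $|A|\asymp N^2$ (where $(|A|/N^2)^{1/2}$ is of order one) then follows from the sharp Bramson-Ding-Zeitouni upper-tail bound
\begin{equation}
\label{E:plan-bdz}
P\Bigl(\max_{V_N}h^{V_N}\ge m_N+u\Bigr)\le c_0(1+u)\,\texte^{-\alpha u},\qquad u\ge 0,
\end{equation}
proved in~\cite{BDZ,BDingZ,DZ} and reviewed in~\cite{Biskup-PIMS}. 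The two-parameter form of the claim is cosmetic: setting $u:=t-s$ and using $(1+u)_+\le t$ for $t\ge 1$ converts~\eqref{E:plan-bdz} to the stated form, and for $u<0$ the right-hand side already exceeds any probability.

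For general $A\subseteq V_N$, cover $V_N$ by disjoint boxes $Q_1,\dots,Q_K$ of side $a:=N\wedge\lceil N|A|^{-1/2}\rceil$, union-bound over the (at most $|A|\wedge K$) boxes meeting $A$, and apply the Gibbs-Markov decomposition (Lemma~\ref{lemma-GM}) to each $Q_i$ relative to a slightly-enlarged outer box $\widetilde Q_i\supset Q_i$ of side $2a$. Writing $h^{V_N}=\varphi_i+\widehat h_i$ on $\widetilde Q_i$, with $\widehat h_i$ an independent DGFF on $\widetilde Q_i$ and $\varphi_i$ harmonic with boundary values from $h^{V_N}|_{\partial\widetilde Q_i}$, the independence of the two summands together with the splitting $m_N+u=m_a+(m_N-m_a)+u$ reduces the per-box probability to a convolution of~\eqref{E:plan-bdz} for $\widehat h_i$ (at threshold $m_a+u_1$) with the Gaussian tail of $\max_{Q_i}\varphi_i$ (at threshold $m_N-m_a+u_2$), whose pointwise variance $g\log(N/a)+O(1)$ is controlled by Lemmas~\ref{lemma-G-potential}--\ref{lemma-potential} and~\ref{lemma-3.5rt}--\ref{lemma-3.6a}.

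The main obstacle is that the naive summation of these per-box estimates merely reproduces the $|A|$-independent bound~\eqref{E:plan-bdz}, losing the targeted improvement: the per-box Gaussian factor $(a/N)^2$ and the number of nonempty boxes $\asymp|A|$ cancel the volume factor exactly. The extra $(|A|/N^2)^{1/2}$ is obtained by recognizing that the event ``$\max_{Q_i}\varphi_i$ is large'' corresponds, via the concentric decomposition of Section~\ref{sec3} centered at $Q_i$, to the associated backbone random walk reaching a record, and therefore incurs an additional ballot-type penalty from the estimates of Propositions~\ref{prop-uncond-BM-positive-cor} and~\ref{prop-4.9ue}. The slowly-varying logarithmic corrections from the $\log\log$ term in~\eqref{E:mN} and from the variance of $\varphi_i$ are absorbed by the flexibility of the $(s,t)$-parametrization of the statement.
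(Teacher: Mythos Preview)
Your reduction from general $D\subseteq V_N$ to $D=V_N$ via Lemma~\ref{lemma-stoch-order} is correct and is exactly what the paper does. The paper, however, does \emph{not} attempt to re-derive the $(|A|/N^2)^{1/2}$ factor: for $D=V_N$ it simply cites Lemma~3.8 of Bramson, Ding and Zeitouni~\cite{BDingZ}, where the full bound (with arbitrary $A\subseteq V_N$) is already proved. So the paper's proof is a two-line citation, not an argument.

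Your attempt to recover the $\sqrt{|A|/N^2}$ factor from scratch has a genuine gap. As you yourself note, the naive Gibbs--Markov decomposition into boxes of side $a\approx N/\sqrt{|A|}$ followed by a union bound produces no improvement: the per-box Gaussian cost $(a/N)^2$ for the binding field and the box-count $\asymp|A|$ cancel. You then invoke ``an additional ballot-type penalty'' from the backbone random walk of Section~\ref{sec3} and Propositions~\ref{prop-uncond-BM-positive-cor} and~\ref{prop-4.9ue}. But those estimates control a random walk/bridge of time-length $n\approx\log(N/a)$ and produce savings of order $1/\sqrt n$ or $1/n$, i.e.\ \emph{logarithmic} in $N/a$. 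The missing factor $(|A|/N^2)^{1/2}=a/N$ is \emph{polynomial} in $N/a$. No ballot-problem gain at the level of the concentric decomposition can manufacture a power of~$N$; those gains are exactly what accounts for the linear-in-$t$ prefactor (equivalently, the $\tfrac34\sqrt g\log\log N$ correction in~$m_N$), not the volume factor. The references to ``slowly-varying logarithmic corrections\dots absorbed by the flexibility of the $(s,t)$-parametrization'' cannot repair this: the mismatch is polynomial, not logarithmic.

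The actual mechanism in~\cite{BDingZ} is different and does not use the concentric decomposition. One partitions $V_N$ into $K^2$ sub-boxes of side $N/K$ and applies the \emph{full} sharp upper-tail bound~\eqref{E:plan-bdz} to the DGFF in each sub-box (not just a single-point Gaussian estimate for the binding field), then optimizes over~$K$. If you want a self-contained argument, that is the route to take; the concentric decomposition is a red herring here.
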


\begin{proofsect}{Proof}
For $D:=V_N$ this is  Lemma~3.8 of Bramson, Ding and Zeitouni~\cite{BDingZ}. The more general case is implied by the bound in Lemma~\ref{lemma-stoch-order}. 
\end{proofsect}

\begin{lemma}
\label{lemma-max-tail}
There are constants $c_1,c_2\in(0,\infty)$ such that for $V_N:=(0,N)^2\cap\Z^2$,
\begin{equation}
P\Bigl(\,\bigl|\max_{x\in V_N}h^{V_N}(x)-m_N\bigr|>\lambda\Bigr)\le c_1\texte^{-c_2\lambda}.
\end{equation}
\end{lemma}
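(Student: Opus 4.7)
The statement splits naturally into an upper tail and a lower tail, with rather different proofs.

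The upper tail is essentially immediate from Lemma~\ref{lem:6}. Applying that lemma to $A=D=V_N$ with $s=0$ and $t:=\lambda\vee 1$ (to avoid the benign $t\ge1$ restriction) yields
\begin{equation}
P\Bigl(\,\max_{x\in V_N}h^{V_N}(x)\ge m_N+\lambda\Bigr)\le c\,(\lambda\vee 1)\,\texte^{-\alpha\lambda}
\end{equation}
and, choosing any $c_2<\alpha$, the polynomial prefactor is absorbed to give $c_1\texte^{-c_2\lambda}$ for all $\lambda\ge0$ (the claim being trivial when $\lambda\le0$).

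The lower tail is the substantive part. I would proceed by a sub-square bootstrap based on the Gibbs--Markov property (Lemma~\ref{lemma-GM}). Fix a large integer $K$ and tile $V_N$ by $K^2$ disjoint sub-squares $V_N^{(1)},\dots,V_N^{(K^2)}$, each a translate of $V_{\lfloor N/K\rfloor}$ (separated by thin ``spacer'' strips). Decompose
\begin{equation}
h^{V_N}=h^\perp+\varphi^{(K)},
\end{equation}
where $h^\perp$ is, under Lemma~\ref{lemma-GM}, the disjoint union of independent DGFFs on the $V_N^{(i)}$ and $\varphi^{(K)}$ is an independent centered Gaussian field, discrete harmonic on each sub-square. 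A direct Green-function calculation (using Lemmas~\ref{lemma-G-potential}--\ref{lemma-potential}) gives $\sup_{x\in V_N}\mathrm{Var}(\varphi^{(K)}(x))\le C\log K$ uniformly in $N$, and by Fernique/Borell--Tsirelson (Lemmas~\ref{lemma-Fernique}--\ref{lemma-BT}) the quantity $M^{(K)}:=\max_{x\in V_N}|\varphi^{(K)}(x)|$ has mean at most $C'\log K$ and Gaussian concentration with variance proxy $C\log K$. Using the key relation $m_N-m_{\lfloor N/K\rfloor}=2\sqrt g\,\log K+O(\log\log K)$ coming from \eqref{E:mN}, the pointwise lower bound $\max_{V_N}h^{V_N}\ge \max_i\max_{V_N^{(i)}}h^\perp-M^{(K)}$ yields
\begin{equation}
P\bigl(\max_{V_N}h^{V_N}\le m_N-\lambda\bigr)\le\bigl[f_N(\lambda')\bigr]^{K^2}+P\bigl(M^{(K)}>\tfrac\lambda 4\bigr),
\end{equation}
where $f_N(\mu):=P(\max h^{V_{\lfloor N/K\rfloor}}\le m_{\lfloor N/K\rfloor}-\mu)$ and $\lambda':=\lambda-2\sqrt g\,\log K-\tfrac\lambda 4-C''$.

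Setting $f(\lambda):=\sup_N f_N(\lambda)$ (finite for $\lambda$ large by tightness, cf.~Lemma~\ref{lemma-separation} combined with Lemma~\ref{lem:6}) and picking $K$ large enough that $K^2\gg 2$, the resulting inequality $f(\lambda)\le f(\lambda-C\log K)^{K^2}+\texte^{-c\lambda^2/\log K}$ can be iterated: starting from some $\lambda_0$ at which $f(\lambda_0)\le\tfrac12$ (guaranteed by the tightness of $\max h^{V_N}-m_N$, which is a direct consequence of Lemma~\ref{lemma-separation} and Lemma~\ref{lem:6} applied to a single well-placed sub-square), each iteration cuts $\lambda$ by $C\log K$ and squares (at least) the probability, producing a genuine exponential decay $f(\lambda)\le c_1\texte^{-c_2\lambda}$ after tuning $K$ so that $K^2\log K\gg\alpha C$ is offset by the squaring.

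The main obstacle is calibrating the three competing error budgets simultaneously: the ``missing'' $2\sqrt g\log K$ from scale reduction, the Gaussian fluctuations of $\varphi^{(K)}$ (which force $K$ to be large so $\log K\ll\lambda$), and the base case requiring $f(\lambda_0)$ to be strictly less than $K^{-2}$ to make the recursion contractive. The cleanest route is the standard one where $\lambda$ is split into many geometrically decreasing pieces, each handled by one iteration of the sub-square step, with the control on $M^{(K)}$ coming from Lemma~\ref{lemma-BT} uniformly over the hierarchy of scales.
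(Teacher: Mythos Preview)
The paper simply cites Theorem~1.1 of Ding~\cite{Ding}; your sketch is essentially Ding's argument, and the upper-tail half via Lemma~\ref{lem:6} is fine, but the lower-tail bootstrap has two genuine gaps. First, the claim $\sup_{x\in V_N}\mathrm{Var}(\varphi^{(K)}(x))\le C\log K$ is false: at a point~$x$ within distance~$d$ of~$\partial V_N^{(i)}$ one has $\mathrm{Var}(\varphi^{(K)}(x))=G^{V_N}(x,x)-G^{V_N^{(i)}}(x,x)\approx g\log(N/d)$, which is of order~$\log N$ near the sub-square boundaries; indeed $\varphi^{(K)}=h^{V_N}$ on the spacer strips, so $M^{(K)}$ as you defined it is~$\Theta(\log N)$, not~$O(\log K)$. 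The fix is to run the entire argument over the \emph{bulk} of each sub-square (distance $\gtrsim N/K$ from its boundary), where your variance bound does hold, and then check via Lemma~\ref{lem:6} applied to the boundary layer that restricting the maximum of~$h^\perp$ to the bulk costs only~$O(1)$.

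Second, the base case $f(\lambda_0)\le\tfrac12$ does not follow from Lemmas~\ref{lemma-separation} and~\ref{lem:6}: both give only upper bounds on upper-tail-type events and say nothing about $P(\max h^{V_N}<m_N-\lambda_0)$, and without a legitimate base case the recursion is vacuous since $f\equiv1$ satisfies it. Ding imports the base case from the tightness result of Bramson--Zeitouni~\cite{BZ}; a self-contained alternative is a second-moment/Paley--Zygmund argument showing that $|\{x\in V_N^\delta:h^{V_N}(x)\ge m_N-C\}|\ge1$ with probability bounded away from zero uniformly in~$N$.
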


\begin{proofsect}{Proof}
This is a restatement of Theorem~1.1 in~Ding~\cite{Ding}.
\end{proofsect}

Finally, let us address the passage to continuum limit. Given $\wt{D},D\in\mathfrak D$ with $\wt D\subseteq D$, we will write~$\varphi^{ D, \wt{ D}}_N$ as a shorthand for~$\varphi^{ D_N, \wt{ D}_N}$. Then we have:

\begin{lemma}
\label{lemma-BF-converge}
Let $\wt{D},D\in\mathfrak D$ obey $\wt D\subseteq D$. Then for all $x,y \in \wt{ D}$
\begin{equation}
\text{\rm Cov}\bigl(
	\varphi^{ D, \wt{ D}}_{N}(\lf xN \rf) \,,\,
	\varphi^{ D, \wt{ D}}_{N}(\lf yN \rf) \bigr)
\,\underset{N\to\infty}\longrightarrow\, 
C^{D,\wt D}(x,y),
\end{equation}
with the convergence uniform over closed subsets of~$\wt D\times\wt D$. In particular, for $\delta>0$ and each $N\ge1$ there is a coupling of $\varphi_N^{D,\wt D}$ and $\Phi^{D,\wt D}$ such that
\begin{equation}
\sup_{\begin{subarray}{c}
x\in \wt D\\\dist(x, \partial\wt D)>\delta
\end{subarray}}
\bigl|\,\Phi^{D,\wt D}(x)-\varphi_N^{D,\wt D}(x/N)\bigr|\,\underset{N\to\infty}\longrightarrow\,0,\qquad\text{\rm in probability}.
\end{equation}
\end{lemma}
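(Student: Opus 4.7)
\smallskip
\noindent
\textit{Proof proposal.} The plan is to reduce the convergence of covariances to a statement about (differences of) Green functions, and then obtain the coupling by Gaussian tightness and Skorokhod embedding.

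First, the Gibbs-Markov property (Lemma~\ref{lemma-GM}) gives the orthogonal decomposition $h^{D_N}=h^{\wt D_N}+\varphi^{D,\wt D}_N$, from which
\begin{equation}
\text{Cov}\bigl(\varphi^{D,\wt D}_N(\lf xN\rf),\,\varphi^{D,\wt D}_N(\lf yN\rf)\bigr)=G^{D_N}(\lf xN\rf,\lf yN\rf)-G^{\wt D_N}(\lf xN\rf,\lf yN\rf),
\end{equation}
and I would simply set $C^{D,\wt D}(x,y):=G^D(x,y)-G^{\wt D}(x,y)$, where the continuum Green functions are given by \eqref{E:B.7}. So it suffices to show locally uniform convergence of the discrete differences to the continuum difference on $\wt D\times\wt D$.

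Second, I would apply Lemma~\ref{lemma-G-potential} to write
\begin{equation}
G^{D_N}(\lf xN\rf,\lf yN\rf)=-\fraka(\lf xN\rf-\lf yN\rf)+\sum_{z\in\partial D_N}H^{D_N}(\lf xN\rf,z)\,\fraka(\lf yN\rf-z),
\end{equation}
and the analogous identity for $\wt D_N$. Crucially, the $-\fraka$ term is the same in both expressions, so it cancels in the difference. Inserting the asymptotic $\fraka(w)=g\log|w|+c_0+O(|w|^{-2})$ from Lemma~\ref{lemma-potential} into the boundary sums and factoring out $g\log N+c_0$ (which disappears upon subtraction because $H^{D_N}(\cdot,\cdot)$ and $H^{\wt D_N}(\cdot,\cdot)$ are probability measures on their respective boundaries), one arrives at
\begin{equation}
g\sum_z H^{D_N}(\lf xN\rf,z)\log\bigl|y-z/N\bigr|-g\sum_z H^{\wt D_N}(\lf xN\rf,z)\log\bigl|y-z/N\bigr|+o(1).
\end{equation}
Invoking the standard convergence of discrete to continuum harmonic measure (see, e.g., Lawler-Limi\'c, Prop.~8.1.4) in conjunction with the regularity assumption $D,\wt D\in\mathfrak D$, these sums converge to $g\int_{\partial D}\Pi^D(x,\textd z)\log|y-z|$ and $g\int_{\partial\wt D}\Pi^{\wt D}(x,\textd z)\log|y-z|$ respectively, which is precisely $C^{D,\wt D}(x,y)$ by \eqref{E:B.7}. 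Locally uniform convergence is inherited from locally uniform convergence of the harmonic measures and Green function asymptotics away from the diagonal. (Near the diagonal, note that $G^D-G^{\wt D}$ is harmonic in each variable in $\wt D$, hence smooth, so no singularity appears in $C^{D,\wt D}$.)

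For the coupling statement, I would apply Fernique majorization (Lemma~\ref{lemma-Fernique}) to the centered Gaussian field $\varphi^{D,\wt D}_N(\lf\cdot N\rf)$ restricted to the compact set $\{x\in\wt D\colon\dist(x,\partial\wt D)\ge\delta\}$: the locally uniform convergence of the covariance (and in particular the uniform control $E(|\varphi^{D,\wt D}_N(\lf xN\rf)-\varphi^{D,\wt D}_N(\lf yN\rf)|^2)\le c|x-y|^2/\delta^2$ that follows from harmonicity and Lemma~\ref{lemma-HM}) yields a uniform modulus of continuity, hence tightness in $C(K)$ for each such compact $K$. Any subsequential limit is a centered Gaussian field on $K$ with covariance $C^{D,\wt D}$, hence equals (in law) the restriction of $\Phi^{D,\wt D}$. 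Skorokhod's representation theorem then provides the advertised coupling with almost sure (and thus in probability) uniform convergence.

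The main technical obstacle is the second step: verifying that one has locally uniform convergence of the harmonic-measure integrals $\sum_z H^{D_N}(\lf xN\rf,z)\log|y-z/N|$ to $\int_{\partial D}\Pi^D(x,\textd z)\log|y-z|$ with a quantitative rate that covers the mild singularity of $\log|y-z|$ at points $z$ of $\partial D$ near $y$. For $y$ in the interior of $\wt D$ this singularity is separated from $\partial D\cup\partial\wt D$, so the logarithmic integrand is bounded and continuous on a neighborhood of the relevant boundaries, and standard weak convergence of harmonic measures on $\mathfrak D$-type domains (combined with equicontinuity in $x$ provided by Lemma~\ref{lemma-HM}) suffices.
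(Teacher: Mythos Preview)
Your proposal is correct and follows essentially the same approach as the paper: the covariance convergence via the potential-kernel representation \eqref{E:B.4} of the discrete Green function passing to its continuum analogue \eqref{E:B.7}, and then a coupling built from finite-dimensional convergence plus uniform modulus-of-continuity control via Fernique (Lemma~\ref{lemma-Fernique}) and Borell--Tsirelson (Lemma~\ref{lemma-BT}). The only organizational difference is that the paper constructs the coupling by first fixing a finite $r$-net in $\wt D^\delta$, coupling the finite-dimensional Gaussian vectors there, and then controlling oscillations between net points, whereas you package this as tightness in $C(K)$ plus identification of the limit plus Skorokhod; these are equivalent formulations of the same argument.
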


\begin{proofsect}{Proof (sketch)}
The convergence of covariances follows from the stated convergence of \eqref{E:B.4} to \eqref{E:B.7}.
Fix~$\delta>0$ and recall $\wt D^\delta:=\{x\in\R^2\colon\dist(x,\partial\wt D)>\delta\}$. Given~$r>0$ small and let $x_1,\dots,x_k$ be an $r$-net in $\wt D^\delta$. As convergence of the covariances implies convergence in law, and convergence in law on~$\R^n$ can be realized as convergence in probability, for each~$N\ge1$ there is a coupling of $\varphi_N^{D,\wt D}$ and $\Phi^{D,\wt D}$ such that
\begin{equation}
P\Bigl(\,\,\max_{i=1,\dots,k}\bigl|\,\Phi^{D,\wt D}(\lfloor Nx_i\rfloor)-\varphi_N^{D,\wt D}(x_i)\bigr|>\epsilon\Bigr)
\,\underset{N\to\infty}\longrightarrow\,0.
\end{equation}
The claim will then follow if we can show that
\begin{equation}
\lim_{r\downarrow0}\,\limsup_{N\to\infty}\,P\biggl(\,\,\sup_{\begin{subarray}{c}
x,y\in \wt D^\delta\\|x-y|<r
\end{subarray}}
\bigl|\Phi^{D,\wt D}(x)-\Phi^{D,\wt D}(y)\bigr|>\epsilon\biggr)=0
\end{equation}
and similarly for $\Phi^{D,\wt D}(\cdot)$ replaced by $\varphi_N^{D,\wt D}(\lfloor N\cdot\rfloor)$. This is checked using Lemmas~\ref{lemma-Fernique} and~\ref{lemma-BT} and some elementary regularity of $C^{D,\wt D}$.
\end{proofsect}

\section*{Acknowledgments}
\noindent
This research has been partially supported by European Union's Seventh Framework Programme (FP7/2007-2013]), under grant no.~276923-MC--MOTIPROX, the Israeli Science Foundation grant~1382/17 and the German-Israeli Foundation for Scientific Research and Development grant I-2494-304.6/2017, and also by the NSF grant DMS-1407558 and GA\v CR project P201/16-15238S. We wish to thank Miika Nikula and Scott Sheffield for useful discussions.

\end{document}